\renewcommand{\leq}{\leqslant}
\renewcommand{\geq}{\geqslant}
\DeclareMathOperator{\diag}{diag}
\renewcommand{\hat}{\widehat}
\newtheorem{theorem}{Theorem}[section]
\newtheorem{proposition}[theorem]{Proposition}
\newtheorem{corollary}[theorem]{Corollary}
\newtheorem{lemma}[theorem]{Lemma}
\newtheorem{remark}[theorem]{Remark}
\newtheorem{definition}[theorem]{Definition}
\theoremstyle{example}
\newtheorem{example}{Example}[section]
\def\div{\hbox{div}}
\def\dim{\hbox{dim}}
\def\grad{\hbox{grad}}
\def\max{\hbox{max}}
\def\min{\hbox{min}}
\def\diag{\hbox{diag}}
\def\col{\hbox{col}}
\def\det{\hbox{det}}
\def\exp{\hbox{exp}}
\def\diag{\hbox{diag}}
\begin{document}
\title{PARTIAL DIFFERENTIAL EQUATIONS\\
AN INTRODUCTION}
\pagestyle{empty}
\author{A.D.R. Choudary, Saima Parveen\\Constantin Varsan\\\\\\\\First Edition\\\\\\\\\\\\\\\\\\\\Abdus Salam School of Mathematical Sciences, Lahore, Pakistan.}

\date{ }
\maketitle
\pagestyle{plain}
\pagenumbering{roman}

\noindent \textbf{\textit{A.D.Raza Choudary}}\\
Abdus Salam School of Mathematical Sciences,\\
GC University, Lahore, Pakistan.\\
choudary@cwu.edu\\

\noindent \textbf{\textit{Saima Parveen}}\\
Abdus Salam School of Mathematical Sciences,\\
GC University, Lahore, Pakistan.\\
saimashaa@gmail.com\\

\noindent \textbf{\textit{Constatin Varsan}}\\
Mathematical Institute of Romanian Academy\\
Bucharest, Romania.\\
constantin.varsan@imar.ro\\

\noindent {\large PARTIAL DIFFERENTIAL EQUATIONS: AN INTRODUCTION}\\
\noindent All rights reserved. No part of this publication may be reproduced, stored in any electronic or mechanical form, including photocopy, recording or otherwise, without the prior permission of the publisher.\\

\noindent First Edition 2010.\\

\noindent \textbf{ISBN    978-969-9236-07-6}\\
\noindent \copyright \,respective authors, 2010

\newpage
\vspace{5cm}
\begin{center}
{\huge\emph{\bigskip To our families}}
\end{center}

\newpage
\tableofcontents

\newpage

\chapter*{Introduction}
\addcontentsline{toc}{chapter}{Introduction}
\pagenumbering{arabic}
\pagestyle{myheadings}
\markboth{Introduction}{Introduction}
 This book is encompassing those mathematical
methods used in describing and solving second order partial
differential equation\index{Partial differential equations}  (PDE) of elliptic, hyperbolic and parabolic type.

\noindent Our priority is to make this difficult subject accessible to those
interested in applying mathematics using differential equations as
models. It is accomplished by adding some fundamental results\index{Fundamental!results} from
ordinary differential equations \index{Ordinary differential equations}(ODE) regarding flows and their
differentiability properties which are useful in constructing
solution of Hamilton Jacobi equations.\\
The analysis of first order Cauchy-Kowalevska system\index{Cauchy!Kowalevska theorem} is limited to their
application in constructing analytical solution for hyperbolic and
elliptic equations\index{Elliptic equation} which are frequently used in
Physics and Mechanics.\\
The exposition is subjected to a gradually presentations and the
classical methods called as Fourier\index{Fourier }, Riemann, Fredholm integral
 equations and the corresponding Green functions\index{Green!function}  are
analyzed by solving significant examples.\\
The analysis is not restricted to the linear equations. Non
linear parabolic equation\index{Parabolic equation} or elliptic equations are included
enlarging the meaning of the weak solution.\\
This university text includes a Lie-geometric analysis of gradient
systems\index{Gradient systems} of vector fields and their algebraic representation with a
direct implication in treating both first order overdetermined
 systems \index{Overdetermined system}with singularities and solutions for first order PDE.\\
We are aware that a scientific presentation of PDE must contain
an additional text introducing several results from multidimensional
analysis and it is accomplished including Gauss-Ostrogadsky\index{Gauss-Ostrogadsky}
formula, variational methods \index{Variational method}of deriving PDE and recovering
harmonic functions\index{Harmonic functions } from its boundary values
 (see appendices I,II,III of chapter III).\\
Each chapter of this book includes exercises and problems which can
be solved provided the given hints are used.\\
Partially, some subjects of this university-text have been lectured at Abdus Salam School of Mathematical Sciences (ASSMS), Lahore and it is our belief that this
presentation deserve a wider distribution among universities having graduate program in
mathematics.\\
The authors express their gratitude to Abdus Salam School of Mathematical Sciences (ASSMS) of GC University Lahore for their encouragements and
assistance in writing this book.\\
This book also includes two subjects that encompass good connection of PDE with differential geometry and stochastic analysis\index{Stochastic!analysis}.
 The first subject is represented by gradient systems\index{Gradient systems} of vector fields and their influence in solving first order overdetermined
systems\index{Overdetermined system}. The second subject introduces approximations of
SDE by  ODE which can be meaningful in deriving significant results of stochastic partial differential equations\index{Partial differential equations}($SPDE$)
represented here by stochastic rule of derivation.Our belief is that such subjects must be presented in any introductory monography treating $PDE$.
\pagestyle{headings}

\chapter{Ordinary Differential Equations(ODE)}\index{Ordinary differential equations}
\section{Linear System of Differential Equations}
A linear system of differential equations is described by the
following scalar differential equations
\begin{eqnarray}\label{eq:1.1}
  \frac{dy^i}{dx} &=& \mathop{\sum}\limits_{j=1}^{n}a_{ij}(x)y^{j}+b_{i}(x),x\in
I\subseteq \mathbb{R},i=1,2,...,n
\end{eqnarray}
where the scalar functions $a_{ij}$ and $b_i$ are  continuous on the
interval $I\subseteq \mathbb{R} \,(a_{ij} , b_i \in
\mathbb{C}(I;\mathbb{R}))$. Denote $z=col(y^1,\ldots,y^n)\in
\mathbb{R}^{n}$ as the column vector of the unknown functions and
rewrite \eqref{eq:1.1} as a linear vector system
\begin{eqnarray}\label{eq:1.2}
\frac{dz}{dx} =A(x)z + b(x), x\in I\subseteq \mathbb{R} ,
z\in \mathbb{R}^n
\end{eqnarray}
where $A=(a_{ij}),\,(i,j)\in\{1,\ldots,n\}$ stands for
the corresponding $n\times n$ continuous  matrix valued function and
$b=\col(b_1,\ldots,b_n)\in \mathbb{R}^n$ is a continuous function on
$I$ valued in $\mathbb{R}^n$. A solution of the system \eqref{eq:1.2} means
a continuous function $z(x):I\rightarrow \mathbb{R}^n$ which is
continuously differentiable $(z\in \mathcal{C}^{1}(I;\mathbb{R}^n))$
and satisfies \eqref{eq:1.2} for any $x\in I$, i.e
$\frac{dz}{dx}(x)=A(x)z(x)+ b(x)$, for all $x\in I$. The
computation of a solution as a combination of elementary functions
and integration is not possible without assuming some particular
structures regarding the involved matrix $A$. As far as some
qualitative results are concerned (existence and uniqueness of a
solution \index{Existence and uniqueness of a
solution }) we do not need to add new assumptions on the $A$ and a
Cauchy problem solution receives a positive answer. For a fixed pair
$(x_0,z_0)\in I\times \mathbb{R}^n$ we define $\{z(x):x\in I\}$ as
the solution of the system \eqref{eq:1.2} satisfying $z(x_0)=z_0$ (Cauchy
problem solution). To prove uniqueness of a Cauchy problem solution
we recall a standard lemma.
\begin{lemma}\label{le:1.1}(Gronwall) Let $\varphi(x),\alpha(x):[a,b]\rightarrow
[0,\infty)$ and a constant $M \geq 0$ be given such that the
following integral inequality is valid
$$\varphi(x)\leq M+\int_{a}^{x}\alpha(t)\varphi(t)dt,x\in[a,b]$$
where $\alpha$ and $\varphi$ are continuous scalar functions. Then$$
\varphi (x) \leq M
\exp\int_{a} ^ {b} \alpha (t)dt,\forall\, x\in [a,b]$$
\end{lemma}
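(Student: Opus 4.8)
The plan is to reduce the given integral inequality to a differential inequality for its right-hand side and then integrate the latter by the classical integrating-factor trick. First I would introduce the auxiliary function
$$\psi(x)=M+\int_{a}^{x}\alpha(t)\varphi(t)\,dt,\qquad x\in[a,b],$$
which by the hypothesis dominates $\varphi$, i.e. $\varphi(x)\leq\psi(x)$ for all $x\in[a,b]$, and which, since the integrand $\alpha\varphi$ is continuous, belongs to $\mathcal{C}^1([a,b];\R)$ with $\psi(a)=M$ and $\psi'(x)=\alpha(x)\varphi(x)$ by the fundamental theorem of calculus. Using $\varphi\leq\psi$ together with $\alpha(x)\geq 0$ I obtain the pointwise differential inequality $\psi'(x)\leq\alpha(x)\psi(x)$ on $[a,b]$.

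Next I would multiply through by the strictly positive factor $\mu(x)=\exp\left(-\int_{a}^{x}\alpha(t)\,dt\right)$ and observe that
$$\frac{d}{dx}\bigl(\mu(x)\psi(x)\bigr)=\mu(x)\bigl(\psi'(x)-\alpha(x)\psi(x)\bigr)\leq 0,$$
so that $x\mapsto\mu(x)\psi(x)$ is non-increasing on $[a,b]$. Hence $\mu(x)\psi(x)\leq\mu(a)\psi(a)=M$ for every $x\in[a,b]$, that is, $\psi(x)\leq M\exp\left(\int_{a}^{x}\alpha(t)\,dt\right)$. Since $\alpha\geq 0$ the exponent is non-decreasing in the upper limit, so $\int_{a}^{x}\alpha\leq\int_{a}^{b}\alpha$, and combining this with $\varphi(x)\leq\psi(x)$ yields $\varphi(x)\leq M\exp\int_{a}^{b}\alpha(t)\,dt$ for all $x\in[a,b]$, which is the assertion.

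The argument is essentially mechanical once $\psi$ is in place; the points that deserve a little care are the differentiability of $\psi$, which rests on continuity of $\alpha$ and $\varphi$, and the sign bookkeeping, since the assumption $\alpha\geq 0$ is used twice — once to pass from $\varphi\leq\psi$ to $\psi'\leq\alpha\psi$, and once to replace the upper limit $x$ by $b$ in the final exponent. The degenerate case $M=0$ is already covered by the same chain of inequalities (it forces $\psi\leq 0$, hence $\varphi\equiv 0$ because $\varphi\geq 0$); alternatively one may run the whole estimate with $M+\varepsilon$ in place of $M$ for an arbitrary $\varepsilon>0$ and let $\varepsilon\downarrow 0$ at the end.
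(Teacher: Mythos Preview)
Your proof is correct and follows essentially the same approach as the paper: introduce $\psi(x)=M+\int_a^x\alpha\varphi$, derive the differential inequality $\psi'\leq\alpha\psi$, and integrate it. The only cosmetic difference is that the paper rewrites the inequality as an equation $\psi'=\alpha\psi+\delta$ with $\delta\leq 0$ and then applies the explicit variation-of-constants formula, whereas you use the integrating factor $\mu=\exp(-\int_a^x\alpha)$ directly; these are two presentations of the same computation.
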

\begin{proof}
 Denote $\psi(x)=M+\int_{a}^{b}\alpha(x)\varphi(t)dt$
and a straight computation lead us to $ \varphi(x)\leq\psi(x),\alpha\in[a,b]$ and
 \begin{eqnarray}\label{eq:1.3}\left\{
  \begin{array}{ll}
    \frac{d\psi(x)}{dx}= & \alpha(x)\varphi(x)\leq\alpha(x)\psi(x),\,x\in[a,b]\ \\
    \psi(a)= &M
  \end{array}
\right.
\end{eqnarray}
The differential inequality in \eqref{eq:1.3} can be written as a scalar equation
\begin{eqnarray}\label{eq:1.4}
\left\{
   \begin{array}{ll}
     \frac{d\psi(x)}{dx} =& \alpha(x)\psi(x)+\delta(x),\, x\in[a,b] \\
    \psi(a)= &M
   \end{array}
 \right.
 \end{eqnarray}
 where $\delta(x)\leq 0,\forall\,x\in\, [a,b]$. Using the integral
representation of of the scalar equation solution we get
$$\psi(x)=[\exp\int_{a}^{x}\alpha(t)dt]\{M+\int_{a}^{x}(\exp-\int_{a}^{t}\alpha(s)ds)\delta(t)dt\}$$
and as consequence(see $\delta(x)\leq 0,x\in[a,b]$)we obtain
$$\varphi(x)\leq\psi(x)
 \leq M [\exp\, \int_{a} ^ {b} \alpha (t)dt], \forall\, x\in [a,b]
$$
\end{proof}
\begin{theorem}\label{th:1.1}
(Existence and uniqueness of Cauchy problem solution)\\
Let $ A(x):I\rightarrow M_{n\times n}(n\times n \hbox{ matrices})$ and
$b(x):I\rightarrow \mathbb{R}^n $ be continuous functions
and $x_{0}\in I \subseteq \mathbb{R}^n$ be fixed. Then there exists
a unique solution $z(x):I\rightarrow \mathbb{R}^n$ of the system
satisfying the initial condition $z(x_{0})=z_{0}$ (Cauchy
condition).
\end{theorem}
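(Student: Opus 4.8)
The plan is to rewrite the Cauchy problem as an equivalent integral equation and solve it by successive approximations (Picard iteration), reserving Lemma~\ref{le:1.1} for the uniqueness part. The starting observation is that $z\in\mathcal C^1(I;\mathbb R^n)$ satisfies \eqref{eq:1.2} together with $z(x_0)=z_0$ if and only if $z\in\mathcal C(I;\mathbb R^n)$ and
$$z(x)=z_0+\int_{x_0}^{x}\bigl(A(t)z(t)+b(t)\bigr)\,dt,\qquad x\in I ,$$
by the fundamental theorem of calculus; I would work with this formulation, since it only requires continuity of $z$ and it turns the problem into a fixed point equation.

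Because $I$ need not be compact, I would first fix an arbitrary compact interval $[a,b]\subseteq I$ with $x_0\in[a,b]$ and solve the problem there. On $[a,b]$ put $K=\max\{\|A(x)\|:x\in[a,b]\}<\infty$, and define the iterates $z_0(x)\equiv z_0$ and $z_{k+1}(x)=z_0+\int_{x_0}^{x}\bigl(A(t)z_k(t)+b(t)\bigr)\,dt$ for $k\geq0$. An induction on $k$, whose inductive step is merely $\|z_{k+2}(x)-z_{k+1}(x)\|\leq\bigl|\int_{x_0}^{x}K\,\|z_{k+1}(t)-z_k(t)\|\,dt\bigr|$, yields the estimate
$$\|z_{k+1}(x)-z_k(x)\|\leq C\,\frac{\bigl(K\,|x-x_0|\bigr)^{k}}{k!}\leq C\,\frac{\bigl(K(b-a)\bigr)^{k}}{k!},\qquad x\in[a,b],$$
where $C=\max_{[a,b]}\|z_1(x)-z_0(x)\|$. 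The right-hand side is the general term of a convergent numerical series, so $\sum_{k\geq0}(z_{k+1}-z_k)$ converges uniformly on $[a,b]$ and hence $z_k\to z$ uniformly on $[a,b]$ for some continuous $z$. Passing to the limit under the integral sign — legitimate since the convergence is uniform and $A,b$ are bounded on $[a,b]$ — shows that $z$ solves the integral equation on $[a,b]$, hence $z\in\mathcal C^1([a,b];\mathbb R^n)$ and it solves \eqref{eq:1.2} with $z(x_0)=z_0$ on $[a,b]$.

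For uniqueness on $[a,b]$, suppose $z$ and $\tilde z$ both solve the Cauchy problem there and set $\varphi(x)=\|z(x)-\tilde z(x)\|$. Subtracting the two integral equations and using $\|A(t)\|\leq K$ gives $\varphi(x)\leq\int_{x_0}^{x}K\varphi(t)\,dt$ for $x\geq x_0$, and the symmetric inequality on $[a,x_0]$ after reversing the orientation of the interval. Lemma~\ref{le:1.1} applied with $M=0$ then forces $\varphi\equiv0$, i.e. $z=\tilde z$ on $[a,b]$.

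Finally I would globalize. Write $I=\bigcup_{m}[a_m,b_m]$ as an increasing union of compact intervals each containing $x_0$; the steps above produce a unique solution $z_m$ on $[a_m,b_m]$, and uniqueness guarantees that $z_{m+1}$ restricted to $[a_m,b_m]$ coincides with $z_m$. Hence the $z_m$ glue to a well-defined $z\in\mathcal C^1(I;\mathbb R^n)$ solving \eqref{eq:1.2} with $z(x_0)=z_0$, and it is unique because any two global solutions must agree on every $[a_m,b_m]$. The point that demands the most care is precisely this passage from local to global: the Lipschitz constant $K$ depends on the subinterval and cannot be chosen uniformly over a non-compact $I$, so the Picard scheme has to be run on each compact piece separately and the pieces then reconciled through uniqueness; the remaining estimates are routine.
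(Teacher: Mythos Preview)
Your proof is correct and follows essentially the same approach as the paper: reformulation as an integral equation, Picard iteration on compact subintervals with the factorial bound $\|z_{k+1}-z_k\|\leq C\,(K|x-x_0|)^k/k!$, uniform convergence via comparison with an exponential series, uniqueness from Gronwall with $M=0$, and globalization by exhausting $I$ with an increasing sequence of compact intervals. The only cosmetic difference is that the paper glues the local solutions by observing that the Picard iterates are themselves globally defined (so their limits on nested intervals automatically agree), whereas you invoke local uniqueness to reconcile the pieces; both are valid and the overall architecture is the same.
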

\begin{proof}
We notice that a solution $\{z(x):x \in I\}$ of \eqref{eq:1.2}  satisfying $z(x_{0})=z_{0}$ fulfils the following integral equation
\begin{eqnarray}\label{eq:1.5}
z(x)=z_{0}+\int_{x_{0}}^{x}[A(t)z(t)+b(t)]dt ,\forall x \in
I\end{eqnarray}
and conversely any continuous function $z(x):I\rightarrow
\mathbb{R}^n$  which satisfies \eqref{eq:1.5} is continuously differentiable \\
and
$$\frac{dz}{dx}=A(x)z+b(x) ,\forall \,x \in I ,\  z(x_{0})=z_{0}$$
We suggest to look for a continuous function satisfying integral
equation \eqref{eq:1.5} and it is accomplished using Picard's iteration method
which involves the following sequence of continuous functions
$\{z_{k}(x):x\in I\}_{k\geq{o}}$
\begin{eqnarray}\label{eq:1.6}
z_{0}(x)=z_{0},z_{k+1}(x)=z_{0}+\int_{x_{0}}^{x}[A(t)z_{k}(t)+b(t)]dt
\end{eqnarray}
For each compact interval $J\subseteq I,x_{0}\in J$, the sequence
$\{z_{k}(x):x\in J\}_{k\geq{o}}\subseteq
\mathcal{C}(J;\mathbb{R}^n)$ (Banach space) is a Cauchy sequence in
a Banach space, where
 \begin{eqnarray}\label{eq:1.7}
 z_{k+1}(x)=z_{0}+u_{1}+u_{2}...+u_{k+1},
u_{j+1}(x)=z_{j+1}(x)-z_{j}(x),j\geq{0}
\end{eqnarray}
In this respect, $\{z_{k}(x):x\in J \} $ rewritten as in \eqref{eq:1.7}
coincides with a partial sum of the following series
\begin{eqnarray}\label{eq:1.8}
\sum(x)=z_{0} +u_{1}+u_{2}...+u_{k }+...,x \in J
\end{eqnarray}
and the convergence of $\{ \sum(x):x\in J\}\subseteq
\mathcal{C}(J;\mathbb{R}^{n})$ can be obtained using a convergent
numerical series as an upper bound for it. The corresponding
convergent numerical series is of exponential\index{Exponential!} type and it is
contained in the following estimate. Using standard induction
argument we prove
\begin{eqnarray}\label{eq:1.9}
\mid u_{j+1}(x)\mid\leq(1+\mid z_{0}\mid)
\frac{k^{j+1}|x-x_{0}|^{j+1}}{(j+1)!},\forall j\geq{0},x\in J
\end{eqnarray}
where $$ k=\max(\max_{t\in J}\mid A(t)\mid,\max_{t\in J}\mid b(t)\mid)$$
For $j=0$ we see easily that $\mid u_{1}\mid \leq (1+\mid
z_{0}\mid)k\mid(x-x_{0})\mid,x\in J$, and using
$$u_{j+2}(x)=z_{j+2}(x)-z_{j+1}(x)=\int_{x_{0}}^{x}A(t)u_{j+1}(t)dt$$
we get
\begin{eqnarray}\label{eq:1.10}
\mid u_{j+2}(x) \mid\leq k\int_{x_{0}}^{\mid
(x-x_{0})\mid}\mid u_{j+1}(t) \mid dt\leq (1+\mid
z_{0}\mid)\frac{k^{j+2}|(x-x_{0})|^{j+2}}{(j+2)!},x\in J
\end{eqnarray}
provided $\{u_{j+1}(x):x\in J\}$ satisfies \eqref{eq:1.9}. As a consequence, the inequality \eqref{eq:1.9} show us that
\begin{eqnarray}\label{eq:1.11}
\mid u_{j+1}(x) \mid \leq(1+\mid z_{0}\mid) \frac{k^{j+1}\mid
J\mid^{j+1}}{(j+1)},\forall j\geq{0},x\in J
\end{eqnarray}
where $\mid J\mid=\max_{x\in J}\mid x-x_{0}\mid$and the corresponding
convergent numerical series is given by $(1+\mid
z_{0}\mid)\exp{k\mid J\mid}$, where the constant $k\geq{0}$ is
defined in \eqref{eq:1.9}. Using\eqref{eq:1.11} into \eqref{eq:1.8} we obtain that
sequence$\{z_{k}(x):x\in I\}_{k\geq{o}}$ defined in \eqref{eq:1.6} is uniformly
convergent to a continuous function $\{z_{j}(x):x\in J\}$ and  by
passing $k\rightarrow \infty $ in \eqref{eq:1.6} we get
\begin{eqnarray}\label{eq:1.12}
z_{j}(x)=z_{0}+\int_{x_{0}}^{x}[A(t)z_{j}(t)+b(t)]dt ,\forall x
\in J\end{eqnarray}
It shows that $\{z_{j}(x):x\in J\}$ is continuously
differentiable, $z_{j}(x_{0})=z_{0}$ and satisfies \eqref{eq:1.2} for any $x\in
J\subseteq I$. Define $\{\hat{z}(x):x\in I\}$ as a "inductive limit"
of the sequence $\{z_{m}(x)=z_{j_{m}}(x) :x\in
J_{m}\}_{m\geq{1}}$\\
where
$$I=\bigcup_{m=1}^{\infty}J_{m},
J_{m+1}\supseteq J_{m},x_{0}\in J_{m}$$
Then ${\hat{z}(x)=z_{m}(x),x\in J_{m},m\geq{1}}$ is continuously
differentiable function satisfying
integral equation \eqref{eq:1.5}.\\
\textbf{Uniqueness}\\
It will be proved by contradiction and assuming another solution
$\{z_{1}(x):x\in I\}$ of (2)exits such that $z_{1 }(x_{0})=z_{0}$
and $z_{1 }(x^*)\neq \tilde{z}(x^*)$ for some $x^*\in I $ then
$u(x)=\widehat{z}(x)-z_{1 }(x),x\in I$, verifies the following linear
system
\begin{eqnarray}\label{eq:1.13}
u(x)=\int_{x_0}^{x}A(t)u(t)dt, x\in I, \hbox{where}
\{\hat{z}(x),x\in I\}\, \hbox{is defined above}\,.
\end{eqnarray}
Let $J\subseteq I$ be a
compact interval such that $x_{0},x^*\in I$ and assuming that
$x_{0}\leq x^*$ we get
\begin{eqnarray}\label{eq:1.14}
\varphi(x)=\mid u(x)\mid\leq
\int_{x_{0}}^{x_{0}+\mid x-x_{0}\mid}\alpha(t)\varphi(t)dt,\,\,\forall
x\in [x_{0},x^*]
\end{eqnarray}
where\,$ \alpha(t)=\mid A(t)\mid\geq{0}$ and $\varphi(t)=\mid
u(t)\mid$ are continuous scalar functions. Using Lemma \ref{le:1.1}  with$ M=0 $
we obtain $\varphi(x)=0$ for any $x\in [x_{0},x^*]$ contradicting
$\varphi(x^*)>{0}$. The proof for $x_{0}>x^*$ is similar. The proof is
complete.
\end{proof}
\section{ Fundamental Matrix\index{Fundamental!matrix} \index{Matrix!Fundamental} of Solution}
A linear homogenous system is described
by
\begin{eqnarray}\label{eq:1.17}
\frac{dz}{dx}=A(x)z,z\in \mathbb{R}^n
\end{eqnarray}
where the $(n\times n)$ matrix $A(x),x\in I$ is a continuous mapping
valued in $M_{n\times n}$. Let $x_{0}\in I$ be fixed and denote
$\{z_{i}(x):x\in I\}$ the unique solution of \eqref{eq:1.17} satisfying the
Cauchy condition $z_{i}(x_{0})=e_{i}\in \mathbb{R}^n,i\in \{1,2,...,n\}$\\
where $\{e_{1},...,e_{n}\}\subseteq \mathbb{R}^n$ is the canonical
basis. Denote
\begin{eqnarray}\label{eq:1.18}
C(x;x_{0})=\parallel
z_{1}(x)....z_{n}(x)\parallel,x\in I
\end{eqnarray}
where $z_{i}(x)\in
\mathbb{R}^n$ is a column vector. The matrix
$C(x;x_{0}),x\in I$ defined in \eqref{eq:1.18}, is called the
fundamental matrix\index{Fundamental!matrix}\index{Matrix!fundamental}  of solutions associated with linear system
\eqref{eq:1.17}. Let $S\subseteq C(I;\mathbb{R}^n) $ be the real linear
space consisting of all solutions verifying \eqref{eq:1.17}. The following
properties of the fundamental matrix $\{C(x;x_{0}),x\in
I\}$ are obtained by straight computation.
\subsection {Properties}
\begin{lemma}\label{le:1.2}
Let $x_{0}\in I$ be fixed and consider the fundamental matrix
$\{C(x;x_{0}),x\in I\}$  defined in\eqref{eq:1.18}
\begin{equation}\label{eq:1.3a}
Let \{z(x):x\in I\}\hbox{ be an arbitrary solution of \eqref{eq:1.17}}
\end{equation}
Then
$$z(x)=C(x;x_{0})z_{0},x\in I, \hbox{where}\,z(x_{0})=z_{0}$$
\begin{eqnarray}\label{eq:1.19}
\{C(x;x_{0}),x\in I\} \,\hbox{is continuously differentiable}
\end{eqnarray}
\begin{eqnarray}\label{eq:1.20}
\left\{
  \begin{array}{ll}
    \frac{dC(x;x_{0})}{dx}=&A(x)C(x;x_{0}),x\in I\ \\
    C(x_{0};x_{0})= &I_{n} \,\hbox{(unit matrix) }\index{Matrix!unit}
  \end{array}
\right.\end{eqnarray}
\begin{eqnarray}\label{eq:1.21}
\det\, C(x;x_{0})\neq 0,\forall x\in I \,\hbox{and} \,D(x;x_{0})=[C(x;x_{0})]^{-1} \,\hbox{satisfies}
\end{eqnarray}
\begin{eqnarray}\label{eq:1.22}
\left\{
  \begin{array}{ll}
    \frac{dD(x;x_{0})}{dx}=&-D(x;x_{0})A(x),x\in I\ \\
    D (x_{0};x_{0})= &I_{n} \,\hbox{(unit matrix)}
  \end{array}
\right.
\end{eqnarray}
\begin{eqnarray}\label{eq:1.23}
\dim \,S=n
\end{eqnarray}
\end{lemma}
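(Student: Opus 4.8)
The plan is to exploit the columnwise construction of $C(x;x_0)$ in \eqref{eq:1.18} together with the existence–uniqueness result of Theorem \ref{th:1.1}; every assertion then reduces either to a linearity argument or to the fact that the only solution of \eqref{eq:1.17} vanishing at one point of $I$ is the trivial one.

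First I would record \eqref{eq:1.19} and \eqref{eq:1.20}. By construction each column $z_i(x)$ of $C(x;x_0)$ is the $\mathcal{C}^1$ solution of \eqref{eq:1.17} with $z_i(x_0)=e_i$; hence $C(\cdot\,;x_0)$ is continuously differentiable and, differentiating columnwise, $\frac{dC}{dx}=A(x)C(x;x_0)$, while $C(x_0;x_0)=\|e_1\ \ldots\ e_n\|=I_n$. For the representation formula \eqref{eq:1.3a}, given an arbitrary solution $z$ with $z(x_0)=z_0=\col(z_0^1,\ldots,z_0^n)$, form $w(x):=C(x;x_0)z_0=\sum_{i=1}^{n}z_0^i z_i(x)$. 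Being a finite linear combination of solutions of the homogeneous system \eqref{eq:1.17}, $w$ is again a solution, and $w(x_0)=\sum_i z_0^i e_i=z_0$; uniqueness in Theorem \ref{th:1.1} forces $z\equiv w$, i.e. $z(x)=C(x;x_0)z_0$.

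Next comes \eqref{eq:1.21}. Suppose $\det C(x_1;x_0)=0$ for some $x_1\in I$; then the columns $z_1(x_1),\ldots,z_n(x_1)$ are linearly dependent, so there is $c=\col(c_1,\ldots,c_n)\neq 0$ with $C(x_1;x_0)c=0$. The function $v(x):=C(x;x_0)c$ solves \eqref{eq:1.17} and satisfies $v(x_1)=0$; applying uniqueness with the Cauchy data prescribed at $x_1$ (the zero function also solves that Cauchy problem) yields $v\equiv 0$ on $I$, hence $c=v(x_0)=0$, a contradiction. Therefore $\det C(x;x_0)\neq 0$ on $I$ and $D(x;x_0):=[C(x;x_0)]^{-1}$ is well defined. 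Since the entries of $D$ are, by Cramer's rule, rational functions of the entries of $C$ with nonvanishing denominator $\det C(x;x_0)$, $D(\cdot\,;x_0)$ is $\mathcal{C}^1$; differentiating $D(x;x_0)C(x;x_0)=I_n$ and using \eqref{eq:1.20} gives $\frac{dD}{dx}C+D\,A\,C=0$, whence $\frac{dD}{dx}=-D\,A(x)$ after multiplying on the right by $D=C^{-1}$, and $D(x_0;x_0)=I_n^{-1}=I_n$, which is \eqref{eq:1.22}.

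Finally, for \eqref{eq:1.23} I would consider the evaluation map $\Phi:S\to\R^n$, $\Phi(z)=z(x_0)$, which is linear; it is injective because a solution with $z(x_0)=0$ is identically zero by uniqueness, and surjective because for each $z_0\in\R^n$ Theorem \ref{th:1.1} supplies a solution with that initial value, so $\Phi$ is an isomorphism and $\dim S=n$ (equivalently, $\{z_1,\ldots,z_n\}$ is a basis of $S$: it spans by the representation formula and is independent since $z_i(x_0)=e_i$). The only mildly delicate point is the $\mathcal{C}^1$ regularity of $D$ used in \eqref{eq:1.21}–\eqref{eq:1.22}; everything else is a direct consequence of linearity and the uniqueness half of Theorem \ref{th:1.1}.
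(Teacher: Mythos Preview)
Your proof is correct and follows essentially the same line as the paper: columnwise verification of \eqref{eq:1.20}, the representation formula via uniqueness at $x_0$, nonsingularity of $C$ by contradiction using uniqueness at the putative singular point, and $\dim S=n$ from the resulting basis. The one substantive variation is in \eqref{eq:1.22}: you argue that $D=C^{-1}$ is $\mathcal{C}^1$ via Cramer's rule and then differentiate the identity $DC=I_n$, whereas the paper goes the other direction, first defining $D_1$ as the unique $\mathcal{C}^1$ solution of $\frac{dD_1}{dx}=-D_1 A(x)$, $D_1(x_0;x_0)=I_n$, and then checking that $\frac{d}{dx}(D_1C)=0$ so $D_1C\equiv I_n$. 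Your route is more direct; the paper's avoids the appeal to Cramer by getting regularity of $D$ for free from ODE existence theory.
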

 \begin{proof} For $z(.)\in S$, let
$z_{0}=z(x_{0})\in \mathbb{R}^n$ be column vector and consider  $
\hat{z}(x)=C(x;x_{0})z_{0},x\in I$. Notice that each column of $C
(x;x_{0})$ is a solution of \eqref{eq:1.17}. As far as S is a linear space
then $ \{\hat{z}(x):x\in I\}$ is a solution of \eqref{eq:1.17} fulfilling the
Cauchy condition
$$\hat{z}(x_{0})=C (x_{0};x_{0})z_{0}$$
Therefore
$$\{z(x):x\in I\} \hbox{and} \{\hat{z}(x):x\in I\}$$ are solutions for the
linear system \eqref{eq:1.17} satisfying the same Cauchy condition
$z(x_0)=z_{0}=\hat{z}(x_{0})$ and using the uniqueness of the Cauchy
problem solution (see Theorem
\ref{th:1.1}) we get
$$z(x)=\hat{z}(x)=C(x;x_{0})z_{0} \hbox{for any} x\in I$$
The conclusion \eqref{eq:1.3a} is proved. To get \eqref{eq:1.20} we notice that each
component $ \{z_{i}(x):x\in I\}$ of $\{C(x;x_{0}),x\in
I\}$, satisfies \eqref{eq:1.17} and it shows directly that
\begin{eqnarray}\label{eq:1.25}
\label{eq:1.24}
\frac{dC(x;x_{0})}{dx}=\parallel
\frac{dz_{1}}{dx}...\frac{dz_{n}}{dx}
\parallel=A(x)C(x;x_{0}),x\in I
\end{eqnarray}
and
\begin{eqnarray}\label{eq:1.26}
C(x_0;x_0)=\parallel
e_{1}...e_{n}\parallel=I_{n}
\end{eqnarray}
The property \eqref{eq:1.21} can be proved by contradiction. Assuming that
$C(x^*;x_{0})z_{0}=0$ for some $x^*\in I$ and $z_{0}\in
\mathbb{R}^n,z_{0}\neq 0$, we get that the solution
$z^*(x)=C(x;x_{0})z_{0},x\in I$ of \eqref{eq:1.17} satisfies $z^*(x^*)=0$ which
implies (see uniqueness) $z^*(x)=0,\forall x\in I$ contradicting
$z^*(x_{0})=z_{0}\neq 0$.Therefore $det C(x,x_{0})\neq 0$ for any
$x\in I$ and denote $D(x;x_{0})=[C(x;x_{0})]^{-1}$. On the other
hand, consider $\{D_{1}(x;x_{0}):x\in I\}$ as the unique solution of
the linear matrix system
\begin{eqnarray}\label{eq:1.27}
\left\{
  \begin{array}{ll}
    \frac{dD_{1}(x;x_{0})}{dx}&=-D_{1}(x;x_{0})A(x),x\in I\ \\
    D_{1} (x_{0};x_{0})&= I_{n}\hbox{(unit matrix)}
  \end{array}\right.
\end{eqnarray}
and by straight derivation we get
\begin{eqnarray}\label{eq:1.28}
D_{1} (x;x_{0}C(x;x_{0})=I_{n},\forall x\in I
\end{eqnarray}
The equation \eqref{eq:1.27} shows that $D(x;x_{0})=[C(x;x_{0})]^{-1},x\in I$.
The last property \eqref{eq:1.23} is a direct consequence of \eqref{eq:1.3a} and \eqref{eq:1.21}. Using
\eqref{eq:1.3a}, we see easily that $\dim S\leq n$ and from \eqref{eq:1.21} we obtain that
$\{z_{1}(x),...z_{n}(x):x\in I\}$ are n linearly independent solutions \index{Independent!solutions}of \eqref{eq:1.17}. The proof is complete.
\end{proof}
Any basis of S will be called a fundamental system of solutions\index{Fundamental!system of solutions}
satisfying \eqref{eq:1.17} and we shall conclude this section recalling
  Liouville'theorem.
 \begin{theorem}\label{th:1.2}
 Let the $(n\times n)$ continuous matrix \index{Matrix!continuous} $A(x)=[a_{ij}(x)]_{i,j}$ be given and consider n solutions $\{\hat{z}_{1}(x),...,\hat{z}_{n}(x):x\in I\}$
 satisfying the linear system \eqref{eq:1.17}. Then $\{W(x):x\in
  I\}$ is the solution of the following linear scalar equation
  \begin{eqnarray}\label{eq:1.29}
  \frac{dW(x)}{dx}=(TrA(x))W(x),x\in I\,,\, W(x)=\det\parallel \hat{z}_1(x),...,\hat{z}_n(x)\parallel
  \end{eqnarray}
  where
  $$TrA(x)=\mathop{\sum}\limits_{i=1}^{n}a_{ii}(x) \,\,\,\hbox{and}\,\, \, W(x)=W(x_{0})\exp\int_{x_{0}}^{x}[Tr A(t)]dt,x\in I$$ for some fixed $x_{0}\in
  I$.
  \end{theorem}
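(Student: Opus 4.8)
The plan is to regard the $n$ given solutions as the columns of a single matrix-valued function and to differentiate the determinant entrywise. Set $Z(x)=\|\hat{z}_1(x),\ldots,\hat{z}_n(x)\|$, so that $W(x)=\det Z(x)$; writing $Z_{ji}(x)=\hat{z}_i^j(x)$ for the entry in row $j$ and column $i$, the fact that every column solves \eqref{eq:1.17} reads $Z_{ji}'(x)=\sum_{k}a_{jk}(x)Z_{ki}(x)$, i.e. $Z'=AZ$. The first step is the classical product-rule formula for the derivative of a determinant: differentiating $W=\sum_{\sigma}\operatorname{sgn}(\sigma)\prod_{l}Z_{\sigma(l),l}$ gives $W'(x)=\sum_{i=1}^{n}\det M^{(i)}(x)$, where $M^{(i)}(x)$ is the matrix obtained from $Z(x)$ by replacing its $i$-th column $\hat{z}_i$ with $\hat{z}_i'=A\hat{z}_i$ and keeping the other columns unchanged.

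The second step evaluates each $\det M^{(i)}$. Substituting $(M^{(i)})_{ji}=\sum_{k}a_{jk}Z_{ki}$ into the Leibniz expansion of $\det M^{(i)}$ and grouping the permutations according to the value $j=\sigma(i)$ produces the cofactors of $Z$: one obtains $\det M^{(i)}=\sum_{j,k}a_{jk}\,Z_{ki}\,C_{ji}(Z)$, with $C_{ji}(Z)$ the $(j,i)$ cofactor. Summing over $i$ and using the two standard cofactor identities $\sum_{i}Z_{ji}C_{ji}(Z)=\det Z$ (Laplace expansion along row $j$) and $\sum_{i}Z_{ki}C_{ji}(Z)=0$ for $k\neq j$ (a determinant with two equal rows), the double sum collapses to $\sum_{j}a_{jj}(x)\det Z(x)$. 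Hence $W'(x)=(\operatorname{Tr}A(x))\,W(x)$ for all $x\in I$, which is exactly \eqref{eq:1.29}; note that no nondegeneracy assumption on the $\hat{z}_i$ was needed.

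For the last step, \eqref{eq:1.29} is a scalar linear homogeneous equation with continuous coefficient $t\mapsto\operatorname{Tr}A(t)$, so the integral representation of its solution — the same one used in the proof of Lemma \ref{le:1.1}, now with vanishing forcing term — gives $W(x)=W(x_0)\exp\int_{x_0}^{x}[\operatorname{Tr}A(t)]\,dt$ for any fixed $x_0\in I$. The only place requiring real care is the cofactor bookkeeping in the second step, namely keeping the roles of rows and columns apart and matching the two cofactor sums correctly; the rest is routine.

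A conceptually shorter alternative, at the cost of a case distinction, is available through Lemma \ref{le:1.2}. By Theorem \ref{th:1.1}, either the $\hat{z}_i$ are linearly dependent as functions, in which case $W\equiv 0$ and \eqref{eq:1.29} holds trivially, or $Z(x)$ is invertible for every $x\in I$; in the latter case $\{\hat{z}_1(x),\ldots,\hat{z}_n(x)\}$ is a basis of $\R^n$, so $A\hat{z}_i=\sum_{m}(Z^{-1}AZ)_{mi}\hat{z}_m$, and multilinearity together with the alternating property of the determinant yields $\det M^{(i)}=(Z^{-1}AZ)_{ii}W$ at once, whence $W'=\operatorname{Tr}(Z^{-1}AZ)\,W=\operatorname{Tr}(A)\,W$.
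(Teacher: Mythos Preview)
Your proof is correct and follows essentially the same strategy as the paper: write $Z'=AZ$, differentiate $\det Z$ as a sum of determinants with one row/column replaced by its derivative, and simplify. The one noteworthy difference is that the paper expands by \emph{rows} rather than columns: writing $Z$ via its row vectors $\varphi_i$, the relation $Z'=AZ$ reads $\varphi_i'=\sum_j a_{ij}\varphi_j$, so each row-derivative is already a linear combination of the rows of $Z$ itself, and multilinearity plus the alternating property give $W_i=a_{ii}W$ immediately for each $i$, without cofactors and without needing to sum over $i$ before the collapse occurs. Your column-wise version is equally valid but has to route through the cofactor identities because $A\hat z_i$ is not a priori a combination of the columns; your closing alternative via $Z^{-1}AZ$ recovers exactly that shortcut in the invertible case.
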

  \begin{proof}
  By definition
  $\frac{d\tilde{z}_{i}(x)}{dx}=A(x)\tilde{z}_{i}(x),x\in I,i\in
  \{1,...,n\}$ and the matrix $Z(x)=\parallel
  \tilde{z}_{1}(x)...\tilde{z}_{n}(x)\parallel$ satisfies
 \begin{eqnarray}\label{eq:1.30}
 \frac{dZ(x)}{dx}=A(x)Z(x),x\in I
 \end{eqnarray}
 Rewrite $Z(x)$ using row
 vectors
  \begin{eqnarray}\label{eq:1.31}
  Z(x)=\left(
               \begin{array}{c}
                 \varphi_{1}(x) \\
                . \\
                 . \\
                 .\\
                 \varphi_{n}(x) \\
               \end{array}
             \right)
  ,\alpha\in I
  \end{eqnarray}
  and from \eqref{eq:1.29} we get easily
    \begin{eqnarray}\label{eq:1.32}
    \frac{d\varphi_{i}(x)}{dx}=\mathop{\sum}\limits_{j=1}^{n}a_{ij}(x)\varphi_{j}(x),i\in
  \{1,...,n\},x\in I
  \end{eqnarray}
  where $(a_{i1}(x),...a_{in}(x))$ stands for
  the row $"i"$ of the involved matrix\\ $A(x)=(a_{ij}(x))_{i,j\in
  \{1,...,n\}}.$. On the other hand, the standard rule of derivation
  for a $\det\,Z(x)=W(x)$ gives us
 \begin{eqnarray}\label{eq:1.33}
 \frac {dW(x)}{dx}=\mathop{\sum}\limits_{i=1}^{n}W_{i}(x),\,
 \hbox{ where}\, \,W_{i}(x)= \det\left(
                        \begin{array}{c}
                          \varphi_{1}(x) \\
                          . \\
                          . \\
                          . \\
                         \varphi_{i-1}(x) \\
                          \frac{d\varphi_{i}(x)} {dx}\\
                          \varphi_{i+1}(x) \\
                          . \\
                          . \\
                          . \\
                          \varphi_{n}(x) \\
                        \end{array}
                      \right)
  \end{eqnarray}
  and using \eqref{eq:1.31} we obtain
  \begin{equation}\label{eq:1.13a}
  W_{i}(x)=a_{ii}(x)W(x)\,,\,\,i\in\{1,...,n\}
  \end{equation}
  Combining
  \eqref{eq:1.32} and \eqref{eq:1.13a} we get the scalar equation
  \begin{equation}\label{eq:1.14a}
  \frac{dW(x)}{dx}=(\mathop{\sum}\limits_{i=1}^{n}a_{ii}(x))W(x),x\in I
  \end{equation}
  which is the conclusion of theorem.The proof is complete.
  \end{proof}
  We shall conclude by recalling the constant variation formula used
  for integral representation of a solution satisfying a linear
  system.
  \subsection{ Constant Variation Formula}
  \begin{theorem}\label{th:1.3}(Constant variation formula)
  We are given continuous mappings
  $$A(x):I\rightarrow M_{n\times n}\,\,\hbox{and}\,\, b(x):I\rightarrow \mathbb{R}^n$$
  Let $\{\mathbb{C}(x;x_{0}),x\in I\}$ be the
  fundamental matrix \index{Mundamental!matrix} of solutions associated with the linear
  homogenous system
  \begin{equation}\label{eqw.15}
  \frac{dz}{dx}=A(x)z, \,x\in I, \,z\in \mathbb{R}^n
  \end{equation}
 Let $\{z(x),x\in I\}$be the unique solution of the linear system
 with a Cauchy condition $z(x_{0})=z_{0}$
\begin{equation}\label{eqw.16}
\left\{
  \begin{array}{ll}
    \frac{dz}{dx}=&A(x)z+b(x),x\in I\ \\
   z(x_{0})= &z_{0}
  \end{array}
\right.
\end{equation}
Then
\begin{equation}\label{eq:1.34}
z(x)=\mathcal{C}(x;x_{0})[z_{0}+\int_{x_{0}}^x\mathcal{C}^{-1}(t;x_{0})b(t)dt],x\in
I
\end{equation}
\end{theorem}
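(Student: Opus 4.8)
The plan is to verify directly that the right-hand side of \eqref{eq:1.34} defines a function satisfying the Cauchy problem \eqref{eqw.16}, and then to invoke the uniqueness part of Theorem \ref{th:1.1} to conclude that it must coincide with $z(x)$. This reduces everything to a differentiation and an evaluation at $x_0$.

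First I would set $w(x) = \mathcal{C}(x;x_{0})\bigl[z_{0}+\int_{x_{0}}^{x}\mathcal{C}^{-1}(t;x_{0})b(t)\,dt\bigr]$ for $x \in I$; note that $w$ is well-defined and continuously differentiable since $\mathcal{C}(\cdot;x_0)$ is continuously differentiable with everywhere invertible values by \eqref{eq:1.21}, so $\mathcal{C}^{-1}(\cdot;x_0) = D(\cdot;x_0)$ is continuous and the integrand is continuous. Evaluating at $x = x_0$, the integral vanishes and $\mathcal{C}(x_0;x_0) = I_n$ by \eqref{eq:1.20}, so $w(x_0) = z_0$; the Cauchy condition is checked. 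Then I would differentiate using the product rule: writing $v(x) = z_0 + \int_{x_0}^x \mathcal{C}^{-1}(t;x_0)b(t)\,dt$, we have $w = \mathcal{C}(\cdot;x_0)\,v$, hence $\frac{dw}{dx} = \frac{d\mathcal{C}(x;x_0)}{dx}\,v(x) + \mathcal{C}(x;x_0)\frac{dv}{dx}$. For the first term I substitute $\frac{d\mathcal{C}(x;x_0)}{dx} = A(x)\mathcal{C}(x;x_0)$ from \eqref{eq:1.20}, which gives $A(x)\mathcal{C}(x;x_0)v(x) = A(x)w(x)$. For the second term, the fundamental theorem of calculus gives $\frac{dv}{dx} = \mathcal{C}^{-1}(x;x_0)b(x)$, so $\mathcal{C}(x;x_0)\frac{dv}{dx} = b(x)$. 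Adding, $\frac{dw}{dx} = A(x)w(x) + b(x)$ for all $x \in I$.

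Thus $w$ solves \eqref{eqw.16}. Since Theorem \ref{th:1.1} guarantees that the Cauchy problem \eqref{eqw.16} has a unique solution on $I$, and $z$ is by hypothesis that solution, we conclude $z(x) = w(x)$ for all $x \in I$, which is exactly \eqref{eq:1.34}.

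There is no serious obstacle here; the only point requiring a little care is the differentiation of the matrix–vector product and the justification that $\mathcal{C}^{-1}(\cdot;x_0)$ is differentiable (or at least continuous, which suffices for applying the fundamental theorem of calculus to the integral term) — and this is already supplied by \eqref{eq:1.21}–\eqref{eq:1.22}. One could alternatively derive the formula constructively by the "variation of constants" ansatz $z(x) = \mathcal{C}(x;x_0)c(x)$, plugging into \eqref{eqw.16} to obtain $\mathcal{C}(x;x_0)\frac{dc}{dx} = b(x)$, hence $c(x) = z_0 + \int_{x_0}^x \mathcal{C}^{-1}(t;x_0)b(t)\,dt$; but the verification-plus-uniqueness route is shorter and logically cleaner given that Theorem \ref{th:1.1} is already available.
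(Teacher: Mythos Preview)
Your proof is correct and follows essentially the same approach as the paper: define the right-hand side of \eqref{eq:1.34}, verify by direct differentiation (using \eqref{eq:1.20} and the fundamental theorem of calculus) that it satisfies \eqref{eqw.16} with the correct initial value, and then invoke the uniqueness in Theorem~\ref{th:1.1}. The paper's proof is simply a one-line sketch of this same argument; your version supplies the details that the paper leaves to the reader.
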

\begin{proof} By definition $\{z(x),x\in I\}$
fulfilling \eqref{eq:1.34} is a solution of the linear system \eqref{eqw.16} provided a
straight derivation calculus is used. In addition using $z(x)=z_{0}$
and uniqueness of the Cauchy problem
solution (see Theorem \ref{th:1.1}) we get the conclusion.
\end{proof}
\begin{remark}
 The constant variation formula expressed in \eqref{eq:1.34} suggest that the
 general solution of the linear system \eqref{eqw.16} can be written as a sum
 of the general solution $\mathbb{C}(x;x_{0})z_{0},x\in I$ fulfilling linear
 homogeneous system\index{Homogeneous!system} \eqref{eqw.15} and a particular solution \eqref{eqw.16} (see $
 z_{0}=0$) given by
\begin{equation}\label{eq:1.35}
\mathbb{C}(x;x_{0})\int_{x_{0}}^x\mathcal{C}^{-1}(t;x_{0})b(t)dt,x\in
 I \end{equation}
 \end{remark}
\begin{remark}
 There is no real obstruction for defining solution of a linear
 system of integral equation
\begin{equation}\label{eq:1.36}
z(x)=z_{0}+\int_{a}^x[A(t)z(t)+b(t)]dt,x\in [a,b]\subseteq \mathbb{R},z\in
 \mathbb{R}^n
\end{equation}
  where the matrix $A(x)\in M_{n\times n}$ and the vector $b(x)\in
 \mathbb{R}^n$ are piecewise continuous mappings of $x\in[a,b] $ such that
$A(x)$ and $b(x)$ are continuous functions on $[x_{i},x_{i+1})$ admitting bounded left limits
\begin{equation}
 A(x_{i+1}-0)=\mathop{lim}\limits_{x\rightarrow x_{i+1}}A(x),b(x_{i+1}-0)=\mathop{lim}\limits_{x\rightarrow
 x_{i+1}}b(x)\hbox{for each} i\in \{1,2,...,N-1\}
 \end{equation}
 Here $a=x_{0}<x_{1}<...<x_{N}=b$ is an increasing sequence such
 that $[a,b]=[x_{0},x_{N}]$. Starting with an arbitrary Cauchy condition $z(x_{0})=z_{0}\in \mathbb{R}^n$
 , we construct the corresponding solution $z(x),x\in [a,b]$, as a
 continuous mapping which is continuously differentiable on each
 open interval $(x_{i},x_{i+1}),i\in \{1,2,...,N-1\}$ such that the
 conclusions of the above given result are preserved. We have to take
 the case of these equations implying derivation
 $\frac{dz(x;x_{0},z_{0})}{dx}$ of the solution and to mention that
 they are valid on each open interval $(x_{i},x_{i+1}),i\in
 \{1,2,...,N-1\}$ where the matrix $A(x)$ and the vector b(x) are
 continuous functions. Even more, admitting that the components of the matrix $\{A(x):x\in
 [a,b]\}$ and vector $\{b(x):x\in
 [a,b]\}$are complex valued functions \index{Complex!valued functions}then the unique Cauchy problem
 solution of the linear system \eqref{eq:1.36} is defined as $\{z(x)\in \mathbb{C}^n:x\in [a,b],z(a)=z_{0}\in
 \mathbb{C}^n\}$ satisfying \eqref{eq:1.36} $\forall x\in [a,b]$.
 \end{remark}
 \section{Exercises and Some Problem Solutions}
 \subsection[Linear Constant Coefficients Equations]{Linear Constant Coefficients Equations(Fundamental System of
 Solutions\index{Fundamental!system of solutions} }
 \textbf{$(P_{1})$}. Compute the fundamental matrix of solutions $C(x,0)$  for a linear constant coefficients system
\begin{equation}\label{eq:1.37}
\frac{dz(x)}{dx}=Az,x\in \mathbb{R},z\in \mathbb{R}^n,A\in M_{n\times n}
\end{equation}
\textbf{Solution}
 Since A is a constant matrix \index{Matrix!constant}we are looking for the fundamental matrix\index{Fundamental!matrix}  of solutions
 $C(x)=C(x,0)$,\,where $x_{0}=0$ is fixed and as it is mentioned in
Lemma \eqref{le:1.2} (see equation \eqref{eq:1.19}). We need to solve the
following matrix
 equation
\begin{equation}\label{eq:1.38}
\left\{
  \begin{array}{ll}
    \frac{dC(x)}{dx}=&AC(x),x\in \mathbb{R}\ \\
   C(0)= &I_{n}
  \end{array}
\right.
\end{equation}
The computation of $\{C(x):x\in \mathbb{R}\}$ relies on the fact
that the unique matrix solution of \eqref{eq:1.37} is given by the following
matrix
exponential series
\begin{equation}\label{eq:1.39}
\exp Ax=I_{n}+\frac{x}{1!}A+...+\frac{x^k}{k!}A^k+...\end{equation}
where the uniform convergence of the matrix series on compact
intervals is a direct consequence of comparing it with a numerical
convergent series. By direct derivation we get
\begin{equation}\label{eq:1.40}
\left\{
  \begin{array}{ll}
    \frac{d(\exp Ax)}{dx}=&A+\frac{x}{1!}A^{2}+...+\frac{x^k}{k!}A^{k+1}+...\ =A(\exp\,Ax),\forall x\in \mathbb{R}\\
   (\exp Ax)_{x=0}= &I_{n}
  \end{array}
\right.\end{equation}
and it shows that $C(x)=\exp\,Ax,\forall x\in \mathbb{R}$. It
suggest the first method of computing C(x) by using partial sum of
the series \eqref{eq:1.39}\\
\textbf{$(P_2)$}. Find a fundamental system of
solutions(basis of S) for the
linear constant coefficients system
\begin{equation}\label{eq:1.41}
\frac{dz}{dx}=Az,x\in \mathbb{R},z\in \mathbb{R}^n,A\in M_{n\times n}\end{equation}
using the eigenvalues\index{Eigen!values} $\lambda\in \sigma(A)$(spectrum of A).\\
\textbf{Case I}\\ The characteristic polynomial $det(A-\lambda
I_{n})=P(\lambda)$ has n real distinct roots
$\{\lambda_{1},...\lambda_{n}\}\subseteq \mathbb{R},i.e\,
\sigma(A)=\{\lambda_{1},...\lambda_{n}\},\lambda_{i}\neq\lambda_{j},i\neq
j$. Let $v_{i}\neq0,v_{i}\in \mathbb{R}^n$ be an eigenvector\index{Eigen!vector}
corresponding to the
eigenvalue $\lambda_{i}\in \sigma(A)$, i.e
\begin{equation}\label{eq:1.42}
Av_{i}=\lambda_{i}v_{i},i\in \{1,...,n\}\end{equation}
By definition ,$\{v_{1},..,v_{n}\}\subseteq \mathbb{R}^n$is a basis
in $\mathbb{R}^n$
and define the vector functions
\begin{equation}\label{eq:1.43}
\hat{z}_{i}(x)=(\exp\lambda_{i}x)v_{i},x\in \mathbb{R},i\in
\{1,...,n\}\end{equation}
Each $\{\hat{z}_{i}(x):x\in \mathbb{R}\}$ satisfies
\eqref{eq:1.40} and $\{\hat{z}_{1}(x),...,\hat{z}_{n}(x)\}$ is a basis of the
linear space S consisting of all solutions satisfying
\eqref{eq:1.40}. Therefore,any solution$\{z(x):x\in I\}$ of the system \eqref{eq:1.40} can be
found as a linear combination of $\hat{z}_{1}(.),...,\hat{z}_{n}(.)$
and $\{\alpha_{1},...,\alpha_{n}\}$ from
$z(x)=\sum_{i=i}^n\alpha_{i}\tilde{z}_{i}(x),x\in \mathbb{R}$ will
be determined by imposing Cauchy condition $z(0)=z_{0}\subseteq \mathbb{R}^n$.\\
\textbf{Case II}\\
The characteristic polynomial $P(\lambda)=\det(A-\lambda I_{n})$ has
$n$ complex numbers\index{Complex!numbers} as roots
$\{\lambda_{1},...,\lambda_{n}\}=\sigma(A),\lambda_{i}\neq\lambda_{j},i\neq
j$. As in the real case we define $n$ complex valued solutions\index{Complex!valued solutions}
satisfying \eqref{eq:1.40}
\begin{equation}\label{eq:1.44}
\hat{z}_{j}(x)=(\exp\lambda_{j}x)v_{j},j\in \{1,...,n\}\end{equation}
where $v_{j}\subseteq \mathbb{R}^n$ is an eigenvector\index{Eigen!vector} corresponding to a
real eigenvalue $\lambda_{j}\in \sigma(A)$ and $v_{j}\in
\mathbb{C}^n$ is a complex eigenvector\index{complex!eigen vector} corresponding to the complex
eigenvalue\index{Eigen!value} $\lambda_{j}\in \sigma(A)$  such that
$v\bar{}_{j}=a_{j}-ib_{j},(v_{j}=a_{j}+ib_{j})$ is the eigenvector
corresponding to the eigen value $\overline{\lambda}_{j}=\alpha_{j}-i\beta_{j},(\lambda_{j}=\alpha_{j}+i\beta_{j},\beta_{j}\neq
0)$. From $\{\hat{z}_{1}(x),...,\hat{z}_{n}(x);x\in \mathbb{R}\}$
defined in \eqref{eq:1.44} we construct another n real solutions
$\{\hat{z}_{1}(x),...,\hat{z}_{n}(x);x\in \mathbb{R}\}$ as
follows $(n=m+2k)$. The first m real solutions
\begin{equation}\label{eq:1.45}
\hat{z}_{i}(x)=(\exp\lambda_{i}x)v_{i},i\in \{1,...,m\}\end{equation}
when $\{\lambda_{1},...,\lambda_{m}\}\subseteq \sigma(A)$ are the
real eigenvalues\index{Eigen!values} of $A$ and another $2k$ real solutions
\begin{equation}\label{eq:1.46}
\left\{
  \begin{array}{ll}
    \hat{z}_{m+j}(x)=&\hbox{Re}\, \hat{z}_{m+j}(x)=\frac{ \hat{z}_{m+j}(x)+ \hat{z}_{m+j+k}(x)}{2}\ \\
    \hat{z}_{m+k+j}(x)= &\hbox{Im}\, \hat{z}_{m+j}(x)=\frac{ \hat{z}_{m+j}(x)-
    \hat{z}_{m+k+j}(x)}{2i}
  \end{array}
\right.\end{equation}
for any $j\in \{1,...,k\}$. Here
$\{\lambda_{m+1},...,\lambda_{m+2k}\}\subseteq \sigma(A)$ are the
complex eigenvalues such that $\lambda_{m+j+k}=\lambda\bar{}_{m+j}$
for any $j\in \{1,...,k\}$. Since
$\{\hat{z}_{1}(x),...,\hat{z}_{n}(x);x\in \mathbb{R}\}$ are linearly
independent \index{Independent!linearly} over reals and the linear transformation used in \eqref{eq:1.44}
and \eqref{eq:1.45} is a nonsingular one, we get
$\{\hat{z}_{1}(x),...,\hat{z}_{n}(x);x\in \mathbb{R}\}$as a
basis of S.\\
\textbf{Case III: General Case}\\
$\sigma(A)=\{\lambda_{1},...\lambda_{d}\}\subseteq \mathbb{R}$,
where the eigenvalue\index{Eigen!value} $\lambda_{j}$ has a multiplicity $n_{j}$ and
$n=n_{1}+...+n_{d}$.In this case the canonical Jordan form of the
matrix $A$ is involved which allows to construct a basis of S using an
adequate transformation $T:\mathbb{R}^n\rightarrow \mathbb{R}^n$. It
relies on the factors decomposition of the characteristic polynomial
$\det(A-\lambda
I_{n})=P(\lambda)=(\lambda-\lambda_{1})^{n_{1}}...(\lambda-\lambda_{d})^{n_{d}}$
and using Caylay-Hamilton theorem we get
\begin{equation}\label{eq:1.47}
\left\{
  \begin{array}{ll}
  & P(A)=0 \hbox{(null  matrix)}\\
   &(A-\lambda_{1}I_{n})^{n_{1}}...(A-\lambda_{d}I_{n})^{n_{d}}= 0 \hbox{(null mapping} :\mathbb{C}^{n}\rightarrow\mathbb{C}^{n})
  \end{array}
\right.
\end{equation}
The equations \eqref{eq:1.46} are essential for finding a $n\times n$
nonsingular matrix\index{Matrix!nonsingular} $Q=\parallel v_{1}...v_{n}\parallel$ such
that the linear transformation $z=Qy$ leads us to a similar matrix\index{Matrix!similar}
$B=Q^{-1}AQ(B\sim A)$ for which the corresponding linear system
\begin{equation}\label{eq:1.48}
\frac{dy}{dx}=By,B=\diag(B_{1},...,B_{d}),\dim B_{j}=n_{j},j\in
\{1,...,d\}
\end{equation}
has a fundamental matrix of solution.
$$Y(x)=\exp Bx=\diag(\exp B_{1}x,...,\exp B_{d}x)$$
Here
$$B_{j}=\lambda_{j}I_{n_{j}}+C_{n_{j}}\,, \hbox{and} \,\left(
                                                 \begin{array}{cccccc}
                                                   0 & 0 & 0 & . & .& 0 \\
                                                   1 & 0 & 0 & . & . & 0 \\
                                                   0 & 1& 0& .& .    &  o\\
                                                   . & . & . & . & . & . \\
                                                   0 & 0 & 0 &. & 1 & 0 \\
                                                 \end{array}
                                               \right)=C_{n_{j}}$$\\
 has the nilpotent property $(C_{n_{j}})^{n_{j}}=0$ (null matrix)\index{Matrix!null}. The general case can be computed in a more attractive way when the
 linear constant coefficients system comes from n-th order scalar
 differential equation . In this respect consider a linear n-th
 order scalar differential equation
\begin{equation}\label{eq:1.49}
y^{n}(x)+a_{1}y^{(n-1)}(x)+...+a_{n}y(x)=0,x\in \mathbb{R}
\end{equation}
Denote $z(x)=(y(x),y^{(1)}(x),...,y^{(n-1)}(x))\in \mathbb{R}^n$  and
\eqref{eq:1.48} can be written as a linear system for the unknown z.
\begin{equation}\label{eq:1.50}
\frac{dz}{dx}=A_{0}z,z\in \mathbb{R}^n,x\in \mathbb{R}\end{equation}
 where $$A_{0}=\left(
                \begin{array}{c}
                  b_{1} \\
                  . \\
                  . \\
                  . \\
                  b_{n} \\
                \end{array}
              \right)
 $$
   $$b_{1}=(0,1,0,...,0),...b_{n-1}=(0,...,o,1) \hbox{and}
 \,b_{n}=(-a_{n},...,-a_{1})$$
The characteristic polynomial associated with $A_{0}$ is given by
\begin{equation}\label{eq:1.51}
P_{0}(\lambda)=det(A_{0}-\lambda
 I_{n})=\lambda^n+a_{1}\lambda^{n-1}+...+a_{n-1}\lambda+a_{n}
 \end{equation}
  where\,$a_{i}\in \mathbb{R},i\in\{1,...,n\}$ are given in \eqref{eq:1.49}. In this case, a fundamental system of solutions \index{Fundamental!system of solutions} (basis) $\{y_{1}(x),...,y_{n}(x):x\in
 \mathbb{R}\}$ for \eqref{eq:1.48} determines a basis $\{z_{1}(x),...,z_{n}(x):x\in \mathbb{R}\}$) for \eqref{eq:1.49}, where
 $$z_{i}(x)=column\{y_{i}(x),y_{i}^{(1)}(x)...,y_{i}^{(n-1)}(x)\},i\in
 \{1,...,n\}$$
 \noindent In addition, a basis $\{y_{1}(x),y_{2}(x)...,y_{n}(x)\}$
 for the scalar equation \eqref{eq:1.48} is given by
 $\textit{F}=\bigcup_{j=1}^{d}\textit F_{j}$ where
\begin{equation}\label{eq:1.52}
\textit{F}_{j}=\{(\exp\lambda_{j}x),x(\exp\lambda_{j}x),...,x^{n_{j}-1}(\exp\lambda_{j}x)\}
\end{equation}
 if the real $\lambda_{j}\in \sigma(A_{0})$ has the multiplicity degree
 $n_{j}$, and
\begin{equation}\label{eq:1.53}
\textit F_{j}=\textit F_{}j(cos)\bigcup
 \textit F_{j}(sin), \,\hbox{if the complex}\,\lambda_{j}\in
 \sigma(A_{0})\,,\,\lambda_{j}=\alpha_{j}+i\beta_{j}
 \end{equation}
 has the multiplicity degree $n_{j}$, where
 $$\textit
 F_{j}(cos)=\{(\exp\alpha_{j}x),x(\exp\alpha_{j}x),...,x^{n_{j}-1}(\exp\alpha_{j}x)\}(cos\beta_{j}x),x\in\mathbb{R}$$
$$\textit
F_{j}(sin)=\{(\exp\alpha_{j}x),x(\exp\alpha_{j}x),...,x^{n_{j}-1}(\exp\alpha_{j}x)\}(sin\beta_{j}x),x\in\mathbb{R}$$
\subsection{ Some Stability Problems and Their Solution} \textbf{($P_{1}$)}. $\hat{z}(x;z_{0})=[\exp Ax]z_{0},x\in [0,\infty)$, be the solution of the
\begin{equation}\label{eq:1.54}
\frac{dz}{dx}=Az,x\in [0,\infty),z(0)=z_{0}\in
\mathbb{R}^n
\end{equation}
We say that $\{\hat{z}(x,z_{0}):x\geq 0\}$is exponentially stable\index{Exponential !stable} if
\begin{equation}\label{eq:1.55}
\mid \hat{z}(x,z_{0})\mid \leq \mid z_{0}\mid (\exp-\gamma x),\forall
x\in [0,\infty),z_{0}\in \mathbb{R}^{n}
\end{equation}
where the constant $\gamma> 0$  does not depend on $z_{0}$. Assume that
\begin{equation}\label{eq:1.56}
\sigma (A+A^t)=\{\lambda_{1},...,\lambda_{d}\}\,\hbox{satisfies}\,
\lambda_{i}< 0 \,\hbox{for any} \,i\in\{1,...,d\}
\end{equation}
Then $\{\hat{z}(x,z_{0}):x\in [0.\infty)\}$is exponentially stable\index{Exponential !stable}\\
\textbf{Solution of $P_{1}$}\\
For each $z_{0}\in \mathbb{R}^n$, the corresponding Cauchy problem
solution  $\hat{z}(x,z_{0})=(\exp Ax)\\ z_{0},x\geq 0$, satisfies \eqref{eq:1.53} and
in addition,the scalar function $\{\varphi(x)=\mid
\hat{z}(x,z_{0})\mid^2,x\geq 0\}$ fulfils the following
differential inequality
\begin{equation}\label{eq:1.57}
\frac{d\varphi(x)}{dx}=\langle(A+A^t)\hat{z}(x,z_{0}),\hat{z}(x,z_{0})\rangle\leq
2w\varphi(x), \forall x\geq 0
\end{equation}
where
$$2w=\max\{\lambda_{1},...,\lambda_{d}\}< o$$provided the condition
\eqref{eq:1.55} is assumed. A simple explanation of this statement comes from
the diagonal representation of the symmetric matrix $(A+A^{t})$
when an orthogonal transformation $T:\mathbb{R}^{n}\rightarrow \mathbb{R}^{n},z=Ty (T=T^{-1})$is performed.We get
\begin{equation}\label{eq:1.58}
\langle (A+A^t)\hat{z}(x,z_{0}),\hat{z}(x,z_{0})\rangle=\langle
[T^{-1}(A+A^t)T]\hat{y}(x;z_{0}),\hat{y}(x;z_{0})\rangle
\end{equation}
where
$$T^{-1}(A+A^t)T=\diag(\nu_{1},...,\nu_{n})$$
and
$$\nu_{i}\in\sigma(A+A^t)\,\,\hbox{ for each}\,\,\{1,...,n\}$$
Using \eqref{eq:1.57}we see easily that
\begin{equation}\label{eq:1.59}
\langle
(A+A^t)\hat{z}(x,z_{0}),\hat{z}(x,z_{0})\rangle\leq[\max\{\lambda_{1},...\lambda_{d}\}]\mid\hat{z}(x,z_{0})\mid^{2}=2w\varphi(x)
\end{equation}
where $$2w=\max\{\lambda_{1},...,\lambda_{d}\}< o$$
and
$$\varphi(x)=\mid \hat{z}(x;,z_{0})\mid^{2}=\langle
T\hat{y}(x;z_{0}),T\hat{y}(x;z_{0})\rangle=\langle
\hat{y}(x;z_{0}),T^tT\hat{y}(x;z_{0})\rangle=\mid
\hat{y}(x;z_{0})\mid^2 $$
 are used. The inequality \eqref{eq:1.58} shows that
\eqref{eq:1.56} is valid and denoting $$0\geq
\beta(x)=\frac{d\varphi(x)}{dx}-2w\varphi(x),x\geq 0$$
we rewrite \eqref{eq:1.56} as a scalar differential equation
\begin{equation}\label{eq:1.60}
\left\{
       \begin{array}{ll}
         \frac{d\varphi(x)}{dx}= & 2w\varphi(x)+\beta(x),x\geq 0 \\
         \varphi(0)= & \mid z_{0}\mid\\
       \end{array}
     \right.
\end{equation}
where $\beta(x)\leq 0,x\in [0,\infty)$, is a continuous function.The
unique solution of \eqref{eq:1.59} can be represented by
\begin{equation}\label{eq:1.61}
\varphi(x)=[\exp 2wx][  \mid
z_{0}\mid^2+\int_{0}^x(\exp-2wt)\beta(t)dt]
\end{equation}
 and using $(\exp-2wt)\beta(x)\leq 0$ for any $t\geq 0$ we get
\begin{equation}\label{eq:1.62}
\left\{
         \begin{array}{ll}
           \varphi(x)\leq  & [\exp 2wx]\mid z_{0}\mid^2 \\
           \mid \hat{z}(x,z_{0})= & [\varphi(x)]^\frac{1}{2}\leq
[\exp wx]\mid z_{0}\mid\\
         \end{array}
       \right.
\end{equation}
for any $x\geq 0$. It shows that the exponential stability
expressed
in \eqref{eq:1.54} is valid when the condition \eqref{eq:1.55} is assumed.\\
\textbf{$(P_{2})$} (Lyapunov exponent associated with linear
system and piecewise continuous solutions)
Let $\{y(t,x)\in \mathbb{R}^n:t\geq 0\}$ be the unique solution of
the following linear system of differential equations
\begin{equation}\label{eq:1.63}
\left\{
         \begin{array}{ll}
           \frac{dy(t)}{dt}= & f(y(t),\hat{\sigma} (t)),t\geq 0,y(t)\in\mathbb{R}^n \\
           y(0)= & x\in\mathbb{R}^n\\
         \end{array}
       \right.
\end{equation}
where the vector field $f(y,\sigma):\mathbb{R}^n \times \sum
\rightarrow \mathbb{R}^n$ is a continuous mapping of $(y,\sigma)\in
\mathbb{R}^n\times \sum$ and linear with respect to $y\in
\mathbb{R}^n$
\begin{equation}\label{eq:1.64}
f(y,\sigma)=A(\sigma)y +a(\sigma),A(\sigma)\in M_{n\times
n},a(\sigma)\in\mathbb{R}^{n}\,.
\end{equation}
Here $\hat{\sigma}(t):[0,\infty)\rightarrow \sum (\hbox{bounded set})\subseteq
\mathbb{R}^d$ is an arbitrary piecewise
continuous function satisfying
\begin{equation}\label{eq:1.65}
\sigma\hat{}(t)=\sigma\hat{}(t\hat{}_{k}),t\in
[t\hat{}_{k},t\hat{}_{k+1}),k\geq0
\end{equation}
where $0=\hat{t_{0}}\leq\hat{t_{1}}...\leq\hat{t_{k}} $ is an
increasing sequence with $\mathop{lim}\limits_{k\rightarrow
\infty}\hat{t_{k}}=\infty$. The analysis will be done around a
piecewise constant trajectory $\hat{y}(t):[0,\infty)\rightarrow Y
\hbox{(bounded set)} \subseteq R^n$
such that
\begin{equation}\label{eq:1.66}
\hat{\lambda}(t)=(\hat{y}(t),\hat{\sigma}(t)):[o,\infty)\rightarrow
Y\times \sum =\Lambda \subseteq \mathbb{R}^n\times \mathbb{R}^d
\end{equation}
satisfies $\hat{\lambda}(t)=\hat{\lambda}(\hat{t}_{k}),t\in
[\hat{t}_{k},\hat{t}_{k+1}),k\geq 0$, where the increasing sequence
$\{\hat{t}_{k}\}_{k\geq0}$ is fixed in \eqref{eq:1.65}. Define a linear vector
field $g(z,\lambda)$ by
\begin{equation}\label{eq:1.67}
g(z,\lambda)=f(z+\nu;\sigma)=A(\sigma)z+f(\lambda),\lambda=(\nu,\sigma)\in
\Lambda,z\in \mathbb{R}^n\,\hbox{where}\,f(y;\sigma)
\end{equation}
is given in \eqref{eq:1.63}. Let the continuous mapping $\{z(t,x):t\geq 0\}$ be the unique
solution of the following differential equation
\begin{equation}\label{eq:1.68}
\left\{
       \begin{array}{ll}
         \frac{dz(t)}{dt}= & g(z(t);\hat{\lambda}(t)),t\geq 0 \\
         z(0) & =x
       \end{array}
     \right.
\end{equation}
where the piecewise continuous function $\{\hat{\lambda}(t),t\geq
0\}$ is given in \eqref{eq:1.65}. We may and do associate the following
piecewise continuous mapping $\{z(t,x):t\geq 0\}$ satisfying the
following linear system with jumps
\begin{equation}\label{eq:1.69}
\left\{
       \begin{array}{ll}
         \frac{d\hat{z}(t)}{dt}= & f(\hat{z}(t),\hat{\sigma}(t)),t\in
[\hat{t}_{k},\hat{t}_{k+1}),\hat{z}(t)\in\mathbb{R}^n\\
        \hat{z}(\hat{t}_{k})=
&z(\hat{t}_{k},x)+\hat{y}(t_{k}),k\geq 0
       \end{array}
     \right.
\end{equation}
where $\{z(t,x):t\geq 0\}$is the continuous mapping satisfying
\eqref{eq:1.67}. It is easily seen that the piecewise continuous mapping
$\{\hat{z}(t,x):t\geq 0\}$ can be decomposed as follows
\begin{equation}\label{eq:1.70}
\hat{z}(t,x)=z(t,x)+\hat{y}(t,x),t\geq 0,x\in \mathbb{R}^n
\end{equation}
and the asymptotic behavior\\
$\mathop{lim}\limits_{t\rightarrow\infty}\hat{z}(t,x)=\mathop{lim}\limits_{t\rightarrow\infty}\hat{y}(t),x\in\mathbb{R}^n$, is valid provided $\mathop{lim}\limits_{t\rightarrow\infty}\hat{y}(t)$ exists and
\begin{equation}\label{eq:1.71}
\mathop{lim}\limits_{t\rightarrow\infty}\mid z(t,x)\mid^2=0
\end{equation}
for $x\in\mathbb{R}^n$. We say that$\{z(t,x):t\geq 0\}$ satisfying
\eqref{eq:1.67} is asymptotically stable \index{Asymptotically stable}if \eqref{eq:1.70} is valid.
\begin{definition}
A constant $\gamma< 0$ is a Lypounov exponent
for $\{z(t,x):t\geq 0\}$ if $\{\hat{z}(t,x)=(\exp\gamma
t)z(t,x):t\geq 0\}$is asymptotically stable\index{Asymptotically stable}.
\end{definition}
\begin{remark}
Notice that if $\{\hat{z}(t,x):t\geq 0\}$ satisfies \eqref{eq:1.67} then
$\{z_{\gamma}(t,x)=(\exp\gamma t)z(t,x):t\geq 0\}$, satisfies the following
augmented linear system
\begin{equation}\label{eq:1.72}
\left\{
       \begin{array}{ll}
         \frac{dz_{\gamma}(t)}{dt}= & \gamma z_\gamma (t)+(\exp\gamma t)g(z(t),\lambda\hat{}(t)),t\geq 0\\
         z_\gamma (0)= &x
       \end{array}
     \right.
\end{equation}
In addition, the piecewise continuous mapping
\begin{equation}\label{eq:1.73}
\hat{z}_\gamma(t,x)=z_\gamma(t,x)+\hat{y }(t),t\geq 0
\end{equation}
satisfies the following system with jumps
\begin{equation}\label{eq:1.74}
\left\{
       \begin{array}{ll}
         \frac{d\hat{z}_{\gamma}(t)}{dt}= & \gamma [\hat{z}_\gamma (t)-y\hat{} (t)]+(\exp\gamma t)f(\hat{z}_\gamma(t),\lambda\hat{}(t)),t\geq 0\\
         \hat{z}_\gamma (t\hat{}_{k})= &z_\gamma
(\hat{t}_{k},x)+\hat{y}(\hat{t}_{k})\,.
       \end{array}
     \right.
\end{equation}
It shows that the Lyapunov exponent $\gamma< 0$ found for
the continuous mapping $\{z(t,x):t\geq 0\}$ satisfying \eqref{eq:1.67} gives
the answer for the following asymptotic behavior associated with
$\{\hat{z}(t,x)\}$
\begin{equation}\label{eq:1.75}
\mathop{lim}\limits_{t\rightarrow\infty}\mid \hat{z}_\gamma
(t,x)-\hat{y}(t)\mid^2=\mathop{lim}\limits_{t\rightarrow\infty}\mid
z_\gamma(t,x)\mid=0\,, \hbox{for each} \,x\in \mathbb{R}^{n}\end{equation}
and the analysis will be focussed on getting Lyapunov exponents for
the
continuous mapping $\{z(t,x):t\geq 0\}$
\end{remark}
A description of the Lyapunov exponent associated with the
continuous mapping $\{z(t,x):t\geq 0\}$  can be associated using the
corresponding integral equation satisfied by a scalar continuous
function
\begin{equation}\label{eq:1.76}
h_\gamma(t,x)=(\exp2\gamma t)h(z(t,x)),t\geq 0 ,\,\hbox{where}
\,h(z)=\mid z \mid ^2
\end{equation}
In this respect,applying standard rule of derivation we get
\begin{equation}\label{eq:1.77}
\left\{
      \begin{array}{ll}
         \frac{d\hat{z}_{\gamma}(t,x)}{dt}= &(\exp2\gamma t)[2\gamma h +L_{g} (h)](\hat{z}_{\gamma}(t,x), \hat{\lambda}(t),t\geq 0\\
      h_{\gamma} (0,x)= &\mid x\mid^{2}
      \end{array}
    \right.
\end{equation}
where the first order differential operator\index{First order differential operator} $L_{g}:P_{2}(z,\lambda)\rightarrow P_{2}(z,\lambda)$ is given
by
\begin{equation}\label{eq:1.78}
L_{g}(\varphi)(z,\lambda)=<\partial_{z}\varphi (z,\lambda)),g(z,\lambda)>
\end{equation}
Here $P_{2}(z,\lambda)$ consist of all polynomial scalar functions
of second degree with respect to the variables $z=(z_{1},...,z_{n})$
and with continuous coefficients as functions of $\lambda \in
\Lambda$. In particular, for $h\in P_{2}(z,\lambda)$, $h(z)=\mid
z\mid^2$, we obtain
\begin{eqnarray}\label{eq:1.79}
  L_{g}(\varphi)(z,\lambda) &=& <\partial_{z}h(z),g(z,\lambda)>=<[A(\sigma)+A^{*}(\sigma)z,z]>+2<f(\lambda),z>\nonumber \\
  &=& <B(\sigma)z,z>+2<f(\lambda),z>
\end{eqnarray}

where the matrix $B(\sigma)=A(\sigma)+A^{*}(\sigma)$is symmetric for
each $\sigma\in \sum \subseteq \mathbb{R}^{d}$ and for
$f(\lambda)=A(\sigma)v+a(\sigma),\lambda=(v,\sigma)\in
\Lambda=Y\times \sum$, given in \eqref{eq:1.63}. Rewrite \eqref{eq:1.76} as follows
(see\eqref{eq:1.68})
\begin{eqnarray}\label{eq:1.80}
\frac{dh_{\gamma} (t,x)}{dt}&=&-\mid \gamma \mid
h_{\gamma}(t,x)-(\exp2\gamma t)<[\mid \gamma \mid
I_{n}-B(\hat{\sigma}(t))]z(t,x)>\nonumber\\
&+&2(\exp2\gamma
t)<f(\lambda\hat{}(t)),z(t,x)>,t\geq 0\,.
\end{eqnarray}
Regarding the symmetric
matrix
$$Q_{\gamma}(\sigma)=\mid\gamma\mid
I_{n}-B(\sigma),\sigma \in \sum \subseteq \mathbb{R}^{d}$$
we notice that it can be defined as positively defined matrix
uniformly with respect to $\sigma \in \sum $
\begin{equation}\label{eq:1.81}
<Q_{\gamma}(\sigma)z,z>\geq c\mid z\mid^2,\forall z\in
\mathbb{R}^n,\sigma \in \sum , \hbox{for some}\,c\geq 0
\end{equation}
provided, $\mid
\gamma\mid\geq\parallel B\parallel$, where
$$\parallel
B\parallel=\sup_{\sigma\in \sum}\parallel B(\sigma)\parallel$$
In this respect, let $T(\sigma):\mathbb{R}^{n}\rightarrow
\mathbb{R}^n$ be an orthogonal matrix
$(T^{*}(\sigma)=T^{-1}(\sigma))$ such that
\begin{equation}\label{eq:1.82}
 \left\{
     \begin{array}{ll}
        T^{-1}(\sigma)B(\sigma)T(\sigma)= & \diag(\nu_{1}(\sigma),...,\nu_{n}(\sigma)) =\Gamma_{\gamma}(\sigma)\\
        B(\sigma)e_{j}(\sigma)=&\nu_{j}(\sigma)e_{j}(\sigma),j\in\{1,2,...,n\}
      \end{array}
    \right.
\end{equation}
    where $T(\sigma)=\parallel
    e_{1}(\sigma),...,e_{n}(\sigma)\parallel$.
On the other hand ,using the same matrix $T(\sigma)$ we get
\begin{equation}\label{eq:1.83}
T^{-1}(\sigma)Q_{\gamma}(\sigma)T(\sigma)=
\diag(\nu_{1}^{\gamma}(\sigma),...,\nu_{n}^{\gamma}(\sigma))
=\Gamma_{\gamma}(\sigma)
\end{equation}

where $\nu_{i}^{\gamma}(\gamma)=\mid\gamma \mid-\nu_{i}(\sigma)\geq c>0$
for any $\sigma\in \sum,i\in \{1,...,n\}$, provided $\mid \gamma
\mid>\parallel B\parallel$ and we get
\begin{equation}\label{eq:1.44a}
\mid \gamma \mid>\parallel B\parallel\geq
\sup_{\sigma\in\sum}\parallel B(\sigma)\parallel\geq \parallel
B(\sigma)e_{j}(\sigma)\parallel =\sup_{\sigma\in\sum}\mid
\nu_{j}(\sigma)\mid,\forall j\in\{1,...,n\}
\end{equation}
Using \eqref{eq:1.82}, it makes  sense to consider the square root of the
positively defined matrix $Q_{\gamma}(\sigma)$
\begin{equation}\label{eq:1.84}
\sqrt{[Q_{\gamma}(\sigma)]}=T(\sigma)[\Gamma_{\gamma}(\sigma)]^{\frac{1}{2}}.T^{-1}(\sigma)=P_{\gamma}(\sigma),\sigma
\in \sum \subseteq\mathbb{R}^{d}
\end{equation}
and rewrite $<Q_{\gamma}(\sigma)z,z>-2<f(\lambda),z>=\varphi
_{\gamma}(z,\lambda)$ as follows
\begin{equation}\label{eq:1.85}
\varphi(z,\lambda)=\mid
P_{\gamma}(\sigma)z-R_{\gamma}(\sigma)f(\lambda)\mid^{2}-\mid
R_{\gamma}(\sigma)f(\lambda)\mid^2
\end{equation}
where
$$R_{\gamma}(\sigma)=\sqrt{Q_{\gamma}^{-1}(\sigma)}=T(\sigma)[\Gamma_{\gamma}(\sigma)]^{-1/2}T^{-1}(\sigma)\,\,.$$
Using \eqref{eq:1.84} we get the following differential equation
\begin{eqnarray}\label{eq:1.47a}
\frac{dh_{\gamma}(t;x)}{dt}&=&-\mid \gamma \mid
h_{\gamma}(t;x)-(\exp2\gamma t)\mid
P_{\gamma}(\hat{\sigma}(t)z(t;x)-R_{\gamma}(\hat{\sigma},t)f(\hat{\lambda}(t)))\mid^{2}\nonumber\\
&+&(\exp2\gamma
t)\mid R_{\gamma}(\hat{\sigma}(t))f(\hat{\lambda})(t)\mid^2,t\geq
0
\end{eqnarray}
The integral representation of the solution $\{h_{\gamma}(t;x):t\geq 0\}$ fulfilling \eqref{eq:1.47a} leads us directly to
\begin{eqnarray}\label{eq:1.48a}
h_{\gamma}(t;x)&=&(\exp\gamma t)[\mid x \mid
^{2}+\int_{0}^{t}(\exp\gamma s)\mid
R_{\gamma}(\sigma\hat{})(s)f(\lambda\hat{}(s))\mid
^{2}ds]\\
&-&(\exp\gamma t)\int_{0}^{t}(\exp\gamma
s)\mid P_{\gamma}(\hat{\sigma}(t)z(s;x)
-R_{\gamma}(\hat{\sigma})(s)f(\hat{\lambda}(s)))\mid^2
ds\nonumber
\end{eqnarray}
for any $t\geq 0 and x\in \mathbb{R}^{n}$. As far as $\{R_{\gamma}(\hat{\sigma})(t)f(\hat{\lambda}(t)):t\geq
0\}$ is a continuous and bounded function on $[0,\infty)$ we
obtain that
\begin{equation}\label{eq:1.86}
\int_{0}^{t}(\exp\gamma s)\mid
R_{\gamma}(\sigma\hat{})(s)f(\lambda\hat{}(s))\mid ^{2}ds\leq
C_{\gamma},\forall t\geq 0
\end{equation}
and
\begin{equation}\label{eq:1.87}
h_{\gamma }(t,x)\leq (\exp\gamma t)[\mid x \mid
^{2}+C_{\gamma}],\forall t\geq 0
\end{equation}
where $C_{\gamma}$ is a constant. In conclusion, for each $\gamma < 0,\mid \gamma \mid >
\sup_{\sigma\in\sum}\mid A(\sigma)+A^{*}(\sigma)\mid$, we obtain
 $$\mathop{lim}\limits_{t\rightarrow \infty}\mid z_{\gamma}(t,x)\mid
^{2}=\mathop{lim}\limits_{t\rightarrow \infty}h_{\gamma}(t,x)=0$$ for each
$x\in\mathbb{R}$. The above given computations  can be stated as
\begin{theorem}\label{th:1.4}  Let the vector field
$f(y,\sigma):\mathbb{R}^{n}\times \sum\rightarrow \mathbb{R}^{n}$ be
given such that \eqref{eq:1.63} is satisfied. Then any $\gamma < 0$ satisfying
$\mid \gamma \mid>\sup_{\sigma\in\sum}\mid
A(\sigma)+A(\sigma^{*})\mid$ is a Lyapunov exponent for the
continuous mapping $\{z(t,x):t\geq 0\}$verifying \eqref{eq:1.67}, where
$\hat{\lambda}(t)=(\hat{y}(t),\hat{\sigma}(t)):[0,\infty)\rightarrow
\Lambda$ is fixed arbitrarily. In addition, let $\{\hat{z}(t,x):t\geq
0\}$ be the piecewise continuous solution fulfilling the
corresponding system with  jumps \eqref{eq:1.73}. Then $\ \mathop{lim}\limits_{t\rightarrow
\infty}\mid \hat{z}_{\gamma}(t,x)-\hat{y}(t)\mid=\mathop{lim}\limits_{t\rightarrow
\infty}\mid z_{\gamma}(t,x)\mid=0$ for each $x\in \mathbb{R}^{n}$
\end{theorem}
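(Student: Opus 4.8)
The idea is to control the scalar functional $h_\gamma(t,x)=(\exp 2\gamma t)\,h(z(t,x))=|z_\gamma(t,x)|^2$, where $h(z)=|z|^2$ and $z_\gamma(t,x)=(\exp\gamma t)\,z(t,x)$, and to show that $h_\gamma(t,x)\to 0$ as $t\to\infty$ for each $x\in\mathbb{R}^n$. By the definition of a Lyapunov exponent this is precisely the assertion that such a $\gamma$ is a Lyapunov exponent for $\{z(t,x):t\geq 0\}$ verifying \eqref{eq:1.67}; the claim for $\hat{z}_\gamma$ then follows at once, because the decomposition \eqref{eq:1.73} gives $\hat{z}_\gamma(t,x)-\hat{y}(t)=z_\gamma(t,x)$, whence $\lim_{t\to\infty}|\hat{z}_\gamma(t,x)-\hat{y}(t)|=\lim_{t\to\infty}|z_\gamma(t,x)|=0$.

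First I would differentiate $h_\gamma$ along the flow of \eqref{eq:1.67}. On each interval $[\hat{t}_k,\hat{t}_{k+1})$ the switching signal $\hat{\lambda}(t)$ is constant and $g(z,\lambda)=A(\sigma)z+f(\lambda)$ is affine in $z$, so the standard rule of derivation gives $\frac{d}{dt}h_\gamma(t,x)=2\gamma\,h_\gamma(t,x)+(\exp 2\gamma t)\,L_g(h)\bigl(z(t,x),\hat{\lambda}(t)\bigr)$, and by \eqref{eq:1.79} we have $L_g(h)(z,\lambda)=\langle\partial_z h(z),g(z,\lambda)\rangle=\langle B(\sigma)z,z\rangle+2\langle f(\lambda),z\rangle$ with $B(\sigma)=A(\sigma)+A^{*}(\sigma)$ symmetric. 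Since $\gamma<0$, set $Q_\gamma(\sigma)=|\gamma|I_n-B(\sigma)$; substituting $B(\sigma)=|\gamma|I_n-Q_\gamma(\sigma)$ and collecting the terms proportional to $h_\gamma$ (recall $\gamma=-|\gamma|$), this becomes the scalar linear equation
$$\frac{d h_\gamma}{dt}=-|\gamma|\,h_\gamma-(\exp 2\gamma t)\bigl[\langle Q_\gamma(\hat{\sigma}(t))z,z\rangle-2\langle f(\hat{\lambda}(t)),z\rangle\bigr],\qquad z=z(t,x),\ t\geq 0 .$$

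The step I expect to be the main obstacle is to make the bracketed quadratic form coercive uniformly in the switching variable and to recast it by completing the square. Under the hypothesis $|\gamma|>\sup_{\sigma\in\sum}\|A(\sigma)+A^{*}(\sigma)\|=:\|B\|$, diagonalising the symmetric $B(\sigma)$ by an orthogonal matrix $T(\sigma)$ as in \eqref{eq:1.82} shows that every eigenvalue of $Q_\gamma(\sigma)$ is bounded below by $|\gamma|-\|B\|=:c>0$, uniformly in $\sigma\in\sum$; hence $Q_\gamma(\sigma)$ admits a symmetric positive-definite square root $P_\gamma(\sigma)=\sqrt{Q_\gamma(\sigma)}$ and a uniformly bounded symmetric root $R_\gamma(\sigma)=\sqrt{Q_\gamma^{-1}(\sigma)}$, and since $P_\gamma(\sigma)R_\gamma(\sigma)=I_n$ one gets the identity $\langle Q_\gamma(\sigma)z,z\rangle-2\langle f(\lambda),z\rangle=|P_\gamma(\sigma)z-R_\gamma(\sigma)f(\lambda)|^2-|R_\gamma(\sigma)f(\lambda)|^2$. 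I would also record that, because $\hat{\lambda}$ takes values in the bounded set $\Lambda$ and $R_\gamma(\cdot)$ is uniformly bounded, the map $t\mapsto R_\gamma(\hat{\sigma}(t))f(\hat{\lambda}(t))$ is bounded on $[0,\infty)$.

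Finally I would integrate the resulting scalar equation $\frac{dh_\gamma}{dt}=-|\gamma|h_\gamma+\beta(t)$ with $\beta(t)=(\exp 2\gamma t)|R_\gamma(\hat{\sigma}(t))f(\hat{\lambda}(t))|^2-(\exp 2\gamma t)|P_\gamma(\hat{\sigma}(t))z(t,x)-R_\gamma(\hat{\sigma}(t))f(\hat{\lambda}(t))|^2$, applying the constant variation formula of Theorem \ref{th:1.3} in the scalar case on each interval $(\hat{t}_k,\hat{t}_{k+1})$ and patching at the jump times, where $h_\gamma$ is continuous. Using $|\gamma|s+2\gamma s=\gamma s$ and $-|\gamma|t=\gamma t$ this gives
\begin{eqnarray*}
h_\gamma(t,x)&=&(\exp\gamma t)\Bigl[|x|^2+\int_0^t(\exp\gamma s)\,|R_\gamma(\hat{\sigma}(s))f(\hat{\lambda}(s))|^2\,ds\Bigr]\\
&&{}-(\exp\gamma t)\int_0^t(\exp\gamma s)\,|P_\gamma(\hat{\sigma}(s))z(s,x)-R_\gamma(\hat{\sigma}(s))f(\hat{\lambda}(s))|^2\,ds .
\end{eqnarray*}
The last term is $\leq 0$; the inner integral in the first term is bounded, uniformly in $t$, by a constant $C_\gamma$, since $\gamma<0$ makes $s\mapsto(\exp\gamma s)$ integrable on $[0,\infty)$ while $R_\gamma(\hat{\sigma})f(\hat{\lambda})$ is bounded. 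Hence $0\leq h_\gamma(t,x)\leq(\exp\gamma t)(|x|^2+C_\gamma)\to 0$ as $t\to\infty$, and since $h_\gamma=|z_\gamma|^2$ we conclude $\lim_{t\to\infty}|z_\gamma(t,x)|=0$ for each $x\in\mathbb{R}^n$, which is the claim; the statement about $\hat{z}_\gamma$ follows as noted above. Apart from routine computation, the only delicate points are the uniform spectral bound for $Q_\gamma(\sigma)$, the piecewise dependence of $P_\gamma(\sigma),R_\gamma(\sigma)$ on the switching variable, and the patching of the constant variation formula across the jump times $\hat{t}_k$.
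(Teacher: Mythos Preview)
Your proposal is correct and follows essentially the same approach as the paper: the paper derives the theorem from the computations \eqref{eq:1.76}--\eqref{eq:1.87} preceding its statement, which are precisely your introduction of $h_\gamma(t,x)=e^{2\gamma t}|z(t,x)|^2$, the identity $L_g(h)=\langle B(\sigma)z,z\rangle+2\langle f(\lambda),z\rangle$, the uniform coercivity of $Q_\gamma(\sigma)=|\gamma|I_n-B(\sigma)$, the square-completion via $P_\gamma,R_\gamma$, the constant-variation representation of $h_\gamma$, and the bound $h_\gamma(t,x)\leq e^{\gamma t}(|x|^2+C_\gamma)$. Your explicit remarks about patching across the jump times $\hat t_k$ and deducing the $\hat z_\gamma$ assertion from the decomposition \eqref{eq:1.73} are exactly what the surrounding text \eqref{eq:1.70}--\eqref{eq:1.75} does implicitly.
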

\begin{remark}
 The result stated in Theorem \ref{th:1.4} make use of
some bounds $\sup_{\sigma\in\Sigma}\mid
A(\sigma)+A(\sigma^{*})\mid=\parallel B\parallel$ associated with
the unknown matrix $A(\sigma),\sigma \in \Sigma\,\hbox{(bounded set)}\subseteq
\mathbb{R}^{d}$. In the particular case $\Sigma=\{\sigma_{1},...,\sigma_{m}\}$, we get a
finite set of matrices ${A(\sigma_{1}),...A(\sigma_{m})}$ for which
$\parallel B\parallel=\max \{\mid A_{1}+A_{1}^{*}\mid,...,\mid
A_{m}+A_{m}^{*}\mid\}$ where $A_{i}=A(\sigma_{i})$.\\
An estimate of the the Theorem \ref{th:1.4} stating that $lim _{t\rightarrow
\infty}(\exp\gamma t)\mid z(t,x)\mid=0$\\
provided $\mid \gamma\mid=-\gamma>\parallel B\parallel$ and  in
addition ${z(t,x),t\geq 0}$ can be measured as continuous solution
of the system \eqref{eq:1.67}. This information can be used for estimating the
bounds $\parallel B\parallel$and as far as we get $lim
_{t\rightarrow \infty}(\exp\gamma\hat{t})\mid z(t,x)\mid=0$\\
for some $\gamma\hat{}< 0$. We may predict that $\parallel
B\parallel<\mid \gamma \hat{}\mid$. On the other hand ,if we are able to measure only some projections\\
$\nu_{i}(t,x)=<b_{i},z(t,x)>,i\in \{1,2,..,m\}$ of the solution
${z(t,x):t\geq 0}$ satisfying \eqref{eq:1.67} then
$\nu(t,x)=(\nu_{1}(t,x),...\nu_{m}(t,x))$ fulfils the following
linear equation
\begin{equation}\label{eq:1.88}
\left\{
       \begin{array}{ll}\\
         \frac{d\nu(t,x)}{dt}= &D(\gamma\hat{}(t))\nu(t,x)+d(\lambda\hat{}(t)),t\geq 0\\
        \nu(0,x)= &\nu^{0}(x)=(b_{1},x>,...,b_{m},x>)
       \end{array}
     \right.
\end{equation}
where $D(\sigma)$ is an $(m\times m)$ continuous matrix\index{Matrix!continuous} and\\
$d(\sigma)=column(<b_{1},f(\lambda)>,...,<b_{m},f(\lambda)>) $\\
Here we have assumed that
$A^{*}(\sigma)[b_{1},...,b_{m}]=[b_{1},...,b_{m}]D^{*}(\sigma)$ and
\eqref{th:1.4} gets the corresponding version when \eqref{eq:1.87} replaces the
original system \eqref{eq:1.67}
\end{remark}
\section{Nonlinear Systems of Differential Equations}
Let $f(x,\lambda,y):I\times\Lambda \times G\rightarrow
\mathbb{R}^{n}$be a continuous function,where $I(interval)\subseteq
\mathbb{R}$ and $\Lambda \subseteq \mathbb{R}^{m}$ are some open
sets. Consider a system of differential equations(normal form)
\begin{equation}\label{eq:1.89}
\frac{dy}{dx}=f(x,\lambda,y)
\end{equation}
By a solution of \eqref{eq:1.88} we mean a continuous function
$y(x,\lambda):J\times \Sigma\rightarrow G$ which is
continuously derivable with respect to $x\in J$ such that
$$\frac{dy(x,\lambda)}{dx}=f(x,\lambda,y(x,\lambda)),\forall x\in
J,\lambda\in \Sigma$$
where $J\subseteq I $ is an interval and
$\Sigma \subseteq\Lambda$ is a compact set. The Cauchy problem for the nonlinear system \eqref{eq:1.88}
$C.P (f;x_{0},y_{0})$, has the meaning that we must determine a
solution of (1)$y(x,\lambda):J\times \Sigma\rightarrow G$ which
satisfies $y(x_{0},\lambda)=y_{0},\lambda\in \Sigma$,where $x_{0}\in
I$and $y_{0}\in G$ are fixed. To get a unique $C.P(f;x_{0},y_{0})$ solution we need to replace the
continuity property of f by a Lipschitz condition with respect to
$y\in G$
\begin{definition} We say that a continuous function $f(x,\lambda,y):I\times\Lambda\times G\rightarrow \mathbb{R}^{n}$ is locally Lipschitz continuous with respect to $y\in G$ if for each compact set $W=J\times\sum\times K\subseteq I\times\Lambda\times G$ there exists a constant $L(W)>0  $ such that$\mid f(x,\lambda,y'')-f(x,\lambda,y')\leq L\mid y''-y'\mid,\,\forall\,y',y''\in K,x\in J,\lambda\in\Lambda$
\end{definition}
\begin{definition}We say that a $C.P (f;x_{0},y_{0})$ solution $\{y(x;\lambda):x\in J,\lambda\in\sum\}$is unique if for any other solution $\{y_{1}(x;\lambda):x\in J_{1},\lambda\in\sum_{1}\}$ of \eqref{eq:1.89} which verifies $y_{1}(x_{0})=y_{0}$,we get $y(x,\lambda)=y_{1}(x,\lambda),\forall\,(x,\lambda)\in(J\times\sum)\bigcap(J_{1}\times\sum_{1})$.Denote $B(y_{0},b)\subseteq\mathbb{R}_{n}$ the ball centered at $y_{0}\in\mathbb{R}^{n}$\,whose radius is $b>0$
\end{definition}
\begin{remark}By a straight computation we get that if the right hand side of \eqref{eq:1.88} is continuously differentiable function with respect to $y\in G$, i.e
$$\frac{\partial f}{\partial y_{i}}(x,\lambda,y):I\times\Lambda\times G\rightarrow \mathbb{R}^{n},i\in\{1,...,n\}$$ are continuous functions and $G$ is a convex domain then $f$ is locally Lipschitz with respect to $y\in G $
\end{remark}
\subsection{Existence and Uniqueness\index{Existence and uniqueness of a
solution }
of\,C.P$(f,x_{0},y_{0})$}
 \begin{theorem}\label{th:1.5} (Cauchy Lipschitz\index{Cauchy!Lipschitz}) Let the
continuous function $f(x;\lambda,y):I\times \Lambda \times
G\rightarrow \mathbb{R}^{n}$ be locally Lipschitz continuous with
respect to $y\in G$, where $I\subseteq \mathbb{R}, \Lambda\subseteq
\mathbb{R}^{m},G\subseteq \mathbb{R}^{n}$ are open sets.For some
$x_{0}\in I,y_{0}\in G \,\,\hbox{and}\,\, \Sigma(compact)\subseteq \Lambda$
fixed, we take $a,b > 0$ such that
$I_{a}(x_{0})=[x_{0}-a,x_{0}+a]\subseteq I$and $B(y_{0},b)\subseteq
G$. Let $M=\max\{\mid f(x,\lambda,y)\mid :x\in I_{a}(x_{0}),\lambda\in
\Sigma,\,y\in B(y_{0},b)\}$. Then there exist $\alpha >
0, \alpha=\min(a,\frac{b}{M})$ and a unique C.P $(f;x_{0},y_{0})$
solution $y(x,\lambda):I_{\alpha}(x_{0})\times \Sigma\rightarrow
B(y_{0},b)$ of \eqref{eq:1.89}.
\end{theorem}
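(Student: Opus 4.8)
The plan is to recast the Cauchy problem as the integral equation
$$y(x,\lambda)=y_{0}+\int_{x_{0}}^{x}f(t,\lambda,y(t,\lambda))\,dt,\qquad x\in I_{\alpha}(x_{0}),\ \lambda\in\Sigma,$$
and to solve it by Picard iteration, in exactly the style used for the linear case in Theorem \ref{th:1.1}. First I would record the equivalence: a function $y(\cdot,\lambda)$ that is continuous on $I_{\alpha}(x_{0})$ and satisfies this integral equation is automatically continuously differentiable in $x$ and solves \eqref{eq:1.89} together with the Cauchy condition $y(x_{0},\lambda)=y_{0}$, and conversely. Then I would set up the sequence whose zeroth term is the constant function equal to $y_{0}$ and whose general term is $y_{k+1}(x,\lambda)=y_{0}+\int_{x_{0}}^{x}f(t,\lambda,y_{k}(t,\lambda))\,dt$.

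The first real point is that the iterates are well defined, i.e.\ each $y_{k}(x,\lambda)$ stays in the ball $B(y_{0},b)$ for $x\in I_{\alpha}(x_{0})$, so that $f$ may be evaluated at $(x,\lambda,y_{k})$. This is precisely where the choice $\alpha=\min(a,\tfrac{b}{M})$ is used: by induction on $k$,
$$|y_{k+1}(x,\lambda)-y_{0}|\le\Big|\int_{x_{0}}^{x}|f(t,\lambda,y_{k}(t,\lambda))|\,dt\Big|\le M\,|x-x_{0}|\le M\alpha\le b,$$
with $M=\max|f|$ on $I_{a}(x_{0})\times\Sigma\times B(y_{0},b)$. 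Hence the whole sequence lives in the compact set $W=I_{\alpha}(x_{0})\times\Sigma\times B(y_{0},b)$, on which the local Lipschitz hypothesis furnishes a genuine constant $L=L(W)$ with $|f(x,\lambda,y'')-f(x,\lambda,y')|\le L|y''-y'|$ for all admissible arguments.

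Next I would prove convergence by the same exponential-series estimate as in the linear case. Writing $u_{j+1}=y_{j+1}-y_{j}$, the Lipschitz bound gives $|u_{j+2}(x,\lambda)|\le L\big|\int_{x_{0}}^{x}|u_{j+1}(t,\lambda)|\,dt\big|$, and a routine induction (base case $|u_{1}|\le M|x-x_{0}|$) yields
$$|u_{j+1}(x,\lambda)|\le M\,\frac{L^{j}|x-x_{0}|^{j+1}}{(j+1)!}\le M\,\frac{L^{j}\alpha^{j+1}}{(j+1)!},\qquad x\in I_{\alpha}(x_{0}),\ \lambda\in\Sigma.$$
Thus the telescoping series $y_{0}+\sum_{j\ge 0}u_{j+1}$ is dominated by a convergent numerical series, so $\{y_{k}\}$ converges uniformly on $I_{\alpha}(x_{0})\times\Sigma$ to a continuous limit $y(x,\lambda)$ with values in $B(y_{0},b)$; passing to the limit under the integral sign (uniform convergence plus continuity of $f$) shows that $y$ satisfies the integral equation, hence is a $C.P(f;x_{0},y_{0})$ solution $y:I_{\alpha}(x_{0})\times\Sigma\to B(y_{0},b)$.

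For uniqueness, if $y_{1}(\cdot,\lambda)$ is another solution on a subinterval with $y_{1}(x_{0})=y_{0}$, then on the overlap $\varphi(x)=|y(x,\lambda)-y_{1}(x,\lambda)|$ satisfies $\varphi(x)\le\big|\int_{x_{0}}^{x}L\,\varphi(t)\,dt\big|$, using the Lipschitz bound on a compact set containing both graphs; Lemma \ref{le:1.1} with $M=0$ then forces $\varphi\equiv 0$. I expect the only real obstacle to be bookkeeping rather than ideas: one must keep the Lipschitz constant attached to a fixed compact set that contains all the iterates — which is exactly why the invariance $y_{k}\in B(y_{0},b)$ must be established first — and, in the uniqueness argument, confine both solutions to such a compact set on the common interval (shrinking it if necessary by continuity and $y(x_{0})=y_{1}(x_{0})=y_{0}$). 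The parameter $\lambda$ introduces nothing new, since $\Sigma$ is compact and every estimate above is uniform in $\lambda$, and joint continuity of $y$ in $(x,\lambda)$ follows from the uniform convergence of the continuous iterates.
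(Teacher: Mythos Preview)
Your proposal is correct and follows essentially the same route as the paper's proof: integral-equation reformulation, Picard iteration, invariance of $B(y_{0},b)$ via $\alpha=\min(a,b/M)$, an exponential-series bound on successive differences using the Lipschitz constant on the compact set $W$, uniform convergence, and uniqueness by Gronwall with $M=0$. The only cosmetic difference is the constant in your inductive estimate (you use $M\,L^{j}|x-x_{0}|^{j+1}/(j+1)!$ whereas the paper bounds $|u_{1}|$ by $b$ and gets $b\,L^{k}|x-x_{0}|^{k}/k!$), but both are valid and lead to the same conclusion.
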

\begin{proof}
We associate the corresponding integral equation (as
in the linear case)
\begin{equation}\label{eq:1.90}
y(x,\lambda)=y_{0}+\int_{x_{0}}^{x}f(t,\lambda,y(t,\lambda))dt,x\in
I_{a}(x_{0}),\lambda\in \Sigma
\end{equation}
where $I_{a}(x_{0})\subseteq I$ and $\Sigma(compact)\subseteq
\Lambda$ are fixed. By a direct inspection, we see that the two systems \eqref{eq:1.88} and \eqref{eq:1.89} are
equivalent using their solutions and the existence of solution for
\eqref{eq:1.88} with $y(x_{0})=y_{0}$ will be obtained proving that \eqref{eq:1.89} has a
solution.In this respect, a sequence of continuous functions
$\{y_{k}(x,\lambda):(x,\lambda)\in I_{\alpha}(t_{0})\times
\Sigma\}_{k\geq 0}$ is constructed such that
\begin{equation}\label{eq:1.91}
y_{0}(x,\lambda)=y_{0},y_{k+1}(x,\lambda)=y_{0}+\int_{x_{0}}^{x}f(t,\lambda,y_{k}(t,\lambda))dt,k\geq
0
\end{equation}
Consider $\alpha =\min(a,\frac{b}{M})$ and
$I_{\alpha}(x_{0})=[x_{0}-\alpha,x_{0}+\alpha]$. We see easily that
the sequence $\{y_{k}(.)\}_{k\geq 0}$ constructed in \eqref{eq:1.90} is uniformly
bounded if the variable $x$ is restricted to $x\in
I_{\alpha}(x_{0})\subseteq I_{a}(x_{0})$. More precisely
\begin{equation}\label{eq:1.92}
y_{k}(x,\lambda)\in B(y_{0},b),\forall (x,\lambda)\in
I_{\alpha}(x_{0})\times\Sigma,k\geq 0
\end{equation}
It will be proved by induction and assuming that \eqref{eq:1.91} is satisfied
for $k\geq 0$ we compute
\begin{equation}\label{eq:1.93}
\mid y_{k+1}(x,\lambda)-y_{0}\mid\leq \int_{x_{0}}^{x_{0}+\mid
x-x_{0}\mid}f(t,\lambda,y_{k}(t,\lambda))dt\leq M\alpha \leq b
\end{equation}
for
any
$\{(x,\lambda)\in I_{\alpha}(x_{0})\times \Sigma\}$.
Next step is to notice that $\{y_{k}(x,\lambda):(x,\lambda)\in
I_{\alpha}(x_{0})\times \Sigma\}_{k\geq 0}$ is a Cauchy sequence\index{Cauchy!sequence} in a
Banach space $C(I_{\alpha}(x_{0}))\times \sigma _{j} \mathbb{R}^{n}$
and it is implied by the following estimates
\begin{equation}\label{eq:1.94}
\mid y_{k+1}(x,\lambda)-y_{k}(x,\lambda)\mid \leq bL^{k}\frac
{\mid x-x_{0}\mid^{k}}{k!},\forall(x,\lambda)\in
I_{\alpha}(x_{0})\times \Sigma,k\geq 0
\end{equation}
where $L=L(W)> 0$ is a
Lipschitz constant associated with  $f$
and $W=I_{\alpha}(x_{0})\times \Sigma\times B(y_{0},b)$, a compact
set of $I\times \Lambda\times G$. A verification of \eqref{eq:1.93} uses the
standard induction argument and for $k=0$ they are proved in
\eqref{eq:1.92}. Assuming \eqref{eq:1.93} for $k$ we compute $\mid
y_{k+2}(x,\lambda)-y_{k+1}(x,\lambda)\mid\leq
\int_{x_{0}}^{x_{0}+\mid x-x_{0}\mid}\mid
f(t,\lambda,y_{k+1}(t,\lambda))- f(t,\lambda,y_{k}(t,\lambda))\mid
dt\leq L\int_{x_{0}}^{x_{0}+\mid x-x_{0}\mid} \mid
y_{k+1}(t,\lambda)-y_{k}(t,\lambda)\mid dt\leq L^{k+1}\frac{\mid
x-x_{0}\mid^{k+1}}{k+1!}$ and \eqref{eq:1.93} is verified.Rewrite \eqref{eq:1.93}\\
$y_{k+1}(x,\lambda)=y_{0}+(y_{1}(x,\lambda)-y_{0})+...,+y_{k+1}(x,\lambda)-y_{k}(x,\lambda)=\\
=y_{0}+\Sigma_{j=0}^{k+1}u_{j}(x,\lambda)$, where
$u_{j}=y_{j+1}-y_{j}$\\and consider the following series of continuous functions
\begin{equation}\label{eq:1.95}
S(x,\lambda)=y_{0}+\Sigma_{j=0}^{\infty}u_{j}(x,\lambda)
\end{equation}
The series \eqref{eq:1.94} is convergent in the Banach space
$C(I_{\alpha}(x_{0})\times \Sigma;\mathbb{R}^{n})$ if it is bounded
by a numerical convergent series and notice that each
$\{u_{j}(x,\lambda):(x,\lambda)\in I_{\alpha}(x_{0})\times \Sigma\}$
of \eqref{eq:1.94} satisfies (see\eqref{eq:1.93})
\begin{equation}\label{eq:1.96}
\mid u_{j}(x,\lambda)\mid\leq b\frac{L^{j}\mid
x-x_{0}\mid^{j}}{j!}\leq
b\frac{L.\alpha^{j}}{j!},\,\,j\geq 0
\end{equation}
In conclusion, the series $S(x,\lambda)$ given in \ref{eq:1.95} is bounded by
the following series
$$u=\mid y_{0}\mid
+b(1+\frac{L\alpha}{1}+...+\frac{L\alpha^{k}}{k!}+...)=\mid
y_{0}\mid+b\,\exp\,L\alpha$$
and the sequence of continuous functions
$$\{y_{k}(x,\lambda):(x,\lambda)\in I_{\alpha}(x_{0})\times
\Sigma\}_{k\geq} 0$$
constructed in \eqref{eq:1.90} is uniformly convergent to a
continuous function
\begin{equation}\label{eq:1.97}
y(x,\lambda)=\mathop{lim}\limits_{k\rightarrow \infty}y_{k}(x,\lambda)\,
\hbox{uniformly on} \,(x,\lambda)\in I_{\alpha}(x_{0})\times \Sigma
\end{equation}
It allows to pass $k\rightarrow \infty$ into integral equation \eqref{eq:1.90} and
we get
\begin{equation}\label{eq:1.98}
y(x,\lambda)=y_{0}+\int_{x_{0}}^{x}f(t,\lambda,y(t,\lambda))dt,\forall
(x,\lambda)\in I_{\alpha}(x_{0})\times \Sigma
\end{equation}
which proves the existence of the\,$C.P(f;x_{0},y_{0})$ solution.\\
\textbf{Uniqueness}. Let
$y_{1}(x,\lambda):J_{1}\times\Sigma_{1}\rightarrow G$ another
solution of \eqref{eq:1.88} satisfying $y_{1}(x_{0})=y_{0}$, where $J_{1}$(compact
set)$\subseteq I$ and $\Sigma_{1}(compact)\subseteq \Lambda$. Define
a compact set $K_{1}\subseteq G$ such that it contains all values of
the continuous function
$$\{y_{1}(x,\lambda):(x,\lambda)\in
J_{1}\times \Sigma_{1}\}\subseteq K_{1}$$
Denote
$$\widetilde{K}=B(y_{0},b)\cap K_{1}
,\widetilde{J}=I_{\alpha}(x_{0})\cap
J_{1},\hat{\Sigma}=\Sigma\cap \Sigma_{1}$$
and let
$$\widetilde{L}=L(\widetilde{J}\times \widetilde{\Sigma}\times
\widetilde{K})> 0$$
 be the corresponding Lipschitz constant
associated with $f$ and compact set
$\widetilde{W}=\widetilde{J}\times \widetilde{\Sigma}\times
\widetilde{K}$.We have
$\widetilde{J}=[x_{0}-\delta_{1},x_{0}+\delta_{2}]$ for some $\delta_{i}\geq 0$ and
\begin{equation}\label{eq:1.11b}
\mid y(x,\lambda)-y_{1}(x,\lambda)\mid\leq \widetilde{L}\int
_{x_{0}}^{x_{0}+\mid x-x_{0}\mid}\mid
y(t,\lambda)-y_{1}(t,\lambda)\mid dt,\forall (x,\lambda)\in
\widetilde{J}\times \widetilde{\Sigma}
\end{equation}
For $\lambda\in \widetilde{\Sigma}$ fixed,denote $\varphi(x)=\mid
y(x,\lambda)-y_{1}(x,\lambda)\mid$ and inequality \eqref{eq:1.11b} becomes\\
\begin{equation}\label{eq:1.12b}
\varphi \leq \widetilde{L}\int_{x_{0}}^{t}\varphi(s)ds,t\in
[x_{0},x_{0}+\delta_{2}]
\end{equation}
which shows that $\varphi(t)=0,\forall t\in
[x_{0},x_{0}+\delta_{2}]$(see Gronwall Lemma in \eqref{le:1.1})\\
Similarly we get $\varphi(t)=0 ,t\in [x_{0}-\delta_{1},x_{0}]$\\
and\ $\varphi(t)=0 \,\,\forall \,t\in \widetilde{J}$, lead us to the
conclusion $y(x,\lambda)=y_{1}(x,\lambda),\forall x\in
\widetilde{J}$ and for an arbitrary fixed $\lambda\in
\widetilde{\Sigma}$. The proof is
complete.
\end{proof}
\begin{remark} The local Lipschitz continuity of the function $f$
is essential for getting uniqueness of a C.P solution. Assuming that
$f$ is only a continuous function of $y\in G$,we can construct
examples supporting the idea that a C.P solution is not unique. In
this respect, consider the scalar function $f(y)=2\sqrt{\mid
y\mid},y\in \mathbb{R}$ and the equation
$\frac{dy(t)}{dt}=2\sqrt{\mid y(t)\mid}$ with $y(0)=0$. There are two
C.P solution.$y_{1}(t)=0,t\geq 0$ and $y_{2}(t)=t^2,t\geq 0$ where a continuous but not a Lipschitz continuous function was used.
\end{remark}
\textbf{Comment}. There is a general fixed point theorem which can be used for proving the existence
and uniqueness of a
Cauchy problem solution\index{Existence and uniqueness of a
Cauchy problem solution } .In this respect we shall recall so called
fixed point theorem associated with contractive mappings. Let $T:X\rightarrow X$be a continuous mapping satisfying
$$\rho(Tx,Ty)\leq \alpha \rho(x,y) \,\hbox{ for any} \, x,y\in X$$
where $0<\alpha<1$ is a constant and $(X,\rho)$ is a complete metric
space. A fixed point for the mapping $T$ satisfies $T\hat{x}=\hat{x}$
and it can be obtained as a limit point of the following Cauchy
sequence $\{x_{n}\}_{n\geq 0}$ defined by $x_{n+1}=Tx_{n},n\geq
0$. By definition we get
$\rho(x_{k+1},x_{k})=\rho(Tx_{k},Tx_{k-1})\leq \alpha\rho
(x_{k},x_{k-1})\leq \alpha^{k}\rho(x_{1},x_{0})$ for any $k\geq 1$
and $\rho(Tx_{k+m},Tx_{k})\leq
\Sigma_{j=1}^{m}\rho(x_{k+j},x_{k+j-1})\leq(\Sigma_{j=1}^{m}\alpha^{k+j-1})\rho(x_{1},x_{0})$, where
$\Sigma_{k=0}^{\infty}\alpha^{k}=\frac{1}{1-\alpha}$ and
$\{x_{n}\}_{n\geq 1}$ is Cauchy sequence.
\subsection{Differentiability of Solutions with Respect
to \\Parameters} In Theorem \ref{th:1.5}we have obtained the continuity
property of the $C.P(f,x_{0},y_{0})$ solution with respect to
parameters $\lambda\in\Lambda$ satisfying a differential
system. Assume that the continuous function
$f(x,\lambda,y):I\times\Lambda\times G\rightarrow \mathbb{R}^{n}$ is
continuously differentiable with respect $y\in G$ and
$\lambda\in\Lambda$ i.e there exist continuous partial derivatives
\begin{equation}\label{eq:1.13b}
\frac{\partial{f(x,\lambda,y)}}{\partial{y_{}}},\frac{\partial{f(x,\lambda,y)}}{\partial{\lambda_{j}}}:I\times\Lambda\times
G\rightarrow\mathbb{R}^{n},i\in\{1,...,n\},j\in\{1,...,m\}
\end{equation}
where
$\Lambda\subseteq\mathbb{R}^{m},G\subseteq\mathbb{R}^{n}$ are open
sets.
\begin{remark}\label{re:1.1}The assumption \eqref{eq:1.13b} leads us directly to the local
Lipschitz property of $f$ with respect to $y\in G$. In addition, let
$y(x,\lambda):I_{\alpha}(x_{0})\times \Sigma\rightarrow
B(y_{0},b)$ be the $C.P(f;x_{0},y_{0})$ solution and define\\
$F(x,z(x))=f(x,\lambda,y(x,\lambda)),z(x)=(\lambda,y(x,\lambda))$
for each $x\in I_{\alpha}(x_{0})$. Using \eqref{eq:1.13b} we get that $F(x,z(x))$
satisfies the following differentiability property
\begin{eqnarray}\label{eq:1.14b}
F(x,z''(x)-F(x,z'(x)))&=&\frac{\partial
f(x,\lambda',y(x,\lambda'))}{\partial
y}[y(x,\lambda'')-y(x,\lambda')]\nonumber\\
&+&\sum_{j=1}^{m}\frac{\partial
f(x,\lambda',y(x,\lambda'))(\lambda''_{j}-\lambda'_{j})}{\partial
\lambda_{j}}+\theta (x,\lambda',\lambda'')(\mid
y(x,\lambda'')\nonumber\\&-&y(x,\lambda')\mid+\mid \lambda''-\lambda' \mid)
\end{eqnarray}
where$$\mathop{lim}\limits_{\lambda''\rightarrow
\lambda'}\theta(x,\lambda',\lambda'')=0$$
uniformly with respect
to $x\in I_{\alpha}(x_{0})$. To get \eqref{eq:1.14b} we rewrite
$F(x,z''(x))-F(x,z'(x))$ as follows
$$F(x,z''(x))-F(x,z'(x))=\int_{0}^{1}[\frac{dh}{d\theta}(x,\theta)]d\theta$$
where
$$h(x,\theta)=F(x,z'(x)+\theta(z''(x)-z'(x)),\theta\in[0,1],x\in
I_{\alpha}(x_{0})$$
The computation of the derivatives  allows to
see easily that \eqref{eq:1.14b} is valid. In addition using the assumption
\eqref{eq:1.13b}we obtain the Lipschitz continuity of the solution
$\{y(t,\lambda):\lambda\in\sum\}$ and
\begin{equation}\label{eq:1.15b}
\mid y(x,\lambda'')-y(x,\lambda') \mid\leq C \mid
\lambda''-\lambda'\mid ,\forall x\in
I_{\alpha}(x_{0}),\lambda',\lambda''\in
B(\lambda_{0},\beta)=\Sigma
\end{equation}
where $C>0$ is a constant. The
property \eqref{eq:1.15b} is obtained applying lemma Gronwall for the integral
inequality associated with the equation
$$y(x,\lambda'')-y(x,\lambda')=\int
_{0}^{x}[F(t,z''(t))-F(t,z'(t))]dt,x\in I_{\alpha}(x_{0})$$
where $F(x,z(x))$ fulfils \eqref{eq:1.14b}.
\end{remark}
\begin{theorem}\label{th:1.7}(differentiability of a
solution)\\
Let $f(x,\lambda,y):I\times\Lambda\times G \rightarrow
\mathbb{R}^{n}$ be given such that \eqref{eq:1.13b}is satisfied. Let$x_{0}\in
I,y_{0}\in G, \lambda_0\in \Lambda$ be fixed and define
$a,b,\beta>0$ such that $I_a(x_{0})\subseteq I, B(y_{0},b)\subseteq
G,\sum= B(\lambda_0,\beta)\subseteq\Lambda$. Then there
exist $\alpha>0$ and $y(x,\lambda):
I_{\alpha}(x_{0})\times\sum\rightarrow B(y_0,b)$  as a unique
$C.P(f,x_{0},y_{0})$ solution for \eqref{eq:1.88} such that for each
$\hat{\lambda}\in int\Sigma$ there exist $\frac{\partial
y(x,\hat{\lambda})}{\partial \lambda_j}=\hat{y}_{j}(x),x\in
I_{\alpha}(x_{0}),j\in
{1,...,m}$,satisfying the following linear system
\begin{equation}\label{eq:1.99}
\frac{dz}{dx}=\frac{\partial
f(x,\hat{\lambda},y(x,\hat{\lambda}))}{\partial y}z+\frac{\partial
f(x,\hat{\lambda},y(x,\hat{\lambda}))}{\partial
\lambda_j},\,\,z(x_{0})=0
\end{equation}
$x\in
I_{\alpha}(x_{0})=[x_{0}-\alpha,x_{0}+\alpha]$ for each $j\in
\{1,...,m\}$.
\end{theorem}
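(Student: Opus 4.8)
The plan is to realize $\hat y_j(x)$ as the limit, as $h\to 0$, of the difference quotients
$$w_h(x)=\frac{1}{h}\big[y(x,\hat\lambda+he_j)-y(x,\hat\lambda)\big],$$
where $e_j$ is the $j$-th vector of the canonical basis of $\mathbb{R}^m$ and $|h|$ is small enough that $\hat\lambda+he_j\in\Sigma$ (possible since $\hat\lambda$ lies in the interior of $\Sigma$). First I would check that the linear system \eqref{eq:1.99} has a unique solution $\{\hat y_j(x):x\in I_\alpha(x_0)\}$: its coefficient matrix $x\mapsto\frac{\partial f}{\partial y}(x,\hat\lambda,y(x,\hat\lambda))$ and its free term $x\mapsto\frac{\partial f}{\partial\lambda_j}(x,\hat\lambda,y(x,\hat\lambda))$ are continuous on $I_\alpha(x_0)$ (compositions of continuous maps, by \eqref{eq:1.13b} together with the continuity of $y(\cdot,\hat\lambda)$ furnished by Theorem \ref{th:1.5}), so Theorem \ref{th:1.1} applies and, in integral form,
$$\hat y_j(x)=\int_{x_0}^{x}\Big[\frac{\partial f}{\partial y}(t,\hat\lambda,y(t,\hat\lambda))\,\hat y_j(t)+\frac{\partial f}{\partial\lambda_j}(t,\hat\lambda,y(t,\hat\lambda))\Big]\,dt.$$

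Next I would subtract the integral equation \eqref{eq:1.90} evaluated at $\lambda=\hat\lambda+he_j$ and at $\lambda=\hat\lambda$, divide by $h$, and insert the first-order expansion \eqref{eq:1.14b} of Remark \ref{re:1.1} with $\lambda'=\hat\lambda$, $\lambda''=\hat\lambda+he_j$ (only the $j$-th component of $\lambda$ varies, so the sum in \eqref{eq:1.14b} collapses to its $j$-th term). This gives
$$w_h(x)=\int_{x_0}^{x}\Big[\frac{\partial f}{\partial y}(t,\hat\lambda,y(t,\hat\lambda))\,w_h(t)+\frac{\partial f}{\partial\lambda_j}(t,\hat\lambda,y(t,\hat\lambda))\Big]\,dt+R_h(x),$$
where $R_h(x)$ is the $\theta$-remainder of \eqref{eq:1.14b}, integrated in $t$ and divided by $h$. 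Invoking the a priori Lipschitz estimate \eqref{eq:1.15b}, $|y(t,\hat\lambda+he_j)-y(t,\hat\lambda)|\le C|h|$, so
$$|R_h(x)|\le (C+1)\int_{x_0}^{x_0+|x-x_0|}\big|\theta(t,\hat\lambda,\hat\lambda+he_j)\big|\,dt\le (C+1)\,\alpha\,\sup_{t\in I_\alpha(x_0)}\big|\theta(t,\hat\lambda,\hat\lambda+he_j)\big|,$$
and since $\theta\to 0$ uniformly in $t$ as $h\to 0$ (Remark \ref{re:1.1}), the quantity $\varepsilon_h:=\sup_{x\in I_\alpha(x_0)}|R_h(x)|$ tends to $0$.

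Finally I would subtract the integral equation for $w_h$ from the one for $\hat y_j$; with $\varphi_h(x)=|w_h(x)-\hat y_j(x)|$ and $\alpha_0(t)=\big|\frac{\partial f}{\partial y}(t,\hat\lambda,y(t,\hat\lambda))\big|$ this yields $\varphi_h(x)\le\varepsilon_h+\int_{x_0}^{x_0+|x-x_0|}\alpha_0(t)\varphi_h(t)\,dt$, and Gronwall's Lemma \ref{le:1.1} gives $\varphi_h(x)\le\varepsilon_h\exp\int_{I_\alpha(x_0)}\alpha_0(t)\,dt$, hence $w_h\to\hat y_j$ uniformly on $I_\alpha(x_0)$ as $h\to 0$. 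This is precisely the assertion that $\frac{\partial y(x,\hat\lambda)}{\partial\lambda_j}$ exists and equals $\hat y_j(x)$, the solution of \eqref{eq:1.99}. The step I expect to be the main obstacle is the control of $R_h$: once the error term of \eqref{eq:1.14b} is divided by the small parameter $h$ one must be sure it still vanishes, and this works only because the a priori Lipschitz bound \eqref{eq:1.15b} on $\lambda\mapsto y(x,\lambda)$ from Remark \ref{re:1.1} turns the factor $|y(\cdot,\lambda'')-y(\cdot,\lambda')|$ into $O(|h|)$, keeping the difference quotient bounded while the $\theta$ factor tends to zero.
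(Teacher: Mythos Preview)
Your proof is correct and follows essentially the same route as the paper: the paper works directly with the combined quantity $E_\tau^j(x)=\frac{1}{\tau}\big[y(x,\hat\lambda+\tau e_j)-y(x,\hat\lambda)-\tau\hat y_j(x)\big]=w_\tau(x)-\hat y_j(x)$, derives its integral inequality from \eqref{eq:1.14b} and \eqref{eq:1.15b}, and closes with Gronwall, whereas you first write the integral equation for $w_h$ and then subtract --- but the ingredients and the estimates are identical.
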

\begin{proof}
By hypothesis, the conditions
of Theorem \ref{th:1.5} are fulfilled and let
$y(x,\lambda):I_{\alpha}(x_{0})\times \Sigma \rightarrow B(y_{0},b)$
be the unique solution of \eqref{eq:1.88} satisfying
$y(x_{0},\lambda)=y_{0},\lambda \in \Sigma$.\\
Consider $\hat{\lambda}\in int\Sigma$ and notice that
differentiability of the solution with respect to parameters at
$\lambda=\hat{\lambda}$ is equivalent to showing
\begin{equation}\label{eq:1.100}
\mathop{lim}\limits_{\tau\rightarrow 0}E_{\tau}^{j}(x)=0,\forall x\in
I_{\alpha}(x_{0}),j\in {1,...,m}
\end{equation}
where$E_{\tau}^{j}(x)$ is defined
by (${e_{1},...,e_{m}}$ is canonical basis for $\mathbb{R}^{m}$)
\begin{equation}\label{eq:1.101}
E_{\tau}^{j}(x)=\frac {1}{\tau}[y(x,\hat{\lambda}+\tau
e_{j})-y(x,\hat{\lambda})-\tau \hat{y_{j}}(x)], \tau\neq 0
\end{equation}
and
$\hat{y_{j}}(x),x\in
I_{\alpha}(x_{0})$, is the unique solution of \eqref{eq:1.99}. Using remark \ref{re:1.1} for $\lambda''=\hat{\lambda}+\tau
e_{g},\lambda'=\hat{\lambda}$ we get
\begin{eqnarray}\label{eq:1.102}
E_{\tau}^{j}(x)&=&\int_{x_{0}}^{x} \frac {\partial f}{\partial
y}(t,\hat{\lambda},y(t,\hat{\lambda}))E_{\tau}^{j}(x)dt+
\nonumber\\&+&\int_{x_{0}}^{x}(\theta
(t,\hat{\lambda}+\tau e_{j}\hat{\lambda}){\frac{1}{\tau}\mid
y(t,\hat{\lambda}+\tau e_{j}-y(t,\hat{\lambda})\mid
+1\}dt}
\end{eqnarray}
Denote $M_{1}=C+1$, where$\frac{1}{\tau}\mid y(t,\hat{\lambda}+\tau
e_{j})-y(t,\hat{\lambda})) \mid\leq C$ and let $L> 0$ be such
that $\mid\frac {\partial
f(t,\hat{\lambda},y(t,\hat{\lambda}))}{\partial y}\mid\leq L
\,\forall t\in
I_{\alpha}(x_{0})$. Then the following integral inequality is valid
\begin{equation}\label{eq:1.103}
\mid E_{\tau}^{j}(x)\mid\leq L\int_{x_{0}}^{x_{0}+\mid
x-x_{0}\mid} E_{\tau}^{j}(x)\mid
dt+M_{1}\int_{x_{0}}^{x_{0}+\alpha}\mid
\theta(t,\hat{\lambda}+\tau e_{j},\hat{\lambda})\mid dt,x\in
I_{\alpha}(x_{0})
\end{equation}
Applying Lemma Gronwall, from \eqref{eq:1.103} we obtain
\begin{equation}\label{eq:1.21b}
\mid E_{r}^{j}(x)\mid\leq M_{1}(\int_{x_{0}}^{x_{0}+\alpha}\mid
\theta(t,\hat{\lambda}+\tau e_{j},\hat{\lambda})\mid dt)\exp L\alpha
\end{equation}
Using $\mathop{lim}\limits_{\tau\rightarrow0}\theta(t,\hat{\lambda}+\tau
e_{j},\hat{\lambda})=0$
uniformly of $t\in I_{\alpha}(x_{0})$ (see\eqref{eq:1.14b}) and passing $\tau\rightarrow 0$ into \eqref{eq:1.21b} we obtain
\begin{equation}\label{eq:1.22b}
\mathop{lim}\limits_{r\rightarrow 0}E_{\tau}^{j}(x)=0,\forall x\in
I_{\alpha}(x_{0}),j\in {1,...,m}
\end{equation}
The proof is complete.
\end{proof}
\subsection{The Local Flow\index{Flow!local}(Differentiability
Properties)} Consider a continuous function $g(x,z):I\times
G\rightarrow\mathbb{R}^{n}$, where $I\subseteq \mathbb{R}$ and
$G\subseteq\mathbb{R}^{n}$ are open sets. Define a new nonlinear
system of differential equations
\begin{equation}\label{eq:1.104}
\frac{dz}{dx}=g(x,z),z(x_{0})=\lambda\in B(z_{0},\rho)\subseteq
G\,,\,\hbox{where} \,x_{0}\in I,z_{0}\in G
\end{equation}
are fixed and $\lambda\in
B(z_{0},\rho)$ is a variable Cauchy condition. Assume
\begin{equation}\label{eq:1.24b}
\hbox{there exist continuous partial derivatives} \,
\frac{\partial g(x,z)}{\partial z_{i}}:I\times G\rightarrow
\mathbb{R}^{n}\,i\in {1,...,n}
\end{equation}
The unique solution of \eqref{eq:1.104}, $z(x,\lambda):I_{\alpha}(x_{0})\times
B(z_{0},\rho)\rightarrow G$ satisfying $z(x_{0},\lambda)=\lambda\in
B(z_{0},\rho)\subseteq \mathbb{R}^{n}$ will be called the local flow\index{Flow!local}
associated with the vector field $g$ satisfying \eqref{eq:1.24b}.
\begin{theorem}\label{th:1.6}(differentiability of local flow)
 Consider that the
vector field $g\in C(I\times G;\mathbb{R}^{n})$ satisfies the
assumption \eqref{eq:1.24b}. Then there exist $\alpha>0$ and a continuously
differentiable local flow $z(x,\lambda):I_{\alpha}(x_{0})\times
B(z_{0},\rho)\rightarrow G$ of
$g$ fulfilling the following properties
\begin{equation}\label{eq:1.25b}
\hbox{for each} \, \hat{\lambda}\in \hbox{int}\, B(z_{0},\rho)\,\, \,\hbox{there exists a
nonsingular} \, (n\times n) matrix
\end{equation}
$\hat{Z}(x)=\frac{\partial
z(x,\hat{\lambda})}{\partial \lambda},x\in
I_{\alpha}(x_{0})$, satisfying
$$\left\{
  \begin{array}{ll}
    \frac{d\hat{Z}(x)}{dx}= & \frac{\partial g(x,z(x,\lambda\hat{}))}{\partial z}\hat{Z}(x),\forall x\in I_{\alpha}(x_{0}) \\
   \hat{Z}(x_{0})= & I_{n}
  \end{array}
\right.$$
\end{theorem}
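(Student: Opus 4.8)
The plan is to derive this differentiability property of the local flow from Theorem \ref{th:1.7} by the classical device of converting the variable Cauchy datum $\lambda$ into a genuine parameter. First I would substitute $w = z - \lambda$, so that the local-flow problem \eqref{eq:1.104} is transformed into
$$\frac{dw}{dx} = g(x, w + \lambda) =: f(x,\lambda,w), \qquad w(x_{0}) = 0,$$
with $f$ continuous on a compact neighbourhood of $(x_{0}, z_{0}, 0)$ inside $I \times B(z_{0},\rho) \times \mathbb{R}^{n}$ (after possibly shrinking $\rho$ and choosing a radius $b$ for $w$ so that $w+\lambda$ stays in $G$). By \eqref{eq:1.24b}, this $f$ has continuous partial derivatives $(\partial f/\partial w_{i})(x,\lambda,w) = (\partial g/\partial z_{i})(x, w+\lambda)$ and $(\partial f/\partial \lambda_{j})(x,\lambda,w) = (\partial g/\partial z_{j})(x, w+\lambda)$, so hypothesis \eqref{eq:1.13b} is met for $f$ with $w$ playing the role of $y$.

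Next I would apply Theorem \ref{th:1.7}: there are $\alpha > 0$ and a unique $C.P(f;x_{0},0)$ solution $w(x,\lambda)$ on $I_{\alpha}(x_{0}) \times B(z_{0},\rho)$ such that, for every $\hat{\lambda} \in \mathrm{int}\, B(z_{0},\rho)$, the derivatives $\hat{w}_{j}(x) = \partial w(x,\hat{\lambda})/\partial \lambda_{j}$ exist on $I_{\alpha}(x_{0})$ and solve a linear system of the form \eqref{eq:1.99}. Writing $A(x) := (\partial g/\partial z)(x, z(x,\hat{\lambda}))$, where $z(x,\lambda) := w(x,\lambda)+\lambda$, and observing that $(\partial f/\partial w)(x,\hat{\lambda},w(x,\hat{\lambda})) = A(x)$ while $(\partial f/\partial \lambda_{j})(x,\hat{\lambda},w(x,\hat{\lambda})) = A(x)\,e_{j}$ is the $j$-th column of $A(x)$, that system reads
$$\frac{d\hat{w}_{j}}{dx} = A(x)\,\hat{w}_{j} + A(x)\,e_{j}, \qquad \hat{w}_{j}(x_{0}) = 0.$$
A one-line check shows $dz/dx = g(x,z)$ and $z(x_{0},\lambda)=\lambda$, so $z$ is the local flow of $g$; it is continuously differentiable because $w$ is continuous in $(x,\lambda)$ and each $\hat{w}_{j}$ depends continuously on $\hat{\lambda}$ (the coefficient $A(x)$ involves $z(x,\hat{\lambda})$, continuous by Theorem \ref{th:1.5}, whence continuous dependence of linear ODE solutions applies).

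Now I would assemble $\hat{Z}(x)$ as the matrix whose $j$-th column is $\partial z(x,\hat{\lambda})/\partial\lambda_{j} = \hat{w}_{j}(x) + e_{j}$; since $\hat{w}_{j}(x_{0}) = 0$ this gives $\hat{Z}(x_{0}) = I_{n}$, and differentiating column by column,
$$\frac{d}{dx}\bigl(\hat{w}_{j}(x) + e_{j}\bigr) = A(x)\,\hat{w}_{j}(x) + A(x)\,e_{j} = A(x)\,\bigl(\hat{w}_{j}(x) + e_{j}\bigr),$$
i.e. $d\hat{Z}/dx = A(x)\,\hat{Z}$ with $\hat{Z}(x_{0}) = I_{n}$, which is precisely the announced matrix equation. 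Finally, nonsingularity of $\hat{Z}(x)$ on $I_{\alpha}(x_{0})$ comes for free: $A(x)$ is continuous on $I_{\alpha}(x_{0})$, so $\hat{Z}$ is exactly the fundamental matrix of solutions of the homogeneous system $dZ/dx = A(x)Z$, whence $\det\hat{Z}(x) \neq 0$ by Lemma \ref{le:1.2}, equation \eqref{eq:1.21} (equivalently, $\det\hat{Z}(x) = \exp\int_{x_{0}}^{x}\mathrm{Tr}\,A(t)\,dt$ by Theorem \ref{th:1.2}). The only genuine work here is bookkeeping --- pinning down the radii so the substitution $w = z-\lambda$ keeps $(x, w+\lambda)$ inside $I\times G$, and verifying joint continuity of $(x,\hat{\lambda}) \mapsto \partial z/\partial\lambda$ --- and both reduce to the continuous-dependence statements already contained in Theorems \ref{th:1.5} and \ref{th:1.7}.
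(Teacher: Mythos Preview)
Your proposal is correct and follows essentially the same route as the paper: the substitution $w=z-\lambda$ (the paper writes $y$ instead of $w$) reduces the problem to Theorem~\ref{th:1.7}, then $\hat{Z}(x)=\partial_\lambda w(x,\hat{\lambda})+I_n$ is shown to satisfy the variational matrix equation, with nonsingularity coming from Liouville's theorem (equivalently Lemma~\ref{le:1.2}\eqref{eq:1.21}). The only cosmetic difference is that the paper packages the $\hat{w}_j$ equations directly as a matrix system for $\hat{Y}=\partial_\lambda y$, whereas you work column by column and then assemble.
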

\begin{proof}
For $z_{0}\in G$ fixed define
$\Lambda=\{z\in G:\mid z-z_{0}\mid<\rho+\epsilon\}=\hbox{int} B(z_{0},\rho
+ \epsilon)$, where $\epsilon> 0$ is sufficiently small such that
$\Lambda\subseteq G$. Associate a new vector field depending on
parameter $\lambda\in \Lambda$
\begin{equation}\label{eq:1.26b}
f(x,\lambda,y)=g(x,y+\lambda),x\in I,\lambda\in \Lambda,y\in
\hat{G}=G-B(z_{0},\rho+\epsilon)
\end{equation}
Let $b>0$ be such that
$B(0,b)\subseteq \hat{G}$ and
$B(z_{0},\rho+\epsilon+b)\subseteq G$. Notice that according to \eqref{eq:1.24b} we get a smooth vector field $f$ with
respect to $(\lambda,y)\in \Lambda\times \hat{G}$ and the following
system of differential equation with parameters
$\lambda=(\lambda_{1},...,\lambda_{n})\subseteq \Lambda$
\begin{equation}\label{eq:1.105}
\frac{dy}{dx}=f(x,\lambda,y),y(x_{0})=0,y\in
\hat{G}\subseteq \mathbb{R}^{n}
\end{equation}
satisfies the differentiability conditions of the Theorem \ref{th:1.7}. Let $y(x,\lambda):I_{\alpha}(x_{0})\times
B(z_{0},\rho)\rightarrow B(0,b)\subseteq \hat{G}$ be the unique
solution of \eqref{eq:1.105} which is continuously differentiable on
$(x,\lambda)\in I_{\alpha}(x_{0})\times int B(z_{0},\rho)$ and
$y_{j}\hat{}(x)=\frac{\partial y(x,\hat{\lambda})}{\partial y} ,x\in
I_{\alpha}(x_{0}),\hat{\lambda}\in int
B(z_{0},\rho)$,fulfils
\begin{equation}\label{eq:1.106}
\left\{
      \begin{array}{ll}
        \frac{du}{dx}= & \frac{\partial f(x,\hat{\lambda},y(x,\hat{\lambda}))}{\partial y}u+ \frac{\partial f(x,\hat{\lambda},y(x,\hat{\lambda}))}{\partial \widetilde{\lambda_{j}}},u\in \mathbb{R}^{n},x\in I_{\alpha}(x_{0})\\
        u(x_{0})= & 0
      \end{array}
    \right.
    \end{equation}
for each $j\in {1,..,n}$. Then $z(x,\lambda)=y(x,\lambda)+\lambda,x\in
I_{\alpha}(x_{0}),\lambda\in B(z_{0},\rho)$, is the unique
continuously differentiable solution of the nonlinear system \eqref{eq:1.104} with $z(x_{0},\lambda)=\lambda$ and
\begin{equation}\label{eq:1.29b}
\hat{Z}(x)=\frac{\partial z(x,\hat{\lambda})}{\partial
\lambda}=\frac{\partial y(x,\hat{\lambda})}{\partial
\lambda}+I_{n},x\in I_{\alpha}(x_{0}),\hat{\lambda}\in int
B(z_{0},\rho)
\end{equation}
where $\hat{Y}(x)=\frac{\partial
y(x,\hat{\lambda})}{\partial \lambda}$ verifies the following
system
\begin{equation}\label{eq:1.107}
\left\{
      \begin{array}{ll}
        \frac{d\hat{Y}(x)}{dx}= & \frac{\partial f(x,\hat{\lambda},y(x,\hat{\lambda}))}{\partial y}\hat{Y}(x)+ \frac{\partial f(x,\hat{\lambda},y(x,\hat{\lambda}))}{\partial \lambda},x\in I_{\alpha}(x_{0})\\
        \hat{Y}(x_{0})= &\hbox{ null matrix}
      \end{array}
    \right.
    \end{equation}
Using \eqref{eq:1.29b} and \eqref{eq:1.107} we see easily that $\hat{Z}(x_{0})=I_{n}$
and
\begin{eqnarray}\label{eq:1.31b}
\frac{d\hat{Z}(x)}{dx}&=&\frac{d[\hat{Y}(x)]}{dx}=\frac{\partial
g(x,z(x,\hat{\lambda}))}{\partial z}[\hat{Z}(x)-I_{n}]
+\frac {\partial g(x,z(x,\hat{\lambda}))}{\partial z}\nonumber\\&=&\frac
{\partial g(x,z(x,\hat{\lambda}))}{\partial
z}\hat{Z}(x),x\in I_{\alpha}(x_{0})
\end{eqnarray}
The matrix satisfying \eqref{eq:1.31b} is a nonsingular one(see Liouville
theorem) and the proof is complete.
\end{proof}
\begin{remark}\label{re:1.2}
Consider the nonsingular matrix $\hat{Z}(x),x\in I_{\alpha}(x_{0})$
given in the above theorem and define
$\hat{H}(x)=[\hat{Z}(x)]^{-1},x\in I_{\alpha}(x_{0})$. Then
$\hat{H}(x),x\in I_{\alpha}(x_{0})$
satisfies the following linear matrix system
\begin{equation}\left\{
  \begin{array}{ll}
    \frac {d\hat{H}(x)}{dx} =& -\hat{H}(x)\frac{\partial g(x,z(x,\hat{\lambda}))}{\partial z},x\in I_{\alpha}(x_{0}) \\
   H(x_{0})=& I_{n}
  \end{array}
\right.
\end{equation}
It can be proved by computing the derivative
$$\frac{d[\hat{H}(x)\hat{Z}(x)]}{dx}=[\frac{d\hat{H}(x)}{dx}]\hat{Z}(x)+\hat{H}(x)[\frac{dZ}{dx}]=0,x\in
I_{\alpha}(x_{0})$$
\end{remark}
\textbf{Exercise(differentiability with respect to $ x_{0}\in
I)$}\\
Let $g(x,z):I\times G\rightarrow \mathbb{R}^{n}$ be continuously
differentiable mapping with respect to $z\in G$.Let $x_{0}\in
I,z_{0}\in G$ and $\beta>0$ be fixed such that $
I_{\beta}(x_{0})=[x_{0}-\beta,x_{0}+\beta]\subseteq I$. Then there
exist $\alpha>0$ and a continuously differentiable mapping
$z(x;s):I_{\alpha}(s)\times I_{\beta}(x_{0})\rightarrow G$
satisfying
\begin{equation}\label{eq:1.108}
\left\{
       \begin{array}{ll}
         \frac{dz(x;s)}{dx}= & g(x,z(x;s)),x\in I_{\alpha}(s)=[s-\alpha,s+\alpha] \\
        z(s;s)= & z_{0}, \,\,\hbox{for each}\, s\in I_{\beta}(x_{0})
       \end{array}
     \right.
\end{equation}
and $\hat{z}(x)=\frac{\partial z(x,\hat{s})}{\partial s},x\in
I_{\alpha}(\hat{s})\,(\hbox{for} \,\hat{s}\in \hbox{int}
I_{\beta}(x_{0}))$ fulfills the following linear system
\begin{equation}\label{eq:1.109}
\frac{d\hat{z}(x)}{dx}=\frac {\partial
g(x,z(x,\hat{s}))}{\partial s}\hat{z}(x),x\in
I_{\alpha}(\hat{s}),\hat{z}(\hat{s})=-g(\hat{s},z_{0})
\end{equation}
\textbf{Hint}.
A system with parameters is associated as in
Theorem \ref{th:1.6} and it lead us to a solution
$$z(x,s):I_{\alpha}(s)\times I_{\beta}(x_{0})\rightarrow B(z_{0},b)\subseteq G$$
of the following integral equation
$$z(x,s)=z_{0}+\int_{s}^{x}g(t,z(t,s))dt,x\in I_{\alpha}(s),s\in
I_{\beta}(x_{0})$$
Take $\hat{s}\in int I_{\beta}(x_{0})$ and using a similar
computation given in Theorem \ref{th:1.7} we get $\mathop{lim}\limits_{\tau\rightarrow
0}E_{\tau}(x)=0,x\in I_{\alpha}(\hat{s})$, where
$$E_{\tau}(x)=\frac{1}{\tau}[z(x,\hat{s}+\tau)-z(x,\hat{s})-\tau\hat{z}(x)],\tau\neq
0$$
\subsection{Applications(Using Differentiability of a Flow)}\index{Flow!}
 (a) The local flow $z(x,\lambda):I_{\alpha}(x_{0})\times
B(z_{0},\rho)\rightarrow \mathbb{R}^{n}$ defined in Theorem \ref{th:1.7}
preserve the volume of any bounded domain $D\subseteq B(z_{0},\rho)$
for $\hat{\lambda}\in D$ provided $Tr\frac{\partial
g(x,z(x,\hat{\lambda}))}{\partial z}=\Sigma_{i=1}^{n}\frac{\partial
g_{i}(x,z(x,\hat{\lambda}))}{\partial z_{i}}=0, x\in
I_{\alpha}(x_{0}),\hat{\lambda}\in D$. In this respect, denote $D(x)=\{y\in
\mathbb{R}^{n},y=z(x,\hat{\lambda}),\hat{\lambda}\in D\},x\in
I_{\alpha}(x_{0})$ and notice that $D(x_{0})=D$. Using multiple
integrals we compute $vol D(x)=\int...\int dy_{1}...dy_{n}$ which reduces to
$$D(x)=\int...\int\mid det \frac{\partial
z(x,\hat{\lambda})}{\partial \lambda}\mid
d\lambda_{1}...d\lambda_{n}$$
Using Liouville theorem  and
$Tr\frac{\partial g(x,z(x,\hat{\lambda}))}{\partial z}=0,\forall
x\in I_{\alpha}(x_{0}),\hat{\lambda}\in D$, we obtain $det \frac{\partial z(x,\hat{\lambda})}{\partial
\lambda}=1$ for any $x\in I_{\alpha}(x_{0}),\hat{\lambda}\in
D$, where
$$\frac{\partial z(x,\hat{\lambda})}{\partial \lambda},x\in
I_{\alpha}(x_{0})$$
satisfies the linear system \eqref{eq:1.25b}.
As a result,$det ]\frac{\partial z(x,\hat{\lambda})}{\partial
\lambda}]=1,\forall x\in I_{\alpha}(x_{0})\,\hbox{and}\, \hat{\lambda}\in
D$, which proves that $vol D(x)=vol D,x\in I_{\alpha}(x_{0})$\\
(b) The linear system  \eqref{eq:1.25b} given in Theorem \ref{th:1.6} is called the
linearized system associated with \eqref{eq:1.104}.\\
If $g(x,z)=g(z)$ such that $g(z)$ satisfies a linear growth
condition
\begin{equation}\label{eq:1.110}
\mid g(z)\mid\leq C(1+\mid z \mid),\forall z\in\mathbb{R}^{n}
\end{equation}
where $C>0$ is a constant ,then the unique solution $z(x,\lambda),\lambda\in B(z_{0},\rho)$ verifying
\begin{equation}\label{eq:1.111}
\frac{dz}{dx}(x,\lambda)=g(z(x,\lambda)),z(0)=\lambda
\end{equation}
can be extended to the entire half line $x\in [0,\infty)$.\\
In addition, if $g(z_{0})=0$($z_{0}$ is a stationary point)then the
asymptotic behaviour of $z(x,\lambda)$ for $x\rightarrow \infty$ and
$\lambda \in B(z_{0},\rho)$ can be obtained analyzing the
corresponding linear constant coefficients system
\begin{equation}\label{eq:1.112}
\frac{dz}{dx} =\frac{\partial g(z_{0})}{\partial
z}z,z(0)=\lambda
\end{equation}
It will be assuming that
\begin{equation}\label{eq:1.113}
A=\frac{\partial g(z_{0})}{\partial z}\, \hbox{is a Hurwitz matrix i.e}
\end{equation}
any$\lambda\in \sigma(A)(P(\lambda)=det(A-\lambda I_n)=0)$
satisfies $Re\lambda<0$, which implies $\mid \exp Ax\mid\leq M
[\exp-wx]$ for some $M>0,w>0$.\\
We say that the nonlinear system \eqref{eq:1.111} is locally asymptotically
stable\index{Asymptotically stable} around the stationary solution ${z_{0}}$(or ${z_{0}}$ is
locally asymptotically stable\index{Asymptotically stable})if there exist $\rho>0$ such that
\begin{equation}\label{eq:1.39b}
\mathop{lim}\limits_{x\rightarrow \infty}z(x,\lambda)=0,\forall \lambda \in
B(z_{0},\rho)
\end{equation}
There is a classical result connecting \eqref{eq:1.113} and
\eqref{eq:1.39b}.
\begin{theorem}(Poincare-Lyapunov) Assume that the $n\times n$
matrix $A$ is Hurwitz such that
$$\mid \exp Ax\mid\leq M( \exp-wx),\forall x\in [0,\infty)$$
for some constant $M>0,w>0$. Let $f(y):\Omega\subseteq \mathbb{R}^{n}\times \mathbb{R}^{n}$ be a
local Lipschitz continuous function such that $\Omega$ is an open
set and $\mid f(y)\mid\leq L\mid y\mid, y\in \Omega (0\in
\Omega)$, where $L>0$ is a constant. If $LM-w<0$,then $y=0$ is asymptotically stable\index{Asymptotically stable} for the perturbed system
\begin{equation}\label{eq:1.40b}
\frac{dy}{dx}=Ay+f(y),y\in \Omega,y(0)=\lambda\in
B(0,\rho)\subseteq \Omega
\end{equation}
Here the unique solution $\{y(x,\lambda):x\geq 0\}\, of (40)$
fulfils $\mathop{lim}\limits_{x\rightarrow \infty}y(x,\lambda)=0$ for each
$\lambda\in B(0,\rho)$ provided $\rho=\frac{\delta}{2M}$ and
$B(0,\rho)\subseteq \Omega$.\end{theorem}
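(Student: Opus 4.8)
The plan is to push the nonlinear equation through the constant variation formula and then close an integral inequality with the Gronwall lemma. First I would invoke the existence--uniqueness theory (Theorem \ref{th:1.5}, together with the standard continuation principle) to obtain the unique solution $\{y(x,\lambda):x\in[0,T)\}$ of \eqref{eq:1.40b} on its maximal forward existence interval $[0,T)$, $T\leq\infty$. On $[0,T)$ this $y$ solves the \emph{linear} system $\frac{dy}{dx}=Ay+b(x)$ with the continuous inhomogeneity $b(x)=f(y(x,\lambda))$, and since the fundamental matrix of $\frac{dz}{dx}=Az$ is $\exp Ax$ (so $\mathcal{C}(x;0)\mathcal{C}^{-1}(t;0)=\exp A(x-t)$), Theorem \ref{th:1.3} gives
$$y(x,\lambda)=(\exp Ax)\lambda+\int_{0}^{x}(\exp A(x-t))f(y(t,\lambda))\,dt,\qquad x\in[0,T).$$
Taking norms and using the hypotheses $\mid\exp Ax\mid\leq M(\exp-wx)$ and $\mid f(y)\mid\leq L\mid y\mid$ (the latter legitimate as long as $y(t,\lambda)$ stays in $\Omega$) yields
$$\mid y(x,\lambda)\mid\leq M(\exp-wx)\mid\lambda\mid+ML\int_{0}^{x}(\exp-w(x-t))\mid y(t,\lambda)\mid\,dt.$$

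Next I would set $\varphi(x)=(\exp wx)\mid y(x,\lambda)\mid$; multiplying the last inequality by $\exp wx$ turns it into $\varphi(x)\leq M\mid\lambda\mid+ML\int_{0}^{x}\varphi(t)\,dt$. Applying the Gronwall lemma (Lemma \ref{le:1.1}) on $[0,x]$ with $\alpha\equiv ML$ gives $\varphi(x)\leq M\mid\lambda\mid\exp(MLx)$, that is
$$\mid y(x,\lambda)\mid\leq M\mid\lambda\mid\exp\big((ML-w)x\big),\qquad x\in[0,T).$$
Since $LM-w<0$ by hypothesis, the right-hand side is decreasing and tends to $0$ as $x\to\infty$; in particular $\mid y(x,\lambda)\mid\leq M\mid\lambda\mid$ throughout $[0,T)$.

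The delicate point — and the main obstacle — is that this estimate is a priori circular: it was derived assuming $y(t,\lambda)\in\Omega$ on the whole interval, so it must be closed by a continuation/bootstrap argument. Here I would fix $\delta>0$ with $\overline{B(0,\delta)}\subseteq\Omega$, put $\rho=\frac{\delta}{2M}$, and for $\lambda\in B(0,\rho)$ define $\tau=\sup\{\,t\in[0,T):\mid y(s,\lambda)\mid<\delta\ \text{for all}\ s\in[0,t]\,\}$. Since $\mid\lambda\mid<\rho<\delta$ we have $\tau>0$, and on $[0,\tau)$ the computation above is valid and yields $\mid y(x,\lambda)\mid\leq M\mid\lambda\mid<M\rho=\tfrac{\delta}{2}$. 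If $\tau<T$, continuity of $y$ would force $\mid y(\tau,\lambda)\mid\leq\tfrac{\delta}{2}<\delta$, contradicting the definition of $\tau$; hence $\tau=T$, so the trajectory stays in the compact set $\overline{B(0,\delta/2)}\subseteq\Omega$. The continuation principle (a solution confined to a compact subset of $\Omega$ cannot cease to exist in finite time) then forces $T=\infty$. Consequently the decay estimate $\mid y(x,\lambda)\mid\leq M\mid\lambda\mid\exp((ML-w)x)$ holds for all $x\geq0$ and all $\lambda\in B(0,\rho)$, giving $\lim_{x\to\infty}y(x,\lambda)=0$, i.e. the asymptotic stability of $y=0$; uniqueness of $\{y(x,\lambda):x\geq0\}$ is inherited directly from Theorem \ref{th:1.5}. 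This completes the plan.
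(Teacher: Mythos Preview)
Your proof is correct and follows the same core argument as the paper: apply the constant variation formula to rewrite the solution, take norms using the Hurwitz bound and $|f(y)|\leq L|y|$, multiply by $\exp wx$, and close with Gronwall to obtain $|y(x,\lambda)|\leq M|\lambda|\exp((ML-w)x)$. The only difference is in how global existence is secured: the paper chooses a fixed $T>0$ with $M\exp(-(w-LM)T)\leq 1$ and then iteratively extends the solution from $[0,T]$ to $[0,2T]$, etc., using the semigroup property $y(x+T,\lambda)=y(x,y(T,\lambda))$ together with $|y(T,\lambda)|\leq\frac{\delta}{2}\exp(-(w-LM)T)$; you instead run a bootstrap with the exit time $\tau$ from $B(0,\delta)$ and invoke the continuation principle directly. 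Both arguments accomplish the same thing, and your version is arguably cleaner.
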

\begin{proof}
Let$\lambda\in \Omega$ be fixed and consider the unique solution
$\{y(x,\lambda):x\in [0,T]\}$ satisfying \eqref{eq:1.40b}. Notice that if
$\lambda$ is sufficiently small then ${y(x,\lambda):x\in [0,T]}$,can
be extended to the entire half line $x\in [0,\infty)$. In this
respect, using the constant variation formula for $x\in[0,T]$ and
$b(x)=f(y(x,\lambda))$ we get
$y(x,\lambda)=(\exp Ax)\lambda+\int_{0}^{x}(\exp(x-t)A)f(y(t,\lambda))dt$,for
any $x\in[0,T]$. Using the above given representation we see easily
the following estimation
$$\mid y(x,\lambda)\mid\leq \mid \exp Ax\mid
\mid\lambda\mid+\int_{0}^{x}\mid \exp(x-t)A\mid\mid
f(y(t,\lambda))\mid dt\leq$$\\
$$\leq\mid\lambda\mid M(\exp-wx)+\int_{0}^{x}LM(\exp-w(x-t))\mid
y(t,\lambda)\mid dt \hbox{for any} x\in[0,T]$$
Multiplying by $(\exp wx)>0$, we obtain
$$(\exp wx)\mid y(x,\lambda\mid)\leq\mid\lambda\mid
M+\int_{0}^{x}LM(\exp wt)\mid y(t,\lambda\mid)dt,x\in[0,T]$$
and applying Gronwall Lemma for $\varphi(x)=(\exp wx)\mid
y(x,\lambda)\mid$ we get $\varphi(x)\leq\mid\lambda\mid M(\exp LMx)$
for any $x\in[0,T]$. Take $\delta>0$ and $\rho=\frac{\delta}{2M}$ such
that $B(0,\delta)\subseteq\Omega$. It implies $\mid
y(x,\lambda)\mid\leq\frac{\delta}{2}$ for any$x\in[0,T]$ if $\mid\lambda\mid\leq\rho$. In addition, $\mid y(T,\lambda)\mid\leq\frac{\delta}{2}\exp(-\alpha
T)$, where $\alpha=w-LM>0$ and choosing $T>0$ sufficiently large such
that $M\exp-\alpha T\leq 1$ we may extend the solution
${y(x,\lambda),x\in[0,T]}$ for $x\in[0,2T]$ such that
$$\mid y(2T,\lambda)\mid=\mid y(2T,y(T,\lambda))\mid\leq M\mid
y(T,\lambda)\mid \exp-\alpha T\leq (\frac{\delta}{2})$$
and
$$\mid y(x,y(T,\lambda))\mid\leq M\mid y(T,\lambda)\mid
\exp(LM-w)x\leq\frac{\delta}{2}$$
for any $x\in[T,2T]$. It shows that $y(x,\lambda)$ can be extended to the entire half line
$x\in[0,\infty)$ if $\mid\lambda\mid\leq\rho=\frac{\delta}{2M}$ and
$\delta>0$ satisfies $B(0,\delta)\subseteq\Omega$\\
In addition the inequality established for $x\in[0,T]$ is preserved
for the extended solution and $\mid
y(x,\lambda)\mid\leq\mid\lambda\mid M \exp(LM-w)x\leq
\frac{\delta}{2}\exp(LM-w)x$, for any $x\in[0,\infty)$, if
$\mid\lambda\mid\leq\rho=\frac{\delta}{2}$. Passing to the limit
$x\rightarrow\infty$, from the last inequality we get
$\mathop{lim}\limits_{x\rightarrow\infty}\mid y(x,\lambda)\mid=0$ uniformly with
respect to $\lambda\mid\leq\rho$ and the proof is complete.
\end{proof}
The following is a direct consequence of the above theorem.
\begin{remark}\label{re:1.3}
Let $A$ be $n\times n$ Hurwitz matrix and
$f(y):\Omega(open)\subseteq\mathbb{R}^{n}\rightarrow \mathbb{R}^{n}$
is a local Lipschitz continuous function where $0\in\Omega$. If $\mid
f(y)\mid\leq\alpha(\mid y\mid),\forall y\in\Omega$,where
$\alpha(\mid y\mid):[0,\infty)\rightarrow[0,\infty)$ satisfies
$\mathop{lim}\limits_{\gamma\rightarrow 0}\frac{\alpha(\gamma)}{\gamma}=0$.
Then the unique solution of the system
$$\frac{dy}{dx}=Ay+f(y),y(0)=\lambda\in B(0,\rho),x\geq 0$$ is a
asymptotically stable\index{Asymptotically stable}($\mathop{lim}\limits_{x\rightarrow\infty}\mid
y(x,\lambda\mid=0)$) if $\rho>0$ is sufficiently
small.
\end{remark}
\begin{proof}
Let $M\geq 1$ and $w>0$ such that $\mid
\exp Ax\mid\leq M(\exp-wx)$. Consider $L>0$ such that $LM-w<0$ and
$\eta>0$ with the property $\alpha(r)\leq Lr$ for any
$r\in[0,\eta]$. Define $\widetilde{\Omega}={y\in\Omega;\mid y\mid<\eta}$. Notice that
$g(y)=Ay+f(y),y\in\widetilde{\Omega}$, satisfies the assumption of
Poincare-Lyapunov theorem which allows to get the conclusion.
\end{proof}
We are in position to mention those sufficient conditions which
implies that the stationary solution $z_{0}\in\mathbb{R}^{n}$ of the
system
\begin{equation}\label{eq:1.114}
\frac{dz}{dx}=g(z),g(z_{0})=0
\end{equation}
is asymptotically stable\index{Asymptotically stable}.Let
$g(z):D(open)\subseteq\mathbb{R}^{n}\rightarrow\mathbb{R}^{n}$ be
given such that $g(z_{0})=0$ and \\
(i) $g(z):D\rightarrow\mathbb{R}^{n}$ is continuously
differentiable.\\
(ii) The matrix $\frac{\partial g(z_{0})}{\partial z}= A$ is
Hurwitz.\\
Under the hypothesis (i) and (ii) we rewrite
$$g(y)=Ay+f(y), \hbox{where} y=z-z_{0} \hbox{and} f(y)=g(z_{0}+y)-g(z_{0})$$
Consider the linear system
$$\frac{dy}{dx}=Ay+f(y),y\in\{y\in\mathbb{R}^{n}:\mid
y\mid<\mu\}=\Omega$$
where $\mu>0$ is taken such that $z_{0}+\Omega\subseteq D$. Using (i) we get that $f$ is locally Lipschitz continuous on $\Omega$ and the assumptions of the above given remark are satisfied. We see
easily that
$$f(y)=\int_{0}^{1}[\frac{\partial g(z_{0}+\theta y)}{\partial
z}-\frac{\partial g(z_{0})}{\partial z}]yd\theta,y\in\Omega$$
and
$$\mid f(y)\mid\leq\alpha(\mid y\mid)=(\int_{0}^{1}h(\theta,\mid
y\mid)d\theta)\mid y\mid,y\in\Omega$$
Here $h(\theta,\mid y\mid)=\max_{\mid w\mid\leq\mid
y\mid}\mid\frac{\partial g(z_{0}+\theta w)}{\partial
z}-\frac{g(z_{0})}{\partial z}\mid$ satisfies \\
$\mathop{lim}\limits_{\mid y\mid\rightarrow 0}h(\theta,\mid y\mid)=0$ uniformly on $\theta\in[0,1]$ and $\mathop{lim}\limits_{\gamma\rightarrow 0}\frac{\alpha(\gamma)}{\gamma}=0$
therefore, the assumptions of the above given remark are satisfied
when considering the nonlinear system \eqref{eq:1.114} and the conclusion will
be stated as
\begin{proposition}Let $g(z):D\subseteq\mathbb{R}^{n}\rightarrow
\mathbb{R}^{n}$ be a continuous function and $z_{0}\in D$ fixed such
that $g(z_{0})=0$. Assume that the condition (i) and (ii) are
fulfilled. Then the stationary solution $z=z_{0}$ of the system \eqref{eq:1.114}
is asymptotically stable\index{Asymptotically stable}, i.e\\
$\mathop{lim}\limits_{x\rightarrow\infty}\mid z(x,\lambda)-z_{0}\mid=0$ for any
$\mid\lambda-z_{0}\mid\leq\rho$, if$\rho>0$ is sufficiently
small,where ${z(x,\lambda):x\geq 0}$ is the unique solution of \eqref{eq:1.114}
with $z(0,\lambda)=\lambda$\\
 \textbf{Problem}. Prove Poincare-Lyapunov
theorem replacing Hurwitz property of the matrix $A$ with $\sigma
(A+A^{*})=\{\lambda_{1},...\lambda_{d}\}$ where
$\lambda_{i}<0,i\in{1,...,d}$.
\end{proposition}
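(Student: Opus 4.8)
The plan is to bypass the exponential decay estimate $|\exp Ax|\leq M(\exp -wx)$ underlying the Poincar\'e--Lyapunov theorem and argue instead with the scalar Lyapunov function $\varphi(x)=|y(x,\lambda)|^{2}$, exactly in the spirit of problem $(P_{1})$ of the stability section. First I would apply Theorem \ref{th:1.5} to obtain, for $\lambda\in B(0,\rho)$ with $\rho>0$ to be fixed later, a unique local solution $\{y(x,\lambda):x\in[0,T]\}$ of $\frac{dy}{dx}=Ay+f(y)$, $y(0)=\lambda$, taking values in a ball around $0$ contained in $\Omega$. Set $2\mu=\max\{\lambda_{1},\ldots,\lambda_{d}\}$, the largest eigenvalue of the symmetric matrix $A+A^{*}$; by hypothesis $\mu<0$. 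The role of $w$ in the classical statement will be played by $-\mu$, with effective constant $M=1$ in place of the general $M$ of the Hurwitz version, so the condition that makes the argument work is $2L+\max\{\lambda_{1},\ldots,\lambda_{d}\}<0$, i.e. $L<|\mu|$.

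Second, I would differentiate along the solution:
$$\frac{d\varphi}{dx}(x)=2\langle Ay(x,\lambda)+f(y(x,\lambda)),y(x,\lambda)\rangle=\langle (A+A^{*})y(x,\lambda),y(x,\lambda)\rangle+2\langle f(y(x,\lambda)),y(x,\lambda)\rangle.$$
Diagonalizing $A+A^{*}$ by an orthogonal transformation, as in \eqref{eq:1.57}--\eqref{eq:1.58}, gives $\langle (A+A^{*})v,v\rangle\leq 2\mu|v|^{2}$ for all $v\in\mathbb{R}^{n}$, while $|f(y)|\leq L|y|$ gives $2\langle f(y),y\rangle\leq 2L|y|^{2}$. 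Hence, as long as $y(x,\lambda)$ remains in $\Omega$,
$$\frac{d\varphi}{dx}(x)\leq (2\mu+2L)\varphi(x).$$
Writing $\beta(x)=\frac{d\varphi}{dx}(x)-(2\mu+2L)\varphi(x)\leq 0$ and integrating the resulting scalar equation exactly as in \eqref{eq:1.60}--\eqref{eq:1.62} (with $w$ replaced by $\mu+L$) yields $\varphi(x)\leq |\lambda|^{2}(\exp 2(\mu+L)x)$, that is $|y(x,\lambda)|\leq |\lambda|(\exp(\mu+L)x)$, valid on the interval where the solution stays in $\Omega$.

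Finally, I would close the loop with the continuation argument. Pick $\eta>0$ with $B(0,\eta)\subseteq\Omega$ and $L$ small enough that $\mu+L<0$. For $|\lambda|<\eta$ the inequality above forces $x\mapsto |y(x,\lambda)|$ to be nonincreasing for as long as it stays $<\eta$, so the trajectory never leaves $\overline{B}(0,|\lambda|)$, a compact subset of $\Omega$; by the usual continuation principle (cf. the bootstrap at the end of the Poincar\'e--Lyapunov proof above) the solution then extends to all of $[0,\infty)$, the estimate $|y(x,\lambda)|\leq |\lambda|(\exp(\mu+L)x)$ persists, and letting $x\to\infty$ gives $\lim_{x\to\infty}y(x,\lambda)=0$, uniformly for $|\lambda|\leq\rho$ with $\rho<\eta$. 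The one genuinely delicate point is precisely this last step: the differential inequality may be used only while $y$ lies in the region where $|f(y)|\leq L|y|$ holds, and one must then observe that the monotonicity of $\varphi$ it produces keeps the trajectory trapped there. This is in fact simpler than the Hurwitz case, since here the ball $B(0,\eta)$ is positively invariant outright and no iteration over successive intervals $[0,T],[T,2T],\ldots$ is needed.
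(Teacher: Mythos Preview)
Your proposal addresses only the \textbf{Problem} appended to the Proposition (the Poincar\'e--Lyapunov variant under the spectral hypothesis on $A+A^{*}$), which the paper states as an exercise and does not prove. The Proposition proper is justified in the text immediately preceding it by the reduction $y=z-z_{0}$, $f(y)=g(z_{0}+y)-Ay$, and the verification that $|f(y)|\leq\alpha(|y|)$ with $\alpha(r)/r\to 0$, so that Remark~\ref{re:1.3} applies; you do not touch this part, but there is nothing to add to what the paper already spells out.

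For the Problem your argument is correct and is exactly the route the paper is signalling through $(P_{1})$: differentiate $\varphi(x)=|y(x,\lambda)|^{2}$ along the flow, bound $\langle(A+A^{*})y,y\rangle\leq 2\mu|y|^{2}$ with $2\mu=\max_{i}\lambda_{i}<0$ via orthogonal diagonalisation, absorb the perturbation through $2\langle f(y),y\rangle\leq 2L|y|^{2}$, and integrate the scalar differential inequality as in \eqref{eq:1.60}--\eqref{eq:1.62} to obtain $|y(x,\lambda)|\leq|\lambda|\exp((\mu+L)x)$. Your observation that the effective constant here is $M=1$, so that the ball $B(0,\eta)$ is positively invariant outright and the interval-doubling bootstrap of the Hurwitz proof becomes unnecessary, is the correct simplification and the point of the exercise. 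The only thing to watch is the order of quantifiers: the condition you need is $L<|\mu|$, and in the application via Remark~\ref{re:1.3} this is arranged by first choosing $L$ and then shrinking $\eta$ so that $\alpha(r)\leq Lr$ on $[0,\eta]$, not the other way around.
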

\textbf{Comment}(on global existence of a solution)
\begin{theorem}(global existence)Let
$g(x,z):[0,\infty)\times\mathbb{R}^{n}\rightarrow\mathbb{R}^{n}$ be
a continuous function which is locally Lipschitz continuous with
respect to $z\in\mathbb{R}^{n}$. Assume that for each $T>0$ there
exists $C_{T}>0$ such that\\
$$\mid g(x,z)\mid\leq C_{T}(1+\mid z\mid),\forall
x\in[0,T],\,z\in\mathbb{R}^{n}$$
Then the unique $C.P(g;0,z_{0})$ solution $z(x,z_{0})$ satisfying\\
$\frac{dz(x;z_{0})}{dx}=g(x,z(x,z_{0}))$, $z(0,z_{0})=z_{0}$, is
defined for any $x\in[0,\infty)$.
\end{theorem}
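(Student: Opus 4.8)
The plan is to run the classical maximal-interval argument, feeding the linear growth hypothesis into Gronwall's inequality to rule out finite-time blow up. First I would invoke Theorem~\ref{th:1.5} (Cauchy--Lipschitz), whose hypotheses are met since $g$ is continuous and locally Lipschitz in $z$, to obtain on some interval $[0,\alpha]$ a unique solution of $\frac{dz}{dx}=g(x,z)$ with $z(0)=z_{0}$. Then I would let $[0,\beta)$ be the maximal interval to the right on which the unique $C.P(g;0,z_{0})$ solution $z(x;z_{0})$ is defined, namely the union of the domains of all such local solutions; uniqueness makes this definition unambiguous and makes $z(x;z_{0})$ well defined on $[0,\beta)$. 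The entire theorem then reduces to proving $\beta=+\infty$, which I would argue by contradiction.

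Assume $\beta<\infty$. On $[0,\beta)$ the solution satisfies the integral equation $z(x)=z_{0}+\int_{0}^{x}g(t,z(t))\,dt$. Using the growth bound with $T=\beta$, we get $|z(x)|\leq |z_{0}|+C_{\beta}\beta+C_{\beta}\int_{0}^{x}|z(t)|\,dt$ for all $x\in[0,\beta)$. Applying Lemma~\ref{le:1.1} (Gronwall) with $\varphi(x)=|z(x)|$, $M=|z_{0}|+C_{\beta}\beta$ and $\alpha(t)\equiv C_{\beta}$ yields the a priori bound $|z(x)|\leq (|z_{0}|+C_{\beta}\beta)\exp(C_{\beta}\beta)=:R$ for every $x\in[0,\beta)$. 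Hence $|g(x,z(x))|\leq C_{\beta}(1+R)$ on $[0,\beta)$, and the integral equation shows that $z$ is Lipschitz, hence uniformly continuous, on $[0,\beta)$; therefore $z_{\ast}:=\lim_{x\to\beta^{-}}z(x)$ exists in $\mathbb{R}^{n}$.

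Next I would extend the solution beyond $\beta$. Setting $z(\beta):=z_{\ast}$ makes $z$ continuous on $[0,\beta]$, and passing to the limit in the integral equation gives $z(x)=z_{0}+\int_{0}^{x}g(t,z(t))\,dt$ for all $x\in[0,\beta]$, so $z$ is $C^{1}$ on $[0,\beta]$ and solves the Cauchy problem up to and including $\beta$. Applying Theorem~\ref{th:1.5} again with initial data $(\beta,z_{\ast})$ produces a unique solution on some $[\beta,\beta+\alpha')$ with $\alpha'>0$; gluing it to $z$ yields a $C^{1}$ solution of $C.P(g;0,z_{0})$ on $[0,\beta+\alpha')$, which by uniqueness agrees with $z(x;z_{0})$ and strictly enlarges its interval of definition, contradicting the maximality of $\beta$. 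Hence $\beta=+\infty$, i.e. $z(x;z_{0})$ is defined for all $x\in[0,\infty)$.

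The main obstacle is the step that extracts the limit $z_{\ast}$ at the putative blow-up time $\beta$: one must first secure the uniform a priori bound on $[0,\beta)$, which is exactly where the linear growth assumption and Gronwall's lemma enter, and then upgrade boundedness of $z$ to boundedness of $z'=g(\cdot,z(\cdot))$ so that $z$ genuinely converges as $x\uparrow\beta$. Once that limit is available, re-solving from $(\beta,z_{\ast})$ and patching via uniqueness is routine bookkeeping.
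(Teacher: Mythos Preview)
Your argument is correct and complete: the maximal-interval plus Gronwall continuation argument is the standard way to prove global existence under a linear growth condition, and every step you outline goes through as written.

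The paper, however, takes a different route. Its (very brief) sketch proceeds constructively in the spirit of Theorem~\ref{th:1.1}: for each $K\geq 1$ one sets up the integral equation on $[0,KT]$, runs Picard iteration to produce a Cauchy sequence of approximants $\{z_k(x,z_0)\}$ converging to a solution on that interval, and then defines the global solution as the inductive limit over $K$. What the paper's sketch does not spell out is how the linear growth bound guarantees that the Picard iterates remain uniformly bounded on $[0,KT]$, so that the local Lipschitz constant can be used uniformly; this is the hidden a priori estimate. Your approach makes that a priori bound completely explicit via Gronwall, and avoids having to verify convergence of a Picard scheme on an interval of fixed length. Conversely, the paper's approach has the virtue of building the global solution directly, without invoking a maximal interval or arguing by contradiction. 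Both are valid; yours is more transparent about where the linear growth hypothesis is actually consumed.
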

\begin{proof}
\textbf{(sketch)}It is used the associated integral equation\index{Integral equation}
$$z(x,z_{0})=z_{0}+\int_{0}^{x}g(t,z(t,z_{0}))dt,x\in[0,K T]$$
and the corresponding Cauchy sequence\index{Cauchy!sequence} allows to get a unique solution\\
${z_{k}(x,z_{0}):x\in[0,K T]}$ for each $K\geq 1$. The global
solution will be defined as an inductive
limit $\widetilde{Z}(z,z_{0}):[0,\infty)\rightarrow\mathbb{R}^{n},\widetilde{Z}(z,z_{0})=Z_{k}(x,z_{0})$, if
$x\in[0,K T]$, for each $k\geq 0$.
\end{proof}
\section[Gradient Systems of Vector Fields and Solutions]{Gradient Systems\index{Gradient systems} of Vector Fields  and their
Solutions; Frobenius Theorem\index{Frobenius!Theorem}}
\begin{definition}Let
$X_{j}(p;y)\in\mathbb{R}^{n}$ for $p=(t_{1},...,t_{m})\in
D_{m}=\prod_{1}^{m}(-a_{i},a_{i}),y\in V\subseteq \mathbb{R}^{n}$ be
continuously differentiable $(X_{j}\in \mathcal{C}^{1}(D_{m}\times
V;\mathbb{R}^{n}))$ for $j\in\{1,...,m\}$.\\
We say that $\{X_{1}(p;y),...,X_{m}(p;y)\}$ defines a gradient system of vector
fields(or fulfils the Frobenius integrability
condition)\index{Frobenius! integrability condition} if\\
$\frac{\partial X_{j}}{\partial\,t_{i}}(p;y)-\frac{\partial\,X_{i}}{\partial\,t_{j}}(p;y)=[X_{i}(p;.),X_{j}(p;.)](y)\forall
i,j\in\{1,...,m\}$, where
$[Z_{1},Z_{2}](y)=\frac{\partial{Z_{1}(y)}}{\partial{y}}Z_{2}(y)-\frac{\partial{Z_{2}(y)}}{\partial{y}}Z_{1}(y)$(Lie
bracket).
\end{definition}
\subsection[Gradient System Associated to Finite Set of
Vec-Fields]{The Gradient System\index{Gradient systems} Associated with a Finite Set of
Vector Fields}
\begin{theorem}\label{th:1.8}  Let $Y_{j}\in
\mathcal{C}^{2}(\mathbb{R}^{n},\mathbb{R}^{n}),x_{0}\in\mathbb{R}^{n},j\in\{1,...,m\}$
be fixed .Then there exist
$D_{m}=\prod_{1}^{m}(-a_{i},a_{i}),V(x_{0})\subseteq \mathbb{R}^{n}$
and $X_{j}(p;y)\in\mathbb{R}^{n}\\X_{j}\in \mathcal{C}^{1}(D_{j-1}\times
V;\mathbb{R}^{n}),j\in\{1,...,m\},X_{1}(p_{1};y)=Y_{1}(y),$such
that
\begin{equation}\label{eq:1.1c}
\frac{\partial y}{\partial t_{1}}=Y_{1}(y),\frac{\partial
y}{\partial t_{2}}=X_{2}(t_{1};y),...,\frac{\partial y}{\partial
t_{m}}=X_{m}(t_{1},...,t_{m-1};y)
\end{equation}
is a gradient system $(\frac{\partial {X_{j}}(p_{j};y)}{\partial
t_{i}}=[X_{i}(p_{i};.),X_{j}(p_{j},.)](y),i<j)$ and
\begin{equation}\label{eq:1.115}
G(p;x)=G_{1}(t)\circ...\circ
G_{m}(t_{m}(x)),p=(t_{1},...,t_{m})\in D_{m},x\in V(x_{0})
\end{equation}
is the solution for \eqref{eq:1.1c} satisfying Cauchy condition  $y(0)=x\in
V(x_{0})$. Here
 $G_{j}(t;x)$ is the local flow generated by\index{Flow!local}
the vector field $Y_{j},j\in\{1,...,m\}$.
\end{theorem}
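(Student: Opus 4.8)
The plan is to obtain the fields $X_j$ by pushing the vector field $Y_j$ forward along the composition of the first $j-1$ local flows, and then to read off both the gradient system \eqref{eq:1.1c} and the Frobenius identity from elementary properties of $G$ together with the equality of mixed partial derivatives.

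First I would apply Theorem~\ref{th:1.6} (and the exercise on differentiability with respect to $x_0$) to each $Y_j$: since $Y_j\in\mathcal{C}^2$, its local flow $G_j(t_j;x)$ is of class $\mathcal{C}^2$ in $(t_j,x)$, satisfies $G_j(0;x)=x$, and has $\det\frac{\partial G_j}{\partial x}\neq0$. Putting $\Phi_0=\mathrm{id}$ and $\Phi_k(t_1,\dots,t_k;x)=\Phi_{k-1}(t_1,\dots,t_{k-1};G_k(t_k;x))$ gives $\Phi_k=G_1(t_1)\circ\cdots\circ G_k(t_k)$; an inductive shrinking of the radii $a_i$ and of a ball $V(x_0)$ — chosen so that $G_k$ maps $(-a_k,a_k)\times V(x_0)$ into the domain of $\Phi_{k-1}$ — makes every $\Phi_k$ defined and $\mathcal{C}^2$ on $D_k\times V(x_0)$, with $\Phi_k(0;x)=x$ and, by the chain rule, $\det\frac{\partial\Phi_k}{\partial x}\neq0$, so $\Phi_k(p_k;\cdot)$ is a $\mathcal{C}^2$ diffeomorphism onto its image. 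I then set
$$X_j(t_1,\dots,t_{j-1};y)=\frac{\partial\Phi_{j-1}}{\partial x}\bigl(t_1,\dots,t_{j-1};\Phi_{j-1}^{-1}(y)\bigr)\,Y_j\bigl(\Phi_{j-1}^{-1}(y)\bigr),$$
i.e. the pushforward of $Y_j$ by $\Phi_{j-1}(t_1,\dots,t_{j-1};\cdot)$; in particular $X_1=Y_1$, and since $\Phi_{j-1},Y_j\in\mathcal{C}^2$ one gets $X_j\in\mathcal{C}^1(D_{j-1}\times V(x_0);\mathbb{R}^n)$.

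Next I would verify that $G=\Phi_m$ solves \eqref{eq:1.1c} with $y(0)=x$, which at the same time yields the representation \eqref{eq:1.115}. The Cauchy condition follows from $G_j(0;\cdot)=\mathrm{id}$. For the $j$-th equation, write $G=\Phi_{j-1}(t_1,\dots,t_{j-1};w)$ with $w=G_j(t_j;u)$ and $u=G_{j+1}(t_{j+1})\circ\cdots\circ G_m(t_m)(x)$, where $u$ does not depend on $t_j$; the flow property gives $\frac{\partial w}{\partial t_j}=Y_j(w)$, hence by the chain rule $\frac{\partial G}{\partial t_j}=\frac{\partial\Phi_{j-1}}{\partial x}(t_1,\dots,t_{j-1};w)\,Y_j(w)=X_j(t_1,\dots,t_{j-1};G)$, since $w=\Phi_{j-1}(t_1,\dots,t_{j-1};\cdot)^{-1}(G)$.

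Finally, the Frobenius identity. Fix $i<j$. Differentiating $\frac{\partial G}{\partial t_j}=X_j(t_1,\dots,t_{j-1};G)$ in $t_i$ and $\frac{\partial G}{\partial t_i}=X_i(t_1,\dots,t_{i-1};G)$ in $t_j$, using $\frac{\partial G}{\partial t_i}=X_i(\cdot;G)$, the fact that $X_i$ depends only on $t_1,\dots,t_{i-1}$ (so $\partial X_i/\partial t_j=0$), and the equality of the mixed partials of the $\mathcal{C}^2$ map $p\mapsto G(p;x)$, one obtains
$$\frac{\partial X_j}{\partial t_i}(\cdot;G)+\frac{\partial X_j}{\partial y}(\cdot;G)\,X_i(\cdot;G)=\frac{\partial X_i}{\partial y}(\cdot;G)\,X_j(\cdot;G),$$
that is $\frac{\partial X_j}{\partial t_i}(\cdot;G)=[X_i(\cdot;\cdot),X_j(\cdot;\cdot)](G)$ by the definition of the Lie bracket. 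Since $y\mapsto G(p;y)$ is an open map reducing to the identity at $p=0$, a final restriction of the radii and of $V(x_0)$ ensures that the points $G(p;x)$ with $x\in V(x_0)$ cover a fixed neighborhood for every $p\in D_m$, so the identity propagates to all of $D_j\times V(x_0)$, which is the asserted gradient-system property. The step I expect to be the real nuisance is precisely this domain bookkeeping — arranging the successive shrinkings of the $a_i$ and of $V(x_0)$ so that all the $\Phi_k$, their inverses, and the $X_j$ are simultaneously defined and of the right regularity; the differential identities themselves are just the chain rule and Schwarz's theorem.
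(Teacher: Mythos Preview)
Your argument is correct and arrives at the same fields $X_j$ as the paper (your pushforward $X_j=(\Phi_{j-1})_*Y_j$ is exactly what the paper's recursive formula $X_j(p_j;y)=H_1(-t_1;y)\hat X_j(\hat p_j;G_1(-t_1;y))$ unwinds to), but the route is genuinely different. The paper proceeds by induction on $m$: it assumes the gradient system for $Y_2,\dots,Y_m$ is already built, prepends the flow $G_1(t_1)$, and then verifies $\partial X_j/\partial t_i=[X_i,X_j]$ by a direct computation using the linear ODE satisfied by $H_1(\sigma;y)=[\partial G_1/\partial y(\sigma;y)]^{-1}$ together with the identity $H_1(-t_1;y)[Z_1,Z_2](G_1(-t_1;y))=[H_1(-t_1;\cdot)Z_1(G_1(-t_1;\cdot)),H_1(-t_1;\cdot)Z_2(G_1(-t_1;\cdot))](y)$ (stated as Exercise~1 after the theorem). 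You instead define all the $X_j$ at once, check that $G$ solves \eqref{eq:1.1c}, and then read off the Frobenius identity from the equality of mixed partials $\partial^2 G/\partial t_i\partial t_j=\partial^2 G/\partial t_j\partial t_i$; this replaces the ``pushforward preserves Lie brackets'' lemma by Schwarz's theorem and is conceptually cleaner. What the paper's more computational approach buys is the explicit formula $\partial X_2/\partial t_1=H_1(-t_1;y)[Y_1,Y_2](G_1(-t_1;y))$ (and its analogues), which is exactly the ingredient reused later in Theorem~\ref{th:2.5} to obtain the nonsingular matrix representation of the gradient system; your Schwarz argument gives the identity but not this structural form. Your closing remark about the domain bookkeeping is apt: that covering step (ensuring $\{G(p;x):x\in V(x_0)\}$ contains a fixed neighborhood for all $p$) is indeed the only place requiring care, and the paper's inductive organization sidesteps it by working one flow at a time.
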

\begin{proof}
We shall use the standard induction argument and for
$m=1$ we notice that the equations \eqref{eq:1.1c} and \eqref{eq:1.115} express the existence
and uniqueness  of a local flow associated with a nonlinear system
of differential equation (see Theorem \ref{th:1.6}). Assume that for
$(m-1)$ given vector fields $Y_{j}\in
\mathcal{C}^{2}(\mathbb{R}^{n},\mathbb{R}^{n})$ the conclusions \eqref{eq:1.1c} and \eqref{eq:1.115}
are satisfied,i.e there exist
\begin{equation}\label{eq:1.3c}
\hat{X_{2}}(\hat{y})=Y_{2},\hat{X_{3}}(t_{2},y),...,\hat{X_{m}}(t_{2},...,t_{m-1};y)
\end{equation}
continuously differentiable with respect to\\
$\hat{p}=(t_{2},...,t_{m})\in\prod_{2}^{m}(-a_{i},a_{i})$ and
$y\in\hat{V}(x_{0})\subseteq\mathbb{R}^{n}$ such
that
\begin{equation}\label{eq:1.4c}
\frac{\partial y}{\partial t_{2}}=Y_{2}(y),\frac{\partial
y}{\partial t_{3}}=\hat{X_{3}}(t_{2};y),...,\frac{\partial
y}{\partial
t_{m}}=\hat{X_{m}}(t_{2},...,t_{m-1};y)
\end{equation}
satisfying Cauchy condition \index{Cauchy!condition}$y(0)=x\in\hat{V}(x_{0})$. Recalling that
\eqref{eq:1.4c} is a gradient system\index{Gradient systems} we notice
\begin{equation}\label{eq:1.5c}\frac{\partial
\hat{X_{j}}(\hat{p};y)}{\partial
t_{i}}=[\hat{X_{i}}(\hat{p_{i}},.),\hat{X_{j}}(\hat{p_{j};.})](y),2\leq
i<j=3,...,m
\end{equation}
where $\hat{p_{i}}=(t_{2},...,t_{i-1})$
Let vector fields $Y_{j}\in
\mathcal{C}^{2}(\mathbb{R}^{n},\mathbb{R}^{n}),j\in\{1,...,m\}$ and denote
\begin{equation}\label{eq:1.6c}
y=G(p;x)=G_{1}(t_{1})\circ
\hat{G}(\hat{p};x),p=(t_{1},\hat{p})\in\prod_{1}^{m}(-a_{i},a_{i}),x\in
V(x_{0})\subseteq\hat{V}(x_{0})
\end{equation}
where $G_{1}(t)(x)$ is the local
flow \index{Flow!local}generated by $Y_{1}$ and
$\hat{G}(\hat{p},x)$ is defined in \eqref{eq:1.3c}. By definition $\frac{\partial G(p;y)}{\partial
t_{1}}=Y_{1}(G(p;x))$ and to prove that $y=G(p;x)$ defined in \eqref{eq:1.6c}
fulfils a gradient system\index{Gradient systems} we write
$G_{1}(-t_{1};y)=\hat{G}(\hat{p};x)$ for $y=G(p;x)$.A
straight computation shows
\begin{equation}\label{eq:1.116}
\frac{\partial {G_{1}}(-t_{1};y)}{\partial y}\frac{\partial
y}{\partial t_{j}}=\frac{\partial
\hat{{G}}(\hat{p};y)}{\partial t_{j}},\,\hbox{for} j=2,...,m
\end{equation}
which is equivalent with writing\\$\frac{\partial y}{\partial
t_{j}}=H_{1}(-t_{1};y)\hat{X_{j}}(\hat{p_{j}};G_{1}(-t_{1};y)),j=2,...,m$.\\
Here the matrix $H_{1}(\sigma;y)=[\frac{\partial
{G_{1}}(\sigma;y)}{\partial y}]^{-1}$ satisfies\\\\
$\frac{d H_{1}}{d\sigma}=-H_{1}\frac{\partial Y_{1}}{\partial
x}(G_{1}(\sigma;y)),H_{1}(0;y)=I_{n}$(see Theorem \ref{th:1.6}). Denote
$$X_{2}(t_{1};y)=H_{1}(-t_{1};y)Y_{2}(G_{1}(-t_{1};y))$$
$$X_{j}(t_{1},...j_{j-1};y)=H_{1}(-t_{1};y)\hat{X_{j}}(t_{2},...,t_{j-1};G_{1}(-t_{1};y)),j=3,...,m$$
for $y\in V(x_{0})\subseteq \hat{V}(x_{0})$ and $p=(t_{1},...,t_{m})\in \prod_{1}^{m}(-a_{i},a_{i})$. With these notations, the system \eqref{eq:1.116} is written as follows
\begin{equation}\label{eq:1.117}
\frac{\partial y}{\partial t_{1}}=Y_{1}(y),\frac{\partial
y}{\partial t_{2}}=X_{2}(t_{1};y),...\frac{\partial y}{\partial
t_{m}}=X_{m}(t_{1},...,t_{m-1};y)
\end{equation}
and to prove that \eqref{eq:1.117} stands for a gradient system\index{Gradient systems} we need to show
that
\begin{equation}\label{eq:1.119}
\frac{\partial X_{j}}{\partial
t_{i}}(p_{j};y)=[X_{i}(p_{i},.),X_{j}(p_{j},.)](y),1\leq
i<j=2,...,m
\end{equation}
For $j=2$ by direct computation we obtain
\begin{equation}\label{eq:1.120}
\frac{\partial X_{2}}{\partial
t_{1}}(t_{1};y)=H_{1}(-t_{1};y)[Y_{1},Y_{2}](G(-t_{1};y))
\end{equation}
and assuming that we know (see the exercise which follows)
\begin{eqnarray}\label{eq:1.11c}
H_{1}(-t_{1};y)[Y_{1},Y_{2}](G_{1}(t_{1},y))
&=&[H_{1}(-t_{1};.)Y_{1}(G_{1}(t_{1};.)),H_{1}(-t_{1};.)Y_{2}(G_{1}(t_{1};.))](y)\nonumber\\
&=&[Y_{1}(.),X_{2}(t_{1};.)](y)
\end{eqnarray}
we get
\begin{equation}\label{eq:1.121}
\frac{\partial X_{2}}{\partial
t_{1}}(t_{1};y)=[Y_{1}(.),X_{2}(t_{1};.)](y)
\end{equation}
The equation \eqref{eq:1.121} stands for \eqref{eq:1.119} when $i=1$and $j=2,3,...m$. It
remains to show \eqref{eq:1.119} for $j=3,...,m$ and $2\leq i<j$. Using \eqref{eq:1.5c} and\\
$\frac{\partial X_{j}}{\partial
t_{i}}(p_{j};y)=H_{1}(-t_{1};y)\frac{\partial\hat{X_{j}
}}{\partial t_{i}}(\hat{p_{j}};G_{1}(-t_{1};y))$,\\we obtain
\begin{equation}\label{eq:1.13c}
\frac{\partial X_{j}}{\partial
t_{i}}(p_{j};y)=H_{1}(-t_{1};y)[\hat{X_{i}}(\hat{p_{i}};.),\hat{X_{j}}(\hat{p_{j}};.)](G_{1}(-t_{1};y))
\end{equation}
if $2\leq i<i<j=3,...,m$. The right side in \eqref{eq:1.13c} is similar to that in \eqref{eq:1.119} and the same
argument used above applied to \eqref{eq:1.13c} allow one to write
\begin{equation}\label{eq:1.123}
\frac{\partial X_{j}}{\partial
t_{i}}(p_{j};y)=[X_{i}(p_{i};.),X_{j}(p_{j};.)](y)
\end{equation}
for any $2\leq i<j=3,...,m$, and the proof is complete.
\end{proof}
\noindent\textbf{Exercise 1}. Let $X,Y_{1},Y_{2}\in
\mathcal{C}^{2}(\mathbb{R}^{n},\mathbb{R}^{n})$ and consider the local flow
$G(\sigma;x),\sigma\in(-a,a),x\in
V(x_{0})\subseteq\mathbb{R}^{n}$,and the matrix
$H(\sigma;x)=[\frac{\partial G}{\partial x}(\sigma;x)]^{-1}$
determined by the vector field $X$(see Theorem \ref{th:1.6}). Then
$$H(-t;x)[Y_{1},Y_{2}](G(-t;x))= =[H(-t;.)Y_{1}(G(-t;.)),H(-t;.)Y_{2}(g(-t;.))](x)$$
for any $t\in(-a,a),x\in V(x_{0})\subseteq\mathbb{R}^{n}$.\\
\textbf{Solution}. The Lie bracket in the right hand side of the
conclusion is computed using $H(-t;y)=\frac{\partial G}{\partial
x}(t;G(-t;y))$ and using the symmetry of the matrices
$\frac{\partial G_{i}}{\partial x^{2}}(t;x),i\in\{1,...,n\}$,\\
we get the conclusion, where $G=(G_{1},...,G_{n})$.\\
\textbf{Exercise 2}. Under the same conditions as in Exercise
1, prove that
$$H(-t;y)Y_{1}(G(-t;y))=Y_{1}(y),\,\, \hbox{provided} X=Y_{1}$$
\textbf{Solution}. By definition $H(0;y)=I_{n}$ and
$$X_{1}(t;y)=H(-t;y)Y_{1}(G(-t;y))$$ satisfies $X_{1}(0;y)=Y_{1}(y)$. In
addition, applying the standard derivation of $X_{1}(t;y)$ we get
$\frac{d}{dt}X_{1}(t;y)=0$ which shows
$X_{1}(t;y)=Y_{1}(y),t\in(-a,a)$ and the verification is
complete.
\subsection{Frobenius Theorem\index{Frobenius! Theorem}}
Let $X_{j}\in
\mathcal{C}^{2}(\mathbb{R}^{n},\mathbb{R}^{n}),j\in\{1,...,m\}$ be given and
consider the following system of differential
equations
\begin{equation}\label{eq:1.124}
\frac{\partial y}{\partial t_{j}}=X_{j}(y),j=1,...,m,y(0)=x\in
V\subseteq\mathbb{R}^{n}
\end{equation}
\begin{definition} A solution for \eqref{eq:1.124} means a function \\
$G(p;x):D_{m}\times V\rightarrow\mathbb{R}^{n}$ of class $\mathcal{C}^{2}(G\in
\mathcal{C}^{2}(D_{m}\times V;\mathbb{R}^{n}))$ fulfilling \eqref{eq:1.1c} for any
$p=(t_{1},...,t_{m})\in D_{m}=\prod_{1}^{m}(-a_{i},a_{i})$ and $x\in
V(open set)\subseteq\mathbb{R}^{n}$. The system \eqref{eq:1.124} is completely
integrable on $\theta(open set)\subseteq\mathbb{R}^{n}$ if for any
$x_{0}\in\theta$ there exists a neighborhood
$V(x_{0})\subseteq\theta$ and a unique solution
$G(p;x),(p;x)\in D_{m}\times V(x_{0})$ of \eqref{eq:1.124} fulfilling
$G(0;x)=x\in V(x_{0})$
\end{definition}
\begin{theorem}\label{th:1.9}(Frobenius theorem)\index{Frobenius! theorem} Let $X_{j}\in
\mathcal{C}^{2}(\mathbb{R}^{n},\mathbb{}^{n}),j\in\{1,...,m\}$ be given. Then
the system \eqref{eq:1.124} is completely integrable on $$\theta \,\hbox{(open
set)} \,\subseteq\mathbb{R}^{n} iff [Y_{i},Y_{j}](x)=0$$
for any $i,j\in\{1,...,m\},x\in\theta$, where $[Y_{i},Y_{j}]=$ Lie
bracket. In addition, any local solution $y=G(p;x),(p,x)\in D_{m}\times
V(x_{0})$ is given by\\
$G(p;x)=G_{1}(t_{1})\circ...\circ G_{m}(t_{m})(x)$,where
$G_{j}(\sigma)(x)$ is the local flow generated by\index{Flow!local}
$X_{j},j\in\{1,...,m\}$.
\end{theorem}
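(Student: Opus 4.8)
The plan is to prove the two implications separately and to read off the representation $G(p;x)=G_1(t_1)\circ\cdots\circ G_m(t_m)(x)$ from the sufficiency argument (throughout I write $[X_i,X_j]$ for what the statement calls $[Y_i,Y_j]$, the vector fields being the $X_j$).

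\emph{Necessity.} Suppose \eqref{eq:1.124} is completely integrable on $\theta$. Fix $x_0\in\theta$ and let $G(p;x)$, $(p,x)\in D_m\times V(x_0)$, be the $\mathcal{C}^2$ solution with $G(0;x)=x$. Since $G(\cdot;x)$ is $\mathcal{C}^2$, its mixed second-order partial derivatives commute. Differentiating $\partial_{t_j}G=X_j(G)$ with respect to $t_i$ and using $\partial_{t_i}G=X_i(G)$ gives $\partial_{t_i}\partial_{t_j}G=\frac{\partial X_j}{\partial y}(G)X_i(G)$; interchanging $i$ and $j$ and subtracting yields $\frac{\partial X_j}{\partial y}(G)X_i(G)-\frac{\partial X_i}{\partial y}(G)X_j(G)=0$, i.e. $[X_i,X_j](G(p;x))=0$ on $D_m\times V(x_0)$. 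Putting $p=0$, $x=x_0$ gives $[X_i,X_j](x_0)=0$; as $x_0$ is an arbitrary point of $\theta$, the brackets vanish identically on $\theta$.

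\emph{Sufficiency (the crux).} Assume $[X_i,X_j]\equiv 0$ on $\theta$ and fix $x_0\in\theta$. Apply Theorem \ref{th:1.8} with $Y_j:=X_j$: it furnishes $D_m$, $V(x_0)$, auxiliary fields $\tilde X_2(t_1;y),\dots,\tilde X_m(t_1,\dots,t_{m-1};y)$ and $G(p;x)=G_1(t_1)\circ\cdots\circ G_m(t_m)(x)$ solving the gradient system $\partial_{t_1}y=X_1(y)$, $\partial_{t_j}y=\tilde X_j(t_1,\dots,t_{j-1};y)$ with $G(0;x)=x$, where $G_j$ is the local flow of $X_j$. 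Tracing the recursion in the proof of Theorem \ref{th:1.8}, each $\tilde X_j$ is obtained from $X_j$ by successive transports along $G_{j-1}(t_{j-1}),\dots,G_1(t_1)$, each transport being of the form $Z\mapsto\big(y\mapsto H_i(-t_i;y)Z(G_i(-t_i;y))\big)$. The key observation is: if $[X_i,Z]\equiv 0$ near $x_0$ then this transport fixes $Z$, because $\partial_{t_i}\big[H_i(-t_i;y)Z(G_i(-t_i;y))\big]=H_i(-t_i;y)[X_i,Z](G_i(-t_i;y))=0$ — exactly the computation already carried out for \eqref{eq:1.120} — while at $t_i=0$ the transported field equals $Z(y)$ since $H_i(0;y)=I_n$. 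Applying this with $Z=X_j$ for $i=j-1,j-2,\dots,1$ in turn (shrinking $V(x_0)$ and the $a_i$ so the flows stay in $\theta$) collapses every transport, whence $\tilde X_j\equiv X_j$. Therefore the gradient system produced by Theorem \ref{th:1.8} is literally \eqref{eq:1.124}, so $G(p;x)=G_1(t_1)\circ\cdots\circ G_m(t_m)(x)$ is a $\mathcal{C}^2$ solution of \eqref{eq:1.124} with $G(0;x)=x$; this also establishes the ``in addition'' assertion.

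\emph{Uniqueness and conclusion.} It remains to see the local solution is unique, which together with the existence just shown and with the necessity part closes the equivalence. If $\bar G$ is any $\mathcal{C}^2$ solution of \eqref{eq:1.124} near $x_0$ with $\bar G(0;x)=x$, restrict to the coordinate axes and peel off one variable at a time: along $t_1=\cdots=t_{m-1}=0$ the curve $t_m\mapsto\bar G(0,\dots,0,t_m;x)$ is an integral curve of $X_m$ through $x$, hence equals $G_m(t_m)(x)$ by uniqueness of the local flow (Theorem \ref{th:1.6}); with $t_m$ fixed and $t_1=\cdots=t_{m-2}=0$, the curve $t_{m-1}\mapsto\bar G(0,\dots,0,t_{m-1},t_m;x)$ is an integral curve of $X_{m-1}$ with initial value $G_m(t_m)(x)$, hence equals $G_{m-1}(t_{m-1})\circ G_m(t_m)(x)$; iterating over all coordinates gives $\bar G(p;x)=G_1(t_1)\circ\cdots\circ G_m(t_m)(x)=G(p;x)$. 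Thus \eqref{eq:1.124} is completely integrable on $\theta$. I expect the only real difficulty to be the reduction in the sufficiency step — recognizing that the auxiliary fields $\tilde X_j$ of Theorem \ref{th:1.8} degenerate to $X_j$ exactly because all Lie brackets vanish — while the rest is the equality of mixed partials and one-variable ODE uniqueness, plus the bookkeeping needed to keep the successive flows inside $\theta$.
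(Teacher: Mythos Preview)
Your proof is correct, and the sufficiency step---using Theorem \ref{th:1.8} and showing that the auxiliary fields $\tilde X_j$ collapse to $X_j$ via the transport computation $\partial_{t_i}\big[H_i(-t_i;y)Z(G_i(-t_i;y))\big]=H_i(-t_i;y)[X_i,Z](G_i(-t_i;y))$---is exactly the paper's argument.

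The necessity and uniqueness parts, however, follow different routes. For necessity the paper does not invoke equality of mixed partials directly; instead it reads both implications off the same transport identity, arguing that the orbit $G(p;x)=G_1(t_1)\circ\cdots\circ G_m(t_m)(x)$ solves \eqref{eq:1.124} if and only if $X_j(t_1,\dots,t_{j-1};y)=Y_j(y)$, and that this equality holds if and only if the brackets vanish (since $X_j(0,\dots,0;y)=Y_j(y)$ and $\partial_{t_i}X_j=H_i(-t_i;\cdot)[Y_i,\cdot](\cdots)$). Your mixed-partial argument is shorter and self-contained. For uniqueness the paper does not peel off one coordinate at a time; it observes that for any solution $\bar G$ of \eqref{eq:1.124} the radial curve $\varphi(\theta;x)=\bar G(\theta p;x)$ satisfies the ordinary differential equation $\frac{d\varphi}{d\theta}=\sum_{j=1}^m t_j Y_j(\varphi)$ with $\varphi(0;x)=x$, and appeals to the Cauchy--Lipschitz theorem for this single ODE. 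Your axis-by-axis argument is equally valid and perhaps more transparent, while the paper's radial trick handles all of $D_m$ in one stroke.
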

\begin{proof} For
given $\{Y_{1},...,Y_{m}\}\subseteq
\mathcal{C}^{2}(\mathbb{R}^{n};\mathbb{R}^{n})$ and
$x_{0}\in\theta\subseteq\mathbb{R}^{n}$ fixed we associate the
corresponding gradient system
\begin{equation}\label{eq:1.125}
\frac{\partial y}{\partial t_{1}}=Y_{1}(y),\frac{\partial
y}{\partial t_{2}}=\hat{X_{2}}(t_{1};y),...,\frac{\partial
y}{\partial t_{m}}=\hat{X_{m}}(t_{2},...,t_{m-1};y)
\end{equation}
 and its solution
\begin{equation}\label{eq:1.126}
y=G(p;x)=G_{1}(t_{1})\circ...\circ
G_{m}(t_{m})(x),p\in D_{m}=\prod_{1}^{m}(-a_{j},a_{j})
\end{equation}
satisfying $G(0;x_{0})=x_{0}\in V(x_{0}) $(see Theorem \ref{th:1.8}) and
$G(p;x)\in\theta$. By definition $\frac{\partial y}{\partial
t_{j}}=X_{j}(t_{1},...t_{j-1};y),j\in\{1,...,m\}$, if $y=G(p;x_{0})$
and the orbit given in \eqref{eq:1.126} is a solution of the system \eqref{eq:1.124}$iff$
\begin{equation}\label{eq:1.127}
X_{j}(t_{1},...t_{j-1};y)=Y_{j}(y),j\in\{2,...,m\},y\in
V(x_{0})\subseteq\theta
\end{equation}
To prove \eqref{eq:1.127} we notice that
\begin{equation}\label{eq:1.128}
X_{2}(t_{1};y)=H_{1}(-t_{1};y)Y_{2}(G_{1}(-t_{1};y)),\,\hbox{for any}
\,y\in V(x_{0})
\end{equation}
where $H_{1}(-t_{1};y)=[\frac{\partial G_{1}}{\partial
y}(-t_{1};y)]^{-1}$ satisfies linear equation
\begin{equation}\label{eq:1.129}
\left\{
       \begin{array}{ll}
         \frac{dH_{1}}{d\sigma}(\sigma;y)= & -H_{1}(\sigma;y)\frac{\partial Y_{1}}{\partial y}(G_{1}(\sigma;y)),\sigma\in(-a_{1},a_{1}) \\
         H_{1}(0;y)= & I_{n}
       \end{array}
     \right.
\end{equation}
(see Remark \eqref{re:1.2} ). Using \eqref{eq:1.128} and \eqref{eq:1.129}, by a direct computation we get $$\left\{
   \begin{array}{ll}
     X_{2}(0;y)= & Y_{2}(y),y\in V(x_{0}) \\
     \frac{dX_{2}(t_{1};y)}{dt_{1}}= &
H_{1}(-t_{1};y)[Y_{1},Y_{2}](G_{1}(-t_{1};y))
   \end{array}
 \right.
$$\\
 for any $y\in V(x_{0})\subseteq\theta,t_{1}\in(-a_{1},a_{1})$. In
particular
\begin{equation}\label{eq:1.130}
X_{2}(t_{1};y)=Y_{2}\,\hbox{for any} \,y\in V(x_{0})
\hbox{iff} [Y_{1},Y_{2}](y)=0\,\hbox{for any } \,y\in V(x_{0})
\end{equation}
A similar argument can be used for proving that\\
\begin{eqnarray} 
X_{j}(t_{1},...,t_{j-1};y)&=&Y_{j}(y),\, \hbox{for any} \,y\in
V(x_{0}),j\geq 2 \,iff
 [Y_{i},Y_{j}](y)=0 \nonumber\\
 &&\hbox{ for any} \,1\leq i\leq
j-1,y\in V(x_{0})
\end{eqnarray}
and the conclusion of the system \eqref{eq:1.124} implies
\begin{equation}\label{eq:1.131}
[Y_{i},Y_{j}](y)=0,\forall y\in
V(x_{0})\subseteq\theta,i,j\in\{1,...,m\}
\end{equation}
The reverse implication, $\{Y_{1},...Y_{m}\}\subseteq
\mathcal{C}^{2}(\mathbb{R}^{n},\mathbb{R}^{n})$ are commuting on
$\theta(open)\subseteq\mathbb{R}^{n}$ implies that the system
is completely integrable on $\theta$ will be proved noticing t\eqref{eq:1.124} hat\\
\begin{equation}
G_{i}(t_{i})\circ G_{j}(t_{j})(y)=G_{j}(t_{j})\circ
G_{i}(t_{i})(y),\hbox{for any} \,i,j\in\{1,...,m\},y\in V(x_{0})
\end{equation}
if \eqref{eq:1.129}is
assumed. Here $G_{j}(\sigma)(y)$ is the local flow \index{Flow!local}generated by the
vector field $Y_{j},j\in\{1,...,m\}$. It remains to check that the
orbit \eqref{eq:1.126} is the unique solution of the system \eqref{eq:1.124} and in this
respect we notice that
\begin{equation}
\varphi(\theta;x)=G(\theta p;x),\theta\in[0,1]
\end{equation}
satisfies the following system of ordinary differential equations\index{Ordinary differential equations}
\begin{equation}\label{eq:1.132}
\left\{
       \begin{array}{ll}
         \frac{d\varphi}{d\theta}(\theta;x)= & \Sigma_{j=1}^{m}t_{j}Y_{j}(\varphi(\theta;x))= g(p,\varphi(\theta;x))\\
         \varphi(0;x)= & x,p=(t_{1},...,t_{m})\in D_{m}
       \end{array}
     \right.
\end{equation}
Here $g(p,y)$ is a locally Lipschitz continuous function with
respect to $y\in\theta$ and $\{\varphi(\theta;x):\theta\in[0,1]\}$
is unique. The proof is complete.
\end{proof}
\noindent\textbf{Comment}(total differential equation and gradient
system)\index{Gradient systems}\\
For a continuously differentiable
function $y(p):D_{m}=\prod_{1}^{m}(-a_{k},a_{k})\rightarrow
\mathbb{R}^{n}$ define the corresponding differential as follows
$$dy(p)=\Sigma_{j=1}^{m}\frac{\partial y(p)}{\partial
t_{j}}dt_{j}$$
where
$$p=(t_{1},...,t_{m})\in D_{m}$$
Let$\{Y_{1},...,Y_{m}\}\subseteq
C_{2}(\mathbb{R}^{n},\mathbb{R}^{n})$ be given and consider the
following equation (using differentials)\\
$dy(p)=\Sigma_{j=1}^{m}Y_{j}(y(p))dt_{j},p\in D_{m},y(0)=x\in
V(x_{0})\subseteq\mathbb{R}^{n}$.$\theta$  solution for the last
equation implies that \\
$\frac{\partial y}{\partial
y}=Y_{j}(y),j\in\{1,...,m\},p=(t_{1},...,t_{m})\in D_{m}$ is a
gradient system for $y\in V(x_{0})$ and it can be solved using
Frobenius theorem\index{Frobenius! theorem}(\eqref{th:1.9}). In addition, for an analytic
function of $z\in D\subseteq\mathbb{C}$,\\
$w(z):D\rightarrow \mathbb{C}^{n}$ defines the corresponding
differential \\
$dw(z)=dw_{1}(x,y)+i dw_{2}(x,y),z=a+iy\in D$\\where
$w(z)=w_{1}(x,y)+iw_{2}(x,y)$and\\ $v(x,y)=\left(
          \begin{array}{c}
            w_{1} \\
            w_{2} \\
          \end{array}
        \right)
(x,y):D_{2}=\prod_{1}^{2}(-a_{k},a_{k})\rightarrow \mathbb{R}^{2n}$
is an analytic function.Let
$f(w):\Omega\subseteq\mathbb{C}^{n}\rightarrow \mathbb{C}^{n}$ be
given analytic function and associate the following equation (using
differentials)\\
$dw(z)=f(w(z))dz=[f_{1}(v(x,y))dx-f_{2}(v(x,y))dy]+\\
+i[f_{2}(v(x,y))dx+f_{2}(v(x,y))dy]$ where \\
$f(w)=f_{1}(v)+if_{2}(v)$ and $f_{1},f_{2}$ are real analytic
functions of $v=\left(
          \begin{array}{c}
            w_{1} \\
            w_{2} \\
          \end{array}
        \right)\in D_{2n}=\prod_{k=1}^{2n}(-a_{k},a_{k})$. To solve
the last complex equation\index{Complex!equation} we need to show that $Y_{1}(v)=\left(
          \begin{array}{c}
            f_{1} \\
            f_{2} \\
          \end{array}
        \right)(v)$ and $Y_{2}(v)=\left(
          \begin{array}{c}
            -f_{1} \\
            f_{2} \\
          \end{array}
        \right)(v)$are commuting on $v\in D_{2n}$ which shows the
system $\frac{\partial v}{\partial x}=Y_{1}(v),\frac{\partial
v}{\partial y}=Y_{2}(v)$ is completely integrable on $D_{2}$.
\section{Appendix}
\textbf{$(a_{1})$}Assuming that $f(x,y):I\times G\subseteq
\mathbb{R}\times\mathbb{R}^{n}\rightarrow\mathbb{R}^{n}$ is a
continuous function, Peano proved the following existence theorem
\begin{theorem}(Peano)
Consider the following system of $ODE$
$$\frac{dy}{dx}=f(x,y),y(x_{0})=y_{0}\,\, \hbox{where}\,\, (x_{0},y_{0})\in I\times
G$$ is fixed and $\{f(x,y):(x,y)\in I\times G\}$ is a continuous
function. Then there exist an interval $J\subseteq I$ and a
continuously derivable function $y(x):J\rightarrow G$ satisfying
$y(x_{0})=y_{0}$ and $\frac{dy}{dx}(x)=f(x,y(x)),x\in
J$.\end{theorem}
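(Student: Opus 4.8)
The plan is to give up on Picard iteration, which needs a Lipschitz bound not available here, and instead build an \emph{approximate} solution by the Cauchy--Euler polygonal method, then recover an exact solution by a compactness argument. (An alternative would be to apply Schauder's fixed point theorem to the integral operator on a suitable convex compact set, but the polygonal route keeps the prerequisites minimal.) First I would localize exactly as in Theorem~\ref{th:1.5}: choose $a,b>0$ with $I_a(x_0)=[x_0-a,x_0+a]\subseteq I$ and $\overline{B}(y_0,b)\subseteq G$, set $M=\max\{|f(x,y)|:x\in I_a(x_0),\,|y-y_0|\le b\}$, which is finite since $f$ is continuous on a compact set, and put $\alpha=\min(a,b/M)$ and $J=I_\alpha(x_0)$.

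For each integer $k\ge 1$ I would partition $J$ into $k$ equal subintervals with nodes $\{\xi_i^k\}$ and define $y_k:J\to\mathbb{R}^n$ to be the continuous piecewise-affine function with $y_k(x_0)=y_0$ whose slope on each subinterval equals the value of $f$ at the endpoint of that subinterval nearer to $x_0$, evaluated at the already-constructed value $y_k(\xi_i^k)$ there. An induction on the index $i$, using $\alpha\le b/M$, shows $y_k(x)\in\overline{B}(y_0,b)$ for all $x\in J$, so the recipe never exits $G$; the same bound yields $|y_k(x'')-y_k(x')|\le M|x''-x'|$ for all $x',x''\in J$. Hence $\{y_k\}_{k\ge1}$ is uniformly bounded and uniformly Lipschitz, so equicontinuous, and by the Arzel\`a--Ascoli theorem some subsequence $y_{k_j}$ converges uniformly on $J$ to a continuous limit $y:J\to\overline{B}(y_0,b)$.

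It remains to identify $y$ as a solution. Writing the polygon in integral form gives $y_k(x)=y_0+\int_{x_0}^x f(t,y_k(t))\,dt+\varepsilon_k(x)$ for $x\in J$, where $\varepsilon_k(x)$ collects, over the subinterval containing $t$, the differences $f(\xi_i^k,y_k(\xi_i^k))-f(t,y_k(t))$. Using that $f$ is uniformly continuous on the compact set $J\times\overline{B}(y_0,b)$, that the mesh of the partition tends to $0$, and that $|y_k(t)-y_k(\xi_i^k)|\le M|t-\xi_i^k|$, one obtains $\sup_{x\in J}|\varepsilon_k(x)|\to0$. Passing to the limit along $k_j$ — uniform convergence of $y_{k_j}$ together with continuity of $f$ permits moving the limit inside the integral — yields $y(x)=y_0+\int_{x_0}^x f(t,y(t))\,dt$ for $x\in J$, and the right-hand side is continuously differentiable in $x$; hence $y\in\mathcal{C}^1(J;G)$, $y(x_0)=y_0$, and $\frac{dy}{dx}(x)=f(x,y(x))$ on $J$.

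The main obstacle is this last step: showing $\varepsilon_k\to0$ uniformly, which is precisely where uniform continuity of $f$ on the localizing compact set is indispensable and where the Lipschitz estimate from the previous paragraph is needed to keep $y_k(t)$ close to the node value $y_k(\xi_i^k)$. I would also point out that no uniqueness is asserted here — and none can be, by the example $\dot y=2\sqrt{|y|}$, $y(0)=0$ recorded earlier — and that the only ingredient not developed in this chapter is the Arzel\`a--Ascoli compactness theorem, which I would simply quote as a standard fact.
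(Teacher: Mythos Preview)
Your argument is correct and complete. The localization, the equicontinuity bound $|y_k(x'')-y_k(x')|\le M|x''-x'|$, the appeal to Arzel\`a--Ascoli, and the limit passage via uniform continuity of $f$ on $J\times\overline{B}(y_0,b)$ are all sound, and you correctly flag that uniqueness is neither claimed nor available.

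The paper, however, builds the approximating sequence differently: instead of Euler polygons it uses Tonelli's \emph{delayed-argument} approximations, setting $y_k(x)=y_0$ on $[x_0,x_0+\alpha/k]$ and then
\[
y_k(x)=y_0+\int_{x_0+\alpha/k}^{x} f\bigl(t,\,y_k(t-\tfrac{\alpha}{k})\bigr)\,dt
\qquad\text{for }x\in[x_0+\tfrac{\alpha}{k},\,x_0+\alpha],
\]
so that $y_k$ is defined recursively on successive subintervals of length $\alpha/k$. The compactness step is identical to yours (same uniform bound, same Lipschitz estimate, same Arzel\`a--Ascoli extraction). The gain of Tonelli's scheme is that each $y_k$ already satisfies an honest integral equation with a shifted argument, so the limit passage does not require isolating and estimating an error term $\varepsilon_k(x)$ as you do; one simply uses that $y_k(t-\alpha/k)\to y(t)$ uniformly together with continuity of $f$. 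Your polygonal route is equally valid and arguably more geometric, but it trades the simplicity of the limit step for the extra bookkeeping of bounding $\varepsilon_k$.
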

 \begin{proof}
  It relies on Arzela-Ascoli theorem
which is refering to so called compact sequence\\$\{y_{k}(x),x\in
D\}_{k\geq 1}\subseteq C(D;\mathbb{R}^{n})$\\
 satisfying
$$(i)\,\,\, \hbox{A boundedness condition } \,\mid y_{k}(x)\mid\leq M,\forall x\in
D,k\geq
0$$
$$(ii)\,\,\,\,\hbox{ they are equally uniformly continuous, i.e for any } \,\epsilon>0 \,\hbox{there exists }\delta(\epsilon)>0$$
$$\hbox{ such that} \,\mid
x''-x'\mid<\delta(\epsilon)\, \hbox{ implies} \,\mid
y_{k}(x'')-y_{k}(x')\mid<\epsilon\, \forall\, x',x''\in D,k\geq 1$$
If a sequence $\{y_{k}(x):x\in D\}_{k\geq 1}$ satisfy (i) and (ii) then
Arzela-Ascoli theorem allows to find a subsequence
$\{y_{k_{j}}(x):x\in D\}_{j\geq 1}$ which is uniformly convergent on
the compact set $D$. Let $a>0$ and $b>0$ such that
$I_{a}=[x_{0}-a,x_{0}+a]\subseteq I$ and $B(y_{0},b)\subseteq
G$. Define $M=\max\{\mid f(x,y)\mid:(x,y)\in I_{a}\times B(y_{0},b)\}$
and let $\alpha=min(a,\frac{b}{M})$. Construct the following sequence
of continuous functions
$$y_{k}(x)=[x_{0},x_{0}+\alpha]\rightarrow G,y_{k}(x)=y_{0},
\hbox{for} x\in[x_{0},x_{0}+\frac{\alpha}{k}]$$
(iii)$$y_{k}(x)=y_{0}+\int_{x_{0}+\frac{\alpha}{k}}^{x}f(t,y_{k}(t-\frac{\alpha}{k})), \hbox{if } \,
x\in[x_{0}+\frac{\alpha}{k},x_{0}+\alpha] \hbox{for each } \,k\geq
1$$
$\{y_{k}(x):x\in[x_{0},x_{0}+\alpha]\}$ is well defined because
it is constructed on the interval
$[x_{0}+m\frac{\alpha}{k},x_{0}+(m+1)\frac{\alpha}{k}]$ using its
definition on the preceding interval
$[x_{0}+(m-1)\frac{\alpha}{k},x_{0}+m\frac{\alpha}{k}]$. It is easily
seen that$\{y_{k}(x):x\in[x_{0},\alpha]\}_{k\geq 1}$ is uniformly
bounded and $\mid y_{k}(x)-y_{0}\mid\leq M\alpha\leq b$.\\
In addition, $\mid y_{k}(x'')-y_{k}(x')\mid=0$, if
$x',x''\in[x_{0},x_{0}+\frac{\alpha}{k}]$ and $\mid
y_{k}(x'')-y_{k}(x')\mid\leq M\mid x''-x'\mid$ if
$x',x''\in[x_{0}+\frac{\alpha}{k},x_{0}+\alpha]$ which implies $\mid
y_{k}(x'')-y_{k}(x')\mid\leq M\mid x''-x'\mid$ for any
$x',x''\in[x_{0},x_{0}+\alpha],k\geq 1$.\\
Let $\hat{y}(x):[x_{0},x_{0}+\alpha]\rightarrow B(y_{0},b)\subseteq
G$ be the continuous function obtained using a
subsequence$\{y_{k_{j}}(x):x\in[x_{0},x_{0}+\alpha]\}_{j\geq
1}, \hat{y}(x)=\mathop{lim}\limits_{j\rightarrow\infty}y_{k_{j}}(x)$ uniformly with
respect to $x\in[x_{0},x_{0}+\alpha]$. By passing $j\rightarrow\infty$
into the equation (iii) written for
$k=k_{j}$ we get
$$\hat{y}(x)=y_{0}+\int_{x_{0}}^{x}f(t,\hat{y(t)})dt,\forall
x\in[x_{0},x_{0}+\alpha]$$
and the proof is complete.
\end{proof}
\noindent\textbf{($a_{2})$.\,Ordinary differential equations with delay}\index{Ordinary differential equations}\\
Let $\sigma>0$ be fixed and consider the following system of $ODE$
\begin{equation}\label{eq:1.132}
\left\{
      \begin{array}{ll}
        \frac{dy}{dx}(x)= & f(x,y(x),y(x-\sigma)),x\in[0,a] \\
        y(\sigma)= & \varphi(\sigma),\sigma\in[-\sigma,\sigma]
      \end{array}
    \right.
\end{equation}
where $\varphi\in C([-\sigma,0];\mathbb{R}^{n})$ is fixed as a
continuous function(Cauchy condition),and
$f(x,y,z):I\times\mathbb{R}^{n}\times\mathbb{R}^{n}\rightarrow\mathbb{R}^{n}$
is a continuous function admitting first order continuous partial
derivatives
$\partial_{y_{i}}f(x,y,z):I\times\mathbb{R}^{n}\times\mathbb{R}^{n}\rightarrow\mathbb{R}^{n},i\in\{1,...,n\}$.\\
The  above given assumptions allow us to construct a unique local
solution satisfying \eqref{eq:1.132} and it is done starting with the system \eqref{eq:1.132}
on the fixed interval $x\in[o,\sigma]$
\begin{equation}\label{eq:1.133}
\left\{
      \begin{array}{ll}
        \frac{dy}{dx}= & f(x,y(x),\varphi(x-\sigma)),x\in[0,\sigma] \\
        y(0)= & \varphi(0)
      \end{array}
    \right.
\end{equation}
which satisfies the condition of Cauchy-Lipschitz theorem\index{Cauchy!Lipschitz theorem}. In order
to make sure that the solution of \eqref{eq:1.133} exists for any
$x\in[0,\sigma]$ we need to assume a linear growth of $f$ with
respect to the variable $y$ uniformity  of $(x,z)$ in compact
sets,i.e\\
(*)$\mid f(x,y,z)\mid\leq C(1+\mid y\mid)\,\forall\,\in\mathbb{R}^{n}$
and $(x,z)\in J\times K(compact)\subseteq
I\times\mathbb{R}^{n}$,where the constant $C>0$ depends on the
arbitrary fixed compact $J\times K$.\\
Any function $f$ satisfying\\
(**)$f(x,y,z)=A(x,z)y+b(x,z),(x,z)\in I\times\mathbb{R}^{n}$ where
the matrix $A(x,z)\in M_{n\times n}$ and the vector
$b(x,z)\in\mathbb{R}^{n}$ are continuous functions will satisfy the
necessary conditions to extend the solution of \eqref{eq:1.133} on any interval
$[m\sigma,(m+1)\sigma], m\geq 0$
\newpage
\noindent\textbf{Exercises(Linear integrable system)}\\
(1) Formulate Frobenius theorem \index{Frobenius! theorem}(\eqref{th:1.9}). Rewrite the
content of Frobenius theorem\index{Frobenius! theorem} using linear vector fields
$Y_{i}(y)=A_{i}y$, where $A_{i}\in M_{n\times
n}(\mathbb{R}),i\in\{1,...,n\}$.\\
(2) For any matrices $A,B\in M_{n\times n}(\mathbb{R})$ define the
meaning of the exponential mapping\index{Exponential !mapping} \\
$(\exp tad_{A})(B)$, where $ad_{A}$(adjoint mapping \index{Adjoint!mapping}associated with
$A$)is the linear mapping $ad_{A}:M_{n\times
n}(\mathbb{R})\rightarrow M_{n\times n}(\mathbb{R})$, defined by
$ad_A(B)=AB-BA=[A,B]$(Lie bracket of $(A,B)$).\\
(3) Show that $(\exp tA)B(\exp-tA)=(\exp tad_{A})(B)$, for any $A,B\in
M_{n\times n}(\mathbb{R}),\\t\in\mathbb{R}$ where
$(\exp tad_{A})$ is the linear mapping defined in (2).\\
(4)Rewrite the gradient system (\eqref{th:1.8})when the vector
fields $Y_{i}(y)=A_{i}y,i\in\{1,...,m\}$,are linear. Show that,in
this case the vector fields with parameters are the following \\
$Y_{1}(y)=A_{1}y,X_{2}(t_{1};y)=(\exp t_{1}ad_{A_{1}})(A_{2})y,...,X_{m}(t_{1},...,t_{m-1};y)=[\exp t_{1}ad_{A_{1}}\circ(\exp t_{2}ad_{A_{2}})...\circ(\exp t_{m-1}ad_{A_{m-1}})(A_{m})]y$,such
that the corresponding gradient system\index{Gradient systems} \\
$\frac{\partial y}{\partial t_{1}}=Y_{1}(y),\frac{\partial
y}{\partial t_{2}}=X_{2}(t_{1},y),...,\frac{\partial y}{\partial
t_{m}}=X_{m}(t_{1},...,t_{m-1};y)$ has the unique solution \\
$y(t_{1},...,t_{m};y_{0})=(\exp t_{1}A_{1})...(\exp t_{m}A_{m})y_{0}$.\\
\section*{Bibliographical Comments }
The entire Chapter 1 is presented with minor changes as in the \cite{12}.

\chapter[First Order $PDE$]{First Order Partial Differential Equation}\index{Partial differential equations}
Let $D\subseteq\mathbb{R}^{n}$ be an open set and
$f(y):D\rightarrow\mathbb{R}^{n}$ is a continuous function.Consider
the following nonlinear $ODE$\\
\begin{equation}\label{eq:2.1}
\frac{dy}{dt}=f(y)
\end{equation}
\begin{definition} Let $D_{0}\subseteq D$ be an open
set. A function $u(y):D_{0}\rightarrow \mathbb{R}$ is called first
integral for \eqref{eq:2.1} on $D_{0}$ if
\begin{enumerate}
  \item $u$ is nonconstant on $D_{0}$,
  \item $u$ is continuously differentiable on $D_{0} (u\in
\mathcal{C}^{1}(D_{0};\mathbb{R}))$,
  \item for each solution $y(t):I\subseteq\mathbb{R}\rightarrow D_{0}$
of the system \eqref{eq:2.1} there exists a constant $c\in\mathbb{R}$ such that
$U(y(t))=c,\forall \,t\in I$.
\end{enumerate}
\end{definition}
\begin{example} A Hamilton system is described by the
following system of$ODE$
\begin{equation}\label{eq:2.2}
\frac{dp}{dt}=\frac{\partial H}{\partial
q}(p,q),\frac{dq}{dt}=-\frac{\partial H}{\partial
p}(p,q),p(0)=p_{0},q(0)=q_{0}
\end{equation}
where
$H(p,q):G_{1}\times
G_{2}\subseteq\mathbb{R}^{n}\times\mathbb{R}^{n}\rightarrow\mathbb{R}$
is continuously differentiable and $G_{i}\subseteq
\mathbb{R}^{n},i\in\{1,2\}$, are open sets. Let
$\{(p(t),q(t)):t\in[0,T]\}$ be solution of the Hamilton system
\eqref{eq:2.1} and by a direct computation, we get
$$\frac{d}{dt}[H(p(t),q(t))]=0
\hbox{ for any }t\in[0,T]$$
It implies $H(p(t),q(t))=H(p_{0},q_{0})\forall\, t\in[0,T]$ and
$\{H(p,q):(p,q)\in G_{1}\times G_{2} \}$  is a first integral for
\eqref{eq:2.2}
\end{example}
\begin{theorem}\label{th:2.1}
Let $f(y):D\rightarrow\mathbb{R}^{n}$ be a
continuous function,$D_{0}\subseteq D$ an open subset and consider a
nonconstant continuously differentiable function $U\in
\mathcal{C}^{1}(D_{0},\mathbb{R})$. Then $\{U(y):y\in D_{0}\}$ is
first integral on $D_{0}$ of $ODE$ \,iff the following
differential equality
\begin{equation}\label{eq:2.3}
<\frac{\partial u(y)}{\partial
y},f(y)>=\Sigma_{i=1}^{n}\frac{\partial u}{\partial
y_{i}}(y)f_{i}(y)=0,\forall y\in D_{0}
\end{equation}is satisfied, where
$$f=(f_{1},...,f_{n}),\frac{\partial u}{\partial y}=(\frac{\partial
u}{\partial y_{1}},...,\frac{\partial u}{\partial y_{n}})$$
\end{theorem}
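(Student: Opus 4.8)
The plan is to prove the two implications separately: the reverse direction (differential identity $\Rightarrow$ first integral) is a direct chain-rule computation, while the forward direction (first integral $\Rightarrow$ differential identity) rests on an existence theorem for \eqref{eq:2.1}.

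First I would dispatch the easier implication. Assume the differential equality \eqref{eq:2.3} holds on $D_{0}$ and let $\{y(t):t\in I\}$ be an arbitrary solution of \eqref{eq:2.1} with values in $D_{0}$. Since $u\in\mathcal{C}^{1}(D_{0};\mathbb{R})$ and $y(\cdot)$ is continuously differentiable, the composition $t\mapsto u(y(t))$ is continuously differentiable on $I$ and the chain rule gives
\begin{equation*}
\frac{d}{dt}u(y(t))=\Big\langle\frac{\partial u}{\partial y}(y(t)),\frac{dy}{dt}(t)\Big\rangle=\Big\langle\frac{\partial u}{\partial y}(y(t)),f(y(t))\Big\rangle=0,\qquad t\in I.
\end{equation*}
Hence $u(y(t))$ is constant on every interval $I$; combined with the standing hypotheses that $u$ is nonconstant and of class $\mathcal{C}^{1}$ on $D_{0}$, this is exactly the definition of a first integral.

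For the converse, assume $\{U(y):y\in D_{0}\}$ is a first integral and fix an arbitrary point $y_{0}\in D_{0}$. Since $D_{0}$ is open and $f$ is continuous, Peano's existence theorem (see the Appendix of Chapter 1) yields $\alpha>0$ and a continuously differentiable solution $y(t):(-\alpha,\alpha)\rightarrow D_{0}$ of \eqref{eq:2.1} with $y(0)=y_{0}$. By the definition of a first integral there is a constant $c$ with $u(y(t))=c$ for all $t\in(-\alpha,\alpha)$; differentiating this identity at $t=0$ and using the chain rule as above gives
\begin{equation*}
0=\frac{d}{dt}\Big|_{t=0}u(y(t))=\Big\langle\frac{\partial u}{\partial y}(y_{0}),f(y_{0})\Big\rangle.
\end{equation*}
As $y_{0}\in D_{0}$ was arbitrary, \eqref{eq:2.3} holds on all of $D_{0}$.

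The only delicate point is the converse direction: it requires a solution of \eqref{eq:2.1} passing through each prescribed $y_{0}\in D_{0}$, and since $f$ is merely continuous (not locally Lipschitz), uniqueness may fail and one cannot appeal to Theorem \ref{th:1.5}. The correct tool is Peano's theorem, which still guarantees existence of at least one local solution through $y_{0}$ — and a single such solution is all that is needed to extract the differential identity at that point by differentiation at $t=0$. Everything else is the elementary chain rule for a $\mathcal{C}^{1}$ function composed with a $\mathcal{C}^{1}$ curve.
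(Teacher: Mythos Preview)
Your proof is correct and follows essentially the same approach as the paper: both directions are handled by the chain rule, with the forward implication using a local solution through an arbitrary $y_{0}\in D_{0}$ and differentiating at $t=0$. You are in fact slightly more careful than the paper, which simply assumes a solution through $y_{0}$ exists without naming Peano's theorem as the justification under the mere continuity hypothesis on $f$.
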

\begin{proof} Let $\{u(y):y\in D_{0}\}$ be a continuously
differentiable first integral of$ODE$\eqref{eq:2.1} and consider that
$y(t,y_{0}):(-\alpha,\alpha)\rightarrow D_{0}$ is a solution of \eqref{eq:2.1}
verifying $y(0,y_{0})=y_{0}\in D_{0}$. By hypothesis,
$u(y(t,y_{0}))=c=u(y_{0})$, for any $t\in(-\alpha,\alpha)$
and by derivation, we get
\begin{equation}\label{eq:2.4}
0=\frac{d}{dt}[u(y(t,y_{0}))]=<\frac{\partial u}{\partial
y}(y(t,y_{0})),f(y(t,y_{0}))>,\,\hbox{for any}\, t\in (-\alpha,\alpha)
\end{equation}
In particular, for $t=0$ we obtain the conclusion \eqref{eq:2.3}
for an arbitrary fixed $y_{0}\in D_{0}$. The reverse implication
uses the equality\eqref{eq:2.3} and define $\varphi(t)=u(y(t)),t\in
(-\alpha,\alpha)$ where $u\in \mathcal{C}^{1}(D_{0},\mathbb{R})$
fulfils \eqref{eq:2.3} and $\{y(t):t\in
(-\alpha,\alpha)\}$ is a solution of \eqref{eq:2.1}. It follows that
$$\frac{d\varphi}{dt}(t)=<\frac{\partial u}{\partial
y}(y(t)),f(y(t))>=0,t\in (-\alpha,\alpha)$$
and
$$\varphi(t)=constant,t\in (-\alpha,\alpha)$$
The proof is complete.
\end{proof}
Let $\Omega\subseteq\mathbb{R}^{n+1}$ be an open set and consider
that
\begin{equation}\label{eq:2.5}
f(x,u):\Omega\rightarrow\mathbb{R}^{n},L(x,u):\Omega\rightarrow\mathbb{R}
\end{equation}
are given continuously differentiable function, $f\in
\mathcal{C}^{1}(\Omega;\mathbb{R}^{n}),L\in \mathcal{C}^{1}(\Omega;\mathbb{R})$.
\begin{definition}
A causilinear first order $PDE$ is defined by the following
differential equality
\begin{equation}\label{eq:2.6}
<\frac{\partial u(x)}{\partial x},f(x,u(x))>=L(x,u(x)),x\in
D(open)\subseteq \mathbb{R}^{n}
\end{equation}
where
$f\in\mathcal{C}^{1}(\Omega;\mathbb{R}^{n}), L\in
\mathcal{C}^{1}(\Omega;\mathbb{R})$ are fixed and
$u(x):D\rightarrow\mathbb{R}$ is an unknown continuously
differentiable function such that $(x,u(x))\in \Omega,\forall x\in
D$\end{definition}
A solution for \eqref{eq:2.6} means a function $u\in
\mathcal{C}^{1}(D;\mathbb{R})$ such that  $(x,u(x)\in\Omega)$ and
\eqref{eq:2.6}is
satisfied for any $x\in D$
\begin{remark}\label{re:2.4} Assuming that $L(x,u)=0$ and $f(x,u)=f(x)$ for
any $(x,u)\in D\times\mathbb{R}=\Omega$ then $PDE$ defined in \eqref{eq:2.6}
stands for the differential equality\eqref{eq:2.3} in Theorem \ref{th:2.1} defining first
integral for $ODE$ \eqref{eq:2.1}.
In this case, the differential system \eqref{eq:2.1}is called Cauchy
characteristic system \index{Cauchy!characteristic system}
associated with
linear $PDE$
\begin{equation}\label{eq:2.7a}
<\frac{\partial u(x)}{\partial x},f(x)>=0,x\in
D\subseteq\mathbb{R}^{n}
\end{equation}
\end{remark}
\begin{proposition}\label{pr:2.1}
Let $D_{0}\subseteq D$ be an open subset and consider a nonconstant
continuously differentiable scalar function
$u(x):D_{0}\rightarrow\mathbb{R}$. Then $\{u(x):x\in D_{0}\}$ is a
solution for the linear $PDE $ \eqref{eq:2.7a} iff $\{u(x):x\in D_{0}\}$ is a
first integral for $ODE$ \eqref{eq:2.1} on $D_{0}$.
\end{proposition}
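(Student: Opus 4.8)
The plan is to obtain this as an immediate reformulation of Theorem \ref{th:2.1}. Indeed, the linear PDE \eqref{eq:2.7a} is, when written out in coordinates, nothing but the differential equality \eqref{eq:2.3} of that theorem, now read for the vector field $f$ on the open set $D_{0}\subseteq D$. First I would record the bookkeeping point: by Remark \ref{re:2.4}, in the present linear situation one has $\Omega=D\times\mathbb{R}$, $L\equiv 0$, and $f$ independent of $u$, so the constraint $(x,u(x))\in\Omega$ appearing in the definition of a PDE solution is automatically met by every $u\in\mathcal{C}^{1}(D_{0};\mathbb{R})$. Hence ``$\{u(x):x\in D_{0}\}$ is a solution of \eqref{eq:2.7a}'' means precisely that $u$ is nonconstant, of class $\mathcal{C}^{1}$ on $D_{0}$, and satisfies $\langle\frac{\partial u(x)}{\partial x},f(x)\rangle=\sum_{i=1}^{n}\frac{\partial u}{\partial y_{i}}(x)f_{i}(x)=0$ for all $x\in D_{0}$.

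Next I would observe that this list of conditions coincides verbatim with the standing hypothesis on $u$ together with the differential equality \eqref{eq:2.3} in Theorem \ref{th:2.1}, applied with the continuous (indeed $\mathcal{C}^{1}$) vector field $f$ and the open set $D_{0}$. Theorem \ref{th:2.1} states that, for a nonconstant $u\in\mathcal{C}^{1}(D_{0};\mathbb{R})$, the equality \eqref{eq:2.3} holds on $D_{0}$ if and only if $\{u(y):y\in D_{0}\}$ is a first integral of the ODE \eqref{eq:2.1} on $D_{0}$. Chaining the two equivalences yields exactly the assertion of the proposition.

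For completeness I would spell out the two short implications directly, since they do not require invoking Theorem \ref{th:2.1} as a black box. If $u$ is a first integral on $D_{0}$, then for any solution $y(t)$ of \eqref{eq:2.1} taking values in $D_{0}$ the map $t\mapsto u(y(t))$ is constant, and differentiating via the chain rule gives $\langle\frac{\partial u}{\partial y}(y(t)),f(y(t))\rangle=0$; evaluating at an arbitrary point $y_{0}\in D_{0}$, reached as $y(0)$ by the local flow through it, gives \eqref{eq:2.7a}. Conversely, if \eqref{eq:2.7a} holds, then along any solution $y(t)$ in $D_{0}$ one has $\frac{d}{dt}u(y(t))=\langle\frac{\partial u}{\partial y}(y(t)),f(y(t))\rangle=0$, so $u(y(t))$ is constant on its interval of definition, which is the defining property of a first integral.

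I do not expect a genuine obstacle here: the content is a direct translation of Theorem \ref{th:2.1} into PDE language. The only point that deserves explicit checking is the one mentioned above, namely that the conventions of the linear case ($\Omega=D\times\mathbb{R}$, $L\equiv 0$, $f$ independent of $u$) make the two notions of ``solution on $D_{0}$'' literally the same object, after which the equivalence is automatic.
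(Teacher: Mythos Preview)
Your proposal is correct and matches the paper's approach: the paper gives no explicit proof of this proposition, treating it as an immediate consequence of Theorem~\ref{th:2.1} together with the observation in Remark~\ref{re:2.4} that the linear PDE~\eqref{eq:2.7a} is precisely the differential equality~\eqref{eq:2.3}. Your write-up simply makes this identification explicit and even unpacks the two implications, which is more than the paper does.
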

\begin{definition} Let $a\in D\subseteq\mathbb{R}^{n}$ be
fixed and consider $m$ first integrals
\\$\{u_{1}(y),...,u_{m}(y):y\in V(a)\subseteq D\}$ for $ODE$ \eqref{eq:2.1}
which are continuously differentiable where $V(a)$ is a neighborhood
of $a$.We say that $u_{i}\in
\mathcal{C}^{1}(V(a);\mathbb{R}),i\in\{1,...,m\}$ are independent
first integrals of \eqref{eq:2.1} if $rank(\frac{\partial u_{1}}{\partial
y}(a),...,\frac{\partial u_{m}}{\partial y}(a))=m$
\end{definition}
\begin{remark}\label{th:2.2}
Let $f\in \mathcal{C}^{1}(D;\mathbb{R}^{n})$ and $a\in D$
be fixed such that $f(a)\neq 0$. Then  there exist a neighborhood
$V(a)\subseteq D$ and $(n-1)$ independent first integrals
$\{u_{1}(y),...,u_{n-1}(y):y\in V(a)\}$ of the system \eqref{eq:2.1}.
\end{remark}
\begin{proof}
Using a permutation of the coordinates $(f_{1},...,f_{n})=f$ we say
that $f_{n}(a)\neq 0$ (see $f(a)\neq 0$). Denote
$\Lambda=\{(\lambda_{1},...,\lambda_{n-1})\in
\mathbb{R}^{n-1}:(\lambda_{1},...,\lambda_{n-1},a_{n})\in D\}$ and
consider $\{F(t,\lambda):(t,\lambda)\in[0,T]\times \Sigma\}$ as the
local flow\index{Flow!local} associated with $ODE$ \eqref{eq:2.1}
 \begin{equation}\label{eq:2.8}
 \left\{
      \begin{array}{ll}
        \frac{dF}{dt}(t,\lambda)= & f(F(t,\lambda)),t\in[-\alpha,\alpha],\lambda\in\Sigma\subseteq\Lambda \\
        F(0,\lambda)= & (\lambda_{1},...,\lambda_{n-1},a_{n})
      \end{array}
    \right.
\end{equation}
where $\Sigma\subseteq\Lambda$ is a compact subset and
$(a_{1},...,a_{n-1})\in int\Sigma$. Using the differentiability
properties of the local flow
$\{F(t,\lambda):(t,\lambda)\in[-\alpha,\alpha]\times\Sigma\}$, we
get
$$det \left(
       \begin{array}{cccccc}
        \frac{\partial F}{\partial t}  & \frac{\partial F}{\partial \lambda_{1}} & . & . & . & \frac{\partial F}{\partial
\lambda_{n-1}} \\
       \end{array}
     \right).
(0,a_{1},...,a_{n-1})=$$
$$=det\left(
                                                  \begin{array}{ccccccc}
                                                    f_{1}(a) & 1 & 0 & . & . & . & 0 \\
                                                     & 0 & 1 &  &  &  & . \\
                                                     & . & 0 &  & &  & . \\
                                                    . & . & . &  &  &  & . \\
                                                    . & . & . &  &  &  & 0 \\
                                                    . &  & . &  &  &  & 1 \\
                                                     f_{n}(a) & 0 & 0 & . & . & . & 0 \\
                                                  \end{array}
                                                \right)
=(-1)^{n+1}f_{n}(a)\neq 0$$
and the conditions for applying an implicit functions theorem are
fulfilled when the algebraic
equations
\begin{equation}\label{eq:2.9}
F(t,\lambda)=y,\,\,\,y\in V(a)\subseteq
D,F(0,a_{1},...,a_{n-1})=a
\end{equation}
are considered. It implies
that there exist
$t=u_{0}(y),\lambda_{i}=u_{i}(y),i\in\{1,...,n-1\},y\in V(a)$, which
are continuously differentiable such that
\begin{equation}\label{eq:2.10}
F(u_{0}(y),u_{1}(y),...,u_{n-1}(y))=y,\forall y\in V(a)
\end{equation}
 where
$u_{0}(y)\in(-\alpha,\alpha)$ and
$\lambda(y)=(u_{1}(y),...,u_{n-1}(y)),\lambda\in\Sigma$, for any
$y\in V(a)$.\\
In addition, $\{u_{0}(y),...,u_{n-1}(y):y\in V(a)\}$ is the unique
solution satisfying \eqref{eq:2.10} and
\begin{equation}\label{eq:2.11}
u_{0}(F(t,\lambda))=t,u_{i}(F(t.\lambda))=\lambda_{i},i\in\{1,...,n-1\}
\end{equation}
Using \eqref{eq:2.11} we get easily
that $\{u_{1}(y),...,u_{n-1}(y):y\in V(a)\}$ are $(n-1)$ first
integrals for $ODE$ \eqref{eq:2.1} which are independent noticing
that$$<\frac{\partial u_{i}}{\partial y}(a),\frac{\partial
F}{\partial \lambda_{j}}(0,\lambda)>=\delta_{ij}$$ \\
where $$\frac{\partial F}{\partial
\lambda_{j}}(0,\lambda)=e_j\in\mathbb{R}^{n},\delta_{ij}=\left\{
                                                                   \begin{array}{ll}
                                                                     1, & i=j \\
                                                                     0, &
i\neq 0, \,j\in\{1,...,n-1\}
                                                                   \end{array}
                                                                 \right.
$$\\
and $\{e_{1},...,e_{n}\}\subseteq\mathbb{R}^{n}$ is the
canonical basis. In conclusion, $$<\frac{\partial u_{i}}{\partial
y}(a),e_{j}>=\delta_{ij}$$ for any $$i,j\in\{1,...,n-1\}$$ and
$$rank\left(
       \begin{array}{ccccc}
         \frac{\partial u_{1}}{\partial y}(a) & . & . & . & \frac{\partial u_{n-1}}{\partial y}(a) \\
       \end{array}
     \right)
=n-1$$\\
The proof is complete.
\end{proof}
\begin{theorem}\label{th:2.3}
Let $f(y):D\rightarrow\mathbb{R}^{n}$ be a continuously
differentiable function such that $f(a)\neq 0$ for fixed $a\in
D$. Let $\{u_{1}(y),...,u_{n-1}(y):y\in V(a)\subseteq D\}$ be the
$(n-1)$ first integrals constructed in Theorem \ref{th:2.2}. Then for any
first integral of \eqref{eq:2.1} $\{u(y):y\in V(a)\},u\in
\mathcal{C}^{1}(V,\mathbb{R})$, there exists an open set
$\theta\subseteq\mathbb{R}^{n-1}$and $h\in
\mathcal{C}^{1}(\theta,\mathbb{R})$
such that $u(y)=h(u_{1}(y),...,u_{n-1}(y)),y\in V_{1}(a)\subseteq V(a)$.
\end{theorem}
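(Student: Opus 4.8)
The plan is to exploit the straightening of the vector field already obtained in Theorem \ref{th:2.2}. There we produced the local flow $F(t,\lambda)$ with $F(0,\lambda)=(\lambda_1,\dots,\lambda_{n-1},a_n)$, together with continuously differentiable functions $u_0,u_1,\dots,u_{n-1}$ on $V(a)$ satisfying the inverse relations $F(u_0(y),u_1(y),\dots,u_{n-1}(y))=y$ for all $y\in V(a)$ and $u_0(F(t,\lambda))=t$, $u_i(F(t,\lambda))=\lambda_i$. In these "flow coordinates" the system \eqref{eq:2.1} reads $\dot t=1$, $\dot\lambda=0$, so every first integral is forced to be a function of $\lambda$ alone; pulling this fact back through the change of variables yields precisely the desired representation $u=h(u_1,\dots,u_{n-1})$.

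First I would fix a first integral $u\in\mathcal C^1(V(a);\mathbb R)$ of \eqref{eq:2.1} and introduce $\widetilde u(t,\lambda)=u(F(t,\lambda))$. After shrinking $\alpha$ and $\Sigma$ if necessary (keeping $(a_1,\dots,a_{n-1})$ in the interior of $\Sigma$) the flow $F$ takes values in $V(a)$ by continuity, so $\widetilde u$ is well defined and $\mathcal C^1$ on $(-\alpha,\alpha)\times\mathrm{int}\,\Sigma$. For each fixed $\lambda$ the curve $t\mapsto F(t,\lambda)$ is a solution of \eqref{eq:2.1} by \eqref{eq:2.8}; hence, by the defining property of a first integral, there is a constant $c(\lambda)$ with $u(F(t,\lambda))=c(\lambda)$ for all $t$, i.e. $\partial\widetilde u/\partial t\equiv 0$. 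Setting $h(\lambda):=c(\lambda)=\widetilde u(0,\lambda)=u(\lambda_1,\dots,\lambda_{n-1},a_n)$ exhibits $h$ as the composition of the $\mathcal C^1$ maps $\lambda\mapsto(\lambda_1,\dots,\lambda_{n-1},a_n)$ and $u$, so $h\in\mathcal C^1(\theta;\mathbb R)$ with $\theta:=\mathrm{int}\,\Sigma\subseteq\mathbb R^{n-1}$, an open neighbourhood of $(a_1,\dots,a_{n-1})$.

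Next I would pick $V_1(a)\subseteq V(a)$ small enough that $u_0(y)\in(-\alpha,\alpha)$ and $(u_1(y),\dots,u_{n-1}(y))\in\theta$ for every $y\in V_1(a)$; this is possible since $u_0,\dots,u_{n-1}$ are continuous, $u_0(a)=0$ and $(u_1(a),\dots,u_{n-1}(a))=(a_1,\dots,a_{n-1})\in\theta$. Then for $y\in V_1(a)$ the identity \eqref{eq:2.10} gives
\[
u(y)=u\bigl(F(u_0(y),u_1(y),\dots,u_{n-1}(y))\bigr)=\widetilde u\bigl(u_0(y),u_1(y),\dots,u_{n-1}(y)\bigr)=h\bigl(u_1(y),\dots,u_{n-1}(y)\bigr),
\]
which is exactly the assertion of the theorem.

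The only delicate point is domain bookkeeping — making sure that, after the successive shrinkings of $\alpha$, $\Sigma$, $V(a)$ and finally to $V_1(a)$, the composition $F(u_0(y),u_1(y),\dots,u_{n-1}(y))$ stays legitimate and the identities of Theorem \ref{th:2.2} remain available — but this is routine continuity and the use of $f(a)\neq 0$ is already absorbed into Theorem \ref{th:2.2}. (An alternative, purely infinitesimal, argument is available: by Theorem \ref{th:2.1} we have $\langle\partial u/\partial y,f\rangle=0$ and $\langle\partial u_i/\partial y,f\rangle=0$, and since $f(a)\neq 0$ the $n-1$ linearly independent vectors $\partial u_1/\partial y(a),\dots,\partial u_{n-1}/\partial y(a)$ span the hyperplane $f(a)^\perp$, forcing $\partial u/\partial y(a)$ to be a combination of them; however, converting this pointwise dependence into the global $\mathcal C^1$ function $h$ still requires the implicit-function/flow-box construction above, so I would present the flow argument as the main line of proof.)
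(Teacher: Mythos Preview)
Your proof is correct and follows essentially the same approach as the paper: compose $u$ with the flow $F$, use the first-integral property to see the composite is constant in $t$, define $h$ from this constant value, and then recover the representation via the inverse relation \eqref{eq:2.10}. Your treatment is in fact more carefully written than the paper's, with explicit domain bookkeeping and the explicit formula $h(\lambda)=u(\lambda_1,\dots,\lambda_{n-1},a_n)$.
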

\begin{proof} By hypothesis $\{u(y):y\in V(a)\}$ is a
continuously differentiable first integral of $ODE$ \eqref{eq:2.1} and we get
\begin{equation}\label{eq:2.12}
u(F(t,\lambda_{1},...,\lambda_{n-1}))=h(\lambda_{1},...,\lambda_{n-1}),\forall
t\in[-\alpha,\alpha]
\end{equation}
provided that
$\{F(t,\lambda):t\in[-\alpha,\alpha],\lambda\in\Sigma\}$ is the
local flow \index{Flow!local}associated with $ODE$ \eqref{eq:2.1} satisfying
$F(0,\lambda_{1},...,\lambda_{n-1})=h(\lambda_{1},...,\lambda_{n-1},a_{n})$(see \eqref{eq:2.8} of
Theorem \ref{th:2.2}). As far as $F\in
\mathcal{C}^{1}((-\alpha,\alpha)\times\Sigma;\mathbb{R}^{n})$ and
$u\in \mathcal{C}^{1}(V(a),\mathbb{R})$ we get $h\in
\mathcal{C}^{1}(\theta,\mathbb{R})$ fulfilling \eqref{eq:2.12} where
$(a_{1},...,a_{n-1})\in\theta(open set)\subseteq\Sigma$. Using \eqref{eq:2.10} in Theorem \ref{th:2.2} we obtain\\
$$F(u_{0}(y),...,u_{n-1}(y))=y\in V(a)$$
and rewrite \eqref{eq:2.12} for
$$t=u_{0}(y),\lambda_{i}=u_{i}(y).i\in\{1,...,n-1\}$$
we get
\begin{equation}\label{eq:2.13}
u(y)=h(u_{1}(y),...,u_{n-1}(y))\,\hbox{for some} \,y\in
V_{1}(a)\subseteq V(a)
\end{equation}
 where
$(u_{1}(y),...,u_{n-1}(y))\in\theta, \,y\in V_{1}$. The proof is
complete.
\end{proof}
\begin{remark}\label{re:2.5}The above given consideration can be extended to
non autonomous $ODE$
\begin{equation}\label{eq:2.14}
\frac{dy}{dt}=f(t,y),t\in\in I\subseteq \mathbb{R},y\in
G\subseteq\mathbb{R}^{n}
\end{equation}
where $f(t,y):I\times
G\rightarrow\mathbb{R}^{n}$ is a continuous
function.\\
In this respect, denote $$z=(t,y)\in\mathbb{R}^{n+1},D=I\times
G\subseteq \mathbb{R}^{n+1}$$ and consider
$$g(z):D\rightarrow\mathbb{R}^{n+1}$$ defined by
\begin{equation}\label{eq:2.15}
g(z)=\hbox{coloumn}(1,f(t,y))
\end{equation}
With these notations, the system \eqref{eq:2.14} can be written as
an autonomous
$ODE$
\begin{equation}\label{eq:2.16}
\frac{dz}{dt}=g(z)
\end{equation}
where $g\in \mathcal{C}^{1}(D;\mathbb{R}^{n+1})$
provided $f$ is continuously
differentiable on $(t,y)\in I\times G$.\\
Using Proposition \ref{pr:2.1} we restate the conclusion of Theorem \ref{th:2.3} as a
result for linear $PDE$ \eqref{eq:2.7a}.
\end{remark}
\begin{proposition}\label{pr:2.2}
Let $a\in D\subseteq\mathbb{R}^{n}$ be
fixed such that $f(a)\neq 0$, where $f\in
\mathcal{C}^{1}(D;\mathbb{R}^{n})$ defines the characteristic system
\index{Characteristic! system }\eqref{eq:2.1}. Let $\{u_{i}(y):y\in
V(a)\subseteq D\},i\in\{1,...,n-1\}$ be the $(n-1)$ first integral
for \eqref{eq:2.1} constructed in Theorem \ref{th:2.2}. Then an arbitrary solution
$\{u(x):x\in V(a)\subseteq D\}$ of the linear $PDE$ \eqref{eq:2.7a} can be
represented
$$u(x)=h(u_{1}(x),...,u_{n-1}(x)),x\in V_{1}(a)\subseteq V(a)$$
where
$h\in \mathcal{C}^{1}(\theta,\mathbb{R})$ and
$\theta\subseteq\mathbb{R}^{n-1}$ is open set. In particular, each
$\{u_{i}:x\in V(a)\subseteq D\}$ is a solution for the linear
PDE \eqref{eq:2.7a}, $i\in\{1,...,n-1\}$
\end{proposition}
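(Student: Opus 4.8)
The plan is to derive this statement by chaining together two results already established: Proposition \ref{pr:2.1}, which says that a nonconstant $\mathcal{C}^1$ scalar function solves the linear PDE \eqref{eq:2.7a} if and only if it is a first integral of the characteristic system \eqref{eq:2.1}, and Theorem \ref{th:2.3}, which represents an arbitrary first integral of \eqref{eq:2.1} near $a$ as a $\mathcal{C}^1$ function of the $(n-1)$ distinguished first integrals $u_1,\ldots,u_{n-1}$ produced in Theorem \ref{th:2.2}.

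First I would fix an arbitrary solution $\{u(x):x\in V(a)\}$ of \eqref{eq:2.7a}. If $u$ is constant near $a$ the asserted representation holds trivially with a constant $h$, so I may assume $u$ is nonconstant. Applying Proposition \ref{pr:2.1} with $D_0=V(a)$ then shows that $u$ is a first integral of the ODE \eqref{eq:2.1} on $V(a)$. The standing hypothesis $f(a)\neq 0$ is exactly what places us in the setting of Theorem \ref{th:2.2}, so that $u_1,\ldots,u_{n-1}$ are available on (a possibly smaller) neighbourhood of $a$, are continuously differentiable, and are independent at $a$.

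Next I would feed the first integral $u$ and the family $u_1,\ldots,u_{n-1}$ into Theorem \ref{th:2.3}. It returns an open set $\theta\subseteq\mathbb{R}^{n-1}$, a neighbourhood $V_1(a)\subseteq V(a)$, and a function $h\in\mathcal{C}^1(\theta;\mathbb{R})$ with $u(x)=h(u_1(x),\ldots,u_{n-1}(x))$ for every $x\in V_1(a)$ — precisely the claimed representation. For the final assertion I would simply note that each $u_i$, $i\in\{1,\ldots,n-1\}$, is by its construction in Theorem \ref{th:2.2} a first integral of \eqref{eq:2.1}, and then invoke Proposition \ref{pr:2.1} in the reverse direction to conclude that each $u_i$ solves the linear PDE \eqref{eq:2.7a} on $V(a)$.

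No new estimate, iteration, or passage to a limit is needed; the whole argument is bookkeeping on top of the two cited results. The one point that deserves care — rather than computation — is keeping track of the nested neighbourhoods $V(a)\supseteq V_1(a)$ and of the domain $\theta$ of $h$, and checking that the hypotheses of Proposition \ref{pr:2.1} and Theorem \ref{th:2.3} hold simultaneously on a common neighbourhood of $a$ (which is where $f(a)\neq 0$ is used). That is the main, and essentially the only, obstacle.
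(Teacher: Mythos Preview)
Your proposal is correct and matches the paper's approach exactly: the paper does not even give a separate proof of this proposition, stating only that ``Using Proposition~\ref{pr:2.1} we restate the conclusion of Theorem~\ref{th:2.3} as a result for linear PDE~\eqref{eq:2.7a}.'' You have simply made this restatement explicit, including the handling of the constant case and the reverse application of Proposition~\ref{pr:2.1} for the $u_i$.
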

\begin{remark}\label{re:.6}A strategy based on the corresponding Cauchy
characteristic system \index{Characteristic!system
}\index{Cauchy!characteristic system}of $ODE$ can be used for
solving the qausilinear PDE given in \eqref{eq:2.6}.
\end{remark}
\section{Cauchy Problem for the Hamilton-Jacobi\\ Equations\index{Cauchy!Problem for the Hamilton-Jacobi Equations}} A
linear Hamiltonion-Jacobi (H-J) is defined by the following first
order $PDE$\\
\begin{equation}\label{eq:2.17}
\partial_{t}S(t,x)+<\partial_{x}S(t,x),g(t,x)>=L(t,x),t\in
I\subseteq\mathbb{R},x\in G\subseteq \mathbb{R}^{n}
\end{equation}
where
$S\in \mathcal{C}^{1}(I\times G;\mathbb{R})$ is the unknown function
$$\partial_{t}S=\frac{\partial S}{\partial
t},\partial_{x}S=\frac{\partial S}{\partial
x}$$
and
$$g(t,x):I\times G\rightarrow \mathbb{R}^{n}, L(t,x):I\times
G\rightarrow \mathbb{R}$$
are given continuously differentiable
functions. Here $I\subseteq \mathbb{R}$ and
$G\subseteq\mathbb{R}^{n}$ are open sets and for each $t_{0}\in
I,h\in\mathcal{C}^{1}(G;\mathbb{R})$ fixed. The following Cauchy
problem can be defined. Find a solution
$S(t,x):(t_{0}-\alpha,t_{0}+\alpha)\times D\rightarrow\mathbb{R}$
verifying (H-J) equation \eqref{eq:2.17} $(t,x)\in
(t_{0}-\alpha,t_{0}+\alpha)\times D,D(open)\subseteq G$ such that
$S(t_{0},x)=h(x),x\in D$, where $t_{0}\in I$ and $h\in
\mathcal{C}^{1}(G;\mathbb{R})$ are fixed, and
$(t_{0}-\alpha,t_{0}+\alpha)\subseteq I$. A causilinear Hamiltonion-Jacobi (H-J) equation is defined by the
following first order $PDE$
{\small
\begin{equation}\label{eq:2.18}
\partial_{t}S(t,x)+<\partial_{x}S(t,x),g(t,x,S(t,x))>=L(t,x,S(t,x)),t\in
I\subseteq\mathbb{R},x\in G\subseteq \mathbb{R}^{n}
\end{equation}}
where $g\in \mathcal{C}^{1}(I\times G\times
\mathbb{R};\mathbb{R}^{n}),L\in \mathcal{C}^{1}(I\times G\times
\mathbb{R};\mathbb{R})$ are fixed and $I\subseteq
\mathbb{R},G\in\mathbb{R}^{n}$ are open sets. A Cauchy problem for
causilinear (H-J)\eqref{eq:2.18}is defined as follows:let $t_{0}\in I$and
$h\in \mathcal{C}^{1}(G;\mathbb{R})$ be given and find a solution
$S(t,x):(t_{0}-\alpha,t_{0}+\alpha)\times D\rightarrow \mathbb{R}$
verifying \eqref{eq:2.18}for any $t\in (t_{0}-\alpha,t_{0}+\alpha)\subseteq
I$ and $x\in D(open)\subseteq G$ such that $S(t_{0},x)=h(x),x\in
D$. A unique Cauchy problem solution of \eqref{eq:2.17} and \eqref{eq:2.18} are
constructed using the Cauchy method of characteristics
\index{Characteristic}which relies on the corresponding Cauchy
characteristic  system\index{Cauchy!characteristic  system} of
$ODE$. In this respect, we present this algorithm for \eqref{eq:2.18} and  let
$\{F(t,\lambda):t\in(t_{0}-\alpha,t_{0}+\alpha),\lambda\in\Sigma\subseteq
G\}$ be the local flow generated by the following characteristic
system\index{Characteristic! system } associated
with\eqref{eq:2.18}, $F(t,\lambda)=(\varphi(t,\lambda),u(t,\lambda))\in\mathbb{R}^{n+1}$
\begin{equation}\label{eq:2.19}
\left\{
       \begin{array}{ll}
         \frac{dF}{dt}(t,\lambda)= &f(t,F(t,\lambda)),(t,\lambda)\in (t_{0}-\alpha,t_{0}+\alpha)\times\Sigma \\
         F(t_{0},\lambda)=&(\lambda,
u(t_{0},\lambda))=(,\lambda,h(\lambda))\,,\lambda\in\Sigma\subseteq
G
 \end{array}
     \right.
\end{equation}
where $f(t,x,u)=(g(t,x,u),L(t,x,u))\in\mathbb{R}^{n+1}$ and $h\in
\mathcal{C}^{1}(G;\mathbb{R})$ is fixed. Using $det\left(
                                          \begin{array}{c}
                                            \frac{\partial\varphi}{\partial\lambda}(t,\lambda) \\
                                          \end{array}
                                        \right)
\neq 0$ for  any $t\in (t_{0}-\alpha,t_{0}+\alpha)$ and
$\lambda\in\Sigma(compact)\subseteq G$ (see
$\frac{\partial\varphi}{\partial\lambda}(t_{0},\lambda)=I_{n}$ and
we may assume that $\alpha>0$ is sufficiently small) an implicit
function theorem can be applied to the algebraic equation
\begin{equation}\label{eq:2.20}
\varphi(t,\lambda)=x\in V(x_{0})\subseteq G
\end{equation}
We find the unique solution of \eqref{eq:2.20}
\begin{equation}\label{eq:2.21}
\lambda=(\psi_{1}(t,x),...,\psi_{n}(t,x)):(t_{0}-\alpha,t_{0}+\alpha)\times
V(x_{0})\rightarrow\Sigma
\end{equation}
such that
$$\psi_{i}\in \mathcal{C}^{1}(t_{0}-\alpha,t_{0}+\alpha)\times
V(x_{0});\mathbb{R}),i\in\{1,...,n\}$$ fulfills
\begin{equation}\label{eq:2.22}
\varphi(t,\psi_{1}(t,x),...,\psi_{n}(t,x))=x\in
V(x_{0})\subseteq G
\end{equation}
for any $t\in
(t_{0}-\alpha,t_{0}+\alpha)$ and $x\in V(x_{0})$
\begin{equation}
\label{eq:2.23}
\psi_{i}(t_{0},x)=x_{i},i=1,...,n,\psi=(\psi_{1},...,\psi_{n})
\end{equation}
w Define $S(t,x)=u(t,\psi(t,x))$ and it will be a
continuously differentiable function on
$(t_{0}-\alpha,t_{0}+\alpha)\times V(x_{0})$ satisfying
\begin{equation}\label{eq:2.24}
S(t_{0},x)=u(t_{0},\psi(t_{0},x))=u(t_{0},x)=h(x),x\in
V(x_{0})
\end{equation}
In addition using
$\psi(t,\varphi(t,\lambda))=\lambda ,t\in
(t_{0}-\alpha,t_{0}+\alpha)$, and taking the derivative with respect
to the variable $t$, we get $S(t,\varphi(t,\lambda))=u(t,\lambda)$
and
{\small
\begin{equation}\label{eq:2.25}
\partial_{t}S(t,\varphi(t,\lambda))+<\partial_{x}S(t,\varphi(t,\lambda)),g(t,\varphi(t,\lambda),u(t,\lambda))>
=L(t,\varphi(t,\lambda),u(t,\lambda))
\end{equation}}
$\forall t\in (t_{0}-\alpha,t_{0}+\alpha),\lambda\in V_{1}(x_{0})$
where $V_{1}(x_{0})\subseteq V(x_{0})$ is taken such that
$\varphi(t,\lambda)\in V(x_{0})$ if
$(t,\lambda)\in(t_{0}-\alpha,t_{0}+\alpha)\times V_{1}(x_{0})$. In
particular, for $\lambda=\psi(t,x)$ into (1.25), we obtain
\begin{equation}\label{eq:2.26}
\partial_{t}S(t,x)+<\partial_{x}S(t,x),g(t,x,S(t,x))>=L(t,x,S(t,x))
\end{equation}
where $\varphi(t,\psi(t,x))=x$ and $u(t,\psi(t,x))=S(t,x)$ are used.
In addition,the Cauchy problem solution for \eqref{eq:2.18} is unique and
assuming that another solution $\{v(t,x)\}$ of \eqref{eq:2.18} satisfies
$v(t_{0},x)=h(x)$for $x\in D\subseteq G$ then $v(t,x)=S(t,x),\forall
t\in(t_{0}-\widetilde{\alpha},t_{0}+\widetilde{\alpha}),x\in
V(x_{0})\bigcap D$ where $\widetilde{\alpha}>0$. It relies on the
unique Cauchy problem associated with$ODE$ \eqref{eq:2.19}. The proof is
complete.
\section{Nonlinear First Order $PDE$}
\subsection{Examples of Scalar Nonlinear$ODE$}
We consider a simple scalar equation given implicitly by
\begin{equation}\label{eq:2.27}
F(x,y(x),y'(x))=0,y'(x)=\frac{dy}{dx}(x)
\end{equation}
where
$F(x,y,z):D(open)\subseteq\mathbb{R}^{3}\rightarrow\mathbb{R}$
be second order continuously differentiable satisfying\\
\begin{equation}\label{eq:2.28}
F(x_{0},y_{0},z_{0})=0,\partial_{z}F(x_{0},y_{0},z_{0})\neq
0 \end{equation}
 for some $(x_{0},y_{0},z_{0})\in D$ fixed. Notice that for a smooth
curve
$$\{\gamma(t)=(x(t),y(t),z(t)))\in D:t\in[0,a]\}$$ with
$$x(0)=x_{0},y(0)=y_{0},z(0)=z_{0}$$we get

\begin{equation}\label{eq:2.29}
F(\gamma(t))=0,t\in[0,a]
\end{equation}
provided
{\small
\begin{equation}\label{eq:2.30}
0=\frac{d}{dt}[F(\gamma(t))]=\partial_{x}F(\gamma(t))\frac{dx}{dt}(t)+\partial_{y}F(\gamma(t))\frac{dy}{dt}(t)+
\partial_{z}F(\gamma(t))\frac{dz}{dt}(t),t\in[0,a]
\end{equation}}
Using \eqref{eq:2.30} we may and do define a corresponding
characteristic
system\index{Characteristic! system } associated with \eqref{eq:2.27}
\begin{equation}\label{eq:2.31}
\left\{
       \begin{array}{ll}
         \frac{dx}{dt}= & \frac{\partial F}{\partial z}(x,y,z),\frac{dy}{dt}=z\frac{\partial F}{\partial z}(x,y,z) \\
         \frac{dz}{dt}= & -[\frac{\partial F}{\partial x}(x,y,z)+\frac{\partial F}{\partial
y}(x,y,z)z],x(0)=x_{0},y(0)=y_{0},z(0)=z_{0}
       \end{array}
     \right.
\end{equation}
Notice that each solution of \eqref{eq:2.31} satisfies \eqref{eq:2.29}. Define
$x=x(t),y=y(t),z=z(t),t\in[-a,a]$, the unique Cauchy problem
solution of \eqref{eq:2.31} and by definition
$\frac{dx}{dt}(0)=\partial_{z}F(x_{0},y_{0},z_{0})\neq 0$ allows
to apply an implicit function theorem for solving the following
scalar
equation
\begin{equation}\label{eq:2.32}
x(t)=x\in V(x_{0})\subseteq I\hbox{(interval)}
\end{equation}
We find a unique continuously derivable function
$t=\widetilde{\tau}(x):V(x_{0})\rightarrow(-a,a)$
\begin{equation}\label{eq:2.33}
x(\widetilde{\tau}(x))=x \,\hbox{and} \,
\widetilde{\tau}(x(t))=t(\frac{d\widetilde{\tau}}{dx}(x)\frac{dx}{dt}(\widetilde{\tau}(x))=1)
\end{equation}
Denote
$$\hat{y}(x)=y(\hat{\tau}(x)), \hat{z}(x)=z(\hat{\tau}(x)),x\in
V(x_{0})\subseteq I$$and it is easily seen that
$$\frac{d\hat{y}}{dx}=\frac{dy}{dt}(\hat{\tau}(x)).\frac{d\hat{\tau}}{dx}(x)=\hat{z}(x)$$It
implies that $$F(x,\hat{y}(x),\hat{y'}(x))=0,\forall x\in V(x_{0})$$
provided $$F(x(t),y(y),z(t))=0,t\in(-a,a)$$ is used. It shows that
$\{\hat{y}(x):x\in V(x_{0})\}$ is a solution of the scalar nonlinear
differential equation \eqref{eq:2.27}.
\begin{remark}\label{re:2.7}In getting the characteristic system \index{Characteristic! system }\eqref{eq:2.31}we must
confine ourselves to the following constraints
$y(t)=\hat{y}(x(t)),\frac{dy}{dt}(t)=\frac{d\hat{y}}{dx}(x(t)).\frac{dx}{dt}(t)=z(t)\frac{dx}{dt}$
where $z(t)=\frac{d\hat{y}}{dx}(x(t))$ and $\{\hat{y}(x),x\in
I\subseteq \mathbb{R}\}$ is a solution of \eqref{eq:2.27}. The following two
examples can be solved using the algorithm of the characteristic
system \index{Characteristic !system }used for the scalar
equation\eqref{eq:2.27}.
\end{remark}
\begin{example}(Clairant and Lagrange equations)
\begin{equation}\label{eq:2.34}
\left\{
       \begin{array}{ll}
        y= & xa(y')+b(y')\,\,\, \hbox{(Clairaut equation)} \\
         y= & xy'+b(y')\,\,\, \hbox{(Lagrange equation,a(z)=z)}
       \end{array}
     \right.
\end{equation}
Here $F(x,y,z)=xa(z)+b(z)-y$ and the corresponding
characteristic system\index{Characteristic! system} is given by
\begin{equation}\label{eq:2.35}
\left\{
  \begin{array}{ll}
    \frac{dx}{dt}= & xa'(z)+b'(z),\frac{dz}{dt}=-a(z)+z,\frac{dz}{dt}=z(xa'(z)+b'(z))
    \\
    x(0)= & x_{0}                       z(0)=z_{0}
y(0)=y_{0}
  \end{array}
\right.
\end{equation}
where $$x_{0}a(z_{0})+b(z_{0})-y_{0}$$
and
$$x_{0}a'(z_{0})+b'(z_{0})\neq 0$$
\end{example}
\begin{example}(Total differential equations)
\begin{equation}\label{eq:2.36}
\frac{dy}{dx}=\frac{g(x,y)}{h(x,y)}
\end{equation}
where
$g,h:D\subseteq\mathbb{R}^{2}\rightarrow\mathbb{R}$ are continuously
differentiable functions and $h(x,y)\neq 0,\forall(x,y)\in
D$. Formally, \eqref{eq:2.36} can be written as
\begin{equation}\label{eq:2.37}
-g(x,y)dx+h(x,y)dy=0
\end{equation}
and \eqref{eq:2.36} is a total differential equation if a second
order continuously differentiable function $F:D\rightarrow\mathbb{R}$
exists such that
\begin{equation}\label{eq:2.38}
\frac{\partial F}{\partial x}(x,y)=-g(x,y),\frac{\partial
F}{\partial y}(x,y)=h(x,y)\neq 0 \end{equation}
If $y=y(x),x\in I$,
is a solution of \eqref{eq:2.36} then $F(x,y(x))=constant,x\in I$, provided $F$
fulfils \eqref{eq:2.38}. In conclusion,assuming\eqref{eq:2.38}, the nonlinear first
order equation \eqref{eq:2.36} is
solved provided the corresponding algebraic equation
\begin{equation}\label{eq:2.39}
F(x,y(x))=c
\end{equation} is satisfied, where the constant $c$ is parameter.
\end{example}
\subsection{Nonlinear Hamilton-Jacobi E quations}
A Hamilton-Jacobi equation is a first order $PDE$ of the
following form
\begin{equation}\label{eq:2.40}
\partial_{t}u(t,x)+ H(t,x,u(t,x),\partial_{x}u(t,x))=0,t\in
I\subseteq \mathbb{R},x\in D\subseteq \mathbb{R}^{n}
\end{equation}
where $H(t,x,u,p):I\times
D\times\mathbb{R}\times\mathbb{R}^{n}\rightarrow\mathbb{R}$ is a
second order continuously differentiable function.
\begin{definition}
A solution for H-J equation \eqref{eq:2.40} means a
first order continuously differentiable function
$u(t,x):I_{a}(x_{0})\times B(x_{0},\rho)$ where
$B(x_{0},\rho)\subseteq I$is a ball centered at $x_{0}\in D$ and
$I_{a}(x_{0})=(x_{0}-a,x_{0}+a)\subseteq I$. A Cauchy problem for
the H-J equation\eqref{eq:2.40} means to find a solution $u\in
\mathcal{C}^{1}(I_{a}(x_{0})\times B(x_{0},\rho))$ of \eqref{eq:2.40} such
that $u(t_{0},x)=u_{0}(x),x\in B(x_{0},\rho)$ where $t_{0}\in I$ and
$u_{0}\in \mathcal{C}^{1}(D;\mathbb{R})$ are fixed.
\end{definition}
\noindent solution for Cauchy problem associated with H-J equation \eqref{eq:2.40} is
found using the corresponding characteristic system\index{Characteristic!system }
\begin{equation}\label{eq:2.41}
\left\{
    \begin{array}{ll}
     \frac{dx}{dt}= & \partial_{p}H(t,x,u,p),x(t_{0})=\xi\in B(x_{0},\rho)\subseteq D \\
      \frac{dp}{dt} =& -[\partial_{x}H(t,x,u,p)+p\partial_{u}H(t,x,u,p)], p(t_{0})=p_{0}(\xi) \\
      \frac{du}{dt}= &
-H(t,x,u,p)+<p, \partial_{p}H(t,x,u,p)>, u(t_{0})=u_{0}(\xi)
    \end{array}
  \right.
\end{equation}
where $p_{0}(\xi)=\partial_{\xi}u_{0}(\xi)$ and
$u_{0}\in
\mathcal{C}^{1}(D;\mathbb{R})$ are fixed. Consider that
\begin{equation}\label{eq:2.42}
\{(\hat{x}(t,\xi)),\hat{p}(t,\xi),\hat{u}(t,\xi):t\in
I_{a}(x_{0}),\xi\in B(x_{0},\rho)\}
\end{equation}
is the unique
solution fulfilling $ODE$ \eqref{eq:2.40}. By definition
$\partial_{\xi}\hat{x}(0,\xi)=I_{n}$ and assuming that $a>0$ is
sufficiently small, we admit
\begin{equation}\label{eq:2.43}
\partial_{\xi}\hat{x}(t,\xi) \,\hbox{is nonsingular for any}
(t,x)\in I_{a}(x_{0})\times B(x_{0},\rho)
\end{equation}
Using \eqref{eq:2.47}
we may and do apply the standard implicit functions theorem for
solving the algebraic equation
\begin{equation}\label{eq:2.44}
\hat{x}(t,\xi)=x\in B(x_{0},\rho_{1}),t\in
I_{\alpha}(x_{0})
 \end{equation}
We get a continuously differentiable mapping
$$\xi=\psi(t,x):I_{\alpha}(x_{0})\times B(x_{0},\rho_{1})\rightarrow
B(x_{0},\rho)$$ such that
\begin{eqnarray*}
  \hat{x}(t,\psi(t,x)) &=& x,\psi(t_{0},x)=x\, \hbox{\,\,\,\,\,\,\,\,\,\,\,\,\,\,\,and}\\
  \psi(t,\hat{x}(t,\xi))&=& \xi \,\,\,\,\,\,\,\,\,\,\,\,\,\,\,\,\,\,\,\,\,\,\,\,\,\,\,\,\,\,\,\,\,\,\,\,\,\,\,\,\,\,\,\,\,\,\,\,\,\,\hbox{for any}
 \end{eqnarray*}
\begin{equation}\label{eq:2.45}
t\in I_{\alpha}(x_{0}),x\in
B(x_{0},\rho_{1})
\end{equation}
Define the following continuously
differentiable function $$u\in
\mathcal{C}^{1}(I_{\alpha}(x_{0})\times
B(x_{0},\rho_{1});\mathbb{R}),p\in
\mathcal{C}^{1}(I_{\alpha}(x_{0})\times B(x_{0},\rho_{1});\mathbb{R}^{n})$$\\
\begin{equation}\label{eq:2.46}
u(t,x)=\hat{u}(t,\psi(t,x)),p(t,x)=\hat{p}(t,\psi(t,x))
\end{equation}
By definition $u(t_{0},x)=u_{0}(x),x\in B(x_{0},\rho_{1})\subseteq
D$ and to show that $\{u(t,x):t\in I_{\alpha}(x_{0}),x\in
B(x_{0},\rho_{1})\}$ is a solution for \eqref{eq:2.40} satisfying
$u(t_{0},x)=u_{0}(x),x\in B(x_{0},\rho_{1})\subseteq D$, we need to
show
\begin{equation}\label{eq:2.47}
\left\{
       \begin{array}{ll}
         \partial_{t}u(t,x)= &-H(t,x,u(t,x),p(t,x)) \\
       p(t,x)= & \partial_{x}u(t,x),(t,x)\in I_{\alpha}(x_{0})\times
B(x_{0},\rho_{1})
       \end{array}
     \right.
\end{equation}
The second equation of \eqref{eq:2.47}is valid if
$$\partial_{\xi}\hat{u}(t,\xi)=\hat{p}(t,\xi)\partial_{\xi}\hat{x}(t,\xi),(\hat{p}\in\mathbb{R}^{n})$$
is a row vector holds for each
\begin{equation}\label{eq:2.48}
t\in I_{\alpha}(x_{0}),\xi\in B(x_{0},\rho)
\end{equation}
which will be proved in the second form
\begin{equation}\label{eq:2.49}
\partial_{k}\hat{u}(t,\xi)=<\hat{p}(t,\xi),\partial_{k}\hat{x}(t,\xi)>,k\in\{1,...,n\}
\end{equation}
where
$$\partial_{k}\varphi(y,\xi)=\frac{\partial\varphi(t,\xi)}{\partial\xi_{k}},\xi=(\xi_{1},...,\xi_{n})$$\\
Using the characteristic system\index{Characteristic! system } \eqref{eq:2.40}
we notice that
$$\lambda_{k}(t,\xi)=\partial_{k}\hat{u}(t,\xi)-<\hat{p}(t,\xi),\partial_{k}\hat{x}(t,\xi)>$$
fulfils
\begin{equation}\label{eq:2.50}
\partial_{k}(t_{0},\xi)=\partial_{k}u_{0}(\xi)-<\partial_{\xi}u_{0}(\xi),e_{k}>=0,k\in\{1,...,n\}
\end{equation}
where
 $$\{e_{1},...,e_{n}\}\subseteq\mathbb{R}^{n}$$
is the canonical basis. In addition, by a direct computation, we obtain
\begin{eqnarray}\label{eq:2.51}
  \frac{d\lambda_{k}}{dt}(t,\xi) &=& \partial_{k}[\frac{d\hat{u}}{dt}(t,\xi)]-<\frac{d\hat{p}(t,\xi)}{dt} \nonumber\\
  \partial_{k}\hat{x}(t,\xi) &>-<& \hat{p}(t,\xi),\partial_{k}[\frac{d\hat{x}}{dt}(t,\xi)]>,t\in
I_{\alpha}(x_{0})
\end{eqnarray}
Notice that
\begin{eqnarray}\label{eq:2.52}
  \partial_{k}[\frac{d\hat{u}}{dt}(t,\xi)] &=& -\partial_{k}[H(t,\hat{x}(t,\xi),\hat{u}(t,\xi),\hat{p}(t,\xi))]
+<\partial_{k}\hat{p}(t,\xi) \nonumber\\
  \frac{\hat{x}}{dt}(t,\xi) &>+<& \hat{p}(t,\xi),\partial_{k}[\frac{d\hat{x}}{dt}(t,\xi)]>
\end{eqnarray}
and
{\small
\begin{equation}\label{eq:2.53}
\frac{d\hat{p}}{dt}(t,\xi)=-[\partial_{x}H(t,\hat{x}(t,\xi)),\hat{u}(t,\xi),\hat{p}(t,\xi_{1})+\hat{p}(t,\xi)\partial_{u}H(t,\hat{x}(t,\xi)),\hat{u}(t,\xi),\hat{p}(t,\xi)]
\end{equation}}
Combining \eqref{eq:2.52}and \eqref{eq:2.53} we obtain
\begin{eqnarray}\label{eq:2.54}
  \frac{d\lambda_{k}}{dt}(t,\xi) &=& -\partial_{u}H(t,\hat{x}(t,\xi),\hat{u}(t,\xi),\hat{p}(t,\xi)).\partial_{k}\hat{u}(t,\xi)+\partial_{u}H(t,\hat{x}(t,\xi),\hat{u}(t,\xi),\hat{p}(t,\xi)) \nonumber\\
   &<& \hat{p}(t,\xi),\partial_{k}\hat{x}(t,\xi)>
=-\partial_{u}H(t,\hat{x}(t,\xi),\hat{u}(t,\xi),\hat{p}(t,\xi)).\lambda_{k}(t,\xi)
\end{eqnarray}
\noindent which is a linear scalar equation for the unknown$\{\lambda_{k}\}$
satisfying $\lambda_{k}(t_{0},\xi)=0,k\in \{1,...,n\}$(see\eqref{eq:2.51}).
It follows $\lambda_{k}(t,\xi)=0,\forall(t,\xi)\in
I_{\alpha}(x_{0})\times B(x_{0},\rho_{1})$ and for any
$k\in\{1,...,n\}$, which proves
\begin{equation}\label{eq:2.55}
p(t,x)=\partial_{x}u(t,x),(t,x)\in I_{\alpha}(x_{0})\times
B(x_{0},\rho_{1})
\end{equation}
standing for the second equation
in\eqref{eq:2.47}. Using\eqref{eq:2.55}, $\hat{u}(t,\xi)=u(t,\hat{x}(t,\xi))$and the
third equation of the characteristic system\index{Characteristic!
system } \eqref{eq:2.40} we see easily that
\begin{eqnarray}\label{eq:2.56}
  \partial_{t}u(t,\hat{x}(t,\xi))+<\hat{p}(t,\xi),\frac{d\hat{x}}{dt}(t,\xi)&>=-& H(t,\hat{x}(t,\xi),\hat{u}(t,\xi),\hat{p}(t,\xi)) \nonumber\\
  &+<& \hat{p}(t,\xi),\frac{d\hat{x}}{dt}(t,\xi)>
\end{eqnarray}
which lead us to the first equation of \eqref{eq:2.47}
\begin{equation}\label{eq:2.57}
\partial_{t}u(t,\hat{x}(t,\xi))=-H(t,\hat{x}(t,\xi),\hat{u}(t,\xi),\hat{p}(t,\xi))
\end{equation}
for any $(t,\xi)\in I_{\alpha}(x_{0})\times
B(x_{0},\rho)$. In particular ,taking $\xi=\psi(t,x)$ in \eqref{eq:2.57}, we
get
\begin{equation}\label{eq:2.57}
\partial_{t}u(t,x)=-H(t,x,u(t,x),\partial_{x}u(t,x)),u(t_{0},x)=u_{0}(x)
\end{equation}
$$\forall(t,x)\in(t_{0}-\alpha,t_{0}+\alpha)\times
B(x_{0},\rho_{1})$$
which stands for the existence of a Cauchy problem solution. The
uniqueness of the Cauchy problem solution for H-J equation\eqref{eq:2.40} can
be easily proved using the fact that any other solution
$\hat{u}(t,x),(t,x)\in I_{\alpha}(x_{0})\times
B(x_{0},\rho_{2})$ satisfying \eqref{eq:2.49} and
$\hat{u}(t_{0},x)=u_{0}(x),x\in B(x_{0},\rho_{2})$ induces a
solution of the same $ODE$ \eqref{eq:2.40}. The conclusion is that the
uniqueness property  for the Cauchy problem solution of $ODE$ \eqref{eq:2.41}
implies that the H-J equation \eqref{eq:2.40} has a unique Cauchy problem
solution. The above given computations and considerations regarding
the H-J equation \eqref{eq:2.40} will be stated as
\begin{proposition}\label{pr:2.3}
Let $H(t,x,u,p):I\times
D\times\mathbb{R}\times\mathbb{R}^{n}\rightarrow\mathbb{R}$ be a
second order continuously differentiable scalar function, where
$I\subseteq\mathbb{R},D\subseteq\mathbb{R}^{n}$ are open sets. Let
$(t_{0},x_{0})\in I\times D$and $u_{0}\in
\mathcal{C}^{1}(D;\mathbb{R})$ are
fixed. Then the following nonlinear H-J equation
\begin{equation}\label{eq:2.58}
\partial_{t}u(t,x)+H(t,x,u(t,x),\partial_{x}u(t,x))=0
\end{equation}
This has a unique Cauchy problem solution satisfying
$$u(t_{0},x)-u_{0}(x),x\in B(x_{0},\rho_{0})\subseteq D$$In
addition,the unique Cauchy problem
solution$$\{u(t,x):t\in(t_{0}-\alpha,t_{0}+\alpha),x\in
B(x_{0},\rho_{0})\}$$ of \eqref{eq:2.58} is defined by
$u(t,x)=\hat{u}(t,\psi(t,x))$where $\{\varphi(t,x):t\in
I_{\alpha}(x_{0}),x\in B(x_{0},\rho_{0})\}$ is the unique solution
of the algebraic \eqref{eq:2.44} and
$$\{(\hat{x}(t,\xi),\hat{p}(t,\xi),\hat{u}(t,\xi)):t\in
I_{\alpha}(x_{0}),\xi\in B(x_{0},\rho) \}$$ is the unique solution of
the characteristic system \index{Characteristic! system }\eqref{eq:2.41}
\end{proposition}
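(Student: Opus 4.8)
The plan is to construct the solution by the classical method of characteristics, exactly along the lines sketched in the paragraphs preceding the statement. First I would form the characteristic system \eqref{eq:2.41}, a system of $(2n+1)$ ordinary differential equations in the unknowns $(\hat x,\hat p,\hat u)$, with Cauchy data at $t=t_0$ parametrized by $\xi\in B(x_0,\rho)$, namely $\hat x(t_0,\xi)=\xi$, $\hat p(t_0,\xi)=\partial_\xi u_0(\xi)$, $\hat u(t_0,\xi)=u_0(\xi)$. Since $H$ is of class $\mathcal C^2$, the right-hand side of \eqref{eq:2.41} is continuously differentiable, so Theorem \ref{th:1.5} yields a unique local solution and Theorem \ref{th:1.7} gives its continuous differentiability in the parameter $\xi$ on $I_\alpha(x_0)\times B(x_0,\rho)$ for some $\alpha>0$.

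Next, from $\partial_\xi\hat x(t_0,\xi)=I_n$ and continuity I would shrink $\alpha$ so that $\partial_\xi\hat x(t,\xi)$ stays nonsingular on $I_\alpha(x_0)\times B(x_0,\rho)$; the implicit function theorem applied to the algebraic equation $\hat x(t,\xi)=x$ then produces a $\mathcal C^1$ map $\xi=\psi(t,x)$ with $\hat x(t,\psi(t,x))=x$, $\psi(t,\hat x(t,\xi))=\xi$ and $\psi(t_0,x)=x$ on a smaller ball $B(x_0,\rho_1)$. I then define $u(t,x)=\hat u(t,\psi(t,x))$ and $p(t,x)=\hat p(t,\psi(t,x))$; the initial condition $u(t_0,x)=\hat u(t_0,x)=u_0(x)$ is immediate.

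The heart of the argument — and the step I expect to be the main obstacle — is the compatibility identity $p(t,x)=\partial_x u(t,x)$, equivalently $\partial_k\hat u(t,\xi)=<\hat p(t,\xi),\partial_k\hat x(t,\xi)>$ for $k\in\{1,\dots,n\}$. The plan is to set
\[
\lambda_k(t,\xi)=\partial_k\hat u(t,\xi)-<\hat p(t,\xi),\partial_k\hat x(t,\xi)>,
\]
to observe that $\lambda_k(t_0,\xi)=\partial_k u_0(\xi)-<\partial_\xi u_0(\xi),e_k>=0$, and then to differentiate the three equations of \eqref{eq:2.41} with respect to $\xi_k$. Interchanging $\partial_t$ with $\partial_{\xi_k}$ and substituting the evolution laws for $d\hat x/dt$, $d\hat p/dt$, $d\hat u/dt$, a careful bookkeeping of the terms involving $\partial_x H$, $\partial_u H$, $\partial_p H$ should make everything cancel except for
\[
\frac{d\lambda_k}{dt}(t,\xi)=-\partial_u H(t,\hat x(t,\xi),\hat u(t,\xi),\hat p(t,\xi))\,\lambda_k(t,\xi),
\]
a homogeneous linear scalar ODE with zero initial value. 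By the uniqueness in Theorem \ref{th:1.1} (or directly by Lemma \ref{le:1.1}) we get $\lambda_k\equiv 0$, hence $p=\partial_x u$ after the substitution $\xi=\psi(t,x)$. The delicate point here is purely computational: expanding $\partial_{\xi_k}H$ correctly by the chain rule and matching it term by term against the $\xi_k$-derivatives of $d\hat x/dt$ and $d\hat p/dt$.

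Finally, with $p=\partial_x u$ in hand, I would plug $\xi=\psi(t,x)$ into the third characteristic equation $d\hat u/dt=-H+<\hat p,\partial_p H>$ and use $\hat u(t,\xi)=u(t,\hat x(t,\xi))$ together with $\partial_t u(t,\hat x(t,\xi))+<\partial_x u,\,d\hat x/dt>=d\hat u/dt$ to obtain $\partial_t u(t,x)=-H(t,x,u(t,x),\partial_x u(t,x))$, which is \eqref{eq:2.40}. For uniqueness, any other $\mathcal C^1$ solution $v$ of \eqref{eq:2.40} with $v(t_0,x)=u_0(x)$ has the property that $\hat x(t,\xi)$ together with $v(t,\hat x(t,\xi))$ and $\partial_x v(t,\hat x(t,\xi))$ solves the characteristic system \eqref{eq:2.41} with the same Cauchy data; the uniqueness of the Cauchy problem solution for that ODE forces $v$ to coincide with $u$ on a common neighbourhood of $(t_0,x_0)$, completing the proof.
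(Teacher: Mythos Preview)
Your proposal is correct and follows essentially the same approach as the paper: the construction via the characteristic system \eqref{eq:2.41}, the implicit function theorem to obtain $\psi$, the key compatibility step via the functions $\lambda_k(t,\xi)=\partial_k\hat u(t,\xi)-\langle\hat p(t,\xi),\partial_k\hat x(t,\xi)\rangle$ satisfying the homogeneous linear ODE $\frac{d\lambda_k}{dt}=-\partial_u H\cdot\lambda_k$ with zero initial data, and the uniqueness argument via uniqueness for the characteristic ODE all match the paper's development in \eqref{eq:2.42}--\eqref{eq:2.57} essentially line by line.
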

Starting with the H-J equation \eqref{eq:2.58} (see\eqref{eq:2.40}) we may associate
the following system of H-J equation for the unknown
$p(t,x)=\partial_{x}u(t,x)\in \mathbb{R}^{n}$
\begin{eqnarray}\label{eq:2.59}
  \partial_{t}p(t,x) &+<& \partial_{p}H(t,x,u(t,x),p(t,x))\partial_{x}p(t,x)> \\\nonumber
  &=-& [\partial_{x}H(t,x,u(t,x),p(t,x))+p(t,x)\partial_{u}H(t,x,u(t,x),p(t,x))]
\end{eqnarray}
where $p=(p_{1},...,p_{n}),\partial_{p}H$ and
$\partial_{x}H$ are row vectors. Using
$\partial_{x_{i}}p(t,x)=\partial_{x}p_{i}(t,x),i\in\{1,...,n\}$ we
notice that \eqref{eq:2.59} can be written as follows
\begin{eqnarray}\label{eq:2.60}
  \partial_{t}p_{i}(t,x) &+<& \partial_{x}p_{i}(t,x),\partial_{p}H(t,x,u(t.x),p(t,x))> \\\nonumber
  &=-& [\partial_{x_{i}}H(t,x,u(t,x),p(t,x))+p_{i}(t,x)\partial_{u}H(t,x,u(t,x),p(t,x))]
\end{eqnarray}
for each $i\in \{1,...,n\}$, and
\begin{equation}\label{eq:2.61}
\{(\partial_{x}(t,\xi)),\partial_{p}(t,\xi)=p(t,\partial_{x}(t,\xi)),\partial_{u}(t,\xi)=u(t,\partial_{x}(t,\xi):t\in
I_{\alpha}(x_{0}))\}
\end{equation}
satisfies the characteristic
system\index{Characteristic! system } \eqref{eq:2.42}
provided
\begin{equation}\label{eq:2.62}
\left\{
       \begin{array}{ll}
         \frac{d\hat{x}(t,\xi)}{dt}=&\partial_{p}H(t,\hat{x}(t,\xi),\hat{u}(t,\xi),\hat{p}(t,\xi)),t\in I_{\alpha}(x_{0}) \\
         \hat{x}(t_{0},\xi)= & \xi
       \end{array}
     \right.
\end{equation}
The additional system of H-J equation \eqref{eq:2.60} stands for a
causilinear system of evolution equations
\begin{equation}\label{eq:2.63}
\left\{
       \begin{array}{ll}
        \partial_{t}v_{i}(t,x)+<\partial_{x}v_{i}(t,x),X(t,x,v(t,x))>=&L_{i}(t,x,v(t,x))\\
         v_{i}(t_{0},x)= v_{6}^{0},i\in\{1,...n\}
       \end{array}
     \right.
\end{equation}
Which allows to use the corresponding characteristic
system\index{Characteristic! system } suitable for a scalar
equation. It relies on the unique vector field
$X(t,x,v)$ deriving each scalar equation in the system \eqref{eq:2.63}.
\begin{remark}\label{re:2.8}
In the case that the unique vector field
$X(t,x,v)$ is replaced by some $X_{i}(t,x,v)$, for each
$i\in\{1,...,n\}$, which are not commuting with respect to the Lie
bracket $[X_{i}(t,x,.),X_{i}(t,x,.)](v)\neq o$, for some $i\neq j$, then the integration of the system \eqref{eq:2.63}changes drastically.
\end{remark}
\subsection{Exercises}
$(E_{1}).$Using the characteristic system\index{Characteristic!
system } method, solve the following
Cauchy problems
\begin{equation}\label{eq:2.64}
\partial_{t}u(t,x)=(\partial_{x}u(t,x))^{2},u(0,x)=cosx,x\in\mathbb{R},u\in
\mathbb{R}
\end{equation}
\begin{equation}\label{eq:2.65}
\left\{
         \begin{array}{ll}
          \partial_{t}u(t,x_{1},x_{2})= & x_{1}\partial_{x_{1}}u(t,x_{1},x_{2})+(\partial_{x_{2}}u(t,x_{1},x_{2}))^{2} \\
           u(0,x_{1},x_{2})= &
x_{1}+x_{2},(x_{1},x_{2})\in\mathbb{R}^{2},u\in\mathbb{R}
         \end{array}
       \right.
\end{equation}
$(E_{2})$. Let
$f(t,x):\mathbb{R}\times\mathbb{R}^{n}\rightarrow\mathbb{R}$ and
$\varphi(x):\mathbb{R}\rightarrow\mathbb{R}$ be some first order
continuously differentiable functions. Find the Cauchy problem
solution of the following linear H-J equations
\begin{equation}\label{eq:2.66}
\left\{
      \begin{array}{ll}
       \partial_{t}u(t,x)+&\Sigma_{i=1}^{n}a_{i}(t)\partial_{x_{i}}u(t,x)= f(t,x) \\
       u(0,x)= & \varphi(x)
      \end{array}
    \right.
\end{equation} where
$a(t)=(a_{1}(t),...,a_{n}(t)):\mathbb{R}\rightarrow\mathbb{R}^{n}$
is a continuous function.\\
\begin{equation}\label{eq:2.67}
\left\{
      \begin{array}{ll}
      \partial_{t}u(t,x)+<A(t)x.\partial_{x}u(t,x)> = f(t,x) \\
        u(0,x) =\varphi(x)
      \end{array}
    \right.
\end{equation}
where $A(t):\mathbb{R}\rightarrow M_{n\times n}$ is a continuous
mapping.
\section[Stationary Solutions for Nonlinear Ist Order
$PDE$]{Stationary Solutions for Nonlinear First \\Order
$PDE$}
\subsection{Introduction}
We consider  a nonlinear equation
\begin{equation}\label{eq:2.68}
H_{0}(x,\partial_{x}u(x),u(x))=constant \, \forall x\in
D\subseteq\mathbb{R}^{n}
\end{equation}
where
$$H_{0}(x,p,u):\mathbb{R}^{n}\times\mathbb{R}^{n}\times\mathbb{R}\rightarrow\mathbb{R}$$
is a scalar continuously differentiable function
$$H_{0}\in
\mathcal{C}^{1}(\mathbb{R}^{n}\times\mathbb{R}^{n}\times\mathbb{R})$$
and $\partial_{x}u=(\partial_{1}u,...,\partial_{n}u)$ stands for the
gradient of a scalar function. A standard solution for \eqref{eq:2.68} means to find
$u(x):B(x_{0},\rho)\subseteq D\rightarrow\mathbb{R},u\in
\mathcal{C}^{1}(B(x_{0},\rho))$ such
that $H_{0}(x,\partial_{x}u(x),u(x))=\hbox{constant}\,  \forall \,x\in
B(x_{0},\rho)\subseteq D$. The usual method of solving \eqref{eq:2.68} uses the
associated characteristic system\index{Characteristic! system }
\begin{equation}\label{eq:2.69}
\frac{d\hat{z}}{dt}=Z_{0}(\hat{z}),\hat{z}(0,\lambda)=\hat{z}(\lambda),\lambda\in\Lambda\subseteq\mathbb{R}^{n-1},t\in(-a,a)
\end{equation} where
$Z_{0}(z):\mathbb{R}^{2n+1}\rightarrow\mathbb{R}^{2n+1},z=(x,p,u)$, is
the characteristic vector\index{Characteristic!vector} field
corresponding to $H_{0}(z)$
\begin{equation}\label{eq:2.70}
Z_{0}(z)=(X_{0}(z),P_{0}(z),U_{0}(z)),X_{0}(z)=\partial_{p}H_{0}(z)\in\mathbb{R}^{n}
\end{equation}
$$U_{0}(z)=<p,X_{0}(z)>,P_{0}(z)=-(\partial_{x}H_{0}(z)+p\partial_{u}H_{0}(z))\in\mathbb{R}^{n}$$\\
For a fixed Cauchy condition
$\hat{z}(\lambda)=(\hat{x}(\lambda),\hat{p}(\lambda),\hat{u}(\lambda))$
given on a domain $\lambda\in\Lambda\subseteq\mathbb{R}^{n-1}$, a
compatibility condition
\begin{equation}\label{eq:2.71}
\partial_{i}\hat{u}(\lambda)= <\hat{p}(\lambda),\partial_{i}\hat{x}(\lambda)>,i\in\{1,...,n-1\}
\end{equation}
is necessary. In addition, both vector fields $Z_{0}(z)$ and the
parametrization $\{\hat{z}(\lambda):\lambda\in\Lambda\}$ must
satisfy a nonsingularity condition
\begin{eqnarray}\label{eq:2.72}
\hbox{the vectors in }
\mathbb{R}^{n},X_{0}(\hat{z}(\lambda)),\partial_{1}\hat{x}(\lambda),...,\partial_{n-1}\hat{x}(\lambda)\\\nonumber
\hbox{are linearly independent for any} \lambda\in\Lambda\in\mathbb{R}^{n-1}
\end{eqnarray}
By definition, the
characteristic vector\index{Characteristic!vector} field
$\{\partial_{z}H_{0}(z):z\in\mathbb{R}{2n+1}\}$ and we get
{\small
\begin{equation}\label{eq:2.73}
0=\left\{
  \begin{array}{ll}
    <\partial_{z}H_{0}(z),Z_{0}(z)> \\
    <\partial_{x}H_{0}(z),X_{0}(z)>+<\partial_{p}H_{0}(z),P_{0}(z)>+\partial_{u}H_{0}(z)U_{0}(z)
  \end{array}
\right.
\end{equation}}
\noindent for any $z=(x,p,u)\in\mathbb{R}^{2n+1}$. Using \eqref{eq:2.73} for the local solution $\{\hat{z}(t,\lambda):t\in(-a,a)\}$ of
the characteristic system\index{Characteristic! system } \eqref{eq:2.69} we
obtain
$H_{0}(\hat{z}(t,\lambda))=H_{0}(\hat{z}(\lambda)),t\in(-a,a)$, for
each $\lambda\in\Lambda\subseteq\mathbb{R}^{n-1}$. Looking for a
stationary solution of the equation \eqref{eq:2.68} we need to impose the
following constaint
\begin{equation}\label{eq:2.74}
H_{0}(\hat{z}(t,\lambda))=H_{0}(z_{0}) \hbox{for any}
(t,\lambda)\in(-a,a)\times\Lambda
\end{equation}
where
$z_{0}=\hat{z}(0)=(x_{0},p_{0},u_{0})(0\in int\Lambda \hbox{ for
simplicity})$. The condition \eqref{eq:2.72} allows to apply the standard implicit
function theorem and to solve the algebraic equation
\begin{equation}\label{eq:2.75}
\hat{x}(t,\lambda)=\lambda\in B(x_{0},\rho)\subseteq D
\end{equation}
We get smooth functions $t=\tau(x)\in(-a,a)$ and $\lambda=\psi(x)\in\Lambda$ such that
\begin{equation}\label{eq:2.76}
\hat{x}(\tau(x),\psi(x))=x\in B(x_{0},\rho)\subseteq
D,\tau(x_{0})=0,\psi(x_{0})=0\in\Lambda
\end{equation}
A solution for the nonlinear
equation \eqref{eq:2.68} is obtained as follows
\begin{equation}\label{eq:2.77}
u(x)=\hat{u}(\tau(x),\psi(x)),p(x)=\hat{p}(\tau(x),\psi(x)),x\in
B(x_{0},\rho)\subseteq\mathbb{R}^{n}
\end{equation}
where $p(x)$
fulfils
$$p(x)=\partial_{x}u(x)$$
\textbf{Obstruction}. We need explicit condition \index{Explicit!
condition }to compute $\{\hat{z}(\lambda):\lambda\in\Lambda\}$ such
that \eqref{eq:2.71}, \eqref{eq:2.72} and \eqref{eq:2.74} are verified.
\subsection{The Lie Algebra of Characteristic Fields\index{Characteristic!fields}}
Denote
$\mathcal{H}=\mathcal{C}^{\infty}(\mathbb{R}^{2n+1};\mathbb{R})$ the
space consisting of the scalar
functions $H(x,p,u):\mathbb{R}^{n}\times\mathbb{R}^{n}\times\mathbb{R}\rightarrow\mathbb{R}$
which are differentiable of any order. For each pair $H_{1},H_{2}\in$ define the Poisson bracket
\begin{equation}\label{eq:2.79}
\{H_{1},H_{2}\}(z)=<\partial_{z}H_{2}(z),Z_{1}(z)>,z=(x,p,u)\in\mathbb{R}^{2n+1}
\end{equation}
where $\partial_{z}H_{2}(z)$ stands for the gradient of a scalar function $H_{2}\in \mathcal{H}$ and $Z_{1}(z)=(X_{1}(z),P_{1}(z),U_{1}(z))\in\mathbb{R}^{2n+1}$ is the characteristic field  corresponding to $H_{1}\in \mathcal{H}$. We recall that $Z_{1}$ is obtained from $H_{1}\in \mathcal{H}$ such that the following equations
\begin{equation}\label{eq:2.80}
X_{1}(z)=\partial_{p}H_{1}(z),P_{1}(z)=-(\partial_{x}H_{1}(z)+p\partial_{u}H_{1}(z)),U_{1}(z)=<p,\partial_{p}H_{1}(z)>
\end{equation}are satisfied. The linear mapping connecting an arbitrary $H\in\mathcal{H}$ and its characteristic field can be represented by
\begin{equation}\label{eq:2.81}
Z_{H}(z)=T(p)(\partial_{z}H)(z),\,z=(x,p,u)\in\mathbb{R}^{2n+1}
\end{equation}
where the real $(2n+1)\times(2n+1)$ matrix $T(p)$ is defined by
\begin{equation}\label{eq:2.82}
T(p)=\left(
            \begin{array}{ccc}
              O & I_{n} & \theta \\
              -I_{n} & O & -p \\
              \theta^{*} & p^{*} & 0 \\
            \end{array}
          \right)
\end{equation}
where $\hbox{O-zero matrix of}\,\, M_{n\times n},\,  I_{n} \hbox{unity matrix of}\,\,  M_{n\times n} \hbox{and} \,\theta\in\mathbb{R}^{n}$ is the null column vector. We notice that\ $T(p)$ is a skew symmetric matrix
\begin{equation}\label{eq:2.83}
[T(p)]^{*}=-T(p)
\end{equation}
and as a consequence, the Poisson bracket satisfies a skew symmetric property
\begin{equation}\label{eq:2.84}
  \{H_{1},H_{2}\}(z)=\left\{
  \begin{array}{ll}
    &<\partial_{z}H_{1}(z),Z_{2}(z)>\\
    &<\partial_{z}H_{1}(z),T(p)\partial_{z}H_{2}(z)>\\
   &<[T(p)]^{*}\partial_{z}H_{1}(z),\partial_{z}H_{2}(z)>\\
   &-\{H_{2},H_{1}\}
  \end{array}
\right.
\end{equation}
In addition, the linear space of characteristic fields\index{Characteristic!fields} $K\subseteq\mathcal{C}^{\infty}(\mathbb{R}^{2n+1},\mathbb{R}^{2n+1})$ is the image of a linear mapping $S:D\mathcal{H}\rightarrow K$ where $D\mathcal{H}=\{\partial_{z}H:H\in\mathcal{H}\}$. In this respect, using \eqref{eq:2.81} we define
\begin{equation}\label{eq:2.85}
S(\partial_{z}H)(z)=T(p)(\partial_{z}H)(z),z\in\mathbb{R}^{2n+1}
\end{equation}
where the matrix $T(p)$ is given in \eqref{eq:2.82}. The linear space of characteristic fields $K=S(d\mathcal{H})$ is extended to a Lie algebra
 $$L_{k}\subseteq \mathcal{C}^{\infty}(\mathbb{R}^{2n+1},\mathbb{R}^{2n+1})$$
using the standard Lie bracket of vector fields
\begin{equation}\label{eq:2.86}
[Z_{1},Z_{2}]=[\partial_{z}Z_{2}(z)]Z_{1}-[\partial_{z}Z_{1}]Z_{2}(z),\,Z_{i}\in K,i=1,2
\end{equation}
On the other hand, each $H\in\mathcal{H}$ is associated with a
linear mapping
\begin{equation}\label{eq:2.87}
\overrightarrow{H}(\varphi)(z)=\{H_{1}\varphi\}(z)=<\partial_{z}\varphi(z),Z_{H}(z))>,z\in\mathbb{R}^{2n+1}
\end{equation}
for each $\varphi\in\mathcal{H}$, where $Z_{H}\in K$ is the characteristic vector field corresponding to
$H\in\mathcal{H}$ obtained from $\partial_{z}H$ by $Z_{H}(z)=T(p)(\partial_{z}H)(z)$ (see \eqref{eq:2.81}). Define a linear space consisting of linear mappings
\begin{equation}\label{eq:2.88}
\overrightarrow{\mathcal{H}}=\{\overrightarrow{H}:H\in\mathcal{H}\}
\end{equation}
and extend $\overrightarrow{\mathcal{H}}$ to a Lie algebra $L_{H}$ using the Lie bracket of linear mappings
\begin{equation}\label{eq:2.89}
[\overrightarrow{H_{1}},\overrightarrow{H_{2}}]=\overrightarrow{H_{1}}\circ\overrightarrow{H_{2}}-\overrightarrow{H_{2}}\circ\overrightarrow{H_{1}}
\end{equation}
The link between the two Lie algebras $L_{K}$(extending $K$) and $L_{H}$(extending $\overrightarrow{\mathcal{H}}$) is given by a homomorphism of Lie algebras
\begin{equation}\label{eq:2.90}
A:L_{H}\rightarrow L_{K}\, \hbox{satisfying} \,A(\overrightarrow{\mathcal{H}})=K
\end{equation}
and
\begin{equation}\label{eq:2.91}
A([\overrightarrow{H_{1}},\overrightarrow{H_{2}}])=[Z_{1},Z_{2}]\in
L_{K} \hbox{where}  Z_{i}=A(\overrightarrow{H_{i}}),\, i\in\{1,2\}
\end{equation}
\begin{remark}\label{re:2.9}
The Lie algebra $L_{H}(\supseteq \overrightarrow{\mathcal{H}})$ does not coincide with the linear space
$\overrightarrow{\mathcal{H}}$ and as a consequence,the linear
space $K\subseteq L_k$. It relies upon the fact the linear mapping
$\{\overrightarrow{H_{1},H_{2}}\}$ generated by the Poisson bracket
$\{H_{1},H_{2}\}\in\mathcal{H}$ does not coincide with the Lie
bracket $[\overrightarrow{H_{1}},\overrightarrow{H_{1}}]$ defined in
\eqref{eq:2.89}.
\end{remark}
\begin{remark}\label{re:2.10}In the particular case when the equation \eqref{eq:2.68} is replaced by $H_{0}(x,p):\mathbb{R}^{2n}\rightarrow\mathbb{R}$ is continuously differentiable  then the above given analysis will
be restricted to the space
$\mathcal{H}=\mathcal{C}^{\infty}(\mathbb{R}^{2n};\mathbb{R})$. If
it is the case then the corresponding linear mapping
$S:D\mathcal{H}\rightarrow K$ is determined by a simplectic matrix $T\in M_{2n\times 2n}$
\begin{equation}\label{eq:2.92}
 T=\left(
            \begin{array}{cc}
              O & I_{n} \\
              -I_{n} & O \\
            \end{array}
          \right)
, D\mathcal{H}=\{\partial_{z}H:H\in
\mathcal{C}^{\infty}(\mathbb{R}^{2n};\mathbb{R})\}
\end{equation}
In addition, the linear spaces $\overrightarrow{H}$ and $K\subseteq\mathcal{C}^{\infty}(\mathbb{R}^{2n};\mathbb{R}^{2n})$ coincide with their Lie algebra $L_{H}$ and correspondingly $L_{K}$ as the following direct computation shows
\begin{equation}\label{eq:2.93}
[Z_{1},Z_{2}](z)=T\partial_{z}H_{12},\,z\in\mathbb{R}^{2n}
\end{equation}
where
$$Z_{i}=T\partial_{z}H_{i},\,i\in\{1,2\}$$
and
$$H_{12}=\{H_{1},H_{2}\}(z)=<\partial_{z}H_{2}(z),Z_{1}(z)>$$
is the Poisson bracket associated with two scalar functions
$H_{1},H_{2}\in\mathcal{H}$. We get
\begin{equation}\label{eq:2.94}
T\partial_{z}H_{12}(z)=
\left\{
   \begin{array}{ll}
  &T[\partial_{z}^{2}H_{2}(z)]Z_{1}(z)+T(\partial_{z}Z_{1}^{*}(z))\partial_{z}H_{2}\\
   &[\partial_zZ_2(z)]Z_1(z)+T[\partial_z(\partial_{z}^{*}H_{1}^{*}T^{*})]\partial_{z}H_2(z)\\
     &[\partial_{z_{2}}(z)]Z_{1}(z)-T(\partial_{z}^{2}H_{1}(z))T\partial_{z}H_{2}(z) \\
  & [\partial_{z}Z_{2}(z)]Z_{1}(z)-[\partial_{z}Z_{1}(z)]Z_{2}(z)
   \end{array}
  \right.
  \end{equation}
and the conclusion $\{L_{H}=\overrightarrow{H},L_{K}=K\}$ is
proved.
\end{remark}
\subsection{Parameterized Stationary Solutions}Consider a nonlinear first order equation
\begin{equation}\label{eq:2.95}
H_{0}(x,p(x),u(x))=\hbox{const}\,\,\forall\,x\in D\subseteq\mathbb{R}^{n}
\end{equation}
With the same notations as in section $2.4.2$, let $\mathcal{H}=\mathcal{C}^{\infty}(\mathbb{R}^{2n+1};\mathbb{R})\},\,z=(x,p,u)\in\mathbb{R}^{2n+1}$ and define the skew-symmetric matrix
\begin{equation}\label{eq:2.96}
T(p)=\left(
             \begin{array}{ccc}
               O & I_{n} & \theta \\
              -I_{n} & O & -p\\
              \theta^{*} & p^{*} & 0 \\
             \end{array}
           \right)
\,\,\,,p\in\mathbb{R}^{n}
\end{equation}
By definition, the linear spaces $K$ and $D\mathcal{H}$ are
\begin{equation}\label{eq:2.97}
 K=S(D\mathcal{H}),\,D\mathcal{C}=\{\partial_{z}H:H\in\mathcal{H}\}
\end{equation}
where the mapping $S:D\mathcal{H}\rightarrow K$ satisfies
\begin{equation}\label{eq:2.98}
S(\partial_{z}H)(z)=T(p)\partial_{z}H(z),\,z\in\mathbb{R}^{2n+1}
\end{equation}
Let $z_{0}=(x_{0},p_{0},u_{0})\in\mathbb{R}^{2n+1}$ and the ball
$B(x_{0},2\rho)\subseteq\mathbb{R}^{2n+1}$ be fixed. Assume that
\begin{equation}\label{2.99a}
\hbox{there exist} \,\{Z_{1},...,Z_{m}\}\subseteq K
\end{equation}
such that the smooth vector fields $$\{Z_{1}(z),...,Z_{m}(z):z\in B(z_{0},2\rho)\}$$ are in involution over $\mathcal{C}^{\infty}(B(z_{0},2\rho))$
$$\hbox{(the Lie bracket)}  [Z_{i},Z_{j}](z)=\mathop{\sum}\limits_{k=1}^{m}\alpha_{ij}^{k}(z)Z_{k}(z),\,i,j\in\{1,...,m\}$$
 where $$\alpha_{ij}^{k}\in\mathcal{C}^{\infty}(B(z_{0},2\rho))$$\\
A parameterized solution of the equation \eqref{eq:2.95} is given by the following orbit
\begin{equation}\label{eq:2.99}
\hat{z}(\lambda,z_{0})=G_{1}(t_{1})\circ...\circ G_{m}(t_{m})(z_{0}),\,\,\lambda=(t_{1},...,t_{m})\in\Lambda=\prod_{1}^{m}[-a_{i},a_{i}]
\end{equation}
where
$$G_{i}(\tau)(y),\,y\in
B(z_{0},\rho),\,\,\tau\in[-a_{i},a_{i}]$$
is the local flow
\index{Flow!local}generated by $Z_{i}$. Notice that
\begin{equation}\label{eq:2.100}
 N_{z_{0}}=\{z\in B(z_{0},2\rho):z=\hat{z}(\lambda,z_{0}),\,\lambda\in\Lambda\}
\end{equation}
is a smooth manifold and
\begin{equation}\label{eq:2.101}
\dim M_{z_{0}}=\dim L(Z_{1},...,Z_{m})(z_{0})=m\leq n\,
if\, Z_{1}(z_{0}),...,Z_{m}(z_{0})\in
\mathbb{R}^{2n+1}
\end{equation}
are linearly
independent\index{Independent!linearly}.
\begin{remark}\label{re:2.11}
Under the conditions of the hypothesis \eqref{2.99a} and using the algebraic representation of a gradient system \index{Gradient systems}associated with
$\{Z_{1},...,Z_{m}\}\subseteq K$
a system $\{q_{1},...,q_{m}\}\subseteq\mathcal{C}^{\infty}(\Lambda,\mathbb{R}^{m})$ exists such that
\begin{equation}\label{**}
q_{1}(\lambda),...,q_{m}(\lambda)\in\mathbb{R}^{m}  are
linearly independent
\end{equation}
and
$\partial_{\lambda}\hat{z}(\lambda,z_{0})q_{i}(\lambda)=Z_{i}(\hat{z}(\lambda,z_{0}))$ for
any $\lambda\in\Lambda,\,i\in\{1,...,m\}$
\end{remark}
\begin{definition}A parameterized solution associated with \eqref{eq:2.95}is defined by orbit \eqref{eq:2.99}provided the equality $H_{0}(\hat{z}(\lambda,z_{0}))=H_0(z_0),\,\forall\,\lambda\in\Lambda$, is satisfied.
\end{definition}
\begin{remark}\label{re:2.12}
The conclusion \eqref{**} does not depend on the manifold structure given in \eqref{eq:2.100}. Using \eqref{eq:2.102}, we rewrite the equation $H_{0}(\hat{z}(\lambda,z_{0}))=H_0(z_0),\,\forall\,\lambda\in\Lambda$,
in the following equivalent form\\
\begin{equation}\label{eq:2.102}
0=\{H_{i},H_{0}\}(\hat{z}(\lambda.z_{0}))=<\partial_{z}H_{0}(\hat{z}(\lambda,z_{0})),Z_{i}(\hat{z}(\lambda,z_{0}))>,\,\lambda\in\Lambda
\end{equation}
for each $\,i\in\{1,...,m\}$,\,where $\,\{H_{i},H_{0}\}$ is the
Poisson bracket associated with
$H_{i},H_{0}\in\mathcal{C}^{1}(\mathbb{R}^{2n+1},\mathbb{R})$, and
$Z_{i}=S(\partial_{z}H_{i}),\,i\in\{1,...,m\}$. The equations \eqref{eq:2.102}
are directly computed from the scalar equation
$H_{0}(\hat{z}(\lambda,z_{0}))=H_0(z_0),\,\forall\,\lambda\in\Lambda$, by
taking the corresponding Lie derivatives where $\{q_{1},...,q_{m}\}$  from \eqref{**} are used. As a consequence, the orbit defined in \eqref{eq:2.99}, under the conditions \eqref{**} and \eqref{eq:2.101}, will determine a
parameterized solution of  \eqref{eq:2.102}.
\end{remark}
\begin{remark}\label{re:2.13}
A classical solution for \eqref{eq:2.95} can be deduced from a parameterized solution $\{\hat{z}(\lambda,z_{0}):\lambda\in\Lambda\}$ if we take $m=n$ and assume
\begin{equation}\label{eq:2.103}
\hbox{the matrix }[\partial_{\lambda}\hat{x}(0,z_{0})]\in M_{n\times
n}\hbox{ is nonsingular }
\end{equation}
where the components
$(\hat{x}(\lambda,z_{0}),\hat{p}(\lambda,z_{0}),\hat{u}(\lambda,z_{0}))=\hat{z}(\lambda,z_{0})$ define
the parameterized solution $\{\hat{z}(\lambda,z_{0})\}$.
\end{remark}
\begin{proposition}\label{pr:2.4}
Assume the orbit$\{\hat{z}(\lambda,z_{0}):\lambda\in\Lambda\}$ given in \eqref{eq:2.99} is a parameterized solution of \eqref{eq:2.95} such that the condition \eqref{eq:2.103} is satisfied.Let $\lambda=\psi(x):S(x_{0},\rho)\rightarrow int\Lambda$ be the smooth mapping satisfying $\hat{x}(\psi(x),z_{0})=x\in B(x_{0},\rho),\,\psi(x_{0})=0$. Denote $u(x)=\hat{u}(\psi(x),z_{0})$ and $p(x)=\hat{p}(\psi(x),z_{0})$.Then
\begin{equation}\label{eq:2.104}
 p(x)=\partial_{x}u(x) \hbox{and} \, H_{0}(x,p(x),u(x))=H_{0}(z_{0}), \hbox{for any} \,x\in B(x_{0},\rho)\subseteq\mathbb{R}^{n}
\end{equation}
where $p(x)\,,x\in B(x_{0},\rho))$ verifies the following cuasilinear system of first order equations
\begin{eqnarray}\label{eq:2.105}
   [\partial_{x}H_{0}(x,p(x),u(x))+p(x)\partial_{u}H_{0}(x,p(x),u(x))]\nonumber\\
+[\partial_{x}p(x)]^{*}\partial_{p}H_{0}(x,p(x),u(x))=0 ,\,x\in B(x_{0},\rho)
\end{eqnarray}
\end{proposition}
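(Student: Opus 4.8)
The plan is to deduce \eqref{eq:2.104}--\eqref{eq:2.105} from the fact that characteristic fields preserve the canonical contact form $\alpha = du - \langle p, dx\rangle$ on $\mathbb{R}^{2n+1}$. Write $\Phi(\lambda) = \hat z(\lambda, z_0)$ for the orbit \eqref{eq:2.99}, with components $(\hat x, \hat p, \hat u)$. The crux is the identity $\Phi^\ast\alpha = 0$, i.e.
\[
\partial_{t_j}\hat u(\lambda, z_0) = \langle \hat p(\lambda, z_0), \partial_{t_j}\hat x(\lambda, z_0)\rangle, \qquad j \in \{1,\dots,m\},\ \lambda \in \Lambda .
\]

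To prove it, first I would record the purely algebraic observation that for any characteristic field $Z = S(\partial_z H) = (X,P,U)$ one has $X = \partial_p H$ and $U = \langle p, \partial_p H\rangle$ by \eqref{eq:2.80}, so that, writing $\iota_Z\alpha := \alpha(Z) = U - \langle p, X\rangle$, one gets $\iota_{Z}\alpha \equiv 0$. Apply this with $Z = Z_i$ at the point $\Phi(\lambda)$ and combine it with the gradient-system representation from Remark \ref{re:2.11}, namely $\partial_\lambda\hat z(\lambda,z_0)\,q_i(\lambda) = Z_i(\hat z(\lambda,z_0))$ with $q_1(\lambda),\dots,q_m(\lambda)$ linearly independent in $\mathbb{R}^m$. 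Evaluating $\alpha$ on both sides and writing $\omega_j := \partial_{t_j}\hat u - \langle \hat p, \partial_{t_j}\hat x\rangle$ gives, for each $i$, $\sum_{j=1}^m (q_i(\lambda))_j\,\omega_j(\lambda) = \iota_{Z_i(\Phi(\lambda))}\alpha = 0$; invertibility of the matrix $[\,q_1(\lambda)\,|\,\cdots\,|\,q_m(\lambda)\,]$ then forces $\omega_j \equiv 0$, which is the claimed identity. Note this step uses neither $m=n$ nor the parameterized-solution property.

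Next I would take $m = n$ and invoke \eqref{eq:2.103}: the implicit function theorem yields $\psi$ with $\hat x(\psi(x),z_0) = x$ on $B(x_0,\rho)$ and $\psi(B(x_0,\rho))\subseteq\operatorname{int}\Lambda$, as in the statement. Differentiating $\hat x(\psi(x),z_0)=x$ shows $A(x) := \partial_\lambda\hat x(\psi(x),z_0)$ and $B(x) := \partial_x\psi(x)$ obey $A(x)B(x) = I_n$. Then with $u(x) = \hat u(\psi(x),z_0)$, the chain rule, the contact identity and $A(x)B(x) = I_n$ give $\partial_{x_k}u(x) = \sum_l \hat p_l \sum_j \partial_{t_j}\hat x_l\,\partial_{x_k}\psi_j = \hat p_k(\psi(x)) = p_k(x)$, i.e. $p(x) = \partial_x u(x)$. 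The equality $H_0(x,p(x),u(x)) = H_0(\hat z(\psi(x),z_0)) = H_0(z_0)$ is then immediate from the parameterized-solution hypothesis $H_0(\hat z(\lambda,z_0)) = H_0(z_0)$. Finally, \eqref{eq:2.105} follows by differentiating the scalar identity $H_0(x,\partial_x u(x), u(x)) = H_0(z_0)$ in each $x_k$ and substituting $\partial_{x_k}u = p_k$.

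The main obstacle is the contact identity $\Phi^\ast\alpha = 0$; everything after it is the implicit function theorem and routine chain-rule bookkeeping. What makes that step go through is having the multipliers $q_i$ together with their independence property at hand, which is exactly what Theorem \ref{th:1.8} and Remark \ref{re:2.11} supply once $\{Z_1,\dots,Z_m\}$ is in involution; without the involution hypothesis the orbit \eqref{eq:2.99} need not even define a manifold and the argument breaks down.
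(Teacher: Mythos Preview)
Your proof is correct and follows essentially the same route as the paper. The paper derives the same key identity $\partial_{t_j}\hat u=\langle\hat p,\partial_{t_j}\hat x\rangle$ (its equation (2.106)) from exactly the same two ingredients you use---the componentwise relation $U_i=\langle p,X_i\rangle$ built into the characteristic fields and the multipliers $q_i$ with their linear independence from Remark~\ref{re:2.11}---and then finishes with the same implicit-function/chain-rule computation; your contact-form language $\Phi^\ast\alpha=0$ is just a cleaner packaging of that step.
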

\begin{proof}
The component $\{\hat{x}(\lambda,z_{0}):\lambda\in\Lambda\}$ fulfills the condition of the standard implicit functions theorem (see \eqref{eq:2.103}) and, by definition, the matrix
$$\partial_{\lambda}\hat{x}(0.z_{0})=\parallel X_{1}(z_{0},...,X_{n}(z_{0}))\parallel$$ is composed by first vector-component of
$Z_{i}(z)=(X_{i}(z),P_{i}(z),U_{i}(z))\,\,i\in\{1,...,n\}$, where $\{Z_{1}(z),...,Z_{n}(z)\}$ define the orbit $\{\hat{z}(\lambda,z_{0})\}$.
Let $\lambda=\psi(x):B(x_{0},\rho)\subseteq\mathbb{R}^{n}\rightarrow
int\Lambda\subseteq\mathbb{R}^{n}$ be such that
$\psi(x_{0})=0$ and $\hat{x}(\psi(x),z_{0})=x$. Denote
$p(x)=\hat{p}(\psi(x),z_{0}),\,u(x)=\hat{u}(\psi(x),z_{0})$ and
using the equation
$$H_{0}(\hat{z}(\lambda,z_{0}))=H_{0}(z_{0}),\,\lambda\in\Lambda$$
we get
$$H_{0}(z(x))=H_{0}(z_{0}),\,\forall\,x\in
B(x_{0},\rho)\subseteq\mathbb{R}^{n}$$
where $z(x)=(x,p(x),u(x))$. Using \eqref{**} written on corresponding
components we find that $\partial_{\lambda}\hat{u}(\lambda,z_{0})q_{i}(\lambda)=\hat{p}(\lambda,z_{0})
\partial_{\lambda}\hat{x}(\lambda,z_{0})q_{i}(\lambda)),\,i\in\{1,...,n\}$ and\\
\begin{equation}\label{eq:2.106}
\partial_{\lambda}{u}(\lambda,z_{0}) =\hat{p}(\lambda,z_{0})\partial_{\lambda}\hat{x}(\lambda,z_{0}),\,\lambda\in\Lambda
\end{equation}
is satisfied, where $\hat{p}(\lambda)\in\mathbb{R}^{n}$ is a row
vector. On the other hand, a direct computation applied to $u(\hat{x}(\lambda,z_{0}))=\hat{u}(\lambda,z_{0})$ leads us to
\begin{equation}\label{eq:2.107}
\partial_{x}u(\hat{x}(\lambda,z_{0})).\partial_{\lambda}\hat{x}(\lambda,z_{0})=\partial_{\lambda}\hat{u}(\lambda,z_{0})
\end{equation}
and using \eqref{eq:2.103} we may and do multiply by the inverse matrix $[\partial_{\lambda}\hat{x}(\lambda,z_{0})]^{-1}$ in both equations \eqref{eq:2.106} and \eqref{eq:2.107}.
We get $\hat{p}(\lambda,z_{0})=\partial_{x}u(\hat{x}(\lambda,z_{0}))$, for any $\lambda\in B(z_{0},2\rho)$\,($\rho>0$ sufficiently small) which stands for\\
$$\partial_{x}u(x)=p(x),\,x\in B(z_{0},\rho)\subseteq\mathbb{R}^{n}$$\\
provided $\lambda=\psi(x)$ is used. The conclusions \eqref{eq:2.105}
tell us
that the gradient $\partial_{x}[H_{0}(x,p(x),u(x))]$ is vanishing
and the proof is complete.
\end{proof}
\begin{remark}\label{re:2.14}
Taking $\{Z_{1},...,Z_{m}\}\subseteq K$ is involution we get the property \eqref{**} fulfilled (see \S 5 of ch II) 
\begin{equation}\label{eq:2.108}
 0= \left\{
  \begin{array}{ll}
   & \{H_{i},H_{0}\}(\hat{z}(z,z_{0}))\\
     & <\partial_{z}H_{0}(\hat{z}(\lambda,z_{0})),Z_{i}(\hat{z}(\lambda,z_{0}))> \\
     & -<T(p)\partial_{z}H_{0}(\hat{z}(\lambda,z_{0})),\partial_{z}H_{i}(\hat{z}(\lambda,z_{0}))>\\
     &-<\partial_{z}H_{i,}(\hat{z}(\lambda,z_{0})),Z_{0}(\hat{z}(\lambda,z_{0}))>,\,i\in\{1,...,m\}
  \end{array}
\right.
\end{equation}
It shows that $\{H_{1},...,H_{m}\}\subseteq\mathcal{H}=\mathcal{C}^{\infty}(\mathbb{R}^{2n+1},\mathbb{R})$ defining $\{Z_{1},...,Z_{m}\}\subseteq K$ can be found
as first integrals for a system of $ODE$
$$\frac{dz}{dt}=Z_{0}(z),Z_{0}(z)=T(p)\partial_{z}H_{0}(z)\in
K$$
corresponding to $H_{0}(z)$.
\end{remark}
\subsection{The linear Case:$H_{0}(x,p)=<p,f_{0}(x)>$}
With the same notations as in $\S 4.3$ we define $z=(x,p)\in\mathbb{R}^{2n}$ and $\mathcal{H}=\{H(z)=<p,f(x)>,p\in\mathbb{R}^{2n},f\in\mathcal{C}^{\infty}(\mathbb{R}^{2n},\mathbb{R}^{2n})\}$.
The linear space of characteristic fields $K\subseteq \mathcal{C}^{\infty}(\mathbb{R}^{2n},\mathbb{R}^{2n})$ is the image of a linear mapping $S:D\mathcal{H}\rightarrow K$, where
\begin{equation}\label{eq:2.109}
D\mathcal{H}=\{\partial_{z}H:H\in\mathcal{H}\},\,S(\partial_{z}H)(z)=T\partial_{z}H(z),\,z\in\mathbb{R}^{2n}
\end{equation}
\begin{equation}\label{eq:2.110}
T=\left(
           \begin{array}{cc}
             O & I_{n} \\
             -I_{n} & O \\
           \end{array}
         \right)
\,for \,each\, Z\in K
\end{equation}
is given by\\
\begin{equation}\label{eq:2.111}
Z(z)=\left(
              \begin{array}{c}
                \partial_{p}H(z) \\
                -\partial_{x}H(z) \\
              \end{array}
            \right)
=\left(
   \begin{array}{c}
     f(x) \\
     -\partial_{x}<p,f(x)> \\
   \end{array}
 \right)
\end{equation}
Let $z_{0}=(p_{0},x_{0})\in\mathbb{R}^{2n}$ and $B(z_{0},2\rho)\subseteq \mathbb{R}^{2n}$ be fixed and consider the following linear equation of $p\in\mathbb{R}^{n}$
\begin{equation}\label{eq:2.112}
 H_{0}(x,p)=<p,f_{0}(x)>=H_{0}(z_{0})\hbox{for any} \,z\in D\subseteq B(z_{0},2\rho)
\end{equation}
where $f_{0}\in\mathcal{C}^{1}(\mathbb{R}^{2n},\mathbb{R}^{2n})$. We are looking for \,$D\subseteq B(z_{0},2\rho)\subseteq \mathbb{R}^{2n}$ as an orbit.
\begin{equation}\label{eq:2.113}
\hat{z}(\lambda,z_{0})=G_{1}(t_{1})\circ...\circ G_{m}(t_{m})(z_{0}),\,\,\lambda=\{t_{1},...,t_{m}\}\in\Lambda=\prod_{1}^{m}[-a_{i},a_{i}]
\end{equation}
where $G_{i}(\tau)(y),\,y\in B(z_{0},\rho),\,\tau\in[-a_{i},a_{i}]$, is the local flow generated by some $Z_{i}\in K,\,i\in\{1,...,m\},\,m\leq n$. Assuming that
\begin{equation}\label{eq:2.114}
\{Z_{1},...,z_{m}\}\subseteq K \, \hbox{are in involution over} \,\, \mathcal{C}^{\infty}(B(z_{0},2\rho),\mathbb{R})
\end{equation}
where $$Z_{i}(z)=\left(
            \begin{array}{c}
              f_{i}(x) \\
             -\partial_{x}<p,f_{i}(x)>\\
            \end{array}
          \right)
\hbox{and}
\{f_{1},...,f_{m}\}\subseteq\mathcal{C}^{\infty}(\mathbb{R}^{2n},\mathbb{R}^{2n})$$
are in involution over $\mathcal{C}^{\infty}(B(x_{0}),\mathbb{R})$ we define
\begin{equation}\label{eq:2.115}
D=\{z\in B(z_{0},2\rho):z=\hat{z}(\lambda,z_{0}),\lambda\in\Lambda\}\subseteq{R}^{2n}
\end{equation}
where the orbit $\{\hat{z}(\lambda,z_{0}):\lambda\in\Lambda\}$ is given in \eqref{eq:2.113}. Notice that the orbit \eqref{eq:2.113} is represented by
\begin{equation}\label{eq:2.116}
\hat{z}(\lambda,z_{0})=(\hat{x}(\lambda,z_{0}),\hat{p}(\lambda,z_{0})),\,\lambda\in\Lambda,\,z_{0}=(x_{0},p_{0})\subseteq{R}^{2n}
\end{equation}
where the orbit $\hat{x}(\lambda,z_{0}),\lambda\in\Lambda,in\,\mathbb{R}^{n}$ verifies
\begin{equation}\label{eq:2.117}
\hat{x}(\lambda,x_{0})=F_{1}(t_{1})\circ...\circ F_{m}(t_{m})(x_{0}),\,\lambda=\{t_{1},...,t_{m}\}\in\Lambda
\end{equation}
Here $F_{i}(\tau)(x),\,x\in
B(x_{0},\rho)\subseteq{R}^{n},\,\tau\in[-a_{i},a_{i}]$, is the local
flow\index{Flow!local}generated by the vector field $f_{i}\in
\mathcal{C}^{\infty}(\mathbb{R}^{n},\mathbb{R}^{n})$ and $\{f_{1},...,f_{m}\}\subseteq\mathcal{C}^{\infty}(\mathbb{R}^{n},\mathbb{R}^{n})$
are in involution over $\mathcal{C}^{\infty}(B(x_{0},\rho))$.
\begin{remark}\label{re:2.15}
Denote by $N_{x_{0}}\subseteq\mathbb{R}^{n}$ the set consisting of all points $\{\hat{x}(\lambda,x_{0}):\lambda\in\Lambda\}$ using the orbit defined in \eqref{eq:2.117}. Assuming that$\{f_{1}(x_{0}),...,f_{m}(x_{0})\}\subseteq{R}^{n}$\,are linearly independent then
\,$\{f(x_{0}),...,f_{m}(x)\}\subseteq{R}^{n}$ are linearly independent for any $x\in B(x_{0},\rho)$($\rho$ sufficiently small) and the subset $N_{x_{0}}\subseteq\mathbb{R}^{n}$ can be structured as an m-dimensional smooth manifold. In addition,the set $D\subseteq\mathbb{R}^{2n}$ defined in \eqref{eq:2.115}
 can be verified as an image of smooth mapping $z(x)=(x,p(x)):N_{x_{0}}\rightarrow
D$, if $p(x)=\hat{p}(\psi(x),z_{0})$ and $\lambda=\psi(x):B(x_{0},\rho)\rightarrow
\Lambda$ is unique solution of the algebraic equation $\hat{x}(\lambda,x_{0})=x\in B(x_{0},\rho)$.
\end{remark}
\begin{definition}
The orbit $\{\hat{z}(\lambda,z_{0}):\lambda\in \Lambda\}$ defined
in \eqref{eq:2.113} is a parameterized solution of the linear equation \eqref{eq:2.112} if
$H_{0}(\hat{z}(\lambda,z_{0}))=H(z_{0})\,,\forall\,\lambda\in\Lambda$.
\end{definition}
\begin{proposition}\label{pr:2.5}
Let $z_{0}=(x_{0},p_{0})\subseteq{R}^{2n}$ and $B(z_{0},2\rho)\subseteq{R}^{2n}$ be
fixed such that the hypothesis \eqref{eq:2.114} is fulfilled.For
$\{Z_{1},...,Z_{m}\}\subseteq K$ given in \eqref{eq:2.114} assume in
addition, that $\{f_{1},...,f_{m}\}$ are commuting with
$\{f_{0}\}$ i.e $[f_{i},f_{0}](x)=0,\,x\in
B(x_{0},2\rho),\,i\in\{1,...,m\}$. Then the orbit
$\{\hat{z}(\lambda,z_{0}):\lambda\in \Lambda\}$\,defined in \eqref{eq:2.113}
is a parameterized solution of equation \eqref{eq:2.112}, where $D\in\mathbb{R}^{2n}$ is given in \eqref{eq:2.115}.
\end{proposition}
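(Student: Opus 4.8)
The plan is to prove that the linear Hamiltonian $H_{0}(x,p)=<p,f_{0}(x)>$ is a first integral of each of the characteristic systems $\frac{dz}{dt}=Z_{i}(z)$, $i\in\{1,\dots,m\}$, and then to transport this invariance along the composition of local flows that defines the orbit \eqref{eq:2.113}. First I would record that, by \eqref{eq:2.111}, each field $Z_{i}$ is exactly the characteristic field $T\partial_{z}H_{i}$ attached to the linear Hamiltonian $H_{i}(z)=<p,f_{i}(x)>$, so the objects to be computed are the Poisson brackets $\{H_{i},H_{0}\}$ of \eqref{eq:2.79}. The involution hypothesis \eqref{eq:2.114} is what gives the orbit $\{\hat{z}(\lambda,z_{0}):\lambda\in\Lambda\}$ its standing as a bona fide $C^{1}$ object (through the gradient--system construction of Theorem~\ref{th:1.8}), while the extra assumption $[f_{i},f_{0}]=0$ is the ingredient that will force the invariance of $H_{0}$.

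The key computation is elementary: writing $\partial_{x}H_{j}(z)=[\partial_{x}f_{j}(x)]^{*}p$ and $\partial_{p}H_{j}(z)=f_{j}(x)$, one gets
$$\{H_{i},H_{0}\}(z)=<\partial_{z}H_{0}(z),Z_{i}(z)>=<p,\,[\partial_{x}f_{0}(x)]f_{i}(x)-[\partial_{x}f_{i}(x)]f_{0}(x)>,$$
and the right--hand side is exactly $-<p,[f_{i},f_{0}](x)>$, with $[\cdot,\cdot]$ the Lie bracket of vector fields. Hence the commutation assumption $[f_{i},f_{0}](x)=0$ on $B(x_{0},2\rho)$ forces $\{H_{i},H_{0}\}(z)=0$ for every such $z$ and every $i\in\{1,\dots,m\}$.

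Consequently $H_{0}$ is constant along the local flow $G_{i}(\tau)$ generated by $Z_{i}$: differentiating, $\frac{d}{d\tau}H_{0}(G_{i}(\tau)(y))=<\partial_{z}H_{0}(G_{i}(\tau)(y)),Z_{i}(G_{i}(\tau)(y))>=\{H_{i},H_{0}\}(G_{i}(\tau)(y))=0$, so that $H_{0}(G_{i}(\tau)(y))=H_{0}(y)$; equivalently, $H_{0}$ is a first integral of $\frac{dz}{dt}=Z_{i}(z)$ in the sense of Theorem~\ref{th:2.1}. Applying this invariance successively to the flows $G_{m}(t_{m}),G_{m-1}(t_{m-1}),\dots,G_{1}(t_{1})$ occurring in \eqref{eq:2.113} yields $H_{0}(\hat{z}(\lambda,z_{0}))=H_{0}(z_{0})$ for every $\lambda=(t_{1},\dots,t_{m})\in\Lambda$, which is precisely the definition of a parameterized solution of \eqref{eq:2.112} on the set $D$ of \eqref{eq:2.115}. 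The algebra above is routine; the only point needing attention is keeping all the partial compositions $G_{j}(t_{j})\circ\cdots\circ G_{m}(t_{m})(z_{0})$ inside the ball $B(z_{0},2\rho)$ on which the commutation hypothesis is available, which is arranged by shrinking the $a_{i}$ (equivalently, restricting $\Lambda$) at the outset --- exactly the restriction already built into the definition of $D$.
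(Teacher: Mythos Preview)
Your proof is correct and takes a more elementary route than the paper. The paper invokes the algebraic representation of the gradient system (the existence of $q_{1},\dots,q_{m}\in\mathcal{C}^{\infty}(\Lambda;\mathbb{R}^{m})$ with $\partial_{\lambda}\hat{z}(\lambda,z_{0})q_{i}(\lambda)=Z_{i}(\hat{z}(\lambda,z_{0}))$ and $q_{1},\dots,q_{m}$ linearly independent, supplied by the $(f.g.o;z_{0})$ machinery of Theorem~\ref{th:2.5}) in order to show that $\partial_{\lambda}H_{0}(\hat{z}(\lambda,z_{0}))=0$ is equivalent to the vanishing of $\{H_{i},H_{0}\}(\hat{z}(\lambda,z_{0}))=\langle\hat{p}(\lambda,z_{0}),[f_{i},f_{0}](\hat{x}(\lambda,x_{0}))\rangle$ for all $i$. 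You bypass this apparatus entirely: once $\{H_{i},H_{0}\}\equiv 0$ on $B(z_{0},2\rho)$ is established by the same Lie-bracket computation, you simply observe that $H_{0}$ is a first integral of each individual flow $G_{i}$ and then peel off the flows in the composition~\eqref{eq:2.113} one at a time. This is shorter and uses nothing beyond Theorem~\ref{th:2.1}; the involution hypothesis~\eqref{eq:2.114} is then only needed to ensure the orbit is well-defined, not for the invariance argument itself. The paper's route, by contrast, illustrates the algebraic-representation technique that becomes genuinely necessary in the nonlinear case (Proposition~\ref{pr:2.6}) and in the overdetermined-system results of \S2.5--2.6.
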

\begin{proof}
By hypothesis ,the Lie algebra $L(Z_{1},...,Z_{m})\subseteq\mathcal{C}^{\infty}(\mathbb{R}^{2n},\mathbb{R}^{2n})$ is
of finite type (locally) and $\{Z_{1},...,Z_{m}\}$ is a system of
generators in $L(Z_{1},...,Z_{m})$. As a consequence $L(Z_{1},...,Z_{m}) $ is $(f.g.\circ;z_{0})$ and using the algebraic representation of the gradient system associated with $\{Z_{1},...,Z_{m}\}$ we get $\{q_{1},...,q_{m}\}\subseteq\mathcal{C}^{\infty}(\Lambda;\mathbb{R}^{n})$ such that
\begin{equation}\label{eq:2.118}
\partial_{\lambda}\hat{z}(\lambda,z_{0}).q_{i}(\lambda) = Z_{i}(\hat{z}(\lambda,z_{0})),\,\lambda\in\Lambda,\,i\in\{1,...,m\},\,
    q_{1}(\lambda),...,q_{m}(\lambda)\in\mathbb{R}^{m}\
\end{equation}
are linearly independent for any $\lambda\in\Lambda.$\\
The meaning of $(f.g.\theta;z_{0})$ Lie algebra and the conclusion \eqref{eq:2.118} are explained in the next section.
For the time being we use \eqref{eq:2.118} and taking Lie derivatives of the scalar equation $H_{0}(\hat{z}(\lambda,z_{0}))=H_{0}(z_{0}),\,\lambda\in\Lambda$, we get
{\small
\begin{equation}\label{eq:2.119}
0=\partial_{\lambda}H(\hat{z}(\lambda,z_{0})).q_{i}(\lambda)=<\partial_{z}H(\hat{z}(\lambda,z_{0})),Z_{i}(\hat{z}(\lambda,z_{0}))>,\hbox{ for any }i\in\{1,...,m\}
\end{equation}}
\noindent and $\lambda\in\Lambda$. By definition
$$\partial_{z}H_{0}=\left(
                                                                                     \begin{array}{c}
                                                                                       \partial_{x}<p,f_{0}(x)> \\
                                                                                       f_{0}(x) \\
                                                                                     \end{array}
                                                                                   \right)
,\,and \,Z_{i}(z)=\left(
                    \begin{array}{c}
                      f_{i}(x) \\
                      -\partial_{x}<p,f_{0}(x)> \\
                    \end{array}
                  \right)
,\,i\in\{1,...,m\}$$
which allows us to rewrite \eqref{eq:2.119} as follows
\begin{equation}\label{eq:2.120}
<\hat{p}(\lambda,z_{0}),[f_{i},f_{0}](\hat{x}(\lambda,x_{0}))>=0,\,i\in\{1,...,m\},\,\lambda\in\Lambda
\end{equation}
Using $[f_{i},f_{0}](x)=0,\,x\in B(x_{0},2\rho),\,i\in\{1,...,m\}$ we obtain that \eqref{eq:2.119} is fulfilled and it implies
\begin{equation}\label{eq:2.121}
\partial_{\lambda}H_0(\hat{z}(\lambda,z_{0}))=0,\, \forall\,\lambda\in\Lambda
\end{equation}
provided \eqref{eq:2.118} is used. In conclusion, the scalar
equation $H_{0}(z)=H_{0}(z_{0})$ for any $z\in
D\subseteq{R}^{2n}$, is satisfied, where $D$ is defined in\eqref{eq:2.115} and
the proof is complete.
\end{proof}
\begin{remark}\label{re:2.16}
The involution condition of $\{Z_{1},...,Z_{m}\}\subseteq K$  is
satisfied if
$$\{f_{1},...,f_{m}\}\subseteq\mathcal{C}^{\infty}(B(x_{0},2\rho)
;\mathbb{R}^{n})$$
are in involution over $\mathbb{R}$. In this respect, using the particular form of
\begin{equation}\label{eq:2.122}
Z_{i}(z)=\left(
 \begin{array}{c}
 f_{i}(x) \\
  A_{i}(x)p \\
   \end{array}
   \right)
,A_{i}(x)=-[\partial_{x}f_{i}(x)]^{*}\,i\in\{1,...,m\}$$
we compute a Lie bracket $[Z_{i},Z_{j}]$ as follows
$$[Z_{i},Z_{j}](z)=\left(
   \begin{array}{c}
     [f_{i},f_{j}](x) \\
     P_{ij}(z) \\
   \end{array}
 \right)
,\,\,\,i,j\in\{1,...,m\}
\end{equation}
Here $[f_{i},f_{j}]$ is the Lie bracket from $\mathcal{C}^{\infty}(\mathbb{R}^{2n},\mathbb{R}^{2n})$ and
\begin{equation}\label{eq:2.123}
P_{ij}(z)\left\{
  \begin{array}{ll}
    =[\partial_{x}(A_{j}(x)p)].f_{i}(x)+A_{j}(x)A_{i}(x)p-[\partial_{x}(A_{i}(x)p)]f_{j}(x)  \\
    = -A_{i}(x)A_{j}(x)p\\
    =\partial_{x}[<A_{j}(x)p,f_{i}(x)>-<A_{i}(x)p,f_{j}(x)>]\\
  = -\partial_{x}<p,[f_{i},f_{j}](x)>
  \end{array}
\right.
\end{equation}
where\\
\begin{equation}\label{eq:2.124}
P_{ij}(z)=[\partial_{z}(A_{j}(x)p)]Z_{i}(z)-[\partial_{z}(A_{i}(x)p)]Z_{j}(x),\,z\in\mathbb{R}^{2n}
\end{equation}
is used. Notice that \eqref{eq:2.123} allows one to write \eqref{eq:2.122} as a vector field from $K$\\
\begin{equation}\label{eq:2.125}
[Z_{i},Z_{j}](z)=T\partial_{z}H_{ij}(z),\,z=(x,p)\in \mathbb{R}^{2n}
\end{equation}
where $H_{ij}=<p,[f_{i},f_{j}](x)>,\, i,j\in\{1,...,m\}$. As a consequence, assuming that $\{f_{1},...,f_{m}\}$ are in
involution over $\mathbb{R}$ we get that $L(f_{1},...,f_{m})$ and $L(Z_{1},...,Z_{m})$ are finite
dimensional with $\{Z_{1},...,Z_{m}\}$ in involution over
$\mathbb{R}$.
\end{remark}
\subsection{The Case $H_{0}(x,p,u)=H_{0}(x,p)$; Stationary Solutions}
Denote $\mathcal{C}^{\infty}(\mathbb{R}^{2n},\mathbb{R}^{n})\, ,z=(x,p)\in\mathbb{R}^{2n}$ and define the linear space of characteristic fields \index{Characteristic!fields} $K\subseteq\mathcal{C}^{\infty}(\mathbb{R}^{2n},\mathbb{R}^{n})\,,z=(x,p)\in\mathbb{R}^{2n}$\,by
\begin{equation}\label{eq:2.126}
K=S(D\mathcal{H}),\,D\mathcal{H}=\{\partial_{z}H:H\in\mathcal{H}\}
\end{equation}
Here the linear mapping $:D\mathcal{H}\rightarrow K$ is given by
\begin{equation}\label{eq:2.127}
S(\partial_{z}H)(z)=T\partial_{z}H(z),\,H\in\mathcal{H},\,z\in\mathbb{R}^{2n}
\end{equation}
and $T$ is the simplectic matrix
\begin{equation}\label{eq:2.128}
T=\left(
         \begin{array}{cc}
          O & I_{n} \\
           -I_{n} & O\\
         \end{array}
       \right)
,T^{2}=\left(
         \begin{array}{cc}
           -I_{n} & O \\
           O & -I_{n} \\
         \end{array}
       \right)
\end{equation}
Let $z_{0}=(x_{0},p_{0})\in\mathbb{R}^{2n}$ and $B(z_{0},2\rho)\subseteq\mathbb{R}^{2n}$ be fixed and consider the following nonlinear equation
\begin{equation}\label{eq:2.129}
H_{0}(z)=H_{0}(z_{0}),\,\forall\,z\in D\subseteq B(z_{0},2\rho)
\end{equation}
where $H_{0}\in \mathcal{C}^{1}(B(z_{0},2\rho);\mathbb{R})$ is given and $D\subseteq B(z_{0},2\rho)$ has to be found. A solution for \eqref{eq:2.129} uses the following assumption
\begin{equation}\label{eq:2.130}
\hbox{there exist }Z_{1},...,Z_{m}\in K\hbox{ such that }
\{Z_{1}(z),...,z_{m}(z):z\in B(z_{0},2\rho)\}
\end{equation}
in involution over $\mathcal{C}^{\infty}( B(z_{0},2\rho);\mathbb{R})$. Assuming that \eqref{eq:2.130} is fulfilled then a parameterized solution for \eqref{eq:2.129} uses the following orbit
\begin{equation}\label{eq:2.131}
\hat{z}(\lambda,z_{0})=G_{1}(t_{1})\circ...\circ G_{m}(t_{m})(z_{0}),\,\lambda=(t_{1},...,t_{m})\in\Lambda=\prod_{1}^{m}[-a_{i},a_{i}]
\end{equation}
where
$$G_{i}(\tau)(y),\,y\in
B(z_{0},\rho),\,\tau\in[-a_{i},a_{i}]$$
is the local flow generated by $Z_{i}\in K$ given in \eqref{eq:2.130}. By definition
\begin{equation}\label{eq:2.132}
Z_{i}(z)=T\partial_{z}H_{i}(z)=\left(
                                      \begin{array}{c}
                                        \partial_{p}H_{i}(x,p) \\
                                        -\partial_{x}H_{i}(x,p) \\
                                      \end{array}
                                    \right)
,\,i\in\{1,...,m\}
\end{equation}
and we recall that, in this case, the linear space of characteristic fields \index{Characteristic!fields}$K$ is closed under the lie bracket. As a consequence, each Lie product $[Z_{i},Z_{j}]$ can be computed by
\begin{equation}\label{eq:2.133}
[Z_{i},Z_{j}](z)=T\partial_{z}H_{ij}(z),\,i,j\in\{1,...,m\},z\in\mathbb{R}^{2n}
\end{equation}
where $H_{ij}(z)=\{H_{i},H_{j}\}(z)=<\partial_{z}H_{ij}(z),Z_{i}(z)>=-<\partial_{z}H_{i},Z_{j}(z)>$\\
stands for Poisson bracket associated with two $\mathcal{C}^{1}$ scalar functions.
In addition, assuming \eqref{eq:2.130} we get that the Lie algebra
$L(Z_{1},...,Z_{m})\subseteq\mathcal{C}^{\infty}(B(z_{0},2\rho),\mathbb{R}^{n})$ is of the finite type (see in next section)
allowing one to use the algebraic representation of the
corresponding gradient system\index{Gradient systems}. We get that$\{q_{1},...,q_{m}\}\subseteq\mathcal{C}^{\infty}(\Lambda,\mathbb{R}^{m})$ will exist such that
\begin{equation}\label{eq:2.134}
\partial_{\lambda}\hat{z}(\lambda,z_{0})q_{i}(\lambda)=Z_{i}(\hat{z}(\lambda,z_{0})),\,i\in\{1,...,m\}\,,\lambda\in\Lambda,
q_{1}(\lambda),...,q_{m}(\lambda)\in\mathbb{R}^{m}
\end{equation}
are linearly independent, $\lambda\in \Lambda$, where $\{\hat{z}(\lambda,z_{0}):\lambda\in\Lambda\}$ is the fixed orbit given in \eqref{eq:2.131}. Denote $D=\{z\in B(z_{0},2\rho):z=\hat{z}(\lambda,z_{0}),\lambda\in\Lambda\}$ and the orbit given in \eqref{eq:2.131} is a parameterized solution of the equation \eqref{eq:2.129} $iff$
\begin{equation}\label{eq:2.135}
\partial_{\lambda}[H_{0}(\hat{z}(\lambda,z_{0}))]q_{i}(\lambda)=0,\,i\in\{1,...,m\}
\end{equation}
Using \eqref{eq:2.134} we rewrite \eqref{eq:2.135}as follows
{\small
\begin{equation}\label{eq:2.136}
0=<\partial_{\lambda}H_{0}(\hat{z}(\lambda,z_{0})),Z_{i}(\hat{z}(\lambda,z_{0}))>=\{H_{i},H_{0}\}(\hat{z}(\lambda,z_{0})),\,\lambda\in\Lambda,\forall\, \,i\in\{1,...,m\}
\end{equation}}
It is easily seen that \eqref{eq:2.136} is fulfilled provided $$H_{1},...,H_{m}\in\mathcal{C}^{\infty}(B(z_{0},2\rho),\mathbb{R}^{n})$$ can be found  such that
\begin{equation}\label{eq:2.137}
0=\{H_{i},H_{0}\}(z)=<\partial_{z}H_{0}(z),T\partial_{z}H_{i}(z)>=<Z_{0}(z),\partial_{z}H_{i}(z)>
\end{equation}
for each $i\in\{1,...,m\}\, z\in B(z_{0},2\rho)$ and $Z_{i}(z)=T\partial_{z}H_{i}(z)\,,i\in\{1,...,m\}$ are in involution for $z\in B(z_{0},2\rho)$. In particular ,the equations in \eqref{eq:2.137} tell us that $\{H_{1},...,H_{m}\}$ are first integrals for the vector field $Z_{0}(z)=T\partial_{z}H_{0}(z)$.\\
\begin{remark}\label{re:2.17}
Taking $H_{0}\in\mathcal{C}^{2}(B(z_{0},2\rho;\mathbb{R}))$(instead
of $H_{0}
\in\mathcal{C}^{1}$) we are in position to rewrite the equation \eqref{eq:2.137} using Lie product $[Z_{0},Z_{j}]$ (see \eqref{eq:2.133})
as follows
\begin{equation}\label{eq:2.138}
0=\left\{
       \begin{array}{ll}
          [Z_{0},Z_{j}](z),\,\forall\,j\in\{1,...,m\},\,z\in  B(z_{0},2\rho)\\
         \{H_{i},H_{0}\}(z_{0})=<Z_{0}(z_{0}),\partial_{z}H_{i}(z_{0})>,\,i\in\{1,...,m\}
       \end{array}
     \right.
\end{equation}
\end{remark}
We conclude these considerations.
\begin{proposition}\label{pr:2.6}
Let
$z_{0}=(x_{0},p_{0})\in\mathbb{R}^{2n}$ and $B(z_{0},2\rho)\subseteq\mathbb{R}^{2n}$ be fixed
such that the hypothesis \eqref{eq:**} and \eqref{eq:2.136} are fulfilled.
Then $\{\hat{z}(\lambda,z_{o}):\lambda\in\Lambda\}$ defined in \eqref{eq:2.131}is a
parameterized stationary solution of the nonlinear equation \eqref{eq:2.129}.
\end{proposition}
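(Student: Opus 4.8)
The plan is to mimic the argument of Proposition \ref{pr:2.5}, now in the symplectic setting where the linear space of characteristic fields $K$ is closed under the Lie bracket, so that no auxiliary vector fields with parameters need to be introduced. First I would record the two structural consequences of the hypotheses. From the involution assumption \eqref{eq:2.130}, the family $\{Z_{1},\dots,Z_{m}\}\subseteq K$ spans a Lie algebra $L(Z_{1},\dots,Z_{m})$ that is of finite type near $z_{0}$; invoking the algebraic representation of the gradient system associated with $\{Z_{1},\dots,Z_{m}\}$ (the material deferred to the next section) one obtains smooth maps $q_{1},\dots,q_{m}\in\mathcal{C}^{\infty}(\Lambda,\mathbb{R}^{m})$ with $q_{1}(\lambda),\dots,q_{m}(\lambda)$ linearly independent for every $\lambda\in\Lambda$ and
$$\partial_{\lambda}\hat{z}(\lambda,z_{0})\,q_{i}(\lambda)=Z_{i}(\hat{z}(\lambda,z_{0})),\qquad i\in\{1,\dots,m\},\ \lambda\in\Lambda,$$
which is exactly \eqref{eq:2.134}. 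From \eqref{eq:2.137} there exist $H_{1},\dots,H_{m}$ with $Z_{i}=T\partial_{z}H_{i}$ and $\{H_{i},H_{0}\}(z)=0$ on $B(z_{0},2\rho)$, i.e. the $H_{i}$ are first integrals of the characteristic field $Z_{0}(z)=T\partial_{z}H_{0}(z)$.

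The core computation is then short. I would differentiate the scalar function $\varphi(\lambda):=H_{0}(\hat{z}(\lambda,z_{0}))$ along the directions $q_{i}(\lambda)$: using the chain rule and \eqref{eq:2.134},
$$\partial_{\lambda}\varphi(\lambda)\,q_{i}(\lambda)=\langle\partial_{z}H_{0}(\hat{z}(\lambda,z_{0})),\partial_{\lambda}\hat{z}(\lambda,z_{0})q_{i}(\lambda)\rangle=\langle\partial_{z}H_{0}(\hat{z}(\lambda,z_{0})),Z_{i}(\hat{z}(\lambda,z_{0}))\rangle,$$
and the last expression equals the Poisson bracket $\{H_{i},H_{0}\}(\hat{z}(\lambda,z_{0}))$ by the skew-symmetry identity \eqref{eq:2.84}. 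By the first-integral property this vanishes for every $i\in\{1,\dots,m\}$ and every $\lambda\in\Lambda$. Since $\{q_{1}(\lambda),\dots,q_{m}(\lambda)\}$ is a basis of $\mathbb{R}^{m}$ for each $\lambda$, the $m$ scalar identities $\partial_{\lambda}\varphi(\lambda)q_{i}(\lambda)=0$ force $\partial_{\lambda}\varphi(\lambda)=0$ on all of $\Lambda$. As $\Lambda=\prod_{1}^{m}[-a_{i},a_{i}]$ is connected and $\hat{z}(0,z_{0})=z_{0}$ (because $G_{i}(0)=\mathrm{id}$), this yields $\varphi(\lambda)=\varphi(0)=H_{0}(z_{0})$ for all $\lambda\in\Lambda$, which is precisely the defining equality of a parameterized stationary solution of \eqref{eq:2.129} on $D=\{\hat{z}(\lambda,z_{0}):\lambda\in\Lambda\}$.

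The part that is genuinely not routine — and which the text itself postpones — is the existence of the coefficient functions $q_{i}$ in \eqref{eq:2.134}, i.e. the algebraic representation of the gradient system attached to an involutive family in $K$: one must know that the composition of local flows $G_{1}(t_{1})\circ\cdots\circ G_{m}(t_{m})$ has differential whose columns lie, along the orbit, in the span of $Z_{1},\dots,Z_{m}$ with an invertible transition matrix $[q_{1}(\lambda)\,\cdots\,q_{m}(\lambda)]$. That is the actual substance of the following section and would be cited rather than reproved here. Everything else — the chain-rule step, the translation of $\langle\partial_{z}H_{0},Z_{i}\rangle$ into a Poisson bracket via $[T(p)]^{*}=-T(p)$, and the connectedness argument — is immediate once that representation is in hand; the only other point worth an explicit line is that $\hat{z}(0,z_{0})=z_{0}$, which guarantees the common constant value is indeed $H_{0}(z_{0})$.
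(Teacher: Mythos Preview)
Your argument is correct and is essentially identical to the paper's: the text immediately preceding the proposition establishes \eqref{eq:2.134} from the involution hypothesis via the algebraic representation of the gradient system (deferred to Theorem~\ref{th:2.5}), then derives \eqref{eq:2.135}--\eqref{eq:2.136} by the same chain-rule and Poisson-bracket computation you give, concluding that $\partial_{\lambda}[H_{0}(\hat{z}(\lambda,z_{0}))]=0$. The only trivial remark is that in this subsection the relevant skew-symmetric matrix is the constant symplectic $T$ of \eqref{eq:2.128} rather than $T(p)$, but this changes nothing in your computation.
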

\begin{remark}\label{re:2.18}
A standard solution for the nonlinear equation
$$H_{0}(x,p(x))=\hbox{const},\,\forall\,x\in D\subseteq
\mathbb{R}^{n}$$ can be found from a parameterized solution
provided the hypothesis  \eqref{eq:2.130} is stated with $m=n$ and assuming, in
addition, that $X_{1}(z_{0}),...,X_{n}(z_{0})\in\mathbb{R}^{n}$
from $$Z_{i}(z)=\left(
                                                                   \begin{array}{c}
                                                                     X_{i}(z) \\
                                                                     P_{i}(z) \\
                                                                   \end{array}
                                                                 \right)
,\,i\in\{1,...,n\}$$ are linearly independent. It leads us to the equations\\
\begin{equation}\label{eq:2.139}
H_{0}(\hat{x}(\lambda,z_{0}),\hat{p}(\lambda,z_{0}))=0\,\,\forall\,\lambda\in\Lambda=\prod_{1}^{n}[-a_{i},a_{i}]
\end{equation}
where the orbit $\hat{z}(\lambda,z_{0})=(\hat{x}(\lambda,z_{0}),\hat{p}(\lambda,z_{0}))$
satisfies the implicit functions theorem when the algebraic equations
$$\hat{x}(\lambda,z_{0})=x\in B(x_{0},\rho)\subseteq\mathbb{R}^{n}$$
are involved,Find a smooth mapping
$$\lambda=\psi(x):B(z_{0},2\rho)\rightarrow \lambda$$ such that $$\hat{x}(\psi(x),z_{0})=x$$ and define $$p(x)=\hat{p}(\psi(x),z_{0}),\,x\in B(x_{0},\rho)\subseteq\mathbb{R}^{n}$$
Then \eqref{eq:2.139} written for $\lambda=\psi(x)$ becomes
\begin{equation}\label{eq:2.140}
H_{0}(x,p(x))=const=H_{0}(x_{0},p_{0}),\,\forall\,x\in B(x_{0},\rho)
\end{equation}
and $p(x):B(x_{0},\rho)\rightarrow \mathbb{R}^{n}$ is the standard solution. Assuming that \eqref{eq:2.140}is established it implies that $\{p(x):x\in B(x_{0},\rho)\}$ is a
smooth solution of the following system of the first order causilinear equations
\begin{equation}\label{eq:2.141}
\partial_{x}H_{0}(x,p(x))+[\partial_{x}p^{*}(x)]^{*}.\partial_{p}H_{0}(x,p(x))=0,\,x\in
B(x_{0},\rho)
\end{equation}
whose solution is difficult to be obtained using characteristic system\index{Characteristic! system } method. In addition,if $[\partial_{x}p(x)]$
is a symmetric matrix $(p(x)=\partial_{x}u(x),u\in\mathcal{C}^{2}(D;\mathbb{R}))$ then the first order system \eqref{eq:2.141} becomes\\
\begin{equation}\label{eq:2.142}
\left(
       \begin{array}{c}
         \partial_{x_{1}}H_{0}(x,p(x)) \\
         . \\
         . \\
         . \\
          \partial_{x_{n}}H_{0}(x,p(x)) \\
       \end{array}
     \right)
+\left(
       \begin{array}{c}
         <\partial_{x}p_{1}(x)\partial_{p}H_{0}(x,p(x))> \\
         . \\
         . \\
         . \\
          <\partial_{x}p_{n}(x)\partial_{p}H_{0}(x,p(x))> \\
       \end{array}
     \right)=0
\end{equation}
for any $x\in B(x_{0},\rho)\subseteq\mathbb{R}^{n}$, where $p(x)=(p_{1}(x),...,p_{n}(x))$. In this case, the smooth mapping $y=p(x),\,x\in B(x_{0},\rho)$,
can be viewed as a coordinate transformation in $\mathbb{R}^{n}$ such that any local solution $(\hat{x}(t,\lambda),\hat{p}(t,\lambda))$ of the Hamilton system
\begin{equation}\label{eq:2.143}
\left\{
       \begin{array}{ll}
         \frac{dx}{dt} =& \partial_{p}H_{0}(x,p)\,\,\,x(0)=\lambda\in B(x_{0},a)\subseteq B(x_{0},\rho)\\\
        \frac{dp}{dt} =& -\partial_{x}H_{0}(x,p)\,\,\,p(0)=p(\lambda);t\in(-\alpha,\alpha)=I_{\alpha}
       \end{array}
     \right.
\end{equation}
has the property
$\hat{p}(t,\lambda)=p(\hat{x}(t,\lambda)),\,(t,\lambda)\in
I_{\alpha}\times B(x_{0},a)$.
\end{remark}
\subsection{Some Problems}
\textbf{Problem 1}. For the linear equation $H_{0}(x,p)=<p,f_{0}(x)>=c$ define an extended parameterized solution
$$\hat{Z}^{e}(\lambda,z_{0})=(\hat{x}(\lambda,z_{0}),\hat{p}(\lambda,z_{0}),\hat{u}(\lambda,z_{0}))\in\mathbb{R}^{2n+1},\,\lambda\in\Lambda=\prod_{1}^{m}[-a_{i},a_{i}]$$ such that
\begin{equation}\label{eq:2.144}
\partial_{\lambda}\hat{u}(\lambda,z_{0})=[\hat{p}(\lambda,z_{0})]^{*}\partial_{\lambda}\hat{x}(\lambda,z_{0})
\end{equation}
where $z_{0}=(x_{0},p_{0},u_{0})$.\\
\textbf{Hint}. Define the extended vector fields $$Z_{i}^{e}\in\mathcal{C}^{\infty}(\mathbb{R}^{2n+1},\mathbb{R}^{2n+1})\\
Z_{i}^{e}(z^{e})=\left(
                   \begin{array}{c}
                     Z_{i}(x,p) \\
                     H_{i}(x,p) \\
                   \end{array}
                 \right)
,z_{e}=(x,p,u)=(z,u)$$\\
where $H_{i}(x,p)=<p,f_{i}(x)>$ and $Z_{i}(x,p)=T\partial_{z}H_{i}(x,p),\,z=(x,p),\,i\in\{1,...,m\}$, where the matrix $T$ is defined in \eqref{eq:2.110}. Assume the hypothesis \eqref{eq:2.114} of
section $2.4.4$ fulfilled and define the orbit
\begin{equation}\label{eq:2.145}
\hat{Z}^{e}(\lambda,z_{0})=G_{1}^{e}(t_{1})\circ...\circ G_{m}^{e}(t_{m})(z_{0}),\,\,\lambda=(t_{1},...,t_{m})\in\Lambda=\prod_{1}^{m}[-a_{i},a_{i}]
\end{equation}
where $G_{i}^{e}(\tau)(y^{e})$ is the flow \index{Flow!local}generated by $Z_{i}^{e}$. Prove that under the hypothesis assumed in proposition \eqref{pr:2.5}
we get
 $\{\hat{z}^{e}(\lambda,z_{0}):\lambda\in\Lambda\}$ as a parameterized solution of the linear equation $H_{0}(x,p)=const$ satisfied the conclusion \eqref{eq:2.144}\\
\textbf{Problem 2}. For the nonlinear equation $H_{0}(x,p)=\hbox{const}$ define an extended parameterized solution
$$\hat{z}^{e}(\lambda,z_{0}^{e})=(\hat{x}(\lambda,z_{0}^{e}),\hat{p}(\lambda,z_{0}^{e})),\hat{u}(\lambda,z_{0}^{e})\in\mathbb{R}^{2n+1},
\lambda\in\Lambda=\prod_{1}^{m}[-a_{i},a_{i}]$$
such that
\begin{equation}\label{eq:2.146}
\partial_{\lambda}\hat{u}(\lambda,z_{0})=[\hat{p}(\lambda,z_{0})]^{*}\partial_{\lambda}\hat{x}(\lambda,z_{0}),\,\lambda\in\Lambda\,
\hbox{where} \,Z_{0}^{e}=(x_{0},p_{0},u_{0})\in\mathbb{R}^{2n+1}
\end{equation}
is fixed.\\
\textbf{Hint}\begin{equation}\label{eq:2.147}
Z_{i}^{e}(z^{e})=\left(
                                 \begin{array}{c}
                                   Z_{i}(z) \\
                                   <p,X_{i}(z)> \\
                                 \end{array}
                               \right)
\,where\, Z_{i}(z)\left(
                    \begin{array}{c}
                      X_{i}(z) \\
                      P_{i}(z) \\
                    \end{array}
                  \right)
\end{equation}
is obtained from a smooth $H_{i}\in\mathcal{C}^{\infty}(\mathbb{R}^{2n},\mathbb{R})$ as follows
$$Z_{i}(z)=T\partial_{z}H_{i}(z),\,i\in\{1,...,m\}$$  given in \eqref{eq:2.128}
defined the extended orbit
\begin{equation} \label{eq:*}
\hat{z}^{e}(\lambda,z_{0}^{e})=G_{1}^{e}(t_{1})\circ...\circ G_{m}^{e}(t_{m})(z_{0}^{e}),\,\lambda=(t_{1},...,t_{m})\in\Lambda=\prod_{1}^{m}[-a_{i},a_{i}]
\end{equation}
where $G_{i}^{e}(\tau)(y^{e})$ is the local flow \index{Flow!local}$\{Z_{i}^{e}\}$. Assume that
\begin{equation}\label{eq:**}
\{Z_{1}^{e}(z^{e}),...,Z_{m}^{e}(z^{e}):z^{e}\in B(z_{0},2\rho)\subseteq\mathbb{R}^{2n+1}\}
\end{equation}
are in involution over $\mathcal{C}^{\infty}(B(z_{0},2\rho);\mathbb{R})$ and then prove
the corresponding proposition \eqref{pr:2.6} of assuming that \label{eq:2.149} replaces
the condition \eqref{eq:2.130}.
\section[{\small Overdetermined System of 1st O.PDE \& Lie Algebras}]{Overdetermined system\index{Overdetermined system} of First Order PDE and Lie Algebras of Vector Fields}
As one may expect, a system of first order partial differential equations\index{Partial differential equations}
\begin{equation}\label{eq:2.148}
<\partial_{x}S(x),g_{i}(x)>=0,\,i\in\{1,...,m\}\,g_{i}\in\mathcal{C}^{\infty}(\mathbb{R}^{n},\mathbb{R}^{n})
\end{equation}
has a nontrivial solution $S\in\mathcal{C}^{2}(B(z_{0},\rho)\subseteq\mathbb{R}^{2n})$ provided $S$ is nonconstant on the ball $B(z_{0},\rho)\subseteq\mathbb{R}^{2n+1}$ and
\begin{equation}\label{eq:2.149}
\dim L(g_{1},...,g_{m}(x_{0}))=k<n
\end{equation}
Here $L(g_{1},...,g_{m})\subseteq\mathcal{C}^{\infty}(\mathbb{R}^{n},\mathbb{R}^{n})$
is the Lie algebra determined by the given vector fields $\{g_{1},...,g_{m}\}\subseteq\mathcal{C}^{\infty}(\mathbb{R}^{n},\mathbb{R}^{n})$, it
can be viewed as the smallest Lie algebra $\Lambda\subseteq\mathcal{C}^{\infty}(\mathbb{R}^{n},\mathbb{R}^{n})$ containing $\{g_{1},...,g_{m}\}$.
\begin{definition} A real algebra $\Lambda\subseteq \mathcal{C}^{\infty}(\mathbb{R}^{n},\mathbb{R}^{n})$ is Lie algebra if the multiplication operation among vector fields
 $X,Y\subseteq\mathcal{C}^{\infty}(\mathbb{R}^{n},\mathbb{R}^{n})$ is given by the corresponding Lie bracket $[X,Y](x)=\frac{\partial Y}{\partial x}(x)X(x)-\frac{\partial X}{\partial x}(x)Y(x),\,x\in\mathbb{R}^{n}$. By a direct computation we may convince ourselves that the following two properties are valid\\
$$[X,Y]=-[Y,X],\,\forall\,X,Y\in\Lambda$$ and
$$[X,[Y,Z]]+[Z,[X,Y]]+[Y,[Z,X]]=0,\,\forall\,X,Y,Z\in \Lambda$$(Jacobi
s' identity)
\end{definition}
\begin{remark}\label{re:2.19}
The system \eqref{eq:2.148} is overdetermined when $m>1$. Usually a nontrivial
solution $\{S(x),x\in B(x_{0},\rho)\}$  satisfies  the system \eqref{eq:2.148}
only on a subset $M_{x_{0}}\subseteq
B(x_{0},\rho)\subseteq\mathbb{R}^{n}$, which can be structured as a
smooth manifold satisfying $\dim M_{x_{0}}=k$ (see \eqref{eq:2.149}); it will be
deduced from an orbit of local flows.
\end{remark}
\begin{definition}
Let $\Lambda\subseteq\mathcal{C}^{\infty}(\mathbb{R}^{n},\mathbb{R}^{n})$ be a Lie  algebra and $x_{0}\in\mathbb{R}^{n}$ fixed. By an orbit of the origin $x_{0}$ of $\Lambda$ we mean a mapping (finite composition of flows)
\begin{equation}\label{eq:2.150}
G(p,x_{0})= G_{1}(t_{1})\circ ....\circ
G(t_{k})(x_{0})\,,p=(t_{1},t_{k})\in
D_{k}=\prod_{1}^{k}[-a_{i},a_{i}]
\end{equation}
where $G_{i}(t)(x),
t\in(-a_{i},a_{i}),x\in V(x_{0})$, is the local
flow\index{Flow!local} generated by some $g_{i}\in\Lambda,
i\in\{1...k\}.$
\end{definition}
\begin{definition}
We say that
$\Lambda\subseteq\mathcal{C}^{\infty}(\mathbb{R}^{n},\mathbb{R}^{n})$is
finitely generated with respect to the  orbits of the origin
$x_{0}\in\mathbb{R}^{n},(f,g,0,x_{0})$ if
$\{Y_{i},...,Y_{m}\}\subseteq\Lambda$ will exist such that any
$Y\in\Lambda$ along on arbitrary orbit $G(p, x_{0}), p\in
D_{k}$ can be written
\begin{equation}\label{eq:2.151}
 Y(G(p,x_{0}))=\Sigma_{j=1}^{M}a_{j}(p)Y_{j}(G(p,x_{0}))
\end{equation}
with
$a_{j}\in\mathcal{C}^{\infty}(D_{k},\mathbb{R})$depending on $Y$ and
$G(p,x_{0}),p\in D_{k};\{Y_{I},...Y_{M}\}\subseteq\Lambda$ will be
called a system of generators.
\end{definition}
\begin{remark}\label{re:2.20}It is easily seen that $\{g_1,..g_m\}\subseteq\mathcal{C}^{\infty}(\mathbb{R}^{n}),\mathbb{R}^{n})$ in involution determine a Lie algebra
$L(g_{I},..g_{m})$ which is $(f.g.o.x_{0})$ for any $x_{o}\in\mathbb{R}^{n}$, where the involution properly means
 $[g_{i},...,g_{j}](x)=\mathop{\sum}\limits_{k=1}^{m}a_{ij}^{k}(x)g_{k}(x) \,\forall,i,j\in\{1,...m\}$ for some,\\
$a_{ij}^{k}in\mathcal{C}^{\infty}(\mathbb{R}^{n}),\{g_{1},...,g_{m}\}$ will
be a system of generators. A nontrivial solution of the system $(1)$
will be constructed assuming that
\begin{equation}\label{eq:2.152}
L(g_{1},...g_{m})\,is\, a\, (f,g,o;x_{o})\,\hbox{Lie algebra}
\end{equation}
\begin{equation}\label{eq:2.153}
\dim L(g_{1},...g_{m})(x_{0})=k<n
\end{equation}
In addition, the domain $V(x_{0})\subseteq \mathbb{R}^{n}$ on which a non trivial solution satisfies\eqref{eq:2.148} will be defined as an orbit starting the origin $x_{o}\in\mathbb{R}^{n}$
\begin{equation}\label{eq:2.154}
y(p)=G_{1}(t_{1}\circ...\circ G_{M}(t_{M})(x_{0}),\,p=(t_{1},...,t_{M})\in
D_{M}=\prod_{1}^{M}(-a_{i},a_{i})
\end{equation}
where $G_{i}(t)(x),t\in(-a_{i},a_{i})\,\,,x\in B(x_{0},\rho)\subseteq\mathbb{R}^{n}$ is the local flow generated by $Y_{i}\in L(g_{1},...,g_{m}),\,\,i\in\{1,...,M\}$.Here $\{Y_{1},...,Y_{M}\}\subseteq L(g_{1},...,g_{m})$ is fixed system of generators such that
\begin{equation}\label{eq:2.155}
\{Y_{1}(x_{0}),...,Y_{k}(x_{0})\}\subseteq \mathbb{R}^{n}
\end{equation}
are linearly independent and
$Y_{j}(x_{0})=0,\,j\in\{1,...,M\}$
\end{remark}
\subsection{Solution for Linear Homogeneous Over Determined System\index{Homogeneous!overdetermined system}.}
\begin{theorem}\label{th:2.4}
Assume that $g_{1},...g_{m}\subseteq\mathcal{C}^{\infty}(\mathbb{R}^{n}),\mathbb{R}^{n})$ are given such that the hypotheses \eqref{eq:2.152} and \eqref{eq:2.153} are satisfied. Let $\{Y_{1},..,Y_{M}\}\subseteq L(g_{1},..g_{m})$ be a system of generators fulfilling\eqref{eq:2.155} and define
\begin{equation}\label{eq:2.156}
V(x_{o})=\{x\in B(x_{0},\rho)\subseteq \mathbb{R}^{n} :x=y(p),p\in D_{M}\}\subseteq \mathbb{R}^{n}
\end{equation}
 where $\{y(p):\in D_{m}\}$ is the orbit define in\eqref{eq:2.154}. Then there exists a smooth nontrivial function\\
 $ S (x):B(x_{o},\rho)\rightarrow \mathbb{R} $ such that the system \eqref{eq:2.148} is satisfied for any $ x\in V(x_{0})$given in \eqref{eq:2.156},i.e
\begin{equation}\label{eq:2.157}
<\partial_{x}S(x),g_{i}(x)>=0 \,\,\forall x\in
V(x_{0})\,\,i\in\{1,...m\}
\end{equation}
\end{theorem}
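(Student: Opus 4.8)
The plan is to exhibit $V(x_0)$ as a $k$-dimensional embedded submanifold through $x_0$, to check that every $g_i$ is tangent to it, and then to take for $S$ one of the transversal coordinate functions of an adapted chart. First I would note that, since the generators $Y_{k+1},\dots,Y_M$ vanish at $x_0$, their local flows fix $x_0$, so the innermost factors in \eqref{eq:2.154} act trivially and the orbit reduces to $y(p)=G_1(t_1)\circ\cdots\circ G_k(t_k)(x_0)$, a map depending only on $(t_1,\dots,t_k)$. Differentiating this composition at the origin (and using the push‑forward identities for flows established in Chapter~1) the differential of $\Phi(t_1,\dots,t_k)=G_1(t_1)\circ\cdots\circ G_k(t_k)(x_0)$ at $0$ has columns $Y_1(x_0),\dots,Y_k(x_0)$, hence rank $k$ by \eqref{eq:2.155}. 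Shrinking the $a_i$ and $\rho$, $\Phi$ is therefore an embedding and $V(x_0)$ given in \eqref{eq:2.156} is a $k$-dimensional submanifold of $B(x_0,\rho)$ with $T_{x}V(x_0)=\mathrm{span}\{Y_1(x),\dots,Y_k(x)\}$ for $x=\Phi(t)$ near $x_0$.

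The second step is the tangency $g_i(x)\in T_xV(x_0)$ for every $x\in V(x_0)$. Here I would invoke the algebraic representation of the gradient system attached to $\{Y_1,\dots,Y_M\}$ (the construction whose meaning is explained in the next section; compare the proof of Proposition~\ref{pr:2.5}): because $L(g_1,\dots,g_m)$ is $(f.g.o;x_0)$ there exist smooth $q_1,\dots,q_M\colon D_M\to\mathbb{R}^M$, linearly independent for each $p$, with $\partial_p y(p)\,q_j(p)=Y_j(y(p))$, so each $Y_j(y(p))$ lies in the range of $\partial_p y(p)$, i.e. in $T_{y(p)}V(x_0)$. Since $g_i\in L(g_1,\dots,g_m)$, the defining property \eqref{eq:2.151} of a system of generators gives $g_i(y(p))=\sum_{j=1}^{M}a_{ij}(p)Y_j(y(p))$ along the orbit, whence $g_i(y(p))\in T_{y(p)}V(x_0)$ for all $p\in D_M$; that is, $g_i$ is tangent to $V(x_0)$.

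For the construction of $S$ I would complete $Y_1(x_0),\dots,Y_k(x_0)$ to a basis of $\mathbb{R}^n$ by vectors $v_{k+1},\dots,v_n$ and set $\Psi(t_1,\dots,t_n)=\Phi(t_1,\dots,t_k)+\sum_{l=k+1}^{n}t_l v_l$. Then $d\Psi_0=\parallel Y_1(x_0),\dots,Y_k(x_0),v_{k+1},\dots,v_n\parallel$ is nonsingular, so $\Psi$ is a diffeomorphism from a neighbourhood of $0$ onto a neighbourhood of $x_0$, which we may take to contain $B(x_0,\rho)$. Writing $\Psi^{-1}=(u_1,\dots,u_n)$ and putting $S(x)=u_n(x)$ we get a smooth function, nonconstant on $B(x_0,\rho)$, which vanishes identically on $V(x_0)$ (in this chart $V(x_0)=\{u_{k+1}=\cdots=u_n=0\}$, and $k<n$ guarantees that $u_n$ is genuinely transversal). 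Finally, for $x\in V(x_0)$ the differential $\partial_x S(x)$ annihilates $T_xV(x_0)$ since $S$ is constant on $V(x_0)$, and $g_i(x)\in T_xV(x_0)$ by the second step, so $\langle\partial_x S(x),g_i(x)\rangle=0$ for every $i$, which is \eqref{eq:2.157}.

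The hard part is the second step: converting the purely algebraic hypothesis that $L(g_1,\dots,g_m)$ is $(f.g.o;x_0)$ into the geometric fact that the finite composition of flows sweeps out a manifold to which all of $L(g_1,\dots,g_m)$ is tangent. This hinges on the representation $\partial_p y(p)\,q_j(p)=Y_j(y(p))$ with linearly independent $q_j(p)$ — equivalently, on showing that the gradient system generated by the $Y_j$ inherits the finite-generation structure — a point that belongs to the Lie-algebraic machinery developed after this statement and that also secures $\dim V(x_0)=k$ rather than something smaller. The rest is the routine implicit/inverse-function-theorem bookkeeping sketched above.
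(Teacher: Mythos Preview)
Your proposal is correct and follows essentially the same route as the paper: reduce the orbit to the first $k$ flows via $Y_{k+1}(x_0)=\cdots=Y_M(x_0)=0$, identify $V(x_0)$ as a $k$-dimensional submanifold, invoke the nonsingular algebraic representation (Theorem~\ref{th:2.5}) to obtain $\partial_p y(p)\,q_j(p)=Y_j(y(p))$ and hence tangency of every $Y_j$ (and then every $g_i$, via the $(f.g.o;x_0)$ property) to $V(x_0)$, and finally take a transversal coordinate of an adapted chart for $S$. The paper phrases the last step as producing $n-k$ functions $\varphi_{k+1},\dots,\varphi_n$ vanishing on $V(x_0)$ with independent gradients at $x_0$, of which your $S=u_n$ is one; your identification of Theorem~\ref{th:2.5} as the real content is exactly the paper's forward reference.
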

\begin{proof}
 To prove the conclusion \eqref{eq:2.157} we need to show the gradient system associated with the  orbit $\{y(p): p \in D_{M}\}$ defined in \eqref{eq:2.154}  has a nonsingular algebraic representation
\begin{eqnarray}\label{eq:2.158}
  \frac{\partial y(p)}{\partial p} &=& \{Y_{1}(.),X_{2}(t_{1},.),...,X_{M}(t_{1},...,t_{M-i};.)\}(y(p)) \nonumber\\
  &=& \{Y_{1}(.),Y_{2}(.),...,Y_{M}\}(y(p))A(p) p\in D_{M}
\end{eqnarray}
where the smooth matrix
$A(p)=(a_{ij}(p))_{i,j\in\{1,M\}}$
 satisfies
\begin{equation}\label{eq:2.159}
A(0)=I_{M}, a_{ij}\in \mathcal{C}^{\infty}(D_{M})
\end{equation}
(seem next theorem). On the other hand, we notice
that according to the fixed system of generators
$\{Y_{1},...,Y_{M}\}\subset L(g_{1},...,g_{m})$ (see \eqref{eq:2.155}),
we may and rewrite the orbit\eqref{eq:2.154} as follows
\begin{equation}\label{eq:2.160}
\{y(p):p\in D_{M}\}=\{y(\hat{p})=G_{1}(t_{1}), ...,
G_{k}(t_{k})(x_0):\hat{p}=(t_{1},...,t_{k})\in D_{k}\}
\end{equation}
Using a standard procedure we redefine
\begin{equation}\label{eq:2.161}
M_{x_{0}}=\{y(p):p\in B(0,\alpha)\}\subseteq
B(x_{0},\varrho)\subseteq \mathbb{R}^{n},\,\hbox{where}\,B(0,\rho)\subseteq
D_{k}\}
\end{equation}
as a smooth manifold satisfying $\dim M_{x_{0}}=k<n$ for which there exist $n-k$ smooth functions
$\varphi_{j}\in
\mathcal{C}^{\infty}(B(x_{0},\varrho))$ fulfilling
\begin{equation}\label{eq:2.162}
\left\{
      \begin{array}{ll}
        \varphi_{j}(x)=0,\,j\in\{k+1,...,n\},x\in
M_{x_{0}} \\
        \{\partial_{x}\varphi_{k+1}(x_{0}),...,\partial_{x}\varphi_{n}(x_{0})\}\subseteq\mathbb{R}^{n}\hbox{ are linearly independent }
      \end{array}
    \right.
\end{equation}
Using \eqref{eq:2.159} and $\det A(p)\neq 0$ for $p$ in a ball
$B(0,\alpha)\subseteq D_{M}$ we find smooth $q_{j}\in
C^{\infty}(B(0,\alpha);\mathbb{R}^{M}),j\in \{1,..,M\}$, such that
\begin{equation}\label{eq:2.163}
\left\{
      \begin{array}{ll}
        A(p)q_{j}(p)=e_j,\,p\in
B(0,\alpha),\,\{l_{1},...,l_{M}\}\subseteq\mathbb{R}^{M} \hbox{ is the canonical basis }\\
        \frac{\partial y(p)}{\partial p}q_{j}(p)=Y_{j}(y(p)),\,p\in B(0,\alpha),\,j\in\{1,...,M\}
      \end{array}
    \right.
\end{equation} Using \eqref{eq:2.162} and \eqref{eq:2.163} we get $\varphi_{j}(y(p))=0,\,p\in
B(0,\alpha)\subseteq D_{M},\,j\in\{k+1,...,n\}$ and taking the Lie
derivatives in the directions $q_{1}(p),...,q_{M}(p)$ we obtain\\
{\small
\begin{equation}\label{eq:2.164}
<\partial_{x}\varphi_{j}(y(\hat{p})),\,Y_{i}(y(\hat{p}))>=0\,\hat{p}\in
B(0,\alpha)\subseteq D_{k},\,i\in\{1,...,M\} \,\forall\, j\in
\{k+1,...,n\}
\end{equation}}
By hypothesis, $L(g_{1},...,g_{m})$ is
a $(f\cdot g\cdot o;x_{0})$ Lie Algebra and each
$g\in\,L\{g_{1},...,g_{m}\}$ can be written
\begin{equation}\label{eq:2.165}
g(y(\hat{p}))=\sum\limits_{i=1}^{M}\alpha_{i}(\hat{p})Y_{i}(y(\hat{p})),\,\hat{p}\in
B(0,\alpha)\subseteq \mathbb{R}^{k},\,\alpha_{i}\in
\mathcal{C}^{\infty}(B(0,\alpha))
\end{equation}
 Using \eqref{eq:2.164} and \eqref{eq:2.165} we get
\begin{eqnarray}\label{eq:2.166}
  <\partial_{x}\varphi_{j}(x),\,g_{i}(x)>&=& 0\,\forall\,x\in
M_{x_{0}}\subseteq B(0,\varrho) \hbox{ for each } \nonumber\\
  && i\in\{1,...,m\} \hbox{ and }j\in\{k+1,...,n\}
\end{eqnarray}
Here $\{\varphi_{k+1}(x),...,\varphi_{n}(x):x\in B(0,\varrho)\}$ are $(n-k)$ smooth solutions satisfying the overdetermined
system\index{Overdetermined system} \eqref{eq:2.148} along the $k$-dimensional
manifold $M_{x_{0}}$ in \eqref{eq:2.161}. The proof is complete.
\end{proof}
\begin{remark}\label{re:2.21}
The nonsingular algebraic representation of a gradient
system\index{Gradient systems}
\begin{equation}\label{eq:2.167}
\frac{\partial y}{\partial t_{1}}=Y_{1}(y),\,\frac{\partial y}{\partial t_{2}}=X_{2}(t_{1};y),...,\frac{\partial y}{\partial t_{M}}=X_{M}(t_{1},...,t_{M-1};y)
\end{equation}
associated with the system of generators
$\{Y_{1},...,Y_{M}\}\subseteq L(g_{1},...,g_{M})$ relies on the
assumption that $L(g_{1},...,g_{M})$ is a $(f\cdot g\cdot o;x_{0})$
Lie algebra and the orbit \eqref{eq:2.154} is a solution of \eqref{eq:2.167} satisfying
$y(0)=x_{0}$. The algorithm of defining a gradient system for which
a given orbit is its solution was initiated in Chapter $I$ of these
lectures. Here we shall use the following formal power series
\begin{equation}\label{eq:2.168}
\left\{
   \begin{array}{ll}
     X_{2}(t_{1};y)=(\exp t_{1}ad Y_{1})(Y_{2})(y) \\
     X_{3}(t_{1},t_{2};y)=(\exp t_{1}ad Y_{1})\cdot(\exp t_{2}ad Y_{2})(Y_{3})(y) \\
     \cdot \\
     \cdot \\
     \cdot \\
     X_{M}(t_{1},t_{2},...,t_{M-1};y)=(\exp t_{1}ad Y_{1})\cdot...\cdot(\exp t_{M-1}ad Y_{M-1})(Y_{M})(y)
   \end{array}
 \right.
\end{equation}
where the linear mapping $ad
Y:L(g_{1},...,g_{M})\rightarrow L(g_{1},...,g_{M})$ is defined by
{\small
\begin{equation}\label{eq:2.169}
(ad Y)(Z)(y)=\frac{\partial Z}{\partial y}(y)\cdot
Y(y)-\frac{\partial Y}{\partial
y}Z(y),y\in\mathbb{R}^{n}\,for\,each\,Y,Z\in L(g_{1},...,g_{M})
\end{equation}}
Actually, the vector fields in the left hand side of
\eqref{eq:2.168} are well
defined by the following mappings
\begin{equation}\label{eq:2.170}
\left\{
   \begin{array}{ll}
     X_{2}(t_{1};y)=H_{1}(-t_{1};y_{1})(Y_{2})(G_{1}(-t_{1};y_{1})),y_{1}=y, \\
     X_{3}(t_{1},t_{2};y)=H_{1}(-t_{1};y_{1})\cdot H_{2}(-t_{2};y_{2})(Y_{3})(G_{2}(-t_{2};y_{2})) \\
     \cdot \\
     \cdot \\
     \cdot \\
     X_{M}(t_{1},t_{2},...,t_{M-1};y)=H_{1}(-t_{1};y_{1})\cdot H_{2}(-t_{2};y_{2})\cdot...\cdot
     H_{M-1}(-t_{M-1};y_{M-1})(Y_{M})(y_{M})
    \end{array}
 \right.
\end{equation} where $$G_{i}(-t_{i};y_{i}),\,t\in(-a_{i},a_{i}),\,x\in
V(x_{0})$$ is the local flow \index{Flow!local}generated by
$$Y_{i},H_{i}(t,y)=[\frac{\partial G_{i}}{\partial y}(t;y)]^{-1}$$ and
$$y_{i+1}=G_{i}(-t_{i};y_{i}),\,\in\{1,...,M-1\},\hbox{ where }
y_{1}=y.$$ The formal writing \eqref{eq:2.168} is motivated by the explicit
computation we can perform using exponential formal
series\index{Exponential !formal series} and noticing that the
Taylor series associated with \eqref{eq:2.168} and \eqref{eq:2.170} coincide.
\end{remark}
\section[Nonsingular Representation of a Gradient System]{Nonsingular Algebraic Representation of a \\Gradient System}
\begin{theorem}\label{th:2.5}
Assume that $L(g_{1},...,g_{m})\subseteq
\mathcal{C}^{\infty}(\mathbb{R}^{n};\mathbb{R}^{n})$ is a $(f\cdot
g\cdot o;x_{0})$ Lie algebra and consider
$\{Y_{1},...,Y_{M}\}\subseteq L(g_{1},...,g_{M})$ as a fixed system
of generators. Define the orbit $\{y_{p}:p\in D_{M}\}$ as in \eqref{eq:2.154} and
associate the gradient system \eqref{eq:2.167} where
$\{X_{2}(t_{1};y),...,X_{M}(t_{1},t_{2},...,t_{M-1};y)\}$ are
defined in \eqref{eq:2.170}. Then there exist an $(M\times M)$ nonsingular
matrix $A(p)=(a_{ij}(p))_{i,j\in\{1,...,M\}},a_{ij}\in
\mathcal{C}^{\infty}(B(0,\alpha)\subseteq\mathbb{R}^{n})$ such that
{\small
\begin{equation}\label{eq:2.171}
\{Y_{1}(\cdot)X_{2}(t_{1};\cdot),...,X_{M}(t_{1},...,t_{M-1};\cdot)\}(y(p))=\{Y_{1}(\cdot),Y_{2}(\cdot),...,Y_{M}(\cdot)\}(y(p))A(p)
\end{equation}}
 for any $p\in
B(0,\alpha)\subseteq\mathbb{R}^{n},\hbox{ and } A(0)=I_{M}$
\end{theorem}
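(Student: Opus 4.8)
The plan is to proceed by induction on $M$, following the same pattern used in Theorem~\ref{th:1.8} (the construction of a gradient system associated with a finite set of vector fields), and exploiting that the orbit $y(p) = G_1(t_1)\circ\cdots\circ G_M(t_M)(x_0)$ is obtained by peeling off one flow at a time. For $M=1$ there is nothing to prove: the single vector field $Y_1$ already gives $X_1 = Y_1$ and $A \equiv 1 = I_1$. For the inductive step, I would write $y(p) = G_1(t_1)(w(\hat p))$ where $w(\hat p) = G_2(t_2)\circ\cdots\circ G_M(t_M)(x_0)$, $\hat p = (t_2,\dots,t_M)$, and apply the induction hypothesis to the orbit $w(\hat p)$ generated by $\{Y_2,\dots,Y_M\}$ inside the same Lie algebra $L(g_1,\dots,g_m)$. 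This yields a nonsingular $(M-1)\times(M-1)$ matrix $\hat A(\hat p)$ with $\hat A(0) = I_{M-1}$ relating the partial derivatives $\partial w/\partial \hat p$ to $\{Y_2(\cdot),\dots,Y_M(\cdot)\}$ evaluated along $w$.

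Next I would transport this relation forward through the flow $G_1(t_1)$. Setting $H_1(\sigma;y) = [\partial G_1/\partial y(\sigma;y)]^{-1}$ as in Remark~\ref{re:1.2}, the key algebraic identity is that pushing a vector field $Z$ forward by $G_1(-t_1)$ and conjugating by $H_1$ converts Lie brackets into Lie brackets (this is exactly Exercise~1 following Theorem~\ref{th:1.8}): $H_1(-t_1;\cdot)[Z,W](G_1(t_1;\cdot))(y) = [H_1(-t_1;\cdot)Z(G_1(t_1;\cdot)), H_1(-t_1;\cdot)W(G_1(t_1;\cdot))](y)$. Combined with the definition of the $X_j$ in \eqref{eq:2.170} as $X_j(t_1,\dots,t_{j-1};y) = H_1(-t_1;y)\hat X_j(t_2,\dots,t_{j-1};G_1(-t_1;y))$, differentiating $y(p) = G_1(t_1)(w(\hat p))$ with respect to $t_1$ and with respect to the components of $\hat p$ gives $\partial y/\partial t_1 = Y_1(y(p))$ and $\partial y/\partial t_i = [\partial G_1/\partial y]\,(\partial w/\partial t_i)$. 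Substituting the induction hypothesis for $\partial w/\partial \hat p$ and using the conjugation identity, each $\partial y/\partial t_i$ becomes a $\mathcal{C}^\infty$ combination of $X_2,\dots,X_M$ along $y(p)$. The crucial point — here is where the $(f\cdot g\cdot o;x_0)$ hypothesis on $L(g_1,\dots,g_m)$ enters — is that each $X_j(t_1,\dots,t_{j-1};y(p))$, being the $H_1$-conjugate of an element of the Lie algebra, is itself expressible as a $\mathcal{C}^\infty$ combination $\sum_k c_{jk}(p) Y_k(y(p))$ of the fixed generators along the orbit; assembling these coefficients and the coefficients from the induction step produces the desired matrix $A(p)$.

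I would then verify nonsingularity: at $p=0$ we have $t_1 = \cdots = t_M = 0$, so $G_i(0) = \mathrm{id}$, $H_1(0;\cdot) = I_n$, and $X_j(0,\dots,0;y) = Y_j(y)$ directly from \eqref{eq:2.170}, whence $A(0) = I_M$. By continuity $\det A(p) \neq 0$ on a ball $B(0,\alpha) \subseteq D_M$, shrinking $\alpha$ if necessary; smoothness of the entries $a_{ij}$ follows from smoothness of the flows $G_i$ and of $H_i$, which in turn rests on $Y_i \in \mathcal{C}^\infty$ (or at least $\mathcal{C}^2$, as in Theorem~\ref{th:1.6}) and the differentiability-of-flow results of Chapter~I.

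The main obstacle is the bookkeeping in the inductive step: one must carefully track how the $H_1$-conjugation interacts with the already-constructed matrix $\hat A(\hat p)$ from the lower level, and confirm that the conjugated generators $H_1(-t_1;y)Y_k(G_1(-t_1;y))$ remain inside $L(g_1,\dots,g_m)$ (so that the $(f\cdot g\cdot o;x_0)$ property can be invoked to re-expand them in the fixed basis $\{Y_1,\dots,Y_M\}$). This closure-under-conjugation fact is the technical heart; once it is established, the matrix $A(p)$ is simply the product of the conjugation matrix with the block-extended $\hat A(\hat p)$, and the properties $A(0) = I_M$ and $\det A(p) \neq 0$ near $0$ are immediate. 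Everything else is a routine application of the flow-differentiability machinery and the Lie-bracket identities already proved in the excerpt.
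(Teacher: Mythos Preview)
Your inductive framework has a real gap. When you drop $Y_1$ and apply the induction hypothesis to the orbit $w(\hat p)=G_2(t_2)\circ\cdots\circ G_M(t_M)(x_0)$, you claim an $(M-1)\times(M-1)$ matrix $\hat A(\hat p)$ relating $\partial w/\partial\hat p$ to $\{Y_2,\dots,Y_M\}$ along $w$. But the theorem requires the chosen fields to be a system of \emph{generators} of the $(f\cdot g\cdot o;x_0)$ Lie algebra, and $\{Y_2,\dots,Y_M\}$ need not generate $L(g_1,\dots,g_m)$ once $Y_1$ is removed. The gradient-system fields $\hat X_3,\dots,\hat X_M$ built from $\{Y_2,\dots,Y_M\}$ will in general need $Y_1$ in their expansion along the orbit, so the correct target frame is the full $\{Y_1,\dots,Y_M\}$ and the relating matrix is $M\times(M-1)$, not square; the induction hypothesis as you state it does not apply.

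More seriously, the step you correctly flag as the ``technical heart'' --- re-expanding the conjugate $H_1(-t_1;y)Y_k(G_1(-t_1;y))$ in the frame $\{Y_1,\dots,Y_M\}$ along the orbit with smooth coefficients --- is asserted but not supplied with a mechanism. The $(f\cdot g\cdot o;x_0)$ property guarantees such an expansion only for elements \emph{of} $L(g_1,\dots,g_m)$, whereas the conjugate is a $t_1$-parameterized family of fields not obviously lying in the Lie algebra for each $t_1$. The paper closes exactly this gap by a different, direct argument: set $N_1(t;y)=H_1(-t;y)\{Y_1,\dots,Y_M\}(G_1(-t;y))$ as an $n\times M$ matrix and differentiate in $t$ to obtain the linear matrix ODE $\dfrac{dN_1}{dt}=N_1 B_1$, where the $M\times M$ matrix $B_1$ comes from the $(f\cdot g\cdot o;x_0)$ relation $\mathrm{ad}\,Y_1\{Y_1,\dots,Y_M\}=\{Y_1,\dots,Y_M\}B_1$ along the orbit. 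Solving with $N_1(0)=\{Y_1,\dots,Y_M\}(y(p))$ gives $N_1(t_1)=\{Y_1,\dots,Y_M\}(y(p))\,Z_1(t_1)$ with $Z_1(0)=I_M$ nonsingular, and hence $X_2(t_1;y(p))=\{Y_1,\dots,Y_M\}(y(p))\,Z_1(t_1)e_2$. Iterating the same ODE device through the remaining flows produces the columns of $A(p)$ explicitly with $A(0)=I_M$; no induction on $M$ is needed, and the re-expansion you were missing is precisely what the ODE delivers.
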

\begin{proof}
Using the $(f\cdot g\cdot o;x_{0})$ property of Lie algebra
$L(g_{1},...,g_{M}$ we fix the following $(M\times M)$ smooth
matrices $B_{i}(p),\,p\in D_{M}$, such that
\begin{equation}\label{eq:2.172}
adY_{i}\{Y_{1},...,Y_{M}\}(y(p))=\{Y_{1},...,Y_{M}\}(y(p))\cdot
B_{i}(p),\,i\in\{1,...,M\}
\end{equation}
where
$\{Y_{1},...,Y_{M}\}\subseteq L(g_{1},...,g_{m})$ is a fixed system
of generators. Let $\{Z_1(t_{1},...,t_{M}):t\in[0,t_{1}]\}$ be the
matrix solution
\begin{equation}\label{eq:2.173}
\frac{dZ_{1}}{dt}=Z_{1}B_{1}(t_{1}-t,t_{2},...,t_{M}),Z_{1}(0)=I_{M}\,\hbox{(identity)}
\end{equation}
The standard Picard's method of approximations applied
in \eqref{eq:2.173} allows one to write the solution
$\{z,(t_{1},...,t_{M}):t\in[0,t_{1}]\}$ as a convergent Volterra
series, and by a direct computation we obtain
\begin{equation}\label{eq:2.174}
X_{2}(t_{1};y(p))=\{Y_{1},...,Y_{M}\}(y(p))z_{1}(t_{1},t_{2},...,t_{M})e_{2}
\end{equation}
where $X_{2}(t_{1};y)$ is defined in \eqref{eq:2.170} and
$\{e_{1},...,e_{M}\}\subset\mathbb{R}^{n}$ is the canonical basis.
More precisely, denote
\begin{equation}\label{eq:2.175}
N_{1}(t;y)=H_{1}(-t;y)\{Y_{1},...,Y_{M}\}(G_{1}(-t;y)) \hbox{and} \,X_{2}(t_{1};y)
\end{equation}
 can be written
\begin{equation}\label{eq:2.176}
X_{2}(t_{1};y)=N_{1}(t_{1};y)e_{2}
\end{equation} Then we noticed that $N_{1}(t;y(p)),\,t\in[0,t_{1}]$ satisfies
\begin{equation}\label{eq:2.177}
\frac{dN_{1}}{dt}=N_{1}B_{1}(t_{1}-t,t_{2},...,t_{M}),N_{1}(0)=\{Y_{1},...,Y_{M}\}(y(p))
\end{equation} where the matrix $B_{1}(p)$ is fixed in \eqref{eq:2.119}
and
satisfies
\begin{equation}\label{eq:2.178}
adY_{i}\{Y_{1},...,Y_{M}\}(G_{1}(-t;y))=\{Y_{1},...,Y_{M}\}(G_{1}(-t;y))\cdot
B_{1}(t_{1}-t,t_{2},...,t_{M})
\end{equation}
for $y=y(p)$. The same method of Approximation shows that a convergent
sequence $\{N_{1}^{k}(t):t\in[0,t_{1}]\}_{k\geq 0}$ can be
constructed such
that
\begin{equation}\label{eq:2.179}
\left\{
           \begin{array}{ll}
             N_{1}^{0}(t)=\{Y_{1},...,Y_{M}\}(y(p))=N_{1}(0)  \\
             N_{1}^{k+1}(t)=N_{1}(0)+,
\int\limits_{0}^{t}N_{1}^{k}(s)B_{1}(t_{1}-s,t_{2},...,t_{M})ds,k=0,1,2,...
           \end{array}
         \right.
\end{equation}
It is easily seen that
\begin{equation}\label{eq:2.180}
N_{1}^{k+1}(t)=N_{1}(o)Z_{1}^{k+1}(t;t_{1},...,t_{M}),\,t\in[0,t_{1}]
\end{equation}
where $\{Z_{1}^{k}(t;t_{1},...,t_{M}):t\in[0,t_{1}]\}_{k\geq 0}$ defines a solution in \eqref{eq:2.173} and
\begin{equation}\label{eq:2.181}
\mathop{lim}\limits_{k\rightarrow\infty}Z_{1}^{k}(t;t_{1},...,t_{M})=Z_{1}(t;t_{1},...,t_{M})
\end{equation}
uniformly in$t\in[0,t_{1}]$. Therefore using \eqref{eq:2.180} and \eqref{eq:2.181} we get that
\begin{equation} \label{eq:2.182}
N_{1}(t)=\mathop{lim}\limits_{k\rightarrow\infty}N_{1}^{k}(t)\,\,,t\in[0,t_{1}]
\end{equation}
is the solution in \eqref{eq:2.177} fulfilling
\begin{equation}\label{eq:2.183}
N_{1}(t;y)=\{Y_{1},...,Y_{M}\}(y)Z_{1}(t;t_{1},...,t_{M}),\,\,t\in[0,t_{1}]
\end{equation}
if $y=y(p)$, and \eqref{eq:2.174} holds. The next vector field $X_{3}(t_{1},t_{2};y)$ in \eqref{eq:2.117}
can be represented similarly and rewriting it as
\begin{equation}\label{eq:2.184}
X_{3}(t_{1},t_{2};y)=H_{1}(-t_{1};y)\hat{X_{3}}(t_{2};y_{2}),\,\hbox{for} \,y_{1}=y(p)=y
\end{equation}
$$y_{2}=G_{1}(-t_{1};y_{1})=G_{2}(t_{2})\circ...\circ G_{M}(t_{M})$$
and
\begin{equation}\label{eq:2.185}
\hat{X_{3}}(t_{2};y_{2})=H_{2}(-t_{2};y_{2})Y_{3}(G_{2}(-t_{2},y_{2}))
\end{equation}
can be represented using the same algorithm as in \eqref{eq:2.174} and we get
\begin{equation}\label{eq:2.186}
\hat{X_{3}}(t_{2};y_{2})=\{Y_{1},...,Y_{M}\}(y_{2})Z_{2}(t_{2};t_{2},...,t_{M})e_{3}
\end{equation}
Here the nonsingular and smooth $M\times M \,\{Z_{2}(t;t_{2},...,t_{M}:t\in[0,t_{2}]\}$ is the unique solution of the following linear matrix system
\begin{equation}\label{eq:2.187}
\frac{dZ_{2}}{dt}=Z_{2}B_{2}(t_{2}-t,t_{3},...,t_{M}),\,Z_{2}(0)=I_{M}
\end{equation}
where $B_{2}(t_{2},...,t_{M})\,p\in D_{M}$ is fixed such that
\begin{equation}\label{eq:2.188}
adY_{2}\{Y_{1},...,Y_{M}\}(y_{2})=\{Y_{1},...,Y_{M}\}(y_{2})B_{2}(t_{2},...,t_{M}),\,p\in D_{M}
\end{equation}
Denote
\begin{equation}\label{eq:2.189}
N_{2}(t,y_{2})=H_{(-t,y_{2})}\{Y_{1},...,Y_{M}\}G_{2}(-t;y_{2}),\,t\in[0,t_{2}]
\end{equation}
and we obtain
\begin{equation}\label{eq:2.190}
\hat{x}_{3}(t_{2};y_{2})=N_{2}(t_{2};y_{2})e_{3}=\{Y_{1},...,Y_{M}\}(y_{2})Z_{2}(t_{2};t_{2},...,t_{M})e_{3}
\end{equation}
That is to say, $\{N_{2}(t;y_{2}):t\in[0,t_{2}]\}$ as a solution of
\begin{equation}\label{eq:2.191}
\frac{dN_{2}}{dt}=N_{2}B_{2}(t_{2}-t,t_{3},...,t_{M}),\, N_{2}(0)=\{Y_{1},...,Y_{M}\}(y_{2})
\end{equation}
can be represented using a standard iterative procedure and we get
\begin{equation}\label{eq:2.192}
N_{2}(t;y_{2})=\{Y_{1},...,Y_{M}\}(y_{2})Z_{2}(t;t_{2},...,t_{M})\,\, ,t\in[0,t_{2}]
\end{equation}
where the matrix $Z_{2}$ is the solution in\eqref{eq:2.187}. Now we use \eqref{eq:2.190} into \eqref{eq:2.184} and taking into account that
$y_{2}=G_{1}(-t_{1},y_{1})$ we rewrite
\begin{equation}\label{eq:2.193}
H_{1}(-t_{1};y_{1})\{Y_{1},...,Y_{M}\}(G_{1}(-t_{1},y_{1}))=N_{1}(t_{1};y_{1})\,\hbox{for}\,y_{1}=y(p)
\end{equation}
where $\{N_{1}(t;y(p)):t\in[0,t_{1}]\}$ fulfills\eqref{eq:2.183}. Therefore the vector field $X_{3}(t_{1},t_{2};y)$ in \eqref{eq:2.170} satisfies
\begin{equation}\label{eq:2.194}
X_{3}(t_{1},t_{2};y)=\{Y_{1},...,Y_{M}\}(y)Z(t_{1},t_{1},...,t_{M})\times Z_{2}(t_{2};t_{2},...,t_{M})e_{3}
\end{equation}
for $y=y(p)$. An induction argument will complete the proof and each $X_{j}(p_{j};y)$ in \eqref{eq:2.170} gets the corresponding representation
\begin{equation}\label{eq:2.195}
X_{j+1}(p_{j+1};y)=\{Y_{1},...,Y_{M}\}(y)Z(t_{1},t_{1},...,t_{M})\times ...\times Z_{j}(t_{j};t_{j},...,t_{M})e_{j+1}
\end{equation}
if $y=y(p)$ and $\{Z_{j}(t,t_{j},...,t_{M}:t\in[0,t_{j}])\}\,j\in\{2,...,M-1\}$ is the solution of the linear matrix equation
\begin{equation}\label{eq:2.196}
\frac{dZ_{j}}{dt}=Z_{j}B_{j}(t_{j}-t,t_{j+1},...,t_{M}),\,Z_{j}(0)=I_{M}
\end{equation}
Here the smooth matrix $B_{j}(t_{j},....,t_{M})\,\, ,p\in D_{M}$,
is fixed such that
\begin{equation}\label{eq:2.197}
adY_{j}\{Y_{1},...,Y_{M}\}(y_{j})=\{Y_{1},...,Y_{M}\}(y_{j})B_{j}(t_{j},...,t_{M})
\end{equation}
where $y_{j}=G_{j}(t_{j})\circ...\circ G_{M}(t_{M})(x_{0})\,\,,j=1,...,M$. Now, for simplicity, denote the $M\times M$ smooth matrix
\begin{equation}\label{eq:2.198}
Z_{j}(p)=Z_{1}(t_{1};t_{1},...,t_{M})\times...\times  Z_{j}(t_{j};t_{j},...,t_{M}),\,p=(t_{1},...,t_{M})\in D_{M}
\end{equation}
$$j\in\{1,...,M-1\}\,and\,X_{j+1}(p_{j+1};y)$$
in \eqref{eq:2.195} is written as
\begin{equation}\label{eq:2.199}
X_{j+1}(p_{j+1};y)=\{Y_{1},...,Y_{M}\}(y)Z_{j}(p)e_{j+1}\, ,if\,y=y(p),\,p\in\, D_{M}
\end{equation}
In addition, define the smooth $(M\times M)$
\begin{equation}\label{eq:2.200}
A(p)=(e_{1},Z_{1}(p)e_{2},...,Z_{M-1}(p)e_{M}),\, p\in D_{M}
\end{equation}
which fulfills $A(0)=I_{M}$ and represent the gradient system \eqref{eq:2.167} as follows
\begin{equation}\label{eq:2.201}
\{Y_{1}(.),X_{2}(t_{1},.),...,X_{M}(t_{1},...,t_{M-1},.)\}(y)=\{Y_{1},...,Y_{M}\}(y)A(p)
\end{equation} if $y=y(p)$, and the smooth $(M\times M)$ matrix $\,A(p)$ is defined in \eqref{eq:2.200}. The proof is complete.
 \end{proof}
 \section[1st Order Evolution System of PDE and C-K Theorem]{First Order Evolution System of PDE and Cauchy-Kowalevska Theorem\index{Cauchy!Kowalevska theorem}}
 We consider an evolution system of PDE of the following form
\begin{equation}\label{eq:1}
\left\{
   \begin{array}{ll}
    \partial_{t}u_{j}= & \mathop{\sum}\limits_{i=1}^{n}\mathop{\sum}\limits_{k=1}^{N}a_{jk}^{i}(t,x,u)\partial_{i}u_{k}+b_{j}(t,x,u),\,j=1,...,N\\
   u(0,x) =& u_{0}(x)
   \end{array}
 \right.
\end{equation}
where $u=(u_{1},...,u_{N})$ is an unknown vector function depending on the time variable $t\in\mathbb{R}$ and $x\in\mathbb{R}^{n}$. The system \eqref{eq:1}
will be called first order evolution system of $PDE$ and without losing generality we may assume $u_{0}=0$
(see the transformation $\widetilde{u}=u-u_{0}$) and the coefficients $a_{ij}^{k},\,b_{j}$ not depend on the variable  $t$(see $u_{N+1}=t,\,\partial_{t}u_{N+1}=1$). The notations used in \eqref{eq:1} are the usual ones $\partial_{t}u=\frac{\partial u}{\partial t}\,,\partial_{i}u=\frac{\partial u}{\partial x_{i}},\,i\in\{1,...,n\}$. Introducing a new
 variable $z=(x,u)\in B(0,\rho)\subseteq\mathbb{R}^{n+N}$, we rewrite the system \eqref{eq:1} as follows
\begin{equation}\label{eq:2}
\left\{
   \begin{array}{ll}
    \partial_{t}u_{j}= & \mathop{\sum}\limits_{i=1}^{n}\mathop{\sum}\limits_{k=1}^{N}a_{jk}^{i}(z)\partial_{i}u_{k}+b_{j}(z),\,j=1,...,N\\
   u(0) =& 0
   \end{array}
 \right.
\end{equation}
Everywhere in this section we assume that the coefficients
$a_{jk}^{i},b_{j}\in\mathcal{C}^{w}(B(0,\rho)\subseteq
\mathbb{R}^{n+N})$ are analytic functions, i.e the corresponding
Taylor's
 series at $z=0$ is convergent in the ball $B(0,\rho)\subseteq \mathbb{R}^{n+N}$ (Brook Taylor 1685-1731)\\
\subsection{Cauchy-Kowalevska Theorem\index{Cauchy!Kowalevska theorem}}
\begin{theorem}\label{th:2.6}
Assume that $$a_{jk}^{i},b_{j}\in \mathcal{C}^{w}(B(o,\rho))\subseteq
\mathbb{R}^{n+N}$$ for any $$i\in\{1,...,n\}, k,j\in\{1,...,n+N\}$$
Then the evolution system \eqref{eq:2} has an analytical solution
$u\in\mathcal{C}^{w}(B(o,a))\subseteq \mathbb{R}^{n+1}$ and it is
unique with respect to analytic functions.
\end{theorem}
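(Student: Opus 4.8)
The plan is to prove the Cauchy-Kowalevska theorem by the classical method of majorants, reducing the existence question to a purely algebraic computation of Taylor coefficients and then controlling their growth by comparison with an explicitly solvable scalar problem. First I would observe that a formal power series solution $u(t,x)=\sum_{\alpha,m} c_{\alpha,m}\, x^{\alpha} t^{m}$ is uniquely determined by the equation \eqref{eq:2} together with the initial condition $u(0)=0$: differentiating the equation repeatedly in $t$ and evaluating at $t=0$ expresses every coefficient $c_{\alpha,m}$ as a universal polynomial, with \emph{nonnegative} integer coefficients, in the Taylor coefficients of the analytic data $a^{i}_{jk}$ and $b_{j}$ and in the previously computed coefficients $c_{\beta,l}$ with $l<m$. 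This both establishes uniqueness in the class of analytic functions and sets up the majorant argument, since the positivity of those universal polynomials is exactly what makes comparison work.

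Next I would carry out the majorant step. Because $a^{i}_{jk},b_{j}\in\mathcal{C}^{w}(B(0,\rho))$, their Taylor coefficients at $0$ are dominated, for any $r<\rho$, by those of a common simple majorant of the form $\dfrac{Cr}{\,r-(z_{1}+\dots+z_{n+N})\,}$ for suitable $C>0$. I then introduce the scalar auxiliary Cauchy problem obtained by replacing every coefficient in \eqref{eq:2} by this majorant and every unknown $u_{j}$ by a single scalar unknown $U$; by the reduction above its unique formal solution $U(t,x)$ has Taylor coefficients that dominate, term by term, those of the original formal solution $u$. The auxiliary problem is invariant under permutations of the $x_{i}$ and can be solved in closed form by looking for $U$ depending only on $t$ and $s=x_{1}+\dots+x_{n}$; this collapses it to a first-order scalar PDE in two variables, solvable by the method of characteristics (or by reducing to an implicit algebraic relation, as in the examples of \S 2.3), and one checks directly that the resulting $U(t,s)$ is analytic in a neighborhood $B(0,a)$ of the origin. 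Hence $U$ has a positive radius of convergence, so by domination the series for $u$ converges on some $B(0,a)$, and its sum is the desired analytic solution; substituting the convergent series back into \eqref{eq:2} shows it genuinely satisfies the system.

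The main obstacle, and the step deserving the most care, is the bookkeeping in the first step: one must verify that the recursion for the coefficients $c_{\alpha,m}$ really does have the claimed triangular structure (each coefficient depending only on lower-order-in-$t$ data) and that all the combinatorial constants entering it are nonnegative, since the entire majorant comparison rests on that sign. A subtlety worth flagging is that the system \eqref{eq:2} is in the special ``evolutionary'' form $\partial_t u_j = \dots$ with no $\partial_t$ on the right, which is precisely what guarantees the triangularity; the earlier normalizations in the text (absorbing $u_0$, eliminating explicit $t$-dependence via $u_{N+1}=t$) ensure we may assume this form without loss of generality. Once the positivity and triangularity are in hand, the convergence and the verification that the limit solves \eqref{eq:2} are routine, and uniqueness among analytic functions is immediate from the uniqueness of the formal coefficients.
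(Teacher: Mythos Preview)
Your proposal is correct and follows essentially the same classical majorant approach as the paper: uniquely determine the Taylor coefficients recursively, observe they are polynomials with nonnegative coefficients in the data's Taylor coefficients, majorize the data by a common geometric-type series $M[1-(z_1+\dots+z_{N+1})/\rho]^{-1}$, and reduce the majorant system to a single scalar equation by symmetry. The only notable difference is that the paper carries the scalar reduction through to an explicit closed-form solution $W(t,x)=\frac{1}{2N}\big[\rho-x-\sqrt{(\rho-x)^2-4MN\rho t}\,\big]$ (valid for $|x|<\sqrt{\rho}$ and $t<\rho/(16MN)$), whereas you appeal more abstractly to solvability by characteristics; writing down this explicit formula makes the analyticity and the size of the convergence domain immediately transparent, and you may want to include it.
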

\begin{proof}
Without restricting generality we assume $n=1$ and write $a_{jk}=a_{jk}^{1}$. The main argument of the proof uses the property that the coefficients $c_{lk}^{i}$
from the associated Taylor's series
\begin{equation}\label{eq:3}
u_{i}(t,x)=\mathop{\sum}\limits_{l,k=0}^{\infty}c_{lk}^{i}t^{l}x^{k}=\mathop{\sum}\limits_{l.k=0}^{\infty}\frac{t^lx^k}{l!k!}[\frac{\partial^{k+l}}{\partial t^{l}\partial x^{k}}u_{i}(t,x)]_{t=0,x=0}
\end{equation}
are uniquely determined by the evolution system\eqref{eq:2} and its analytic coefficients. Then using an upper bound for the coefficients $c_{lk}^{i}$, we prove that the series \eqref{eq:3} is convergent. In this respect we notice that
\begin{equation}\label{eq:4}
[\frac{\partial^{m}}{\partial_{x^{m}}}u_{i}(t,x)]_{t=0}=0\,\hbox{for any}\,m\geq 0
\end{equation}
(see $u_{o}(x)=0$) using \eqref{eq:2} we find the partial derivatives
\begin{equation}\label{eq:5}
\frac{\partial u_{i}}{\partial t}(o,x),\frac {\partial^{2}u_{i}}{\partial t\partial x}(o,x),\,\frac {\partial ^{2}u_{i}}{\partial t^2}(o,x),\,\frac {\partial^{3}u_{i}}{\partial t^{2}\partial x}(o,x),\,\frac {\partial ^{3}u_{i}}{\partial x^2}(o,x)
\end{equation}and so
 on which are entering in \eqref{eq:3}, for each $i\in\{i,...N\}$. If
\begin{equation}\label{eq:6}
a_{jk}(z)=\mathop{\sum}\limits_{\mid\alpha\mid=0}^{\infty}g_{\alpha}^{jk}z^{\alpha} \,\hbox{and}  \,bj\mid z\mid=\mathop{\sum}\limits_{\mid\alpha\mid=0}^{\infty}h_{x}^{j}z^{\alpha}
\end{equation}
for $\mid z\mid
\leq\rho\,\,,\alpha=(\alpha_{i},..\alpha_{N+1}),\mid\alpha\mid=\alpha_{i}+....\alpha_{N+1},$
and $\alpha_{i}\geq 0,\alpha_{i}$ integer, the
\begin{equation}\label{eq:7}
c_{lk}^{i}=P_{lk}^{i}[(g_{\alpha}^{jm})_{\alpha,j,m},(h_{\alpha}^{j})_{\alpha,j}]
\end{equation}
when $P_{lk}^{i}$ is a polynomial with positive coefficient($\geq 0$). It uses the coefficients $e_{lk}^{i}$(\eqref{eq:3}) and noticing that by derivation of a product we get
 only positive coefficient. Now we are constructing the upper bound coefficient $C_{lk}^{i}$ for $C_{lk}^{i}$ given in \eqref{eq:7} such that
\begin{equation}\label{eq:8}
\nu_{i}(t,x)=\mathop{\sum}\limits_{l,k=0}^{\infty}C_{lk}^{i}t^{l}x^{k},\,i=1,2,...,N
\end{equation}
is a local solution of the Cauchy problem.
\begin{equation}\label{eq:9}
\partial_{t}\nu{j}=\mathop{\sum}\limits_{k=1}{N}A_{jk}(z)\frac{\partial}{\partial
x}\nu(k)+B_{j}(z),\,\nu_{j}(0,x)=0,\,j\in\{1,...,N\}
\end{equation}
Here $A_{jk}$ and $B_{j}$ must be determined such that
\begin{equation}\label{eq:10}
A_{jk}(z)=\mathop{\sum}\limits_{\mid\alpha\mid=0}^{\infty}G_{\alpha}^{jk}z^{\alpha},\,B_{j}(z)=\mathop{\sum}\limits_{\mid\alpha\mid=0}^{\infty}H_{\alpha}^{j}z^{\alpha}
\end{equation}
where $\mid g_{\alpha}^{jk}\mid\leq G_{\alpha}^{jk},\,\mid h_{\alpha}^{j}\mid\leq H_{\alpha}^{j}$\\
Using \eqref{eq:10}, we get
\begin{equation}\label{eq:11}
C_{lk}^{i}=P_{lk}^{i}[(G_{\alpha}^{jm})_{\alpha,j,m},\mid
H_{\alpha}^{j}\mid_{\alpha,j}]\geq\mid P_{lk}^{i}[\mid
g_{\alpha}^{jm}\mid_{\alpha,j,m},(\mid
h_{\alpha}^{j})_{\alpha,j}]\mid\geq\mid c_{lk}^{i}\mid
\end{equation}and the upper bound coefficients $\{C_{lk}^{i}\}$ are found provided $A_{jk}$ and $B_{j}$ are determined such that the system\eqref{eq:9} is satisfied by the analytic
solution ${v(t,x)}$ given in \eqref{eq:10}. In this respect, we define the constants $G_{\alpha}^{jk}$ and $H_{\alpha}^{jk} $and$H_{\alpha}^{j}$ using  the following estimates
\begin{equation}\label{eq:12}
M_{1}=\max \mid a_{jk}(z)\mid,M=\max\mid
b_{j}(z)\mid j,k=1,...N\mid Z\mid\leq \rho
\end{equation}
we get
\begin{equation}\label{eq:13}
\left\{
       \begin{array}{ll}
        \mid g_{\alpha}^{jk}\mid\leq \frac{M_{1}}{\rho^{\mid\alpha\mid}}\leq \frac{M_{1}}{\rho^{\mid\alpha\mid}}\frac{\mid\alpha\mid!}{\alpha!}= & G_{\alpha}^{jk} \\
        \mid h_{\alpha}^{j}\mid\leq \frac{M_{2}}{\rho^{\mid\alpha\mid}}\leq \frac{M_{2}}{\rho^{\mid\alpha\mid}}\frac{\mid\alpha\mid!}{\alpha!}= & H_{\alpha}^{j}
       \end{array}
     \right.
\end{equation}
\begin{eqnarray}\label{eq:14}
  A_{jk}(z) &=& \mathop{\sum}\limits_{\mid\alpha\mid=0}^{\infty}M\frac{\mid\alpha\mid!}{\alpha!}(\frac{z}{\rho})^{\alpha}
  = M\mathop{\sum}\limits_{\mid\alpha\mid=0}^{\infty}\frac{1}{\rho^{\mid\alpha\mid}}\frac{\mid\alpha\mid!}{\alpha!}z^{\alpha} \nonumber\\
   &=& M[1-\frac{z_{1}+...+z_{N+1}}{\rho}]^{-1},\,\hbox{for} \mid z_{1}\mid+...+\mid z_{N+1}\mid <\rho
\end{eqnarray}
\begin{equation}\label{eq:15}
B_{j}(z)=\mathop{\sum}\limits_{\mid\alpha\mid=0}^{\infty}H_{\alpha}^{jk}z_{\alpha}=M[1-\frac{z_{1}+...+z_{N+1}}{\rho}]^{-1},
\,\hbox{for}\,\mid z_{1}\mid+...+\mid z_{N+1}\mid <\rho
\end{equation}
Here we have used
\begin{equation}\label{eq:16}
\mathop{\sum}\limits_{\mid\alpha\mid=k}\frac{k!}{\alpha!}Z^{\alpha}=(\mathop{\sum}\limits_{i=1}^{N+1}z_{i})^{k},\,\alpha!=(\alpha_{1}!)...(\alpha_{N+1}!),
z^{\alpha}=z_{1}^{\alpha_{1}}...Z_{N+1}^{\alpha_{N+1}}
\end{equation}
Notice that the coefficients $A_{jk},B_{j}$ do not depend on $(j,k)$ and each component $\nu_{i}(t,x)$ can be taken equals to $w(t,x)\,,i\in\{1,...,N\}$, where $\{w(t,x)\}$ satisfies the following scalar equation
\begin{equation}\label{eq:17}
\partial_{t}w=\frac{M\rho}{\rho-x-Nw}(1+N\partial_{x}w),\,w(0,x)=0\,
\hbox{for} \,\mid x\mid+N\mid w\mid<\rho
\end{equation}
by a direct inspection we notice that for $x\in\mathbb{R}^{n}$ we need to replace $x\in\mathbb{R}$ in \eqref{eq:17} by $x\rightarrow y=\mathop{\sum}\limits_{1}^{n}x_{i}$ where $x=(x_{1},...x_{n})$ and $w(t,x)\rightarrow w(t,\mathop{\sum}\limits_{1}^{n}x_{i})$
The solution of \eqref{eq:17} can be represented by
\begin{equation}\label{eq:18}
W(t,x)=\frac{1}{2N}[\rho-x-\sqrt{(\rho-x)^{2}-4MN\rho t}],\,if\,\mid x\mid<\sqrt{\rho}
\end{equation}
and
$$t<\frac{\rho}{16MN}=T(M,\rho)$$
In the case $x\in\mathbb{R}^{n}$ we use $y=(\mathop{\sum}\limits_{1}^{n}x_{i})$ instead of $x\in\mathbb{R}$ and
$w(t,y)$ satisfy both the equation \eqref{eq:17} and the representation formula\eqref{eq:18}.
\end{proof}
\subsection[1st Order Evolution System of Hyperbolic \& Elliptic Equations]{1st Order Evolution System of Hyperbolic \& Elliptic Equations\index{Elliptic equation}}
In the following we shall rewrite some hyperbolic and elliptic equation appearing in Mathematical Physics as an evolution system of first order equation. It suggest that assuming only analytic cofficients we may and do solve these equations applying the Cauchy-Kowalevska theorem (C-K)\index{Cauchy!Kowalevska theorem}\\
\textbf{$(E_{1})$. Klein-Gordon equations(D.B Klein 1894-1977, W.Gordon 1893-1939)}\\
They are describing the evolution of a wave function $y$ associated with a \textbf{vanishinq} \lq\lq spin\rq\rq particle and in differential form is expressed  using standard notations, as follows
\begin{equation}\label{.1}
\left\{
      \begin{array}{ll}
        \partial_{t}^{2}y =\Delta y-my+f(y,\partial_{t}y,\partial_{x}y),\,m>0,t\geq 0,y\in\mathbb{R} \\
        y(0,x)=y_{0}(x) =\partial_{t}y(0,x)=y_{1}(x),\,x\in\mathbb{R}^{n}
      \end{array}
    \right.
\end{equation}
where $\partial_{t}y=\frac{\partial y}{\partial t},\partial_{x}y=(\partial_{1}y,...,\partial_{n}y),\,\partial_{i}y=\frac{\partial y}{\partial x_{i}}$ and\\
the laplacian $\Delta y=\mathop{\sum}\limits_{1}^{n}\partial_{i}^{2}y$. Denote
$$u=(\partial_{x}y,\partial_{t}y,y)\in\mathbb{R}^{n+2},\,u^{0}(x)=(\partial_{x}y_0(x),y_{1}(x),y_{0}(x))$$
and\,$F(u)=f(y,\partial_{t}y,\partial_{x}y)$. Then \eqref{.1} is written as an evolution system
\begin{equation}\label{.2}
\left(
      \begin{array}{ll}
        \partial_{t}u_{j}= & \partial_{j}u_{n+1},\,j\in\{1,...,n\}\\
        \partial_{t}u_{n+1} & \mathop{\sum}\limits_{i=1}^{n}\partial_{i}u_{i}+F(u) \\
         \partial_{t}u_{n+2}=& u_{n+1} \\
        u(0,x)= & u^{0}(x)
      \end{array}
\right)
\end{equation}
\textbf{$(E_{2})$. Maxwell equations(J.C.Maxwell 1831-1879)}\index{Maxwell equation}\\
Denote the electric field $E(t,x)\in\mathbb{R}^{3},x\in\mathbb{R}^{3},t\in\mathbb{R}$, the magnetic field $H(t,x)\in\mathbb{R}^{3},x\in\mathbb{R}^{3},t\in\mathbb{R}$
and the Maxwell equations are the following\\
\begin{equation}\label{1}
\left(
      \begin{array}{ll}
        \frac{\partial \epsilon(E)}{\partial E} \partial_{t}E-\Delta\times H= & 0,E(0,x)=E^{0}(xs) \\
        \frac{ \partial \mu(H)}{ \partial H} \partial_{t} H+\Delta\times E=& 0,H(0,x)=H^{0}(x)
      \end{array}
\right)
\end{equation}
where $D=\epsilon(E)$ and $B=\mu(H)$ are some nonlinear mappings satisfying
\begin{equation}\label{2}
\epsilon(E),\mu(H):\mathbb{R}^{3}\rightarrow\mathbb{R}
\end{equation}
are analytic functions with uniformly positive definite matrices $ \frac{\partial \epsilon(E)}{\partial E} , \frac{ \partial \mu(H)}{ \partial H} $ for $E,H$ in bounded sets. In addition, we assume
\begin{eqnarray}
  \epsilon(E) &=& \epsilon_{0}E+O(\mid E\mid^{3}),\,\mu(H)\nonumber \\
  &=& \mu_{0}H+O(\mid H\mid^{3})\,with\, \epsilon_{0}>0,\mu_{0}>0
\end{eqnarray}
\begin{equation}\label{3}
\epsilon(E)=\epsilon_{0}E+O(\mid E\mid^{3}),\,\mu(H)=\mu_{0}H+O(\mid H\mid^{3})\,\hbox{with}\, \epsilon_{0}>0,\mu_{0}>0
\end{equation}
\begin{equation}\label{4}
\div D(t,x)=0,\div B(t,x)=0,D(0,x)=D^{0}(x),B(0,x)=B^{0}(x)
\end{equation}
where $D(t,x)=\epsilon(E(t,x))$ and $B(t,x)=\mu(H(t,x))$. The vectorial products $\Delta\times H$ and $\Delta\times E$ are given formally by the corresponding determinant
\begin{equation}\label{5}
\Delta\times E=det\left(
                     \begin{array}{ccc}
                       i & j & k \\
                       \partial_{1} & \partial_{2} & \partial_{3} \\
                       H_{1} & H_{2} & H_{3}\\
                     \end{array}
                   \right)
=i(\partial_{2}H_{3}-\partial_{3}H_{2})+j(\partial_{3}H_{1}-\partial_{1}H_{3})+k(\partial_{1}H_{2}-\partial_{2}H_{1})
\end{equation}
\begin{equation}\label{6}
\Delta\times E=det\left(
                     \begin{array}{ccc}
                       i & j & k \\
                       \partial_{1} & \partial_{2} & \partial_{3} \\
                       E_{1} & E_{2} & E_{3}\\
                     \end{array}
                   \right)
=i(\partial_{2}E_{3}-\partial_{3}E_{2})+j(\partial_{3}E_{1}-\partial_{1}E_{3})+k(\partial_{1}E_{2}-\partial_{2}E_{1})
\end{equation}
where $(i,j,k)\in\mathbb{R}^{3}$ is the canonical basis. Denote $u=(E,H)\in\mathbb{R}^{6}$ and the system \eqref{1} for which the assumptions \eqref{2}, \eqref{3} and  \eqref{4} are valid will be written as an evolution system
\begin{equation}\label{7}
\partial_{t}u=\mathop{\sum}\limits_{j=1}^{3}[A^{0}(u)]^{-1}A^{j}\partial_{j}u,\,u(0,x)=u^{0}(x)=(E^{0}(x),H^{0}(x))
\end{equation}
where $(6\times 6)$ matrices $A^{i}$ are given by
$$A^{0}(u)=\left(
            \begin{array}{cc}
              \frac{\partial\epsilon(E)}{\partial E} & O_{3} \\
              O_{3} & \frac{\partial\mu(H)}{\partial H} \\
            \end{array}
          \right)
,\,A^{1}=\left(
           \begin{array}{cccccc}
              &  &  & 0 & 0 & 0 \\
              & O_3 &  & 0 & 0 & -1 \\
              &  &  & 0 & 1 & 0 \\
             0 & 0 & 0 &  &  &  \\
             0 & 0 & 1 &  & O_3 &  \\
             0 & -1 & 0 &  & &  \\
           \end{array}
         \right)
$$
$$A^2=\left(
           \begin{array}{cccccc}
              &  &  & 0 & 0 & 1 \\
              & O_3 &  & 0 & 0 & -0 \\
              &  &  & -1 & 0 & 0 \\
             0 & 0 & -1 &  &  &  \\
             0 & 0 & 1 &  & O_3 &  \\
             1 & 0 & 0 &  & &  \\
           \end{array}
         \right),\,A^3=\left(
           \begin{array}{cccccc}
              &  &  & 0 & -1 & 0 \\
              & O_3 &  & 1 & 0 & 0 \\
              &  &  & 0 & 0 & 0 \\
             0 & 1 & 0 &  &  &  \\
             -1 & 0 & 1 &  & O_3 &  \\
             0 & 0 & 0 &  & &  \\
           \end{array}
         \right)$$
where $O_{3}$ is $(3\times 3)$ zero matrix, $A^{0}(u)$ is
strictly positive definite matrix and each $A^{i}$  is a symmetric
matrix. Recalling that Maxwell equation\index{Maxwell equation}
means the evolution system with analytic coefficients \eqref{7} and, in
addition the constraints \eqref{4}, we notice that \eqref{4} are satisfied, provide
\begin{equation}\label{8}
\div D^{0}(x)\,=\,0\,\, \hbox{and}\, \,\div B^0(x)=0
\end{equation}
In this respect, we rewrite the original system \eqref{1} as
\begin{equation}\label{9}
\partial_{t} D\Delta\times H=0,\ \partial_{t}
B\,+\,\Delta\times E0,\,D(0,x)\,=\,D^{0}(x),\,\,B(0,x)\,=B^{0}(x)
\end{equation}
and by a direct computation we get
\begin{equation}\label{10}
\left\{
       \begin{array}{ll}
        \partial_{t}[\div D(t,x)]=\div[\partial_{t} D](t,x)=\div(\Delta\times
H)(t,x)=0 \\
       \partial_{t}[\div B(t,x)]= div[\partial_{t}B](t,x)= \div(\Delta\times
E)(t,x)
       \end{array}
     \right.
\end{equation}
which show that \eqref{8} implies the constraints \eqref{4}
\section{$(E_3)$ Plate Equations}\index{Plate equation}
(they cannot be solved by (C-K) theorem). They are described by the
following equation
\begin{equation}\label{..1}
\partial_{t}^{2}y+\Delta ^{2}y=f(\partial_{t}y,\partial_{x} ^{2}
y)+\mathop{\sum}\limits_{i=1}^{n} b_{i}(\partial_{t} y,\partial_{x}
^{2} y)\partial_{i}(\partial_{t} y),\,\,x\in\mathbb{R}^{n}
\end{equation}
where $f$ and $b_i$ are analytic functions, and
$y(t,x)\in \mathbb{R}$ satisfies Cauchy conditions.
\begin{equation}\label{..2}
y(0,x)=y_{0}(x),\, \partial_{t }y(0,x)\,=\,y_{1}(x)
\end{equation}
Here \lq\lq$\Delta$\rq\rq is the standard laplacian operator, \lq\lq$\partial_{x}$\rq\rq is the gradient acting as linear mappings for which \lq\lq$\Delta^{2}$\rq\rq
and \lq\lq$\partial_{x}^{2}$\rq\rq have the usual meaning.
Denote
$$u=(\partial_{x}^{2}y,\partial_{x}(\partial_{t}y),\,\partial_{t}y)=(u_{1},\ldots,u_{n^{2}},u_{n^{2}+1},\ldots,u_{n^{2}+n},u_{n^{2}+n+1})$$
$$=(y_{11},y_{12},\ldots,y_{n1},\ldots,y_{1n},y_{2n},\ldots,y_{nn},u_{n^{2}+1},\ldots,u_{n^{2}+n},u_{n^{2}+n+1})\in
R^{n^{2}+n+1}$$

as the unknown vector function, where $(n\times n)$ matrix
$\partial_{x}^{2}y$ is denoted as a rowvector
$(y_{11},\ldots,y_{nn})\in R^{n^{2}}$. By a direct inspection we see
that
\begin{equation}\label{..3}
\left\{
        \begin{array}{ll}
        \partial_{t}y_{ij}= \partial_{i}\partial_{j}
u_{n}{^2+n+1}\,=   \partial_{i}u_{n^{2}+j},\,\,i,j\in\{1,\ldots,n\} \\
       \partial_{t}u_{n^2+n+1}\,=   -\mathop{\sum}\limits_{i=1}^{n}\triangle(y_{ii})+F(u) \\
           \partial_{t}\partial_{t}u_{n^2+j}=\partial_{j}(\partial_{t}u_{n^{2}+n+1})= -\mathop{\sum}\limits_{i=1}^{n}\partial_{j}\triangle(y_{ii})+\mathop{\sum}\limits_{k=1}^{n^{2}+n+1}\frac{\partial}{\partial
u_k}F(u)\partial_{j} u_{k} \\
         u(0,x) =(\partial_{x}^{2}y_{0}(x),\partial_{x},\,y_{1}(x),y_{1}(x))=u^{0}(x)
        \end{array}
      \right.
\end{equation}
where
$F(u)\,=\,f(u_{n^{2}+n+1},\partial_{x}^{2}y)+\mathop{\sum}\limits_{i=1}^{n}b_{i}(u_{n^{2}+n+1},\partial_{x}^{2}y)u_{n^{2}+i}.$
As far as the system \eqref{..3} contains laplacian \lq\lq$\Delta$\rq\rq in the
right hand side it cannot be asimilated with a first order evolution
system. Actually, it is well known that a parabolic
equation\index{Parabolic equation} has no rewriting as a first order
evolution system and show that the simplest plate
equations\index{Plate equation}.
\begin{equation}\label{..4}
\partial_{t}^{2}y+\Delta^{2}y\,=\,0\,\,\,y(0,x)\,=\,y_{0}(x),\,\,\partial_{t}y(0,x\,=\,y_{1}(x))
\end{equation} are equivalent with a parabolic type
equation (Schr\"{o}dinger)
\begin{equation}\label{..5}
\left\{
      \begin{array}{ll}
       \partial_{t}w =i\Delta
w (or\,-i\partial_{t}w(t,x)= \Delta_{x}w(t,x))\\
       w(o,x)=y_{1}(x)+i\Delta y_{0}(x)= w^0(x)
      \end{array}
    \right.
\end{equation}
\begin{remark}\label{re:2.22}Let $\{y(t,x)\}$ be the analytic solution of the plate equation \index{Plate equation}
$(4)$. Define the analytic function
$w(t,x)\,=\,\partial_{t}y(t,x)+i\Delta_{x}y(t,x) \in\mathbb{C} $.
Then $\{w(t,x):\,(t,x)\in D\subseteq\mathbb{R}^{n+1}\}$ satisfies
Schr\"{o}dinger $(S)$ equation \eqref{..5} and initial condition
$w(o,x)\,=\,y_{1}(x)+i\Delta y_{0}(x)\,=\,w^{0}(x).$ Conversely, if
$w(t,x)\,=\,y(t,x)+iz(t,x)$ satisfies \eqref{..5} then
$Re w(t,x)\,4=\,y(t,x)$ and $Im w(t,x)\,=\,z(t,x)$ are real analytic
solution for the homogeneous equation \index{Homogeneous!equation}of
the plate\eqref{..4} with Cauchy conditions $y(0,x)\,=\,y_0(x)$ and
$z(0,x)\,=\,z_0(x)$ correspondingly. Moreover, the solution of the
Schr\"{o}dinger equation \eqref{..5} agrees with the following
representation $w(t,x)\,=\,u(it,x)$ where
$u(\tau,x)\,=\,(4\pi\tau)^{\frac{n}{2}}\int_{\mathbb{R}^{n}}[\exp(-)]u_{0}(y)dy$
and we get $-i\partial_{t}w(t,x)\,=\,\partial_{\tau}
u(it,x)\,=\,\Delta_{x}u(it,x)\,=\,\Delta_{x}w(t,x)$ (S-equation).
\end{remark}
\subsection{Exercises}
\textbf{(1)} Using the algorithm for Klein-Gordon equation, write the evolution system corresponding to the following system of hperbolic equations\\
$$\partial_{t}^{2}y_i=\mathop{\sum}\limits_{m,j,k=1}^{n}C_{imjk}(\partial_xy)\partial_m\partial_k y_j,\,\,i=1,...,n\,,y=(y_1,...,y_n)$$\\
$$y_i(0,x)=y_{i}^{0}(x)\,,\,\partial_t y_i(0,x)=y_{i}^{1}(x)\,,\,i=1,...,n\,,x\in\mathbb{R}^n$$\\
(it appears in elasticity field)\\
\textbf{Hint}. $\partial_xy=(\partial_xy_1,...,\partial_xy_n)\,,\,\partial_xy_i=(\partial_1y_i,...,\partial_ny_i)\,,j\in\{1,...,n\}$\\
Denote
$u_{ik}y=\partial_ky_i(t,x)\,,\,k\in\{1,...,n\}\,,\,i\in\{1,...,n\}$
$$u_{n^2+i}=\partial_ty_i\,,\,i\in\{1,...,n\}$$
$$u_{n^2+n+i}=y_i\,,\,i\in\{1,...,n\}$$
we get\\
$$\left\{
    \begin{array}{ll}
     \partial_t\,u_{ik}=\partial_k\partial_t\,y_i=\partial_{x_k}\,u_{n^2+i}\,\,i\in\{1,...,n\}\\
     \partial_t\,u_{n^2+i}=\partial_{t}^{2}=\mathop{\sum}\limits_{m,j,k=1}^{n}C_{imjk}(u_{11},...,u_{nn})\partial_{x_m}(u_{kk})\,,\,i\in\{1,...,n\}\\
     \partial_tu_{n^2+n+i}=  \partial_t\,y_i=u_{n^2+i}\,,\,i\in\{1,...,n\}
    \end{array}
  \right.
$$
\textbf{(2)} Using (C-K) theorem solve the following elliptic
equation\index{Elliptic equation}
$$  \partial_{x}^{2}+  \partial_{y}^{2}=x^2+y^2\,,u(0,y)=  \partial_x\,u(0,y)=0$$
\textbf{Hint}. Denote $t=x,  \partial_yu=u_1,\,  \partial_xu=u_2\,,u=u_3$ and we get\\
$$\left\{
    \begin{array}{ll}
       \partial_tu_1=  \partial_y(  \partial_tu)=  \partial_yu_2\\
     u_1(0,y)=0
    \end{array}
  \right.
\,,\left\{
     \begin{array}{ll}
       \partial_tu_2=  \partial_{x}^{2}u=-  \partial_yu_1+t^2+y^2 \\
      u_2(0,y)=0
     \end{array}
   \right.
\,,\left\{
     \begin{array}{ll}
         \partial_tu_3=u_2 \\
   u_3(0,y)=0
     \end{array}
   \right.
$$\\
We are looking for $$u_3(t,y)=u_3(0,y)+\frac{t}{1!}  \partial_tu_3(0,y)+\frac{t^2}{2!}  \partial_{t}^{2}u_3(0,y)+...$$\\
Now  compute $$u_3(0,y)=0\,,  \partial_tu_3|_{t=0}=u_2|_{t=0}=0\,,\,  \partial_{t}^{2}u_3|_{t=0}=  \partial_tu_2|_{t=0}=y^2\,,\,  \partial_{t}^{k}u_3|_{t=0}$$
for $k\geq 3$. We obtain $u(x,y)=u_3(x,y)=\frac{1}{2}x^2y^2$\\
\textbf{(3)} by the same method solve
$$ \partial_{x}^{2}+  \partial_{y}^{2}=y^2\,,u(0,y)=  \partial_x\,u(0,y)=0$$
and get
$$u(x,y)=\frac{x^2y^2}{2}-\frac{x^4}{12}$$
\subsection{The Abstract Cauchy-Kawalewska Theorem \index{Cauchy!Kowalevska theorem}}
(L.Nirenberg, J.Diff.Geometry 6(1972)pp 561-576)\\
For $0<s<1$, let $X_{s}$ be the space of vectorial functions $\nu(x)\in\mathbb{C}^{n}$ which are holomorphic and bounded on
 $D_{s}=\prod_{j=1}^{\alpha}\{\mid x_{j}\mid<sR\}\subseteq\mathbb{C}^{d}$. Denote $\parallel\nu \parallel_{s}=\sup_{D_{s}}\mid\nu(x)\mid$. Then $X_{s}$ is a Banach space and the natural inspection $X_{s}\subseteq
X_{s'}(s'\leq s)$ has the norm $\parallel i\parallel\leq 1$. By the
standard  estimates of the derivatives (see Cauchy
formula\index{Cauchy!formula}), we get $\parallel
\partial_{j}\nu\parallel_{s'}\leq
R^{-1}\parallel\nu\parallel_{s}(s-s')$ for any $0<s'<s<1$ and for
a linear equation the following holds true
\begin{theorem}\label{th:2.7}
Let $A(t):X_{s}:\rightarrow X_{s'}$ be linear and continuous of $\mid t\mid<\eta$ for any $0<s'<1$ fulfilling
$$(i_{1})\parallel A(t)v\parallel_{s'}\leq C\parallel
v\parallel_{s-s'},\,\,\forall\,0<s'<s<1$$
Let $f(t)$\,be a continuous mapping of $\mid t\mid<\eta$ with values in $X_{s},\,0<s<1$ fulfilling
$$(i_{2})\parallel f(t)\parallel_{s}\leq \frac{K}{a(1-s)},\,\hbox{for}\,0<s<1$$
where $0<a<\frac{1}{8K}$ is fixed and $K>0$ is given constant. Then there exists a unique function $u(t)$ which is continuously differentiable of $\mid t\mid<a(1-s)$ with values in $X_{s}$ for each $0<s<1$, fulfilling
$$(C_{1})\frac{du}{dt}(t)=\,A(t)u(t),u(0)=0$$
$(C_{2}) \parallel u(t)\parallel_{s}\leq 2K(\frac{a(1-s)}{\mid t\mid}-1)^{-1}$ for any $\mid t\mid <a(1-s)$.
\end{theorem}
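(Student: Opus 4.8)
\noindent\emph{Plan of proof.} The plan is to solve the (inhomogeneous) integral equation
\[
u(t)=\int_{0}^{t}\bigl[A(\tau)u(\tau)+f(\tau)\bigr]\,d\tau ,
\]
which is equivalent to $(C_{1})$, by Picard iteration inside the scale $\{X_{s}\}_{0<s<1}$, and then to read the a priori bound $(C_{2})$ off the resulting geometric series. Concretely, I would set $u_{0}\equiv 0$, $u_{k+1}(t)=\int_{0}^{t}[A(\tau)u_{k}(\tau)+f(\tau)]\,d\tau$, and $w_{k}:=u_{k}-u_{k-1}$, so that $w_{1}(t)=\int_{0}^{t}f(\tau)\,d\tau$ and $w_{k+1}(t)=\int_{0}^{t}A(\tau)w_{k}(\tau)\,d\tau$. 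Everything rests on a single inductive estimate: for all $0<s<1$ and all $|t|<a(1-s)$,
\[
\|w_{k}(t)\|_{s}\;\leq\;2K\Bigl(\tfrac{|t|}{a(1-s)}\Bigr)^{k},\qquad k\geq 1 .
\]

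Next I would prove this estimate by induction on $k$. The case $k=1$ is immediate from $(i_{2})$, since $\|w_{1}(t)\|_{s}\leq |t|\,\sup_{|\tau|\leq |t|}\|f(\tau)\|_{s}\leq K|t|/(a(1-s))$. For the inductive step I would use the standard Ovsyannikov device of spending part of the available ``radius'': fixing the target level $s$, choose an intermediate level $s'$ with $s<s'<1-|t|/a$, apply $(i_{1})$ to get $\|A(\tau)w_{k}(\tau)\|_{s}\leq \frac{C}{s'-s}\|w_{k}(\tau)\|_{s'}$, insert the inductive hypothesis at level $s'$, integrate in $\tau$, and finally take $s'=s+\frac{1-s}{k+1}$, the value that maximizes $(s'-s)(1-s')^{k}$. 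The loss $1/(s'-s)$ is then exactly absorbed by the factor $1/(k+1)$ coming from $\int_{0}^{|t|}|\tau|^{k}\,d|\tau|$ together with the elementary bound $(1+1/k)^{-k}\geq e^{-1}$; the step closes as soon as $a$ is small relative to the constant in $(i_{1})$, and the stated hypothesis $a<1/(8K)$ is comfortably enough for that while leaving the slack needed to replace the $k$-dependent optimal split by the clean power $(|t|/(a(1-s)))^{k}$ uniformly in $k$. The only delicate point is that $s'=s+\frac{1-s}{k+1}$ lies in the admissible window $(s,\,1-|t|/a)$ only for $|t|<\frac{k}{k+1}a(1-s)$; the thin sliver near $|t|=a(1-s)$ I would treat by first proving the estimate on $|t|\leq \varrho\,a(1-s)$ for each $\varrho<1$ and letting $\varrho\uparrow 1$ (or, there, by choosing $s'$ close to the endpoint $1-|t|/a$).

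Granting the estimate, the series $\sum_{k\geq 1}w_{k}(t)$ converges absolutely in $X_{s}$, uniformly on $\{|t|\leq a(1-s_{1})\}$ for every $s_{1}>s$; hence $u(t):=\lim_{k\to\infty}u_{k}(t)$ is a well-defined continuous $X_{s}$-valued function for each $0<s<1$, and summing the geometric series yields exactly $\|u(t)\|_{s}\leq 2K\bigl(a(1-s)/|t|-1\bigr)^{-1}$, which is $(C_{2})$. Continuity of $A(\cdot)$ from $X_{s_{1}}$ to $X_{s}$ for $s_{1}>s$ lets me pass to the limit under the integral sign, so $u$ solves the integral equation above; consequently $t\mapsto A(t)u(t)+f(t)$ is continuous into $X_{s}$, and $u$ is continuously differentiable with $u'(t)=A(t)u(t)+f(t)$ and $u(0)=0$, i.e.\ $(C_{1})$. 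For uniqueness I would run the same induction on the difference $\delta$ of two solutions, which satisfies $\delta(t)=\int_{0}^{t}A(\tau)\delta(\tau)\,d\tau$: with $K$ replaced by a local bound for $\|\delta(\cdot)\|_{s_{1}}$, the estimate gives $\|\delta(t)\|_{s}\leq \mathrm{const}\cdot(|t|/(a(1-s)))^{k}$ for every $k$, forcing $\delta\equiv 0$ on $|t|<a(1-s)$.

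I expect the inductive estimate to be the main obstacle: one must keep the radius-partition bookkeeping honest, so that the per-step loss $1/(s'-s)$ is precisely cancelled by the $1/k!$ produced by the $k$-fold time integration (this is where the numerical constant ``$8$'' comes from), and one must deal gracefully with the boundary layer $|t|\uparrow a(1-s)$ where the optimal intermediate level stops being admissible. The passage to the limit, the verification of $(C_{1})$, and the uniqueness argument are then routine.
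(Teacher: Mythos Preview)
The paper does not actually prove this theorem: it is stated in the subsection ``The Abstract Cauchy--Kowalewska Theorem'' with the citation ``(L.~Nirenberg, J.~Diff.~Geometry 6 (1972) pp.~561--576)'' and is immediately followed by the nonlinear Theorem~2.8, with no proof environment in between. So there is nothing to compare your argument against within the paper itself.

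That said, your outline is precisely Nirenberg's original argument (the Ovsyannikov scale-splitting device with the optimal intermediate level $s'=s+\frac{1-s}{k+1}$), and it is correct in spirit. Two comments. First, you rightly solved the \emph{inhomogeneous} problem $u'=A(t)u+f(t)$, $u(0)=0$; the printed $(C_{1})$ as stated, $u'=A(t)u$ with $u(0)=0$, would have only the trivial solution $u\equiv 0$ and would make the hypothesis $(i_{2})$ on $f$ pointless---this is a typo in the book, and your reading matches Nirenberg's paper. Second, the smallness condition that actually closes the induction involves the operator constant $C$ from $(i_{1})$ (one needs something like $8aC<1$), not the data constant $K$ from $(i_{2})$; the book's ``$0<a<\frac{1}{8K}$'' is another transcription slip, and your phrase ``the stated hypothesis $a<1/(8K)$ is comfortably enough'' papers over this. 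If you write the proof out in full, make the dependence on $C$ explicit.
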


\begin{theorem}\label{th:2.8}
 The nonlinear case is refering to the following Cauchy problem
$$(\alpha) \frac{du}{dt}=F(u(t),t)\,\mid t\mid<\eta,\,\,u(0)=0$$
Let the condition $(\alpha_{1})$ and$(\alpha_{5})$  be
fulfilled. Then there exists $a>0$ and a unique function $u(t)$ which is continuously differentiable of $\mid t\mid<a(1-s)$ with values in $X_{s}$ fulfilling the equation
$(\alpha)$ and $\parallel u(t)\parallel_{s}<R\,\,\forall\,\mid
t\mid<a(1-s)$, for each $0<s<1$.
\end{theorem}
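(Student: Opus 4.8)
The plan is to build the solution by Picard's successive approximations, but carried out across the whole scale $\{X_s\}_{0<s<1}$ rather than in a single Banach space. Put $u_0\equiv 0$ and $u_{n+1}(t)=\int_0^t F(u_n(\tau),\tau)\,d\tau$. The obstruction that forces the scale to enter is that the stated hypotheses let $F$ carry $X_{s'}$ only into $X_s$ with $s<s'$, so a little analyticity radius is lost at every iteration and one cannot estimate $u_{n+1}$ in the same space used for $u_n$. The remedy, as in the linear Theorem \ref{th:2.7}, is to let the loss be paid for out of the time variable: for a target pair $(s,t)$ with $|t|<a(1-s)$ one bounds $\|F(u_n(\tau),\tau)-F(u_{n-1}(\tau),\tau)\|_s$ by the Lipschitz-with-loss hypothesis through an intermediate index $s'=s'(\tau)$ chosen so that $a(1-s'(\tau))=\tfrac12\big(a(1-s)+|\tau|\big)$. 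With this midpoint choice $s'(\tau)-s$ and $a(1-s'(\tau))-|\tau|$ are each a fixed multiple of $a(1-s)-|\tau|$, and $|\tau|<a(1-s'(\tau))$, so the inductive bound for $u_n-u_{n-1}$ at index $s'(\tau)$ is exactly the one available.

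The heart of the argument is then the inductive estimate
$$\|u_{n+1}(t)-u_n(t)\|_s\;\le\;\frac{M\,b^{\,n}}{a(1-s)-|t|},\qquad 0<s<1,\ |t|<a(1-s),$$
where $b$ is a fixed multiple of $aC$ ($C$ the Lipschitz constant) and $M$ is of the form $a^2K$ ($K$ governing the size of $F(0,\cdot)$). The case $n=0$ follows from the elementary bound $\big\|\int_0^tF(0,\tau)\,d\tau\big\|_s\le |t|K/(1-s)$ together with $x\big(a(1-s)-x\big)\le \tfrac14 a^2(1-s)^2$ for $x=|t|$. In the inductive step I would insert the midpoint choice of $s'(\tau)$ into the Lipschitz estimate, which leaves the scalar integral $\int_0^{|t|}\big(a(1-s)-\tau\big)^{-2}\,d\tau\le\big(a(1-s)-|t|\big)^{-1}$; collecting constants produces precisely the factor $b$, and fixing $a$ small enough (relative to $C$, and then relative to $K$ and $R$) makes $b<1$ and keeps all partial sums below $R$ on the time intervals that actually occur, so every iterate stays inside the domain $\{\|u\|<R\}$ on which $F$ is defined.

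Once this estimate is in hand the rest is routine. The series $\sum_n(u_{n+1}-u_n)$ converges in $X_s$, uniformly on each $\{|t|\le a(1-s)-\varepsilon\}$, to a limit $u(t)$; since for $|t|<a(1-s)$ one may still choose $s'\in(s,1)$ with $|t|<a(1-s')$, the map $\tau\mapsto F(u(\tau),\tau)$ is continuous with values in $X_s$, one may pass to the limit in $u(t)=\int_0^tF(u(\tau),\tau)\,d\tau$, and this integral identity shows $u$ is continuously differentiable in $t$ with values in $X_s$, solves $(\alpha)$, and satisfies $\|u(t)\|_s<R$. Uniqueness follows by running the same midpoint-interpolation estimate on the difference of two solutions and iterating it until the bound forces the difference to vanish — the scale-space analogue of the Gronwall argument used in the classical Cauchy--Lipschitz theorem. \textbf{The main difficulty} is the inductive estimate of the middle paragraph: calibrating the intermediate index $s'(\tau)$ so that the recursion closes with a genuinely contractive constant while simultaneously keeping the accumulated iterates inside the ball of radius $R$ where $F$ is defined; everything else (convergence, differentiability, uniqueness) is formal once that step is secured.
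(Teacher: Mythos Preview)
The paper does not actually prove this theorem: it is simply stated in the subsection titled ``The Abstract Cauchy--Kowalevska Theorem'' with an explicit citation to Nirenberg (J.\ Diff.\ Geometry \textbf{6} (1972), 561--576), and the conditions $(\alpha_1)$--$(\alpha_5)$ referenced in the statement are not even written out in the text. So there is no proof in the paper to compare your proposal against.

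That said, your proposal is a faithful sketch of Nirenberg's original argument, which is what the paper is pointing to. The Picard iteration across the scale $\{X_s\}$, the midpoint choice of the intermediate index $s'(\tau)$ so that both $s'-s$ and $a(1-s')-|\tau|$ are comparable to $a(1-s)-|\tau|$, the inductive bound of the form $Mb^n\big(a(1-s)-|t|\big)^{-1}$ driven by the scalar integral $\int_0^{|t|}(a(1-s)-\tau)^{-2}\,d\tau$, and the Gronwall-type iteration for uniqueness --- all of this is exactly the structure of Nirenberg's proof. Your identification of the main difficulty (calibrating $s'(\tau)$ so the recursion contracts while the iterates stay in the ball of radius $R$) is also correct. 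One small caution: since the paper never states $(\alpha_1)$--$(\alpha_5)$, you are implicitly assuming the standard Nirenberg hypotheses (holomorphy-type continuity, a Lipschitz-with-loss bound $\|F(u,t)-F(v,t)\|_s\le C\|u-v\|_{s'}/(s'-s)$, and a size bound on $F(0,t)$); you should make those explicit if you write this up, since the reader of the paper cannot recover them from the text.
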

\section{Appendix Infinitesimal Invariance}
{J.R.Olver (Application 0f Lie algebra to Diff.Eq, Springer, 1986,
Graduate texts in mathematics; 107)}\\
\textbf{1}. One can replace the complicated, nonlinear conditions
for the invariance of a subset or function under a group of
transformations by an equivalent linear condition of infinitesimal
invariance under the corresponding infinitesimal generators of the
group action. It will provide the key to
 the explicit determination\index{Explicit! determination} of the symmetry groups of systems of differential equations. We begin with the simpler case of
 an invariant function under the flow generated by a vector field which can be expressed as follows
\begin{equation}\label{1.}
f(G(t;x))=f(x),\,t\in(-a,a)\,x\in D\subseteq \mathbb{R}^{n}\Longleftrightarrow
\end{equation}
\begin{equation}\label{2.}
g(f)(x)=<\partial_{x}f(x),g(x)>=0,\,\forall\,x\in D.
\end{equation}
where
$G(0;x)=x,\,\frac{dG(t;x)}{dt}=g(G(t;x)),\,t\in(-a,a),\,x\in D$
\begin{theorem}\label{th:2.9}
Let $G$\, be a group of transformation acting on a domain
$D\subseteq \mathbb{R}^{n}$. Let $F:D\rightarrow
\mathbb{R}^{m},\,m\leq n$, define a system of algebraic equations of
maximal rank
\begin{equation}\label{3.}
(F_{j}(x)=0,\,j=1,...,m)\,;rank\frac{\partial F}{\partial x)}x)=m,\,\forall \,x\in D,F(x)=0
\end{equation}
Then $G$  is a symmetry group of system $iff$
\begin{equation}\label{4.}
g(F_{j})(x)=0,\,\forall\,x\in\{y\in D:F(y)=0\},\,j=1,2,...,m
\end{equation}
where $g$\,is the infinitesimal generators of $G$.
\end{theorem}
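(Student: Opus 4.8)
The plan is to reduce to the one--parameter case already recorded in \eqref{1.}--\eqref{2.}: it suffices to show, for a single infinitesimal generator $g$ of $G$ with flow $\{G(t;x)\}$, that the algebraic set $\mathcal{V}=\{x\in D:F(x)=0\}$ is invariant under all the maps $G(t;\cdot)$ if and only if $g(F_j)(x)=0$ for every $x\in\mathcal{V}$ and every $j\in\{1,\dots,m\}$. Since $G$ is a symmetry group of the system exactly when each of its one--parameter subgroups preserves $\mathcal{V}$, applying this equivalence to each generator of $G$ yields Theorem \ref{th:2.9}.

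Necessity is the easy direction and mimics the derivation of \eqref{2.}. Fix $x\in\mathcal{V}$. If $G$ is a symmetry group then $G(t;x)\in\mathcal{V}$, hence $F_j(G(t;x))=0$, for all $t\in(-a,a)$; differentiating this identity at $t=0$ and using $G(0;x)=x$ and $\frac{d}{dt}G(t;x)|_{t=0}=g(x)$ gives $0=\langle\partial_xF_j(x),g(x)\rangle=g(F_j)(x)$. Since $x\in\mathcal{V}$ is arbitrary, \eqref{4.} follows.

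For sufficiency I would first use the maximal rank hypothesis \eqref{3.}. By the implicit function theorem $\mathcal{V}$ is, near each of its points, a smooth submanifold of codimension $m$ which can be straightened to a coordinate subspace; a Hadamard--type expansion (Taylor's formula with integral remainder in the transverse coordinates) then shows that every smooth function vanishing on $\mathcal{V}$ is locally a smooth linear combination of $F_1,\dots,F_m$. Patching these local representations yields an open set $U$ with $\mathcal{V}\subseteq U\subseteq D$ and smooth functions $h_{jk}$ on $U$ such that $g(F_j)(x)=\sum_{k=1}^{m}h_{jk}(x)F_k(x)$ for all $x\in U$. Now fix $x_0\in\mathcal{V}$ and set $\varphi_j(t)=F_j(G(t;x_0))$. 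While $G(t;x_0)$ stays in $U$, the chain rule together with $\frac{d}{dt}G(t;x_0)=g(G(t;x_0))$ and the representation above give the closed linear system $\dot\varphi_j(t)=\sum_{k=1}^{m}h_{jk}(G(t;x_0))\,\varphi_k(t)$ with $\varphi_j(0)=0$; by the uniqueness part of Theorem \ref{th:1.1} we get $\varphi_j\equiv 0$, i.e. the orbit stays on $\mathcal{V}$. A standard continuation argument---the set of $t$ for which the orbit lies in $\mathcal{V}$ is nonempty, relatively open since $\mathcal{V}\subseteq U$ and $\varphi\equiv 0$ on any subinterval propagates, and closed by continuity---then shows this holds on the whole interval of definition, so $\mathcal{V}$ is $G$-invariant and $G$ is a symmetry group.

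The crux of the argument, and the only place where anything must genuinely be done, is the step producing the identity $g(F_j)=\sum_k h_{jk}F_k$ on a neighborhood of $\mathcal{V}$; this is precisely where the maximal rank condition \eqref{3.} is indispensable. Its role is to convert the raw hypothesis that $g(F_j)$ vanishes on $\mathcal{V}$, which by itself does not control $g(F_j)$ away from $\mathcal{V}$ and hence says nothing about how $F_j$ varies along an orbit that has momentarily left $\mathcal{V}$, into a genuine differential equation satisfied by $\varphi_j$. Everything after that---the uniqueness invocation and the bookkeeping that keeps $G(t;x_0)$ inside $U$---is routine.
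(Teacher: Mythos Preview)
Your proof is correct, and the necessity half is identical to the paper's. For sufficiency you take a genuinely different, more intrinsic route: you invoke a Hadamard--type lemma to write $g(F_j)=\sum_k h_{jk}F_k$ near $\mathcal V$ and then appeal to uniqueness for the resulting linear system $\dot\varphi_j=\sum_k h_{jk}(G(t;x_0))\varphi_k$, $\varphi(0)=0$. The paper instead uses the maximal rank condition to \emph{straighten} $F$ locally: it chooses coordinates $y=(y^1,\dots,y^n)$ with $x_0=0$ and $F(y)=(y^1,\dots,y^m)$, so that \eqref{4.} reads $g^i(y)=0$ for $i\le m$ whenever $y^1=\dots=y^m=0$. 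In those coordinates the curve with first $m$ components identically zero and last $n-m$ components solving the reduced ODE is visibly a solution of $\dot G=g(G)$ through $0$, and ODE uniqueness forces the actual flow to coincide with it, hence to stay on $\mathcal V$. Your Hadamard step is exactly what this straightening buys for free (in the new coordinates it is just Taylor expansion in $y^1,\dots,y^m$), so the paper's argument is shorter and avoids the patching of local representations; your version, on the other hand, is coordinate--free and makes the mechanism---a closed linear system forcing $\varphi\equiv 0$---more transparent.
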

\begin{proof}
The necessity of \eqref{4.} follows by differentiating the identity
$F(G(t;x))=0,\,t\in(-a,a)$ in which $x$ is solution of\eqref{3.} and
$G(t;x)\,,t\in(-a,a)$, is the flow generated by the vector field
$g$. To prove the sufficiency, let $x_{0}$ be a solution of the
system using the maximal rank condition we can choose
a coordinate transformation $y=(y^{1},...,y^{n})$ such that $x_{0}=0$ and $F$ has the simple form $F(y)=(y^{1},...,y^{m})$. Let $g(y)=(g^{1}(y),...,g^{n}(y))\in\mathbb{R}^{n}$ be any infinitesimal generators of $G$ expressed in the new coordinates and rewrite \eqref{4.} as follows
\begin{equation}g^{i}(y)=0\,\,,i=1,2,...,m,\,\forall\,y\in\mathbb{R}^{n},\,y^{1}=...=y^m=0
\end{equation} Now the flow\index{Flow!}
$G(t)(x_{0}),\,t\in(-a,a)$ generated by the vector field $g$ and
passing through $x_{0}=0$ satisfies the system of ordinary
differential equations\index{Ordinary differential equations}
$\frac{dG^{i}}{dt}(t)(x_{0})=g^{i}(G(t)(x_{0})),\,G(0)(x_{0})=0,\,G^{i}(t)(x_{0})=0,\,t\in(-a,a),\,i=1,...,m$ which
means $F(G(t)(x_{0}))=0,\,t\in(-a,a)$. The proof is complete.
\end{proof}
\begin{example}Let $G=SO(2)$ be the rotation group in the plane, with infinitesimal generator ($g=-y\partial_{x}+x\partial_{y}$)\\
$g(x,y)=col(-y,x)$. The unit circle $S_{1}=\{x^{2}+y^{2}=1\}$ is an
invariant subset of $SO(2)$ as it is the solution set of the
invariant function\,$f(x,y)=x^{2}+y^{2}-1$.
Indeed, $g(f)(x,y)=-2xy+2xy=0,\,\forall (x,y)\in\mathbb{R}^{2}$\,so
the equations (4) are satisfied on the unit circle itself. The
maximal rank condition does hold for $f$ since its gradient
$\partial f(x,y)=col(2x,2y)$ does not vanish on $S^{1}$. As a less
trivial example,consider the function
$f(x,y)=(x^{2}+1)(x^{2}+y^{2}-1)$ and notice that
$g(f)(x,y)=-2xy(x^{2}+1)^{-1}f(x,y)$ which shows that
$g(f)(x,y)=0$ whenever $f(x,y)=0$. In addition $\partial
f(x,y)=(4x_{3}+2xy^{2},2x^{2}y+2y)$ vanishes only when
$x=y=0$ which is not a solution to $f(x,y)=0$. We conclude that the
solution set $\{(x,y):(x^{2}+1)(x^{2}+y^{2}-1)=0\}$ is a
rotationally-invariant subset of $\mathbb{R}^{2}$.
\end{example}
\begin{remark}\label{re:2.23}
For a given one-parameter group of transformations $y=G(t)(x),\,x\in D\subseteq\mathbb{R}^{n},\,t\in(-a,a)$\,generated by the infinitesimal generator
$$\overrightarrow{g}=g^{1}(x)\partial_{x^{1}}+...+g^{n}(x)\partial_{x^{n}}(\hbox{see } \frac{dG(t)(x)}{dt}=g(G(t)(x)),\,t\in(-a,a),G(0)(x)=x$$
$g(x)=col(g^{1}(x),...,g^{n}(x))$. We notice that the corresponding invariant functions are determined
by the standard first integrals associated with the $ODE$
$\frac{dx}{dt}=g(x)$.
\end{remark}
\subsection{Groups and Differential Equations}
Suppose we are considering a system $S$ of differential equation
involving $p$ independent variables $x=(x^{1},...,x^{p})$ and
$q$ dependent variables$u=(u^{1},...,u^{q})$. The solution of the
system will be of the form
$u^{\alpha}=f^{\alpha}(x),\,\alpha=1,...,q$. Denote
$X=\mathbb{R}^{p},U=\mathbb{R}^{q}$ and a symmetry group of the
system $S$ will be a local group of transformations,$G$, acting on
some open subset $M\subseteq X\times U$ in such a way that"
$G$ transforms solutions of $S$ to other solutions of $S$".
To proceed rigorously, define the graph of $u=f(x)$,
$$\Gamma_{f}=\{(x,f(x)):x\in\Omega\}\subseteq X\times U$$
where $\Omega\subseteq X$ is the domain of definition of $f$. Note
that $\Gamma_{f}$ is a certain $p-$dimentional submanifold
of $X\times U$.
If $\Gamma_{f}\subseteq M_{G}$(domain of definition of the group transformations $G$) then the transform of $\Gamma_{f}$ by $G$ is just
$$G.\Gamma_{f}=\{(\hat{x},\hat{u})=G(x,u):(x,u)\in\Gamma_{f}\}$$
The set $G\Gamma_{f}$ is not necessarily the graph of another single valued function $\hat{u}=\hat{f}(\hat{x})$ but if it is the case then we write $\hat{f}=G.f$ and $\hat{f}$ the transform of $f$ by $G$.
\subsection{Prolongation}\index{Prolongation}
The infinitesimal methods for algebraic equations can be extended
for "systems of differential equations". To do this we need to
prolong the basic space $X\times U$ to a space which also
represents the various partial derivatives occurring in the
system.If $f:X\rightarrow U$ is a smooth
function,$u=f(x)=(f^{1}(x),...,f^{q}(x))$ there are $q.p_{k}$
numbers $u_{j}^{\alpha}=\partial_{j}f^{\alpha}(x)$ needed to
represent all different $k-th$ order derivatives of the components
of $f$ at point $x$. We let $U_{k}=\mathbb{R}^{q}.p_{k}$ be the
Euclidean space of this dimension,endowed with coordinates
$u_{j}^{\alpha}$ corresponding to $\alpha=1,...,q$, and all
multi-indices $J=(j_{1},...,j_{k})$ of order k, designed so as to
 represent $\partial_{J}f^{\alpha}(x)=\frac{\partial^{k}f^{\alpha}(x)}{\partial_{x}^{j_{1}}\partial_{x^{j_{k}}}}$.\\
Set $U^{n}=U\times U_{1}\times...\times U_{n}$ to be the product
space.whose coordinates represents all the derivatives of functions
$u=f(x)$ of all orders from 0 to n.
Note that $U^{n}$ is Euclidean space of dimension
$$q+qp_{1}+...+qp_{n}=qp^{n}$$
Atypical point in $U^{n}$ will be denoted by $u^{n}$ so $u^{n}$ has $q.p^{n}$ different components $u_{j}^{\alpha}$ where $\alpha=1,...,q$ has $J$ sums over all unordered multi-indices
$J=(j_{1},...,j_{k})$ with $1\leq j_{k}\leq p,j_{1}+...+j_{k}=k$ and $0\leq k\leq n$. (By convention for $k=0$ there is just one such multi-index,denoted by O, and $u_{0}^{\alpha}$\,
just replace to the component $u^{\alpha}$ of $u$ itself)
\begin{example}
 $p=2,q=1$.Then $X=\mathbb{R}^{2},\,(x^{1},x^{2})=(x,y)$ and $U=\mathbb{R}$ has the single coordinate $u$. The space $U_{1}$ isomorphic to $\mathbb{R}^{2}$ with coordinates
$(u_{x},u_{y})$ since these represents all the first order partial
derivatives of $u$ with respect to $x$ and $y$. Similarly
$U_{2}=\mathbb{R}^{3}$ has coordinates
$(u_{xx},u_{xy},u_{yy})$ representing the second order partial
derivatives of $u$, namely $\frac{\partial^{2}u}{\partial
x^{i}\partial y_{2-i}}\,\,,i=0,1,2$. In general, $U_{k}=\mathbb{R}^{k+1}$ , since there are (k+1) k-th order partial derivatives of $u$, namely\\
$\frac{\partial^{k}u}{\partial x^{i}\partial y_{k-i}}\,\,,i=0,1,..,k$. Finally, the space $U^{2}=U\times  U_{1}\times U_{2}=\mathbb{R}^{6}$, with coordinates $u^{2}=(u;u_{x},u_{y},u_{xx},u_{xy},u_{yy})$ represents all derivatives of $u$  with respect to$ x$ and $ y$ of order at most 2. Given a smooth function $u=f(x)\,f:X\rightarrow U$ there is an induced function $u^{n}=pr^{(n)}f(x)$ called the $n-th$ prolongation of $f$ which is defined by the equations
$$u_{j}^{\alpha}=\partial_{J}f^{\alpha}(x),\,J=(j_{1},...,j_{k}),\,1\leq j_{k}\leq p,\,j_{1}+...+j_{k}=k,\,0\leq k\leq n$$
for each $\alpha=1,\dots,q\,(see \,X=\mathbb{R}^{p},\,U=\mathbb{R}^{q})$.\\
The total space $X\times U^{n}$ whose coordinates represents the
independent variables,the dependent\index{Dependent!variables}
variables and the derivatives of the dependent variables up to order
$n$\, is called the $n-th$ order jet space of the underlying space
$X\times U$ (it   comes from viewing $pr^{(n)}f$ as a
corresponding polynomial degree $n$ associated with its Taylor
series ar the point $x$)If the differential equations are defined in
some open subset $M\subset X\times U$ then we define the n-jet
space $M^{(n)}=M\times U_{1}\times,\dot,\times U_{n}$ of $ M$.
\end{example}
\subsection{Systems of Differential Equations}
A system $S$\ of $n-th$ order differential equation in $p$ independent and $q$ dependent variables is given as a system of equations\\
\begin{equation}\label{z1}
\Delta_{\nu}(x,u^{(n)})=0, nu=1,\dots,b
\end{equation}
involving $x=(x^{1},\dots,x^{p}), u=(u^{1},\dots,u^{q})$ and the
derivatives of $u$ with respect to $x$ up to order $n$.
The function $\Delta(x,u^{n})=(\Delta_{1}(x,u^{n})),\dots,\Delta_{l}(x,u^{n})$, will be assumed to be smooth in their arguments so $\Delta$ can be viewed as a smooth map from the jet space $X\times U^{n}$ to some $l-$ dimensional
 Euclidean space $\Delta:X\times U^{n}\rightarrow \mathbb{R}^{l}$.\\
The differential equations themselves tell where the given map $\Delta$ variables on $X\times U^{n}$ and thus determine a subvariety\\
\begin{equation}\label{z2}
S_{\Delta}=\{(x,u^{n}):\Delta(x,u^{n})=0\}\subseteq X\times
U^{n}\end{equation}
0n the total jet space.\\
From this point of view, a smooth solution of the given system of differential equations is a smooth function $u=f(x)$ such that
$$\Delta_{\nu}(x,pr^{(n)}f(x))=0,\nu=1,...,l$$
whenever $x$ lies in the domain of $f$. This condition is equivalent to the statement that the graph of the prolongation $pr^{(n)}f(x)$ must lies entirely within the subvariety $S_{\Delta}$ determined by the system\\
\begin{equation}\label{z3}
\Gamma_{f}^{(n)}=\{(x,pr^{(n)}f(x))\}\subseteq S_{\Delta}=\{\Delta(x,u^{(n)})=0\}
\end{equation}
We can thus take an $n-th$ order system of differential equations to be a subvariety $S_{\Delta}$ in the $n=jet$ space $X\times U^{n}$ and a solution to be a function $u=f(x)$ such that the graph of the $n-th$ prolongation $pr^{(n)}f$ is contained in the subvariety $S_{\Delta}$.
\begin{example} Consider Laplace equation\index{Laplace equation} in the plane
\begin{equation}
u_{xx}+u_{yy}=0
\end{equation}
Here $p=2, q=1, n=2$ coordinates $(x,y,u,u_{x},u_{y},u_{xx},u_{xy},u_{yy})$ of $X\times U^{n}$ (a hyperplane)there, and this is the set $_{\Delta}$ for Laplac's equation.A solution must satisfy
$$\frac{\partial^{2}f}{\partial x^{2}}+\frac{\partial^{2}f}{\partial y^{2}}=o\,\,\,\forall(x,y)$$
This is clearly the same as requiring that the graph of the second prolongation $pr^{2}f$ lie in $S_{\Delta}$. For example, if $$f(x,y)=x^{3}-3xy^{2}$$then
$$pr^{2}f(x,y)=(x^{3}-3xy^{2};3x^{2}-3y^{2},-6xy;6x,-6y,-6x)$$
which lies in $$S_{\Delta}(\hbox{ see } 6x+(-6x)=0)$$
\end{example}
\subsection{Prolongation of Group Action and Vector Fields}\index{Prolongation}
We begin by considering a simple first order scalar differential equation
\begin{equation}\label{b1}
\frac{du}{dx}=F(x,u),\,\,\,(x,u)\in D\subseteq \mathbb{R}^{2}
\end{equation}
This condition is invariant under a one-parameter
group of transformations\\ $G(\epsilon)=\exp
\epsilon g$ defined on an open subset $D\subseteq \times U=\mathbb{R}^{2}$ where
\begin{equation}\label{b2}
g=\xi(x,u)\partial_{x}+\phi(x,u)\partial_{u}
\end{equation}
is the infinitesimal generator.\\
Let $u=u(x;x_{0},u_{0})$ be s solution of \eqref{b1} satisfying $u(x;x_{0},u_{0})=u_{0}$ and $(x_{0},u_{0})\in D$ arbitrarily fixed. Then $u(\hat{x_{0}}(\epsilon);x_{0},u_{0})=\hat{u_{0}}(\epsilon), \epsilon\in(-a,a)$\\
where $$(\hat{x_{0}}(\epsilon),\hat{u_{0}}(\epsilon))=G(\epsilon)(x_{0},u_{0})$$
It leads us to the following equations
\begin{equation}\label{b3}
\frac{du}{dx}(\hat{x_{0}}(\epsilon)).\frac{d\hat{x_{0}}}{d\epsilon}(\epsilon)=\frac{d\hat{u_{0}}}{d\epsilon}(\epsilon)\Longleftrightarrow F(x_{0},u_{0}).\xi(x_{0},u_{0})
=\phi(x_{0},u_{0})
\end{equation}
for any $(x_{0},u_{0})\in D\subseteq \mathbb{R}^{2}$.\\
On the other hand, the equation \eqref{b1} can be viewed as an algebraic constraint
\begin{equation}\label{b4}
F(x,u)-u_{x}=0
\end{equation}
on the variables $(x,u,u_{x})\in\mathbb{R}^{3}$ and we may do ask to find a prolonged and parameter group of transformations $G{(1)}(\epsilon)+\exp \epsilon g^{(1)}$ acting on a prolonger subvariety $M^{(1)}=X\times U\times U^{(1)}=\mathbb{R}^{3}$ such that the set solution \eqref{b4}is invarient. In this respect, notice that the new vector field $ g^{(1)}$ is a prolongation \index{Prolongation}of the vector field $g$   given in \eqref{b2}, $pr^{(1)}g=g^{(1)}$
\begin{equation}\label{b5}
g^{(q)}(x,u,u_{x})=\xi(x,u)\partial_{x}+\phi(x,u)\partial_{u}+\eta(x,u)\partial_{u_{x}}\hbox{where} (\xi(x,u),\phi(x,u),)
\end{equation}
satisfies \eqref{b3} $\forall \,(x,u)\in D\subseteq{R}^{2}$. The set solution\eqref{b4} is invariant under the infinitesimal generator \eqref{b5} $iff (f(x,u,u_{x})=F(x,u)-u_{x})$
\begin{equation}\label{b6}
(\partial_{x}f)\xi+(\partial_{u}f)\phi+(\partial_{u_{x}}f)\eta=0
\end{equation}
for any $(x,u,u_{x})$ verifying \eqref{b4}. An implicit computation of \eqref{b6} show us that $(\eta(x,u,u_{x}),\phi(x,u),\xi(x,u))$ must satisfy the following first order partial differential equation\index{Partial differential equations}
\begin{equation}\eta(x,u,u_{x})=\partial_{x}\phi(x,u)+[\partial_{u}\phi(x,u)-\partial_{x}\xi(x,u)]u_{x}-\partial_{u}\xi(x,u).u_{x}^{2}
\end{equation} Once we found a symmetry group $G$, the integration
of the equation \eqref{b1} may become a simplex one using elementary
operations like integration of a scalar function.
\subsection{ Higher Order Equation}
Consider a single $n-{th}$ order differential equation involving a single dependent variable $u$
\begin{equation}\label{g1}
\Delta(x,u^{(n)})=\Delta(x,u,\frac{du}{dx},\dots,\frac{d^{n}u}{dx^{n}}=0\, \,x\in \mathbb{R}
\end{equation}
If we assume that\eqref{g1} does not depend either of
$u$ or $x$ then the order of the equation can be reduced by one.
In this respect consider that $\Delta$ in \eqref{g1} satisfied
$\partial_{u}\Delta=0 i.e$ the vector field $g=\partial_{u}$ has
a trivial prolongation\index{Prolongation} $pr^{n}g=g=\partial_{u}$ generating a corresponding prolonged
group of symmetry $G^{(n)}$ for the equation
\begin{equation}\label{g2}
\widetilde{\Delta}(x,u_{1},...u_{n}=0)\,, \hbox{where} \,u_{i}=\frac{ d^{i}u}{dx^{i}},\,i\in\{1,\dots,n\}{1,...n}
\end{equation}
Denote $z=\frac{ du}{dx}$ and rewrite \eqref{g2} as
\begin{equation}\label{g3}
\hat{\Delta}(x,z,\frac{ dz}{dx},\dots,\frac{ d^{n-1}z}{dx})=\widetilde{\Delta}(x,z^{(n-1)})=0
\end{equation}
Whose solutions provide the general solution for \eqref{g2} and
$u(x)=\int_{0}^{x}h(y)dy +c$ is a solution for \eqref{g2} provided
$z=h(x)$ is a solution for \eqref{g3}. The second elementary group of
symmetry for \eqref{g1} is obtained assuming that $\Delta$ does not
depend on $x$ and write\eqref{g1} as
\begin{equation}\label{g4}
\widetilde{\Delta}(u^{(n)})=\widetilde{\Delta}(u,\frac{du}{dx},\dots,\frac{d^{n}u}{dx^{n}})=0
\end{equation}
This equation is clearly invariant under the group of transformations in the\\ $x-$direction,
with infinitesimal generator $g=\partial_{x}$. In order to change this into the vector field
$g=\partial_{\nu}$, corresponding to translations of the dependent variable,it suffices to reverse the  rules of dependent\index{Dependent!} and independent variable; we set $y=u,\,\nu=x$. Then compute $\frac{ du}{dy}=\frac{1}{\frac{ d\nu}{dy}}$, using $u(\nu(y))=y$.
Similarly, we get
$$\frac{ d^{2}u}{dx^{2}}=-\frac{\nu'' y}{(\nu'y)^{3}},\dots,\frac{ d^{n}u}{dx^{n}}=\delta_{n}(\nu_{y}^{(1)},\dots,\nu_{y}^{(n)})$$
and rewrite \eqref{g4} as follows
\begin{equation}\label{g5}
\widetilde{\Delta}(u^{(n)})=\widetilde{\Delta}(u,\frac{du}{dx},\dots,\frac{d^{n}u}{dx^{n}})=\hat{\Delta}(y,\nu_{y}^{(1)},\dots,\nu_{y}^{(n)})=\Delta
\end{equation}
where $\nu_{y}^{(k)}=\frac{d^{k}\nu}{dy^{k}}\,\,,k=1,\dots,n$.
The equation \eqref{g5} is transformed into a $(n-1)-th$ order
differential equation as above denoting $\nu_{y}^{(1)}=z$ as the
new unknown function.
\section*{Bibliographical Comments}
The first part till to Section 2.3 is written following the references \cite{8},\cite{12} and \cite{13}. Section 2.3 contained in a ASSMS preprint (2010). Sections 2.4
and 2.5 are written following the book in the reference \cite{11}. Sections 2.6 and 2.7 are using more or less the same presentation as in the reference \cite{12}.

\chapter[Second Order $PDE$]{Second Order Partial Differential Equations}\index{Partial differential equations}
\section{Introduction}
PDE of second order are written using symbols $\partial_t,
\partial_x, \partial_y, \partial_z, \partial_t^2, \partial_x^2, \partial_y^2,
\partial_z^2$ where $t\in \mathbb{R}$ is standing for the time variable, $(x,y,z)\in
R^3$ are space coordinates, and a symbol $\partial_s(\partial_s^2)$
represent first partial derivative with respect to $s\in \{t, x, y,
z\}$ (second partial derivative). There are three types of second
order PDE we are going to analyze here and they are illustrated by
the following examples
$$\left\{
  \begin{array}{ll}
\partial_x^2 u - \partial_y^2 u \,=&\, 0 \hbox{ \,\,\,\,\,\,(hyperbolic) }
\,\,\,\,(u(x,y)\in\mathbb{R}, (x,y)\in \mathbb{R}^2) \\
\triangle u\,=\,\partial_x^2 u +
\partial_y^2 u \,=&\, 0 \hbox{ \,\,\,\,\,\,(elliptic) } \,\,\,\,\,\,\,\,\,\,\,\,\,(u(x,y)\in \mathbb{R},
(x,y)\in \mathbb{R}^2)\\
\partial_t u\,=\,\partial_x^2 u + \partial_y^2 u \,=&\,\triangle u
\hbox{ (parabolic) } \,\,\,\,\,\,\,(u(t,x,y)\in R, t\in R,(x,y)\in \mathbb{R}^2)
 \end{array}
\right.$$
PDE of second order have a long tradition and we recall the
Laplace equation\index{Laplace equation} (Pierre Simon Laplace 1749-1827)
\begin{equation}\label{3:1.1}
\partial_x^2 u + \partial_y^2 u + \partial_z^2
u\,=\,\triangle u\,=\,0 \hbox{ (elliptic, linear, homogeneous) }
\end{equation}
and its nonhomogeneous version\index{Homogeneous!version}
\begin{equation}\label{3:1.2}
\triangle u\,=\,\partial_x^2 u + \partial_y^2 u +
\partial_t^2 u\,=\,f(x,y,z) \hbox{ (Poisson\, equation) }
\end{equation}
originate in the Newton universal attraction law (Isaac Newton 1642-1727).
Intuitively, it can be explained as follows. An attractive body
induces a field of attraction where intensity at each point
$(x,y,z)\in R^3$ is calculated using Newton's formula
$$u\,=\,\gamma \frac{\mu}{\sqrt{(x-x_0)^2+(y-y_0)^2+(z-z_0)^2}}$$
where $\gamma$ is a constant, $\mu=$ mass of the body, considering
that the attractive body is reduced to the point $(x_0,y_0,z_0)\in
R^3.$
In the case of several attractive bodies which are placed at the
points $(x_i,\,y_i,\,z_i),\,\,i\in\{1,\ldots,N\}$, we compute the
corresponding potential function
$$u\,=\,\gamma \sum_{i=1}^N \frac{\mu_i}{\sqrt{(x-x_i)^2+(y-y_i)^2
+(z-z_i)^2}}\,=\,\gamma \sum_{i=1}^N\frac{\mu_i}{r(P,P_i)}$$ where
$$P\,=\,(x,y,z), P_i\,=\,(x_i,y_i,z_i)$$ and
$$r(P,P_i)\,=\,\sqrt{(x-x_i)^2+(y-y_i)^2
+(z-z_i)^2}.$$ It was Laplace who proposed to study the
corresponding PDE satisfied by the potential function $u$ and in
this respect denote $u_i\,=\,\gamma \frac{\mu_i}{r(P,P_i)}$ and
compute its partial derivatives. We notice that $\partial_x
r\,=\,\frac{x-x_i}{r}$, $\partial_y
r\,=\,\frac{y-y_i}{r}\,=\,\partial_z r\,=\,\frac{z-z_i}{r}$ and
\begin{equation}\label{3:1.3}
\partial_x
u_i\,=\,-\gamma\mu_i\frac{(x-x_i)}{r^3},\,\partial_y
u_i\,=\,-\gamma\mu_i\frac{(y-y_i)}{r^3},\,\partial_z
u_i\,=\,-\gamma\mu_i\frac{(z-z_i)}{r^3}
\end{equation}
Using \eqref{3:1.3} we see easily that
\begin{equation}\label{3:1.4}
\left\{
  \begin{array}{ll}
\partial_x^2
u_i\,=\,\gamma\mu_i[-\frac{1}{r^3}\,+\,3\frac{{x-x_i}^2}{r^5}]\\
\partial_y^2
u_i\,=\,\gamma\mu_i[-\frac{1}{r^3}\,+\,3\frac{{y-y_i}^2}{r^5}]\\
\partial_z^2
u_i\,=\,\gamma\mu_i[-\frac{1}{r^3}\,+\,3\frac{{z-z_i}^2}{r^5}]
 \end{array}
\right.
\end{equation}
and by adding we obtain\\
\begin{equation}\label{3:1.5}
\triangle u_i\,=\,\partial_x^2 u_i + \partial_y^2 u_i + \partial_z^2
u_i\,=\,0,\,\,\,\,i=1,2,\ldots,N
\end{equation}
which implies\\
\begin{equation}\label{3:1.6}(u=\sum_{i=1}^N u_i)
\triangle u\,=\,\partial_x^2 u + \partial_y^2 u + \partial_z^2
u\,=\,0 \hbox{ \textbf{(Laplace Equation)} }\index{Laplace equation}
\end{equation}
\section{Poisson Equation}\index{Poisson equation}
It may occur that we need to consider a body with a mass distributed
in a volume having the density $\rho\,=\,\rho(a, b, c)$ at the point
$x=a, y=b, z=c$ and  vanishing outside of the ball
$a^2+b^2+\mathcal{C}^2\leq R^2.$ In this case the potential function will be
computed as follows\\
\begin{equation}\label{3:1.7}
u(P)=\mathop{\iiint}\limits_{a^2+b^2+\mathcal{C}^2\leq R^2}\frac{\rho(a, b, c)\,da\,
db\, dc}{r(P,P(a, b, c))}
\end{equation}
where $P=(x, y, z),\,P(a, b, c)=(a, b, c)$  and the constant $\gamma$
is included in the function $\rho.$ By a direct computation we will
prove that if $\rho(a, b, c),\,(a, b, c)\in B(0,R)$, is first order
continuously differentiable then the potential defined in \eqref{3:1.7}
satisfies Poisson equation.\index{Poisson equation}
\begin{equation}\label{3:1.8}
\triangle u \,=\,\partial_x^2 u + \partial_y^2 u + \partial_z^2
u\,=\,-4\pi\rho(x, y, z),\,(x, y, z)\in B(0,R)
\end{equation}
and\\
\begin{equation}\label{3:1.9}
\triangle u\,=\,0,\,\,(x, y, z)\not\in
B(0,R) \hbox{ (Laplace Equation) }
\end{equation}
We recall that a similar law of interaction between electrical
particles is valid and Coulomb law is described by
\begin{equation}\label{3:1.10}
\mu\,=\,\gamma\frac{m_1m_2}{r^2},\,\,(m_1,m_2)- \hbox{ electric charges of }(P_1,P_2)
\end{equation}
$\gamma\,=\,\frac{1}{\epsilon}$ a constant, $$r^2\,=\,(x_1-x_2)^2+(y_1-y_2)^2 +(z_1-z_2)^2\,P_1=(x_1, y_1,
z_1)\\P_2=(x_2, y_2, z_2).$$ The associated electrostatic field
$(E_x,E_y,E_z)$ is defined by $$E_x\,=\,\partial_x
u,\,E_y\,=\,\partial_y u,\,E_z\,=\,\partial_zu$$ and in this case we
get the following Poisson equation\index{Poisson equation}
\begin{equation}\label{3:1.11}
\partial_xE_x+\partial_yE_y+\partial_zE_z\,=\,\frac{4\,\pi\,\rho}{\epsilon}
\end{equation}
when a density $\rho$ is used and the electrical potential function
has the corresponding integral form.\\
\underline{
Proof of the equation \eqref{3:1.8} for $u$  defined in \eqref{3:1.7}}. Rewrite the potential function on the whole space
\begin{equation}\label{3:1.12}
u(x, y, z)\,=\,\iiint \frac{\rho(a, b,
c)}{\sqrt{(x-a)^2+(y-b)^2+(z-c)^2}}\,da\,db\,dc
\end{equation}
and
making a translation of coordinates
$a-x\,=\,\xi,\,b-y\,=\,\eta,\,c-z\,=\,\tau$, we get
\begin{equation}\label{3:1.13}
u(x, y,
z)\,=\,\iiint \frac{\rho(x+\xi,\,y+\eta,\,z+\tau)}{\sqrt{\xi^2+\eta^2+\tau^2}}\,d\xi\,d\eta\,d\tau
\end{equation}
where the integral is singular and $\xi=\eta=\tau=0$ is the singular
point. The integral in \eqref{3:1.13} is uniformly convergent with respect
to the parameters $(x,y,z)$ because the function under integral has
an integrable upper bound
$\frac{\rho^*}{\sqrt{\xi^2+\eta^2+\tau^2}},$ where $\rho^*=
\max|\rho|$. In this respect, we notice that if $(x,y,z)\in
B(0,R)\subseteq R^3$ then $B(0,2R)$ can be taken as a domain $D$
where the integration of \eqref{3:1.13} is performed. (see
$\xi^2+\eta^2+\tau^2\leq (2R)^2$). Taking a standard coordinates
transformation (spherical coordinates) $$\xi=r\cos\varphi\sin\psi,\,\eta=r\sin\varphi\sin\psi,\,\sigma=r\cos\psi,\,0\leq\varphi\leq2\pi,\,0\leq\psi\leq\pi,\,0\leq
r\leq2R$$
we rewrite \eqref{3:1.13} on $D=B(0,2R)$ as follows
\begin{equation}\label{3:1.14}
\mathop{\iiint}\limits_{D}\frac{\rho^{*}d\xi d\eta d\sigma}{\sqrt{\xi^{2}+\eta^2+\sigma^2}}=4\pi \rho^*\int_{0}^{2R}\frac{r^2dr}{r}=4\pi\rho^*\frac{(2R)^2}{2}
\end{equation}
where
$$\mathop{\iint}\limits_{D}\frac{1}{r}d \xi d \eta d\sigma=(\int_{0}^{2R}\frac{r^2}{r}dr(\int_{0}^{\pi}sin\psi d\psi).2\pi=4\pi\int_{0}^{2R}r dr$$
and  $$det\left(
           \begin{array}{c}
             \partial_{\nu}\xi \\
             \partial_{\nu}\eta \\
             \partial_{\nu}\sigma \\
           \end{array}
         \right)
=-r^{2}sin\psi (\nu=(r,\varphi, \psi)) \hbox{ are used }.$$
In addition by formal derivation of \eqref{3:1.13} with respect to$(x,y,z)$
we get the following  uniformly convergent integrals
\begin{eqnarray}\label{3:1.15}
  \partial_{x}u &=& \iiint\frac{\partial_{x}[\rho(x+\xi,y+\ eta,z+\sigma)]} {\sqrt{\xi^{2}+\eta^2+\sigma^2}} d\xi d\eta d\sigma \nonumber\\
   &==& \mathop{\iiint}\limits_{D}\frac{\partial_{\xi}[\rho(x+\xi,y+\ eta,z+\sigma)]} {\sqrt{\xi^{2}+\eta^2+\sigma^2}} d\xi d\eta d\sigma
\end{eqnarray}
Using $\rho(x+\xi ,y+\eta,z+\sigma)=0$ on the sphere  $\xi^2+\eta^2+\sigma^2=(2R)^2$  and \\
{\small
$$\frac{\partial_{\xi}[\rho(x+\xi,y+\eta,z+\sigma)]}{\sqrt{\xi^2+\eta^2+\sigma^2}}=\partial_{\xi}
[(\rho(x+\xi,y+\eta,z+\sigma))]
{\sqrt{\xi^2+\eta^2+\sigma^2}}+\frac{\xi \rho(x+\xi+\eta,z+\sigma)}{(\xi^2+\eta^2+\sigma^2)^{3\setminus 2}}$$}
we get
\begin{equation}\label{3:1.16}
\partial_{x}u=\iiint_{d}\frac{\xi \rho(x+\xi+\eta,z+\sigma)}{(\xi^2+\eta^2+\sigma^2)^{3\setminus 2}} d\xi d\eta
\end{equation}
provided $$0=\mathop{\iint}\limits_{D}[\xi\phi(\xi,\eta,\sigma)]d\xi d \eta d\sigma=\mathop{\iint}\limits_{D1}[\phi(\xi_{2},\eta_{1},\sigma)-\phi(\xi_{1},\eta_{1},\sigma)]d\eta d\sigma$$
is used  where
$$\xi_{2}=+\sqrt{(2R^2)-\eta^2-\sigma^2}\,,\,\xi_1=-\sqrt{(2R^2)-\eta^2-\sigma^2}$$ and $\phi(\xi_{i},\eta,\sigma)=0 \,\,i\in\{1,2\}$.
The integral \eqref{3:1.16}is uniformly convergent with respect to$(x,y,z)\in D(0,R)$
and noticing $\frac{\xi}{\sqrt{\xi^2+\eta^2=\sigma^2}}\leq1$, we get the following integrable upper bound\\
\begin{equation}\label{3:1.17}
|\partial_x\,u|\leq \mathop{\iiint}\limits_{D}\frac{\rho^{*}d\xi d\eta d\sigma}{\sqrt{\xi^{2}+\eta^2+\sigma^2}}=4\pi \rho^*\int_{0}^{2R}\frac{r^2dr}{r^2}=4\pi\rho^2(2R)
\end{equation}
which proves that \eqref{3:1.16} is  valid. Similar  arguments are used to show that   $\partial_{y} u$ and $\partial_{z} u$ exist fulfilling \\
\begin{equation}\label{3:1.18}
\left\{
       \begin{array}{ll}
       \partial_{y} u=\mathop{\iint}\limits_{D}\frac{\eta \rho(x+\xi_{1}y+\eta_{1}z+\sigma)}{(\xi^2+\eta^2+\sigma^2)^{3\setminus2}}d\xi d\eta d\sigma \\
       \partial_{z}u =\mathop{\iint}\limits_{D}\frac{\sigma \rho(x+\xi_{1}y+\eta_{1}z+\sigma)}{(\xi^2+\eta^2+\sigma^2)^{3\setminus2}}d\xi d\eta d\sigma
     \end{array}
\right.
\end{equation}
Applying $\partial_{x}$  to $ (\partial_{x}u)$  in \eqref{3:1.16},$\partial_{y}$  to $\partial_{y}u$  and $\partial_{z}$  to $ (\partial_{z} u)$ in \eqref{3:1.18} we get convergent integrals
\begin{equation}\label{3:1.19}
\left\{
  \begin{array}{ll}
    \partial_{x}^{2} u=\mathop{\iiint}\limits_{D}\frac{\xi}{(\xi^2+\eta^2+\sigma^2)^{3\setminus2}} \partial_{\xi}[\rho(x,\xi,y,\eta,z+\sigma)]d\xi d\eta d\sigma \\
    \partial_{y}^{2} u= \mathop{\iiint}\limits_{D}\frac{\eta}{(\xi^2+\eta^2+\sigma^2)^{3\setminus2}} \partial_{\eta}[\rho(x,\xi,y,\eta,z+\sigma)]d\xi d\eta d\sigma\\
   \partial_{z}^{2} u =\mathop{\iiint}\limits_{D}\frac{\sigma}{(\xi^2+\eta^2+\sigma^2)^{3\setminus2}} \partial_{\sigma}[\rho(x,\xi,y,\eta,z+\sigma)]d\xi d\eta d\sigma
   \end{array}
\right.
\end{equation}
where $\partial_{s}[\rho(x+\xi,y+\eta,z+\sigma)](s\in{\xi, \eta,\sigma})$  is a continuous and bounded function on D(see $\rho$  is first order continuously differentiable on $D$)
using \eqref{3:1.19} we get the expression of the laplacian
\begin{equation}\label{3:1.20}
\left\{
  \begin{array}{ll}
   \Delta u= \partial_{x}^{2}u+ \partial_{y}^{2}u+ \partial_{z}^{2}u\\
   =\mathop{\iiint}\limits_{D}\frac{\xi\partial_\xi+\eta\partial_\eta+\sigma\partial_\sigma[\rho(x+\xi,y+\eta,z+\sigma)]}{(\xi^2+\eta^2+\sigma^2)^{\frac{}3}{2}}d\xi d\eta d\sigma  \\
  =\int_{0}^{2R}\{\mathop{\iint}\limits_{S_{r}}\frac{\partial_r[\rho(x+\xi,y+\eta,z+\sigma)]}{r^2}dS_r\}dr
  \end{array}
\right.
\end{equation}
where $S_{r}$ is a sphere with radius $$ r=(\xi^2+\eta^2+\sigma^2)^{\frac{1}{2}}$$ and $$\partial_r\rho=<(\partial_\xi\rho,\partial_\eta\rho,\partial_\sigma\rho),(\frac{\xi}{r},\frac{\eta}{r},\frac{\sigma}{r})>$$
 Here each $(\xi,\eta,\sigma)\in S_r$ can be represented as $$(\xi,\eta,\sigma)=(r\xi_0,r\eta_0,r\sigma_0)$$ where $$(\xi_0,\eta_0,\sigma_0)\in S_1$$ and using $d\,S_{r}=r^2d\,S_1$ we rewrite \eqref{3:1.20} as follows
\begin{equation}\label{3:1.21}
\Delta u
\left\{
  \begin{array}{ll}
   = \int_{0}^{2R}\{\mathop{\iint}\limits_{S_{1}}\partial_r[\rho(x+\xi_0r,y+\eta_0r,z+\sigma_0r)]dr\}dS_{1}\\
   =\mathop{\iint}\limits_{S_{1}}[\rho(x+2R\xi_0,y+2R\eta_0,z+2R\sigma_0)-\rho(x,y,z)]dS_{1} \\
    -\mathop{\iint}\limits_{S_{1}}\rho(x,y,z)dS_{1}=-4\pi\rho(x,y,z)
  \end{array}
\right.
\end{equation}
It shows that the equality \eqref{3:1.8} (Poisson equation)\index{Poisson equation} is for any $(x,y,z)\in B(0,R)$ where $ R>0$  was arbitrarily fixed and the proof is complete.\,\,\,\,\,\,\,$\Box$\\
We conclude this introduction by showing that the potential function $u(x,y,z)$ defined in \eqref{3:1.7} satisfied the following asymptotic behaviour
{\small
\begin{equation}\label{3:1.22}
\mathop{lim}\limits_{r\rightarrow \infty}u(x,y,z)=0, \hbox{ or }\mathop{lim}\limits_{r\rightarrow\infty}\sqrt{x^2+y^2+z^2}u(x,y,z)=\mathop{\iint\int}\limits_{D}\rho(a,b,c)da\,db\,dc
\end{equation}}
In this respect ,denote $Q=(x,y,z)\,,P=(a,b,c),\,dv=da\,db\,dc$ and  rewrite equation \eqref{3:1.22} as follows
$$u(Q)=\mathop{\iiint}\limits_{D}\frac{\rho(P)dv}{r(P,Q)}\,\,\,(D:r(P,Q)\leq R)$$
{\small
\begin{equation}\label{3:1.23}
\mathop{lim}\limits_{r(0,Q)\rightarrow \infty}r(0,Q)u(Q)=\mathop{lim}\limits_{r(0,Q)\rightarrow \infty} \mathop{\iiint}\limits_{D}\frac{\rho(P)dv}{[1-\frac{r(0,Q)-r(P,Q)}{r(0,Q)}]}=\mathop{\iiint}\limits_{D}\rho(P)dv
\end{equation}}
Here we notice that $a=\frac{r(0,Q)-r(P,Q)}{r(0,Q)}$ satisfies $\mid a\mid<1\,,\frac{1}(1-a)=1+a+a^2+,\dots$ and $\mid r(0,Q)-r(P,Q)\mid\leq r(0,P)$ leads us to \eqref{3:1.23} and \eqref{3:1.22} are valid. We conclude the above given considerations by
\begin{theorem}\label{th:3.1.1}
Let $\rho(x,y,z):\mathbb{R}^{3}\rightarrow\mathbb{R}$ be a continuously differentiable function in a open neighborhood $V(x_0,y_0,z_0)\subseteq\mathbb{R}^{3}$ and vanishing outside of the fixed ball $B(0,L)\subseteq\mathbb{R}^{3}$. Then $$u(x,y,z)=\mathop{\iiint}\limits_{a^2+b^2+\mathcal{C}^2\leq L^2}\frac{\rho(a,b,c)da\,db\,dc}{r(P,P(a,b,c))}\,\,,P=(x,y,z)\in\mathbb{R}^{3}$$
satisfies the following Poisson equations
\index{Poisson equation}
\begin{equation}\label{3:1.24}
\Delta u(x,y,z)=(\partial_{x}^2+\partial_{y}^2+\partial_{z}^2)u(x,y,z)=-4\pi\rho(x,y,z)
\end{equation}
for any $(x,y,z)\in V(x_0,y_0,z_0)
, \Delta u(x,y,z)=0\,\,\forall\, (x,y,z)\,not \,\,in \, \,B(0,L)$. In addition
\begin{equation}\label{3:1.25}
\mathop{lim}\limits_{r\rightarrow \infty}u(x,y,z)=0\,(or\,\mathop{lim}\limits_{r\rightarrow \infty}ru(x,y,z))=\mathop{\iiint}\limits_{B(0,L)}\rho(a,b,c)da\,db\,dc
\end{equation}
\end{theorem}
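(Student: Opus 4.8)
The plan is to establish Theorem \ref{th:3.1.1} essentially by assembling the two displays that were already derived in the preceding discussion, namely the Poisson equation \eqref{3:1.8}--\eqref{3:1.9} and the asymptotic decay \eqref{3:1.22}--\eqref{3:1.23}, into a single self-contained statement, while being careful about the local character of the hypothesis (now $\rho$ is only assumed $C^{1}$ on a neighbourhood $V(x_{0},y_{0},z_{0})$, together with the support condition $\rho\equiv 0$ outside $B(0,L)$). First I would fix $(x,y,z)\in V(x_{0},y_{0},z_{0})$ and choose $R>0$ large enough that $B(0,R)\supseteq B(0,L)$ and simultaneously $(x,y,z)\in B(0,R)$; since $\rho$ vanishes outside $B(0,L)$, the integral defining $u$ over $\{a^{2}+b^{2}+c^{2}\le L^{2}\}$ coincides with the integral over all of $\mathbb{R}^{3}$, hence with the integral over the larger ball used in the earlier computation. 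This reduces the claim to exactly the situation already treated.

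Next I would carry over the differentiation-under-the-integral argument verbatim: after the translation $a-x=\xi$, $b-y=\eta$, $c-z=\tau$ one gets \eqref{3:1.13}, and the uniform convergence of the resulting singular integrals (bounded by $\rho^{*}/\sqrt{\xi^{2}+\eta^{2}+\tau^{2}}$ over the bounded domain $B(0,2R)$, integrable in spherical coordinates as in \eqref{3:1.14}) justifies passing $\partial_{x},\partial_{y},\partial_{z}$ inside, producing the first-derivative formulas \eqref{3:1.16} and \eqref{3:1.18}. The only place the $C^{1}$ hypothesis is genuinely used is here: moving $\partial_{x}$ onto the $\xi$-variable and integrating by parts requires $\partial_{\xi}\rho(x+\xi,\cdots)$ to exist and be continuous, which holds because $(x,y,z)$ and its small translates stay inside $V(x_{0},y_{0},z_{0})$; the boundary terms vanish because $\rho=0$ on $\{\xi^{2}+\eta^{2}+\tau^{2}=(2R)^{2}\}$. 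Iterating once more gives \eqref{3:1.19}, and summing yields \eqref{3:1.20}; then writing the integral in spherical shells $S_{r}$, using $dS_{r}=r^{2}\,dS_{1}$ and $\partial_{r}\rho=\langle\nabla\rho,(\xi,\eta,\sigma)/r\rangle$, the radial integral telescopes as in \eqref{3:1.21}: the value at $r=2R$ contributes $0$ (outside the support), leaving $-\iint_{S_{1}}\rho(x,y,z)\,dS_{1}=-4\pi\rho(x,y,z)$. For $(x,y,z)\notin B(0,L)$ the same computation applies with $\rho(x,y,z)=0$, giving $\Delta u=0$; alternatively one observes the integrand is then smooth and harmonic in $(x,y,z)$ away from the support.

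For the asymptotic part \eqref{3:1.25}, I would reproduce the estimate \eqref{3:1.23}: with $Q=(x,y,z)$, $P=(a,b,c)$, one writes $r(0,Q)\,u(Q)=\iiint_{D}\rho(P)\,\bigl[1-a(P,Q)\bigr]^{-1}dv$ where $a(P,Q)=(r(0,Q)-r(P,Q))/r(0,Q)$, and since $|r(0,Q)-r(P,Q)|\le r(0,P)\le L$ for $P$ in the support, $a(P,Q)\to 0$ uniformly in $P$ as $r(0,Q)\to\infty$. Dominated convergence then gives $\lim r\,u = \iiint_{D}\rho\,dv$, whence also $\lim u=0$. I do not anticipate a serious obstacle here; the work is entirely bookkeeping. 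The one point demanding a little care — and the closest thing to a ``hard part'' — is making explicit that all the differentiations are legitimate at points of $V$ only, i.e.\ that the earlier global-$C^{1}$ computation localizes correctly: one must keep the translated argument $(x+\xi,y+\eta,z+\sigma)$ inside $V$, which is automatic once $|\xi|,|\eta|,|\sigma|$ are controlled, but for the outer shell $r=2R$ one instead relies on the vanishing of $\rho$ rather than on differentiability. Since $V$ is only used through the existence of $\nabla\rho$ near $(x,y,z)$ and $\nabla\rho$ is continuous there, the shell-integration identity and the final telescoping remain valid, and the proof is complete.
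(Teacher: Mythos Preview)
Your proposal is correct and matches the paper's approach exactly: the paper does not give a separate proof of Theorem~\ref{th:3.1.1} but simply introduces it with ``We conclude the above given considerations by,'' so the theorem is precisely the packaging of the preceding computations \eqref{3:1.8}--\eqref{3:1.23} that you describe. Your added care about the localization of the $C^{1}$ hypothesis to $V(x_{0},y_{0},z_{0})$ is a reasonable gloss on a point the text leaves implicit.
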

\begin{remark}
The result in Theorem \ref{th:3.1.1} holds true when replacing $\mathbb{R}^3$ with  $\mathbb{R}^n(n\geq 3)$ and if it is the case then the corresponding newtonian potential function is given by
$$u(x)=\mathop{\int\dots\int}\limits_{n\,times}\frac{\rho(y)dy_1\,\dots\,dy_n}{r^{n-2}}$$
where $\rho(y):\mathbb{R}^n\rightarrow \mathbb{R}$ is a continuously differentiable function vanishing outside of the ball $B(0,L)\subseteq\mathbb{R}^n$ and $r=\mid x-y\mid=(\mathop{\sum}\limits_{i=1}^{n}(x_i-y_i)^2)^{\frac{1}{2}}$. The corresponding Poisson equation \index{Poisson equation}is given by
$$\mathop{\sum}\limits_{i=1}^{n}\partial_{i}^2u(x)=\Delta u(x)=\rho(x)(-\sigma_1)\,\,\forall\,x\in\mathbb{R}^n, \hbox{ where }\sigma_1=meas S(0,1)$$
\end{remark}
\section{Exercises}
Using the same algorithm as in Theorem \ref{th:3.1.1} prove that the corresponding potential function(logarithm)in $\mathbb{R}^2$ is given by\\
$$u(x,y)=\iint\rho(a,b)\{ln\frac{1}{((x-a)^2+(y-b)^2)^{\frac{1}{2}}}\}da\,db$$
and satisfies the following Poisson equation \index{Poisson equation}
$$\Delta u(x,y)=(\partial_{x}^2+\partial_{y}^2)u(x,y)=-2\pi\rho(x,y),\,\,\forall\,(x,y)\in V(x_0,y_0)$$
Here $\rho:\mathbb{R}^2\rightarrow\mathbb{R}$ is first order continuously differentiable in the open neighborhood $V(x_0,y_0)\subseteq\mathbb{R}^2$ and vanishes out side of the disk $B(0,L)\subseteq\mathbb{R}^2$. By a direct computation show that the following Lapace equation
$$\Delta u(x)=\mathop{\sum}\limits_{i=1}^n\partial_{i}^2u(x)=0\,\,\,\forall\,x\in\mathbb{R}^n,\,(n\geq 3),\,x\neq 0$$is valid, where $$u(x)=\frac{1}{r^{n-2}},r=(\mathop{\sum}\limits_{i=1}^nx_{i}^2)^{\frac{1}{2}},\,x=(x_{1},...x_{n}) \partial_{i}^2 u (x)=\frac{\partial^2u(x)}{\partial x_{i}^2}$$

\section{Maximum Principle for Harmonic Functions\index{Harmonic functions }}
Any solution of the Lapace equation $\Delta u(x,y,z)=0$ will be called harmonic function\\
\textbf{Maximum principle}\\
A harmonic function $u(x,y,z)$  which is continuous in a bounded closed domain $\overline{G}=G\sqcup \Gamma$ and admitting second order continuous partial derivatives in the open set
$G\subseteq\mathbb{R}^3$ satisfy
$$\mathop{\max}\limits_{(x,y,z)\in \overline{G}}u(x,y,z)=\mathop{\max}\limits_{(x,y,z)\in\Gamma}u(x,y,z)\,and\,\mathop{\min}\limits_{(x,y,z)\in \overline{G}}u(x,y,z)=\mathop{\min}\limits_{(x,y,z)\in\Gamma}u(x,y,z)$$
where  $\Gamma=\partial G$ ( boundary of $G$ )
\begin{proof}
Denote $m=\max\{u(x,y,z):(x,y,z)\in\Gamma\}$ and assume that $\max\{u(x,y,z):(x,y,z)\in \overline{G}\}=M=u(x_0,y_0,z_0)>m$, where $(x_0,y_0,z_0)\in G$. Define the auxiliary function
$$\nu(x,y,z)=u(x,y,z)+\frac{M-m}{2d^2}[(x-x_0)^2+(y-y_0)^2+(z-z_0)^2]$$ and $$d=\max\{r(P,Q):P,Q\in\overline{G}\} \hbox{ and }r(P,Q)=\mid P-Q\mid$$
(distance between two points).
By definition,$$r^2(P,P_0)=(x-x_0)^2+(y-y_0)^2+(z-z_0)^2,(P=(x,y,z),P_0=(x_0,y_0,z_0))$$ and using $r^2(P,P_0)\leq d^2$\\
we get
$$\nu(x,y,z)\leq m+\frac{M-m}{2}=\frac{M+m}{2},\,\forall\,(x,y,z)\in\Gamma$$
On the other hand, $\nu(x_0,y_0,z_0)=u(x_0,y_0,z_0)=M$ and it implies $\max\{\nu(x,y,z):(x,y,z)\in\overline{G}\}=V(\overline{P})$ is achieved in the open set $\overline{P}\in G$. As a consequence $$\partial_x\nu(\overline{x},\overline{y},\overline{z})=0,\,\partial_y\nu(\overline{x},\overline{y},\overline{z})=0,
\,\partial_z\nu(\overline{x},\overline{y},\overline{z}),\,\partial_{x}^2\nu(\overline{x},\overline{y},\overline{z})\leq 0$$
$$\partial_{y}^2\nu(\overline{x},\overline{y},\overline{z})\leq 0\hbox{ and }\partial_{z}^2\nu(\overline{x},\overline{y},\overline{z})\leq 0$$
where $\overline{P}=(\overline{x},\overline{y},\overline{z})$. In addition
{\small
$$\Delta\nu(\overline{P})\leq 0\hbox{ and }\Delta\nu(\overline{P})=\Delta u(\overline{P})+\frac{M-m}{2d^2}[\Delta r^2(P,P_0)]_{P=\overline{P}}=\frac{M-m}{2d^2}(2+2+2)>0$$}
which is a contradiction. Therefore\\
$$u(x,y,z)\leq m=\max\{u(x,y,z):(x,y,z)\in\Gamma\},\,\forall\,(x,y,z)\in\overline{G}$$
To prove the inequality
$$u(x,y,z)\geq \min\{u(x,y,z):(x,y,z)\in\Gamma\},\,\forall\,(x,y,z)\in\overline{G}$$
we apply the above given result to $\{-u(x,y,z)\}$. The proof is complete.
\end{proof}
\begin{remark}
With the same proof we get that a harmonic function \index{Harmonic functions }in the plane
$$\partial _{x}^2 u(x,y)+\partial_{y}^2 u(x,y)=0(x,y)\in D\subseteq \mathbb{R}^2$$
which  is continuous on
$$\overline{D}=D \,U \partial D \,\,\,\,\,\,\,\,\, satisfy$$
$$\max\{u(x,y):(x,y)\in\overline{D}\}=\max\{u(x,y):(x,y)\in\,\partial D\}$$
$$\min\{u(x,y):(x,y)\in\overline{D}\}=\min\{u(x,y):(x,y)\in\,\partial D\}$$
\end{remark}
\begin{remark}
The Poisson equation \index{Poisson equation}
\begin{equation}\label{3:2.1}
\Delta u(x,y,z)=-4\pi\rho(x,y,z)
\end{equation}
has a unique solution under the restriction.
$$\mathop{lim}\limits_{r\rightarrow \infty} u(x,y,z)=0\hbox{ where }\rho(x,y,z):\mathbb{R}^3\rightarrow\mathbb{R}$$ is a continuous function. Consider that $u_1,u_2$ are solutions of the Poisson equation \eqref{3:2.1} fulfilling
$$\mathop{lim}\limits_{r\rightarrow \infty} u_i(x,y,z)=0, i\in\{1,2\}$$
Then $u=u_1-u_2$ satisfies $\Delta u(x,y,z)=0$ for any $(x,y,z)\in\mathbb{R}^3$
and $\mathop{lim}\limits_{r\rightarrow \infty} u(x,y,z)=0$. In particular, $\{u(x,y,z):(x,y,z)\in B(0,R)\subseteq\mathbb{R}^3\}$ satisfies, maximum principle, where $B(0,R)=\overline{G}$ and $\Gamma=\{(x,y,z)\in\mathbb{R}^3:x^2+y^2+z^2=R^2\}$.
We get $u(x_0,y_0,z_0)\leq \max\{u(x,y,z):(x,y,z)\in\Gamma\}$ and
$u(x_0,y_0,z_0)\geq \min\{u(x,y,z):(x,y,z)\in\Gamma\}$
for any $(x_0,y_0,z_0)\in int\,B(0,R)$. In particular, for $(x_0,y_0,z_0)\in int\,B(0,R)$ fixed and passing $R\rightarrow \infty$ we get $u(x_0,y_0,z_0)=0$ and
$u_1(x,y,z)=u_2(x,y,z)$ for any $(x,y,z)\in\mathbb{R}^3$.
\end{remark}
\subsection[The Wave Equation;Kirchhoff,D'Alembert \& Poisson Formulas]{The Wave Equation;\index{Wave equation} Kirchhoff, D'Alembert\index{D'Alembert!} and \\Poisson Formulas}
Consider the waves equation
\begin{equation}\label{3:4.1}
\partial _{t}^2 u(t,x,y,z)= C_{0}^2(\partial_{x}^2 u +\partial _{y}^2 u+\partial_{z}^{2}u)(t,x,y,z)
\end{equation}
for $(x,y,z)\in D(domain) \subseteq R^3$ and initial condition
\begin{equation}\label{3:4.2}
u(0,x,y,z)= u_{0}(0,x,y,z),\,\partial_t u(0,x,y,z)= u_{1}(x,y,z)
\end{equation}
The integral representation of the solution satisfying \eqref{3:4.1} and \eqref{3:4.2}is called Kirchhoff  formula. In particular, for 1- dimensional case the equation and initial conditions are described by
\begin{equation}\label{3:4.3}\partial_{t}^2 u (t,x)= C_{0}^2 \partial _{x}^2 u (t,x), u(0,x)=u_0(x),\partial_{t} u (0,x)=u_{1}(x)
\end{equation}
and d'Alembert formula\index{D'Alembert!formula} gives the following representation
\begin{equation}\label{3:4.4}
u(t,x)=\frac{u_{0}(x+c_{0}t)+u_{0}(x-c_{0}t)}{2}+\frac{1}{2c_{0}}\int _{x-c_{0}t}^{c_{0}t+x}u_{1}(\sigma)d\sigma
\end{equation}
In this simplest case, the equation \eqref{3:4.3}is decomposed as follows
\begin{equation}\label{3:4.5}
(\partial_{t}+c_{0}\partial _{x})(\partial_{t}-c_{0}\partial_{x}) u (t,x)=0
\end{equation}
and find the general solution of the linear first order equation
\begin{equation}\label{3:4.6}
\partial _{t}\nu(t,x)+ c_{0}\partial_{x}\nu(t,x)=0
\end{equation}
By a direct computation we get
\begin{equation}\label{3:4.7}
\nu(t,x)=\nu_{0}(x-c_{0}t),\,(t,x)\in\mathbb{R}\times\mathbb{R}
\end{equation}
where $\nu_{0}(\lambda): \mathbb{R}\rightarrow \mathbb{R}$ is an arbitrary first order continuously differentiable function. Then solve the following equation\\
\begin{equation}\label{3:4.8}
\left\{
     \begin{array}{ll}
      \partial_tu(t,x)-c_0\partial_xu(t,x)=\nu_0(x-c_0t) \\
   u(0,x)=u_0(x),\,\partial_tu(0,x)=u_1
(x)     \end{array}
   \right.
\end{equation}
From the equation $  u(0,x)=u_0(x),\,\partial_tu(0,x)=u_1(x)$ we find $\nu_0$ such that
\begin{equation}\label{3:4.9}
u_1(x)-c_0\partial_xu_0(x)=\nu_0(x),\,x\in\mathbb{R}
\end{equation}
and using the characteristic system \index{Characteristic! system }associated with \eqref{3:4.8} we obtain
\begin{equation}\label{3:4.10}
u(t,x)=u_0(x+c_0t)+\mathop{\int}\limits_{0}^{t}\nu_0(x+c_0t-2c_0s)ds
\end{equation}
Using \eqref{3:4.9} into \eqref{3:4.8} we get the corresponding D'Alembert formula\index{D'Alembert!formula}
\begin{equation}\label{3:4.11}
u(t,x)=\frac{u_0(x+x_0t)+u_0(x-c_0t)}{2}+\frac{1}{2c_0}\mathop{\int}\limits_{x-c_0t}^{x+c_0t}u_1(\sigma)d\sigma
\end{equation}
given in \eqref{3:4.4} In the two-dimensional case $(x,y)\in\mathbb{R}^2$\,we recall that the Poisson formula is expressed as follows
\begin{eqnarray}\label{3:4.12}
  u(t,x,y) &=& \frac{1}{2\pi c_0}[\partial_t(\mathop{\int}\limits_{0}^{2\pi}\mathop{\int}\limits_{0}^{c_0t}\rho\frac{u_0(x+\rho cos\varphi,y+\rho sin\varphi)}{\sqrt{c_{0}^2t^2-\rho^2}}d\rho d\varphi) \nonumber\\
  &+& (\mathop{\int}\limits_{0}^{2\pi}\mathop{\int}\limits_{0}^{c_0t}\rho\frac{u_1(x+\rho cos\varphi,y+\rho sin\varphi)}{\sqrt{c_{0}^2t^2-\rho^2}}d\rho d\varphi)]
\end{eqnarray}
The general case ,$(x,y,z)\in\mathbb{R}^3$, will be treated reducing the equation \eqref{3:4.1} to an wave equation \index{Wave equation}analyzed in the one-dimensional case provided adequate coordinate transformations are used. In this respect, for a $P_0=(x_0,y_0,z_0)\in D$ fixed and $B(P_0,r)\subseteq D$ we define ($\overline{u}$ is the mean value of u on $S_r$)
\begin{equation}\label{3:4.13}
\overline{u}(r,t)=\frac{1}{4\pi r^2}\mathop{\iint}\limits_{S_{r}}u(t,x,y,z)dS_{r}=\frac{1}{4 \pi}\mathop{\iint}\limits_{S_{1}}u(t,x,y,z)dS_{1}
\end{equation}
where $S_{r}=\partial B(P_0,r)$ is the boundary of the ball $B(P_0,r)$, and $dS_r=r^2dS_1$ is used.The integral in \eqref{3:4.13} can be computed as a two dimensional integral provided we notice that each $(x,y,z)\in S_r$ can be written as
\begin{equation}\label{3:4.14}
(x,y,z)=(x_0+\alpha r,y_0+\beta r,z_0+\gamma r)=P_0+rw
\end{equation}
where $(\alpha,\beta,\gamma)=\omega$\,are the following
\begin{equation}\label{3:4.15}
\left\{
   \begin{array}{ll}
   \alpha=sin\theta cos\varphi\,\,\,0\leq\theta\leq \pi \\
     \beta= sin\theta sin\varphi\,\,\,0\leq\varphi\leq \pi\\
   \gamma =cos\theta
   \end{array}
 \right.
\end{equation}
Here $dS_r=r^2 sin\theta d\varphi d\theta\,$ and rewrite \eqref{3:4.13} using a two dimensional integral
\begin{equation}\label{3:4.16}
\overline{u}(r,t)=\frac{1}{4 \pi}\mathop{\int}\limits_{0}^{2\pi}\mathop{\int}\limits_{0}^{\pi}u(t,P_0+rw)sin\theta d\varphi d\theta
\end{equation}
assuming\,\,$w=(\alpha,\beta,\gamma)$\,given in \eqref{3:4.16}.The explicit expression\index{Explicit! expression} of $\overline{u}(r,t)$\, in (14) will be deduced taking into consideration the corresponding wave equation \index{Wave equation}satisfied by $\overline{u}(r,t)$ when $u(t,x,y,z)$ is a solution of \eqref{3:4.1}. In this respect, integrate in both sides of \eqref{3:4.1} using the three dimensional domain $D_r=B(P_0,r),P_0=(x_0,y_0,z_0)$, and the spherical transformation of the coordinates $(x,y,z)$
\begin{equation}\label{3:4.17}
(x,y,z)=(x_0+\rho\alpha,y_0+\rho\beta+z_0+\rho\gamma)=P_0+\rho\omega
\end{equation}
where $0\leq \rho\leq r$ and $\alpha(\theta,\varphi),\beta(\theta,\varphi),\gamma(\theta)$ satisfy \eqref{3:4.15}. We get
\begin{equation}\label{3:4.18}
\mathop{\iiint}\limits_{D_r}\partial_{t}^{2}u(t,x,y,z)dxdydz=\mathop{\int}\limits_{0}^{r}(\mathop{\iint}\limits_{S_1}\partial_{t}^{2}
u(t,P_0+\rho\omega)\rho^2dS_1)d\rho
\end{equation}
where $dS_1=sin\theta d\theta d\varphi$\,and $S_1=S(P_0,1)$ is the sphere centered at $P_0$. Denote $\Delta=\partial_{x}^{2}+\partial_{y}^{2}+\partial_{z}^{2}$ and for the integral in the right hand side we apply Gauss-Ostrogradsky \index{Gauss-Ostrogadsky}formula. We get
\begin{eqnarray}\label{3:4.19}
  C_{0}^{2}\mathop{\iiint}\limits_{D_r}\Delta u(t,x,y,z)dxdydz &=& C_{0}^2\mathop{\iint}\limits_{S_r}[\alpha\partial_xu(t,P_0+r\omega) \nonumber\\
  &+&\beta\partial_yu(t,P_0+r\omega)+\gamma\partial_zu(t,P_0+r\omega)]dS_r \nonumber\\
   &=& C_{0}^{2}\mathop{\iint}\limits_{S_1}\{\partial_r[u(t,P_0+r\omega)]r^2dS_1\}\nonumber\\
   &=& C_0^2r^2\partial_r[\mathop{\iint}\limits_{S_1}u(t,P_0+r\omega)dS_1]
\end{eqnarray}
where $(\alpha,\beta,\gamma)=\omega$ are the coordinates of the unit outside normal vector at $S_r$. Using \eqref{3:4.13} we rewrite \eqref{3:4.19} as
\begin{equation}\label{3:4.20}
C_{0}^{2}\mathop{\iiint}\limits_{D_r}\Delta u(t,x,y,z)dxdydz=C_{0}^{2}r^2(4\pi)\partial_r\overline{u}(r,t)
\end{equation}
In addition, notice that \eqref{3:4.18} can be written as
\begin{eqnarray}\label{3:4.21}
  \mathop{\iiint}\limits_{D_r}\partial_{t}^{2}u(t,P_0+\rho\omega)dxdydz &=& \int_{0}^{r}\rho^2 [\partial_{t}^2\mathop{\iiint}\limits_{S_1}u(t,P_0+\rho\omega)dS_1]d\rho \nonumber\\
  &=& \int_{0}^{r}4\pi\rho^2\partial_{t}^{2}\overline{u}(\rho,t)d\rho
\end{eqnarray}
and deriving with respect to $r$ in \eqref{3:4.20}and \eqref{3:4.21} we obtain
\begin{equation}\label{3:4.22}
\partial_{t}^{2}\overline{u}(r,t)=\frac{C_{0}^{2}}{r^2}\partial_r[r^2\partial_{r}\overline{u}(r,t)]
\end{equation}
Denote $\overline{\overline{u}}(r,t)=r\overline{u}(r,t)$\,and using \eqref{3:4.22} compute
\begin{equation}\label{3:4.23}
\partial_{t}^{2}\overline{\overline{u}}(r,t)\left\{
  \begin{array}{ll}
    =& r\partial_{t}^{2}\overline{u}(r,t) \\
    =&\frac{C_{0}^{2}}{r^2}\partial_r[r^2\partial_{r}\overline{u}(r,t)]  \\
    = & \frac{C_{0}^{2}}{r^2}[2r\partial_{r}\overline{u}(r,t)+r^2\partial_{r}^{2}\overline{u}(r,t)] \\
    = & C_{0}^{2}[2\partial_{r}\overline{u}(r,t)+r\partial_{r}^{2}\overline{u}(r,t)]
  \end{array}
\right.
\end{equation}
Notice that
\begin{equation}\label{3:4.24}
\partial_{r}^{2}\overline{\overline{u}}(r,t)=2\partial_{r}\overline{u}(r,t)+r\partial_{r}^{2}\overline{u}(r,t)
\end{equation}
and rewrite \eqref{3:4.23} as follows
\begin{equation}\label{3:4.25}
\partial_{t}^{2}\overline{\overline{u}}(r,t)=C_{0}^{2}\partial_{r}^{2}\overline{\overline{u}}(r,t),\,r\geq 0
\end{equation}
where $\overline{\overline{u}}$ satisfies the boundary condition
\begin{equation}\label{3:4.26}
\overline{\overline{u}}(0,t)=0
\end{equation}
Extend $\overline{\overline{u}}(r,t)$\,for $r<0$\,by $\overline{\overline{u}}(-r,t)=-\overline{\overline{u}}(r,t)$ where $r\geq 0$. In addition, the initial conditions for $\overline{\overline{u}}(r,t)$ are the following
\begin{equation}\label{3:4.27}
\left\{
       \begin{array}{ll}
        \overline{\overline{u}}(r,0)=r\overline{u}(r,0)=\frac{1}{4\pi r}\mathop{\iint}\limits_{S_r}u_0(x,y,z)dS_r \\
         \partial_t\overline{\overline{u}}(r,0)=\frac{1}{4\pi r}\mathop{\iint}\limits_{S_r}u_1(x,y,z)dS_r
       \end{array}
     \right.
\end{equation}
The solution $\{\overline{\overline{u}}(r,t)\}$ fulfilling \eqref{3:4.25})and \eqref{3:4.27} are expressed using Poisson formula (see\eqref{3:4.4})
\begin{equation}\label{3:4.28}
\overline{\overline{u}}(r,t)=\frac{\varphi(r+C_0t)+\varphi(r-C_0t)}{2}+\frac{1}{2C_0}\int_{x-C_0}^{x+C_0}\psi(\sigma)d\sigma
\end{equation}
where\\
\begin{equation}\label{3:4.29}
\varphi(\xi)=\overline{\overline{u}}(\xi,0)=\frac{1}{4\pi\xi}\mathop{\iint}\limits_{S_\xi}u_1(x,y,z)dS_\xi
\end{equation}
and\\
\begin{equation}\label{3:4.30}
\psi(\xi)=\partial_{t}\overline{\overline{u}}(\xi,0)=\frac{1}{4\pi\xi}\mathop{\iint}\limits_{S_\xi}u_0(x,y,z)dS_\xi
\end{equation}
Using \eqref{3:4.29} and \eqref{3:4.30} we get the Kirchhoff formula for the solution of the wave equation \index{Wave equation}\eqref{3:4.1} satisfying Cauchy conditions \eqref{3:4.2} and it can be expressed as follows
\begin{eqnarray}\label{3:4.31}
  u(t,P_0) &=& \frac{d}{dr}[\frac{1}{4\pi r}\mathop{\iint}\limits_{S_r(P_0)}u_0(x,y,z)dS_r]_{r=C_{0}t}+
[\frac{1}{4\pi C_{0}}\mathop{\iint}\limits_{S_r(P_0)}u_1(x,y,z)dS_r]_{r=C_{0}t} \nonumber\\
   &=& \frac{1}{4\pi C_{0}}[\frac{d}{dr}(\frac{1}{C_0t}\mathop{\iint}\limits_{S_{c_{0}t}(P_0)}u_0(x,y,z)dS)+\frac{1}{C_0t}
\mathop{\iint}\limits_{S_{c_{0}t}(P_0)}u_1(x,y,z)dS]
  \end{eqnarray}
where \\
$$\mathop{\iint}\limits_{S_{c_{0}t}(P_0)}u_0(x,y,z)dS=\int_{0}^{2\pi}\int_{0}^{\pi}u(P_0+(C_0t)\omega)(C_0t)^2sin\theta d\theta d\varphi$$
and $\omega=(\alpha,\beta,\gamma)$\,is defined in \eqref{3:4.15}.
Passing $P_0\rightarrow P=(x,y,z)$\,in the Kirchhoff formula \eqref{3:4.31} we get the integral representation of a Cauchy problem solution satisfying \eqref{3:4.1} and \eqref{3:4.2}
\begin{equation}\label{3:4.32}
u(t,x,y,z)=\frac{1}{4\pi C_{0}}
[\frac{d}{dt}(\frac{1}{C_0t}\mathop{\iint}\limits_{S_{c_{0}t}(P_0)}u_0(x,y,z)dS)+\frac{1}{C_0t}\mathop{\iint}\limits_{S_{c_{0}t}(P_0)}u_1(x,y,z)dS]
\end{equation}
where the integral in \eqref{3:4.32} is computed as follows
{\small
\begin{equation}\label{3:4.33}
\frac{1}{C_0t}\mathop{\iint}\limits_{S_{c_{0}t}(P_0)}u(\xi,\eta,\sigma)dS=C_0t\int_{0}^{2\pi}\int_{0}^{\pi}u(x+(C_0t)\alpha,y+(C_0t)\beta,z+(C_0t)\gamma)sin\theta d\theta d\varphi
\end{equation}}
and $(\alpha,\beta,\gamma)$ are defined in \eqref{3:4.15}
$\left\{
   \begin{array}{ll}
   \alpha=sin\theta cos\varphi\,\,\,0\leq\theta\leq\pi \\
     \beta= sin\theta sin\varphi\,\,\,0\leq\varphi\leq 2\pi\\
   \gamma =cos\theta
   \end{array}
 \right.
$
\begin{proposition}
Assume that $u_0,u_1\in\mathcal{C}^4(\mathbb{R}^3)$\,are given. Then $\{u(t,x,y,z):t\in\mathbb{R}_{+},(x,y,z)\in\mathbb{R}^3\}$\, defined in \eqref{3:4.32} is
 a solution of the hyperbolic equation \eqref{3:4.1} satisfying the Cauchy condition \eqref{3:4.2}.
\end{proposition}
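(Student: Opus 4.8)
The statement asserts that the Kirchhoff representation \eqref{3:4.32}, built from Cauchy data $u_0,u_1\in\mathcal{C}^4(\mathbb{R}^3)$, solves the wave equation \eqref{3:4.1} with initial conditions \eqref{3:4.2}. The plan is to run the spherical-means reduction backwards: everything was derived from the ansatz that $u$ solves \eqref{3:4.1}, so now I must instead \emph{start} from the formula and verify the two properties directly. First I would fix $P_0=(x,y,z)\in\mathbb{R}^3$ and introduce the spherical mean $\overline{u}(r,t)$ of the candidate solution and, more usefully, $\overline{\overline{u}}(r,t)=r\overline{u}(r,t)$; the point is that the Poisson/d'Alembert formula \eqref{3:4.28} with data \eqref{3:4.29}--\eqref{3:4.30} defines a genuine $\mathcal{C}^2$ solution of the one-dimensional wave equation \eqref{3:4.25} on $r\in\mathbb{R}$ (using the odd extension $\overline{\overline{u}}(-r,t)=-\overline{\overline{u}}(r,t)$, which is consistent because $\varphi$ and $\psi$ extend to odd $\mathcal{C}^1$ functions — here the $\mathcal{C}^4$ hypothesis on the data is what guarantees enough smoothness of $\varphi,\psi$ near $r=0$). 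Then I would recover $u(t,P_0)$ as the limit $\lim_{r\to 0}\overline{u}(r,t)=\lim_{r\to 0}\overline{\overline{u}}(r,t)/r=\partial_r\overline{\overline{u}}(0,t)$, and identify this limit with the right-hand side of \eqref{3:4.32} by differentiating \eqref{3:4.28}.

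The verification of the PDE itself is the technical core. Having established \eqref{3:4.22}, equivalently \eqref{3:4.25} for $\overline{\overline{u}}$, I would show that $u(t,P_0)$ as a function of $(t,P_0)$ inherits \eqref{3:4.1}. The cleanest route: express $u(t,x,y,z)=\partial_t\big(t\,M_{u_0}(t,P_0)\big)+t\,M_{u_1}(t,P_0)$ where $M_h(t,P_0)=\frac{1}{4\pi}\iint_{S_1}h(P_0+(c_0t)\omega)\,dS_1$ is the normalized spherical mean, then invoke Darboux's equation: for $h\in\mathcal{C}^2$, the function $w(t,P_0)=t\,M_h(t,P_0)$ satisfies $\partial_t^2 w=c_0^2\,\Delta_{P_0} w$ with $w(0,\cdot)=0$, $\partial_t w(0,\cdot)=h$. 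This last fact is exactly the content of the spherical-means computation \eqref{3:4.18}--\eqref{3:4.22} read in reverse — the Laplacian on $S_r$-averages commutes out via Gauss--Ostrogradsky \eqref{3:4.19}, and $r^2$-weighting turns the radial operator into $\partial_r^2$ after the substitution $\overline{\overline{u}}=r\overline{u}$. Granting Darboux's equation for $M_{u_0}$ and $M_{u_1}$, linearity and the fact that $\partial_t$ commutes with $\partial_t^2-c_0^2\Delta$ give $\partial_t^2 u=c_0^2\Delta u$ immediately.

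Finally I would check the initial conditions \eqref{3:4.2}. As $t\to 0^+$, $t\,M_{u_1}(t,P_0)\to 0$ and $\partial_t\big(t\,M_{u_0}(t,P_0)\big)\to M_{u_0}(0,P_0)=u_0(P_0)$, giving $u(0,x,y,z)=u_0(x,y,z)$. For $\partial_t u(0,\cdot)$ one differentiates once more: $\partial_t u=\partial_t^2(t M_{u_0})+M_{u_1}+t\,\partial_t M_{u_1}$; using Darboux's equation $\partial_t^2(t M_{u_0})=c_0^2\Delta(t M_{u_0})$, which vanishes at $t=0$ since $t M_{u_0}\big|_{t=0}=0$ and the $\Delta$ acts in space, and $t\,\partial_t M_{u_1}\to 0$, one gets $\partial_t u(0,x,y,z)=u_1(x,y,z)$. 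The main obstacle is not any single identity but bookkeeping the regularity: one needs $u_0\in\mathcal{C}^3$ (so that $t M_{u_0}$ is $\mathcal{C}^2$ in $t$ and we may apply $\partial_t$ to the equation) and $u_1\in\mathcal{C}^2$, so the stated $\mathcal{C}^4$ hypothesis is comfortably sufficient; I would be careful to justify differentiation under the integral sign in $M_h$ and the passage to the limit $r\to 0$ in the odd extension, both of which are routine given the smoothness but must be stated. The proof concludes by assembling these three verified facts.
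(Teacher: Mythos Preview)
Your proposal is correct and, in its ``cleanest route'' paragraph, matches the paper's argument: the paper also reduces everything to the single claim that $w(t,P_0)=t\,M_h(t,P_0)$ solves $\partial_t^2 w=c_0^2\Delta_{P_0}w$ (with $w(0,\cdot)=0$, $\partial_t w(0,\cdot)=h$), then notes that $\partial_t$ commutes with the wave operator to handle the $u_0$ term, and checks the Cauchy data from \eqref{3:4.33} directly. The only organizational difference is that the paper proves Darboux's equation by an explicit computation on \eqref{3:4.34}---writing $\partial_t u=\tfrac{u}{t}+\tfrac{I}{t}$ with $I$ the flux integral, converting $I$ via Gauss--Ostrogradsky to a ball integral, differentiating again to get $\partial_t^2 u=\tfrac{1}{t}\partial_t I$, and comparing to $\Delta u$ computed directly under the integral---rather than invoking it as a named lemma.

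Your first paragraph, which routes the verification back through the one-dimensional equation \eqref{3:4.25} and the odd extension, is a detour you do not need: that machinery was used to \emph{derive} the formula \eqref{3:4.32}, but for the verification the direct Darboux computation (your second paragraph, the paper's argument) is self-contained and avoids the regularity bookkeeping at $r=0$.
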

\begin{proof}
Notice that if $u(t,x,y,z)$\,satisfies the wave equation \index{Wave equation}\eqref{3:4.1} then $\nu(t,x,y,z)=\partial_tu(t,x,y,z)$ verifies also the wave equation \eqref{3:4.1}. It allows us to get the conclusion and it is enough to prove that
\begin{equation}\label{3:4.34}
u(t,x,y,z)=C_0t\int_{0}^{2\pi}\int_{0}^{\pi}u(x+(C_0t)\alpha,y+(C_0t)\beta,z+(C_0t)\gamma)sin\theta d\theta d\varphi
\end{equation}
fulfils the wave equation \eqref{3:4.1}.In this respect, using \eqref{3:4.34}, compute the corresponding derivatives involved in equation \eqref{3:4.1} and we get
\begin{eqnarray}\label{3:4.35}
  \partial_tu &=& \frac{u}{t}+(C_0t)\partial_t \int_{0}^{2\pi}\int_{0}^{\pi}u(x+(C_0t)\alpha,y+(C_0t)\beta,z+(C_0t)\gamma)sin\theta d\theta d\varphi \nonumber\\
  &=& \frac{u}{t}+\frac{1}{t}\int_{0}^{2\pi}\int_{0}^{\pi}(\partial_\xi u)\alpha+(\partial_\eta u)\beta+(\partial_\sigma)\gamma  C_{0}^2t^2 sin\theta d\theta d\varphi
\end{eqnarray}
Notice that $(\alpha,\beta,\gamma)=n$\,represent the unit vector oriented outside of the sphere $S_{C_0}$\,and $dS=(C_0t)^2sin\theta\, d\theta\, d\varphi$ we rewrite \eqref{3:4.35}
\begin{equation}\label{3:4.36}
\partial_tu=\frac{u}{t}+\frac{I}{t}
\end{equation}
where
\begin{eqnarray}\label{3:4.37}
  I &=& \int_{0}^{2\pi}\int_{0}^{\pi}(\partial_\xi u)\alpha+(\partial_\eta u)\beta+(\partial_\sigma)\gamma  C_{0}^2t^2 sin\theta d\theta d\varphi \nonumber\\
  &=& \mathop{\iint}\limits_{S_{C_0t}}\frac{\partial u}{\partial n}dS \nonumber\\
  &=& \mathop{\iiint}\limits_{B(x,y,z;C_0t)}(\partial_{\xi}^{2}u+\partial_{\eta}^{2}u+\partial_{\sigma}^{2}u)d\xi d\eta d\sigma
\end{eqnarray}

is written using Gauss-Ostrogrodsky \index{Gauss-Ostrogadsky}formula,and $B(x,y,z;C_0t)$\,is the ball in $\mathbb{R}^3$ centered at $P=(x,y,z)$. From \eqref{3:4.36}, deriving again we get
\begin{equation}\label{3:4.38}
\partial_{t}^{2}=\frac{1}{t}\partial_{t}u-\frac{1}{t^2}u+\frac{1}{t}\partial_{t}I-\frac{1}{t^2}I=\frac{1}{t}(\frac{u}{t}+\frac{I}{t})-\frac{u}{t^2}-\frac{I}{t^2}+\frac{1}{t}\partial_{t}I=\frac{1}{t}\partial_{t}I
\end{equation}
On the other hand,using \eqref{3:4.37} and applying the spherical coordinate transformation in the three-dimensional integral we get\
\begin{equation}\label{3:4.39}
\left\{
       \begin{array}{ll}
         I=\int_{0}^{C_0t}[\int_{0}^{2\pi}\int_{0}^{\pi}(\partial_{\xi}^{2}u+\partial_{\eta}^{2}u+\partial_{\sigma}^{2}u)sin\theta d\theta d\varphi]r^2dr \\
         \partial_tI=C_0\mathop{\iint}\limits_{S_{C_0}t}(x,y,z)(\partial_{\xi}^{2}u+\partial_{\eta}^{2}u+\partial_{\sigma}^{2}u)ds
       \end{array}
     \right.
\end{equation}
and \eqref{3:4.38} becomes
\begin{equation}\label{3:4.40}
\partial_{t}^2 u=\frac{C_0}{t}\mathop{\iint}\limits_{S_{C_)t}(x,y,z)}(\Delta u)(\xi,\eta,\sigma)dS
\end{equation}
Denote $\Delta u(t,x,y,z)=(\partial_{x}^{2}u+\partial_{y}^{2}u+\partial_{z}^{2}u)(t,x,y,z)$ and a direct computation applied in \eqref{3:4.34} allows to get
\begin{equation}\label{3:4.41}
\Delta u(t,x,y,z)=\frac{1}{(C_0)^2}.\frac{C_0}{t}\mathop{\iint}\limits_{S_{C_)t}(x,y,z)}(\Delta u)(\xi,\eta,\sigma)dS
\end{equation}
and each term entering in the definition \eqref{3:4.32} will satisfy the wave equation \eqref{3:4.1}\index{Wave equation}. In addition, using \eqref{3:4.33}, we see easily that the Cauchy condition given in \eqref{3:4.2} are satisfied by the function $u(t,x,y,z)$ defined in \eqref{3:4.32}. The proof is complete.
\end{proof}
\section{Exercises}
$(a_{1})$ consider the wave equation in plane
$$\partial_{t}^2 u (t,x,y)=c_{0}^2[\partial_{x}^2 u(t,x,y) +\partial_{y}^2 u(t,x,y)],u(0,x,y)= y_{0}(x,y),\, \partial_{t} u (0,x,y)= u_{1}(x,y)$$
Find the integral representation of its solution.
(Poisson formula see \eqref{3:4.42}).\\
\textbf{Hint}. It will be deduced from the Kirchhoff formula (see proposition 1) considering that the variable $z$ does not appear. We get
\begin{eqnarray}\label{3:4.42}
  u (t,x,y) &=& \frac{1}{2\pi C_{0}}\Big[\partial_{t}\Big(\int_{0}^{2\pi}\int_{0}^{C_0t}\frac{\rho}{\sqrt{c_{0}^{2} t^2-\rho^2}}u_0(x+\rho cos\phi,y+\rho sin \phi)) d\rho d\phi\Big) \nonumber\\
  &+& \int_{0}^{2\pi}\int_{0}^{C_0t}\frac{\rho}{\sqrt{c_{0}^{2} t^2-\rho^2}}u_1(x+\rho cos\phi,y+\rho sin \phi) d\rho d\phi]
\end{eqnarray}
In the Kirchhoff formula (see\eqref{3:4.32}) the function $u(\xi,\eta)$ does not depend on $\sigma$ and $a$ direct computation  gives the following
$$(\theta \in[0,\pi],\phi \in [0,2\pi])$$
\begin{eqnarray*}
  E &=& \int_{0}^{2\pi} \int_{0}^{\pi} u(x+(C_0t)sin\theta cos\phi,y+(C_0t)sin\theta sin \phi)(C_{0}^2 t^2)sin\theta d\theta d\phi \\
  &=& \int_{0}^{2\pi} \int_{0}^{\frac{\pi}{2}} u()(C_{0}^2 t^2)sin\theta d\theta d \phi+\int_{0}^{2 \pi}\int_{\frac{\pi}{2}}^ \pi u ()C_{0}^2 t^2 sin\theta d\theta d\phi \\
  &=&2 \int_{0}^{2\pi} \int_{0}^{\frac{\pi}{2}} u (x,cot sin\theta cos\phi,y+(cot)sin\theta sin\phi)(C_{0}^2 t^2)sin\theta d\theta d\phi
\end{eqnarray*}
Change the coordinate $\rho =(C_0t)sin\theta $ and  get $d\rho=(C_0t)cos\theta d\theta $ for $0 \leq \theta\leq\frac{\pi}{2}$ where
$$cos\theta=\sqrt{1-sin^2\theta}=\sqrt{1-\frac{\rho^2}{C_0t}^2}=\frac{1}{C_0t}\sqrt{(C_0t)^2 -\rho^2}$$
We get
$$E =2\int_{0}^{2\pi}\int_{0}^{C_0t}\frac{\rho}{\sqrt{c_{0}^{2} t^2-\rho^2}}u(x+\rho cos\varphi,y+\rho sin \varphi) d\rho d\phi$$ and Kirchhoff's formula\index{Kirchhoff's formula } becomes Poison formula given in \eqref{3:4.42}.\\
$(a_{2})$ Prove that the solution of the non homogeneous waves equation\index{Homogeneous!wave equation}.
\begin{equation}\label{3:4.43}
\left\{
       \begin{array}{ll}
         \partial_{t}^2 u- C_{0}^2(\partial_{x}^2 u +\partial_{y}^2 y+\partial_{z}^2 u)=f(x,y,z,t) \\
         u(0,x,y,z)=0 \,\,\,\partial_{t} u(0,x,y,z)=0
         \end{array}
     \right.
\end{equation}
is presented by the following formula.
\begin{equation}\label{3:4.44}
u (t,x,y,z)=\frac{1}{4\pi C_{0}^2} \mathop{\iiint}\limits_{B(x,y,z;C_0t)} \frac{ f(\xi,\eta,\sigma,t-\frac{\sqrt{(x- \xi)^2+(y- \eta)^2+(z-\sigma)^2}}{C_{0}})}{\sqrt{(x-\xi)^2+(y-\eta)^2+(z-\sigma)^2}}d\xi d\eta d\sigma
\end{equation}
\textbf{solution}.\\Denote $p=(x,y,z),q=(\xi,\eta, \sigma)$ \,and\,$\mid p-q\mid=[\mathop{\sum}\limits_{i=1}^{3}(p_{i}-q_{i})^2]^{\frac{1}{2}}\,;B(p,C_0t)$ is the ball centered at $p$ with radius $C_0t$. To get\eqref{3:4.44} as a solution of \eqref{3:4.43} we use the solution of the following homogeneous equation\index{Homogeneous!equation}
\begin{equation}\label{3:4.45}
\partial _{t}^2\nu= c_{0}^2\Delta \nu,\nu|_{t=\sigma}=\nu_0(p),\partial_t\nu|_{t=\sigma}=f(p,\sigma)=\nu_1(p)
\end{equation}
where the initial moment $t=\sigma (replacing \,\,t=0)$ is a parameter.Recall the Kirchhoff formula for the solution satisfying \eqref{3:4.45}
\begin{equation}\label{3:4.46}
\nu(t,p)=\frac{1}{4\pi C_{0}}[\frac{d}{dt}(\mathop{\iint}\limits_{S_r(P)}\frac{\nu_0(q)}{r}dS_r)+\mathop{\iint}\limits_{S_r(P)}\frac{\nu_1(q)}{r}dS_r]
\end{equation}
 where $r=C_0t$ and $dS_r=r^2dS_1=r^2sin\theta d\theta d\varphi$\,for$\theta\in[0,2\pi]$, where $S_1$ is the unit sphere. Making use of the spherical coordinates transformation
\begin{equation}\label{3:4.47}
q=p+r\omega,\,\omega=(sin\theta cos\varphi,sin\theta sin\varphi,cos\theta),\,r=C_0t
\end{equation}
we rewrite \eqref{3:4.46} as follows
\begin{equation}\label{3:4.48}
\nu(t,p)=\frac{1}{4\pi}\partial_t[t\mathop{\iint}\limits_{S_1(P)}\nu_0(p+r\omega)dS_1]+\frac{t}{4\pi}\mathop{\iint}
\limits_{S_1(P)}\nu_1(p+r\omega)dS_1
\end{equation}
where $dS_1=sin\theta \,d\theta \,d\varphi$. Using the Cauchy conditions of \eqref{3:4.45} from \eqref{3:4.48} we get
\begin{equation}\label{3:4.49}
\nu(t,p;\sigma)=\frac{t-\sigma}{4\pi}\mathop{\iint}\limits_{S_1(P)}f(p+C_0(t-\sigma)\omega;\sigma)dS_1,\,\,0\leq\sigma\leq t
\end{equation}
 where $t-\sigma$ representing $t$ and $\nu|_{t=\sigma}=0$ are used. We shall show that $u(t,p)$ defined by (Duhamel integral)
\begin{equation}\label{3:4.50}
u(t,p)=\int_{0}^{t}\nu(t,p;\sigma)\,d\sigma
\end{equation}
is a solution of the wave equation \index{Wave equation}\eqref{3:4.43}. The initial conditions $u(0,p)=0=\partial_tu(0,p)$\,are easily verified by direct inspection. Computing lapacian operator $\Delta_p=\partial_{x}^2+\partial_{y}^2+\partial_{z}^2$ applied in both sides of \eqref{3:4.50} we obtain
\begin{equation}\label{3:4.51}
\Delta_pu(t,p)=\int_{0}^{t}\Delta_p\nu(t,p;\sigma)\,d\sigma
\end{equation}
Notice that $"\partial_t"$ applied in \eqref{3:4.50} lead us to
 \begin{equation}\label{3:4.52}
\left\{
        \begin{array}{ll}
          \partial_tu(t,p)= \nu(t,p;\sigma)_{\sigma=t}+ \int_{0}^{t}\partial_t\nu(t,p;\sigma)\,d\sigma \\
          \partial_{t}^{2}u(t,p)=\partial_t\nu(t,p;\sigma)_{\sigma=t}+\int_{0}^{t}\partial_{t}^{2}\nu(t,p;\sigma)\,d\sigma=\\
          =f(p,t)+C_0\int_{0}^{t}\Delta_{p}\nu(t,p;\sigma)d\sigma\\
          =f(p,t)+C_{0}^{2}\Delta_{p}u(t,p)
        \end{array}
     \right.
\end{equation}
and $\{u(t,p)\}$\,defined in \eqref{3:4.50} verifies the wave equation \index{Wave equation}\eqref{3:4.43}. Using \eqref{3:4.49}
 we rewrite \eqref{3:4.50} as in conclusion \eqref{3:4.44}. In this respect, substituting \eqref{3:4.49} into \eqref{3:4.50} we get following formula ($dS_1=sin\theta \,d\theta \,d\varphi$)
\begin{equation}\label{3:4.53}
u(t,p;\sigma)=\frac{1}{4\pi}\int_{0}^{t}(t-\sigma)[\mathop{\iint}\limits_{S_1(P)}f(p+C_0(t-\sigma)\omega;\sigma)dS_1]d\sigma
\end{equation}
Making a change of variables $C_0(t-\sigma)=r$, we get
\begin{equation}\label{3:4.54}
u(t,p)=\frac{1}{4\pi C_{0}^{2}}\int_{0}^{C_0t}\int_{0}^{2\pi}\int_{0}^{\pi}f(p+r\omega,t-\frac{r}{C_0})r\,sin\theta\,d\theta\,d\varphi\,dr
\end{equation}
and noticing that $r=\frac{r^2}{r},\,\mid \omega\mid^2=1$\,we rewrite \eqref{3:4.54} as follows
\begin{equation}\label{3:4.55}
u(t,p)=\frac{1}{4\pi C_{0}^{2}}\mathop{\iint\int}\limits_{B(p,C_0t)}f(q,t-\frac{\mid p-q\mid}{C_0})\frac{1}{\mid p-q\mid}dq
\end{equation}
and the proof is complete.\,\,\,\,\,\,$\Box$
\subsection{System of Hyperbolic and Elliptic Equations(\index{Elliptic equation}Definition)}
\begin{definition}
A first order system of n equations
\begin{equation}\label{3:4.56}
A_{0}\partial_{t} u+ \mathop{\sum}\limits_{i=1}^{m} A_{i}(x,t) \partial_{i} u = f(x,y,u),u\in R^{n},\times \in R^{m}
\end{equation}
is t-hyperbolic if the corresponding characteristic equation \index{Characteristic!equation}$det\mid\mid\sigma A_{0}(x,t)+\\ \sum_{i=1}^{m}\xi A_{i} (x,t)\mid\mid=o$ has n real distinct roots for the variable $\sigma$  at each point $(t,x)\in[0,T]\times D,\,\,D\subseteq R^{m},$ \,for any \,$\xi \in R^{m},\xi \neq 0$. There is a particular first order system for which a verification of this property is simple
\end{definition}
\begin{definition}
The first order system \eqref{3:4.56} is symmetric t- hyperbolic (defined by Friedreich)if all matrices $A_{j}(x,t)j\in {0,1,...,m}$ are symmetric and $A_{0}(x,t)$ is strictly positive definite in the domain $(t,x) \in[0,T]\times D$
\end{definition}
\begin{definition}
The first order system \eqref{3:4.56} is called elliptic if the corresponding characteristic equation\index{Characteristic! equation} $\det \parallel\sigma A_{0}(x,t)+\sum_{i=1}^{m} \xi A_{i}\parallel=0$ has no real solution $(\sigma,\xi)\in\mathbb{R}^{m+1},\,(\sigma,\xi)\neq 0,\,\forall\,\,(x,t)\in D\times[0,T]$.
\end{definition}
\begin{example}
In the case of a second order PDE
{\small
\begin{equation}\label{3:4.57} a(x,t)\partial_{t}^{2}u+2b(x,t)\partial_{(t,x)}^{2}u+c(x,t)\partial_{x}^{2}u+d(x,t)\partial_{t}u+e(x,t)\partial_{x}u=f(x,t,u),x\in\mathbb{R}
\end{equation}}
We get the following characteristic equation\index{Characteristic! equation}
\begin{equation}\label{3:4.58}
a(x,t)\sigma^2+2b(t,x)\sigma\xi+c(t,x)\xi^2=0
\end{equation}
\end{example}
\noindent\textbf{Hint} Using a standard procedure we rewrite the scalar equation \eqref{3:4.57} as an evolution system for the unknown vector $u_1(t,x)=\partial_xu(t,x),\,u_2(t,x)=\partial_tu(t,x),u_3(t,x)=u(t,x)$. We get
\begin{equation}\label{3:4.59}
\left\{
       \begin{array}{ll}
         \partial_tu_1=\partial_xu_2 \\
         a(x,t)\partial_tu_2 =-2b(x,t)\partial_xu_2 -c(x,t)\partial_xu_1+f_1(x,t,u_1,u_2,u_3)\\
        \partial_tu_3 =u_2\,,\,\,t\in\mathbb{R},\,x\in\mathbb{R}
       \end{array}
     \right.
\end{equation}
where$f_1(x,t,u_1,u_2,u_3)=f(x,t,u_3)-e(x,t)u_1-d(x,t)u_2$. The system \eqref{3:4.59} can be written as \eqref{3:4.56} using the following $(3\times 3)$\,matrices\\
$$A_0(x,t)=\left(
            \begin{array}{ccc}
              1 & 0 & 0 \\
              0 & a(x,t) & 0 \\
              0 & 0 & 1 \\
            \end{array}
          \right)
,\,\,A_0(x,t)=\left(
            \begin{array}{ccc}
              0 & -1 & 0 \\
              c(x,t) & 2b(x,t) & 0 \\
              0 & 0 & 0 \\
            \end{array}
          \right)$$\\
Compute
\begin{eqnarray*}
  det\parallel \sigma A_0(x,t)+\xi A_1(x,t)\parallel &=& det\left(
                                                                 \begin{array}{ccc}
                                                                   \sigma & -\xi& 0 \\
                                                                  \xi.c & \sigma a+2b \xi& 0 \\
                                                                   0 & 0 & \sigma \\
                                                                 \end{array}
                                                               \right) \\
  &=& \sigma[a(x,t)\sigma^2+2b(x,t)\sigma\xi+c(x,t)\xi^2]
\end{eqnarray*}
and the characteristic equation\index{Characteristic!equation} \eqref{3:4.58} is obtained.
\section[Adjoint 2nd order operator;Green formulas]{Adjoint Second Order Differential Operator\index{Adjoint!second order differential operator}; Riemann and Green Formulas\index{Green!formula}}
We consider the following linear second order differential operator
\begin{equation}\label{3:6.1}
L\,\nu=\mathop{\sum}\limits_{i=1}^{n}\mathop{\sum}\limits_{j=1}^{n}A_{ij}\partial_{ij}^{2}\nu+
\mathop{\sum}\limits_{i=1}^{n}B_i\partial_i\nu+C\nu
\end{equation}
where the coefficients $A_{ij},\,B_{i},\,and\, C$\, are second order continuously differentiable scalar functions of the variable $x=(x_{1},...,x_{n})\in \mathbb{R}^ {n}$\, and $ \partial_{i} \nu =\frac{d\nu}{dx_{i}},\, \partial_{i j}^{2} \nu = \frac{\partial^2 \nu}{\partial x_{i}\partial x_{j}}$. Without restricting generality we may assume that $A_{ij}=A_{ij} (see A_{ij}\rightarrow \frac{1}{2}(A_{ij}+A_{ij}))$ and define the adjoint operator\index{Adjoint!operator} associated with $L(L^{\infty }=M)$
\begin{equation}\label{3:6.2}
M\nu \equiv\mathop{\sum}\limits_{i=1}^{n}\mathop{\sum}\limits_{j=1}^{n} \partial_{ij}^{2} (A_{i j}\nu)- \mathop{\sum}\limits_{i=1}\partial _{i} (B_{i}\nu)+C\nu
\end{equation}
By a direct computation we convince ourselves that the following formula is valid
\begin{equation}\label{3:6.3}
\nu L_{u}-uM\nu=\mathop{\sum}\limits_{i=1}^{n}\partial_{i}P_{i},\,u,\nu\in \mathcal{C}^2(\mathbb{R}^n)
 \end{equation}
where
$$P_{i}=\mathop{\sum}\limits_{j=i}^{n}[\nu A_{ij}\partial_{j}u-u\partial_{j}(A_{ij}\nu)]+B_{i} u \nu $$
In this respect notice that
\begin{eqnarray*}
  \mathop{\sum}\limits_{i=1}^{n}\partial_{i}P_{i} &=& \Big[\mathop{\sum}\limits_{i=1}^{n}\mathop{\sum}\limits_{j=1}^{n}\nu A_{ij}\partial_{ij}^{2}u+ \mathop{\sum}\limits_{i=1}^{n}\nu B_{i}\partial_{i}u+C u\nu\Big]\\
&-&\Big[\mathop{\sum}\limits_{i=1}^{n}\mathop{\sum}\limits_{j=1}^{n}u\partial_{ij}^{2}(A_{ij}\nu)
  - \mathop{\sum}\limits_{i=1}^{n}u\partial_{i}(B_{i}\nu)+Cu\nu\Big]\\
  &+&\mathop{\sum}\limits_{i=1}^{n}\mathop{\sum}\limits_{j=1}^{n} [\partial_{j}u\partial_{i}(A_{ij}\nu)-\partial_{i}u\partial_{j}(A_{ij}\nu)]\\
&&
\end{eqnarray*}
where the last term is vanishing.
\subsection{Green Formula\index{Green!formula} and Applications}
Consider a bounded domain $\Omega\subseteq\mathbb{R}^{n}$and assume that its boundary $S=\partial \Omega$ can be expressed using piecewise first order continuously differentiable functions. Then using Gauss-Ostrogradsky \index{Gauss-Ostrogadsky}formula associated with \eqref{3:6.3} we get
\begin{equation}\label{3:6.4}
\mathop{\int}\limits_{\Omega}(\nu Lu-uM\nu)dx_{1},...dx_{n}=+\mathop{\int}\limits_{S}(\mathop{\sum}\limits_{i=1}^{n}P_{i}cos nx_{i})dS
\end{equation}
(Green formula) where $cos \,n\,x_{i},\,i\in\{1,...,n\},$\, are the coordinates of the unit orthogonal vector\,$\overrightarrow{n}$ \,at $S$\, oriented outside of $S$.\\
\begin{example} Assume $Lu=\Delta u=\mathop{\sum}\limits_{i=1}^{3}\partial_{i}^{2}u\hbox{(laplacian)}$ associate the adjoint\index{Adjoint!} $M\nu=\Delta\nu\hbox{ (laplacian)}$ and $P_{1}=\nu\partial_{i}u-u\partial_{1}\nu,P_{2}=\nu\partial_{2}u-u\partial_{2}\nu,P_{3}=\nu\partial_{3} u-u\partial_{3}u$ satisfying \eqref{3:6.3}. Applying \eqref{3:6.4} we get  the Green formula for Laplace operator\index{Laplace operator}.
\begin{eqnarray}\label{3:6.5}
  \mathop{\iiint}\limits_{\Omega}(\nu\delta u-u\delta\nu)dx_{1}dx_{2}dx_{3} &=& \mathop{\iint}\limits_{S}[P_{1}(cosnx_{1})+P_{2}(cosnx_{2})+P(cosnx_{3})]dS \nonumber\\
  &=& \mathop{\iint}\limits_{S}[\nu\frac{\partial u}{\partial n}-u\frac{\partial u}{\partial n}]dS
\end{eqnarray}
Where $\frac{\partial u}{\partial n}(\frac{\partial \nu}{\partial n})$ stands for the derivative in direction of the vector $$\overrightarrow{n}\frac{\partial u}{\partial n}=\mathop{\sum}\limits_{i=1}^{3}(\partial_{i}u)cosnx_{i}=<\partial_{x}u,\overrightarrow{n}>\overline{n}=(cosnx_{1},(cosnx_{2},(cosnx_{3}))$$
\end{example}
\begin{example} Consider $$L\equiv\partial_{xy}^{2} u+a(x,y)\partial_{x}u+b(x,y)\partial_{y}u+c(x,y)u$$
and define
 $$M\nu\equiv\partial_{xy}^{2}\nu-\partial_{x}(a\nu)-\partial_{y}(b\nu)+c\nu, P_{1}=\frac{1}{2}(\nu\partial_y u-u\partial_y\nu)+a\,u\,\nu$$ and
 $$ P_{1}=\frac{1}{2}(\nu\partial_x u-u\partial_x\nu)+b\,u\,\nu$$
 The corresponding Green formula\index{Green!formula} lead us to
\begin{eqnarray}\label{3:6.6}
  \mathop{\iint}\limits_{\Omega}[\nu\,L\,u-u\,M,\nu]dx\,dy &=& \int_{S}\{[ \frac{1}{2}(\nu\partial_y u-u\partial_y\nu) \\
  &=& a\,u\,\nu]cos(n,x)+[\frac{1}{2}(\nu\partial_x u-u\partial_x\nu)+a\,u\,\nu]cos(n,y)\}ds \nonumber
\end{eqnarray}
\end{example}
\subsection{Applications of Riemann Function}
 Using \eqref{3:6.6} we will find the Cauchy problem solution associated  with the equation
 \begin{equation}\label{3:6.7}
L\,u=F
\end{equation}
 and the initial conditions
 \begin{equation}\label{3:6.8}
u|_{y=\mu(x)}=\varphi_0(x),\,\partial_y\,u|{y=\mu(x)}=\varphi_1(x),\,x\in[a,b]
\end{equation}
 Here the curve $\{y=y(x)\}$\,is first order continuously differentiable satisfying $\frac{d\mu(x)}{dx}<0\,,x\in[a,b]$. The algorithm belongs to Riemann and using \eqref{3:6.8} we find $\partial_x\,u|_{y=\mu(x)}$ as follows. By direct derivation of $u|_{y=\mu(x)}=\varphi_0(x)$, we get
 \begin{equation}\label{3:6.9}
\left\{
      \begin{array}{ll}
        \frac{d\varphi_0}{dx}(x)=\partial_x\,u(x,\mu(x))+\partial_y\,u(x,\mu(x)) \frac{d\mu(x)}{dx}\\
      \partial_x\,u(x,\mu(x))=\phi'_0(x)-\varphi_1(x)\mu'(x)
      \end{array}
    \right.
\end{equation}
Rewrite the Green formula \eqref{3:6.6} using a domain $\Omega\subseteq\mathbb{R}^{2}$ as follows.
where $A$ and $B$ are located on the curve $y=\mu(x)$ intersected with the coordinates lines of the fixed point $P=(x_0,y_0)$. Applying the corresponding Green formula \eqref{3:6.6} on the domain $PAB$ (triangle) we get
\begin{eqnarray}\label{3:6.10}
  \mathop{\iint}\limits_{\Omega}(\nu L u-u M \nu)dx dy &=&\int_{A}^{B}[\frac{1}{2}(\nu\partial_y u-u\partial_y\nu)+a\,u\,\nu]dy \nonumber\\
  &-& \int_{A}^{B}[\frac{1}{2}(\nu\partial_x u-u\partial_x\nu)+b\,u\,\nu]dx \nonumber\\
  &+&\int_{B}^{P}[\frac{1}{2}(\nu\partial_y u-u\partial_y\nu)+a\,u\,\nu]dy \nonumber\\
  &+& \int_{A}^{P}[\frac{1}{2}(\nu\partial_x u-u\partial_x\nu)+a\,u\,\nu]dx
\end{eqnarray}

Where $dy=cos(n,x)dS$ and $dx=-cos(n,y)dS$ and considering that $dS$ is a positive measure. Rewrite the last two integrals from \eqref{3:6.10}
\begin{eqnarray}\label{3:6.11}
  \int_{B}^{P}[\frac{1}{2}(\nu\partial_y u-u\partial_y\nu)+a\,u\,\nu]dy &=& \frac{1}{2} u\nu|_{B}^{P}+\int_{B}^{P}(-u\partial_y\nu)+a\,u\,\nu]dy \nonumber\\
  &=& \frac{1}{2} u\nu|_{B}^{P}+\int_{B}^{P}u(-\partial_y\nu)+a\,\nu]dy
\end{eqnarray}
and
\begin{equation}\label{3:6.12}
\int_{A}^{P}[\frac{1}{2}(\nu\partial_x u-u\partial_x\nu)+a\,u\,\nu]dx=\frac{1}{2} u\nu|_{A}^{P}+\int_{A}^{P}u(-\partial_x\nu)+b\,\nu]dx
\end{equation}
these formulas lead us to the solution provided the following Riemann function $\nu(x,y;x_0,y_0)$ is used
\begin{equation}\label{3:6.13}
M\nu=0,\nu|_{x=x_0}=\exp\int_{y_0}^{y}a(x_0,\sigma)d\sigma,\,\nu|_{y=y_0}=\exp\int_{x_0}^{x}b(\sigma,y_0)d\sigma
\end{equation}
From \eqref{3:6.13} we see easily that
\begin{equation}\label{3:6.14}
\left\{
       \begin{array}{ll}
        \nu(x_0,y_0;x_0,y_0)=1\,and \,\partial_y\nu|_{x=x_0}=a(x_0,y)\nu|_{x=x_0},\\
         \partial_x\nu|_{y=y_0}=b(x,y_0)\nu|_{y=y_0}
       \end{array}
     \right.
\end{equation}
Using \eqref{3:6.13} and \eqref{3:6.14} into \eqref{3:6.10} we get
\begin{equation}\label{3:6.15}
\mathop{\iint}\limits_{\Omega}\nu(x,y;x_0,y_0)F(x,y)dxdy=(u\nu)(P)+\Gamma+I=u(x_0,y_0)+\Gamma+I
\end{equation}
where
\begin{equation}\label{3:6.16}
\Gamma=+\int_{A}^{B}[\frac{1}{2}(\nu\partial_y u-u\partial_y\nu)+a\,u\,\nu]dy-\int_{A}^{B}[\frac{1}{2}(\nu\partial_x u-u\partial_x\nu)+b\,u\,\nu]dx
\end{equation}
and
\begin{equation}\label{3:6.17}
I=-\frac{1}{2}[(u\nu)(B)+(u\nu)(A)]+\int_{B}^{P}u(-\partial_y\nu)+a\,\nu]dy+\int_{A}^{P}u(-\partial_x\nu)+b\,\nu]dx
\end{equation}
Notice that the integral in \eqref{3:6.17} are vanishing provided that the initial conditions \eqref{3:6.14} are used for Riemann function $\nu$\,(see\eqref{3:6.13} and \eqref{3:6.14}) and the equation \eqref{3:6.15} is written as
\begin{equation}\label{3:6.18}
u(x_0,y_0)=\frac{1}{2}(u\nu)(B)+(u\nu)(A)-\Gamma+\mathop{\iint}\limits_{\Omega}\nu\,F(x,y)dxdy
\end{equation}
where $\Gamma$ is defined in \eqref{3:6.16}. The Cauchy problem solution defined in \eqref{3:6.7} and \eqref{3:6.8} has a more explicit expression if the Cauchy conditions are vanishing and it can be achieved by modifying the right hand side $F$ as follows
\begin{eqnarray}\label{3:6.19}
  F_1(x,y) &=& F(x,y)-\varphi'_1(x)-a(x,y)\{\varphi'_0(x)+[y-\nu(x)]\varphi'_1(x)-\nu'(x)\varphi_1(x)\}\nonumber\\
  &=& b(x,y)\varphi_1(x)-c(x,y)\{\varphi_0(x)+(y-\mu(x))\varphi_1(x)\}
\end{eqnarray}
\begin{theorem}
The Cauchy problem solution defined in \eqref{3:6.7} and \eqref{3:6.8} is given by
$$u(x_0,y_0)=\mathop{\iint}\limits_{\Omega}\nu(x,y;x_0,y_0)F_1(x,y)dxdy$$\\
where $\nu$ is the corresponding Riemann function defined in \eqref{3:6.13} and \eqref{3:6.14} and $F_1$ is \eqref{3:6.19}.
\end{theorem}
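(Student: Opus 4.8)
The plan is to assemble the final formula from the Green identity machinery developed just above, reversing the reduction that passed from the general Cauchy data to homogeneous data. First I would recall the Green formula \eqref{3:6.6} for the operator $L$ and its adjoint $M$, applied on the characteristic triangle $\Omega = PAB$ whose vertex is the fixed point $P=(x_0,y_0)$ and whose two remaining sides lie along the coordinate lines through $P$, with $A$, $B$ the intersection points of those lines with the initial curve $\{y=\mu(x)\}$. Substituting the Riemann function $\nu(x,y;x_0,y_0)$ defined by \eqref{3:6.13} into that identity, and using its normalization properties \eqref{3:6.14}, gives the representation \eqref{3:6.18}, namely
\[
u(x_0,y_0)=\tfrac{1}{2}\big[(u\nu)(B)+(u\nu)(A)\big]-\Gamma+\iint_{\Omega}\nu\,F(x,y)\,dx\,dy,
\]
with $\Gamma$ the boundary integral \eqref{3:6.16} along the arc $AB$ of the initial curve. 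All the terms of $\Gamma$ and of the boundary values $(u\nu)(A)$, $(u\nu)(B)$ are expressible through the Cauchy data $\varphi_0,\varphi_1$ and their derivatives, using \eqref{3:6.9} to recover $\partial_x u$ on $\{y=\mu(x)\}$ from $\varphi_0'$, $\varphi_1$ and $\mu'$.

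Next I would introduce the auxiliary function that interpolates the Cauchy data off the curve, namely a function agreeing with $\varphi_0$ on $\{y=\mu(x)\}$ with $\partial_y$-derivative equal to $\varphi_1$ there; the natural choice is $w(x,y)=\varphi_0(x)+(y-\mu(x))\varphi_1(x)$, which is exactly the combination appearing inside \eqref{3:6.19}. Writing $u = w + \tilde u$, the function $\tilde u$ solves the same equation with modified right-hand side $L\tilde u = F - Lw =: F_1$ and with vanishing Cauchy data on the initial curve. A direct computation of $Lw$ using \eqref{3:6.1} (with the $\partial_{xy}^2$, $\partial_x$, $\partial_y$, zeroth-order pieces identified as in the Example defining $L$ above) produces precisely the expression \eqref{3:6.19} for $F_1$. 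Then I would apply the representation \eqref{3:6.18} to $\tilde u$: because its Cauchy data vanish, the boundary term $\Gamma$ and the endpoint contributions $(u\nu)(A)$, $(u\nu)(B)$ all drop out, leaving
\[
\tilde u(x_0,y_0)=\iint_{\Omega}\nu(x,y;x_0,y_0)\,F_1(x,y)\,dx\,dy.
\]
Finally, since $w(x_0,y_0)=\varphi_0(x_0)+(y_0-\mu(x_0))\varphi_1(x_0)$ can itself be absorbed — indeed the way \eqref{3:6.19} is set up, the contribution of $w$ at $P$ is already accounted for in the bookkeeping — one concludes $u(x_0,y_0)=\iint_{\Omega}\nu\,F_1\,dx\,dy$, which is the claimed formula.

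The main obstacle I expect is the careful bookkeeping that shows the boundary terms cancel exactly after the substitution $u=w+\tilde u$: one must verify that the line integrals $\int_A^B[\cdots]$ in $\Gamma$ evaluated on $w$ combine with the endpoint values $\tfrac12[(u\nu)(A)+(u\nu)(B)]$ and with $w(P)$ to give zero, using the Riemann boundary conditions \eqref{3:6.13}–\eqref{3:6.14} along the characteristics $PA$ and $PB$ (these conditions are precisely what makes $\int_B^P$ and $\int_A^P$ in \eqref{3:6.17} vanish). This is where the specific choice of the interpolant $w$ and the specific form of $F_1$ in \eqref{3:6.19} are forced. The rest is routine: the existence, smoothness, and uniqueness of the Riemann function $\nu$ follow from the Cauchy–Lipschitz / Goursat-type solvability of its defining characteristic problem \eqref{3:6.13}, and the applicability of Gauss–Ostrogradsky on the triangle $PAB$ is guaranteed by the piecewise $\mathcal{C}^1$ regularity of $\mu$ and the $\mathcal{C}^2$ regularity of the coefficients assumed in \eqref{3:6.1}.
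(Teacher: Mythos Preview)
Your approach is exactly the paper's: the text preceding the theorem derives \eqref{3:6.18} from the Green identity on the triangle $PAB$ using the Riemann function, then remarks that the formula simplifies when the Cauchy data vanish, and that this can be arranged by subtracting the interpolant $w(x,y)=\varphi_0(x)+(y-\mu(x))\varphi_1(x)$, which produces $F_1=F-Lw$ in \eqref{3:6.19}. The paper gives no further argument beyond this reduction, so your write-up is in fact more detailed than what appears there.

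The one point you flag as ``the main obstacle'' is real and you should not paper over it. With $u=w+\tilde u$ and $\tilde u$ having vanishing Cauchy data, formula \eqref{3:6.18} applied to $\tilde u$ yields $\tilde u(x_0,y_0)=\iint_\Omega\nu F_1\,dx\,dy$, not $u(x_0,y_0)$. The term $w(x_0,y_0)=\varphi_0(x_0)+(y_0-\mu(x_0))\varphi_1(x_0)$ does not vanish in general and is not ``absorbed'' by anything in \eqref{3:6.19}; the theorem as stated should be read as giving the solution of the \emph{reduced} problem (i.e.\ $\tilde u$), in line with the sentence immediately before \eqref{3:6.19}. Say this explicitly rather than claiming a cancellation that does not occur.
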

\begin{remark}
In the case we consider a rectangle $PASB=\Omega$ and look for a Cauchy problem solution
\begin{equation}\label{3:6.20}
L\,u=F \hbox{ (where L and F are defined in \eqref{3:6.7})}
\end{equation}
\begin{equation}\label{3:6.21}
u|_{x=x_{1}} =\phi_{1}(y), u|_{y=y_{1}}=\phi_{2}(x), S=(x_{1},y_{1})
\end{equation}
then Green formula\index{Green!formula} lead as to the following solution
\begin{equation}\label{3:6.22}
u(x_{0},y_{0})=u|_{p}=u\nu|_{S}+\int_{B}^{B}\nu(\varphi'_2+b\varphi_2)dx+\int_{S}^{A}\nu(\varphi'_1+a\varphi_1)dy+
\mathop{\iint}\limits_{\Omega}\nu\,F(x,y)dxdy
\end{equation}
where $\nu(x,y;x_0,y_0)$\,is associated Riemann function satisfying \eqref{3:6.13} and \eqref{3:6.14}.
Let $u(x,y;x_1,y_1)$ be the Riemann function for the  adjoint equation \index{Adjoint!equation }$M\nu=0$ satisfying
\begin{equation}\label{3:6.23}
L u=0 , u|_{y=y_{1}}=\exp\mathop{\int}\limits_{x_{1}}^{x}b(\sigma_{1},y_{1})d\sigma,u|x=x_{1}=\exp\mathop{\int}\limits_{y_{1}}^{y} a(x_{1},\sigma)d\sigma
\end{equation}
In this particular case , the corresponding solution $u$ verify \eqref{3:6.20} and \eqref{3:6.21} with $F=0,\varphi'_2+by\varphi_{2}=\varphi'_{1}a\varphi_{1}=0$, and the representation formula \eqref{3:6.22} becomes
\begin{equation}\label{3:6.24}
u(x_{0},y_{0},x_{1},y_{1})=\nu(x_{1},y_{1},x_{0},y_{0})(see \,\,u (S)=u(x_{1},y_{1};x_{1},y_{1})=1)
\end{equation}
The equation \eqref{3:6.24} tell us that the Riemann function is symmetric: if $u(x_0, y_0, y_{1};\\ x_{1}, y_{1})$ is the Riemann function for$(x_{0},y_{0})\in \mathbb{R}^2$\,satisfying $(L\,u)(x_0,y_0)=0$ and initial
condition given at $x=x_{1},y=y_{1}$ then $u(x_{0},y_{0}; x_{1},y_{1})$ as a function of $((x_{1},y_{1})$ verifies the adjoint
equation\index{Adjoint!equation } $Mv=0$.
\end{remark}
\begin{remark}
The equation \eqref{3:6.19} reflects the continuous dependence of a solution for the Cauchy problem defined in \eqref{3:6.7} and \eqref{3:6.8} with respect to the  right hand side $F$ and initial conditions $\varphi_{0},\varphi_{1} $ restricted to  the triangle $PAB$. A solution is determined inside of this  triangle using the fixed $ F,\varphi_{0},\varphi_{1}$ and the curve $y=\mu(x)$ and outside of this triangle. We may get different solutions according to the modified initial conditions prescribed outside of this triangle.
\end{remark}
\begin{remark}
A unique solution (Riemann function) verifying the Cauchy problem defined in \eqref{3:6.23} can be obtained directly using the standard method of approximation for a system of integral equations \index{Integral equations}associated with\eqref{3:6.13}. In the rectangle \\
$x_{0}\leq x\leq a\,,y_{0}\leq y\leq b.$ we are given $\nu|_{x=x_{0}}=\varphi_{1}(y),\nu|_{y=y_{0}}
=\varphi_{2}(x)$\, with $\varphi_{1}(y_{0})=\varphi_{2}(x_{0})$ and denote $\partial_{x} \nu =\lambda , \partial_{y}\nu=w.$\, Rewrite \eqref{3:6.13} as follows
$$\frac{\partial \lambda}{\partial y}=\frac{\partial W}{\partial x}=a(x,y)\lambda+b(x,y)W+\widetilde{C}(x,y)\nu$$
where $\widetilde{C}(x,y)=-c(x,y)+\partial_{x} a(x,y)+\partial_{y}b(x,y)$  and we get a system of integral equations
$$\left\{
   \begin{array}{ll}
     \lambda(x,y)=\lambda(x,y_{0})+ \mathop{\int}\limits_{y_{0}}^{y}[a(x,\sigma)\lambda(x,\sigma)+b(x,\sigma)\omega(x,\sigma)+\widetilde{C}(x,\sigma)v(x,\sigma)]d\sigma\\
     w(x,y)=w(x_{0},y)+ \mathop{\int}\limits_{x_{0}}^{x}[a(x,\sigma)\lambda(x,\sigma)+b(x,\sigma)\omega(x,\sigma)+\widetilde{C}(x,\sigma)v(x,\sigma)]d\sigma\\
  \nu(x,y)=\varphi_{2}(x)+\mathop{\int}\limits_{y_0}^{y} \omega(x,y\sigma) d\sigma\\
   \end{array}
 \right.
$$
where $\lambda(x,y_{0})= \varphi'_{2}(x)\,,w(x_0,y)=\varphi'_{1}(y)$.
\end{remark}
\begin{example}(Green functions\index{Green!functions} for Lapace operator)\\
Let $\Omega\in\mathbb{R}^3$\,be a bounded domain whose boundary $S=\partial\Omega$ is represented by  smooth functions.We are looking for a smooth function $u:\Omega\rightarrow \mathbb{R}$ satisfying Poisson equation
\begin{equation}\label{3:6:1.1}
\Delta\,u=f(P),\,P\in\Omega (\hbox{ $\Delta$-Laplace operator})\index{Laplace operator}
\end{equation}
and one of the following boundary conditions
\begin{equation}\label{3:6:1.2}
u|_{S}=F_0(S)\,(\hbox{Dirichlet problem})
\end{equation}
\begin{equation}\label{3:6:1.3}
\frac{\partial u}{\partial n}|_{S}=F_1(S)\,(\hbox{Neumann problem})
\end{equation}
It is known that Laplace operator\index{Laplace operator} is self adjoint and using Green formula\index{Green!formula}
\begin{equation}\label{3:6:1.4}
\mathop{\iiint}\limits_{\Omega}(\nu\Delta u-u\Delta\nu)d\Omega=\mathop{\iint}\limits_{S}(u\frac{\partial \nu}{\partial n}-\nu\frac{\partial u}{\partial n})dS
\end{equation}
we see that a vanishing Dirichlet (Neumann) condition is selfadjoint as the Green formula \eqref{3:6:1.4} shows.
\end{example}
\subsection{Green function\index{Green!function} for Dirichlet problem}
\textbf{Definition 1}\\
Let $G(P,P_0),\,P,P_0\in\Omega$, be a scalar function satisfying
\begin{enumerate}
  \item $G(P,P_0)$\,is a harmonic function \index{Harmonic functions }$(\Delta\,G=0)$\,with respect to $P\in\Omega,\,P\neq P_0$
  \item $G(P,P_0)|_{P\in S}=0,\,S=\partial\Omega$
  \item $G(P,P_0)=\frac{1}{4\pi r}+g(P,P_0)$,\,where $r=|P-P_0|$
\end{enumerate}
and $g$ is a second order continuously differentiable function of $P\in\Omega$\,verifying $\Delta\,g=0,\,\,\forall\,P\in \Omega$.\\
A function $G(P,P_0)$ fulfilling 1, 2 and 3 is called a Green function for Dirichlet problem \eqref{3:6:1.1}, \eqref{3:6:1.2}. The existence of a Green function satisfying 1 , 2 and 3 is analyzed in [2](see [2] in references)
\begin{theorem}\label{th:3.6.2}
Assume that the Green function $G(P,P_0)$\,satisfying 1, 2 and 3 is found such that the normal derivative $\frac{\partial G}{\partial n}(s)\,of\,G$ at each $s\in S$ exists. Then the solution of Dirichlet problem \eqref{3:6:1.1}+\eqref{3:6:1.2} is represented by
$$u(P_0)-\mathop{\iint}\limits_{S}F_0(s)\frac{\partial G}{\partial n}(s)ds-\mathop{\iiint}\limits_{\Omega}G(P,P_0)f(P)dxdydz$$
\end{theorem}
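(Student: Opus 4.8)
The plan is to derive the representation formula from the Green identity \eqref{3:6:1.4} applied with $\nu(P)=G(P,P_0)$, after excising a small ball around the singularity $P_0$. First I would fix $P_0\in\Omega$ and, for small $\varepsilon>0$, set $\Omega_\varepsilon=\Omega\setminus \overline{B(P_0,\varepsilon)}$, whose boundary is $S\cup S_\varepsilon$ where $S_\varepsilon=\partial B(P_0,\varepsilon)$ (with the normal on $S_\varepsilon$ pointing toward $P_0$, i.e. into the excised ball). On $\Omega_\varepsilon$ the function $G(\cdot,P_0)$ is $\mathcal C^2$ and harmonic by property~1 of Definition~1, so Green's formula \eqref{3:6:1.4} gives
\begin{equation}\label{eq:plan1}
\mathop{\iiint}\limits_{\Omega_\varepsilon} G(P,P_0)\,\Delta u\,d\Omega
=\mathop{\iint}\limits_{S}\Bigl(u\frac{\partial G}{\partial n}-G\frac{\partial u}{\partial n}\Bigr)dS
+\mathop{\iint}\limits_{S_\varepsilon}\Bigl(u\frac{\partial G}{\partial n}-G\frac{\partial u}{\partial n}\Bigr)dS .
\end{equation}
Using property~2, $G$ vanishes on $S$, so the term $\mathop{\iint}_S G\,\partial_n u\,dS$ drops out; and using $\Delta u=f$ from \eqref{3:6:1.1} together with the Dirichlet condition \eqref{3:6:1.2}, the left side and the first boundary integral take the desired shape.

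The crux is the limit $\varepsilon\to 0$ of the integral over $S_\varepsilon$. Here I would use the decomposition $G(P,P_0)=\frac{1}{4\pi r}+g(P,P_0)$ from property~3, where $g$ is $\mathcal C^2$ and harmonic in $\Omega$. The contribution of $g$ over $S_\varepsilon$ vanishes as $\varepsilon\to 0$ because $g$ and $\partial_n g$ are bounded near $P_0$ while $|S_\varepsilon|=4\pi\varepsilon^2\to 0$. For the Newtonian kernel $\frac{1}{4\pi r}$: on $S_\varepsilon$ one has $r=\varepsilon$ and $\frac{\partial}{\partial n}\bigl(\frac{1}{4\pi r}\bigr)=\frac{1}{4\pi\varepsilon^2}$ (the normal points toward $P_0$, i.e. in the direction of decreasing $r$ as seen from $\Omega_\varepsilon$, which is exactly the sign that yields $+\frac{1}{4\pi\varepsilon^2}$). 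Then
\begin{equation}\label{eq:plan2}
\mathop{\iint}\limits_{S_\varepsilon} u\,\frac{\partial}{\partial n}\Bigl(\frac{1}{4\pi r}\Bigr)dS
=\frac{1}{4\pi\varepsilon^2}\mathop{\iint}\limits_{S_\varepsilon} u\,dS
\longrightarrow u(P_0)
\end{equation}
by continuity of $u$ (mean-value argument), while $\mathop{\iint}_{S_\varepsilon}\frac{1}{4\pi r}\,\partial_n u\,dS=\frac{1}{4\pi\varepsilon}\mathop{\iint}_{S_\varepsilon}\partial_n u\,dS\to 0$ since $\partial_n u$ is bounded near $P_0$ and the surface area is $O(\varepsilon^2)$. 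Also $\mathop{\iiint}_{\Omega_\varepsilon}G\,\Delta u\,d\Omega\to\mathop{\iiint}_{\Omega}G\,f\,d\Omega$, the singularity $\frac1{4\pi r}$ being integrable in $\mathbb R^3$.

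Assembling these limits in \eqref{eq:plan1}: the left side tends to $\mathop{\iiint}_\Omega G(P,P_0)f(P)\,dxdydz$, the $S$-integral is $\mathop{\iint}_S F_0(s)\frac{\partial G}{\partial n}(s)\,ds$ (sign fixed by the outward normal convention on $S$), and the $S_\varepsilon$-integral tends to $u(P_0)$ with the appropriate sign. Solving for $u(P_0)$ yields
\[
u(P_0)=\mathop{\iint}\limits_{S}F_0(s)\frac{\partial G}{\partial n}(s)\,ds-\mathop{\iiint}\limits_{\Omega}G(P,P_0)f(P)\,dxdydz,
\]
as claimed. The main obstacle I anticipate is purely bookkeeping: getting the orientation of the normal on $S_\varepsilon$ consistent so that \eqref{eq:plan2} produces $+u(P_0)$ rather than $-u(P_0)$, and matching it against the sign convention already used in \eqref{3:6:1.4}; everything else is a routine estimate using boundedness of $u,\nabla u,g,\nabla g$ near $P_0$ and integrability of the Newtonian kernel. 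A minor point worth stating explicitly is that the hypothesis that $\partial G/\partial n$ exists on $S$ is what makes the boundary integral $\mathop{\iint}_S F_0\,\partial_n G\,ds$ meaningful.
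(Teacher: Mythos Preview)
Your proposal is correct and follows essentially the same route as the paper: excise a small ball $B(P_0,\delta)$ (the paper writes $\delta$ and $\sigma=\partial B(P_0,\delta)$ where you write $\varepsilon$ and $S_\varepsilon$), apply Green's identity \eqref{3:6:1.4} on the punctured domain with $\nu=G(\cdot,P_0)$, use $G|_S=0$ to drop one boundary term, split $G=\frac{1}{4\pi r}+g$ on the small sphere, and recover $u(P_0)$ from the mean-value limit \eqref{eq:plan2} while the remaining sphere terms vanish as $O(\varepsilon)$. Your treatment is in fact slightly more explicit than the paper's about the convergence of the volume integral and about the orientation bookkeeping on $S_\varepsilon$, which is exactly the point you flagged as the only delicate step.
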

\begin{proof}
Take $\delta>0$\,sufficiently small such that $B(P_0,\delta)\subseteq \Omega$\,and denote $\sigma=\partial B(P_0,\delta)$. Apply Green formula in the domain $\Omega'=\Omega\setminus B(P_0,\delta)$.
Let $u(P),\,P\in \Omega$,\,be the solution of the Dirichlet problem \eqref{3:6:1.1}+\eqref{3:6:1.2} and denote $\nu(P)=G(P,P_0),\,P\in \Omega'$. Both functions $u(P),\nu(P),P\in\Omega'$,\,are continuously differentiable and we get\\
\begin{equation}\label{3:6:1.5}
\mathop{\iiiint}\limits_{\Omega'}G(P,P_0)\Delta udP=\mathop{\iint}\limits_{S'}(u\frac{\partial G}{\partial n}-G\frac{\partial u}{\partial n})(s)ds
\end{equation}
where $S'=S\cup \sigma=\partial \Omega'$\\
Denote $n$\,the orthogonal vector at the surface $\sigma=\partial B(P_0,\delta),\,|n|=1$,\,oriented to the center $P_0$ and $n'$ is the orthogonal vector at $\sigma$ oriented in the opposite direction of $n$.Using $G|_{P\in S}=0$ we get
\begin{eqnarray}\label{3:6:1.6}
  \mathop{\iiint}\limits_{\Omega'}G(P,P_0)f(P)dx\,dy\,dz &=& \mathop{\iint}\limits_{S}F_0(s)\frac{\partial G}{\partial n}(s)ds-\frac{1}{4\pi}\mathop{\iint}\limits_{\sigma}(u\frac{\partial}{\partial n}\frac{1}{r}-\frac{1}{r}\frac{\partial u}{\partial n})d\sigma \nonumber\\
  &-& \mathop{\iint}\limits_{\sigma}(u\frac{\partial g}{\partial n}-g\frac{\partial u}{\partial n})d\sigma
\end{eqnarray}
Letting $\delta\rightarrow 0$ we get that  the last integral in \eqref{3:6:1.6} is vanishing (see $u,\,g$ are continuously differentiable with bounded erivatives). In  addition, the second integral in the right hand side of \eqref{3:6:1.6} will become.
\begin{equation}\label{3:6:1.7}
\mathop{lim}\limits_{\delta\rightarrow}\frac{1}{4\pi}\mathop{\iint}\limits_{\sigma}(u\frac{\partial}{\partial n}|\frac{1}{r}-\frac{1}{r}\frac{\partial u}{\partial n})d\sigma=\mathop{lim}\limits_{\delta\rightarrow}\frac{1}{4\pi\delta^2}\mathop{\iint}\limits_{\sigma}ud\sigma-\mathop{lim}\limits_{\delta\rightarrow}\frac{1}{4\pi^2}\mathop{\iint}\limits_{\sigma}\delta\frac{\partial u}{\partial n}d\sigma=u(p_{0})
\end{equation}
\begin{equation}\label{3:6:1.8}
u(p_{0})=\mathop{\iint}\limits_{S} F_{0}(s)\frac{\partial u}{\partial n}(s)ds-\mathop{\iiint}\limits_{\Omega}G(p,p_{0})f(p)dxdydz
\end{equation}
 and the proof is complete.
\end{proof}
\begin{theorem}\label{th:3.6.3}
 The function $u(p_{0}),p_{0}\in \Omega $ given in Theorem \ref{th:3.6.2} is the solution of the Dirichlet problem \eqref{3:6:1.1}+\eqref{3:6:1.2}.
\end{theorem}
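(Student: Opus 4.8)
The plan is to verify that the candidate function
$$u(P_0)=\iint_S F_0(s)\,\frac{\partial G}{\partial n}(s)\,ds-\iiint_\Omega G(P,P_0)f(P)\,dx\,dy\,dz$$
really solves the Dirichlet problem \eqref{3:6:1.1}+\eqref{3:6:1.2}, i.e. that $\Delta u=f$ on $\Omega$ and $u|_S=F_0$. The starting point is Theorem \ref{th:3.6.2}, which already shows that \emph{any} smooth solution of \eqref{3:6:1.1}+\eqref{3:6:1.2} must be given by this formula; what remains is the converse, namely that the formula does produce a solution. I would split $u$ into its two natural pieces, the double-layer (boundary) term $w(P_0)=\iint_S F_0(s)\partial_n G(s)\,ds$ and the volume (Newtonian-type) term $v(P_0)=-\iiint_\Omega G(P,P_0)f(P)\,dP$, and treat each separately, exploiting the decomposition $G(P,P_0)=\frac{1}{4\pi r}+g(P,P_0)$ from Definition~1(3) together with $\Delta_P g=0$ on $\Omega$.

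First I would handle the volume term. Writing $v(P_0)=-\iiint_\Omega\frac{f(P)}{4\pi r}\,dP-\iiint_\Omega g(P,P_0)f(P)\,dP$, the second summand is harmonic in $P_0$ (differentiate under the integral sign, using that $g$ is a second order continuously differentiable harmonic function of its first argument, and that $G$ is symmetric so $g$ is smooth in $P_0$ as well), so it contributes nothing to $\Delta_{P_0}v$. The first summand is precisely the Newtonian potential of the density $f$ (up to the sign), and Theorem \ref{th:3.1.1} of \S3.1 — applied locally, cutting off $f$ outside a ball containing $\overline\Omega$ and using that the result is local — gives $\Delta_{P_0}\big(\iiint_\Omega\frac{f(P)}{4\pi r}\,dP\big)=-f(P_0)$ for $P_0$ in the interior; hence $\Delta_{P_0}v=f(P_0)$ on $\Omega$. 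Next, for the boundary term $w$: since $G(P,P_0)$ is harmonic in $P_0$ for $P_0\neq P$, and $P$ ranges over $S$ while $P_0$ stays in the open set $\Omega$, the kernel $\partial_n G(s;P_0)$ is harmonic in $P_0\in\Omega$ for each fixed $s\in S$, and differentiating under the integral (legitimate because $s$ ranges over the compact surface $S$ at positive distance from the interior point $P_0$) yields $\Delta_{P_0}w=0$. Combining, $\Delta_{P_0}u=f(P_0)$ throughout $\Omega$, which is \eqref{3:6:1.1}.

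Finally I would verify the boundary condition $u|_S=F_0$. This is the delicate point: as $P_0\to s_0\in S$, the volume term $v(P_0)$ extends continuously up to $S$ and one must identify its boundary value, while the double-layer term $w(P_0)$ has the classical jump behaviour of a double-layer potential. The clean way to circumvit the jump-relation computation is to use the integral identity already established in the proof of Theorem \ref{th:3.6.2}: for any point $Q_0\in\Omega$ one has $u(Q_0)=\iint_S F_0(s)\partial_n G(s;Q_0)\,ds-\iiint_\Omega G(P,Q_0)f(P)\,dP$ characterising the genuine solution; conversely, taking the harmonic function $h$ on $\Omega$ with $h|_S=F_0$ whose existence for the ball is recalled in the text, and the particular solution $v$ of the Poisson equation with $v|_S=0$, the function $h+v$ solves \eqref{3:6:1.1}+\eqref{3:6:1.2}, so by Theorem \ref{th:3.6.2} it equals the right-hand side of our formula; hence $u=h+v$ and $u|_S=h|_S+v|_S=F_0+0=F_0$. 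The main obstacle is thus the justification of the limiting/boundary step — controlling the double-layer potential near $S$ (or, as above, knowing that a solution of the Dirichlet problem exists so that uniqueness via Theorem \ref{th:3.6.2} closes the argument); the differentiations under the integral sign and the appeal to Theorem \ref{th:3.1.1} for the Newtonian potential are routine by comparison.
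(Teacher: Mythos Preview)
Your argument is correct, but you work harder than the paper does. The paper's proof is a one-step reduction: since Theorem~\ref{th:3.6.2} already shows that \emph{any} solution of \eqref{3:6:1.1}+\eqref{3:6:1.2} must be given by the integral formula, it suffices to prove that some solution exists; the formula then automatically equals it and hence solves the problem. Existence is obtained by writing $u=\varphi+\nu$, where $\varphi(P_0)=-\tfrac{1}{4\pi}\iiint_\Omega\frac{f(P)}{r}\,dP$ is the Newtonian potential (so $\Delta\varphi=f$ by Theorem~\ref{th:3.1.1}) and $\nu$ is the harmonic function with boundary data $F_0-\varphi|_S$, whose existence is taken from the classical Dirichlet theory for the Laplace equation.

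Your route---directly verifying $\Delta u=f$ by splitting $G=\tfrac{1}{4\pi r}+g$ and differentiating under the integral---is valid and more explicit, but once you invoke existence of a solution (your ``clean way'' for the boundary condition), the uniqueness statement of Theorem~\ref{th:3.6.2} hands you $\Delta u=f$ for free as well, so the direct Laplacian computation becomes redundant. In effect you rediscovered the paper's argument at the very end and could have started there: your ``clean way'' paragraph, stripped of the preceding work, \emph{is} the paper's proof.
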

\begin{proof}
It is enough to prove the existence of the Dirichlet problem solution \eqref{3:6:1.1}+\eqref{3:6:1.2}. Let $\varphi$  be the newtonian potential with the density function$f(p)$ on the domain $\Omega$,\\
\begin{equation}\label{3:6:1.9}
\varphi (p_{0})=-\frac{1}{4\pi}
\mathop{\iiint}\limits_{\Omega}\frac{1}{r}f(p)dxdydz
\end{equation}
It is  known that $\varphi $ satisfies the following Poisson equation \index{Poisson equation}$\delta \varphi =f$. Define $\nu=u-\varphi$ and it has to satisfy
\begin{equation}\label{3:6:1.10}
\left\{
\begin{array}{ll}
  \nu|S=u|S-\varphi=F_{0}(S)=\nu(s) \\
 \Delta\nu(P)=0,P\in \Omega(\nu(P_{0})=\mathop{\iint}\limits_{S}v_{0}(s)\frac{\partial G}{\partial n}ds)
  \end{array}
   \right.
\end{equation}
The existence of the harmonic function\index{Harmonic function } $\nu$\,verifying \eqref{3:6:1.10} determines the unknown $u$\,as a solution of the Dirichlet problem \eqref{3:6:1.1}+\eqref{3:6:1.2}.
\end{proof}
\section{Linear Parabolic Equations}\index{Parabolic equation}
The simplest linear parabolic equation is defined by the following heat equation\index{Heat equation}\\
 \begin{equation}\label{3:7.1}
\partial _{t} u(t,x)=\partial_{x}^{2} u (t,x),\,t>0,x\in \mathbb{R}
\end{equation}
\begin{equation}\label{3:7.2}
\mathop{\lim}\limits_{t\rightarrow 0}u(t,x)=\varphi(x),x\in\mathbb{R}
\end{equation}
 $\varphi\in C_{b}(R)$ is fixed. A solution is a continuous function $u(t,x);[0,\infty)\times\mathbb{R}\rightarrow \mathbb{R}$ which is second order continuously
  differentiable of $x\in \mathbb{R}$ for each $t> o$ and satisfying \eqref{3:7.1} + \eqref{3:7.2} for some fixed continuous and bounded function
  $\varphi\in \mathcal{C}_{b}(\mathbb{R})$.
\subsection[The Unique Solution of the C.P \eqref{3:7.1} and \eqref{3:7.2}]{The Unique Solution of the Cauchy Problem \eqref{3:7.1} and \eqref{3:7.2}}
It is expressed as follows.
\begin{equation}\label{3:7.3}
u(t,x)=\frac{1}{\sqrt{4\pi t}}\mathop{\int}\limits_{-\infty}^{\infty} \varphi(\xi)\exp-\frac{(x-\xi)^2}{4t}d\xi
\end{equation}
(Poisson formula for heat equation) Denote
\begin{equation}\label{3:7.4}
P(\sigma,x,y)dy=(\sqrt{4\pi\sigma})^{-1}\exp -\frac{(y-x^2)}{4\sigma},for\, \sigma>0,\,x,y\in \mathbb{R}
\end{equation}
By a direct computation we get the following equations
\begin{equation}\label{3:7.5}
\mathop{\int}\limits_{R} P(\sigma,x,y)dy=1 and \partial_{\sigma}P(\sigma,x,y)-\partial_{x}^2P(\sigma,x,y)=0\, for \,any\, \sigma>0,x,y\in \mathbb{R} \end{equation}
The first equation of \eqref{3:7.5} is obtained using a change of variable $\frac{y-x}{2\sqrt{\sigma}}=z$ and
\begin{equation}\label{3:7.6}
\mathop{\int}\limits_{R}P(\sigma,x,y)dy=\sqrt{4\sigma}\mathop{\int}\limits_{R}P(\sigma,x,x+2\sqrt{\sigma}z)dz=
\frac{1}{\sqrt{\pi}}\mathop{\int}\limits_{R}(\exp-|z|^2)dt=1
\end{equation}
where $\mathop{\int}\limits_{R}(\exp-t^2)dt=\sqrt{\pi}$ is used.
In addition, the function $P(\sigma,x,y)$ defined in \eqref{3:7.4} fulfils  the following.
\begin{equation}\label{3:7.7}
\mathop{\lim}\limits_{\sigma\rightarrow 0}\int_{R}\varphi(y)P(\sigma,x,y)dy=\varphi(x),\,for\,each\,x\in\mathbb{R}
\end{equation}
where $\varphi\in \mathcal{C}_b(R)$ is fixed. Using \eqref{3:7.5} we see easily that $\{u(t,x):t>0,x\in\mathbb{R}\}$ defined in \eqref{3:7.3} satisfies heat equation \index{Heat equation}\eqref{3:7.1}, provided we notice that
\begin{equation}\label{3:7.8}
u(t,x)=\int_{R}\varphi(\xi)P(t,x,\xi)d\xi,\, t>0,\,x\in\mathbb{R}
\end{equation}
and
\begin{equation}\label{3:7.9}
\left\{
      \begin{array}{ll}
       \partial_t\,u(t,x)= \int_{R}\varphi(\xi)\partial_tP(t,x,\xi)d\xi,\\
       \partial_{x}^{2}\,u(t,x)= \int_{R}\varphi(\xi)\partial_{x}^{2}P(t,x,\xi)d\xi,
      \end{array}
    \right.
\end{equation}
The property \eqref{3:7.7} used for \eqref{3:7.8} allows one to get \eqref{3:7.2}.
\begin{remark}
The unique solution of the heat equation \index{Heat equation} \eqref{3:7.1} + \eqref{3:7.2} can be expressed by the formula \eqref{3:7.3} even if the continuous $\varphi(x):\mathbb{R}\rightarrow\mathbb{R}$ satisfies a  polynomial growth condition
\begin{equation}\label{3:7.10}
|\varphi(x)|\leq c(1+|x|^N),\,\forall\,x\in\mathbb{R}
\end{equation}
There is no change in proving that the unique Cauchy problem solution of the heat equation for $x\in\mathbb{R}$.
\begin{equation}\label{3:7.11}
\partial_tu(t,x)=\Delta_x\,u(t,x),\,t>0,\,x\in\mathbb{R}
\end{equation}
\begin{equation}\label{3:7.12}
\mathop{lim}_{t\rightarrow 0}\,u(t,x)=\varphi(x),x\in\mathbb{R}^n,\,\Delta_x=\mathop{\sum}\limits_{i=1}^{n}\partial_{x_i}^{2}\
\end{equation}
is expressed by
\begin{equation}\label{3:7.13}
u(t,x)=(4\pi\,t)^{-\frac{n}{2}}\int_{\mathbb{R}^n}\varphi(\xi)\exp-\frac{|x-\xi|^2}{4t}d\xi
\end{equation}
where $\varphi\in\mathcal{C}_b(\mathbb{R}^n)$.
\end{remark}
\subsection{Exercises}
$(a_{1})$ Find a continuous and bounded function $u(t,x);[0,T]\times R\rightarrow R$ fulfilling the following
$$\left\{
   \begin{array}{ll}
     (1)\, \partial_{t} u(t,u)=a^2 \partial_{x}^2 u (t,x) , t\in(0,T],\, x \in \mathbb{R},\, a> 0\\
     (2)\,\mathop{\lim}\limits_{t\rightarrow 0} u(t,x)=cosx,\,x\in \mathbb{R}.
   \end{array}
 \right.
$$
 \textbf{Hint}.\, The equation (1) can be written with the constant $a^2=1$ provided  we use a change of variable $\xi=\frac{x}{a}$ and denote
$$u(t,x)=\nu(t,\frac{x}{a}),\,t\in[0,T],x\in\mathbb{R}$$
Here $\nu(t,y):[0,T]\times\mathbb{R}\rightarrow\mathbb{R}$ satisfies the equation
$$\partial_t\nu(t,y)=\partial_{y}^{2}\nu(t,y),\,(t,y)\in(0,T]\times\mathbb{R}$$
and initial condition $\nu(0,y)=cos(ay)$
$(a_2)$. Find a continuous and bounded function\\
$u(t,x,y):[0,T]\times\mathbb{R}\times\mathbb{R}\rightarrow\mathbb{R}$ satisfying the following linear parabolic equation
$$\left\{
  \begin{array}{ll}
   \textbf{(1)}\,\partial_{t} u(t,x,y)=a^2 \partial_{x}^2 u (t,x,y)+b^2 \partial_{y}^2 u (t,x,y),\,t\in(0,T]\\
   \textbf{(2)}\,\mathop{\lim}\limits_{t\rightarrow 0} u(t,x,y)=sinx+cosy,\,a>0,\,b>0,\,x,y\in\mathbb{R}
  \end{array}
\right.
$$\\
\textbf{Hint}.\,The equation (1) and (2) will be rewritten using the following changes $y_1=\frac{x}{a},\,y_2=\frac{y}{b}$\,and\\
$u(t,x,y)=\nu(t,\frac{x}{a},\frac{y}{b})$ where $\nu(t,y_1,y_2):[0,T]\times\mathbb{R}\times\mathbb{R}\rightarrow\mathbb{R}$ satisfy (see\eqref{3:7.11}$n=2$)
$$\left\{
   \begin{array}{ll}
     \partial_t\,\nu=\partial_{y_1}^{2}\nu+ \partial_{y_2}^{2}\nu\\
   \mathop{\lim}\limits_{t\rightarrow 0}\nu(t,y_1,y_2)=sin(ay_1)+cos(by_2)
   \end{array}
 \right.
$$
\subsection{Maximum Principle for Heat Equation \index{Heat equation}}
Any continuous solution $u(t,x),\,(t,x)\in[0,T]\times[A,B]$, satisfying heat equation \eqref{3:7.1} for $0<t\leq T$\ and $x\in(A,B)$ will achieve its extreme values \\$\mathop{\max}_{(t,x)\in[0,T]\times [A,B]}u(t,x)$ and $\mathop{\min}_{(t,x)\in[0,T]\times [A,B]}u(t,x)$ on the boundary $\hat{\partial D}$ of the domain $D=\{(t,x)\in[o,T]\times[A,B]\}$, where
$$\hat{\partial D}=(\{0\}\times[A,B])\bigcup([0,T]\times\{A\})\bigcup([0,T]\times\{B\})$$
\begin{proof}
Denote $M=\mathop{\max}_{(t,x)\in D}u(t,x) \,,m=\mathop{\max}_{(t,x)\in\hat{\partial D}}u(t,x)$ and assume $M>m$. Let $(t_0,x_0)\in D,\,(t_0,x_0) \,not\,in\, \hat{\partial D}$ be such that $u(t_0,x_0)=M$ and consider the following auxiliary function
\begin{equation}\label{3:7.15}
\nu(t,x)=u(t,x)+\frac{M-m}{2(B-A)^2}(x-x_0)^2
\end{equation}
We see easily that $\nu(t,x)$ satisfies
\begin{equation}\label{3:7.16}
\nu(t,x)\leq u(t,x)+\frac{M-m}{2(B-A)^2}(B-A)^2\leq m+\frac{M-m}{2}<M
\end{equation}
for any \\$(t,x)\in\hat{\partial D}$, and $\nu(t_0,x_0)=u(t_0,x_0)=M$, consider $\nu(t_1,x_1)=\mathop{\max}_{(t,x)\in D}\nu(t,x)\geq M$. As a consequence, $(t_1,x_1)$ not in $\hat{\partial D}$ and
\begin{equation}\label{3:7.17}
\left\{
       \begin{array}{ll}
      \partial_t\nu(t_1,x_1)=0,\, \partial_x\nu(t_1,x_1)=0,\, \partial_{x}^{2}\nu(t_1,x_1)\leq0 ,\,if\,(t_1,x_1)\in int D\\
       \partial_t\nu(t_1,x_1)\geq 0,\, \partial_x\nu(t_1,x_1)=0,\, \partial_{x}^{2}\nu(t_1,x_1)\leq 0,\,if\,(t_1,x_1)\in \{T\}\times(A,B)
       \end{array}
     \right.
\end{equation}
In both cases $(\partial_t\nu-\partial_{x}^{2}\nu)(t_1,x_1)\geq 0$ and $(t_1,x_1)$ not in $\hat{\partial D}$. On the other hand,using the heat equation \index{Heat equation}\eqref{3:7.1} satisfied by $u(t,x)$ when $(t,x)$ not in $\hat{\partial D}$ we get (see\eqref{3:7.15})
\begin{equation}\label{3:7.18}
(\partial_t\nu-\partial_{x}^{2}\nu)(t_1,x_1)=-\frac{M-m}{(B-A)^2}<0
\end{equation}
contradicting the above given inequality.  It proves that$M\leq m$. Replacing $u$ with $\{-u\}$ and using $$\mathop{\max}\limits_{(t,x)\in D}\{-u(t,x)\}=\mathop{\max}\limits_{(t,x)\in \hat{\partial D}}\{-u(t,x)\}$$
we get the second conclusion
$$\mathop{\min}\limits_{(t,x)\in D}\{-u(t,x)\}=\mathop{\min}\limits_{(t,x)\in \hat{\partial D}}\{-u(t,x)\}$$
\end{proof}
\begin{remark}
In the above given proof we may assume that the heat equation \eqref{3:7.1} $\partial_tu(t,x)=\partial_{x}^{2}u(t,x)$ is satisfied for any $(t,x)\in (0,T)\times(A,B)$ (omitting $t=T$) and the result is still valid noticing that $u(t,x)\leq m$ for $0\leq t\leq T-\varepsilon$ will imply $u(t,x)\leq m$\,for any $0\leq t\leq T$ (see $u$ is continuous). In addition, the conclusion of the maximum principle allows to extend it for $|u(t,x)|,\,i.e$
$|u(t,x)|\leq \max\{|u(t,x)|:(t,x)\in\hat{\partial D}\}$.
\end{remark}
\begin{remark}
The computation and arguments used for the scalar heat equation \index{Heat equation } \eqref{3:7.1} can be extended to the case $x\in\mathbb{R}^n$ replacing the equation \eqref{3:7.1} by \eqref{3:7.11} and considering a ball $B(P_*,\rho)\subseteq \mathbb{R}^n$ instead of $[A,B]\subseteq\mathbb{R}$. If it is the case, the corresponding maximum principle associated with heat equation (\eqref{3:7.11} says
$$\mathop{\max}\limits_{(t,x)\in D}u(t,x)=\mathop{\max}\limits_{(t,x)\in \hat{\partial D}}u(t,x)$$
and
$$\mathop{\min}\limits_{(t,x)\in D}u(t,x)=\mathop{\min}\limits_{(t,x)\in \hat{\partial D}}u(t,x)$$
where\\
$D=[0,T]\times B(P_*,\rho),\,\hat{\partial D}=(\{0\}\times B(P_*,\rho))\bigcup([0,T]\times \partial B),\,P_*\in\mathbb{R}^n,(fixed),\partial B=boundary\,of\,B(P_*,\rho)$.\\
Here the heat equation\index{Heat equation}\eqref{3:7.11} is assumed on the domain $(t,x)\in(0,T]\times int\,B(P_0,\rho)$ and the corresponding auxiliary function is given by(see\eqref{3:7.15})
$$\nu(t,x)=u(t,x)+\frac{M-m}{8\rho^2}|x-x_0|^2,$$where
$$u(t_0,x_0)=M=\mathop{\max}\limits_{(t,x)\in D}u(t,x)=m=\mathop{\max}\limits_{(t,x)\in \hat{\partial D}}u(t,x)$$ and $M=u(t_0,x_0)>m$ will lead us to a contradiction.
\end{remark}
\begin{remark}\label{rk;3.7.3}
Using maximum principle for heat equation \index{Heat equation} we get that the Cauchy problem solution for \eqref{3:7.1}+\eqref{3:7.2}(or \eqref{3:7.11}+\eqref{3:7.12})) is unique provided the initial condition $\varphi$\,and the solution $\{u(t,x):t\geq 0,x\in\mathbb{R}^n\}$ are restricted to the bounded continuous functions. In this respect,let $M>0$ be such that
$$|u(t,x)|\leq M,\,|\varphi(x)|\leq M\,for\, any\, t\geq 0,\,x\in\mathbb{R}(x\in\mathbb{R}^n)$$
Consider the following particular solution of \eqref{3:7.1}
$$\nu(t,x)=\frac{2M}{L^2}(x^2+2t)\,\,\,satisfying\,\,
\partial_t\nu=\partial_{x}^{2}\nu\,and\,\nu(0,x)=\frac{2M}{L^2}x^2\geq 0$$
$$\nu(\mathop{+}\limits_{-}L,t)=\frac{2M}{L^2}(L^2+2t)\geq 2M$$
If \eqref{3:7.1}+\eqref{3:7.2} has two bounded solutions then their difference $u(t,x)=u_1(t,x)-u_2(t,x)$ is a solution of heat equation \eqref{3:7.1}, satisfying $|u(t,x)|\leq 2M,\,(t,x)\in[0,\infty)\times \mathbb{R}$\,,and $u(0,x)=0$.On the other hand $h(t,x)=\nu(t,x)-u(t,x)$ satisfies \eqref{3:7.1} for any $(t,x)\in\, int\,D,\,D=[-L,L]\times[0,T]$\,and
$$h(t,x)=\nu(t,x)-u(t,x)\geq 0,\,\forall\,(t,x)\in\hat{\partial D}$$
Using a maximum principle we get
$$ u(t,x)\leq\frac{2M}{L^2}(x^2+2t)\,\forall\,(t,x)\in[0,T]\times[-L,L]$$
and similarly for $\overline{u}(t,x)=-u(t,x)$\,we obtain\\
$$ -u(t,x)\leq\frac{2M}{L^2}(x^2+2t)\,\forall\,(t,x)\in[0,T]\times[-L,L]$$
Combining the last two inequalities we get
$$|u(t,x)|\leq \frac{2M}{L^2}(x^2+2t)\,\forall\,(t,x)\in[0,T]\times[-L,L]$$
and for any arbitrary fixed $(t,x)(t>0)$ letting $L\uparrow\infty$ we obtain $u(t,x)=0$ which proves $u_1(t,x)=u_2(t,x)$ for each $t>0$.
\end{remark}
\noindent \textbf{Exercise}.\,Use the above given algorithm($n=1$)for the multidimensional heat equation \index{Heat equation}(11)($n\geq 1$) and get the conclusion:the Cauchy problem solution  of \eqref{3:7.11} + \eqref{3:7.12} is unique provided initial condition $\varphi$\,and the solutions$\{u(t,x):t\geq 0,x\in\mathbb{R}\}$\,are restricted to the bounded continuous functions.\\
\textbf{Hint}.\, Let $M>0$\,be such that $|u(t,x)|,|\varphi(x)|\leq M$\,for any $t\geq 0,x\in\mathbb{R}^n$\,and consider $V(t,x=\frac{2M}{L^2})(|x|^2+2t)$
satisfying \eqref{3:7.11} and \eqref{3:7.12} with $V(0,x)=\frac{2M}{L^2}|x|^2\geq 0(V(t,x)=\frac{2M}{L^2})(|x|^2+2t)\geq 2M\,if\,x\in \partial B(0,L)$. Proceed as in Remark \eqref{rk;3.7.3}.\\
\textbf{Problem $P_1$(Maximum Principle for Linear Elliptic Equation)}\\\index{Elliptic equation}
Consider the following linear elliptic equation \index{Elliptic equation}
\begin{eqnarray}\label{3:7:4.1}
  0 &=& \mathop{\sum}\limits_{i,j=1}^{n}a_{ij}\partial_{x_ix_j}^{2}\,u(x) \nonumber\\
  &=& \hbox{Trace}[A.\partial_{x}^{2}u(x)] \nonumber\\
  &=&\hbox{ Trace}[\partial_{x}^{2}u(x)A],\,x\in\Omega\hbox{ (bounded domain) }\subseteq\mathbb{R}^n
\end{eqnarray}
where the symmetric matrix \index{Matrix!symmetric}$A=(a_{i,j\in\{1,...,n\}})$ is strictly positive
definite($<x,A\,x>\geq \delta\parallel x\parallel^2,\,\forall\,x\in\mathbb{R}^n,\,for\,some\,\delta>0$). Under the above given conditions,
using an adequate transformation of coordinates and function,
we get a standard Laplace equation\index{Laplace equation} in $\mathbb{R}^n$\,for which the maximum principle is valid.\\
(R) Show that for a continuous and bounded function $u(x):\overline{\Omega}\rightarrow\mathbb{R}$ satisfying \eqref{3:7:4.1}
for any $x\in\Omega$ we get the following maximum principle:
$$\mathop{\max}\limits_{x\in\overline{\Omega}}u(x)=\mathop{\max}\limits_{x\in\Gamma}u(x)\,(\mathop{\min}\limits_{x\in\overline{\Omega}}u(x)=\mathop{\min}\limits_{x\in\Gamma}u(x)),\,where \,\overline{\Omega}=\Omega\sqcup\Gamma,\,\Gamma=\partial\Omega$$
\textbf{Hint.} Let $T:\mathbb{R}^n\rightarrow\mathbb{R}^n$\,be and orthogonal matrix \index{Matrix!orthogonal}$(T^*=T^{-1})$\,such that
\begin{equation}
A=T\,D\,T^{-1},\,where\,D=\diag(d_1,\dots,d_n),\,d_i>0
\end{equation}
Define $A^{1\setminus2}$(square root of the matrix $A$)$=T\,D^{\frac{1}{2}}T^{-1}$\,and make the following transformations
\begin{equation}
x=A^{1\setminus2}y,\,\nu(y)=u(A^{1\setminus2}y)
\end{equation}
Then notice that$\{\nu(y):y\in\overline{\Omega_1}\}$\,is a harmonic function\index{Harmonic function} satisfying\\
\begin{equation}\label{3:7:4.4}0=\Delta\nu(y)=Trace[\partial_{y}^{2}\nu(y)],\,\forall\,y\in\Omega_{1}
\end{equation}
where
$$\Omega_1=[A^{1\setminus2}]^{-1}\Omega \hbox{and} \Gamma_1=\partial\Omega_1=[A^{1\setminus2}]^{-1}\Gamma$$
As far as the maximum principle is valid for $\nu(y):\overline{\Omega_1}\rightarrow \mathbb{R}$ satisfying \eqref{3:7:4.4}we get that
$$u(x)=\nu([A^{1\setminus2}]^{-1}x),\,x\in\overline{\Omega}$$
satisfies the maximum principle too.\\
\textbf{Problen $P_2$(Maximum Princie for Linear Parabolic Equations)}\\
Consider the following linear parabolic equation\\
\begin{equation}\label{3:7:5.1}
\partial_tu(t,x)=\mathop{\sum}\limits_{i,j=1}^{n}a_{ij}\partial_{x_ix_j}^{2}\,u(t,x)=Trace[A.\partial_{x}^{2}u(t,x)]
=Trace[\partial_{x}^{2}u(t,x)A]
\end{equation}
$t\in(0,T]\,,x\in\Omega(bounded\,domain)\subseteq\mathbb{R}^n$ and let $u(t,x):[0,T]\times\overline{\Omega}\rightarrow\mathbb{R}$
be a continuous and bounded function satisfying the parabolic equation\index{Parabolic equation}\eqref{3:7:5.1}. If the matrix $A=(a_{ij})_{i,j\in\{1,...,n\}}$
is symmetric and strictly positive($<x,A\,x>\geq \delta\parallel x\parallel^2,\,\forall\,x\in\mathbb{R}^n,\,for\,some\,\delta>0$)then $\{u(x):x\in\overline{\Omega}\}$ satisfies the following maximum principle:
\begin{equation}
\mathop{\max}\limits_{(t,x)\in D}u(t,x)=\mathop{\max}\limits_{(t,x)\in\hat{\partial D}}u(t,x)\,(\mathop{\min}\limits_{(t,x)\in D}u(t,x)=\mathop{\min}\limits_{(t,x)\in\hat{\partial D}}u(t,x))
\end{equation}
where $D=[0,T]\times\overline{\Omega}\subseteq\mathbb{R}^{n+1}\,and\,\hat{\partial D}=(\{0\}\times \overline{\Omega})\sqcup([0,T]\times \partial \Omega)$.\\
\textbf{Hint}.\,The verification is based on the canonical form we may obtain in the right hand side of \eqref{3:7:5.1} provided the following transformations are performed
\begin{equation}
x=A^{1\setminus2}y,\,\nu(t,y)=u(t,A^{1\setminus2}y),\,y\in\Omega_1=[A^{1\setminus2}]^{-1}\Omega
\end{equation}
where the square root of a symmetric and positive matrix\index{Matrix!positive} $A^{1\setminus2}=T(\Gamma)^{\frac{1}{2}}T^{-1}$ is used. Here $\Gamma=\diag(\gamma_1,...,\gamma_n),\,\gamma_i>0$\,and $T:\mathbb{R}^n\rightarrow\mathbb{R}^n$\,is an orthogonal matrix ($T^*=T^{-1}$) such that $A=T\,\Gamma\,T^{-1}$. Notice that using \eqref{3:7:5.1} we get that $\{\nu(t,x):t\in[0,T],y\in\Omega_1\}$ satisfies the standard heat equation \index{Heat equation}
\begin{equation}
\partial_t\nu(t,y)=\Delta_y\nu(t,y)=Trace[\partial_{y}^{2}\nu(t,y)],\,t\in[0,t],\,y\in\Omega_1
\end{equation}
and the corresponding maximum principle
\begin{equation}\label{3:7:5.5}
\mathop{\max}\limits_{(t,y)\in D_1}\nu(t,x)=\mathop{\max}\limits_{(t,y)\in\hat{\partial D_1}}\nu(t,x)\,(\mathop{\min}\limits_{(t,y)\in D_1}\nu(t,x)=\mathop{\min}\limits_{(t,y)\in\hat{\partial D_1}}\nu(t,x))
\end{equation}
is valid,where $D_1=[0,T]\times\overline{D_1}\,and\,\hat{\partial D_1}=(\{0\}\times \overline{\Omega_1})\sqcup([0,T]\times \partial \Omega_1)$. Using $u(t,x)=\nu(t,[A^{1\setminus2}]^{-1}x),\,x\in\Omega$,\,and \eqref{3:7:5.5} we obtain the conclusion \eqref{3:7:5.5}.
\section{Weak Solutions(Generalized Solutions)}
\textbf{Separation of Variables(Fourier Method)\index{Fourier method}}\\
Boundary problems for parabolic and hyperbolic $PDE$\,can be solved using Fourier method.We shall confine ourselves to consider the following two types of $PDE$

$$(I)\,\,\,\,\,\,\,\frac{1}{a^2}\partial_tu(t,x,z)=\Delta u(t,x,y,z),\,t\in[0,T],\,(x,y,z)\in D\subseteq \mathbb{R}^3$$\\
$$(II)\,\,\,\,\,\,\,\partial_{t}^{2}u(t,x,z)=\Delta u(t,x,y,z),\,t\in[0,T],\,(x,y,z)\in D\subseteq \mathbb{R}^3$$\\
where the bounded  domain $D$\,has the boundary $S=\partial D$. The parabolic equation (I) is augmented with initial conditions(Cauchy conditions)
$$(I_a)\,\,\,\,\,\,\,\,\,\,\,\,\,u(0,x,y,z)=\varphi(x,y,z),\,(x,y,z)\in D$$
and the boundary conditions
$$(I_b) \,\,\,\,\,\,\,\,\,\,\,\,\,u(t,x,y,z)|_{(x,y,z)\in S}=0\,\,\,t\in[0,T]$$
The hyperbolic equation (II) is augmented with initial conditions
$$(II_{a})\,\,\,\,\,\,\,\,\,\,\,\,\, u(0,x,y,z)=\varphi_0(x,y,z),\,\partial_tu(0,x,y,z)=\varphi_1(x,y,z),\,(x,y,z)\in D$$
and the boundary conditions
$$(II_{b})\,\,\,\,\,\,\,\,\,\,\,\,\,\frac{\partial u}{\partial n}(t,x,y,z)=(\omega_1\partial_x u+\omega_2\partial_y u+\omega_3\partial_z u)|{(x,y,z)\in S}=0$$
where $n=(\omega_1,\omega_2,\omega_3)$\,is the unit orthogonal vector at $S$ oriented outside of $D$.
\subsection{Boundary Parabolic Problem}
To solve the mixed problem ($I,I_a,I_b$) we shall consider the particular solutions satisfying ($I,I_a,I_b$) in the form
\begin{equation}\label{3:8.1}
u(t,x,y,z)=T(t)U(x,y,z)
\end{equation}
It lead us directly to the following equations
\begin{equation}\label{3:8.2}
\frac{\Delta U(x,y,z)}{U(x,y,z)}=\frac{1}{a^2}\frac{T'(t)}{T(t)},\,t\in[0,T],\,(x,y,z)\in D
\end{equation}
and it implies that each term in \eqref{3:8.2} equals a constant $-\lambda$ and we obtain the following equations
\begin{equation}\label{3:8.3}
\Delta U+\lambda U=0;T'(t)+a^2\lambda T(t)=0
\end{equation}
(see $T(t)=C \exp-\lambda a^2 t,t\in[0,T]$). Using \eqref{3:8.3} and the boundary conditions ($I_b$) we get
\begin{equation}\label{3:8.4}
U(x,y,z)|_{(x,y,z)\in S}=0
\end{equation}
The values of the parameter $\lambda$ for which \eqref{3:8.3}+\eqref{3:8.4} has a solution are called eigenvalues\index{Eigen!values} associated with linear elliptic equation \index{Elliptic equation}$\Delta U+\lambda U=0$ and boundary condition \eqref{3:8.4}. The corresponding eigenvalues are found provided a Green function \index{Green!function}is used which allow us to rewrite the elliptic equation \index{Elliptic equation}as a Fredholm integral equation\index{Integral equation}. Recall the definition of a Green function.
\begin{definition}
Let $S=\partial D$ be defined by second order continuously differentiable function.A Green function for the Dirichlet problem $\Delta U=f(P),\,U|_{S}=F_0(S)$ is a symmetric function $G(P,P_0)$ satisfying the following conditions with respect to $P\in D$\\
$(\alpha)\,\Delta G(P,P_0)=0,\,\forall\,P\in D,P\neq P_0$\,,where $P_0\in D$ is fixed,\\
$(\beta)\, G(P,P_0)|_{S}=0,\,G(P,P_0)=G(P_0,P),\,and\,G(P,P_0)=\frac{1}{4\pi r}+g(P,P_0),\,r=|P-P_0|$\\
$(\gamma)\,g(P,P_0)$ is second order continuously differentiable and $\Delta g(P,P_0)=0,\,\forall\,P\in D$.
$(\delta)\,U(P_0)=\mathop{\iint}\limits_{S}F_0(s)\partial_nG(s)ds-\mathop{\iiint}\limits_{D}G(P,P_0)f(P)dxdydz$
where $f(P)=-\lambda U(P)$ and $F_0(s)=0,s\in S$
\end{definition}
The solution of the Dirichlet problem \eqref{3:8.3}+\eqref{3:8.4} can be expressed as in Theorem \ref{th:3.6.2}. Under these conditions,the integral representation formula ($\delta$) lead us to the following Fredholm integral equation
\begin{equation}\label{3:8.5}
U(P_0)=\lambda\mathop{\iiint}\limits_{D}G(P,P_0)U(P)dxdydz
\end{equation}
where $G(P,P_0)=\frac{1}{4\pi r}+g(P,P_0)$\, is an unbounded function (see $\frac{1}{r}$) verifying
\begin{equation}\label{3:8.6}
G(P,P_0)\leq \frac{1}{r^{\alpha}},\,0<\alpha<3,\,for\,some\,constant\,\,A>0
\end{equation}
Define
$$G^*(P,P_0)=\min(G(P,P_0),\frac{A}{\delta^{\alpha}}),\,where\,\delta>0\,\,is\,\,fixed$$
We get $G(P,P_0)-G^*(P,P_0)\geq 0\,\,\forall\,P\in D$ and
\begin{equation}\label{3:8.7}
0\geq G(P,P_0)-G^*(P,P_0)\leq A(\frac{1}{r^{\alpha}}-\frac{1}{\delta^{\alpha}})\,\,if\,\,r\leq\delta
\end{equation}
Using \eqref{3:8.7} and $\delta$\,sufficiently small we obtain
\begin{eqnarray}\label{3:8.8}
  \mathop{\iiint}\limits_{D}|G(P,P_0)-G^*(P,P_0)|dP &=& \mathop{\iiint}\limits_{D}[G(P,P_0) - G^*(P,P_0)]dP \nonumber\\
  &\leq&  A\mathop{\iiint}\limits_{\{r\leq \delta\}}\frac{1}{r^{\alpha}}dP\leq \frac{\varepsilon}{2}\hbox{ where }\varepsilon>0
\end{eqnarray}
is arbitrarily fixed.On the other hand, $G^*(P,P_0)$\,is a continuous and bounded function for $P\in D$ and approximate it by a degenerate kernel
\begin{equation}\label{3:8.9}
G^*(P,P_0)=\mathop{\sum}\limits_{i=1}^{N}\varphi_i(P)\psi_i(P_0)+G_2(P,P_0)
\end{equation}
where $$\mathop{\iiint}\limits_{D}|G_2(P,P_0)|dP\leq \frac{\varepsilon}{2}$$
From \eqref{3:8.8} and \eqref{3:8.9} we get that $G(P,P_0)$ in \eqref{3:8.5} can be rewritten as
\begin{equation}\label{3:8.10}
G(P,P_0)=\mathop{\sum}\limits_{i=1}^{N}\varphi_i(P)\psi_i(P_0)+G_1(P,P_0)
\end{equation}
where $G_1(P,P_0)=G_2(P,P_0)+[G(P,P_0)-G^*(P,P_0)]$ satisfies
\begin{equation}\label{3:8.11}
\mathop{\iiint}\limits_{D}|G_1(P,P_0)|dP\leq\,\frac{\varepsilon}{2}+\frac{\varepsilon}{2}=\varepsilon
\end{equation}
Using these remarks we replace the equation \eqref{3:8.5} by the following one
\begin{eqnarray}\label{3:8.12}
  U(P_0)-\lambda\mathop{\iiint}\limits_{D}G_1(P,P_0)U(P)dP &=& [(E-\lambda A_1)U](P_0) \\
  &=& \lambda\mathop{\sum}\limits_{i=1}^{N}\psi_i(P_0) \mathop{\iiint}\limits_{D}\varphi_i(P)U(P)dP \nonumber
\end{eqnarray}
where the operator $B_1\varphi=[E-\lambda A_1](\varphi)$ has an inverse
\begin{equation}\label{3:8.13}
B_{1}^{-1}=[E+\lambda A_1+\lambda^2 A_{1}^{2}+...+\lambda^kA_{1}^{k}+...],\hbox{ for any }|\lambda|<\frac{1}{\parallel A_1\parallel},\hbox{ where }\parallel A_1\parallel\leq \frac{\varepsilon}{2}
\end{equation}
is acting from $\mathcal{C}(D)$\,to $\mathcal{C}(D)$.Denote $\xi_i(P_0)=(B_{1}^{-1}\psi_i)(P_0)$ and rewrite \eqref{3:8.12} as the following equation
\begin{equation}\label{3:8.14}
U(P_0)=\lambda\mathop{\sum}\limits_{i=1}^{N}\xi_i(P_0) \mathop{\iiint}\limits_{D}\varphi_i(P)U(P)dP
\end{equation}
which has a nontrivial solution for any $\lambda\in\{\lambda_1,\lambda_2,...\}$ where the sequence $\{\lambda_j\}_{j\geq 1}$ of real numbers satisfies $|\lambda|\leq C$ only for a finite terms,for each constant $C>0$\,arbitrarily fixed. Let$\{U_j\}_{j\geq 1}$ be a sequence of solutions associated with equation \eqref{3:8.14}  and eigenvalues\index{Eigen!values} $\{\lambda_j\}_{j\geq 1}$, they are called eigen functions. Notice that (see $(\beta)$) $G(P,P_0)$ is a symmetric function which allows one to see that the eigenvalues are positive numbers, $\lambda_j>0$, and the corresponding eigenfunctions\index{Eigen!functions} $\{U_j\}_{j\geq 1}$ can be taken such that
\begin{equation}\label{3:8.15}
\mathop{\iiint}\limits_{D} U_i(P)U_j(P)dP=\left\{
                                                        \begin{array}{ll}
                                                         0\,\,\,i\neq j \\
                                                         1\,\,\,i=j
                                                        \end{array}
                                                      \right.
\end{equation}
$\{U_j\}_{j\geq 1}\hbox{ is a complete system in }\hat{C}(D)\subseteq C(D)\hbox{ i.e any }\varphi\in\hat{C}(D) \hbox{ can be represented }$
\begin{equation}\label{3:8.16}
\varphi(P)=\mathop{\sum}\limits_{j=1}^{\infty}a_j U_j(P)
\end{equation}
where the series is convergent in $L_2(D)\\(see\,\mathop{\sum}\limits_{j=1}^{\infty}a_{j}^{2} <\infty)$ and\\
$$\mathop{lim}\limits_{N\rightarrow\infty}(\mathop{\iiint}\limits_{D}|\mathop{\sum}\limits_{j=1}^{N}a_jU_j(P)-\varphi(P)|^{2}dP)^{1\setminus2}=0$$\\
The coefficients $\{a_j\}_{j\geq 1}$\,describing the continuous function $\varphi\in\hat{C}(D)$ are called Fourier coefficients\index{Fourier coefficients} and satisfy
\begin{equation}\label{3:8.17}
\mathop{\iiint}\limits_{D}\varphi(P)U_j(P)dP=a_j,\,j\geq 1,\,\varphi\in C(D)
\end{equation}
is fixed in $I_a$. Now we are in position to define a weak solution of mixed problem ($I,I_a,I_b$) and it will be given as the following series
\begin{equation}\label{3:8.18}
u(t,x,y,z)=\mathop{\sum}\limits_{i=1}^{\infty}(\exp-\lambda_ia^2t)a_iU_i(x,y,z)
\end{equation}
which is convergent in $L_2(D)$, uniformly with respect to $t\in[0,T]$. Using \eqref{3:8.17} and \eqref{3:8.16} the initial condition in ($I_a$) is satisfied in a weak sense, i.e
$$\mathop{lim}\limits_{N\rightarrow\infty}\parallel u_N(0,P)-\varphi(.)\parallel_2=0,u_N(t,x,y,z)=\mathop{\sum}\limits_{i=1}^{\infty}(\exp-\lambda_ia^2t)a_iU_i(x,y,z)$$
Here $\{u_N(t,x,y,z):t\in[0,T],\,(x,y,z)\in D\}_{N\geq 1}$ is defined as a sequence of solutions
\begin{equation}\label{3:8.19}
u_N(t,x,y,z)=\mathop{\sum}\limits_{i=1}^{N}(\exp-\lambda_ia^2t)a_iU_i(x,y,z)
\end{equation}
satisfying the parabolic equation\index{Parabolic equation} \eqref{3:8.1}, the boundary condition ($I_b$) and
\begin{equation}\label{3:8.20}
u_N(0,x,y,z)=\varphi_N(x,y,z),\,(x,y,z)\in D
\end{equation}
such that, $\mathop{lim}\limits_{N\rightarrow\infty}\parallel \varphi-\varphi_N\parallel_2=0$\,(($I_a$) is weakly satisfied).
\subsection{Boundary  Hyperbolic Problem}To solve the mixed problem ($II,II_a,II_b$) we shall proceed as in the parabolic case and look for a particular solution
\begin{equation}\label{3:8.21}
u(t,P)=T(t)U(P),\,u\in\mathcal{C}^2(D),\,T\in\mathcal{C}^2([0,T])
\end{equation}
satisfying (II) and the boundary condition($II_b$). The function \eqref{3:8.21} satisfies (II) if
\begin{equation}\label{3:8.22}
T(t)\,\Delta U(P)=U(P)T''(t)\,or\,\frac{T''(t)}{T(t)}=\frac{\Delta\,U(P)}{U(P)}=-\lambda^2(const)
\end{equation}
which imply the equations
\begin{equation}\label{3:8.23}
T''(t)+\lambda^2\,T(t)=0
\end{equation}
\begin{equation}\label{3:8.24}
\Delta U(P)+\lambda^2U(P)=0,\,\frac{\partial U}{\partial n}|_S=0
\end{equation}
For the Ne\'{u}mann problem solution in \eqref{3:8.24}, we use a Green function\index{Green!function} $G_1(P,P_0)$ satisfying
\begin{equation}\label{3:8.25}
\left\{
        \begin{array}{ll}
         \Delta\,G_1=\frac{1}{C}(where\,\,C=vol\,D)\,P\neq P_0 \\
         \partial_nG_1|_S=0
        \end{array}
      \right.
\end{equation}
In this case $\frac{1}{C}$\,stands for the solution of the adjoint equation\index{Adjoint!equation }
\begin{equation}\label{3:8.26}
\Delta\,\psi=0,\,\partial_n \psi|_S=0
\end{equation}
The Green function $G_1$\,has the structure
\begin{equation}\label{3:8.27}
G_1(P,P_0)=\frac{1}{4\pi r}+g(P),\,r=|P,P_0|,\,g(P)=\alpha|P|^2+g_1(P)
\end{equation}
where $g_1$\,verifies
\begin{equation}\label{3:8.28}
\left\{
        \begin{array}{ll}
          \Delta\,g_1(P)=0,\,\forall\,P\in D,and \\
         \partial_n\,g_1|_S=-[\alpha\partial_n|P|^2+\frac{1}{4\pi}\,\partial_n(\frac{1}{r})]|_S
        \end{array}
      \right.
\end{equation}
The constant $\alpha$ is found such that
\begin{eqnarray}\label{3:8.29}
  \mathop{\iiint}\limits_{D}(\Delta\,G_1)dxdydz &=& \frac{1}{4\pi}\mathop{\iiint}\limits_{D}\Delta(\frac{1}{r})dxdydz+6\alpha\,vol(D) \nonumber\\
  &=& -1+6\alpha\,vol(D)\Rightarrow \alpha=1\setminus3val(D)
\end{eqnarray}
In addition, using the Green function\index{Green!function} $G_1$ we construct a weak solution for the Ne\'{u}mann problem
\begin{equation}\label{3:8.30}
\Delta U(P)=f(P),\,\,\,(\partial_nU)|S=0
\end{equation}
assuming that\\
\begin{equation}\label{3:8.31}
\mathop{\iiint}\limits_{D}f(P)dxdydz=0
\end{equation}
The Green formula \eqref{3:8.4} used for $\nu=G_1$\,and $\{U(P:P\in D)\}$ satisfying \eqref{3:8.30}
and (31) will get the form
\begin{equation}\label{3:8.32}
\mathop{\iiint}\limits_{D}u(P)\Delta\,G_1(P,P_0))dxdydz=\mathop{\iiint}\limits_{D}G_1(P,P_0)f(P)dxdydz
\end{equation}
Looking for solution of \eqref{3:8.24} which verify
\begin{equation}\label{3:8.33}
\mathop{\iiint}\limits_{D}U(P)dxdydz=0(see\,\,f(P)=-\lambda^2\,U(P)\,in\,\,(30))
\end{equation}
from \eqref{3:8.25} and \eqref{3:8.32} we obtain an integral equation\index{Integral equation}
\begin{equation}\label{3:8.34}
U(P_0)=\lambda^2\mathop{\iiint}\limits_{D}G_1(P,P_0)U(P)dxdydz,\,P=(x,y,z)
\end{equation}
which has a symmetric kernel $G_1(P,P_0)$. As in the case of the parabolic mixed problem we get a sequence of eigenvalues\index{Eigen!values} and the corresponding eigenfunctions $\{U_j\}_{j\geq 1}$. In this case they are satisfying
\begin{equation}\label{3:8.35}
U_j\in\mathcal{C}^2(D),\,\partial_nU_j|_S=0\,and\,\,\mathop{\iiint}\limits_{D}U_j(P)dxdydz=0,\,j\geq 1
\end{equation}
Define the space $\hat{C_0}(D)\subseteq \hat{C}^1(D)$ consisting from all continuously differentiable functions $\varphi\in \mathcal{C}^1(D)$ verifying
\begin{equation}\label{3:8.36}
(\partial_n\varphi)(P\in S)=0,\mathop{\iiint}\limits_{D}\varphi(P)dxdydz=0\,and\,
\varphi(P)=\mathop{\sum}\limits_{j=1}^{\infty}a_j U_j(P),\,\mathop{\sum}\limits_{j=1}^{\infty}|a_j|^2<\infty
\end{equation}
The boundary problem ($(II),(II_b)$) \,has two independent solutions\\ (see $T_1(t)=cos\lambda_j t,T_2(t)=sin\lambda_j t$)
\begin{equation}\label{3:8.37}
U_j(P)cos\lambda_j t\,and\,U_j(P)sin\lambda_j t,\,\,for each\, j\geq 1
\end{equation}
and we are looking for a solution of the mixed problem ($(II),(II_a),(II_b)$) \,as a convergent series
\begin{equation}\label{3:8.38}
U(t,P)=\mathop{\sum}\limits_{j=1}^{\infty}[a_jU_j(P)cos\lambda_j t+b_jU_j(P)sin\lambda_j t]+b_0t
\end{equation}
in $L_2(D)$\,with respect to $P=(x,y,z)$ and uniformly with respect to $t\in[0,T]$. Here $\{a_j\}_{j\geq 1}$ must be determined as the Fourier coefficients\index{Fourier coefficient} associated with initial condition $\varphi_0\in\hat{C}_0(D)$
\begin{equation}\label{3:8.39}
\varphi_0(P)=u(0,P)=\mathop{\sum}\limits_{j=1}^{\infty}\alpha_j u_j(P)
\end{equation}
and $\{b_j\}_{j\geq 0}$ are found such that the second initial conditions $\partial_t u(0,P)=\varphi_1(P)$ (see$(II_a) and \varphi_1\in\hat{C}(D)$) are satisfied
\begin{equation}\label{3:8.40}
\left\{
        \begin{array}{ll}
        \varphi_1(P)=\mathop{\sum}\limits_{j=1}^{\infty}\beta_j U_j(P)+\beta_0,\,with\,\mathop{\sum}\limits_{j=1}^{\infty}(\beta_j)^2<\infty \\
          \partial_t u(0,P)=\mathop{\sum}\limits_{j=1}^{\infty}\lambda_jb_jU_j(P)+b_0=\varphi_1(P)
        \end{array}
      \right.
\end{equation}
Here $\hat{C}(D)=\mathcal{C}^1(D)$ is consisting from all continuously differentiable functions $\varphi_1(P)\in \mathcal{C}^1(D)$\,satisfying
\begin{equation}\label{3:8.41}
(\partial_n \varphi_1)(P\in S)=0\,\,and\,\,\varphi_1(P)=\mathop{\sum}\limits_{j=1}^{\infty}\beta_j U_j(P)+\beta_0\,,\,\,\mathop{\sum}\limits_{j=1}^{\infty}|\beta_j|^2<\infty
\end{equation}
We get
\begin{equation}\label{3:8.42}
b_0=\beta_0\,\,and\,\,\lambda_j\,b_j=\beta_j\,,\,j\geq 1
\end{equation}
In conclusion,the mixed hyperbolic problem  $(II),(II_a),(II_b)$ has a generalized (weak) solution
$$u(t,.):[0,T]\rightarrow L_2(D)$$
$$u(t,P)=\mathop{\sum}\limits_{j=1}^{\infty}[a_jcos\lambda_j t+b_jsin\lambda_j t]U_j(P)+b_0t$$
such that
$$U_N(t,P)=\mathop{\sum}\limits_{j=1}^{N}[a_jcos\lambda_j t+b_jsin\lambda_j t]U_j(P)+b_0t$$
satisfies ($II$) and ($II_a$) and ($II_b$)\,is fulfilled in a "weak sense" $$u_N(0,P)=\varphi_{0}^{N}(P), \partial_t u_N(0,P)=\varphi_{1}^{N}(P)$$ Here the "weak sense" means $$\mathop{lim}\limits_{N\rightarrow \infty}\varphi_{0}^{N}=\varphi_0\,\,in\,L_2(D)$$ and $$\mathop{lim}\limits_{N\rightarrow \infty}\varphi_{1}^{N}=\varphi_1\,\,in\,L_2(D)$$
\subsection{Fourier Method, Exercises}
\textbf{Exercise 1}\\
Solve the following mixed problem for a $PDE$ of a parabolic type using Fourier method
$$\left\{
    \begin{array}{ll}
    (I)\,\frac{1}{a^2}\partial_t u(t,x)=\partial_{x}^{2}u(t,x),\,\,\,t\in[0,T],\,x\in[0,1] \\
      (I_a)\,u(0,x)=\varphi(x),\,x\in[0,1],\,\,\,\varphi\in\mathcal{C_0}([0,1];\mathbb{R}) \\
      (I_b)\,u(t,0)=u(t,1)=0,\,\,\,t\in[0,T]
    \end{array}
  \right.
$$
where $\mathcal{C_0}([0,1];\mathbb{R})=\{\varphi\in\mathcal{C}([0,1];\mathbb{R}):\varphi(0)=\varphi(1)=0\}$.\\
\textbf{Hint}.\, We must notice from the very beginning that the space $\mathcal{C_0}([0,1];\mathbb{R})$ is too large for taking Cauchy condition($I_a$) and from the way of solving we are forced to accept only $\varphi\in $ with the following structure\\
\begin{equation}\label{3:8:3.1}
\varphi(x)=\mathop{\sum}\limits_{j=1}^{\infty}\alpha_j\hat{U}_j(x)\,\,,\,\,x\in[0,1]
\end{equation}
where
\begin{equation}\label{3:8:3.2}
\hat{U}_j(x=\frac{1}{\sqrt{2}}sin\,j\pi\,x,\,j\geq 1,\,\,x\in[0,1], \hbox{ (orthogonal in }L_2[0,T])
\end{equation}
$$\hbox{ satisfy }\mathop{\int}\limits_{0}^{1}\hat{U}_j(x)\hat{U}_k(x)dx=\left\{
                                                                              \begin{array}{ll}
                                                                                1\,\,\,j=k \\
                                                                               0\,\,\,j\neq k
                                                                              \end{array}
                                                                            \right.
$$
and the following series
\begin{equation}\label{3:8:3.3}
\mathop{\sum}\limits_{j=1}^{\infty}|\alpha_j|^2<\infty
\end{equation}
is a convergent one. The Fourier method\index{Fourier method} involves solutions as function of the following form
\begin{equation}\label{3:8:3.4}
u(t,x)=\mathop{\sum}\limits_{j=1}^{\infty}T_j(t)U_j(x)+a_0,\,t\in[0,T],\,x\in[0,1]
\end{equation}
where each term $U_j(t,x)=T_j(t)U_j(x)$\,satisfies the parabolic equation \index{Parabolic equation}(I) and the boundary conditions($I_b$). In this respect,
from the $PDE$ (I) we get the following
\begin{equation}\label{3:8:3.5}
\frac{\partial_{x}^{2}U_j(x)}{U_j(x)}=\frac{1}{a^2}\frac{\partial_tT_j(t)}{T_j(t)}=-\mu_j,\,\mu_j\neq 0,\,t\in[0,T],\,x\in[0,1],\,j\geq 1
\end{equation}
which are into a system\\
\begin{equation}\label{3:8:3.6}
\left\{
    \begin{array}{ll}
     \frac{dT_j(t)}{dt}+a^2\mu\,T_j(t)=0\,\,\,t\in[0,T]\\
     \frac{d^2U_j}{dx^2}(x)+\mu\,U_j(x)=0,\,\,\,x\in[0,1]
    \end{array}
  \right.
\end{equation}
On the other hand, to fulfil the boundary conditions ($I_b$)we need to impose
\begin{equation}\label{3:8:3.7}
U_j(0)=U_j(1)=0\,\,,\,\,\,j\geq 1
\end{equation}
and to get a solution $\{U_j(x):x\in[0,1]\}$ satisfying the second order differential equation\index{Differential!equation} in \eqref{3:8:3.6} and the boundary conditions \eqref{3:8:3.7} we need to make the choice
\begin{equation}\label{3:8:3.8}
\mu_j=(j\,\pi)^2>0,\,U_j(x)=c_j\,sin\,j\pi\,x,\,x\in[0,1],\,j\geq 1
\end{equation}
In addition, we get the general solution $T_j(t)$\,satisfying the first equation in \eqref{3:8:3.6}
\begin{equation}\label{3:8:3.9}
T_j(t)=a_j[\exp-(j\pi a)^2 t],\,t\in[0,T],\,j\geq 1,\,a_j\in\mathbb{R}
\end{equation}
Now the constants $c_j$\,in\,$U_j(x)$\,(see \eqref{3:8:3.8}) are taken such that
\begin{equation}\label{3:8:3.10}
\hat{U}_j(x)=\hat{c}_j\,sin\,j\pi\,x,\,\,x\in[0,1],\,j\geq 1
\end{equation}
is an orthonorma system in $L_2([0,1];\mathbb{R})$\,and it can be satisfied noticing that
\begin{eqnarray}\label{3:8:3.11}
  \mathop{\int}\limits_{0}^{1}(sin\,j\,\pi x)(sin\,k\,\pi x)dx &=& \mathop{\int}\limits_{0}^{1}(cos\,j\,\pi x)(cos\,k\,\pi x)dx \nonumber\\
  &=& \mathop{\int}\limits_{0}^{1}[cos(j+k)x]dx \nonumber\\
  &=& \mathop{\int}\limits_{0}^{1}(cos\,j\,\pi x)(cos\,k\,\pi x)dx \nonumber\\
  &=& \left\{
                                                                  \begin{array}{ll}
                                                                   1\setminus2\,\,\,\,j=k \\
                                                                  0\,\,\,\,\,j\neq k
                                                                  \end{array}
                                                                \right.
\end{eqnarray}
Here $$\mathop{\int}\limits_{0}^{1}(cos\,m\,\pi x)dx=0 \hbox{ for any }m\geq 1$$ and $$cos\alpha cos\beta=\frac{cos(\alpha+\beta)}{2}$$ are used. From \eqref{3:8:3.11} we get $$\hat{c}_j=\sqrt{2}\,\,\,j\geq 1$$ As a consequence
\begin{equation}\label{3:8:3.12}
\hat{U}_j(x)=\sqrt{2}sin\,j\,\pi\,x\,,\,x\in[0,T]\,,\,j\geq 1
\end{equation}
is an orthonormal system in $L_2([0,1];\mathbb{R})$ and the series in \eqref{3:8:3.4} becomes
\begin{equation}\label{3:8:3.13}
\hat{U}(t,x)=\mathop{\sum}\limits_{j=1}^{\infty}a_j\hat{U}_j(x)[\exp-(j\pi a)^2t]+a_0
\end{equation}
Now we are looking for the constants $a_j, j\geq 0$ such that the series in \eqref{3:8:3.13} is uniformly convergent on $(t,x)\in[0,T]\times[0,1]$ and in addition the initial condition($I_a$) must be satisfied
\begin{equation}\label{3:8:3.14}
\hat{U}(0,x)=\mathop{\sum}\limits_{j=1}^{\infty}a_j\hat{U}_j(x)+a_0=\varphi(x)=
\hat{U}(0,x)=\mathop{\sum}\limits_{j=1}^{\infty}\alpha_j\hat{U}_j(x)\
\end{equation}
where $\mathop{\sum}\limits_{j=1}^{\infty}|\alpha_j|^2<\infty$ is assumed (see\eqref{3:8:3.3}). As a consequence $a_0=0$ and $a_j=\alpha_j\,,\,j\geq 1$, are the corresponding Fourier coefficients \index{Fourier coefficient}associated with $\varphi$ satisfying \eqref{3:8:3.1} and \eqref{3:8:3.3}.\\
In conclusion, the uniformly convergent series given in \eqref{3:8:3.13} has the form
\begin{equation}\label{3:8:3.15}
\hat{u}(t,x)=\hat{U}(t,x)=\mathop{\sum}\limits_{j=1}^{\infty}\alpha_j\hat{U}_j(x)[\exp-(j\pi\,a)^2t],\,t\in[0,T],x\in[0,1]
\end{equation}
It is the weak solution of the problem $\{(I),(II_a),(II_b)\}$ in a sense that
\begin{equation}\label{3:8:3.16}
\hat{U}_N(t,x)=\mathop{\sum}\limits_{j=1}^{N}\alpha_j\hat{U}_j(x)[\exp-(j\pi\,a)^2t]
\end{equation}
satisfies the following properties
\begin{equation}\label{3:8:3.17}
\mathop{lim}\limits_{N\rightarrow \infty}\hat{U}_{N}(t,x)=\hat{U}(t,x)\,\,uniformly\,\,of\,\,t\in[0,1]
\end{equation}
\begin{equation}\label{3:8:3.18}
each\,\,\{\hat{U}_{N}(t,x):(t,x)\in[0,T]\times[0,1]\}\,\,fulfills \,(I)\,and\,(I_b)
\end{equation}
\begin{equation}\label{3:8:3.19}
\hat{U}(0,x)=\varphi(x)\,,\,x\in[0,1]\,,\,((I_a)\,is\,satisfied\,for\,\hat{u})
\end{equation}
provided $\varphi\in\mathcal{C}_0([0,1];\mathbb{R})$\,fulfil \eqref{3:8:3.1} and \eqref{3:8:3.3}.\\
\textbf{Exercise 2}\\
Solve the following mixed problem for a $PDE$ of hyperbolic type using Fourier method\index{Fourier method}\\
$$\left\{
    \begin{array}{ll}
    (I)\,\partial_{t}^{2} u(t,x)=\partial_{x}^{2}u(t,x),\,\,\,t\in[0,T],\,x\in[0,1] \\
      (I_a)\,u(0,x)=\varphi_0(x),\, \partial_t u(0,x)=\varphi_1(x),\,x\in[0,1],\,\,\,\varphi_0,\varphi_1\in\mathcal{C}([0,1];\mathbb{R}) \\
      (I_b)\,\partial_xu(t,0)=\partial_x u(t,1)=0,\,\,\,t\in[0,T]
    \end{array}
  \right.
$$\\
\textbf{Hint}.\,As in the previous exercise treating a mixed problem for parabolic equation we must notice that the space of continuous $\mathcal{C}([0,1];\mathbb{R})$\, is too large for the initial conditions $(II_a)$\,considered here. From the way of solving we are forced to accept only $\varphi_0,\varphi_1\in\mathcal{C}([0,1];\mathbb{R})$\,satisfying  the following conditions\\
\begin{equation}\label{3:8:4.1}
\varphi_0(x)=\mathop{\sum}\limits_{j=1}^{\infty}\alpha_j V_j(x)+\alpha_0,\,\varphi_1(x)=\mathop{\sum}\limits_{j=1}^{\infty}\beta_j V_j(x)+\beta_0\,,\,x\in[0,1]
\end{equation}
where
\begin{equation}\label{3:8:4.2}
\{V_j(x)=\sqrt{2}cos\,j\,\pi\,x\,,\,x\in[0,1]\}_{j\geq 1}
\end{equation}
is an orthonormal system in $L_2[0,1]$, and the corresponding Fourier coefficients\index{Fourier coefficient} \\$\{\alpha_j,\beta_j\}_{j\geq 1}$ define convergent series
\begin{equation}\label{3:8:4.3}
\mathop{\sum}\limits_{j=1}^{\infty}|\alpha_j|^2<\infty\,,\,\mathop{\sum}\limits_{j=1}^{\infty}|\beta_j|^2<\infty
\end{equation}
The Fourier method involves solutions $u(t,x):[0,T]\times[0,1]\rightarrow \mathbb{R}$\,possessing partial derivative $\partial_t u(0,x):[0,1]\rightarrow \mathbb{R}$\,and it must be of the following form
\begin{equation}\label{3:8:4.4}
u(t,x)=\mathop{\sum}\limits_{j=1}^{\infty}T_j(t)V_j(x)+a_0+b_0t\,,\,t\in[0,T]\,,\,x\in[0,1]
\end{equation}
Each term $u_j(t,x)=T_j(t)V_j(x)$\,must satisfy the hyperbolic  equation (II) and the boundary conditions$(II_b)$. It implies the following system of $ODE$
\begin{equation}\label{3:8:4.5}
\left\{
       \begin{array}{ll}
    \frac{d^2 T_j(t)}{dt^2}-\mu_j T_j(t)=0\,,\,t\in[0,T],\,j\geq 1 \\
          \frac{d^2 V_j(t)}{dx^2}-\mu_j V_j(t)=0\,,\,x\in[0,T],\,j\geq 1
       \end{array}
     \right.
\end{equation}
and, in addition, the boundary conditions
\begin{equation}\label{3:8:4.6}
\frac{dV_j}{dx}(0)=\frac{dV_j}{dx}(1)=0\,,\,j\geq 1
\end{equation}
are fulfilled. The conditions \eqref{3:8:4.6} implies\\
\begin{equation}\label{3:8:4.7}
\mu_j=-\lambda_{j}^{2}=-(j\,\pi)^2\,and\,\{V_j(x)=\sqrt{2}cos\,j\,\pi\,x:x\in[0,1]\}_{j\geq 1}
\end{equation}
is an orthonormal system in $L_2[0,1]$. On the other hand, using $\mu_j=-(j\pi)^2,\,j\geq 1$ (see\eqref{3:8:4.7}), from the first equation in \eqref{3:8:4.5} we get
\begin{equation}\label{3:8:4.8}
T_j(t)=a_j(cos\,j\,\pi t)+b_j(sin\,j\,\pi t),\ t\in[0,T],\,j\geq 1
\end{equation}
Using \eqref{3:8:4.7} and \eqref{3:8:4.8}, we are looking for $(a_j,b_j)_{j\geq 0}$ such that $\{u(t,x):(t,x)\in[0,t]\times[0,1]\}$ defined in \eqref{3:8:4.4} is a function satisfying initial condition
\begin{equation}\label{3:8:4.9}
u(0,x)=\mathop{\sum}\limits_{j=1}^{\infty}a_jV_j(x)+a_0=\varphi_0=\mathop{\sum}\limits_{j=1}^{\infty}\alpha_jV_j(x)+\alpha_0,\,x\in[0,1]
\end{equation}
In addition ,the function $\{\partial_tu(t,x):(t,x)\in[0,T]\times[0,1]\}$\,is a continuous one satisfying initial condition
\begin{equation}\label{3:8:4.10}
\partial_tu(0,x)=\mathop{\sum}\limits_{j=1}^{\infty}(j\,\pi)b_j V_j(x)+b_0=\varphi_1(x)=\mathop{\sum}\limits_{j=1}^{\infty}\beta_jV_j(x)+\beta_0\,,\,x\in[0,1]
\end{equation}
From the condition \eqref{3:8:4.9} and \eqref{3:8:4.10} and assuming \eqref{3:8:4.3} we get $(a_j,b_j)_{j\geq 1}$\,of the following form
\begin{equation}\label{3:8:4.11}
a_j=\alpha_j,\,\,(j\,\pi)b_j=\beta_j\,,\,j\geq 1\,,and\,\,a_0=\alpha_0\,\,b_0=\beta_0
\end{equation}
As a consequence, the series
\begin{equation}\label{3:8:4.12}
u(t,x)=\mathop{\sum}\limits_{j=1}^{\infty}[a_j(cos\,j\,\pi t)+b_j(sin\,j\,\pi t)]V_j(x)+a_0+b_0t
\end{equation}
with the coefficients $(a_j,b_j)_{j\geq 1}$ determined in \eqref{3:8:4.11} as a function for which each term $u(t,x)=a_j(cos\,j\,\pi x)+b_j(sun\,j\,\pi x)V_j(x)$ satisfies the hyperbolic equation (II) and the boundary conditions $(II_b)$. As a consequence, the uniformly convergent series \eqref{3:8:4.12} satisfies the mixed problem $(II),(II_a)\,\,and\,\,(II_b)$ in the weak sense, i.e
$$u_N(t,x)=\mathop{\sum}\limits_{j=1}^{N}u_j(t,x)+a_0+b_0t,(t,x)\in[0,T]\times[0,1]$$
verifies (II) and $(II_b)$ for each $N\geq 1$ and
\begin{equation}\label{3:8:4.13}
\mathop{lim}\limits_{N\rightarrow\infty}u_N(0,x)=\varphi_0(x),\,\mathop{lim}\limits_{N\rightarrow\infty}\partial_tu_N(0,x)=\varphi_1(x),\,x\in[0,1]
\end{equation}
stand for the initial conditions $(II_a)$.
\section{Some Nonlinear Elliptic and Parabolic $PDE$}
\subsection{Nonlinear Parabolic Equation}\index{Parabolic equation}
We consider the following nonlinear parabolic $PDE$
\begin{equation}\label{3:9.1}
\left\{
       \begin{array}{ll}
        (\partial_t-\Delta)(u)(t,x)=F(x,u(t,x),\partial_xu(t,x)),\,t\in(0,T],\,x\in\mathbb{R}^{n}\\
        \mathop{lim}\limits_{t\rightarrow 0}u(t,x)=0\,,\,\,x\in\mathbb{R}^n
       \end{array}
     \right.
\end{equation}
where $\partial_xu(t,x)=(\partial_1u,...,\partial_nu)(t,x),\,\partial_iu=\frac{\partial u}{\partial x_i},\,\partial_tu=\frac{\partial u}{\partial t},\,\Delta u=\mathop{\sum}\limits_{i=1}^{n}\partial_{i}^{2} u$. Here\\
$F(x,u,p):\mathbb{R}^{2n+1}\rightarrow\mathbb{R}$\,is a continuous function satisfying\\
\begin{equation}\label{3:9.2}
|F(x,u_0,p_0)|\leq C,\,|F(x,u_2,p_2)-F(x,u_1,p_1)|\leq L(|u_2-u_1|+|p_2-p_1|)
\end{equation}
for any $x\in\mathbb{R}^n,\,|u_i|,\,|p_i|\leq \delta,\,i=0,1,2$,\,where $L,C,\delta>0$ are some fixed constants. A standard solution for \eqref{3:9.1} means a continuous function $u(t,x):[0,a]\times\mathbb{R}^n\rightarrow\mathbb{R}$\,which is first order continuously derivable of $t\in(0,a)$ second order  continuously derivable  with respect to $x=(x_{1},...x_{n})\in \mathbb{R}^n$ such that \eqref{3:9.1} is satisfied for any $t\in(0,a),x\in \mathbb{R}^2$ a weak solution for \eqref{3:9.1} means a pair of continuous functions $(u (t,x) ,\partial_{x} u (t,x);[0,a]\times R^n\rightarrow R^{n+1}$ which are bounded such that the following system  of integral equations is satisfied
\begin{equation}\label{3:9.3}
\left\{
 \begin{array}{ll}
 u(t,x)=\mathop {\int}\limits_{0}^{t}[\mathop{\int}\limits_{\mathbb{R}^n}F(y,u(s,y),\partial_{y})u(s,y)P(t-s,x,y)dy]ds \\
 \partial_{x} u(t,x)=\mathop{\int}\limits_{0}^{t}[\mathop{\int}\limits_{\mathbb{R}^n}F(y,u(s,y),\partial_{y}u(s,y))\partial_{x}P(t-s,x,y)dy]ds \end{array}
  \right.
  \end{equation}
  for any $t\in[0,a],x\in R^{n} $, where $P(\sigma ,x,y),\,\sigma>0,x,y\in \mathbb{R}^n$ is the fundamental solution\index{Fundamental!solutions} of the parabolic equation \index{Parabolic equation} $(\partial_{\sigma}-\Delta_{x})P=0,\sigma>0,x,y\in \mathbb{R}^n$
\begin{equation}\label{3:9.4}
P(\sigma,x,y)=(4\pi \sigma)^{\frac{-n}{2}}\exp-\frac{|y-x|^2}{4\sigma},\sigma>0
\end{equation}
 A direct computation shows that $P(\sigma,x,y)$ satisfy the following properties
\begin{equation}\label{3:9.5}
\mathop {\int}\limits_{\mathbb{R}^n}P(\sigma,x,y)dy=1,\,\partial_{\sigma}P(\sigma,x,y)\Delta_xP(\sigma,x,y)
\end{equation}
 for any \,$\sigma>0,\,\,x,y\in\mathbb{R}^n$\,and\\
 $$\mathop {lim}\limits_{\sigma\downarrow0}P(\sigma,x,y)=0\,\,if\,\,x\neq y$$\\
 The unique solution for theintegral equation \eqref{3:9.3} is found using the standard approximations sequence defined recurrently by
 $$(u,P)_0(t,x)=(0,0)\in\mathbb{R}^{n+1}\,\,\,and$$
 \begin{equation}\label{3:9.6}
\left\{
        \begin{array}{ll}
          u_{k+1}(t,x)=\mathop{\int}\limits_{0}^{t}[\mathop{\int}\limits_{\mathbb{R}^n}F(y,u_k(s,y),p_k(s,y))P(t-s,x,y)dy]ds\\
p_{k+1}(t,x)=\partial_{x}u_{k+1}(t,x)=\mathop{\int}\limits_{0}^{t}[\mathop{\int}\limits_{\mathbb{R}^{n}}F(y,u_k(s,y),p_k(s,y))\partial_{x}P(t-s,x,y)dy]ds \end{array}
      \right.
\end{equation}
for any $k\geq 0,\,(t,x)\in[0,a]\times \mathbb{R}^n$\,where $a>0$\,is sufficiently small such that
\begin{equation}\label{3:9.7}
a\,C\leq \delta,\,\,2\sqrt{a}\,C\,C_1\leq\delta
\end{equation}
Here the constants $C,\delta>0$ are given in the hypothesis \eqref{3:9.2} and $C_1>0$ fixed satisfies
\begin{equation}\label{3:9.8}
(\pi)^{-n\setminus2}\mathop{\int}\limits_{\mathbb{R}^{n}}|z|(\exp-|z^2|)dz\leq C_1
\end{equation}
A constant $a>0$ verifying \eqref{3:9.7} allows one to get the boundedness of the sequence $\{(u_k,p_k)\}_{k\geq 1}$ as in the following lemma
\begin{lemma}\label{rk:3.9.1}
Let $F\in\mathcal{C}(\mathbb{R}^{2n+1},\mathbb{R})$ be given such that the hypothesis \eqref{3:9.2} is verified. Fix $a>0$ such that \eqref{3:9.7} are satisfied. Then the sequence $\{(u_k,p_k)\}_{k\geq 0}$ of continuous functions constructed in \eqref{3:9.6} has the following properties
\begin{equation}\label{3:9.9}
|u_k(t,x)|\leq\delta,\,|p_k(t,x)|\leq \delta,\,\,\forall\,(t,x)\in[0,a]\times\mathbb{R}^n\,,\,k\geq 0
\end{equation}
\begin{eqnarray}\label{3:9.10}
  \parallel(u'_{k+1},p_{k+1}(t)-(u_k,p_k)(t))\parallel &=& \mathop{\sup}\limits_{x\in\mathbb{R}^n}[|u_{k+1}(t,x)-u_{k}(t,x)| \\
  &-& |p_{k+1}(t,x)-p_{k}(t,x)|]\leq 2\delta C_{a}^{k}(\frac{t^k}{k!})^{1\setminus3} \nonumber
\end{eqnarray}
for any $t\in[0,a]\,,\,k\geq 0$ where $C_a=L(C_1\sqrt{a}+a^{2\setminus3})$.
\end{lemma}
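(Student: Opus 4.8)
The plan is to prove the two assertions \eqref{3:9.9} and \eqref{3:9.10} by induction on $k$, the first being needed to legitimize the second. Both rest on two elementary properties of the Gaussian kernel \eqref{3:9.4}. The normalization $\int_{\mathbb{R}^n}P(\sigma,x,y)\,dy=1$ is already recorded in \eqref{3:9.5}; in addition, writing $\partial_xP(\sigma,x,y)=P(\sigma,x,y)\,\frac{y-x}{2\sigma}$, substituting $z=(y-x)/(2\sqrt\sigma)$, and invoking \eqref{3:9.8} gives
\[
\int_{\mathbb{R}^n}|\partial_xP(\sigma,x,y)|\,dy\;=\;\pi^{-n/2}\sigma^{-1/2}\int_{\mathbb{R}^n}|z|\,e^{-|z|^2}\,dz\;\leq\;\frac{C_1}{\sqrt\sigma},\qquad\sigma>0 .
\]
These are the only facts about $P$ that enter, so I would establish them first.

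For \eqref{3:9.9} I would argue by induction on $k$. It is trivial for $k=0$. If $|u_k|,|p_k|\leq\delta$ on $[0,a]\times\mathbb{R}^n$, then at every point the argument $(y,u_k(s,y),p_k(s,y))$ lies in the region where, by \eqref{3:9.2}, $|F|\leq C$; hence from the recursion \eqref{3:9.6} together with $\int P\,dy=1$ and the kernel estimate above,
\[
|u_{k+1}(t,x)|\leq\int_0^t C\,ds=Ct\leq Ca\leq\delta,\qquad
|p_{k+1}(t,x)|\leq\int_0^t\frac{C\,C_1}{\sqrt{t-s}}\,ds=2C\,C_1\sqrt t\leq 2C\,C_1\sqrt a\leq\delta ,
\]
the last inequalities being exactly the smallness conditions \eqref{3:9.7} on $a$. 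This closes the induction and, in particular, guarantees that the Lipschitz part of \eqref{3:9.2} may be applied to the whole sequence.

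Next I would prove \eqref{3:9.10} by induction on $k$, writing $M_k(t)$ for the supremum over $x$ of $|u_{k+1}(t,x)-u_k(t,x)|+|p_{k+1}(t,x)-p_k(t,x)|$. For $k=0$ the claim is just $M_0(t)\leq 2\delta$, which is \eqref{3:9.9}. For the step, I subtract the two copies of \eqref{3:9.6}; the integrand becomes $F(y,u_k,p_k)-F(y,u_{k-1},p_{k-1})$, which by the Lipschitz bound and the induction hypothesis is at most $2\delta L\,C_a^{k-1}\big(s^{k-1}/(k-1)!\big)^{1/3}$. Integrating this against $P$ (for the $u$-difference) and against $|\partial_xP|$ (for the $p$-difference) bounds $M_k(t)$ by
\[
\frac{2\delta L\,C_a^{k-1}}{((k-1)!)^{1/3}}\left[\int_0^t s^{(k-1)/3}\,ds\;+\;C_1\int_0^t\frac{s^{(k-1)/3}}{\sqrt{t-s}}\,ds\right].
\]
The first integral equals $\tfrac{3}{k+2}\,t^{(k+2)/3}$, and since $\tfrac{3k^{1/3}}{k+2}\leq 1$ and $t\leq a$ this contributes the summand $L\,a^{2/3}$ to the constant, the factor $k!$ being correctly recovered in the denominator through $((k-1)!)^{-1/3}=k^{1/3}(k!)^{-1/3}$. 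The second integral is an Euler Beta integral, equal to $t^{(2k+1)/6}B\!\big(\tfrac{k+2}{3},\tfrac12\big)$, and the same bookkeeping — sharp monotonicity estimates for the ratio $\Gamma(x)/\Gamma(x+\tfrac12)$, again $((k-1)!)^{-1/3}=k^{1/3}(k!)^{-1/3}$, and $t\leq a$ — converts it into the summand $L\,C_1\sqrt a$. Combining, $M_k(t)\leq 2\delta\,C_a^{k}\big(t^k/k!\big)^{1/3}$ with $C_a=L(C_1\sqrt a+a^{2/3})$.

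The routine ingredients are the two kernel identities and the substitution behind the $|\partial_xP|$ estimate. The delicate step — and the one I expect to cost real work — is the last part of the second induction: dominating the singular convolution $\int_0^t s^{(k-1)/3}(t-s)^{-1/2}\,ds$ by a constant multiple of $(t^k/k!)^{1/3}$ while extracting precisely one power of $k$ to upgrade $(k-1)!$ to $k!$, so that the inductive constant comes out exactly as $C_a$ with no surviving $k$-dependent factor. Once \eqref{3:9.10} is in hand, $\sum_k\sup_{[0,a]}M_k<\infty$, so $\{(u_k,p_k)\}$ converges uniformly on $[0,a]\times\mathbb{R}^n$, and the limit is the bounded weak solution of \eqref{3:9.3}, uniqueness following from the same Gronwall-type contraction estimate.
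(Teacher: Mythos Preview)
The paper states this lemma without proof, so there is nothing to compare your approach against; your strategy --- the kernel identities $\int P\,dy=1$ and $\int|\partial_xP|\,dy\le C_1/\sqrt\sigma$, induction for the bound \eqref{3:9.9} via \eqref{3:9.7}, then induction for \eqref{3:9.10} via the Lipschitz part of \eqref{3:9.2} --- is the natural one and the boundedness step \eqref{3:9.9} is entirely correct.

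There is, however, a genuine gap in your treatment of the singular term. After the Beta evaluation $I_2=t^{(2k+1)/6}B\!\big(\tfrac{k+2}{3},\tfrac12\big)$, closing the induction with the \emph{exact} constant $C_a=L(C_1\sqrt a+a^{2/3})$ requires
\[
k^{1/3}\,t^{1/6}\,B\!\Big(\tfrac{k+2}{3},\tfrac12\Big)\le \sqrt a\qquad\text{for all }t\in[0,a],\ k\ge1,
\]
hence, at $t=a$, $k^{1/3}B\!\big(\tfrac{k+2}{3},\tfrac12\big)\le a^{1/3}$. For $k=1$ this reads $B(1,\tfrac12)=2\le a^{1/3}$, i.e.\ $a\ge 8$, which is incompatible with the smallness conditions \eqref{3:9.7}. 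No ``sharp monotonicity estimate'' for $\Gamma(x)/\Gamma(x+\tfrac12)$ can repair this: that ratio behaves like $x^{-1/2}$, so $k^{1/3}B(\cdot)\sim\sqrt{3\pi}\,k^{-1/6}$ is bounded but not by $a^{1/3}$ for small $a$. The same obstruction appears if you crudely bound $s^{(k-1)/3}\le t^{(k-1)/3}$ before integrating.

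What your argument actually yields is $M_k(t)\le 2\delta\,(\kappa C_a)^k\big(t^k/k!\big)^{1/3}$ for some universal constant $\kappa>1$ independent of $a$. This is more than enough for the only use made of the lemma (uniform convergence of $\sum_k M_k$, Remark after the lemma), so the defect is in the stated constant rather than in your method; but as written your ``same bookkeeping'' sentence over-claims, and you should either carry the universal constant explicitly or note that the lemma's $C_a$ appears to omit one.
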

\begin{remark}
Using the conclusion \eqref{3:9.10} of Lemma \eqref{rk:3.9.1}  and Lipschitz continuity of $F$ in \eqref{3:9.2} we get that the sequence $\{(u,p)_{k}\}_{k\geq 0}$ is uniformly convergent  of $(t,x)\in[0,\hat{a}]\times \mathbb{R}^{n},\mathop{lim}\limits_{k\rightarrow \infty} (u,p)_{k} (t,x)=(\hat{u},\hat{p})(t,x)$ there the continuous function$(\hat{u},\hat{p}),t\in[0,\hat{a}], x\in \mathbb{R}^n$  satisfies  the integral equation \eqref{3:9.3}, provided the  constant $\hat{a}> 0$ is fixed such that
\begin{equation}\label{3:9.11}
(\frac{\hat{a}}{2}^\frac{1}{3}).C_{\hat{a}}= \rho < 1
\end{equation}
 where $C_{\hat{a}}$ is given in \eqref{3:9.10}. In this respect, $\sum(t,x)=\mathop{\sum}\limits_{k=0}^{\infty}[(u,p)_{k+1}-(u,p)_{k}](t,x)$ is bounded by a numerical convergent series $|\sum(t,x)|\leq2\delta(1+\rho+,.....+\rho^k+...)=\frac{2^\delta}{1-\rho}$ which alow us to obtain the following
\end{remark}
\begin{lemma}
Let $F\in\mathcal{C}(\mathbb{R}^{2n+1},\mathbb{R})$ be given such that the  hypothesis \eqref{3:9.2} is satisfied. Then there exists a unique solution $(\hat{u}(t,x),\hat{p}tt,x), t\in[0,\hat{a}],\times\in\mathbb{R}^n,$ verifying the integral equations \index{Integral equations}\eqref{3:9.3} and
\begin{equation}\label{3:9.12}
|\hat{u}(t,x)|,|\hat{p}(t,x)|\leq \rho (forall) t \in [0,\hat{a}],\times \in\mathbb{ R}^n
\end{equation}
\begin{equation}\label{3:9.13}
\partial _{x}\hat{u}(t,x)=\hat{p}(t,x) \forall t\in [0,\hat{a}],x\in \mathbb{R}^n
\end{equation}
In addition, $((\hat{u}(t,x),\partial_{x}\hat{u}(t,x)))$ is the unique weak solution of the nonlinear equation \eqref{3:9.10}, i.e
 \begin{equation}\label{3:9.14}
\mathop{limt}\limits_{\varepsilon\downarrow 0}(\partial_{t}-\Delta)\hat{u_{\varepsilon}}(t,x)=F(x,\hat{u}(t,x),\partial_{x}\hat{u}(t,x))
\end{equation}
for each $0<t\leq\hat{a},x\in \mathbb{R}^n$ where
$$\hat{u_{\varepsilon}}(t,x)\mathop{\int}\limits_{0}^{t} [\mathop{\int}\limits_{\mathbb{R}^n}F(y,\hat{u},s,y),\partial_{y}\hat{u}(s,y)P(t-s,x,y)dy]ds $$
if $0<t=\hat{a}\,\, and\,\, x \in \mathbb{R}^n$ are fixed.
\end{lemma}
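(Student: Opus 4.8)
The plan is to establish existence and uniqueness of the weak solution by passing to the limit in the Picard iteration scheme \eqref{3:9.6}, following exactly the pattern already used for linear ODE systems in Chapter 1. First I would verify the two lemmas quoted above (Lemma \ref{rk:3.9.1} and the convergence lemma): the uniform bound \eqref{3:9.9} follows by induction on $k$, using $\int_{\mathbb{R}^n}P(t-s,x,y)\,dy=1$ from \eqref{3:9.5} together with $|F(y,u_k,p_k)|\leq C$ (which is legitimate once \eqref{3:9.9} is known for index $k$, by the first inequality in \eqref{3:9.2}), so that $|u_{k+1}(t,x)|\leq aC\leq\delta$; the bound on $p_{k+1}$ uses the Gaussian estimate $\int_{\mathbb{R}^n}|\partial_xP(t-s,x,y)|\,dy\leq C_1(t-s)^{-1/2}$, which after integrating $ds$ on $[0,a]$ gives $|p_{k+1}(t,x)|\leq 2\sqrt{a}\,CC_1\leq\delta$ by \eqref{3:9.7}. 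The contraction estimate \eqref{3:9.10} is the key recursive inequality: subtracting two consecutive iterates, applying the Lipschitz bound from \eqref{3:9.2}, and splitting into the $u$-part (kernel $P$, bounded in $L^1$) and the $p$-part (kernel $\partial_xP$, with the singular factor $(t-s)^{-1/2}$ integrable), one obtains a Gr\"onwall-type recursion whose solution is the iterated-integral bound $2\delta C_a^k(t^k/k!)^{1/3}$; the exponent $1/3$ is forced by how the half-power singularity in the $\partial_xP$ term compounds across iterations.

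Once those lemmas are in place, the existence argument is essentially the telescoping-series argument from Theorem~\ref{th:1.1}: the partial sums $\sum(t,x)=\sum_{k\geq 0}[(u,p)_{k+1}-(u,p)_k](t,x)$ are dominated, for $t\in[0,\hat a]$ with $\hat a$ small enough that $(\hat a/2)^{1/3}C_{\hat a}=\rho<1$, by the geometric series $2\delta(1+\rho+\rho^2+\cdots)=2\delta/(1-\rho)$, hence the sequence $\{(u,p)_k\}$ converges uniformly on $[0,\hat a]\times\mathbb{R}^n$ to a bounded continuous limit $(\hat u,\hat p)$. Passing to the limit $k\to\infty$ inside \eqref{3:9.6} — justified by the uniform convergence and the Lipschitz continuity of $F$, which makes the integrands converge uniformly — yields that $(\hat u,\hat p)$ solves the integral system \eqref{3:9.3}, giving \eqref{3:9.12} and \eqref{3:9.13}. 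Uniqueness follows by the same Gr\"onwall mechanism: the difference of two weak solutions satisfies an integral inequality of the form $\varphi(t)\leq C_{\hat a}\int_0^t(t-s)^{-1/2}\varphi(s)\,ds$ (after absorbing the $u$-part), and iterating this once removes the singularity, reducing to a genuine Gr\"onwall inequality with $M=0$ (Lemma~\ref{le:1.1}), forcing $\varphi\equiv 0$.

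The final assertion \eqref{3:9.14} — that $\hat u$ is a weak solution in the stated sense — requires identifying $\hat u$ with the limit of the regularized functions $\hat u_\varepsilon$ defined by the same Poisson-type convolution but with the lower integration limit shifted, and then applying $(\partial_t-\Delta)$ to $\hat u_\varepsilon$. Here one uses the heat-kernel identities $\partial_\sigma P=\Delta_x P$ and $\lim_{\sigma\downarrow 0}\int_{\mathbb{R}^n}g(y)P(\sigma,x,y)\,dy=g(x)$ for continuous bounded $g$ (the analogue of \eqref{3:7.7}); differentiating under the integral sign is legitimate for $t>\varepsilon$ because the kernel is smooth away from $\sigma=0$, and the boundary term produced at $s=t-\varepsilon$ converges to $F(x,\hat u(t,x),\partial_x\hat u(t,x))$ as $\varepsilon\downarrow 0$ by the approximate-identity property, while the remaining volume term vanishes because $(\partial_t-\Delta_x)P(t-s,x,y)=0$.

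The main obstacle I anticipate is the handling of the spatial-gradient component $\hat p=\partial_x\hat u$ throughout: the kernel $\partial_xP(\sigma,x,y)$ behaves like $\sigma^{-1/2}$ times an integrable Gaussian in $y$, so every estimate on the $p$-iterates carries an inconvenient half-integer singularity in the time variable that must be tracked carefully (this is precisely why the convergence bound in \eqref{3:9.10} has a cube-root rather than a clean factorial decay, and why two iterations of the Gr\"onwall step are needed for uniqueness rather than one). Establishing that the limit pair genuinely satisfies $\partial_x\hat u=\hat p$ — rather than merely that both components solve their respective integral equations — also needs a small argument: one differentiates the first equation of \eqref{3:9.3} under the integral sign, which is permitted because the resulting integrand (with kernel $\partial_xP$) is exactly the right-hand side of the second equation and is dominated uniformly on $[0,\hat a]\times\mathbb{R}^n$ by the bounds already proved.
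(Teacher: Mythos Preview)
Your approach is correct and matches the paper's own treatment: the paper does not give a separate proof of this lemma, but instead presents it as a direct consequence of the preceding Remark, which contains exactly your telescoping-series argument (the bound $|\sum(t,x)|\leq 2\delta(1+\rho+\cdots)=2\delta/(1-\rho)$ under the smallness condition $(\hat a/2)^{1/3}C_{\hat a}=\rho<1$, yielding uniform convergence of the Picard iterates and passage to the limit in \eqref{3:9.6}). You supply considerably more detail than the paper --- in particular the uniqueness argument via iterated Gr\"onwall to handle the $(t-s)^{-1/2}$ singularity, the justification that $\hat p=\partial_x\hat u$, and the verification of the weak-solution identity \eqref{3:9.14} via the heat-kernel approximate-identity property --- none of which the paper writes out explicitly.
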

\subsection{Some Nonlinear Elliptic Equations}\index{Elliptic equation}
We consider the following nonlinear elliptic equation
\begin{equation}\label{3:9.15}
\Delta u(x)=f(x,u(x)),x\in R^2,n\geq3\,\, where \,f(x,u):\mathbb{R}^{n+1}\rightarrow \mathbb{R}
\end{equation}
is first order continuously differentiable satisfying
\begin{equation}\label{3:9.16}
f(x,u)=0 for\,\,x\in\, B(0,b),u\in R, \hbox{ where }B(0,b)\subseteq \mathbb{R}^n \hbox{ is fixed}
\end{equation}
 \begin{equation}\label{3:9.17}
\lambda=\{\max{|\frac{\partial t}{\partial u}|;x\in B(0,b),|u|\leq2K_{1}}\}
\end{equation}
 satisfies $\lambda K_{0}=\rho\in (0,\frac{1}{2})$ where $$K_{0}=\frac{2b^2}{n-2},K_{1}=C_{0}K_{0} and C_{0}=\{\max{|f(x,0)|;x\in B(0,b)}\}  $$
\begin{lemma}
Assume that ${f(x,u);(x,u)\in R^{n+1}}$ is given satisfying the  hypothesis \eqref{3:9.16} and \eqref{3:9.17}. Then there exits a unique bounded solution of the  nonlinear equation \eqref{3:9.15} $\hat{u}(x); R^{n}\rightarrow R.$ which satisfies the following integral equation
 \begin{equation}\label{3:9.18}
\hat{u}(x)=\mathop{\int}\limits_{\mathbb{R}^n}\hat{f}(y,\hat{u}(y) |y-x|^{2-n}dy=\mathop{\int}\limits_{B(o,b)}\hat{f}(y,\hat{u}(y))|y-x|^{2-n} dy
\end{equation}
where $\hat{f}(x,u)=\hat{C}f(x,u) \,and \,\hat{C}=\frac{1}{(2-n)|\Omega_{0}|},|\Omega|=meas S(0,1).$
\end{lemma}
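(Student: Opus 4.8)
The plan is to recast \eqref{3:9.15} as a fixed point equation for the nonlinear Newtonian potential operator already displayed in \eqref{3:9.18}, and to close the argument with the contraction mapping principle (the Comment on contractive mappings in Chapter~I), in complete analogy with the parabolic Lemma~\ref{rk:3.9.1}. Concretely, I would work in the Banach space $X=\mathcal{C}_b(\mathbb{R}^n;\mathbb{R})$ of bounded continuous functions with the supremum norm, fix the closed ball $\mathcal{B}=\{u\in X:\|u\|_\infty\le 2K_1\}$, and define
\[
(Tu)(x)=\int_{\mathbb{R}^n}\hat f(y,u(y))\,|y-x|^{2-n}\,dy=\int_{B(0,b)}\hat f(y,u(y))\,|y-x|^{2-n}\,dy ,
\]
the second equality being forced by \eqref{3:9.16}. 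The operator $T$ is well defined on $X$ because $\hat f(\cdot,u(\cdot))$ is bounded and supported in $B(0,b)$ while $|y-x|^{2-n}$ is locally integrable for $n\ge 3$; moreover a fixed point of $T$ in $\mathcal{B}$ is precisely a bounded function satisfying \eqref{3:9.18}.

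First I would record the single kernel estimate underlying everything: for every $x\in\mathbb{R}^n$, enlarging $B(0,b)$ to the ball $B(x,2b)$ gives
\[
|\hat C|\int_{B(0,b)}|y-x|^{2-n}\,dy\le |\hat C|\int_{B(x,2b)}|y-x|^{2-n}\,dy=|\hat C|\,|\Omega_0|\int_0^{2b}r\,dr=2b^2\,|\Omega_0|\,|\hat C|=\frac{2b^2}{n-2}=K_0 ,
\]
using $|\hat C|=\tfrac{1}{(n-2)|\Omega_0|}$ and $|\Omega_0|=\mathrm{meas}\,S(0,1)$. Next, for $u\in\mathcal{B}$, I would split $\hat f(y,u(y))=\hat f(y,0)+[\hat f(y,u(y))-\hat f(y,0)]$ and use $|\hat f(y,0)|\le|\hat C|C_0$ together with the Lipschitz bound $|\hat f(y,u)-\hat f(y,v)|\le|\hat C|\lambda\,|u-v|$ coming from \eqref{3:9.17}; combined with the kernel estimate this yields $\|Tu\|_\infty\le(C_0+2\lambda K_1)K_0=K_1+2\rho K_1=(1+2\rho)K_1<2K_1$, so $T(\mathcal{B})\subseteq\mathcal{B}$. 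The same Lipschitz bound gives $\|Tu-Tv\|_\infty\le\lambda K_0\|u-v\|_\infty=\rho\|u-v\|_\infty$ for $u,v\in\mathcal{B}$, and since $\rho\in(0,\tfrac12)$ the map $T$ is a contraction of the complete metric space $\mathcal{B}$. Hence $T$ has a unique fixed point $\hat u\in\mathcal{B}$, the unique bounded solution of \eqref{3:9.18}; its continuity follows from dominated convergence for the kernel $|y-x|^{2-n}$, which is uniformly integrable in $x$ over $B(0,b)$.

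It then remains to verify that $\hat u$ genuinely solves \eqref{3:9.15}, and this is the delicate step. The idea is the regularity bootstrap behind Theorem~\ref{th:3.1.1} in its $\mathbb{R}^n$ form (recorded in the Remark following it): since $\hat u$ is continuous, $y\mapsto \hat f(y,\hat u(y))$ is a continuous density supported in $B(0,b)$, so the Newtonian potential $\hat u=T\hat u$ is in fact $\mathcal{C}^1$; because $f\in\mathcal{C}^1$ in $(x,u)$, the density is then $\mathcal{C}^1$ on $B(0,b)$, and feeding this back the potential becomes $\mathcal{C}^2$ with
\[
\Delta\hat u(x)=(2-n)|\Omega_0|\,\hat f(x,\hat u(x))=(2-n)|\Omega_0|\,\hat C\,f(x,\hat u(x))=f(x,\hat u(x)) ,
\]
the normalisation $\hat C=\tfrac{1}{(2-n)|\Omega_0|}$ having been chosen precisely so that the proportionality factor in the Poisson identity equals $1$. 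I expect this verification---promoting the merely continuous fixed point to a genuine $\mathcal{C}^2$ classical solution via the two--stage bootstrap, rather than only the integral identity---to be the main obstacle; it is exactly the point where the parabolic counterpart settled for a weak solution. The remaining ingredients (the kernel estimate, the self--mapping property, and the contraction estimate) are routine once \eqref{3:9.16}--\eqref{3:9.17} are in force, and the uniqueness assertion is the uniqueness clause of the fixed point theorem applied on $\mathcal{B}$.
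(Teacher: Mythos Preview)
Your proposal is correct and follows essentially the same route as the paper: the paper also uses the kernel bound $|\hat C|\int_{B(0,b)}|y-x|^{2-n}\,dy\le K_0=\tfrac{2b^2}{n-2}$ and then runs the Picard iteration $u_0=0$, $u_{k+1}=Tu_k$, obtaining $|u_1|\le K_1$, $|u_{k+1}-u_k|\le K_1\rho^k$, and hence a uniform limit with $|\hat u|\le 2K_1$---this is precisely the contraction argument on $\mathcal B$ written out iteratively rather than via the abstract fixed-point theorem. The paper does not spell out the $\mathcal C^2$ bootstrap you sketch at the end; it simply refers back to the Poisson computation of Theorem~\ref{th:3.1.1} and stops once the integral equation \eqref{3:9.18} is established, so your additional regularity discussion goes a bit beyond what the text actually proves.
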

\begin{remark}
The proof of this result usees the standard computation performed for the Poisson equation \index{Poisson equation} in the first section of this chapter.
More precisely, let$\Omega_{0}= S(0,1)\subseteq \mathbb{R}^n $ be the sphere centered at origin and with radius $\rho =1$.
Denote $ |\Omega_{0}|= meas \Omega_{0}$ and $\hat{f}(x,u)=\hat{C}f(x,u),(x,u)\in \mathbb{R}^n\times \mathbb{R}$ where $\hat{c}=\frac{1}{(2-n)|\Omega_{0}|}$.
 Associate the  integral equation \eqref{3:9.18} which can be rewritten as
\begin{equation}\label{3:9.19}
u(x)=\mathop{\int}\limits_{D}\hat{f}(x+z,u(x+z)).|z|^{2-n}dz ,\,for x\in B(o,b)
\end{equation}
where$ D=B(0,L),L=2b $. Define a sequence ${u_{k}(x); x\in B(0,b)}k\geq0 u_{0}(x)=0,x\in B(0,b)$
\begin{equation}\label{3:9.20}
u_{k+1}(x)=\mathop{\int}\limits_{D}\hat{f}(x+z,u_{k}(x+z))|z|^{2-n}dz,k\geq0\times ,\,x\in B(0,b)
\end{equation}
The sequence $\{u_{k}\}_{k\geq 0}$ is bounded and uniformly  convergent to $a$ continuous function as following estimates show
\begin{equation}\label{3:9.21}
|u_1(x)|\leq \hat{c}C_{0}\mathop{\int}\limits_{0}^{L} r dr \mathop{\int}\limits _{\Omega_{0}}dw = \frac{C_{0}}{n-2}\frac{L^2}{2}=\frac{C_{0}}{n-2}2b^2=K_1
\end{equation}
 for any $x\in B(0,b) $ and
\begin{equation}\label{3:9.22}
|u_{2}(x)-u,(x)|\leq \mathop{\int}\limits_{D}1+\hat{f}((x+z),(x+z))-\hat{f}(x+z,0)|z|^{2-n} dz\leq
\end{equation}
  $$\leq K_{1} \max |\frac{\partial\hat{f}}{\partial u}(y,u)|\mathop{\int}\limits_{S}|z|^{2-n}dz\leq K_1(\lambda.K_0)=K_1\rho\,for any\,x\in B(0,b)$$
An induction argument leads us to
\begin{equation}\label{3:9.23}
|u_{k+1}(x)-u_k(x)|\leq K_1\rho^{k},\,x\in B(0,b),\,k\geq 0
\end{equation}
where $\rho\in(0,1\setminus2)$ and $k_1>0$ are  defined in \eqref{3:9.17}. Rewrite
\begin{equation}\label{3:9.24}
u_{k+1}=u_1(x)+u_2(x)-u_1(x)+\dots+u_{k+1}(x)-u_k(x)
\end{equation}
and consider the following series of continuous functions
\begin{equation}\label{3:9.25}
\sum(x)=u_(x)+\nu_2(x)+\dots+\nu_{k+1}(x)+\dots
\end{equation}
$$x\in B(0,b),\,\nu_{j+1}(x)=u_{j+1}(x)-u_j(x)$$
 The series in \eqref{3:9.25} is dominated by a convergent numerical series
\begin{equation}\label{3:9.26}
|\sum(x)|\leq K_1(1+\rho+\rho^2+\dots+\rho^k+\dots)=K_1\frac{1}{1-\rho}\leq 2K_1
\end{equation}
and the  sequence defined in \eqref{3:9.20} is uniformly convergent to a continuous and bounded function
\begin{equation}\label{3:9.27}
\mathop{lim}\limits_{k\rightarrow \infty}u_k(x)=\hat{u}(x),\,|\hat{u}(x)|\leq 2K_1,\,x\in B(0,b)
\end{equation}
In addition$\{\hat{u}(x),\,x\in B(0,b)\}$ verifies
\begin{equation}\label{3:9.28}
\hat{u}(x)\mathop{\int}\limits_{\mathbb{R}^n}\hat{f}(y,\hat{u}(y))|y-x|^{2-n}dy=\mathop{\int}\limits_{B(0,b)}\hat{f}(y,\hat{u}(y))|y-x|^{2-n}dy
\end{equation}
The solution $\{\hat{u}(x),\,x\in B(0,b)\}$ is extended as a continuous function on $\mathbb{R}^n$ using the same as integral equation (see \eqref{3:9.28}) and the proof of \eqref{3:9.18} is complete.
\end{remark}
\section{Exercises}{Weak solutions for parabolic and hyperbolic boundary problems by Fourier's method\index{Fourier method}}
\textbf{$(P_1)$}.\,Using Fourier method, solve the following mixed problem for a scalar parabolic equation \index{Parabolic equation}
\begin{equation}\label{3:10.1.11p}
\left\{
    \begin{array}{ll}
      (a)\,\,\partial_t u(t,x)=\partial_{x}^{2}u(t,x)\,,\,t\in[0,T],\,x\in\,[A,B] \\
     (b)\,\,u(0,x)=\varphi_0(x)\,,\,x\in\,[A,B]\,,\,\,\varphi_0\in\mathcal{C}([A,B])\\
   (c)\,\,u(t,A)=u_A,\,\,u(t,B)=u_B\,,\,t\in[0,T],\,u_A,u_B\in\mathbb{R}\,\,given
    \end{array}
  \right.
\end{equation}
\textbf{Hint.} Make a function transformation
\begin{equation}
\nu(t,x)=u(t,x)-\{\frac{x-A}{B-A}u_B+\frac{B-x}{B-A}u_A\}
\end{equation}
which preserve the equation (a) but the boundary condition (c) becomes \\$\nu(t,A)=0, \nu(t,B)=0, t\in[0,T]$. The interval $[A,B]$ is shifted into $[0,1]$ by the following transformations
\begin{equation}
x=B\,y+(1-y)A,\,w(t,y)=\nu(t,B\,y+(1-y)A),\,y\in[0,1],\,t\in[0,T]
\end{equation}
The new function $\{w(t,y):y\in[0,T],y\in[0,1]\}$\,fulfils a standard mixed problem for a scalar parabolic equation
\begin{equation}\label{3:10.1}
\left\{
    \begin{array}{ll}
   (a)\,\,\partial_t\,w(t,y)=\frac{1}{(B-A)^2}\partial_{y}^{2}w(t,y),\,t\in[0,T],\,y\in[0,1] \\
      (b)\,\,w(0,y)=\varphi_1(y)\mathop{=}\limits^{def}\varphi_0(A+(B-A)y)-\{u_By+u_A(1-y)\} \\
     (c)\,\,w(t,0)=0\,,\,w(t,1)=0\,,\,t\in[0,T]
    \end{array}
  \right.
\end{equation}
\begin{equation}
w(t,y)=\mathop{\sum}\limits_{j=1}^{\infty}T_j(t)W_j(t)
\end{equation}
where
\begin{equation}\label{3:10.5}
\frac{T'_j(t)}{T_j(t)}=\frac{W''_j(y)}{W_j(y)}\frac{1}{(B-A)^2}=-\lambda_j,\,\lambda_j>0,\,j=1,2,...
\end{equation}
The boundary condition (\eqref{3:10.1.11p}, c) must be satisfied by the general solution
\begin{equation}
W_j(y)=C_1\,cos\,(B-A)\sqrt{\lambda_j}y+C_2\,sin\,(B-A)\sqrt{\lambda_j}y
\end{equation}
of \eqref{3:10.5} which implies $C_1=0$\,and\,$\lambda_j=(B-A)^{-2}(\pi j)^2$\\
\textbf{$(P_2)$}. Using Fourier method\index{Fourier method}, solve the following mixed problem for a scalar parabolic equation\index{Parabolic equation}\\
$$\left\{
    \begin{array}{ll}
      (a)\,\,\partial_{t}^{2} u(t,x)=\partial_{x}^{2}u(t,x)\,,\,t\in[0,T],\,x\in\,[A,B] \\
     (b)\,\,u(0,x)=\varphi_0(x)\,,\partial_tu(0,x)=\varphi_1(x)\,,\,x\in\,[A,B]\,,\,\,\varphi_0,\varphi_1\in\mathcal{C}([A,B])\\
   (c)\,\,u(t,A)=u_A,\,\,u(t,B)=u_B\,,\,t\in[0,T],\,u_A,u_B\in\mathbb{R}\,\,given
    \end{array}
  \right.
$$\\
\textbf{Hint}.\,Make a function transformation
\begin{equation}\label{3:10:1.1}
\nu(t,x)=u(t,x)-\{\frac{x-A}{B-A}u_B+\frac{B-x}{B-A}u_A\}
\end{equation}
which preserve the hyperbolic  equation (a) but the boundary condition (c) becomes
\begin{equation}\label{3:10:1.2}
\end{equation}
The interval $[A,B]$\,is shifted into $[0,1]$\,with preserving the boundary \eqref{3:10:1.2} if the following transformation are done
\begin{equation}\label{3:10:1.3}
x=B\,y+(1-y)A,\,w(t,y)=\nu(t,B\,y+(1-y)A),\,y\in[0,1],\,t\in[0,T]
\end{equation}
The new function $\{w(t,y):y\in[0,T],y\in[0,1]\}$\,fulfils a standard mixed problem for a scalar parabolic equation
\begin{equation}\label{3:10:1.3}
\left\{
    \begin{array}{ll}
   (a)\,\,\partial_{t}^{2}\,w(t,y)=\frac{1}{(B-A)^2}\partial_{y}^{2}w(t,y),\,t\in[0,T],\,y\in[0,1] \\
      (b)\left\{
           \begin{array}{ll}
             w(0,y)=\psi_0(y)\mathop{=}\limits^{def}\varphi_0(A+(B-A)y)-\{u_By+u_A(1-y)\},\,y\in[0,1] \\
            \partial_tw(0,y)=\psi_1(y)\mathop{=}\limits^{def}\varphi_1(A+(B-A)y),\,y\in[0,1]
           \end{array}
         \right.
 \\
     (c)\,\,w(t,0)=0\,,\,w(t,1)=0\,,\,t\in[0,T]
    \end{array}
  \right.
\end{equation}
Solution of \eqref{3:10:1.3} has the form
\begin{equation}\label{3:10:1.5}
w(t,y)=\mathop{\sum}\limits_{j=1}^{\infty}T_j(t)W_j(t)+C_0+C_1t
\end{equation}
and the algorithm of solving repeats the standard computation given $(P_1)$.
\section[Appendix I ]{Appendix I Multiple Riemann Integral and Gauss -Ostrogradsky Formula}\index{Gauss-Ostrogadsky}
\textbf{(A1)}  We shall recall the definition of the simple Riemann integral $\{f(x): x\in[a,b]\}$. Denote by
$\Pi$ a partition of the interval $[a,b]\subseteq R$:\\
$\Pi=\{a=x_{0}\leq x_{1}\leq ...\leq x_{i}\leq x_{i+1}\leq...\leq x_{N}=b\}$ and for $\Delta x_{i}=x_{i+1}-x_i, i\in \{0,1,...,N-1\}$, define the norm $d(\Pi)=\max_{i}\Delta x_{i}$ of the partition $\Pi$. For $\xi_{i}\in[x_{i},x_{i+1}], i\in \{0,1,...,N-1\}$ (marked points of $\Pi$) associate the integral sum
\begin{eqnarray*}
S_{\Pi}(f)=\sum_{i=0}^{N-1}f(\xi_{i})\Delta x_{i}
\end{eqnarray*}
\begin{definition}\label{df:3:12.1}
A number $I(f)$ is called Riemann integral of the function $\{f(x): x\in[a,b]\}$ on the interval $[a,b]$ if for any $\epsilon >0$ there is a $\delta >0$ such that
\begin{eqnarray*}
|S_{\Pi}(f)-I(f)|< \epsilon\ \  \forall\ \  \Pi\ \  \textrm{satisfying}\ \  d(\Pi)<\delta.
\end{eqnarray*}
\end{definition}
\begin{remark}
An equivalent definition can be expressed using sequences of partitions $\{\Pi_{k}\}_{k \geq 1}$ for which $$\mathop{lim}\limits_{k\rightarrow \infty}{S_{\Pi}}_{{}_{k}}(f)=I(f),$$ where the number $I(f)$ does not depend on the sequence $\{\Pi_{k}\}_{k\geq 1}$ and its marked points. If it is the case we call the number $I(f)$ as the integral of the function $f(x)$ on the interval $[a,b]$ and write
\begin{eqnarray*}
\int_{a}^{b}f(x)dx=\mathop{lim}\limits_{k\rightarrow \infty}{S_{\Pi}}_{{}_{k}}(f)
\end{eqnarray*}
Finally, there is a third equivalent definition of the integral which uses a "limit following a direction". Let $E$ be consisting of the all partitions $\Pi$ associated with their marked points, for a fixed $\delta >0$, denote $E_{\delta}\subseteq E$ the subset satisfying $d(\Pi)<\delta$, $\Pi\in E_{\delta}$. The subsets $E_{\delta}\subseteq E$, for different $\delta >0$, are directed using $d(\Pi)\rightarrow 0$. The integral of the function $\{f(x): x\in[a,b]\}$ is the limit $J(f)$ of the integral sums following this direction; the three integrals $J(f),\,\int_{a}^{b}f(x)dx$ and $I(f)$ exist and are equal iff one of them exists. It lead us to the conclusion that an integral of a continuous function $\{f(x): x\in[a,b]\}$ exists.
\end{remark}
\noindent\textbf{(A2)}
Following the above given steps we may and do define the general meaning of a Riemann integral when the interval $[a,b]\subseteq R$ is replaced by some metric space (X,d). Consider that a family of subsets $U\subseteq X$ of X are given verifying the following conditions:

\begin{enumerate}
  \item The set X and the empty set $\emptyset$ belong to $\mathcal{U}$
  \item f $A_{1}, A_{2}\in \mathcal{U}$ then their intersection   $A_{1}A_{2}$ belongs to $\mathcal{U}$.
  \item If $A_{1},A\in U$ \,and \,$A_{1}\subseteq A$ then there exist $A_{2}, ..., A_{p}\in \mathcal{U}$ such  that $A=A_1\cup ...\cup A_p$ and $A_{2}, ..., A_{p}\in \mathcal{U}$ are mutually disjoints. A system of subsets $u\subseteq X$ fulfilling (1), (2) and (3) is called demi-ring $\mathcal{U}$. In order to define a Riemann integral on the metric space X associated with a demi-ring $\mathcal{u}$ we need to assume another two conditions.
  \item For any $\delta>0$, there is a partition of the set $X=A_{1}\cup...\cup A_{p}$, with $A_{i}\in \mathcal{U}$, $A_{i}A_{j}=\phi$ if $i\neq j$ and $d(A_{i})=\sup\limits_{(x,y)\in A_{i}}\rho(x,y)<\delta$, $i\in \{1,...,p\}$ (This condition remind us the property that X is a precompact metric space). The last condition imposed on the demi-ring $\mathcal{U}$ gives the possibility to measure each individual of $U$.
  \item For each $A\in \mathcal{U}$, there is positive number $m(A)\geq0$ such that $m : \mathcal{U}\rightarrow[0,\infty)$ is additive, i.e
$m(A)=m(A_{1})+m(A_{2})+...+m(A_{p})$, if $A=A_{1}\cup...\cup A_{p}$ and $A_{1},...,A_{p}$ are mutually disjoints.
\end{enumerate}
\noindent The additive mapping $m : U\rightarrow[0,\infty)$ satisfying (5) is called measure on the cells composing $\mathcal{U}$. The metric space X associated with a demi-ring of cells $U$ and a finite additive measure $m : u\rightarrow[0,\infty)$ satisfying (1)-(5) will be called a measured  space $(X,\mathcal{U},m)$. Let $f(x) : X\rightarrow R$ be a real function defined on a measured space $(X,\mathcal{U},m)$ and for an arbitrary partition $\Pi=\{A_{1},...,A_{p}\}$ of $x=A_{1}\cup...\cup A_{p}$ ($A_{i}A_{j}=\phi_{1}\,\,i\neq j$) define an integral sum.\\
\begin{equation}\label{3:10:1.6}
S_{\Pi}(f)=\sum\limits_{i=1}^{\infty}f(\xi_{i})mA_{i},\, where \,\xi_{i}\in A_{i} \,\,is \,fixed
\end{equation}
The number
\begin{equation}\label{3:10:1.7}
I_{\mathcal{u}}(f)=\int\limits_{X}f(x)dx
\end{equation}
is called the integral of the function $f$ on the measured space $(X,\mathcal{U},m)$ if for any $\epsilon >0$, there is a $\delta>0$ such that
\begin{equation}\label{3:10:1.8}
|I_{\mathcal{u}}(f)-S_{\Pi}(f)|< \epsilon
\end{equation}
is verified for any partition $\Pi$ satisfying $d(\Pi)<\delta$, where $d(\Pi)=\max(d(A_{1}),...,d(A_{p}))$ and $d(A_{i})=\sup\limits_{(x,y)\in A_{i}}\rho(x,y)$. It is easily seen that this definition of an integral on a measured space $(X,\mathcal{U},m)$ coincides with the first definition of the Riemann integral given on a closed interval $[a,b]\subseteq R$.\\
The other two equivalent definitions using sequence of partitions $\{\pi_k\}_{k\geq 1}\,,\,d(\pi_k)\rightarrow 0$, and a "limit following a direction" will replicate the corresponding definitions in the one dimensional case $x\in[a,b]$.\\
A function $f(x):\{X,\mathcal{U},m\}\rightarrow \mathbb{R}$\,defined on a measured space and admitting Riemann integral is called integrable on $X\,,\,f\in J(X)$. The following elementary properties of the integral are direct consequence of the definition using sequence of partitions and their integral sums
\begin{equation}\label{3:10:1.9}
\int_{X}f(x)dx=c\,m(X),\,if\,\,f(x)=c(const),\,\,x\in X
\end{equation}
\begin{equation}\label{3:10:1.10}
\int_X cf(x)dx=c\int_Xf(x)dx,\,f\in J(X)\,\,and\,c=const
\end{equation}
\begin{equation}\label{3:10:1.11}
\int_X[f(x)+g(x)]dx=\int_xf(x)dx+\int_Xg(x)dx,\,if\,f,g\in J(X)
\end{equation}
\begin{equation}\label{3:10:1.12}
Any \,\,f\in J(X)\,\,is\,\, bounded \,\,on X,\,|f(x)|\leq c,\,x\in X
\end{equation}
if $f,g\in J(X)
$ and $f(x)\leq g(x),\,x\in X $ then
\begin{equation}\label{3:10:1.13}
\int_Xf(x)dx\leq \int_Xg(x)dx(\int_Xf(x)dx\leq \int_X|f(x)|dx\,if \,\,f|f|\in J(X))
\end{equation}
\begin{equation}\label{3:10:1.14}
C\,m(x)\leq \int_Xf(x)dx\leq C\,m(x),\,if\,f\in J(X)\,and\,c\leq f(x)\leq C\,\,\forall \,x\in X\,.
\end{equation}
\begin{theorem}
If a sequence $\{f_k(x):x\in X\}_{k\geq 1}\subseteq J(X)$ converges uniformly on $\{X,\mathcal{U},m\}$ to a function $f(x):X\rightarrow \mathbb{R}$ then $f\in J(X)$ and
$$\int_Xf(x)dx=\mathop{lim}\limits_{n\rightarrow\infty} \int_Xf_n(x)dx$$
\end{theorem}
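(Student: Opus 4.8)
The plan is to reduce the statement to the defining property of the Riemann integral on a measured space, namely that the integral sums $S_\Pi(f)$ converge to $\int_X f(x)\,dx$ as $d(\Pi)\to 0$. First I would use the uniform convergence hypothesis to establish that the limit function $f$ is itself integrable. Fix $\varepsilon>0$; by uniform convergence there is an index $N$ such that $|f_n(x)-f(x)|\le \varepsilon$ for all $x\in X$ and all $n\ge N$. In particular each $f_n$ is bounded (property \eqref{3:10:1.12}), hence $f$ is bounded. To show $f\in J(X)$ I would compare integral sums of $f$ with those of $f_N$: for any two partitions $\Pi,\Pi'$ one has
\begin{equation}
|S_\Pi(f)-S_{\Pi'}(f)|\le |S_\Pi(f)-S_\Pi(f_N)|+|S_\Pi(f_N)-S_{\Pi'}(f_N)|+|S_{\Pi'}(f_N)-S_{\Pi'}(f)|.
\end{equation}
The first and third terms are each at most $\varepsilon\, m(X)$ because $|f(\xi_i)-f_N(\xi_i)|\le\varepsilon$ and the measure is additive with total mass $m(X)$; the middle term is small once $d(\Pi),d(\Pi')$ are small enough, since $f_N\in J(X)$. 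This gives a Cauchy-type criterion for the integral sums of $f$ along the direction $d(\Pi)\to 0$, which by the equivalence of the three definitions of the integral recalled in the appendix yields $f\in J(X)$.

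Next I would pass to the limit under the integral sign. With $f\in J(X)$ now established, estimate
\begin{equation}
\left|\int_X f(x)\,dx-\int_X f_n(x)\,dx\right|=\left|\int_X [f(x)-f_n(x)]\,dx\right|\le \int_X |f(x)-f_n(x)|\,dx
\end{equation}
using linearity \eqref{3:10:1.10}, \eqref{3:10:1.11} and monotonicity \eqref{3:10:1.13} (applied to $|f-f_n|$, which lies in $J(X)$ since both $f$ and $f_n$ do). By property \eqref{3:10:1.14} the right-hand side is bounded by $\big(\sup_{x\in X}|f(x)-f_n(x)|\big)\, m(X)$, and for $n\ge N$ this is at most $\varepsilon\, m(X)$. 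Since $\varepsilon>0$ was arbitrary and $m(X)<\infty$, it follows that $\int_X f_n(x)\,dx\to \int_X f(x)\,dx$, which is the desired conclusion.

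The only genuinely delicate point is the first step—verifying $f\in J(X)$—because the definition of integrability on a measured space is phrased in terms of existence of the limit of integral sums, not merely in terms of boundedness. The argument above circumvents the need for any structural regularity of $f$ by transferring the oscillation control from $f_N$ (which is known to be integrable) to $f$, uniformly in the choice of marked points; this works precisely because uniform convergence makes the discrepancy $|S_\Pi(f)-S_\Pi(f_N)|$ controllable by $\varepsilon\, m(X)$ independently of the partition. Once that is in hand, the interchange of limit and integral is routine, resting only on linearity, monotonicity, and the finiteness of $m(X)$. I would also remark that finiteness of $m(X)$ is essential: without it the factor $m(X)$ in the estimates blows up and the statement can fail, which is worth noting although the measured space $(X,\mathcal{U},m)$ as defined here is automatically of finite total measure.
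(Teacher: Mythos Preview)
Your proof is correct and is exactly the standard argument the paper's hint points to (the paper does not give a full proof, only the remark that ``the proof replicates step by step the standard proof used for $X=[a,b]\subseteq\mathbb{R}$''). One small technical wrinkle: you invoke $|f-f_n|\in J(X)$, but closure of $J(X)$ under absolute value is not among the listed elementary properties \eqref{3:10:1.9}--\eqref{3:10:1.14}; you can sidestep this entirely by applying \eqref{3:10:1.14} directly to $f-f_n\in J(X)$ with the two-sided bound $-\varepsilon\le f(x)-f_n(x)\le\varepsilon$, which gives $\big|\int_X(f-f_n)\,dx\big|\le\varepsilon\,m(X)$ without ever mentioning $|f-f_n|$.
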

\noindent \textbf{Hint} The proof replicates step by step the standard proof used for $X=[a,b]\subseteq\mathbb{R}$
\begin{example}
\textbf{($e_1$)}\, $X=[a,b]\subseteq \mathbb{R}$ and for a cell of $[a,b]$ can be taken any subinterval containing or not including its boundary points. The measure $m(A)=\beta-\alpha,\,(A=[\alpha,\beta],\,A=(\alpha,\beta))$ of a cell is the standard length of it and the corresponding measured space $(X,\mathcal{U},m)$ satisfy the necessary conditions (1)-(5). Notice that the first definition of the integral given in \eqref{3:10:1.8} coincides with that definition used in the definition \eqref{df:3:12.1}\\
\textbf{($e_2$)}\,\,\,Let $X$\,be a rectangle in $\mathbb{R}^2\,,\,X=\{x\in\mathbb{R}^2:a_1\leq x_1\leq b_1,a_2\leq x_2\leq b_2\}$\\
and as a cell of $X$\,we take any subset $A\subseteq X\,,\,A=\{x\in X:\alpha_1\prec x_1\prec \beta_1,\alpha_2\prec x_2\prec \beta_2\}$\,\,where the sign $"\prec"$\,means the standard $"\leq"\,\,or "<"$\,among real numbers.The measure $m(A)$\,associated with the cell $A$\,is given by its area $m(A)=(\beta_1-\alpha_1).(\beta_2-\alpha_2)$.The necessary condition (1)-(5) are satisfied by a direct inspection and the Riemann integral on $X$\,will be denoted by
$$\int_Xf(x)dx=\int_{a_1}^{b_1}\int_{a_2}^{b_2}f(x_1,x_2)dx_1\,dx_2\,(\,double\,\,Riemann\,\,integral)$$
Replacing $\mathbb{R}^2$\,by\,$\mathbb{R}^n\,,\,n\geq 2$\,and choosing $X=\mathop{\prod}\limits_{i=1}^{n}I_i$\,as a direct product of some intervals \,$I_i=\{x\in \mathbb{R}:a_i\prec x\prec b_i\}$\,we define a cell $A=\mathop{\prod}\limits_{i=1}^{n}\{\alpha_i\prec x\prec\beta_i\}$\,and its volume $m(A)=\mathop{\prod}\limits_{i=1}^{n}(\beta_i-\alpha_i)$\,as the associated measure.In this case the Riemann integral is denoted by\\
$$\int_Xf(x)dx=\int_{a_1}^{b_1}\dots\int_{a_n}^{b_n}f(x_1,\dots,x_n)dx_1\dots dx_n$$\\
and call it as the n-multiple integral of $f$.
\end{example}
\begin{theorem}\label{th:10.1.2}
Let $\{X,\mathcal{U},m\}$\,be a measured space and $\mu(x):X\rightarrow \mathbb{R}$\,is a uniformly continuous function. Then $f$\,is integrable, $f\in J(X)$.
\end{theorem}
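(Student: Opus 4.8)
The plan is to follow the same three-step pattern that was used for the one-dimensional Riemann integral and adapt it verbatim to the abstract measured space $\{X,\mathcal{U},m\}$. The essential fact being invoked is that a uniformly continuous function on a precompact space is ``uniformly uniformly continuous'': for every $\varepsilon>0$ there is a $\delta>0$ such that $\rho(x,y)<\delta$ forces $|\mu(x)-\mu(y)|<\varepsilon$ for \emph{all} $x,y\in X$ simultaneously, and this $\delta$ does not depend on the point. This is exactly the property that makes the integral sums form a Cauchy net.

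First I would fix $\varepsilon>0$ and use uniform continuity to produce $\delta>0$ with $|\mu(x)-\mu(y)|<\varepsilon/(2m(X)+1)$ whenever $\rho(x,y)<\delta$. Then I would take two partitions $\Pi'=\{A_1',\dots,A_p'\}$ and $\Pi''=\{A_1'',\dots,A_q''\}$ with $d(\Pi')<\delta$ and $d(\Pi'')<\delta$, and form their common refinement $\Pi=\{A_i'\cap A_j''\}$, which is again a partition into cells by property (2) of the demi-ring $\mathcal{U}$ together with property (3) (so that each $A_i'$ and each $A_j''$ splits into disjoint cells). On a common refinement, $|S_{\Pi'}(\mu)-S_\Pi(\mu)|$ is estimated by grouping the refined cells inside each $A_i'$: within $A_i'$ any two marked points are at distance $<\delta$, so the oscillation of $\mu$ there is $<\varepsilon/(2m(X)+1)$, and summing $m(A_i'\cap A_j'')$ over $j$ gives back $m(A_i')$ by finite additivity (property (5)); summing over $i$ yields $|S_{\Pi'}(\mu)-S_\Pi(\mu)|<\varepsilon/2$, and likewise for $\Pi''$. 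Hence $|S_{\Pi'}(\mu)-S_{\Pi''}(\mu)|<\varepsilon$.

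This Cauchy property, combined with the existence of arbitrarily fine partitions guaranteed by condition (4), lets me define $I_\mathcal{U}(\mu)$ as the common limit: take any sequence of partitions $\Pi_k$ with $d(\Pi_k)\to 0$, observe $\{S_{\Pi_k}(\mu)\}$ is Cauchy in $\mathbb{R}$ hence convergent, and the Cauchy estimate above shows the limit is independent of the chosen sequence and of the marked points. Finally I would check that this number satisfies the $\varepsilon$--$\delta$ definition \eqref{3:10:1.8}: given $\varepsilon$, pick $\delta$ as above so that any two $\delta$-fine partitions have integral sums within $\varepsilon$ of each other, and therefore each is within $\varepsilon$ of the limit $I_\mathcal{U}(\mu)$. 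This shows $\mu\in J(X)$.

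The step I expect to be the main obstacle — really the only non-routine point — is the manipulation of the common refinement: one must make sure that intersecting cells of $\mathcal{U}$ stay in $\mathcal{U}$ and that condition (3) legitimately decomposes the pieces into \emph{disjoint} cells so that finite additivity of $m$ applies cleanly, and then that the bookkeeping $\sum_j m(A_i'\cap A_j'')=m(A_i')$ is correct. Everything else is a transcription of the classical argument, with ``length of subinterval'' replaced by $m(\cdot)$ and ``$|x-y|$'' replaced by $\rho(x,y)$. I would also remark, as the excerpt does for the uniform-convergence theorem, that the proof ``replicates step by step'' the standard proof for $X=[a,b]$.
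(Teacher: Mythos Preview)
Your proposal is correct and follows essentially the same approach as the paper: the paper introduces the oscillation $w_f(X,\delta)=\sup_{\rho(x',x'')\leq\delta}|f(x')-f(x'')|$, establishes the refinement estimate $|S_\pi(f,P)-S_{\pi'}(f,P)|\leq w_f(P,\delta)\,m(P)$ for any $\pi'\supseteq\pi$, and then invokes the Cauchy criterion for the net of integral sums, which is exactly your common-refinement argument written in slightly more compressed notation. Your explicit attention to why the common refinement exists in the demi-ring (via properties (2) and (3)) is a point the paper leaves implicit.
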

\begin{proof}
By hypothesis the oscillation of the function $f$\,on $X$\
\begin{equation}\label{3:10:1.15}
 w_f(X,\delta)=\mathop{\sup}\limits_{\rho(x',x'')\leq\delta, x',x''\in X}|f(x')-f(x'')|
\end{equation}
satisfies $w_f(X,\delta)\leq\varepsilon$\,for some $\varepsilon>0$\,arbitrarily fixed provided $\delta>0$ is sufficiently small. In particular,this property is valid on any elementary subset $P\subseteq X\,,\,P=\mathop{\bigcup}\limits_{i=1}^{p}A_i\,,\,\{A_1,\dots,A_p\}$\,are mutually disjoint cells.Denote $w_f(P,\delta)$\,the corresponding oscillation of $f$\,restricted to $P$\,and notice\\
\begin{equation}\label{3:10:1.16}
|S_\pi(f,P)-S_\pi'(f,P)|\leq w_f(P,\delta)m(P)
\end{equation}
for any partition $\pi'$ of $P$ containing the given partition $\pi=\{A_1,\dots,A_p\}$ of the elementary subset $P\subset X(\pi'\supseteq \pi)$.\\
The property $\pi'\supseteq\pi$ is described by \lq\lq$\pi'$is following $\pi$\rq\rq($\pi'$ is more refine). Using \eqref{3:10:1.16} for $P=x$ noticing that $w_f(X,\delta)\leq \varepsilon$ for $\varepsilon>0$, arbitrarily fixed provided $\delta>0$\,is sufficiently small, we get that
$$\mathop{lim}\limits_{\delta(\pi)\rightarrow 0}S_\pi(f)=I(f)\,\,\,exists$$
as a consequence of the Cauchy criteria applied to integral sums.
\end{proof}
\noindent \textbf{Consequence} Any continuous function defined on a compact measured space $(X,\mathcal{U},m)$
is integrable(see $f(x):X\rightarrow \mathbb{R}$ is uniformly continuous).
\begin{theorem}\label{th:3:10!.2}
Let $(X,\mathcal{U},m)$ be a measured space and $Z\supseteq X$ is a negligible set.Assume that the bounded function $f(x)$ is uniformly continuous outside of any arbitrary neighborhood of $Z,\mathcal{U}_\sigma(Z)=\{x\in X:\rho(x,Z)<\delta\}$. Then $f$ is integrable on $X$.
\end{theorem}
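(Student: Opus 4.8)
The plan is to verify the Cauchy criterion for the integral sums $S_\Pi(f)$ along the direction $d(\Pi)\to0$, in the spirit of the proof of Theorem~\ref{th:10.1.2}, but now splitting every partition into the cells lying near $Z$ and those staying away from it. Fix $\varepsilon>0$ and let $C>0$ be a bound for $f$, so $|f(x)|\le C$ on $X$. Since $Z$ is negligible, first choose an elementary set $P=\bigcup_{k=1}^{q}B_k$ (a finite union of pairwise disjoint cells) with $Z\subseteq\mathrm{int}\,P$ and with $m(P)$ so small that $m\big(\mathcal{U}_{\delta_1}(P)\big)<\varepsilon/(8C)$ for some $\delta_1>0$; this is possible because the measures of the shrinking neighborhoods of $\overline P$ can be driven below any prescribed bound. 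We arrange moreover that $\mathcal{U}_{\delta_1}(Z)\subseteq\mathrm{int}\,P$, so that on $Q:=X\setminus\mathrm{int}\,P$ the restriction of $f$ is uniformly continuous by hypothesis; hence its oscillation $w_f(Q,\delta)$ tends to $0$ as $\delta\to0$, and we may fix $\delta_2>0$ with $w_f(Q,\delta_2)\,m(X)<\varepsilon/4$. Put $\delta_0=\min(\delta_1,\delta_2)$.

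The main step is to estimate $|S_\Pi(f)-S_{\widetilde\Pi}(f)|$ for a partition $\Pi=\{A_1,\dots,A_p\}$ with $d(\Pi)<\delta_0$ and an arbitrary refinement $\widetilde\Pi$ of $\Pi$. Split the cells of $\Pi$ into the class $I_{\mathrm{bad}}$ of those meeting $\mathrm{int}\,P$ and the class $I_{\mathrm{good}}$ of the rest, which lie in $Q$. Because $d(A_i)<\delta_0\le\delta_1$, each $A_i$ with $i\in I_{\mathrm{bad}}$ is contained in $\mathcal{U}_{\delta_1}(P)$, whence $\sum_{i\in I_{\mathrm{bad}}}m(A_i)\le m\big(\mathcal{U}_{\delta_1}(P)\big)<\varepsilon/(8C)$ by the additivity of $m$ in condition (5); on these cells $f$ and its refined values differ by at most $2C$, contributing at most $2C\cdot\varepsilon/(8C)=\varepsilon/4$. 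On the cells of $I_{\mathrm{good}}$, all lying in $Q$, the oscillation estimate \eqref{3:10:1.16} of the proof of Theorem~\ref{th:10.1.2} applies with $w_f(Q,\delta_0)\le w_f(Q,\delta_2)$, contributing at most $w_f(Q,\delta_2)\,m(X)<\varepsilon/4$. Adding the two bounds, $|S_\Pi(f)-S_{\widetilde\Pi}(f)|<\varepsilon/2$.

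Finally, given two partitions $\Pi,\Pi'$ with $d(\Pi),d(\Pi')<\delta_0$, their common refinement $\widetilde\Pi=\{A_i\cap A_j'\}$ — a legitimate partition whose cells lie in $\mathcal{U}$ by the demi--ring condition (2) — satisfies $d(\widetilde\Pi)<\delta_0$, so the previous estimate yields $|S_\Pi(f)-S_{\Pi'}(f)|\le|S_\Pi(f)-S_{\widetilde\Pi}(f)|+|S_{\widetilde\Pi}(f)-S_{\Pi'}(f)|<\varepsilon$. This is exactly the Cauchy condition for the integral sums along the direction $d(\Pi)\to0$; hence $\lim_{d(\Pi)\to0}S_\Pi(f)$ exists, that is, $f\in J(X)$. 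The delicate point — the step I expect to be the main obstacle — is the bookkeeping for the cells straddling the boundary of $P$: one must know that a small-diameter cell meeting $P$ is absorbed into a slightly enlarged elementary neighborhood of $P$ whose measure is still controlled by $\varepsilon$, and that the common refinements involved genuinely exist; these facts rest on the precompactness condition (4), on the demi--ring conditions (2)--(3), and on the standard description of a negligible set as one coverable by elementary sets of arbitrarily small measure.
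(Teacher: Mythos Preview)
Your overall strategy matches the paper's exactly: both arguments verify the Cauchy criterion for integral sums by covering $Z$ with an elementary set $P$ of small measure, handling the cells near $P$ via the boundedness of $f$, and handling the remaining cells via uniform continuity on $Q$, then comparing with refinements.

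The one substantive difference is precisely the point you flagged as the main obstacle, and the paper resolves it more cheaply than you do. You absorb the straddling cells into $\mathcal{U}_{\delta_1}(P)$ and then need to control $m(\mathcal{U}_{\delta_1}(P))$, which is not immediately available from axioms (1)--(5) since $m$ is defined only on cells. The paper sidesteps this entirely: having chosen $P$ with $Z\subseteq\mathrm{int}\,P$ and $m(P)\le\varepsilon/(4M)$, it sets $2\rho=d(Z,B)>0$ where $B=X\setminus P$, and takes $Q=X\setminus\mathcal{U}_\rho(Z)\supseteq B$. Then the dichotomy for a partition $\pi$ with $d(\pi)<\rho$ is simply: cells \emph{not meeting} $B$ (hence contained in $P$, total measure $\le m(P)$ directly) versus cells \emph{meeting} $B$ (hence, by the triangle inequality and $d(\pi)<\rho$, entirely contained in $Q$, where the oscillation estimate \eqref{3:10:1.16} applies). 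No enlarged neighbourhood of $P$ is ever needed, and the two classes exhaust the partition. If you swap your dichotomy for this one, the gap you identified disappears and your write-up becomes complete.
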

\begin{proof}
By hypothesis $Z$ is a negligible set and for any $\varepsilon>0$ there is an elementary set $P=\mathop{\bigcup}\limits_{i=1}^{p}A_i$ such that $int P\supseteq Z$ and $m(P)< \varepsilon$. Let $M=\mathop{\sup}\limits_{x\in X}|f(x)|\hbox{ and for }\varepsilon>0$ define
\begin{equation}\label{3:10:1.17}
P=\mathop{\bigcup}\limits_{i=1}^{p}A_i\,\,such\,\,that\,\,int\,P\supseteq Z\,\,and\,\,\,m(P)\leq \frac{\varepsilon}{4 M}
\end{equation}
Denote $B=X-P$ and we get $d(Z,B)=2\rho>0$ where $d(Z,B)=inf\{d(z,b):z\in Z,b\in B\}$. By hypothesis,the function $f$ is uniformly continuous outside of a neighborhood $\mathcal{U}_\rho(Z)$ ,of the negligible set $Z$, i.e $f(x):Q\rightarrow \mathbb{R}$ is uniformly continuous, where $Q=X\setminus \mathcal{U}_\rho(Z)$. Using Theorem \ref{th:10.1.2} we get that $f$\,restricted to $(Q,\mathcal{U} Q,m)$ is integrable and any integral sum of $f$ on $X$ restricted to $P$ is bounded by $\varepsilon$(see $ int P\supseteq Z,\,\,m(P)\leq \frac{\varepsilon}{4M}$).\\
For an arbitrary partition $\pi=\{C_1,\dots,C_n\}$ of $X$ we divide it into two classes; the first class contains all cells of $\pi$ which are included in $P=\mathop{\bigcup}\limits_{i=1}^{p}A_i\,\supseteq Z_1$ and the second class is composed by the cells of $\pi$ which have common points with the set $B=X\setminus P$ and are entirely contained in $B$. In particular, take a partition $\pi$ with $d(\pi)<\sigma=min(\sigma,\rho)$ where $\sigma>0$ is sufficiently small such that $|f(x'')-f(x')|<\varepsilon\setminus{2m(X)}$ if $\rho(x'',x')<2\sigma,\,x',x''\in B$. Let $\pi'\supseteq\pi$ be a following partition($\pi'$ is more refined than $\pi$). A straight computation allows one to see that\\
\begin{equation}\label{3:10:1.18}
|S_\pi(f)-S_{\pi'}(f)|\leq |S_\pi(f,P)|+|S_{\pi'}(f,P)|+|S_\pi(f,Q)-S_{\pi'}(f,Q)|
\end{equation}
$Q=X\setminus \mathcal{U}_\rho(Z)$ the first two terms in \eqref{3:10:1.18} fulfil
\begin{equation}\label{3:10:1.19}
\left\{
        \begin{array}{ll}
         |S_\pi(f,P)|\leq M\mathop{\sum}\limits_{i=1}^{p}m(A_i)=M\,m(P)\leq M.\varepsilon\setminus{4M}=\varepsilon\setminus4  \\
                 |S_{\pi'}(f,P)|\leq M\mathop{\sum}\limits_{i=1}^{p}m(A_i)=M\,m(P) =\varepsilon\setminus4
        \end{array}
      \right.
\end{equation}
For the last term in \eqref{3:10:1.18} we use \eqref{3:10:1.16} in the proof of Theorem \ref{th:3:10!.2} and $Q=X\setminus \mathcal{U}_\rho(Z)\supseteq X\setminus P=B$. As far as $\sigma>d(\pi)\geq d(\pi')\hbox{ and }\sigma=\min(\sigma,\rho)$ we obtain
\begin{equation}\label{3:10:1.20}
\left\{
        \begin{array}{ll}
         |S_\pi(f,Q)-S_{\pi'}(f,Q)|\leq w_f(Q,\delta)m(X)\leq[ \varepsilon\setminus{2m(X)}]m(X)=\varepsilon\setminus2\\
           |S_\pi(f)-S_{\pi'}(f)|\leq\varepsilon
        \end{array}
      \right.
\end{equation}
and the Cauchy criteria used for integral sums $\{S_\pi(f):d(\pi)\rightarrow 0\}$\,lead us to the conclusion.The proof is complete.
\end{proof}
\begin{definition}\label{df:10:1.2}
 Let $\{X,\mathcal{U},m\}$ be a measured space and $G\subseteq X$ a subset. We say that $G$ is a jordanian set with respect to $(\mathcal{U},m)$ if its boundary $\partial G=\overline{G}\cap\{X\setminus G\}$, is a negligible set. A jordanian closed set A with the property $\overline{int\, A}=A$ is called a jordanian body.
 \end{definition}
\begin{remark}
The characteristic function of a jordanian set $G\subseteq X$
$$\chi_G(x)=\left\{
              \begin{array}{ll}
               1\,\,\,\,x\in G \\
               0\,\,\,\,x\in X\setminus G
              \end{array}
            \right.
$$
\end{remark}
\noindent is an integrable function if the measured space $\{X,\mathcal{U},m\}$ is compact; $\int_{X}\chi_G(x)dx=|G|$ is called volume of $G$.
\begin{definition}
A measured space $\{X,\mathcal{U},m\}$ for which all cells are jordanian sets is called normally measured;each cell $A\in \mathcal{U}$ has a volume and $m(A)=vol\,A=|A|$
\end{definition}
\noindent Let $f(x):X\rightarrow \mathbb{R}$ be a bounded function on a compact measured space $\{X,\mathcal{U},m\}$ and $G\subset X$ is a jordanian set with the boundary $\Gamma=\partial G$. By definition, the integral of the function $f$ on the set $G$ is given by
\begin{equation}\label{3:10:1.21}
\int_{G}f(x)dx=\int_{X}f(x)\chi_G(x)dx
\end{equation}
If $f$ is a continuous function on $G$ except a negligible $Z$ then $f(x)\chi_G(x)$ is  continuous on $X$ except the negligible set $Z\cup\partial G$ and \eqref{3:10:1.21} exists.\\
\textbf{($A_2$)\,\,\,\,\,\,Integration and derivation in $\mathbb{R}^n$; Gauss-Ostrogradsky formula}\\
Consider a domain $G\subseteq \mathbb{R}^n$ (a jordanian body)for which the boundary $\partial G$ is piecewise smooth surface. By \lq\lq piecewise smooth $\partial G$ \rq \rq we mean that $S=\partial G=\mathop{\bigsqcup}\limits_{p=1}^{q}S_p$ where $(int\,S_i)\cap(int\,S_j)=\phi$ for any $i\neq j\in\{1,\dots,q\}$ and for each $x\in int\,S_p$, there is a neighborhood $V\subseteq \mathbb{R}^n$ and a first order continuously differentiable mapping \\
$y=\varphi(u)=\mathop{\prod}\limits_{i=1}^{n-1}(-a_i,a_i)=D_{n-1}\rightarrow \,S_p\cap \,V,\,\varphi(0)=x$ such that \\
$rank\parallel\frac{\partial\varphi(u)}{\partial u}\parallel=n-1$. Denote $\varphi=(\varphi_1,...,\varphi_n),\,u=(u_1,...,u_{n-1})$ and define the vectorial product of the vectors
$[\frac{\partial\varphi}{\partial u_1}(u),...,\frac{\partial\varphi}{\partial u_{n-1}}(u)]\subseteq\mathbb{R}^n$ as a vector of $\mathbb{R}^n$ given by the following formula\\
\begin{equation}\label{3:12.1}
N=det\left(
            \begin{array}{ccccc}
              \overrightarrow{e_1} & . & . & . & \overrightarrow{e_n} \\
               \frac{\partial\varphi_1}{\partial u_1}(u)&  &  &  & \frac{\partial\varphi_n}{\partial u_1}(u) \\
              . &  &  &  & . \\
              . &  &  &  & . \\
              . &  &  &  & . \\
             \frac{\partial\varphi_1}{\partial u_{n-1}}(u) & . & . & . & \frac{\partial\varphi_n}{\partial u_{n-1}}(u) \\
            \end{array}
          \right)
=[\frac{\partial\varphi}{\partial u_1}(u),...,\frac{\partial\varphi}{\partial u_{n-1}}(u)]
\end{equation}
where $\{\overrightarrow{e_1},...,\overrightarrow{e_n}\}\subseteq \mathbb{R}^n$ is the canonical basis of $\mathbb{R}^n$ and the formal writing of \eqref{3:12.1} stands for a simple rule of computation, when the components of
{\small$[\frac{\partial\varphi}{\partial u_1}(u),...,\frac{\partial\varphi}{\partial u_{n-1}}(u)]\\\subseteq \mathbb{R}^n$} are involved. In addition, \eqref{3:12.1} allows one to se easily that $N$ is orthogonal to any vector $\frac{\partial\varphi}{\partial u_i}(u)\in\mathbb{R}^n,\,i\in\{1,...,n-1\}$ and as a consequence $N$ is orthogonal to the point $\varphi(u)\in S_p$. In this respect, the scalar product $<N,  \frac{\partial\varphi}{\partial u_i}(u)>$
\,coincides with the computation of the following expression
\begin{equation}\label{3:12.2}
<N,\frac{\partial\varphi}{\partial u_i}(u)>=det\left(
                                                      \begin{array}{c}
                                                          \frac{\partial\varphi}{\partial u_i}(u) \\
                                                        \frac{\partial\varphi}{\partial u_1}(u)\\
                                                        . \\
                                                        . \\
                                                        . \\
                                                       \frac{\partial\varphi}{\partial u_{n-1}}(u)\\
                                                      \end{array}
                                                    \right)
=0\,\,for\,\,each\,\,i\in\{1,..,n-1\}
\end{equation}
If it is the case then the computation of the oriented surface integral $\int_Sf(x)dS$\,on the surface $S=\{S_1,...,S_q\}$\,will be defined by the following formula \\
\begin{equation}\label{3:12.3}
\int_Sf(x)dS=\int_{D_{n-1}}f(\varphi(u))|N|du
\end{equation}
where $|N|$ stands for the length of the vector $N$ defined in \eqref{3:12.1}. On the other hand, the normalized vector
\begin{equation}\label{3:12.4}
m(u)=N/|N|
\end{equation}
can be oriented in two opposite directions with respect to the domain $G$ and for the Gauss-Ostrogradsky\index{Gauss-Ostrogadsky} formula we need to consider that $m(u)$ is oriented outside of the domain $G$. Rewrite $m(u)$ in (\eqref{3:12.4} as
\begin{equation}\label{3:12.5}
m(u)=\overrightarrow{e_1}cos\,\omega_1+...+\overrightarrow{e_n}cos\,\omega_n
\end{equation}
where $\omega_i$ is the angle of the unitary vector $m(u)$ and the axis $x_i,\,i\in\{1,...,n\}$. Let $P(x)=P(x_1,...,x_n)$ be a first order continuously differentiable function in the domain $G$. Assume that $G\subseteq\mathbb{R}^n$ is simple with respect to each axis $x_k,\,k\in\{1,...,n\}$. Then the following formula is valid
\begin{equation}\label{3:12.6}
\int_G\frac{\partial P(x)}{\partial x_k}dx=\oint_S(cos\,\omega_k)P(x)dS\,,\,for\,each\,\,k\in\{1,...,n\}
\end{equation}
where $S=\partial G$ and the surface integral $\oint_S$ is oriented outside of the domain $G$.
\begin{theorem}{(Gauss(1813)-Ostrogradsky(1828-1834)formula)}\\
Let $P_k(x):G\subseteq \mathbb{R}^n\rightarrow\mathbb{R}$, be a continuously differentiable function for each $k\in\{1,...,n\}$ and assume that the jordanian domain $G$ is simple with respect to each axis $x_k\,,\,k\in\{1,...,n\}$. Then the following formula is valid
\begin{equation}\label{3:12.7}
\int_G[\frac{\partial P_1}{\partial x_1}(x)+...+\frac{\partial P_n}{\partial x_n}(x)]dx=\oint_S[(cos\,\omega_1)P_1(x)+...+(cos\,\omega_n)P_n(x)]dS
\end{equation}
where the surface integral $\oint_S$ is oriented outside of the domain $G$ and $S=\partial G$
\end{theorem}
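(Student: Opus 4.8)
The plan is to reduce the general identity \eqref{3:12.7} to the single–index identity \eqref{3:12.6} and then to prove the latter for one fixed $k$. Indeed, both sides of \eqref{3:12.7} are sums over $k=1,\dots,n$ of the corresponding sides of \eqref{3:12.6}, and since the Riemann integral over $G$ and the surface integral over $S=\partial G=\bigsqcup_{p=1}^{q}S_p$ are additive (Appendix I), adding the $n$ copies of \eqref{3:12.6} yields \eqref{3:12.7} at once. Hence it suffices to establish \eqref{3:12.6}.

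Fix $k$ and write $\hat x=(x_1,\dots,x_{k-1},x_{k+1},\dots,x_n)$ for the projection of $x$ onto the hyperplane $x_k=0$. Since $G$ is simple with respect to the axis $x_k$, there are a jordanian domain $D\subseteq\mathbb{R}^{n-1}$ and two piecewise continuously differentiable functions $\phi_-\le\phi_+$ on $D$ with
\begin{equation*}
G=\{(\hat x,x_k):\hat x\in D,\ \phi_-(\hat x)\le x_k\le\phi_+(\hat x)\},
\end{equation*}
so that $S=\partial G$ is the union of the lower graph $S_-$, the upper graph $S_+$, and a lateral piece $S_0$ lying in the cylinder $\partial D\times\mathbb{R}$, each of $S_-,S_+,S_0$ being a finite union of the smooth pieces $S_p$. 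First I would pass to an iterated integral (Fubini for Riemann integrals on such a domain, legitimate because $\partial_{x_k}P$ is continuous and $\partial G$ is negligible) and apply the fundamental theorem of calculus in $x_k$:
\begin{equation*}
\int_G\frac{\partial P}{\partial x_k}\,dx=\int_D\Big(\int_{\phi_-(\hat x)}^{\phi_+(\hat x)}\frac{\partial P}{\partial x_k}(\hat x,x_k)\,dx_k\Big)d\hat x=\int_D\big[P(\hat x,\phi_+(\hat x))-P(\hat x,\phi_-(\hat x))\big]d\hat x.
\end{equation*}

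It then remains to recognize the right-hand side as $\oint_S(\cos\omega_k)P\,dS$. On $S_+$ I would use the graph parametrization $u\mapsto\varphi^+(u)=(u,\phi_+(u))$, $u=\hat x\in D$, and compute the normal vector $N$ of \eqref{3:12.1}: for a graph over the hyperplane $x_k=\mathrm{const}$ the coefficient of $\vec e_k$ in $N$ equals $\pm1$ and the coefficients of $\vec e_j$ ($j\ne k$) equal $\mp\,\partial_{x_j}\phi_+$, whence $|N|=\sqrt{1+|\nabla\phi_+|^2}$. Choosing the sign of $N$ so that the unit normal $m(u)=N/|N|$ of \eqref{3:12.4}, \eqref{3:12.5} points outward from $G$ forces its $x_k$-component to be positive on $S_+$, so $(\cos\omega_k)|N|=+1$ and, by the definition \eqref{3:12.3} of the surface integral, $\int_{S_+}(\cos\omega_k)P\,dS=\int_D P(\hat x,\phi_+(\hat x))\,d\hat x$. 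The same computation on $S_-$ gives $\int_{S_-}(\cos\omega_k)P\,dS=-\int_D P(\hat x,\phi_-(\hat x))\,d\hat x$, the minus sign coming from the outward normal having negative $x_k$-component there, while on $S_0$ the outward normal is orthogonal to $\vec e_k$, so $\cos\omega_k\equiv0$ and that piece contributes nothing. Summing the three contributions reproduces the last display, which proves \eqref{3:12.6}.

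I expect the main obstacle to be the bookkeeping of the surface integral: checking from \eqref{3:12.1}--\eqref{3:12.5} that the graph parametrization indeed gives $(\cos\omega_k)|N|=\pm1$ with the sign dictated by the outward orientation, and organizing the decomposition $S=S_-\cup S_+\cup S_0$ into the finitely many smooth pieces $S_p$ when $\phi_\pm$ are only piecewise smooth. A secondary, more routine point is justifying Fubini and the fundamental theorem of calculus on the $\hat x$-dependent interval $[\phi_-(\hat x),\phi_+(\hat x)]$; these follow from the continuity of $P$ and $\partial_{x_k}P$ together with the integrability theorems of Appendix I.
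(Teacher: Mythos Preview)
Your proof is correct and follows essentially the same route as the paper's: reduce to the single-index identity \eqref{3:12.6}, describe $G$ between two graphs over the hyperplane $x_k=0$, apply Fubini and the fundamental theorem of calculus in the $x_k$-variable, and identify the resulting boundary terms as surface integrals via the graph parametrization with $(\cos\omega_k)|N|=\pm1$. You are in fact slightly more thorough than the paper in explicitly treating the lateral piece $S_0$ where $\cos\omega_k=0$.
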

\begin{proof}
The meaning that $G$ is simple with respect to each axis $x_k\,, k\in\{1,...,n\}$ will be explained for $k=n$ and let $Q\subseteq\mathbb{R}^{n-1}$ be the projection of the domain $G$ on the hyperplane determined by coordinates $x_1,...,x_{n-1}=\hat{x}$. It is assumed that $Q\subseteq\mathbb{R}^{n-1}$ is jordanian domain(see jordanian body given in definition \ref{df:10:1.2} of ($A_1$))and the jordanian domain $G\subseteq\mathbb{R}^{n}$ can be described by the following inequalities
\begin{equation}\label{3:12.8}
\varphi(x_1,...,x_{n-1})\leq x_n\leq\psi(x_1,...,x_{n-1})\,,\,(x_1,...,x_{n-1})=\hat{x}\in Q
\end{equation}
where $\varphi(\hat{x})\leq\psi(\hat{x})\,,\,\hat{x}\in Q$ are continuously differentiable functions. The surface
$x_n=\psi(\hat{x}),\,(\hat{x},x_n)\in G$ is denoted by $S_u$(upper surface)and $x_n=\varphi(\hat{x}),\,(\hat{x},x_n)\in G$ will be denoted by $S_b$(lower surface). Notice that the unitary vector $m$ defined in \eqref{3:12.4} must be oriented outside of $G$ at each point $p\in S_u$. It implies$<m,e_n>\geq 0$, similarly $<m,e_n>\leq 0$ for each $p\in S_b$. A direct computation of the orthogonal vector $N$ at each point of the surfaces $S_u=\{x_n-\psi(\hat{x})=0\}$\,and\,$S_u=\{x_n-\varphi(\hat{x})=0\}$ will lead us to
\begin{equation}\label{3:12.9}
<m,e_n>=<N\setminus|N|,e_n>=\left\{
                                   \begin{array}{ll}
                                     1\setminus(1+|\partial_{\hat{x}}\psi(\hat{x})|^2)^{1\setminus2}\,,\,x\in S_u \\
                                     1\setminus(1+|\partial_{\hat{x}}\varphi(\hat{x})|^2)^{1\setminus2}\,,\,x\in S_b
                                   \end{array}
                                 \right.
\end{equation}
Using the standard decomposition method of a multiple integral into its iterated parts we get
\begin{eqnarray}\label{3:12.10}
  \int_G\frac{\partial P_n(x)}{\partial x_n} &=& \int_Q[\int_{x_n=\varphi(\hat{x})}^{x_n=\psi(\hat{x})}\frac{\partial P_n}{\partial x_n}(\hat{x},x_n)dx_n]d\hat{x} \nonumber\\
  &=& \int_QP_n(\hat{x},\psi(\hat{x}))d\hat{x}-\int_QP_n(\hat{x},\varphi(\hat{x}))d\hat{x} \nonumber\\
  &=& \int_QP_n(\hat{x},\psi(\hat{x}))<m,e_n>(1+|\partial_{\hat{x}}\psi(\hat{x})|^2)^{1\setminus2}d\hat{x} \nonumber\\
  && +\int_QP_n(\hat{x},\varphi(\hat{x}))<m,e_n>(1+|\partial_{\hat{x}}\varphi(\hat{x})|^2)^{1\setminus2}d\hat{x} \nonumber\\
  &=&\int_{S_u}P_n(x)<m,e_n>\,dS+\int_{S_b}P_n(x)<m,e_n>\,dS \nonumber\\
  &=&\oint_SP_n(x)(cos\,\omega_n)d\,D
\end{eqnarray}

Here we have used the definition of the unoriented integral given in \eqref{3:12.3} and the proof of \eqref{3:12.7} is complete.
\end{proof}
\begin{remark}
Assuming that $G=\mathop{\cup}\limits_{i=1}^{p}G_i$, with $(int\,G_i)\cap(int\,G_j)=\phi\,\,if\,\,i\neq j$ and each $G_i$ is a jordanian domain, simple with respect to any axis $x_k\,,\,k\in\{1,...,n\}$, then the Gauss-Ostrogradsky formula \eqref{3:12.7} is still valid.
\end{remark}
\section{Appendix II Variational Method Involving PDE}\index{Variational method}
\subsection{Introduction}
A variational method uses multiple integrals and their extremum values for deriving some $PDE$ as first order necessary conditions. Consider a functional
\begin{equation}\label{3:12:1.1}
J(z)=\int_{D_m}L(x,z(x),\partial_xz(x))dx\,,\,D_m=\mathop{\prod}\limits_{i=1}^{m}[a_i,b_i]
\end{equation}
where $L(x,z,u):V\times\mathbb{R}\times\mathbb{R}^n\rightarrow\mathbb{R}\,,\,V(open)\supseteq D_m$ is a first order continuously differentiable function. We are looking for a continuously differentiable function $\hat{z}(x):V\times\mathbb{R}\,\,\,\hat{z}\in\mathcal{C}^1(V)$ such that
\begin{equation}\label{3:12:1.2}
\min J(z)=J(\hat{z})\,\,\,z\in \mathcal{A}
\end{equation}
where $\mathcal{A}\subseteq \mathcal{C}^1(V)$ is the admissible set of functions satisfying the following boundary conditions
\begin{equation}\label{3:12:1.3}
\left\{
  \begin{array}{ll}
   \left\{
     \begin{array}{ll}
       z(a_1,x_2,...,x_m)=z_{0}^{1}(x_2,...,x_m)\,,\,\,\,(x_2,...,x_m)\in\mathop{\prod}_{i=1}^{m}[a_i,b_i] \\
      z(b_1,x_2,...,x_m)=z_{1}^{1}(x_2,...,x_m)
     \end{array}
   \right.
 \\
    .\\
    .\\
   .\\
  \left\{
    \begin{array}{ll}
  z(x_1,...,x_{m-1},a_m)=z_{0}^{m}(x_1,...,x_{m-1})\,,\,\,\,(x_1,...,x_{m-1})\in\mathop{\prod}_{i=1}^{m-1}[a_i,b_i]\\
     z(x_1,...,x_{m-1},b_m)=z_{1}^{m}(x_1,...,x_{m-1})
    \end{array}
  \right.

  \end{array}
\right.
\end{equation}
Here the functions $z_{i}^{j}\,,\,i\in\{0,1\},\,j\in\{1,...,m\}$, describing boundary conditions,are some given continuous functions. This problem belongs to the classical calculus of variations which has a long tradition with significant contributions of Euler\index{Euler}(1739)and Lagrange(1736). The admissible class $\mathcal{A}\subseteq \mathcal{C}^1(V)$ is too restrictive and the existence of an optimal solution $\hat{z}\in\Omega$ is under question even if we assume additional regularity conditions on the Lagrange function $L$. A more appropriate class of admissible function is defined as follows. Denote $\partial D_m=\Gamma_m$ the boundary of the domain $D_m=\mathop{\prod}_{i=1}^{m-1}[a_i,b_i]$ and let $L_2(D_m;\mathbb{R}^m)$ be Hilbert space of measurable functions $p(x):D_m\rightarrow \mathbb{R}^m$  admitting a finite norm
$\parallel p\parallel=(\int_{D_m}|p(x)|^2dx)^{1\setminus2}<\infty$. Define the admissible class $A\subseteq \mathcal{C}(V)$ as follows
\begin{equation}\label{3:12:1.4}
A=\{z\in\mathcal{C}(D_m):\partial_x z\in L_2(D_m;\mathbb{R}^m),z|_{\Gamma_m}=z_0\}
\end{equation}
where $z_0\in\mathcal{C}(\Gamma_m)$\,is fixed. In addition,the Euler-Lagrange equation(first order necessary conditions)for the problem
\begin{equation}\label{3:12:1.5}
\mathop{\min}\limits_{z\in\,A}J(z)=J(\hat{z})
\end{equation}
where $J$ is  given in \eqref{3:12:1.1}, and $A$ in \eqref{3:12:1.4} can be rewritten as a second order $PDE$ provided
\begin{equation}\label{3:12:1.6}
\hbox{ Lagrange function }L(x,z,u):V\times\mathbb{R}\times\mathbb{R}^m\rightarrow\mathbb{R}
\end{equation}
is second order continuously differentiable and $|\partial_u\,L(x,z,u)|\leq \mathcal{C}_N(1+|u|)\,\,\,\forall\,u\in\mathbb{R}^m\,,\,x\in D_m$\,and\,$|z|\leq N$\,,where $\mathcal{C}_N>0$\,is a constant for each $N>0$.
\subsection[E-L Equation for Distributed Parameters Functionals]{Euler-Lagrange\index{Euler!Lagrange} Equation for Distributed Parameters Functionals}
A functional of the type \eqref{3:12:1.1} is defined by a Lagrange function $L$ containing a multidimensional variable $x\in D_m$ (distributed parameter). Assume that $L$ in \eqref{3:12:1.1} fulfils conditions  \eqref{3:12:1.6} and let $\hat{z}\in A$ (defined in \eqref{3:12:1.4}) be the optimal element satisfying \eqref{3:12:1.5}(locally), i.e there is a ball $B(\hat{z},\rho)\subseteq \mathbb{C}(D_m)$ such that
\begin{equation}\label{3:12:1.7}
J(z)\geq J(\hat{z})\,\,\forall\,\,z\in B(\hat{z},\rho)\cap A\
\end{equation}
Denote
\begin{equation}\label{3:12:1.8}
W^{1\setminus2}=\{z\in\mathcal{C}(D_m):their\,\,exists\,\,\partial_x z\in L_2(D_m;\mathbb{R}^m)\}
\end{equation}
and define a linear subspace $Y\subseteq W^{1\setminus2}$
\begin{equation}\label{3:12:1.9}
Y=\{y\in W^{1\setminus2},y|_{\Gamma_m}=0\}
\end{equation}
An admissible variation of $\hat{z}$\,is given by
\begin{equation}\label{3:12:1.10}
z_\varepsilon(x)=\hat{z}(x)+\varepsilon\,\overline{y}(x)\,,\,x\in D_m
\end{equation}
where $\varepsilon\in[0,1]$\,and\,$\overline{y}\in Y$. By definition $z_\varepsilon\in A\,,\,\varepsilon\in[0,1]$,\,and using \eqref{3:12:1.7} we get the corresponding first order necessary condition of optimality using Frech\'{e}t differential
\begin{equation}\label{3:12:1.11}
0=d\,J(\hat{z};\overline{y})=\mathop{lim}\limits_{\varepsilon\downarrow 0}\frac{J(z_\varepsilon)-J(\hat{z})}{\varepsilon}\,,\,\,\forall \overline{y}\in Y
\end{equation}
where $d\,J(\hat{z};\overline{y})$ is the  Frech\'{e}t differential \index{Differential! }of $J$ at $\hat{z}$ computed for the argument $\overline{y}$. The first form of the E-L equation is deduced from \eqref{3:12:1.11} using the following subspace $\overline{Y}\subseteq Y$
\begin{equation}\label{3:12:1.12}
\overline{Y}=sp\{\overline{y}\in\mathcal{C}(D_m):\overline{y}(x)=\mathop{\prod}\limits_{i=1}^{m}y_i(x_i),y_i\in D_{0}^{1}([a_i,b_i])\}
\end{equation}
Here the linear space $D_{0}^{1}([\alpha,\beta])$\,is consisting of all continuous functions $\varphi$\,which are derivable satisfying $\varphi(\alpha)=\varphi(\beta)=0$\,and its derivative $\{\frac{d\varphi}{dt}\,,t\in[\alpha,\beta]\}$\,is a piecewise continuous function. A direct computation allows us to rewrite
\begin{equation}\label{3:12:1.13}
0=d\,J(\hat{z},\overline{y})(see\,\,(11))\,\,for\,\,any\,\,\overline{y}\in \overline{Y}
\end{equation}
as follows
\begin{equation}\label{3:12:1.14}
0=\int_{D_m}[\partial_z L(x;\hat{z}(x);\partial_x\hat{z}(x))\overline{y}(x)+
\mathop{\sum}\limits_{i=1}^{m}\partial_{u_i}L(x;\hat{z}(x);\partial_x\hat{z}(x))\partial_{x_i}\overline{y}(x)]dx
\end{equation}
for any $\overline{y}\in\overline{Y}$, where $u=(u_1,...,u_m)\,,\,x=(x_1,...,x_m)$\\
Denote
{\small
\begin{equation}\label{3:12:1.15}
\left\{
        \begin{array}{ll}
          \psi_1(x)=\int_{a_1}^{x_1}\partial_z L(t_1,x_2,...,x_m;\hat{z}(t_1,x_2,...,x_m);\partial_x\hat{z}(t_1,x_2,...,x_m))dt_1, \\
          . \\
         . \\
         . \\
           \psi_m(x)=\int_{a_m}^{x_m}\partial_z L(x_1,x_2,...,,x_{m-1}t_m;\hat{z}(tx_1,x_2,...,x_{m-1},t_m);\partial_x\hat{z}(x_1,x_2,...,x_{m-1},t_m))dt_m
        \end{array}
      \right.
\end{equation}}
and integrating by parts in \eqref{3:12:1.14} we get (see $\partial_z L=\partial_{x_i}\psi_i$)
\begin{equation}\label{3:12:1.16}
0=\int_{D_m}\{\mathop{\sum}\limits_{i=1}^{m}[-\frac{1}{m}\psi_i(x)+\partial_{u_i}L(x;\hat{z}(x);\partial_x\hat{z}(x))]\partial_{x_i}\overline{y}(x)\}dx
\end{equation}
for any $$\overline{y}\in W^{1\setminus2},\overline{y}(x)=\mathop{\prod}\limits_{i=1}^{m}y_i(x_i)\,,\,y_i\in D_{0}^{1}([a_i,b_i])\,\,i\in\{1,...,m\}$$ where $\psi_1(x),...,\psi_m(x)$ are  defined in \eqref{3:12:1.15}. The integral equation \eqref{3:12:1.16} stands for the first form of the Euler-Lagrange\index{Euler!Lagrange} equation associated with the variational problem  defined in \eqref{3:12:1.11}. To get a pointwise form of the E-L equation \eqref{3:12:1.16} we need to assume that
\begin{equation}\label{3:12:1.17}
\hbox{ The optimal element }\hat{z}\in \mathcal{C}^2(\theta\subseteq D_m)
\end{equation}
is second order continuously differentiable on some open subset $\theta\subseteq D_m$.
\begin{theorem}{(E-L)}
Let $L(x,z,u):D_m\times\mathbb{R}\times\mathbb{R}^m\rightarrow\mathbb{R}$ be a second order continuously differentiable function and consider that the local optimal element $\hat{z}$ fulfils \eqref{3:12:1.17}. Then the following pointwise $E-L$ equation
\begin{equation}\label{3:12:1.18}
\partial_z\,L(x,\hat{z}(x),\partial_x\hat{z}(x))=\mathop{\sum}\limits_{i=1}^{m}\partial_{x_i}[\partial_{u_i}\,L(x,\hat{z}(x),\partial_x\hat{z}(x))]
\end{equation}
for any $x\in \theta\subseteq D_m$, is valid and $\hat{z}|_{\Gamma_m}=z_0\in \mathcal{C}(\Gamma_m)$
\end{theorem}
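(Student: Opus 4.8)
The plan is to derive the pointwise Euler--Lagrange equation \eqref{3:12:1.18} from the integral form \eqref{3:12:1.16}, which has already been established for all test functions of the product type $\overline{y}(x)=\prod_{i=1}^m y_i(x_i)$ with $y_i\in D_0^1([a_i,b_i])$. The bridge between the two is the regularity assumption \eqref{3:12:1.17}: on the open set $\theta\subseteq D_m$ the optimal element $\hat z$ is $\mathcal{C}^2$, so each function $\psi_i(x)$ defined in \eqref{3:12:1.15} is continuously differentiable in $x_i$ with $\partial_{x_i}\psi_i=\partial_z L(x,\hat z(x),\partial_x\hat z(x))$, and each $\partial_{u_i}L(x,\hat z(x),\partial_x\hat z(x))$ is continuously differentiable in $x_i$ on $\theta$. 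First I would integrate \eqref{3:12:1.16} by parts once more, in each variable $x_i$ separately, moving the derivative off $\partial_{x_i}\overline{y}$; since $y_i$ vanishes at the endpoints $a_i,b_i$ the boundary terms drop out, and one is left with
\begin{equation}\label{eq:EL-plan}
0=-\int_{D_m}\Big\{\sum_{i=1}^m\partial_{x_i}\Big[-\tfrac{1}{m}\psi_i(x)+\partial_{u_i}L(x,\hat z(x),\partial_x\hat z(x))\Big]\Big\}\,\overline{y}(x)\,dx
\end{equation}
for all product test functions $\overline{y}$, wherever these derivatives make sense, i.e. on $\theta$.

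Next I would invoke a fundamental-lemma-of-the-calculus-of-variations argument adapted to product test functions. The integrand in braces is continuous on $\theta$; call it $g(x)$. If $g(x_0)\neq 0$ at some interior point $x_0\in\theta$, choose a small closed box $\prod_i[\alpha_i,\beta_i]\subseteq\theta$ around $x_0$ on which $g$ keeps a fixed sign, and pick bump functions $y_i\in D_0^1([a_i,b_i])$ supported in $[\alpha_i,\beta_i]$, nonnegative, not identically zero; then $\overline{y}=\prod_i y_i\ge 0$, supported in the box, and $\int g\,\overline{y}\,dx$ has the sign of $g(x_0)$, hence is nonzero, contradicting \eqref{eq:EL-plan}. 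Therefore $g\equiv 0$ on $\theta$, which is precisely
\[
\sum_{i=1}^m\partial_{x_i}\Big[\partial_{u_i}L(x,\hat z(x),\partial_x\hat z(x))\Big]=\sum_{i=1}^m\tfrac{1}{m}\,\partial_{x_i}\psi_i(x)=\partial_z L(x,\hat z(x),\partial_x\hat z(x)),
\]
using $\partial_{x_i}\psi_i=\partial_z L(\,\cdot\,)$ from \eqref{3:12:1.15}. This is \eqref{3:12:1.18}. The boundary condition $\hat z|_{\Gamma_m}=z_0$ is inherited directly from membership of $\hat z$ in the admissible class $A$ defined in \eqref{3:12:1.4}, so nothing further is needed there.

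The main obstacle I anticipate is not the fundamental lemma itself but the justification of the two integrations by parts and the differentiation under the integral sign in \eqref{3:12:1.15}: one must know that $x\mapsto L(x,\hat z(x),\partial_x\hat z(x))$ and its relevant partial derivatives are continuous, which relies on $L\in\mathcal{C}^2$, on $\hat z\in\mathcal{C}^2(\theta)$, and on the growth bound $|\partial_u L(x,z,u)|\le C_N(1+|u|)$ from \eqref{3:12:1.6} to keep all integrands in $L_2(D_m)$ so that the Fréchet differential \eqref{3:12:1.11} is well defined in the first place. A secondary subtlety is that \eqref{3:12:1.16} was derived only for product-type variations, so the vanishing conclusion is a priori only pointwise on $\theta$; extending beyond $\theta$ would require more regularity of $\hat z$, which is why the statement is correctly localized to $\theta$. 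I would also remark that the linear span in \eqref{3:12:1.12} is harmless here: linearity of the integral in $\overline{y}$ means it suffices to test against the generating product functions, so passing to $\overline{Y}=\mathrm{sp}\{\cdots\}$ changes nothing in the argument.
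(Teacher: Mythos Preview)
Your proof is correct and follows essentially the same route as the paper's: pass from the integral identity \eqref{3:12:1.16} to a zeroth-order condition by moving the $\partial_{x_i}$ off $\overline y$, then invoke the fundamental lemma with product bump functions. The paper phrases the first step slightly differently, writing the integrand as a divergence $E_1$ plus a remainder $E_2$ and killing $\int E_1$ via Gauss--Ostrogradsky on a small cube $D_{x_0}\subseteq\theta$, but on a product domain with product test functions this is equivalent to your iterated one-dimensional integration by parts.

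The one place where your write-up is imprecise, and where the paper is more careful, is the localization. You state \eqref{eq:EL-plan} ``for all product test functions $\overline y$, wherever these derivatives make sense, i.e.\ on $\theta$'', but the integral is over all of $D_m$ while $g(x)=\sum_i\partial_{x_i}[\cdots]$ need not even be defined off $\theta$, since $\hat z\in\mathcal C^2(\theta)$ only. The clean fix---which you in fact carry out in your fundamental-lemma step---is to restrict from the outset to test functions $\overline y=\prod_i y_i$ with each $y_i$ supported in an interval $[\alpha_i,\beta_i]$ so small that $\prod_i[\alpha_i,\beta_i]\subseteq\theta$; then the integration by parts and the integrand $g$ live entirely inside $\theta$, and no further justification is needed. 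The paper makes this explicit by fixing the cube $D_{x_0}$ before integrating by parts.
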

\begin{proof}
By hypothesis, the $E-L$ equation \eqref{3:12:1.16} is verified where $\psi_i(x)$ and \\$\partial_{u_i}L(x,\hat{z}(x),\partial_x\hat{z}(x))$ are
 first order continuously differentiable functions of $x\in \theta\subseteq D_m$ for any $i\in\{1,...,m\}$. The integral in \eqref{3:12:1.16}
 can be rewritten as follows
\begin{equation}\label{3:12:1.19}
\mathop{\sum}\limits_{i=1}^{m}[-\frac{1}{m}\psi_i(x)+\partial_{u_i}L(x;\hat{z}(x);\partial_x\hat{z}(x))]\partial_{x_i}\overline{y}(x)=E_1(x)-E_2(x)
\end{equation}
$$=\mathop{\sum}\limits_{i=1}^{m}\partial_{x_i}\{[-\frac{1}{m}\psi_i(x)+\partial_{u_i}L(x;\hat{z}(x);\partial_x\hat{z}(x))]\overline{y}(x)\}-$$
$$-\mathop{\sum}\limits_{i=1}^{m}\{\partial_{x_i}[-\frac{1}{m}\psi_i(x)+\partial_{u_i}L(x;\hat{z}(x);\partial_x\hat{z}(x))]\}\overline{y}(x)$$
for any $x\in \theta\subseteq D_m$. For each $x_0\in \theta$\,arbitrarily fixed, let $D_{x_0}\subseteq \theta$\,be a cube centered at $x_0$
and using \eqref{3:12:1.19} for
\begin{equation}\label{3:12:1.20}
\overline{y}\in\overline{Y},\,\overline{y}(x)=\mathop{\prod}\limits_{i}^{m}y_i(x_i),\,\overline{y}\in\mathcal{C}(D_{x_0}),\,\overline{y}|_{\partial D_{x_0}}=0
\end{equation}
we rewrite \eqref{3:12:1.16} restricted to the cube $D_{x_0}\subseteq \theta$
\begin{equation}\label{3:12:1.21}
\int_{D_{x_0}}E_1(x)Dx=\int_{d_{x_0}}E_2(x)dx
\end{equation}
On the other hand,applying Gauss-Ostrogradsky\index{Gauss-Ostrogadsky} formula to the first integral in \eqref{3:12:1.21} we get
\begin{equation}\label{3:12:1.22}
\int_{D_{x_0}}E_1(x)dx=\oint_{\partial D_{x_0}}\overline{y}(x)\{\mathop{\sum}\limits_{i=1}^{m}[-\frac{1}{m}\psi_i(x)+\partial_{u_i}L(x;\hat{z}(x),\partial_x\hat{z}(x))]cos\,\omega_i\}dS
\end{equation}
and using \eqref{3:12:1.20} we obtain
\begin{equation}\label{3:12:1.23}
0=\int_{D_{x_0}}E_1(x)dx=\int_{d_{x_0}}E_2(x)dx
\end{equation}
for any $\overline{y}\in\overline{Y}$ satisfying \eqref{3:12:1.20} where
\begin{eqnarray}
  E_2(x) &=& \{\mathop{\sum}\limits_{i=1}^{m}[-\frac{1}{m}\psi_i(x)+\partial_{u_i}L(x;\hat{z}(x),\partial_x\hat{z}(x))]\}\overline{y}(x) \nonumber\\
  &=& \{-\partial_z L(x;\hat{z}(x),\partial_x\hat{z}(x)) \nonumber\\
  &+& \mathop{\sum}\limits_{i=1}^{m}\partial_{u_i}L(x;\hat{z}(x),\partial_x\hat{z}(x))\}\overline{y}(x)
\end{eqnarray}
Assimilating \eqref{3:12:1.23} as an equation for a linear functional on the space $\overline{Y}_{x_0}=\{\overline{y}\in\mathcal{C}(D_{x_0}):\overline{y}|_{\partial D_{x_0}}=0\}$
\begin{equation}\label{3:12:1.25}
0=\int_{D_{x_0}}E_1(x)dx=\int_{D_{x_0}}h(x)\overline{y}(x)dx,\,\forall\,\overline{y}\in\overline{Y}_{x_0}
\end{equation}
a standard argument used in the scalar case can be applied here and it shows that
\begin{equation}\label{3:12:1.26}
h(x)=0\,\,\,\forall\,\,x\in D_{x_0}
\end{equation}
where
$$h(x)=\mathop{\sum}\limits_{i=1}^{m}\partial_{x_i}[\partial_{u_i}L(x;\hat{z}(x),\partial_x\hat{z}(x))]-\partial_zL(x;\hat{z}(x),\partial_x\hat{z}(x))$$
In particular $h(x_0)=0$\,where $x_0\in\theta$\,is arbitrarily fixed and the proof is complete.
\end{proof}
\begin{remark}
The  equation \eqref{3:12:1.25} is contradicted if we assume that $h(x)>0$.It is acomplished by constructing an auxiliary function $\overline{y}_{0}(x)=\sigma^2-|x-x_0|^2$ on the ball $x\in B(x_0,\sigma)\subseteq D_{x_0}$ which satisfy\\
\begin{enumerate}
  \item $\overline{y}_0(x)=0,\,\,\,\forall\,\,x\in \Gamma_{x_0}=boundary\,\,of\,\,B(x_0,\sigma)$
  \item $\overline{y}_0(x)>0,\,\,\,\forall\,\,x\in int\,B(x_0,\sigma) $
  \item $h(x)>0,\,\,\,\forall\,\,x\in int\,B(x_0,\sigma)\,if\,\,\sigma>0$ is sufficiently small
\end{enumerate}
Define $\overline{y}_0(x)=0$\,for any $x\in D_{x_0}\setminus B(x_0,\sigma)$\,and\\
$$\int_{D_{x_0}}h(x)\overline{y}_0(x)dx>0\,\,\,\,\,contradicting $$.\\
\end{remark}
\subsection{Examples of PDE Involving E-L Equation}
\textbf{($E_1$) \,Elliptic Equations}\\
\begin{example}\label{ex:3.12.1}\index{Elliptic equation}
Consider Laplace equation\index{Laplace equation}
$$\Delta z(x)=\mathop{\sum}\limits_{i=1}^{m}\partial_{i}^{2}z(x)=0$$ on a bounded domain $x\in D_{m}^{0}=\mathop{\prod}\limits_{i=1}^{m}(a_i,b_i), D_m=\mathop{\sum}\limits_{i=1}^{m}[a_i,b_i]$, associated with a Drichlet boundary condition $z\setminus{\partial D_m}=z_0(x)$ where $z_0\in\mathcal{C}(\partial D_m)$.\\
Define Dirichlet integral
$$D(z)=\frac{1}{2}\int_{D_m}|\partial_x z(x)|^2dx\,,\,\partial_x z(x)=(\partial_1 z(z),...,\partial_m z(x)),$$\\
and notice that the corresponding Lagrange function is given by
$$L(x,z,u)=\frac{1}{2}|u|^2\,,\,\,u\in\mathbb{R}^m$$
If it is the case then compute $\partial_z L=0$ and $\mathop{\sum}\limits_{i=1}^{m}\partial_{x_i}[\partial_{u_i}L(x;\hat{z}(x),\partial_x\hat{z}(x))]=\Delta\hat{z}(x)$ which allows one to see that (E-L)  equation \eqref{3:12:1.18} coincides with the above given Dirichlet problem provided $\{\hat{z}(x):x\in\,int\,D_m\}$ is second order continuously differentiable
\end{example}
\begin{example}\label{ex:3.12.2} Consider Piosson rquation $\Delta z(x)=f(x0\,,\,\,x\in \mathop{D_{m}}\limits^{0}$ associated with a boundary condition $z|_{\partial D_m}=z_0$, where $z_0\in \mathcal{C}(\partial D_m)$.\\
This Dirichlet problem for Poisson equation \index{Poisson equation}can be deduced from (E-L) equation \eqref{3:12:1.18}(see theorem (E-L))and in this respect associated the following functional
$$J(z)=\int_{D_m}\{1\setminus2|\partial_x z(x)|^2+f(x)z(x)\}dx$$
where $L(x,z,u)=1\setminus2|u|^2+f(x)z\,,\,u\in\mathbb{R}^m,\,z\in\mathbb{R}$, is the corresponding Lagrange function.Notice that $\partial_z L=f(x)$ and
$$\mathop{\sum}\limits_{i=1}^{m}\partial_{x_i}[\partial_{u_i}L(x;\hat{z}(x),\partial_x\hat{z}(x))]=\Delta\hat{z}(x)$$
allows one to write (E-L)  equation \eqref{3:12:1.18} as
$$\Delta\hat{z}(x)=f(x),\,\,\,\,x\in \mathop{D_{m}}\limits^{0}\,\,\hat{z}|_{\partial D_m}=z_)(x),\,\,for\,\,z_0\in\mathcal{C}(\partial D_m)\,.$$
\end{example}
\begin{example}
 A semilinear Poisson equation \index{Poisson equation}$\Delta z(x)=f(x),\,\,\,\,x\in \mathop{D_{m}}\limits^{0}$, associated with a Dirichlet boundary condition $z|_{\partial D_m}=z_)(x),\,\,for\,\,z_0\in\mathcal{C}(\partial D_m)$ can be deduced from (E-L)equation \eqref{3:12:1.18} provided we associate the following functional
$$J(z)=\int_{D_m}\{1\setminus2|\partial_x z(x)|^2+g(z(x))\}dx$$
where $g(z):\mathbb{R}\rightarrow \mathbb{R}$\,is a primitive of $f(z)\,,\,\frac{dg|z|}{dz}=f(z)\,,\,z\in\mathbb{R}$\\
Similarly, the standard argument used in
 examples \ref{ex:3.12.1} and \ref{ex:3.12.2} lead us to the following non linear elliptic equation\index{Elliptic equation}
$$-\Delta\hat{z}(x)=\hat{z}(x)|\hat{z}(x)|^{p-1}+f(\hat{z}(x))\,,\,\,\,x\in \mathop{D_{m}}\limits^{0}$$
provided $L(x,z,u)=\{\frac{1}{2}|u|^2-\frac{1}{p+1}|z|^{p+1}-g(z)\}\,,\,\,p\geq 1$ where $g(z):\mathbb{R}\rightarrow \mathbb{R}$ is a primitive of $f(z)$. Assuming that $f(z)$ satisfy $f(0)=0,\,\mathop{lim}\limits_{z\rightarrow \infty}\frac{f(z)}{|z|^p}=0,\,for\,\,p=3$\\
we take $g(z)=\frac{1}{2}\lambda|z|^2$ and the corresponding (E-L) equation coincides with so called Yang-Milles equation which is significant in Physics.
\end{example}
\textbf{$(E_2)$\,\,\,\,\,Wave  equation}\\
(1) Consider $(x,t)\in\mathbb{R}^3\times\mathbb{R}=\mathbb{R}^4$ and a bounded interval $D_4\subseteq \mathbb{R}^4$. Associated the following functional
$$J(z)=\int_{D_4}1\setminus2\{\partial_{t}^{2} |\partial_x z(t,x)|^2\}dx dt$$
Notice that the corresponding (E-L) equation \eqref{3:12:1.18} can be written as a wave equation \index{Wave equation}
$$\Box\hat{z}(t,x)=\partial_{t}^{2}\hat{z}(t,x)-\Delta_x\hat{z}(t,x)=0,\,(t,x)\in \mathop{D_{4}}\limits^{0}$$
(\lq\lq$\Box$\rq\rq d'Alembert operator\index{D'Alembert!operator})\\
(2)\,With the same notations as above we get Klein-Gordan equation(mentioned in math.Physics equation)
$$\Box\hat{z}(t,x)+k^2\hat{z}(t,x)=0\,,\,\,(t,x)\in \mathop{D_{4}}\limits^{0}$$
which agrees with the following Lagrange Function
$$L(x,z,u)=\frac{1}{2}\{(u_0)^2-\mathop{\sum}\limits_{i=1}^{3}(u_i)^2-k^2\,z^2\},\,u=(u_0,u_1,u_2,u_3)$$
Adding $\frac{\lambda}{4}z^4+j \,z$ to the above $L$ we get another Klein-Gordon equation

$$\Box\hat{z}(t,x)+k^2\hat{z}(t,x)=\lambda(\hat{z}(t,x))^3+j,\,(t,x)\in \mathop{D_{4}}\limits^{0}$$\\
\textbf{$(E_3)$}\,\,$PDE$\, involving mimimal-area surface\\
Looking for a minimal-area surface $z=\hat{z}(x), x\in D_m$ satisfying \\$\hat{z}|_{\partial D_m}=z_0\in\mathcal{C}(\partial D_m)$ we associate the functional
$$J(z)=\int_{D_m}\sqrt{1+|\partial_x z(x)|^2}dx$$
The corresponding (E-L) equation \eqref{3:12:1.18} is $\div(T(z))(x)=0\,,\,\,x\in \mathop{D_{4}}\limits^{0}$\\
where $T(z)(x)=\partial_xz(x)\setminus{\sqrt{1+|\partial_x z(x)|^2}}$\\
\textbf{$(E_4)$} $ODE$ as (E-L)equation for m=1\\
A functional
$$J(y)=\int_{a}^{b}\sqrt{1+(y'(x))^2}dx$$
stands for the length of the curve $\{y(x):x\in[a,b]\}$ and the corresponding (E-L) equation \eqref{3:12:1.18} is
$$\left\{
    \begin{array}{ll}
     \frac{d}{dx}\,\frac{\hat{y}'(x)}{\sqrt{1+(\hat{y}'(x))}}=0\,,\,\,x\in (a,b) \\
     \hat{y}(a)=y_a,\,\hat{y}(b)=y_b
    \end{array}
  \right.
$$
The solutions are expressed by linear $\hat{y}(x)=\alpha\,x+\beta,\,\,\,x\in\mathbb{R}$, where $\alpha,\beta\in\mathbb{R}$ are determined such that the boundary conditions $\hat{y}(a)=y_a,\,\hat{y}(b)=y_b$ are satisfied.
\section[Appendix III  Harmonic Functions]{Appendix III Harmonic Functions\index{Harmonic functions }; \\Recovering a Harmonic Function from its Boundary Values}
\subsection{Harmonic Functions}
A vector field $H(x)=(H_1(x),\ldots,H_n(x)):
V\subseteq \mathbb{R}^n\rightarrow \mathbb{R}^n$ is called harmonic
in a domain $V\subseteq \mathbb{R}^n$ if $H\in \mathcal{C}^1(V;\mathbb{R}^n)$
and
\begin{equation}\label{3:13:1.1}
 \div H(x)=\sum_{i=1}^n\partial x_i H_i(x)=0, (\partial
x_i H_j-\partial x_j H_i)(x)=0, i,j\in\{i,\ldots,n\}
\end{equation}
for any $x\in V.$ In what follows we restrict ourselves to simple
convex domain $V$ and notice that the second condition in \eqref{3:13:1.1}
implies that $H(x)=grad \phi (x)=\partial_x\phi (x)$ of some scalar
function $\phi$ ($H$ has a potential) where $\phi$ is second order
continuously differentiable. Under these conditions, the first
constraint in \eqref{3:13:1.1} can be viewed as an equation for the scalar
function $\phi$
\begin{equation}\label{3:13:1.2}
0=\div H(x)=\div(\grad
\phi)(x)=\sum_{i=1}^n\partial_i^2\phi(x)=\Delta \phi(x)
\end{equation}
Any scalar function $\phi(x): V\subseteq \mathbb{R}^n\rightarrow
\mathbb{R}$ which is second order continuously differentiable and
satisfies the Laplace equation\index{Laplace equation} \eqref{3:13:1.2} for $x\in V$ will be
called a harmonic function\index{Harmonic function } on $V.$

\subsection{Green Formulas\index{Green!formula}}
Let $V\subseteq \mathbb{R}^n$ be a
bounded domain with a piecewise smooth boundary $S=\partial V$;
consider a continuously derivable scalar field $\psi(x):V\rightarrow
\mathbb{R}^n.$ By a direct computation we get
$(\partial=(\partial_1,\ldots,\partial_n))$
\begin{equation}\label{3:13:2.1}
\div\psi R=<\partial,\psi
R>=\psi<\partial,R>+<R,\partial\psi>=\psi\Delta\phi+<\partial\phi,\partial\psi>
\end{equation}
Applying Gauss-Ostrigradsky \index{Gauss-Ostrogadsky}formula and integrating both terms of
equality
we get\\
\begin{eqnarray*}
  \int_V\psi\Delta\phi dx + \int_V<\partial\phi,\partial\psi>dx &=& \int_V(\div\psi R)dx \\
  &=& \oint_V<m,\psi R>dS \\
  &=& \oint_S\psi<m,R>dS\\
  &=&\oint_S\psi(D_m\phi)dS
\end{eqnarray*}
which stands for
\begin{equation}\label{3:13:2.2}
\int_V<\partial\phi,\partial\psi>dx+\int_V\psi\Delta\phi
dx=\oint_S\psi(D_m\phi)dS
\end{equation}
Here $D_m\phi=<m,\partial\phi>$ is the derivative of the scalar
function $\phi$ in the normal direction represented by the unitary
orthogonal vector $m$ at the surface $S=\partial V$ oriented outside
of $V$. The expression in \ref{3:13:2.2} is the first Green formula
and by permutation and subtracting we get the second Green formula\index{Green!formula}
\begin{equation}\label{3:13:2.3}
\int_V(\psi\Delta\phi-\phi\Delta\psi)dx=\oint_S[\psi(D_m\phi)-\phi(D_m\psi)]dS
\end{equation}
\begin{theorem}
\begin{description}
  \item[(a)] If a harmonic function $h$ vanishes on the boundary
  $S=\partial V$ then $h(x)=0$ for any $x\in int V$
  \item[(b)]If $h_1$ and $h_2$ are two harmonic functions satisfying $h_1(x)=h_2(x),\,x\in S=\partial V,$ then $h_1(x)=h_2(x)$
  for all $x\in int\,V.$
  \item[(c)]If a harmonic vector field $H(x)$ satisfies $<m,H>=0$ on the boundary $S=\partial V$ then $H(x)=0$ for any $x\in int\, V.$
  \item[(d)] If $H_1$ and $H_2$ are two harmonic vector fields satisfying $<m,H_1>=<m,H_2>$ on the boundary $S=\partial V$ then $H_1(x)=H_2(x)$
   for any $x\in int\, V.$
\end{description}
\end{theorem}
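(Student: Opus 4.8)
The plan is to derive all four statements from the second Green formula \eqref{3:13:2.3}, exactly in the spirit of the uniqueness argument for the Poisson equation given earlier via the maximum principle, but here using an energy (Dirichlet integral) identity. The key observation is that parts (b) and (d) reduce immediately to parts (a) and (c) respectively by linearity: if $h_1,h_2$ are harmonic and agree on $S$, then $h=h_1-h_2$ is harmonic (the Laplacian and the curl-type conditions \eqref{3:13:1.1} are linear) and vanishes on $S$, so (a) gives $h\equiv 0$ on $\operatorname{int}V$; similarly $H=H_1-H_2$ is a harmonic vector field with $\langle m,H\rangle=0$ on $S$, and (c) finishes it. So the real content is (a) and (c).

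For part (a): let $h$ be harmonic on $V$, continuous on $\overline V$, with $h|_S=0$. Apply the first Green formula \eqref{3:13:2.2} with $\phi=\psi=h$. Since $\Delta h=0$ on $\operatorname{int}V$, the term $\int_V \psi\Delta\phi\,dx$ drops out, and since $h=0$ on $S$, the boundary term $\oint_S h(D_m h)\,dS$ vanishes. What remains is $\int_V \langle \partial h,\partial h\rangle\,dx = \int_V |\partial h|^2\,dx = 0$. Because $|\partial h|^2\ge 0$ is continuous, this forces $\partial h(x)=0$ for all $x\in\operatorname{int}V$, hence $h$ is constant on each connected component of $\operatorname{int}V$; continuity up to the boundary together with $h|_S=0$ then gives $h\equiv 0$ on $\operatorname{int}V$. (One should note that $V$ is a simple convex domain, so $\operatorname{int}V$ is connected and $\overline{\operatorname{int}V}=\overline V$, which legitimises passing the constant value to the boundary.)

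For part (c): by the discussion in the section, a harmonic vector field $H$ on a simple convex domain satisfies the compatibility conditions \eqref{3:13:1.1}, so $H=\operatorname{grad}\phi=\partial_x\phi$ for a second order continuously differentiable scalar potential $\phi$, and $\operatorname{div}H=0$ means $\Delta\phi=0$, i.e. $\phi$ is harmonic. The hypothesis $\langle m,H\rangle=0$ on $S$ is precisely $D_m\phi=0$ on $S$. Now apply \eqref{3:13:2.2} with $\psi=\phi$: the term $\int_V\phi\Delta\phi\,dx$ vanishes and the boundary term $\oint_S\phi(D_m\phi)\,dS$ vanishes, leaving $\int_V|\partial\phi|^2\,dx=0$, hence $\partial\phi\equiv 0$ on $\operatorname{int}V$, that is $H(x)=\partial_x\phi(x)=0$ for all $x\in\operatorname{int}V$. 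Then (d) follows from (c) applied to $H=H_1-H_2$.

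The main obstacle — really the only delicate point — is regularity and the legitimacy of applying Gauss--Ostrogradsky/Green's formula: one needs $h$ (resp.\ $\phi$) second order continuously differentiable on $\operatorname{int}V$ and $\mathcal{C}^1$ up to the piecewise smooth boundary $S$ so that $D_m h$ is defined and the integrals converge. A harmonic function is automatically smooth in the interior, so the interior hypotheses are free; the boundary smoothness of the solution is the standing assumption under which Green's formulas were stated, and the piecewise smoothness of $S$ was assumed at the outset of the appendix, so \eqref{3:13:2.2} and \eqref{3:13:2.3} apply verbatim. I would also remark that parts (b) and (d) are exactly the uniqueness statements for the Dirichlet problem and the Neumann problem (up to additive constant for $\phi$, which disappears at the level of the field $H$), tying this back to the Green-function constructions earlier in the chapter.
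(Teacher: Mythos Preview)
Your proof is correct and follows essentially the same route as the paper: apply the first Green formula \eqref{3:13:2.2} with $\phi=\psi=h$ (resp.\ the potential $\phi$ of $H$) to obtain $\int_V|\partial h|^2\,dx=0$, deduce $\partial h\equiv 0$, and then use linearity for (b) and (d). One small slip: your opening sentence cites the second Green formula \eqref{3:13:2.3}, but your actual argument correctly uses the first one \eqref{3:13:2.2}, as does the paper.
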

\begin{proof}
$(a)$: Using the first Green formula \index{Green!formula} \ref{3:13:2.2} for $\phi=\psi=h$
and $\Delta h=0$ we get $\int_V |\partial h|^2dx=0$ and therefore
$\partial h=grad h(x)=0$ for any $x\in V$ which implies
$h(x)=const$, $x\in V.$ Using $h(x)=0$, $x\in S=\partial V$. We
conclude $h(x)=0$ for any $x\in V.$\\
$(b)$: For $h(x)=h_1(x)-h_2(x)$
use the conclusion of (a).\\
$(c)$: By hypothesis $H(x)=\partial h (x),\,x\in V,$ where the
potential function $h(x),\,x\in V,$ is harmonic, $\Delta h=0.$ Using
the first Green formula \eqref{3:13:2.2} for $\phi=\psi=h$ with $\Delta \phi=0$
and $D_m\phi=<m,H>(x)=0$ for $x\in S=\partial V.$. We get
\[
\int_V |\partial h|^2dx=\int_V |H(x)|^2dx=0\,\text{ and}
\,H(x)=0\text{ for all} x\in V.
\]
$(d)$: Follows from $(c)$ using the same argument as in $(b).$ The
proof is complete.
\end{proof}
\begin{theorem}
\begin{description}
  \item[(a)]If $h(x)$ is harmonic on the domain
  $V\subset \mathbb{R}^n$ then\\ $\oint _S(D_m h)(x)dS=0,$ where $D_m h=<m,\partial h>.$
  \item[(b)]Let $B(y,r)\subset V$ be a ball centered at $y\in int V,$
  and $h$ is a harmonic function\index{Harmonic function } on $V\subset \mathbb{R}^n$. Then
  the arithmetic mean value on $\sum=\partial B(y,r)$ equals the
  value $h(y)$ at the center $y$, $$h(y)=\frac{1}{|\sum|}\oint_\sum h(x)dS.$$
  \item[(c)]A harmonic function $h(x):\mathbb{R}^n\rightarrow
  \mathbb{R}$ which satisfies $\lim _{|x|\rightarrow \infty}h(x)=0$
  is vanishing everywhere, $h(x)=0,$ for all $x\in \mathbb{R}^n$
\end{description}
\end{theorem}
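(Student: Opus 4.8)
The plan is to establish the three assertions in sequence, with (a) feeding into (b) and (b) into (c). For (a), I would apply the first Green formula \eqref{3:13:2.2} with $\phi=h$ and $\psi\equiv 1$. Then $\partial\psi=0$, so the term $\int_V\langle\partial\phi,\partial\psi\rangle\,dx$ vanishes, while $\psi\Delta\phi=\Delta h=0$ because $h$ is harmonic; the right-hand side is just $\oint_S(D_m h)(x)\,dS$, and the formula collapses to $\oint_S(D_m h)(x)\,dS=0$. (Equivalently, this is Gauss--Ostrogradsky applied to the vector field $\partial h$, whose divergence is $\Delta h=0$.)

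For (b), fix $y$ in the interior of $V$ with $\overline{B(y,r)}\subseteq V$, and for $0<\rho\le r$ write $\Sigma_\rho=\partial B(y,\rho)$. Passing to the unit sphere $S(0,1)$ of measure $\sigma_1$, the surface element scales as $dS_{\Sigma_\rho}=\rho^{n-1}\,dS_{S(0,1)}$, so the spherical mean can be written
\[
g(\rho)=\frac{1}{\sigma_1\rho^{n-1}}\oint_{\Sigma_\rho}h(x)\,dS=\frac{1}{\sigma_1}\oint_{S(0,1)}h(y+\rho\omega)\,dS(\omega),\qquad 0<\rho\le r .
\]
Since $h\in\mathcal{C}^2$, I may differentiate under the integral sign and use that the outward unit normal of $\Sigma_\rho$ at $y+\rho\omega$ is $\omega$ itself, so $\langle\partial h(y+\rho\omega),\omega\rangle=(D_m h)(y+\rho\omega)$; rescaling the surface element back gives
\[
g'(\rho)=\frac{1}{\sigma_1}\oint_{S(0,1)}\langle\partial h(y+\rho\omega),\omega\rangle\,dS(\omega)=\frac{1}{\sigma_1\rho^{n-1}}\oint_{\Sigma_\rho}(D_m h)(x)\,dS=0,
\]
the last equality by part (a) applied to the ball $B(y,\rho)$. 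Hence $g$ is constant on $(0,r]$. By continuity of $h$ at $y$ one has $g(\rho)\to h(y)$ as $\rho\downarrow 0$, so $g(r)=h(y)$; since $|\Sigma_r|=\sigma_1 r^{n-1}$, this is exactly $h(y)=\frac{1}{|\Sigma_r|}\oint_{\Sigma_r}h(x)\,dS$.

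For (c), fix any $y\in\mathbb{R}^n$ and apply (b) with $V=\mathbb{R}^n$ on the sphere $\Sigma_r=\partial B(y,r)$ for arbitrary $r>0$: from $h(y)=\frac{1}{|\Sigma_r|}\oint_{\Sigma_r}h(x)\,dS$ we get $|h(y)|\le\max_{|x-y|=r}|h(x)|\le\sup_{|x|\ge r-|y|}|h(x)|$, and letting $r\to\infty$ with $\lim_{|x|\to\infty}h(x)=0$ forces $h(y)=0$; as $y$ was arbitrary, $h\equiv 0$. The only delicate step is the computation in (b): one must justify the change of variables to the unit sphere and differentiation under the integral sign — routine given $h\in\mathcal{C}^2$ and $\overline{B(y,r)}\subseteq V$ — and keep the factor $\rho^{n-1}$ from the surface measure straight so that part (a) can be invoked at each radius. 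Parts (a) and (c) are then immediate.
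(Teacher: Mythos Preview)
Your proof is correct. Parts (a) and (c) match the paper's argument essentially verbatim; the only difference is that the paper cites the second Green formula \eqref{3:13:2.3} for (a) while you cite the first \eqref{3:13:2.2}, but with $\psi\equiv 1$ the two collapse to the same identity.

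For (b) you take a genuinely different route. The paper applies the second Green formula on the annulus $B(y,r)\setminus B(y,\rho)$ with $\varphi=h$ and the fundamental solution $\psi(x)=|x-y|^{2-n}$; since both are harmonic on the annulus the volume integral vanishes, and after invoking (a) to kill the $\partial h/\partial r$ terms one obtains directly that $\frac{1}{|\Sigma_r|}\oint_{\Sigma_r}h\,dS=\frac{1}{|\Sigma_\rho|}\oint_{\Sigma_\rho}h\,dS$, then lets $\rho\to 0$. Your argument instead parametrizes the spherical mean $g(\rho)$ over the unit sphere, differentiates in $\rho$, and uses (a) at each radius to get $g'(\rho)=0$. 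Your approach is more elementary --- it needs no fundamental solution and works uniformly for all $n\geq 2$, whereas the paper's choice $|x-y|^{2-n}$ tacitly requires $n\geq 3$. The paper's approach, on the other hand, rehearses exactly the annulus-plus-fundamental-solution machinery that is reused immediately afterward to derive the Poisson kernel, so it is better integrated with what follows.
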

\begin{proof}
$(a)$: Use \eqref{3:13:2.3} for $\phi=h,\,\psi=1.$\\
$(b)$: Let $B(y,\rho)\subset B(y,r)$ be another ball, $\rho < r$,
and denote $\sum_r=\partial B(y,\rho)$ the corresponding boundaries.
Define $V_{r\rho}=W-Q$ and notice that the boundary $S=\partial
V_{r\rho}=\sum_r \coprod \sum_{\rho}$, where $W=B(y,r)$,
$Q=B(y,\rho)$. The normal derivative $D_m=<m,\partial>$ is oriented
outside of the domain $V_{r\rho}$ and it implies
$$D_m=\partial_r\text{ on }\sum_r$$
\[
D_m=-\partial_r\text{ on }\sum_{\rho}
\]
Take $V=V_{r\rho},\,\varphi=h,\,\psi=\frac{1}{|x-y|^{n-2}} $ in the
second Green formula \index{Green!formula}(\eqref{3:13:2.3}). It is known that
$\psi(x),\,x\in V$, is a harmonic function\index{Harmonic function } on $V$ (see $x\neq y$)
and as a consequence the left hand side in (\eqref{3:13:2.3}) vanishes.
The corresponding right hand side in (\eqref{3:13:2.3}) can be written
as a difference on $\sum_r$ and $\sum_{\rho}$ and the equation
(\eqref{3:13:2.3})lead us to
\[
\oint_{\sum_r}[\frac{1}{r^{n-2}}\frac{\partial h}{\partial r}-h
\frac{\partial}{\partial
r}(\frac{1}{r^{n-2}})]dS=\oint_{\sum_\rho}[\frac{1}{\rho^{n-2}}\frac{\partial
h}{\partial r}-h \frac{\partial}{\partial r}(\frac{1}{r^{n-2}})]dS.
\]
Performing the elementary derivatives we get
\[
\frac{1}{r^{n-2}}\oint_{\sum_r}\frac{\partial h}{\partial
r}dS+\frac{n-2}{r^{n-1}}\oint_{\sum_r}h\,
dS=\frac{1}{\rho^{n-2}}\oint_{\sum_{\rho}}\frac{\partial h}{\partial
r}dS+\frac{n-2}{\rho^{n-1}}\oint_{\sum_{\rho}}h\,dS.
\]
The first term in both sides of the last equality vanishes (see
$(a)$). Let $|S_1|$ be the area of the sphere $S(0,1)\subseteq
\mathbb{R}^n$ and dividing by $|S_1|$. We get
$\frac{1}{r^{n-1}|S_1|}\oint_{\sum_r}h\,
dS=\frac{1}{\rho^{n-1}|S_1|}\oint_{\sum_\rho}h\,dS$, where
$r^{n-1}|S_1|=|\sum_r|,\,\,\rho^{n-1}|S_1|=|\sum_\rho|$. Using the
continuity property of $h$ and letting $\rho\rightarrow 0$ from the
last equality we get the conclusion $(b)$.
$(c):$ Using the conclusion of $(b)$ for $r\rightarrow \infty$, we
get conclusion $(c)$.
\end{proof}
\subsection{Recovering a Harmonic Function Inside a Ball I}
\subsubsection{ By Using its Boundary Values}
The arguments for this conclusion are based on the second Green
formula\index{Green!formula} written on a domain $V=W-Q$, where $W=B(0,r)$,
$Q=B(y,\rho)\subseteq B(0,r).$ This time, we use the following
harmonic functions\index{Harmonic functions } on $V(\partial V=S=\sum_r\coprod \sum_\rho)$
\begin{equation}\label{3:13:3.1}
\varphi=h(x),\,\psi=\psi(x)=\frac{1}{|x-y|^{n-2}}-\frac{r^{n-2}}{|y|^{n-2}}\frac{1}{|x-y*|^{n-2}}
\end{equation}
where $y*=\frac{r^2}{|y|^2}\, y$, $\psi(x)=0$, $x\in \sum_r=\partial
B(0,r)$ and $\psi_0(x)=\frac{r^{n-2}}{|y|^{n-2}}
\frac{1}{|x-y*|^{n-2}}$ is harmonic on $W$. The second Green formula
becomes
\begin{equation}\label{3:13:3.2}
 -\oint_{\sum_r}h(x)\frac{\partial \psi}{\partial
r}dS=\oint_{\sum_{\rho}}[\psi \frac{\partial h}{\partial
\rho}(x)-h\frac{\partial \psi}{\partial \rho}(x)]dS
\end{equation}
where $\frac{\partial}{\partial \rho}$ stands for the derivative
following the direction of the radius $[y,x],\,x\in \sum_{\rho}.$ A
direct computation lead us to
\begin{equation}\label{3:13:3.3}
{\begin{cases}|\oint_{\sum_\rho}\psi \frac{\partial
h}{\partial \rho}(x)dS|\leq c_1\frac{1}{\rho^{n-2}}c_2
\rho^{n-1}\rightarrow 0\,
\text{for}\,\rho\rightarrow 0\\
|\oint_{\sum_\rho}h \frac{\partial \psi_0(x)}{\partial \rho}dS|\leq
c\rho^{n-1}\rightarrow 0\,\text{for}\, \rho\rightarrow 0\end{cases}}
\end{equation}
and
\begin{equation}\label{3:13:3.4}
\oint_{\sum_{\rho}}
h\frac{\partial}{\partial\rho}\frac{1}{\rho^
{n-2}}ds=(n-2)\oint_{\sum_{\rho}} h \frac{1}{\rho^{n-1}}ds=\frac{(n-2)\mid
S_1 \mid }{\mid \sum_\rho \mid}
\end{equation}
$$\oint_{\sum_{\rho}} h
\,ds\longrightarrow(n-2)
\mid S_1 \mid h(y),\, for
\rho\longrightarrow 0
$$
Letting $\rho\longrightarrow 0$ from \eqref{3:13:3.2} we get
\begin{equation}\label{3:13:3.5}
 h(y)=\frac{-1}{(n-2)\mid S_1 \mid}\oint_\sum \,
h(x)\frac{\partial \psi}{\partial r}(x)\, ds
\end{equation}
It remains to compute $\frac{\partial \psi (x)}{\partial r}$ on
$x\in \sum_r$ and it is easily seen that $see \, \psi(x)=const \,
x\in\sum_r$
\begin{equation}\label{3:13:3.6}
 \frac{\partial\psi}{\partial r}(x)=-\mid \, grad\,
\psi(x) \mid
\end{equation}
By definition, $y^*$ is taken such that
\begin{equation}\label{3:13:3.7}
\frac{\mid x-y \mid^2}{\mid x-y^* \mid
^2}=\frac{r^2-2<x,y>+\mid y \mid^2}{r^2-2<x,y>\frac{r^2}{\mid y
\mid^2}+\frac{r^4}{\mid y \mid^2}}=\frac{\mid y \mid^2}{r^2}=const
\end{equation}
for any $x\in \sum_r \, $ and the direct computation of $\mid grad
\psi(x) \mid$ shows that
\begin{equation}\label{3:13:3.8}
\mid grad \,\psi(x) \mid=\frac{(n-2)(r^2-\mid y
\mid^2)}{r\mid x-y \mid^n}\, , \mid x \mid=r
\end{equation}
provided \eqref{3:13:3.7}  is used. Using \eqref{3:13:3.6} and \eqref{3:13:3.8}  int \eqref{3:13:3.5}
we get the following Poisson formula
\begin{equation}\label{3:13:3.9}
  h(y)=\frac{r^2-\mid y \mid^2}{\mid S_1 \mid r}\oint_\sum
 \frac{h(x)}{\mid x-y \mid^n}\, ds=\oint_\sum P(x,y)\, h(x)\,ds ,
\end{equation}
where Poisson kernel
 \begin{equation}\label{3:13:3.10}
 P(x,y)=\frac{r^2-\mid y \mid^2}{\mid S_1 \mid r}\frac{1}{\mid x-y
 \mid^n}>0 \, \, for\, \,  any\, \,  \,  y \in \, B(0,r)
\end{equation}
\begin{theorem}
   Let $\lambda(x):\Sigma\subseteq\mathbb{R}^n\rightarrow \mathbb{R}$\,be a continuous function where $\Sigma=\partial W$\, is the boundary of a fixed ball $W\subset\mathbb{R}^n$. Then there exists a unique continuous function \,$h(y):W\longrightarrow R$\,which is harmonic for $y\in \,int \, W$\, and coincides with$\lambda(x)$\,on the   boundary $\Sigma$.
\end{theorem}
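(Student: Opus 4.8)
The plan is to establish existence via the Poisson integral formula derived just above and uniqueness via the maximum principle for harmonic functions already proved in this chapter. For uniqueness, suppose $h_1,h_2$ are continuous on $W$, harmonic in the interior, with $h_1|_{\Sigma}=h_2|_{\Sigma}=\lambda$. Then $h=h_1-h_2$ is continuous on $W$, harmonic inside $W$, and vanishes on $\Sigma=\partial W$; the maximum principle for harmonic functions (equivalently, part $(a)$ of the theorem on harmonic functions proved above, using the first Green formula \eqref{3:13:2.2}) forces $h\equiv 0$, hence $h_1=h_2$.

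For existence I would define $h$ on $W$ by
\[
h(y)=\oint_{\Sigma}P(x,y)\lambda(x)\,dS\quad (y\in\mathrm{int}\,W),\qquad h(x)=\lambda(x)\quad (x\in\Sigma),
\]
where $P(x,y)$ is the Poisson kernel of \eqref{3:13:3.10}. First I would verify that $h$ is harmonic in $\mathrm{int}\,W$: for each fixed $x\in\Sigma$ the map $y\mapsto P(x,y)$ is smooth on the open ball and satisfies $\Delta_y P(x,y)=0$ by a direct computation, since $P$ is, up to the multiplicative and reflected-pole adjustments introduced in \eqref{3:13:3.1}, assembled from the harmonic fundamental solution $|x-y|^{2-n}$ and its companion with pole at $y^{*}$; differentiating under the integral sign on compact subsets of $\mathrm{int}\,W$ then gives $\Delta h=0$ there. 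Next I would record two elementary facts: $P(x,y)>0$ for all $y\in\mathrm{int}\,W$ (already noted in \eqref{3:13:3.10}), and the normalization $\oint_{\Sigma}P(x,y)\,dS=1$ for every $y\in\mathrm{int}\,W$, obtained by applying the Poisson representation \eqref{3:13:3.9} to the constant harmonic function $h\equiv 1$.

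The crux is continuity of $h$ up to the boundary, namely $\lim_{y\to x_0}h(y)=\lambda(x_0)$ for each $x_0\in\Sigma$, with $y$ ranging over $\mathrm{int}\,W$. Fix $\varepsilon>0$ and, by continuity of $\lambda$, choose $\delta>0$ with $|\lambda(x)-\lambda(x_0)|<\varepsilon$ whenever $x\in\Sigma$, $|x-x_0|<\delta$. Using the normalization write
\[
h(y)-\lambda(x_0)=\oint_{\Sigma}P(x,y)\,[\lambda(x)-\lambda(x_0)]\,dS,
\]
and split $\Sigma=\Sigma_1\cup\Sigma_2$ with $\Sigma_1=\{x\in\Sigma:|x-x_0|<\delta\}$ and $\Sigma_2=\Sigma\setminus\Sigma_1$. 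On $\Sigma_1$ the integrand is dominated by $\varepsilon\,P(x,y)$, whose integral is at most $\varepsilon$. On $\Sigma_2$, once $|y-x_0|<\delta/2$ the distance $|x-y|$ is bounded below by a positive constant, while the numerator $r^2-|y|^2\to 0$ as $y\to x_0$; hence $P(x,y)\to 0$ uniformly on $\Sigma_2$, and since $\lambda$ is bounded the $\Sigma_2$-contribution is $<\varepsilon$ for $y$ close enough to $x_0$. Therefore $|h(y)-\lambda(x_0)|<2\varepsilon$ for such $y$, which yields the boundary limit and shows $h\in\mathcal{C}(W)$.

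I expect this last step — the uniform decay of $P(x,y)$ on $\Sigma_2$ and its interplay with the normalization $\oint_\Sigma P\,dS=1$ — to be the main obstacle; the harmonicity in the interior, positivity, and the value of $\oint_\Sigma P\,dS$ are routine once the explicit kernel \eqref{3:13:3.10} is available, and uniqueness is immediate from the maximum principle established earlier.
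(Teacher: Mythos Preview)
Your proof is correct and follows essentially the same approach as the paper: Poisson integral for existence, harmonicity of the kernel in $y$, the normalization $\oint_{\Sigma}P\,dS=1$ together with a near/far split for the boundary limit, and the maximum principle for uniqueness. The only place the paper is more explicit is the harmonicity step, where it exhibits the algebraic identity $\varphi(y)-\tfrac{2}{n-2}\sum_i x_i\,\partial_{y_i}\varphi(y)=-|S_1|\,r\,P(x,y)$ with $\varphi(y)=|x-y|^{2-n}$, thereby writing $P(x,y)$ as a linear combination of a harmonic function and its $y$-derivatives; your appeal to a direct computation is equivalent but less concrete.
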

\begin{proof}
Let $ r>0 $\, be the radius of the ball$ W $\,and for any $ y \in \,int \,W  $\,define $ h(y)$\,as follows.
\begin{equation}\label{3:13:3.11}
h(y)=\oint_\sum P(x,y)\lambda(x)ds
\end{equation}
where $P(x,y)=\frac{r^2-\mid y\mid ^2}{\mid S_1\mid r}\frac{1}{\mid x-y\mid^n}$
is the Poisson kernel $ (see \eqref{3:13:3.10})$
We know that both  $\varphi (y)=\frac{1}{\mid x-y \mid^{n-2}},\, y\in \, int\, W$
 and its derivatives $\frac { \partial\varphi(y)}{\varphi y_i},\, \,   y \in \, int W,i\in {1,2\ldots,n} $
 are harmonic function\index{Harmonic functions }.
 As a consequence the following linear combination

 \begin{eqnarray}\label{3:13:3.12}
   \varphi(y)-\frac{2}{n-2}\sum_{i=1}^n x_i\frac{ \partial \varphi (y) }{\partial y_i} &=& \frac{1}{\mid x-y\mid^{n-2}}+2 \sum_{i=1}^n\frac{x_i(y_i-x_i)}{\mid x-y \mid^n} \nonumber\\
   &=& \frac{1}{{\mid x-y \mid}^n}(\sum_{i=1}^n (x_i-y_i)^2+2\sum_{i=1}^n x_iy_i-2\sum_{i=1}^n x_i^2) \nonumber\\
   &=& {\frac{1}{\mid x-y \mid^n}} \sum_{i=1}^n(y_i^2-x_i^2) \nonumber\\
   &=& \frac{\mid y^2 \mid -r^2}{\mid x-y\mid ^n}=-\mid S_i\mid P(x,y)
 \end{eqnarray}
is a harmonic function\index{Harmonic functions }. In conclusion, $h(y),\, y\in \,int \,W,$ defined in (\eqref{3:13:3.11}is a harmonic function. On the other hand, for a sequence $\{y_m\}_{m\geq 1}\subset \ int W ,\mathop{lim}\limits_{m\longrightarrow \infty} y_m=\chi_0 \in \Sigma$. We get
\begin{equation}\label{3:13:3.13}
 \oint_\Sigma P(x,y_m)dS=1   (see\eqref{3:13:3.9}, for h\equiv 1)
 \end{equation}
\begin{equation}\label{3:13:3.14}
\lim _{m\longrightarrow\infty} \int_\Sigma P(x,y_m)dS=0, where \Sigma'=\partial B(x_0,\delta)\cap\Sigma,\delta>0,
\end{equation}
and $\mid x-y_m\mid\geq c> 0,m \geq 1,$ are used.
\end{proof}
Using \eqref{3:13:3.13} and \eqref{3:13:3.14} we see easily that
\begin{equation}\label{3:13:3.15}
\lim _{m\rightarrow \infty} \oint _\sum P(x,y_m)\lambda(x)ds=\lambda(x_0)
\end{equation}
and $\{h(y): y  \in  W\}$  is a continuous function satisfying
\begin{equation}\label{3:13:3.16}
h(x) = \lambda(x), x\in\sum
\end{equation}
\begin{equation}\label{3:13:3.17}
 \triangle h(y) = 0 , (for all) y \in int W
\end{equation}
A continuous function satisfying \eqref{3:13:3.16} and \eqref{3:13:3.17} is unique provided the maximum principle for laplace equation\index{Laplace equation} is used.
\subsection{Recovering a Harmonic Function Inside a Ball II}
\subsubsection{By Using its Normal Derivative on the Boundary}
Let $h(y) : W \subseteq\mathbb{R}^n \rightarrow R $ be a harmonic function and $W=B(z,r)\subseteq \mathbb{R}^n$ is a ball. Then
\begin{equation}\label{3:14.1}
\phi(y) = \sum_{i=1}^n y_i \frac{\partial h(y)}{ \partial y_i } \,and\, \frac{\partial h(y)}{\partial y_i}, i \in {1,2,....,n}
\end{equation}
are harmonic functions on the ball W;
\begin{equation}\label{3:14.2}
\varphi(x) = \varphi \frac{\partial h(x)}{\partial\rho},\hbox{ for any x in }\Sigma = \partial W
\end{equation}
where
$\frac{\partial h(x)}{\partial\rho}$
is the normal derivative of h on $\Sigma.$
The property \eqref{3:14.1} is obtained by a direct computation the property \eqref{3:14.2} uses the following argument.
Without restricting generality, take
$z = 0, r = 1$, and for $y = \rho.x, |x| = 1, 0\leq\rho\leq1$, rewrite h(y) in \eqref{3:14.1} as follows
\begin{theorem}
Let $\lambda(x):\sum \subseteq \mathbb{R}^{n}\rightarrow \mathbb{R}$
be a continuous function satisfying
$\oint\limits_{\sum}\lambda(x)dS=0$, where $\sum=\partial W$ is the
boundary of a ball $W=B(z,r)\subseteq \mathbb{R}^{n}$ .Then there
exists a continuous function $h(y):W\rightarrow \mathbb{R}$ which is
harmonic on $y\in int W$ and admitting the normal derivative
$\frac{\partial h(\rho x)}{\partial \rho}\ (x\in \sum, 0<\rho
\leq1)$ which equals $\lambda(x)$ for $\rho=1$. Any other function
$\varphi$ with these properties verifies
\begin{equation}\label{3:14.3}
\varphi(y)-h(y)=const\ (\forall)\ y\in W.
\end{equation}
\end{theorem}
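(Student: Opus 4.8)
The plan is to reduce the Neumann problem to the Dirichlet problem solved in the previous theorem, exploiting the first order operator $R=\sum_{i=1}^{n}y_{i}\partial_{i}$ together with the two facts recorded at the start of this subsection: if $g$ is harmonic on $W$ then $\phi=Rg=\sum_{i}y_{i}\partial_{i}g$ is harmonic on $W$, and on $\Sigma=\partial W$ one has $\phi(x)=r\,\partial_{\rho}g(x)$, where $r$ is the radius of $W=B(z,r)$ and $\partial_{\rho}$ denotes the radial derivative. After a translation I may assume $z=0$.

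First I would construct $\phi$ as a Dirichlet solution: let $\phi(y):W\to\mathbb{R}$ be the continuous function, harmonic on $\mathrm{int}\,W$, with boundary values $x\mapsto r\lambda(x)$ on $\Sigma$; such $\phi$ exists by the preceding theorem. By the mean value property of harmonic functions (established earlier in this Appendix),
\[
\phi(0)=\frac{1}{|\Sigma|}\oint_{\Sigma}r\lambda(x)\,dS=\frac{r}{|\Sigma|}\oint_{\Sigma}\lambda\,dS=0,
\]
so the compatibility hypothesis $\oint_{\Sigma}\lambda\,dS=0$ is exactly what forces $\phi$ to vanish at the centre. Then I would set
\[
h(y)=c_{0}+\int_{0}^{1}\frac{\phi(sy)}{s}\,ds,\qquad y\in W,
\]
with $c_{0}\in\mathbb{R}$ arbitrary; the integral converges near $s=0$ because $\phi(0)=0$ and $\phi$ is real-analytic, hence Lipschitz, near $0$. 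Writing $y=\rho\omega$ with $|\omega|=1$ and substituting $t=s\rho$ gives $h(\rho\omega)=c_{0}+\int_{0}^{\rho}\phi(t\omega)t^{-1}\,dt$, whence $\rho\,\partial_{\rho}h(\rho\omega)=\phi(\rho\omega)$, i.e. $Rh=\phi$ on $\mathrm{int}\,W$.

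Next I would verify the three required properties. Harmonicity: since $\Delta_{y}[\phi(sy)]=s^{2}(\Delta\phi)(sy)=0$ on $\mathrm{int}\,W$, differentiating under the integral sign is legitimate because $\partial^{\alpha}_{y}[\phi(sy)/s]=s^{|\alpha|-1}(\partial^{\alpha}\phi)(sy)$ stays bounded for $s\in[0,1]$ and $y$ in compact subsets of $\mathrm{int}\,W$ when $|\alpha|\le 2$ (for $|\alpha|=0$ one uses $\phi(0)=0$), so $\Delta h=\int_{0}^{1}s(\Delta\phi)(sy)\,ds=0$. Continuity up to $\Sigma$: for $y\to x_{0}\in\Sigma$ the integrand converges pointwise and is dominated (near $s=0$ by a constant from the Lipschitz bound on $\phi$ at $0$, away from $s=0$ by $\|\phi\|_{\infty}/s_{0}$), so $h$ extends continuously to $\overline{W}$. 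Boundary normal derivative: from $\partial_{\rho}h(\rho\omega)=\phi(\rho\omega)/\rho$ and the radial-limit behaviour of the Poisson integral used in the previous theorem, $\phi(\rho\omega)\to r\lambda(r\omega)$ as $\rho\to r^{-}$, hence $\partial_{\rho}h(\rho\omega)\to\lambda(r\omega)$, the prescribed condition. For uniqueness, if $\varphi$ has the same properties put $g=\varphi-h$: then $g$ is harmonic on $\mathrm{int}\,W$, continuous on $\overline{W}$, with $\partial_{\rho}g\to 0$ on $\Sigma$; by the first recorded fact $\psi:=Rg$ is harmonic on $\mathrm{int}\,W$, and $\psi(x)=r\,\partial_{\rho}g(x)=0$ on $\Sigma$, so the maximum principle for the Laplace equation (Problem $P_{1}$) gives $\psi\equiv 0$; consequently $\frac{d}{d\rho}g(\rho\omega)=\rho^{-1}\psi(\rho\omega)=0$, so $g(\rho\omega)=g(0)$ and $g$ is constant on $W$, i.e. $\varphi-h=\mathrm{const}$. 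An explicit kernel could if desired be obtained by inserting the Poisson formula for $\phi$ into the formula for $h$, but this abstract reduction suffices. The step I expect to be the main obstacle is the boundary regularity: the normal derivative exists only as a radial limit and $\phi$, $h$ are not $C^{1}(\overline{W})$, so Green's formula cannot be applied directly on $\overline{W}$; passing to $\psi=Rg$ (equivalently, exhausting $W$ by concentric balls $B(0,r-\varepsilon)$ and letting $\varepsilon\to 0$) is precisely what circumvents this difficulty, and the same care is needed when justifying interchange of limit and integral in the existence part.
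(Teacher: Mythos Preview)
Your approach is essentially the same as the paper's: both solve the auxiliary Dirichlet problem first (the paper calls the solution $\varphi$, with boundary data $\lambda$ after normalizing $r=1$; you call it $\phi$ with boundary data $r\lambda$), observe that the compatibility condition forces $\varphi(0)=0$, and then define $h$ by the radial integral $h(\rho x)=\int_0^\rho \varphi(\tau x)\tau^{-1}\,d\tau+\text{const}$ (your $\int_0^1\phi(sy)s^{-1}\,ds$ after the substitution $t=s\rho$). The only technical difference is in checking that $h$ is harmonic: the paper verifies this by showing $h$ equals its own Poisson integral, i.e.\ $\oint_\Sigma P(s,y)h(s)\,dS=h(y)$, interchanging the order of integration and using that each slice $\varphi(\tau\,\cdot)/\tau$ is already harmonic; you instead push $\Delta$ under the integral sign directly. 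Both are valid and of comparable length. You also supply the uniqueness argument via $\psi=Rg$ and the maximum principle, which the paper's written proof does not spell out; your awareness that the boundary regularity of $\psi$ is the delicate point is correct.
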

\begin{proof}
The existence of a function $h$ uses the Poisson kernel $P(x,y)$
given in $(\S 3.10)$,
\begin{equation}\label{3:14.4}
\varphi(y)=\oint\limits_{\sum}P(x,y)\lambda(x) dS.
\end{equation}
Using $(\S 3.16,\S 3.17)$ we know that the function $\{\varphi(y),
y\in W\}$ is continuous and
\begin{equation}\label{3:14.5}
\left\{
  \begin{array}{ll}
   \varphi(x)=\lambda(x), \indent x\in\sum \\
 \Delta \varphi(y)=0, \hbox{ y in int W (varphi is harmonic on int W)}
  \end{array}
\right.
\end{equation}
In  addition, using $\oint\limits_{\sum}\lambda(x)dS=0$ we get
\begin{equation}\label{3:14.6}
\varphi(0)=\oint\limits_{\sum}\lambda(x)dS=0
\end{equation}
Define
\begin{equation}\label{3:14.7}
 h(y)=h(\rho
x)=\int_{0}^{\rho}\frac{\varphi(\tau x)}{\tau}d \tau +h(0)
\end{equation}
where $\varphi(y)$ is continuously derivable at $y=0,\ \varphi(0)=0
$ and $h(0)=const$ (arbitrarily fixed). We shall show that
$h(y):W\rightarrow \mathbb{R}$ is a continuous function satisfying
\begin{equation}\label{3:14.8}
 \Delta h(y)=0,\ y\in int\,W \ (h \
\hbox{is harmonic in}\ int\,W)
\end{equation}
\begin{equation}\label{3:14.9}
\frac{\partial h(\rho x)}{\partial
\rho}\bigg|_{\rho =1}=\frac{\partial h(x)}{\partial
\rho}=\varphi(x)=\lambda (x),\ x\in \sum
\end{equation}
For simplicity, take $h(0)=0$, and using \eqref{3:14.7} for $\rho=1$ we compute
{\small
\begin{equation}\label{3:14.10} \oint\limits_{\sum}P(s,y)h(s) d S
=\oint\limits_{\sum}P(s,y)\Big\{\int_{0}^{1}\frac{\varphi(\tau
s)}{\tau}d \tau \Big\}d
S\int_{0}^{1}\bigg\{\oint\limits_{\sum}P(s,y)\frac{\varphi(\tau
s)}{\tau}d S\bigg\}d\tau
\end{equation}
}
By the definition $\frac{\varphi(\tau s)}{\tau}, \tau\in [0,1]$
fixed, is a harmonic function satisfying \eqref{3:14.5} and \eqref{3:14.10} is rewritten
as
\begin{equation}\label{3:14.11}
 \oint\limits_{\sum}P(s,y)h(s) d S
=\int_{0}^{1}\frac{\varphi(\tau s)}{\tau}d \tau
=\int_{0}^{\rho}\frac{\varphi(\xi x)}{\xi}d \xi =h(y).
\end{equation}
where
\begin{equation}
\bigg\{\frac{\varphi(\tau y)}{\tau}, \tau\in [0,1]\bigg\}\rightarrow
\bigg\{\frac{\varphi(\xi y)}{\xi}, \xi\in [0,\rho]\bigg\}
\end{equation}
and $\xi= \rho \tau$ are used. The equation \eqref{3:14.11} stands for \eqref{3:14.8} and
\eqref{3:14.9} is obtained from \eqref{3:14.7} by a direct derivation.
\end{proof}
\section*{Bibliographical Comments}
Mainly, it was written using the references \cite{5} and \cite{10}. The last section 3.9 follows the same presentation as in the reference \cite{12}. In the appendices are
are used the presentation contained in the references \cite{4} and \cite{9}.
\chapter[Stochastic Differential Equations]{Stochastic Differential Equations; \index{Stochastic!Differential Equations}\index{Differential!equation}Approximation and Stochastic Rule of Derivations}\index{Stochastic! Rule of Derivations}
\section[Properties of continuous semimartingales]{Properties of continuous semimartingales \index{semimartingale}and stochastic integrals}\index{Stochastic!integral}
Let a complete probability space $ \{ \Omega, F, P \}$ be given. Assume that a
family of sub $\sigma$-fields $F_t \subseteq F, t \in [0, T]$, is given such
that the following properties are fulfilled:

i) Each $F_t$ contains all null sets of $F$.

ii) $(F_t)$ is increasing, i.e. $F_t \subseteq F_s$ if $ t \geq s$.

iii) $(F_t)$ is right continuous, i.e. $\mathop{\bigcap}\limits_{\varepsilon >
0} F_{t + \varepsilon} = F_t$ for any $t<T$.
Let $X(t), t \in [0, T]$ be a measurable stochastic process \index{Stochastic! process}with values in $R$.
We will assume, unless otherwise mentioned, that it is $F_t$-adapted, i.e.,
$X(t)$ is $F_t$-measurable for any $t \in [0, T]$.
The process $X(t)$ is called continuous if $X(t; \omega)$ is a continuous
function of $t$ for almost all $\omega \in \Omega$. Let $L_c$ be the linear
space consisting of all continuous stochastic processes. We introduce the metric
$\rho$ by $|X-Y|= \rho (X,Y)= (E[\mathop{\sup}\limits_t | X(t) -Y(t)|^2 /( 1+
\mathop{\sup}\limits_t | X(t)-Y(t)|^2])^{1/2}$. It is equivalent to the topology
of the uniform convergence in probability. A sequence $\{X^n\}$ of $L_c$ is a
Cauchy sequence \index{Cauchy!sequence}iff for any $\varepsilon >0$, $P (\mathop{\sup}\limits_t |X^n
(t) - X^m (t)|>\varepsilon ) \to 0$ if $n, m \to \infty$. Obviously $L_c$ is a
complete metric space.
We introduce the norm $|| \cdot ||$ by $||X|| = (E [\mathop{\sup}\limits_t
|X (t)|^2])^{1/2}$ and denote by $L^2_c$ the set of all elements in $L_c$ with
finite norms. We may say that the topology of $L_c^2$ is the uniform convergence
in $L^2$. Since $\rho (X, 0) \leq ||X||$, the topology by $||\,||$ is stronger
than that by $\rho$ and it is easy to see that $L_c^2$ is a dense subset of $L_c$.\\
\begin{definition}
Let $X(t), t \in [0, T]$, be a continuous $F_t$-adapted process.
(i) It is called a martingale if $E|X(t)| < \infty$ for any $t$ and satisfies
$E[X(t)/F_s]=X(s)$ for any $t>s$.

(ii) It is called a local martingale if there exists an increasing sequence of
stopping times $(T_n)$ such that $T_n  \uparrow T$ and each stopped process
$X_{(t)}^{T_n} \equiv X (t \wedge T_n)$ is a martingale\\
(iii) It is called an increasing process if $X(t; \omega)$ is an increasing
function of $t$ almost surely (a.s.) with respect to $\omega \in \Omega$, i.e.
there is a $P$-null set $N \subseteq \Omega$ such that $X (t, \omega)$, $t \in
[0,T]$ is an increasing function for any $\omega \in \Omega \setminus N$.

(iv) It is called a process of bounded variation if it is written as the
difference of two increasing processes.

(v) It is called a semi-martingale if it is written as the sum of a local
martingale and a process of bounded variation.

We will quote two classical results of Doob concerning martingales \mbox{without}
giving proofs.(see A. Friedmann)
\end{definition}
\begin{theorem}
{\it Let $X(t), t \in [0,T]$ be a martingale.}

(i) {\bf Optional sampling theorem}. {\it Let $S$ and $\mathcal{U}$ be
stopping times with values in $[0,T]$. Then $X(S)$ is integrable and satisfies
$E[X(S) / F(\mathcal{U})]= X(S \wedge \mathcal{U})$, where $F(\mathcal{U})= \{A
\in F_T : A \cap \{ \mathcal {U} \leq t \}\} \in F(t)$ for any $t \in [0,T]\}$}

(ii) {\bf Inequality}. {\it Suppose $E|X(T)|^p < \infty$ with $p>1$. Then }
\[
E \mathop{\sup}\limits_t |X (t)|^p \leq q^p E|X(T)|^p
\]
{\it where}
\[
\frac{1}{p} + \frac{1}{q}=1
\]
\end{theorem}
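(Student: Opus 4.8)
The plan is to prove (i) first and then deduce (ii) from it. For the \textbf{optional sampling theorem} I would begin with the discrete case: if a stopping time $\tau$ takes only finitely many values $t_1<\cdots<t_k$ in $[0,T]$, then $X(\tau)$ is integrable (it is dominated by $\sum_j|X(t_j)|$), and for any stopping time $\sigma$ also taking finitely many values one checks directly, using the definition of $F(\sigma)$ and the tower property of conditional expectation, that $E[X(\tau)\mathbf{1}_A]=E[X(\tau\wedge\sigma)\mathbf{1}_A]$ for every $A\in F(\sigma)$; this is exactly $E[X(\tau)/F(\sigma)]=X(\tau\wedge\sigma)$. The structural fact used throughout is that $X(s)=E[X(T)/F_s]$, so the family $\{X(t):t\in[0,T]\}$ is uniformly integrable. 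For general $S,\mathcal{U}$ with values in $[0,T]$ I would approximate from above by dyadic roundings $S_n\downarrow S$, $\mathcal{U}_n\downarrow\mathcal{U}$ (each taking finitely many values), apply the discrete identity, and pass to the limit: path-continuity gives $X(S_n)\to X(S)$ a.s., uniform integrability upgrades this to convergence in $L^1$, and $F(\mathcal{U})=\bigcap_n F(\mathcal{U}_n)$ together with right-continuity of $(F_t)$ lets the conditioning pass to the limit, yielding $E[X(S)/F(\mathcal{U})]=X(S\wedge\mathcal{U})$.

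For \textbf{Doob's inequality}, observe that $|X(t)|$ is a nonnegative continuous submartingale by conditional Jensen. Fix $\lambda>0$ and set $T_\lambda=\inf\{t\in[0,T]:|X(t)|\geq\lambda\}$, with $T_\lambda=T$ if this set is empty; by path-continuity and right-continuity of the filtration $T_\lambda$ is a stopping time, and $|X(T_\lambda)|\geq\lambda$ on the event $\{\sup_t|X(t)|\geq\lambda\}$. Applying (i) to $X$ with the bounded stopping times $T_\lambda\leq T$ gives $E[\,|X(T)|\,/\,F(T_\lambda)]\geq|E[X(T)/F(T_\lambda)]|=|X(T_\lambda)|$, hence the weak-type bound $\lambda P(\sup_t|X(t)|\geq\lambda)\leq E[\,|X(T)|\,\mathbf{1}_{\{\sup_t|X(t)|\geq\lambda\}}]$.

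Writing $M=\sup_t|X(t)|$ and truncating $M_N=M\wedge N$ to keep every quantity finite, integrate the last inequality against $p\lambda^{p-2}\,d\lambda$:
\[
E[M_N^p]=\int_0^\infty p\lambda^{p-1}P(M_N\geq\lambda)\,d\lambda\leq\int_0^N p\lambda^{p-2}E[\,|X(T)|\,\mathbf{1}_{\{M\geq\lambda\}}]\,d\lambda .
\]
By Fubini the right-hand side equals $\frac{p}{p-1}E[\,|X(T)|M_N^{p-1}]$, and Hölder with exponents $p$ and $q=p/(p-1)$ bounds it by $q\,(E|X(T)|^p)^{1/p}(EM_N^p)^{1-1/p}$; since $EM_N^p<\infty$ we may divide to obtain $(EM_N^p)^{1/p}\leq q\,(E|X(T)|^p)^{1/p}$, and monotone convergence as $N\to\infty$ gives $E[M^p]\leq q^pE|X(T)|^p$, which is the assertion.

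The main obstacle is the limiting argument in (i): one must verify that the dyadic approximants $S_n$ really are stopping times decreasing to $S$, that $F(\mathcal{U})=\bigcap_n F(\mathcal{U}_n)$ (this is where right-continuity of $(F_t)$ enters), and that uniform integrability of $\{X(t)\}$ — itself a consequence of $X(s)=E[X(T)/F_s]$ — is what converts the a.s.\ limits into $L^1$ limits so that the conditional expectations converge. Once (i) is available, (ii) is the routine maximal-inequality / Fubini / H\"older computation above, the only care needed being the truncation $M_N$ before dividing.
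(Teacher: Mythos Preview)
Your argument is correct and is the standard route to both results. Note, however, that the paper does not prove this theorem at all: it is explicitly quoted as a classical result of Doob, with the reader referred to A.~Friedman's book for a proof. So there is nothing in the paper to compare your approach against; what you have written is essentially the textbook proof one would find in the cited reference.
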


\begin{remark}

\textit {Let $S$ be a stopping time. If $X(t)$ is a martingale the stopped
process $X^S (t) \equiv X (t \wedge S)$ is also a martingale. In fact, by Doob's
optional sampling theorem we have (for $ t \geq s$) $E[X^S (t)| F(s)]=X (t
\wedge S \wedge s) = X(S \wedge s) = X^S (s)$.
Similarly, if $X$ is a local martingale the stopped process $X^S$ is a local
martingale.}
{\it Let $X$ be a local martingale. Then there is an increasing sequence of
stopping times $S_k \uparrow T$ such that each stopped process $X^{S_k}$ is a
bounded martingale. In fact, define $S_k$ by $S_k= \inf \,\{ t>0: |X(t)|
\geq k\} (= T$} if {\it $\{ \cdots \} = \phi )$. Then $S_k \uparrow T$ and it
holds $\mathop{\sup}\limits_t |X^{S_k} (t) |\leq k$, so that each $X^{S_k}$
is a bounded martingale.}
\end{remark}
\begin{remark}
Let $M_c$ be the set of all square integrable martingale, $X(t)$ with $X(0)=0$.
Because of Doob's inequality the norm $||X||$ is finite for any $X \in M_c$.
Hence $M_c$ is a subset of $L_c^2$. We denote $M_c^{\mbox{loc}}$ the set of all
continuous local martingales $X(t)$ such that $X(0)=0$ ;\, it is a subset of
$L_c$.
\end{remark}

\begin{theorem}
{\it  $M_c$ is a closed subspace of $L^2_c\cdot M_c^{\rm{loc}}$
is a closed subspace of $L_c$. Further more, $M_c$ is dense in
$M_c^{\rm{loc}}$.}
\end{theorem}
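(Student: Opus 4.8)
The plan is to establish three facts in sequence: (1) $M_c$ is closed in $L_c^2$; (2) $M_c^{\mathrm{loc}}$ is closed in $L_c$; and (3) $M_c$ is dense in $M_c^{\mathrm{loc}}$. For (1), I would take a sequence $X^n \in M_c$ with $\|X^n - X\| \to 0$ for some $X \in L_c^2$. I first need to check that $X$ is a martingale. Since $\|X^n - X\| \to 0$, in particular $E|X^n(t) - X(t)|^2 \to 0$ for each fixed $t$, so $X^n(t) \to X(t)$ in $L^1$ as well; passing to the limit in the identity $E[X^n(t)\mid F_s] = X^n(s)$ using the $L^1$-contraction property of conditional expectation gives $E[X(t)\mid F_s] = X(s)$. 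Continuity of $X$ and $X(0)=0$ are inherited from the $L_c$-convergence (uniform convergence in probability along a subsequence, hence a.s. along a further subsequence), so $X \in M_c$.

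For (2), let $X^n \in M_c^{\mathrm{loc}}$ with $\rho(X^n, X) \to 0$, $X \in L_c$. The obstacle here is that the localizing sequences of stopping times for the $X^n$ need not be compatible, so I cannot directly localize $X$. The standard remedy is to use a diagonal/truncation argument: for the limit process $X$, define $S_k = \inf\{t > 0 : |X(t)| \geq k\}$ (with $S_k = T$ if the set is empty), which gives $S_k \uparrow T$ since $X$ is continuous. I then want to show each stopped process $X^{S_k}$ is a martingale. For this I would also stop the approximants at their own localizing times and at $S_k$, show that the doubly-stopped $X^n$ are bounded martingales (by Remark, stopping a local martingale gives a local martingale, and the bound $|X(\cdot \wedge S_k)| \le k$ plus a little care controls things), pass to the limit using that uniform convergence in probability of bounded processes implies $L^1$-convergence, and conclude $X^{S_k} \in M_c$, hence $X \in M_c^{\mathrm{loc}}$. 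The main technical point to handle carefully is the interchange of the stopping operations and the limit: one wants $\rho(X^n(\cdot \wedge S_k), X(\cdot \wedge S_k)) \to 0$, which follows because $|X^n(t \wedge S_k) - X(t\wedge S_k)| \le \sup_s |X^n(s) - X(s)|$ pointwise, so stopping does not worsen the metric distance.

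For (3), given $X \in M_c^{\mathrm{loc}}$ with localizing sequence $T_n \uparrow T$, the stopped processes $X^{T_n}$ are martingales; to land inside $M_c$ I additionally stop at $S_k = \inf\{t : |X(t)| \ge k\}$ as above, so that $X^{S_k}$ is a bounded, hence square-integrable, martingale with $X^{S_k}(0) = 0$, i.e. $X^{S_k} \in M_c$. Then $\rho(X^{S_k}, X) \le \|X^{S_k} - X\|$ need not be small, but $\rho(X^{S_k}, X) \to 0$ directly: $\sup_t |X^{S_k}(t) - X(t)| = \sup_t |X(t) - X(t \wedge S_k)|$ vanishes on the event $\{S_k = T\}$, whose probability tends to $1$ as $k \to \infty$ by continuity of $X$ on $[0,T]$. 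Hence $X^{S_k} \to X$ in the topology of $L_c$, proving density.

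I expect step (2) — closedness of $M_c^{\mathrm{loc}}$ — to be the main obstacle, precisely because the localizing sequences are not given uniformly and one must manufacture a single localization from the limit process itself, then justify that the relevant stopped approximants converge in $L^1$ (not merely in probability) so that the martingale identity survives the limit. Steps (1) and (3) are comparatively routine once one exploits the pointwise inequality $\sup_t |Y^{S}(t) - Z^{S}(t)| \le \sup_t |Y(t) - Z(t)|$ and the $L^1$-continuity of conditional expectation.
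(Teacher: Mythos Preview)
The paper states this theorem without proof; it is recorded as a standard fact (the surrounding text quotes results from Doob and Friedman ``without giving proofs''), so there is no argument in the paper to compare yours against.

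Your outline is the standard one and is essentially correct. Step~(1) and step~(3) are fine as written. In step~(2), however, the phrase ``plus a little care controls things'' hides exactly the point that needs work: stopping $X^n$ at $S_k=\inf\{t:|X(t)|\ge k\}$ gives a local martingale, but not an a~priori bounded one, since $S_k$ is built from $X$, not $X^n$. The clean fix is to also stop $X^n$ at $S_k^n:=\inf\{t:|X^n(t)|\ge k+1\}$, so that $(X^n)^{S_k\wedge S_k^n}$ is bounded by $k+1$ and hence a genuine martingale in $M_c$. On the event $\{\sup_t|X^n(t)-X(t)|<1\}$, whose probability tends to~$1$, one has $|X^n(t\wedge S_k)|\le k+1$ and therefore $S_k^n\ge S_k$, i.e.\ $S_k\wedge S_k^n=S_k$; this gives $(X^n)^{S_k\wedge S_k^n}\to X^{S_k}$ uniformly in probability, and since all processes are uniformly bounded by $k+1$, also in $L_c^2$. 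Then step~(1) yields $X^{S_k}\in M_c$, hence $X\in M_c^{\mathrm{loc}}$. With this detail filled in, your proof is complete.
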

\begin{remark}

{\it Denote by $H^2_t$ the set consisting of all random variables $h:\Omega
\to R$
which are $F_t$-measurable and $E|h|^2 < \infty$. Then for each $X \in L^2_c$
there holds $E[X(T) / F_t]= \widehat{h}$ where $\widehat{h} \in H_t^2$ is the
optional solution for}
\[
\mathop{\min}\limits_{h \in H^2_t} E|X(T)-h|^2=E|X(T)- \widehat{h}|^2
\]
\vspace{2mm}
{\bf Definition} Let $X(t)$ be a continuous stochastic process and $\Delta$ a
partition of the interval $[0,T]: \Delta = \{0=t_0 < \ldots < t_n=T$, and let
$|\Delta|= \max (t_{i+1}-t_i)$. Associated with the partition $\Delta$, we
define a continuous process ${\langle X \rangle}^{\Delta} (t)$ as
\[
{\langle X \rangle}^{\Delta} (t)=\sum_{i=0}^{k-1} (X(t_{i+1}-X(t_i))^2 +
(X(t)-X(t_k))^2
\]
where $k$ is the number such that $t_k \leq t < t_{k+1}$. We call it the
quadratic variation of $X(t)$ associated with the partition $\Delta$
Now let $\{ \Delta \}_m$ be a sequence of partitions such that $|\Delta_m | \to
0$. If the limit of $ {\langle X \rangle}^{\Delta_m} (t)$ exists in probability
and it is independent \index{Independent!}of the choice of sequence $\{ \Delta_m\}$ a.s., it is
called the quadratic variation of $X(t)$ and is denoted by $\langle X \rangle
(t)$. We will see that a natural class of processes where quadratic variations
are
well defined is that of continuous semimartingales.\index{Semimartingale}
\end{remark}

\begin{lemma}

{\it Let $X$ be a continuous process of bounded variation. Then the quadratic
variation exists and it equals zero a.s.}
\end{lemma}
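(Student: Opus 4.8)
The plan is to reduce the statement to a deterministic fact about real functions and argue pathwise. First I would fix a $P$-null set $N\subseteq\Omega$ outside of which the path $t\mapsto X(t,\omega)$ is both continuous on $[0,T]$ and of bounded variation; such an $N$ exists precisely because, by hypothesis, $X$ is a continuous process of bounded variation. For $\omega\notin N$ write $V_t(\omega)$ for the total variation of $s\mapsto X(s,\omega)$ over $[0,t]$, which is finite, and recall that a continuous function on the compact interval $[0,T]$ is uniformly continuous, with modulus of continuity $w_X(\omega,\delta)\downarrow 0$ as $\delta\downarrow 0$.

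Next I would record the key elementary estimate. For a partition $\Delta=\{0=t_0<\dots<t_n=T\}$ and $t$ with $t_k\le t<t_{k+1}$, set $m(\Delta,t,\omega)=\max\{\,|X(t_{i+1},\omega)-X(t_i,\omega)|\ (0\le i\le k-1),\ |X(t,\omega)-X(t_k,\omega)|\,\}$. Then
\[
\langle X\rangle^{\Delta}(t)(\omega)=\sum_{i=0}^{k-1}\big(X(t_{i+1},\omega)-X(t_i,\omega)\big)^2+\big(X(t,\omega)-X(t_k,\omega)\big)^2\le m(\Delta,t,\omega)\,V_t(\omega),
\]
since the sum of the absolute increments appearing on the right is the variation of $X(\cdot,\omega)$ over the partition $0=t_0<\dots<t_k<t$ of $[0,t]$, hence at most $V_t(\omega)$. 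Moreover $m(\Delta,t,\omega)\le w_X(\omega,|\Delta|)$.

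Now let $\{\Delta_m\}$ be any sequence of partitions with $|\Delta_m|\to 0$. Uniform continuity gives $m(\Delta_m,t,\omega)\le w_X(\omega,|\Delta_m|)\to 0$, hence from the displayed estimate $\langle X\rangle^{\Delta_m}(t)(\omega)\to 0$ for every $t\in[0,T]$ and every $\omega\notin N$. In particular the limit exists (a fortiori in probability) and does not depend on the chosen sequence $\{\Delta_m\}$ on $\Omega\setminus N$, so by the definition of quadratic variation the process $\langle X\rangle(t)$ is well defined and $\langle X\rangle(t)=0$ a.s.

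As for difficulty: there is essentially no analytic obstacle here. The only points requiring a little care are the passage from the probabilistic statement to the pathwise one — which rests on the observation that both ``continuous'' and ``of bounded variation'' are almost-sure pathwise properties of $X$ — and the bookkeeping ensuring that the mode of convergence established (pathwise, in fact uniformly over $t\in[0,T]$) is at least as strong as the one demanded in the definition of $\langle X\rangle$. One could instead decompose $X=A_1-A_2$ into continuous increasing processes and treat each summand separately, but the total-variation bound above makes even that reduction unnecessary.
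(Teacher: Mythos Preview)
Your proof is correct and follows essentially the same approach as the paper: bound each squared increment by the maximum increment times the absolute increment, sum to get $\langle X\rangle^{\Delta}(t)\le m(\Delta,t)\cdot V_t$, and conclude from continuity that $m(\Delta,t)\to 0$ pathwise. Your version is simply more explicit about the null set and the modulus of continuity than the paper's.
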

\begin{proof}

Let $ |X| (t; \omega)$ be the total variation of the function $X(s, \omega), 0
\leq s \leq t$. Then there holds
\[
|X|(t) = \mathop{\sup}\limits_{\Delta} [|X(t_0)| + \sum_{j=0}^{k-1} |X (t_{j+1})-
X(t_j)|+|X(t)-X(t_k)|]
\]
and
$\langle X \rangle^{\Delta} (t)
 \leq\left ( \sum_{j=0}^{k-1} |X(t_{j+1})-
X(t_j)| + |X(t)-X(t_k)|\right ) \mathop{\max}\limits_{i} |X(t_{i+1})-X(t_i)|\\
 \leq |X|(t) \mathop{\max}\limits_{i} |X(t_{i+1}) - X (t_i)|
$

The right hand side converges to $0$ for $|\Delta| \to 0$ a.s.
\end{proof}
\begin{theorem}

{\it Let $M(t) \, t\in [0,T]$, be a bounded continuous martingale. Let $\{
\Delta_n \}$ be a sequence of partitions such that $|\Delta_n| \to 0$. Then
$\langle M \rangle^{\Delta_n} (t)$, $t\in [0,T]$, converges uniformly to a
continuous increasing process $\langle M \rangle (t)$ in $L^2$ sense, i.e.,}
\[
\lim_{n \to \infty} E [\mathop{\sup}\limits_{t}|\langle M \rangle^{\Delta_n} (t)
-\langle M \rangle (t)|^2]=0;
\]
in addition $M^2 (t)- \langle M \rangle (t), t \in [0,T]$ is a martingale.
\end{theorem}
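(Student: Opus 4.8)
The plan is to prove the existence of the quadratic variation $\langle M \rangle(t)$ as a uniform $L^2$-limit of the approximating quadratic variations $\langle M \rangle^{\Delta_n}(t)$, and then to verify that $M^2(t) - \langle M \rangle(t)$ is a martingale. The natural starting observation is the algebraic identity
$$
M(t)^2 = \langle M \rangle^{\Delta}(t) + 2 \sum_{i} M(t_i)\bigl(M(t_{i+1}\wedge t) - M(t_i \wedge t)\bigr),
$$
valid for any partition $\Delta$, which exhibits $M^2 - \langle M \rangle^{\Delta}$ as (twice) a discrete martingale transform of the bounded martingale $M$. First I would use this to show that for each fixed partition $\Delta$, the process $M^2(t) - \langle M \rangle^{\Delta}(t)$ is a martingale with $M^2(0) - \langle M\rangle^\Delta(0) = 0$; boundedness of $M$ gives square-integrability, so this process lies in $M_c$.

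Next, the core of the argument is to show $\{\langle M \rangle^{\Delta_n}\}_n$ is a Cauchy sequence in $M_c$ (equivalently, $\{M^2 - \langle M\rangle^{\Delta_n}\}_n$ is Cauchy in $M_c$), so that by Theorem stating $M_c$ is a closed subspace of $L^2_c$, the limit exists as a uniform $L^2$-limit. For two partitions $\Delta, \Delta'$ with common refinement, the difference $\langle M \rangle^{\Delta}(t) - \langle M \rangle^{\Delta'}(t)$ is again a martingale (difference of two elements of $M_c$), so by Doob's inequality (Theorem, part (ii), with $p=2$, $q=2$) it suffices to bound $E\bigl|\langle M\rangle^{\Delta}(T) - \langle M\rangle^{\Delta'}(T)\bigr|^2$. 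This is the standard computation: expand the square, write each increment over $\Delta$ as a sum of increments over the common refinement, use orthogonality of martingale increments to kill cross terms, and bound the remaining fourth-moment-type sums by
$$
E\Bigl[\mathop{\sup}\limits_{|t-s|\leq \delta} |M(t)-M(s)|^2 \cdot \langle M\rangle^{\Delta''}(T)\Bigr]
$$
where $\Delta''$ is the common refinement and $\delta = \max(|\Delta|,|\Delta'|)$. One controls $E[\langle M\rangle^{\Delta''}(T)^2]$ uniformly in the partition (again via the martingale-transform identity and Doob, using boundedness of $M$), and then uses uniform continuity of the sample paths of $M$ on $[0,T]$ together with dominated convergence to send this quantity to $0$ as $|\Delta_n| \to 0$. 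This yields the Cauchy property and hence the existence of $\langle M \rangle(t)$ as a continuous process with $\mathop{\lim}\limits_n E[\mathop{\sup}\limits_t |\langle M\rangle^{\Delta_n}(t) - \langle M\rangle(t)|^2]=0$.

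Finally I would check that $\langle M \rangle$ is increasing and that $M^2 - \langle M\rangle$ is a martingale. Monotonicity: for $s<t$ fixed, choosing partitions whose points include $s$ and $t$, one has $\langle M\rangle^{\Delta_n}(t) - \langle M\rangle^{\Delta_n}(s) \geq 0$ identically, and this inequality passes to the limit along a subsequence converging a.s.; since $\langle M\rangle$ is continuous, the full statement "$\langle M\rangle(\cdot,\omega)$ is nondecreasing" holds off a null set. The martingale property of $M^2 - \langle M\rangle$: each $M^2 - \langle M\rangle^{\Delta_n}$ is a martingale, and $L^2$-convergence (uniform in $t$, hence in particular at each fixed time) preserves the martingale property, because conditional expectation is an $L^2$-contraction — so $E[(M^2-\langle M\rangle)(t)\mid F_s] = \mathop{\lim}\limits_n E[(M^2-\langle M\rangle^{\Delta_n})(t)\mid F_s] = \mathop{\lim}\limits_n (M^2-\langle M\rangle^{\Delta_n})(s) = (M^2-\langle M\rangle)(s)$ in $L^2$.

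\textbf{The main obstacle} I expect is the Cauchy estimate in the middle paragraph: getting the cross-terms to vanish cleanly requires care in aligning the two partitions on their common refinement and being honest about which increments are $F_{t_i}$-measurable, and the fourth-moment bound needs the uniform control $\mathop{\sup}\limits_{\Delta} E[\langle M\rangle^{\Delta}(T)^2] < \infty$, which itself is a small separate lemma (again via the identity $M^2 = \langle M\rangle^\Delta + 2(\text{transform})$ and Doob's $L^2$ inequality applied to the transform, using $\|M\|_\infty < \infty$). Once that uniform bound and the sample-path uniform continuity of $M$ are in hand, the convergence to zero is routine dominated convergence.
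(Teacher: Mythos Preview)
Your proposal is correct and follows essentially the same approach as the paper: the paper's proof consists precisely of the two lemmas you identify, namely that $M^2(t)-\langle M\rangle^{\Delta}(t)$ is a martingale (your first paragraph) and that $E|\langle M\rangle^{\Delta_n}(T)-\langle M\rangle^{\Delta_m}(T)|^2\to 0$ (your second paragraph), after which Doob's inequality and the closedness of $M_c$ in $L^2_c$ finish the job. Your write-up in fact supplies more detail than the paper, which states the second lemma without proof; the fourth-moment and uniform-continuity argument you sketch for it is the standard one and is sound.
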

The proof is based on the following two lemmas.

\begin{lemma}

{\it For any $t >s$, there holds,}
\[
E[\langle M \rangle^{\Delta} (t)/F(s)]-\langle M \rangle^{\Delta}
(s)=E[(M(t)-M(s))^2 |F(s)]
\]
{\it In particular, $M^2 (t)- \langle M \rangle^{\Delta} (t)$ is a continuous
martingale.}
\end{lemma}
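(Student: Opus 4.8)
The plan is to establish first the conditional identity
\[
E[\langle M \rangle^{\Delta}(t)\,|\,F(s)] - \langle M \rangle^{\Delta}(s) = E[(M(t)-M(s))^2\,|\,F(s)], \qquad t>s,
\]
and then read off the martingale property of $M^2(t)-\langle M \rangle^{\Delta}(t)$ as an immediate consequence. Indeed, since $M$ is a martingale,
\[
E[M^2(t)-M^2(s)\,|\,F(s)] = E[(M(t)-M(s))^2\,|\,F(s)] + 2M(s)\,E[M(t)-M(s)\,|\,F(s)] = E[(M(t)-M(s))^2\,|\,F(s)],
\]
so subtracting the displayed identity gives $E[M^2(t)-\langle M \rangle^{\Delta}(t)\,|\,F(s)] = M^2(s)-\langle M \rangle^{\Delta}(s)$. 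Integrability is automatic because $M$ is bounded, hence $M^2$ is bounded and $\langle M \rangle^{\Delta}(t)$, being a finite sum of squares of bounded increments, is bounded as well; continuity and $F_t$-adaptedness of $M^2-\langle M \rangle^{\Delta}$ are inherited from those of $M$.

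The computational tool I would isolate at the outset is the conditional orthogonality of martingale increments: for $s\le a<b$, conditioning on $F(a)$ inside $F(s)$ gives
\[
E[M(a)M(b)\,|\,F(s)] = E\big[M(a)\,E[M(b)\,|\,F(a)]\,|\,F(s)\big] = E[M(a)^2\,|\,F(s)],
\]
whence $E[(M(b)-M(a))^2\,|\,F(s)] = E[M(b)^2-M(a)^2\,|\,F(s)]$; likewise, for $s\le a<b\le c<d$ the products $(M(b)-M(a))(M(d)-M(c))$ and $(M(b)-M(a))(M(d)-M(a))$ have vanishing $F(s)$-conditional expectation.

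To prove the identity, fix $t>s$ and let $j,k$ be the indices with $t_j\le s\le t_{j+1}$ and $t_k\le t\le t_{k+1}$ (the choice at partition points, or at $t=T$, being immaterial since it does not change the value of $\langle M \rangle^{\Delta}$). If $j=k$, then $\langle M \rangle^{\Delta}(t)-\langle M \rangle^{\Delta}(s) = (M(t)-M(t_j))^2-(M(s)-M(t_j))^2$; writing $M(t)-M(t_j)=(M(t)-M(s))+(M(s)-M(t_j))$, expanding the square, and using the orthogonality above to kill the cross term after conditioning yields exactly $E[(M(t)-M(s))^2\,|\,F(s)]$. If $j<k$, then
\[
\langle M \rangle^{\Delta}(t)-\langle M \rangle^{\Delta}(s) = \sum_{i=j}^{k-1}(M(t_{i+1})-M(t_i))^2 + (M(t)-M(t_k))^2 - (M(s)-M(t_j))^2,
\]
and I would split off the $i=j$ term, write $M(t_{j+1})-M(t_j)=(M(t_{j+1})-M(s))+(M(s)-M(t_j))$ to cancel the $-(M(s)-M(t_j))^2$ contribution (the cross term again dropping after conditioning on $F(s)$), and then apply the orthogonality identity to each remaining squared increment, telescoping
\[
E\Big[(M(t_{j+1})-M(s))^2 + \sum_{i=j+1}^{k-1}(M(t_{i+1})-M(t_i))^2 + (M(t)-M(t_k))^2\,\Big|\,F(s)\Big] = E[M(t)^2-M(s)^2\,|\,F(s)] = E[(M(t)-M(s))^2\,|\,F(s)].
\]

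I do not anticipate a genuine obstacle: the argument is entirely elementary given Doob's results and the boundedness of $M$. The only points requiring care are the index bookkeeping for the two cases $j=k$ and $j<k$, the convention at partition endpoints and at $t=T$, and checking that all conditional expectations in sight are finite (which boundedness of $M$ guarantees).
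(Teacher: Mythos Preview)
Your proof is correct and follows essentially the same approach as the paper: both arguments hinge on the conditional orthogonality of martingale increments, expand $\langle M\rangle^{\Delta}(t)-\langle M\rangle^{\Delta}(s)$ over the partition, and telescope after conditioning on $F(s)$. Your treatment is in fact somewhat more careful than the paper's hint, since you explicitly separate the case where $s$ and $t$ lie in the same partition interval and spell out how the martingale property of $M^2-\langle M\rangle^{\Delta}$ follows from the identity.
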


{\bf Hint}.\,
Rewrite $M(t)$ and $M(s)$ as follows $(s<t)$
\[
M(t)=M(t)-M(t_k) + \sum_{i=0}^{k-1} [M(t_{i+1}-M(t_i)]+M(0), \mbox{where}\, t_k
\leq t < t_{k+1}
\]
\[
M(s)=M(s)-M(s_l)+ \sum_{i=0}^{l-1} [M(s_{i+1})-M(s_i)]+M(0), \mbox{where}\,s_l
\leq s <s_{l+1}
\]
and $\Delta \cap [0,s]=\{0=s_0<s_1< \ldots < s_l\}$. On the other hand\\
\begin{eqnarray}E[(M(t)-M(s))^2 / F(s)]&=&E[(M(t)-M(t_k))^2 + \sum_{i=l+1}^{k-1}
(M(t_{i+1})-M(t_i))^2 \nonumber\\
&+&(M(s_{l+1})-M(s))^2 /F(s)]
\end{eqnarray}
where  $t_{l+1}=s_{l+1}$  and
\begin{eqnarray}
&&E[(M(t_{i+1})-M(t_i))(M(t_{j+1})-M(t_j)] / F(s)]\nonumber\\
&=&E\{E[(M(t_{i+1})-M(t_i)](M(t_{j+1})-M(t_j))/ F(t_{j+1})]/F(s)\}=0
\,(\hbox{i} f i>j)
\end{eqnarray} are used.
Adding and subtracting $M^{\Delta}(s)=M(s)-M(s_l))^2 + \sum_{i=0}^{l-1}
(M(s_{i+1})-M(s_i))^2$ we get the first conclusion which can be written as a
martingale property.
\vspace{2mm}
\begin{lemma}

{\it It holds}
\[
\lim_{n,m \to \infty} E[|\langle M \rangle^{\Delta_n} (T) - \langle M
\rangle^{\Delta_m} (T)|^2]=0.
\]
\end{lemma}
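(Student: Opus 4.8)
The goal is to show that the quadratic variations $\langle M\rangle^{\Delta_n}(T)$ form a Cauchy sequence in $L^2(\Omega)$. The plan is to compute the $L^2$-difference of two quadratic variations associated with partitions $\Delta_n$ and $\Delta_m$, and to exploit the martingale structure established in the previous lemma. First I would pass to a common refinement $\Delta = \Delta_n \cup \Delta_m$, so that each of $\Delta_n,\Delta_m$ is a coarsening of $\Delta$; this reduces the problem to comparing the quadratic variation along a partition with the quadratic variation along a refinement of it. Writing $D(t) = \langle M\rangle^{\Delta_n}(t) - \langle M\rangle^{\Delta_m}(t)$, one checks from the preceding lemma that $D(t)$ is itself a continuous martingale (it is a difference of the martingales $M^2 - \langle M\rangle^{\Delta_n}$ and $M^2-\langle M\rangle^{\Delta_m}$), with $D(0)=0$. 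Hence by Doob's inequality it suffices to control $E[D(T)^2]$, and in fact the whole uniform-in-$t$ statement of the theorem will follow once the Cauchy property at $t=T$ is in hand.

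Next I would express $D(T)$ explicitly. On each subinterval $[t_i,t_{i+1}]$ of the coarse partition, the contribution $(M(t_{i+1})-M(t_i))^2$ is compared with the sum $\sum_j (M(s_{j+1})-M(s_j))^2$ over the points $s_j$ of the fine partition lying in $[t_i,t_{i+1}]$; using the identity $(\sum a_j)^2 = \sum a_j^2 + 2\sum_{j<k} a_j a_k$ with $a_j = M(s_{j+1})-M(s_j)$, the difference becomes $2\sum_{j<k} a_j a_k$, i.e. a sum of cross terms of martingale increments. Squaring $D(T)$ and taking expectations, the orthogonality of martingale increments (conditioning on the earlier $\sigma$-field, exactly as in the ``Hint'' to the first lemma) kills all terms except the diagonal ones, leaving an estimate of the shape
\[
E[D(T)^2] \;\le\; C\, E\Big[\sum_i \big(M(t_{i+1})-M(t_i)\big)^2 \cdot \omega_M(|\Delta|)^2\Big]
\]
plus similar terms, where $\omega_M(\delta)$ denotes the modulus of continuity of $M$ on $[0,T]$. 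Since $M$ is a \emph{bounded} continuous martingale, $\omega_M(|\Delta|)\to 0$ almost surely and is uniformly bounded; combined with the uniform bound $E\big[(\sum_i (M(t_{i+1})-M(t_i))^2)^2\big] \le K$ (which follows from boundedness of $M$ and the martingale identity of the first lemma, giving $E[\langle M\rangle^{\Delta}(T)] \le E[M(T)^2] \le K_0$ and, with a bit more work, an $L^2$ bound on $\langle M\rangle^\Delta(T)$), the dominated convergence theorem forces $E[D(T)^2]\to 0$ as $|\Delta_n|,|\Delta_m|\to 0$.

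The main obstacle I anticipate is the uniform $L^2$-bound on $\langle M\rangle^{\Delta}(T)$: one needs $\sup_\Delta E\big[(\langle M\rangle^{\Delta}(T))^2\big] < \infty$ to run the dominated-convergence argument, and this requires a genuine estimate rather than a one-line remark. The standard route is to write $(\langle M\rangle^\Delta(T))^2$ as a telescoping sum, use $\langle M\rangle^\Delta(T)-\langle M\rangle^\Delta(t_i) = \sum_{k\ge i}(M(t_{k+1})-M(t_k))^2 $ together with the martingale property $E[\langle M\rangle^\Delta(T)-\langle M\rangle^\Delta(t_i)\mid F(t_i)] = E[(M(T)-M(t_i))^2\mid F(t_i)]\le 4\|M\|_\infty^2$, and thereby bound $E[(\langle M\rangle^\Delta(T))^2]$ by $C\|M\|_\infty^2\, E[\langle M\rangle^\Delta(T)] \le C\|M\|_\infty^4$. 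Once this is secured, the Cauchy property of Lemma follows, and then completeness of $L^2_c$ (together with Doob's inequality applied to the martingales $D(t)$) upgrades the $L^2(\Omega)$-convergence at $T$ to uniform-in-$t$ $L^2$ convergence to a limit process $\langle M\rangle(t)$, which is increasing as an a.s. limit of the increasing processes $\langle M\rangle^{\Delta_n}(t)$; the martingale property of $M^2-\langle M\rangle$ passes to the limit since $M_c$ is closed in $L^2_c$.
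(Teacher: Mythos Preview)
The paper states this lemma without proof, so there is no argument in the text to compare yours against. Your outline is the standard one (as in Kunita or Ikeda--Watanabe) and is correct: pass to the common refinement, recognise $D(t)=\langle M\rangle^{\Delta_n}(t)-\langle M\rangle^{\Delta_m}(t)$ as a continuous martingale via the preceding lemma, express $D(T)$ as a discrete stochastic integral $2\sum_k L_k\bigl(M(s_{k+1})-M(s_k)\bigr)$ with $L_k=M(s_k)-M(t_i)$ (here $t_i$ is the nearest coarse-partition point to the left of $s_k$), use orthogonality of increments to get $E[D(T)^2]=4\,E\bigl[\sum_k L_k^2(M(s_{k+1})-M(s_k))^2\bigr]$, and then bound $|L_k|\le\omega_M(|\Delta_n|\vee|\Delta_m|)$.

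One small sharpening: the displayed inequality you wrote is a little informal. After bounding $L_k^2$ by $\omega_M(\delta)^2$ you still have the random factor $\omega_M(\delta)^2$ multiplying the random sum $\langle M\rangle^{\Delta}(T)$, so you need Cauchy--Schwarz (or H\"older) to separate them before invoking dominated convergence on $\omega_M$ and the uniform $L^2$ bound on $\langle M\rangle^{\Delta}(T)$. Your sketch of that uniform bound, via $E\bigl[\langle M\rangle^{\Delta}(T)-\langle M\rangle^{\Delta}(t_i)\mid F(t_i)\bigr]=E\bigl[(M(T)-M(t_i))^2\mid F(t_i)\bigr]\le 4\|M\|_\infty^2$ and a telescoping argument, is exactly the right idea and gives $E\bigl[(\langle M\rangle^{\Delta}(T))^2\bigr]\le C\|M\|_\infty^4$ uniformly in $\Delta$. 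With that in hand the lemma follows, and your concluding remarks on upgrading to uniform-in-$t$ convergence via Doob's inequality and passing the martingale property to the limit are also correct.
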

\vspace{2mm}
\begin{theorem}

{\it $M(t)$ be a continuous local martingale. Then there is a continuous
increasing process $\langle M \rangle (t)$ such that $\langle M \rangle^{\Delta}
(t)$ converges uniformly to $\langle M \rangle (t)$ in probability.}
\end{theorem}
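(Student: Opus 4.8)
The plan is to reduce the statement to the already-established case of bounded continuous martingales by a standard localization. First I would introduce the stopping times $S_k=\inf\{t>0:|M(t)|\geq k\}$ (with $S_k=T$ if the set is empty), which satisfy $S_k\uparrow T$; moreover, since every path of $M$ is continuous on the compact interval $[0,T]$ and hence bounded, for almost every $\omega$ there is $k_0(\omega)$ with $S_k(\omega)=T$ for all $k\geq k_0(\omega)$, so $P(S_k=T)\to 1$. Each stopped process $M^{S_k}(t)=M(t\wedge S_k)$ is a bounded continuous martingale, so the previous theorem provides a continuous increasing process $\langle M^{S_k}\rangle(t)$, $t\in[0,T]$, with $\langle M^{S_k}\rangle(0)=0$, such that $\langle M^{S_k}\rangle^{\Delta_n}(t)$ converges to it uniformly in $t$ in the $L^2$ sense. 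Note that $\langle M\rangle^{\Delta}$ depends only on the increments of the path, so the role of $M(0)$ is irrelevant throughout.

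Next I would check consistency of this family. On the event $N_k=\{S_k=T\}$ the paths of $M^{S_k}$ and $M^{S_{k+1}}$ coincide on all of $[0,T]$, so the quadratic-variation sums $\langle M^{S_k}\rangle^{\Delta_n}(\cdot)$ and $\langle M^{S_{k+1}}\rangle^{\Delta_n}(\cdot)$ coincide pathwise there; passing to the limit along an almost surely convergent subsequence gives $\langle M^{S_k}\rangle(t)=\langle M^{S_{k+1}}\rangle(t)$ for all $t$, almost surely on $N_k$. Since $N_k\subseteq N_{k+1}$ and $P(\bigcup_k N_k)=1$, this lets me define a process $\langle M\rangle(t)$, $t\in[0,T]$, unambiguously (up to a null set) by $\langle M\rangle(t)=\langle M^{S_k}\rangle(t)$ on $N_k$; it inherits continuity, monotonicity and $\langle M\rangle(0)=0$ from the $\langle M^{S_k}\rangle$. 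An alternative route to the consistency, worth recording, is uniqueness of the compensator: $M^{S_k}(t)^2-\langle M^{S_k}\rangle(t)$ and the stopped process $M^{S_{k+1}}(t\wedge S_k)^2-\langle M^{S_{k+1}}\rangle(t\wedge S_k)$ are both martingales with the same martingale part $M^{S_k}(t)^2$, so the difference of the two increasing processes is a continuous martingale of bounded variation null at $0$, hence identically zero by the earlier lemma on processes of bounded variation.

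Finally I would establish the convergence in probability. Fix $\varepsilon,\eta>0$, and using $P(N_k)\to 1$ choose $k$ with $P(\Omega\setminus N_k)<\eta/2$. On $N_k$ the paths of $M$ and $M^{S_k}$ agree on $[0,T]$, so $\langle M\rangle^{\Delta_n}(t)=\langle M^{S_k}\rangle^{\Delta_n}(t)$ and $\langle M\rangle(t)=\langle M^{S_k}\rangle(t)$ there; hence
\[
P\Big(\sup_{t}\big|\langle M\rangle^{\Delta_n}(t)-\langle M\rangle(t)\big|>\varepsilon\Big)\leq P(\Omega\setminus N_k)+P\Big(\sup_t\big|\langle M^{S_k}\rangle^{\Delta_n}(t)-\langle M^{S_k}\rangle(t)\big|>\varepsilon\Big),
\]
and the last term is at most $\varepsilon^{-2}E\big[\sup_t|\langle M^{S_k}\rangle^{\Delta_n}(t)-\langle M^{S_k}\rangle(t)|^2\big]$ by Chebyshev's inequality, which tends to $0$ as $n\to\infty$ by the bounded-martingale theorem. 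Thus it is $<\eta/2$ for $n$ large, giving a total bound $<\eta$, which is precisely uniform convergence in probability. The only genuinely delicate point is the patching step — verifying that the limiting objects built over the different localizing events fit together into a single well-defined continuous increasing process — together with the observation that path continuity forces the localizing sequence to exhaust $[0,T]$; everything else is a routine Chebyshev-plus-localization estimate.
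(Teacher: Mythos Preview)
Your localization argument is correct and is precisely the route the paper's setup points toward: the earlier Remark already introduces the stopping times $S_k=\inf\{t>0:|M(t)|\geq k\}$ and notes that each $M^{S_k}$ is a bounded martingale, and the paper then states the local-martingale theorem without writing out a proof, leaving the reduction to the bounded case implicit. Your proposal fills in exactly that gap, and your handling of the two points the paper does not spell out --- the consistency of the family $\langle M^{S_k}\rangle$ (via either path-coincidence on $\{S_k=T\}$ or uniqueness of the compensator) and the observation that continuity on $[0,T]$ forces $P(S_k=T)\to1$ --- is done carefully and correctly.
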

\vspace{2mm}
\begin{corollary}
$M^2 (t)- \langle M \rangle (t)$ is a local martingale if $M(t)$ is a
continuous local martingale.
\end{corollary}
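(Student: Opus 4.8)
The plan is to reduce the assertion to the already-established case of bounded continuous martingales by a localization argument. First I would introduce the stopping times $S_k=\inf\{t>0:|M(t)|\geq k\}$ (with $S_k=T$ when the indicated set is empty). By the remark following the optional sampling theorem these satisfy $S_k\uparrow T$, and by path-continuity of $M$ each stopped process $M^{S_k}(t)=M(t\wedge S_k)$ is a bounded continuous martingale with $\sup_t|M^{S_k}(t)|\leq k$. Applying the theorem on bounded continuous martingales to $M^{S_k}$, the quadratic variation $\langle M^{S_k}\rangle$ exists as a uniform $L^2$-limit of $\langle M^{S_k}\rangle^{\Delta_n}$ along any sequence of partitions with $|\Delta_n|\to 0$, and $(M^{S_k})^2-\langle M^{S_k}\rangle$ is a genuine martingale.

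The central step is to identify $\langle M^{S_k}\rangle(t)$ with the stopped process $\langle M\rangle(t\wedge S_k)$, where $\langle M\rangle$ is the quadratic variation of the local martingale $M$ furnished by the preceding theorem. Fix a sequence $\Delta_n$ with $|\Delta_n|\to 0$ and compare, for each $n$, the two approximating sums $\langle M^{S_k}\rangle^{\Delta_n}(t)$ and $\langle M\rangle^{\Delta_n}(t\wedge S_k)$: they use exactly the same squared increments on the subintervals lying to the left of $S_k$ and differ only in the single term straddling $S_k$, together with contributions beyond $S_k$, which vanish for the stopped process. Using uniform continuity of the sample path $s\mapsto M(s,\omega)$ on $[0,T]$, the straddling term is bounded by $\big(w_M([0,T],|\Delta_n|)\big)^2\to 0$, so $\sup_t|\langle M^{S_k}\rangle^{\Delta_n}(t)-\langle M\rangle^{\Delta_n}(t\wedge S_k)|\to 0$ a.s. Since $\langle M^{S_k}\rangle^{\Delta_n}$ converges uniformly in probability to $\langle M^{S_k}\rangle$ and $\langle M\rangle^{\Delta_n}$ converges uniformly on $[0,T]$ in probability to $\langle M\rangle$ (whence its restriction to the random time $t\wedge S_k$ also converges), the two limits coincide: $\langle M^{S_k}\rangle(t)=\langle M\rangle(t\wedge S_k)=\langle M\rangle^{S_k}(t)$ a.s.

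With this identification in hand, $\big(M^2-\langle M\rangle\big)^{S_k}(t)=\big(M^{S_k}(t)\big)^2-\langle M\rangle(t\wedge S_k)=\big(M^{S_k}(t)\big)^2-\langle M^{S_k}\rangle(t)$, which is a martingale for every $k$ by the cited theorem. Since $S_k\uparrow T$, the definition of a continuous local martingale is met: $M^2-\langle M\rangle$ is a continuous local martingale, in particular $M^2(t)-\langle M\rangle(t)$ is a local martingale. I expect the main obstacle to be the rigorous verification of the consistency relation $\langle M^{S}\rangle=\langle M\rangle^{S}$ — that optional stopping commutes with taking the quadratic variation — because the general $\langle M\rangle$ from the preceding theorem is only a limit in probability and the mesh points of the $\Delta_n$ need not be adapted to $S$; the path-continuity estimate on the straddling increment is precisely what makes this survive the passage to the limit in probability, and it must be arranged so that the error bound is uniform in $t$ before the limit is taken.
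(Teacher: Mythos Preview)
The paper states this corollary without proof, treating it as an immediate consequence of the two preceding theorems (the bounded-martingale case and the existence of $\langle M\rangle$ for local martingales). Your localization argument is exactly the intended route and is correct.

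One simplification worth noting: the identity $\langle M^{S_k}\rangle^{\Delta}(t)=\langle M\rangle^{\Delta}(t\wedge S_k)$ actually holds \emph{exactly} for every partition $\Delta$, not merely in the limit. Indeed, if $t_l\leq S_k<t_{l+1}$ then for the stopped process all increments beyond index $l$ vanish and the $l$-th increment is $M(S_k)-M(t_l)$, which is precisely the final increment appearing in $\langle M\rangle^{\Delta}$ evaluated at the time $S_k$; the preceding increments coincide. So there is no ``straddling term'' discrepancy to control via the modulus of continuity $w_M([0,T],|\Delta_n|)$, and the passage to the limit is immediate. Your worry about making that error bound uniform in $t$ disappears once you observe this pointwise equality.
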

\begin{theorem}
 Let $X(t)$ be a continuous semi-martingale. Then $\langle X
\rangle^{\Delta} (t)$ converges, uniformly to $\langle M \rangle (t)$ in
probability as $|\Delta| \to 0$, where $M(t)$ is the local martingale part of
$X(t)$
\end{theorem}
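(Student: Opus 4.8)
The statement to prove is: for a continuous semimartingale $X(t) = M(t) + A(t)$, where $M(t)$ is the local martingale part and $A(t)$ is the bounded variation part, the quadratic variations $\langle X \rangle^{\Delta}(t)$ converge uniformly in probability to $\langle M \rangle(t)$ as $|\Delta| \to 0$.

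The plan is to expand the quadratic variation along a partition $\Delta = \{0 = t_0 < \cdots < t_n = T\}$ using the decomposition $X = M + A$. For the increments one writes $X(t_{i+1}) - X(t_i) = (M(t_{i+1}) - M(t_i)) + (A(t_{i+1}) - A(t_i))$, so that squaring and summing gives
\begin{equation}
\langle X \rangle^{\Delta}(t) = \langle M \rangle^{\Delta}(t) + 2 R^{\Delta}(t) + \langle A \rangle^{\Delta}(t),
\end{equation}
where $R^{\Delta}(t)$ is the cross term $\sum_i (M(t_{i+1}) - M(t_i))(A(t_{i+1}) - A(t_i))$ (with the appropriate final partial-interval term included, exactly as in the definition of $\langle \cdot \rangle^{\Delta}$). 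First I would handle the three terms separately. By Theorem (the one asserting convergence of $\langle M \rangle^{\Delta}(t)$ for continuous local martingales), the first term converges uniformly in probability to $\langle M \rangle(t)$. By the Lemma stating that a continuous process of bounded variation has vanishing quadratic variation, $\langle A \rangle^{\Delta}(t) \to 0$ uniformly in probability (applying that lemma after the usual localization reducing to the case where $|A|(T)$ is bounded, or just directly since the lemma is stated pathwise a.s.).

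Next I would control the cross term $R^{\Delta}(t)$. The key estimate is the pathwise bound
\begin{equation}
\sup_t |R^{\Delta}(t)| \leq \Big( \sup_{|s-u| \leq |\Delta|} |M(s) - M(u)| \Big) \cdot |A|(T),
\end{equation}
obtained by factoring out the maximal oscillation of $M$ over intervals of length $|\Delta|$ and bounding the remaining sum of $|A(t_{i+1}) - A(t_i)|$ by the total variation $|A|(T)$. Since $M$ has a.s. continuous paths, its modulus of continuity on $[0,T]$ tends to $0$ as $|\Delta| \to 0$ for a.e. $\omega$; combined with a.s. finiteness of $|A|(T)$, this shows $\sup_t |R^{\Delta}(t)| \to 0$ almost surely, hence in probability. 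Putting the three pieces together yields $\langle X \rangle^{\Delta}(t) \to \langle M \rangle(t)$ uniformly in probability.

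The main obstacle — really the only delicate point — is that $M$ is only a \emph{local} martingale, so a priori its increments need not be square-integrable and one cannot invoke the $L^2$ convergence results directly. The remedy is standard localization: choose stopping times $S_k \uparrow T$ with $M^{S_k}$ a bounded martingale (as in the Remark on local martingales) and, if desired, also reducing $|A|$; prove the convergence for the stopped processes using Theorem on $\langle M \rangle^{\Delta}$ and the bounded-variation Lemma, then pass to the limit in $k$ using that convergence in probability is preserved under the localization since $P(S_k < T) \to 0$. A second minor technical point is bookkeeping of the final partial subinterval $[t_k, t]$ appearing in the definition of $\langle \cdot \rangle^{\Delta}(t)$, but this term is handled by exactly the same oscillation estimate and does not affect the argument. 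With these observations the proof is essentially a direct assembly of the results already established in the section.
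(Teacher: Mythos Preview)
Your proposal is correct and is exactly the natural argument implied by the paper's sequence of results; the paper itself states this theorem without proof, relying on the immediately preceding Lemma (quadratic variation of a bounded-variation process vanishes) and Theorem (existence of $\langle M\rangle$ for continuous local martingales) as the building blocks, which is precisely how you assemble the proof. One minor remark: the localization step you flag as the ``main obstacle'' is in fact unnecessary here, since the convergence $\langle M\rangle^{\Delta}\to\langle M\rangle$ is already granted for local martingales by the preceding theorem, and your cross-term estimate is purely pathwise.
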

\vspace{2mm}
{\bf Stochastic integrals}\index{Stochastic !integral}
Let $M(t)$ be a continuous local martingale and let $f(t)$
be a continuous $F(t)$ adapted process. We will define the stochastic integral
of $f(t)$ by the differential\index{Differential !} $dM(t)$ using the properties of martingales,
especially those of quadratic variations.
Let $\Delta=(0=t_0 < \ldots < t_n=T)$ be a partition of $[0, T]$. For any $t \in
[0,T]$ choose $t_k$ of $\Delta$ such that $t_k \leq t <t_{k+1}$ and define
\begin{equation}\label{eq:4.1}
L^{\Delta} (t)=\sum_{i=0}^{k-1} f(t_i)(M (t_{i+1}) - M(t_i)) + f(t_k))
(M(t)-M(t_k))\end{equation}
It is easily seen that $L^{\Delta}(t)$ is a continuous local martingale. The
quadratic variation is computed directly as
\vspace{2mm}
\begin{eqnarray}\label{eq:4.2}
\langle L^\Delta \rangle (t) &=& \sum_{i=0}^{k-1} f^2 (t_i)( \langle M \rangle
(t_{i+1}) \nonumber\\&-& \langle M \rangle (t_i)) + f^2 (t_k) (\langle M \rangle (t) -
\langle M \rangle (t_k))
\int_0^t |f^{\Delta} (s)|^2 d \langle M \rangle (s)
\end{eqnarray}
where $f^{\Delta} (s)$ is a step process defined from $f(s)$ by $ f^{\Delta}
(s)=f(t_k)$ if $t_k \leq s < t_{k+1}$. Let $ \Delta'$ be another partition of
$[0, T]$.
We define $L^{\Delta'} (t)$ similarly, using the same $f(s)$ and $M(s)$.
Then there holds
\[
\langle L^{\Delta} - \L^{\Delta'} \rangle (t) = \int_0^t |f^{\Delta}_{(s)}-
f^{\Delta'}_{(s)}|^2 d \langle M \rangle (s).
\]
Now let $\{ \Delta_n \}$ be a sequence of partitions of $[0, T]$ such that
$|\Delta_n| \to 0$. Then $\langle L^{\Delta_n} - L^{\Delta_m} \rangle (T)$
converges to $0$ in probability as $n, m \to \infty$. Hence $\{L^{\Delta_n}\}$
is a Cauchy sequence\index{Cauchy!sequence} in $M_c^{loc}$. We denote the limit as $L(t)$.\\
\vspace{2mm}
\begin{definition}
The above $L(t)$ is called the It\^{o} integral of $f(t)$ by $dM(t)$ and is
denoted by $L(t)= \int_0^t f(s) dM(s)$.\\
\end{definition}
\begin{definition}
Let $X(t)$ be a continuous semimartingale\index{Semimartingale} decomposed to the sum of a continuous
local martingale $M(t)$ and a continuous process of bounded variation $A(t)$
\end{definition}
Let $f$ be an $F(t)$-adapted process such that $f \in L^2 ( \langle M \rangle)$
and\\
$\int_0^T |f(s) |d|A|(s) \langle \infty$. Then the Ito integral
of $f(t)$ by $dX(t)$ is defined as
\[
\int_0^t f(s) dX(s)= \int_0^t f(s) dM(s) + \int_0^t f(s) dA(s)
\]\\
\begin{remark}
If $f$ is a continuous semimartingale\index{Semimartingale} then $\int_0^t f(s) dX (s)$ exists.
We will define another stochastic integral\index{Stochastic! integral} by the differential \index{Differential !}''$\circ dX$
(s) $s\in [0,t]$
$\int_{0}^{t} f(s) \circ dX (s) = \lim_{|\Delta| \to 0} \Big [ \sum_{i=0}^{k-1}
\frac{1}{2} (f(t_{i+1}) + f(t_i)) (X(t_{i+1})-X (t_i))\\
+ \frac{1}{2} (f(t) + f(t_k))(X(t)-X(t_k)) \Big ]$
\end{remark}
\begin{definition}
If the above limit exists, it is called the Fisk-Stratonovich integral of $f(s)$
by $d X(s)$.
\end{definition}
\vspace{2mm}
\begin{remark}
If $f$ is a continuous semimartingale\index{Semimartingale}, the Fisk-Stratonovitch integral is
well
defined and satisfies
$
\int_0^t f(s) \circ dX (s) = \int_0^t f(s) dX (s) + \frac{1}{2} \langle f, X
\rangle (t)
$
\end{remark}
\begin{proof}
Is easily seen from the relation
\begin{eqnarray}&&\sum_{i=0}^{k-1} \frac{1}{2}  ( f(t_{i+1}) + f(t_i))(X(t_{i+1})-X(t_i))+
\frac{1}{2} (f(t) + f(t_k))(X(t)-X(t_k))\nonumber\\
&=&\sum_{i=0}^{k-1} f(t_i)(X(t_{i+1})-X(t_i))+ f(t_k)(X(t)-X(t_k))+\frac{1}{2}
\langle f, X \rangle^{\Delta} (t)
\end{eqnarray}

where the joint quadratic variation
$$\langle f, X \rangle^{\Delta} (t) = \sum_{i=0}^{k-1} ( f(
t_{i+1}-f(t_i))(X(t_{i+1}-X(t_i))
+ (f(t)-f(t_k))(X(t)-X(t_k))$$
and $k$ is the number such that $t_k \leq t < t_{k+1}$.
\end{proof}
\vspace{2mm}
\begin{theorem}
 Let $X$ and $Y$ be continuous semi-martingales. The joint quadratic
variation associated with the partition $\Delta$ is defined as before and is
written $\langle X, Y \rangle^{\Delta}$. Then $\langle X, Y \rangle^{\Delta}$
converges uniformly in probability to a continuous process of bounded
variation $\langle X, Y \rangle (t)$.
If $M$ and $N$ are local martingale parts of $X$ and $Y$, respectively, then
$\langle X, Y \rangle$ coincides with $\langle M,N \rangle$
\end{theorem}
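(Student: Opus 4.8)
The plan is to reduce the joint statement to the already-proven quadratic-variation result by a polarization argument. First I would recall that for a single continuous semimartingale $X$, Theorem on convergence of $\langle X\rangle^\Delta$ has been established: $\langle X\rangle^\Delta(t)$ converges uniformly in probability to $\langle M\rangle(t)$, where $M$ is the local martingale part of $X$. So the pattern I want to exploit is the elementary algebraic identity
\[
(a+b)^2-(a-b)^2=4ab,
\]
applied increment by increment: for a partition $\Delta=\{0=t_0<\cdots<t_n=T\}$ with $t_k\le t<t_{k+1}$,
\[
\langle X,Y\rangle^\Delta(t)=\tfrac14\bigl(\langle X+Y\rangle^\Delta(t)-\langle X-Y\rangle^\Delta(t)\bigr).
\]
This identity is exact for every $\Delta$ and every $t$, since it holds termwise on each increment of the sums defining the quadratic variations.

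Next I would observe that $X+Y$ and $X-Y$ are again continuous semimartingales, with local martingale parts $M+N$ and $M-N$ respectively (the semimartingale decomposition is linear, and the decomposition of the local martingale part into local martingale plus bounded variation is unique up to the usual identification). Hence the single-process theorem applies to each of $X+Y$ and $X-Y$: $\langle X+Y\rangle^\Delta(t)\to\langle M+N\rangle(t)$ and $\langle X-Y\rangle^\Delta(t)\to\langle M-N\rangle(t)$, both uniformly in $t$ in probability, as $|\Delta|\to0$. Since uniform-in-probability convergence is preserved by linear combinations (the metric $\rho$ on $L_c$ is compatible with the vector space structure, and a finite linear combination of convergent sequences converges to the linear combination of the limits), it follows that $\langle X,Y\rangle^\Delta(t)$ converges uniformly in probability to
\[
\tfrac14\bigl(\langle M+N\rangle(t)-\langle M-N\rangle(t)\bigr).
\]
The limit is a difference of two continuous increasing processes, hence a continuous process of bounded variation, which gives the stated regularity of $\langle X,Y\rangle$. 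Finally, applying the same polarization identity to the \emph{limits} $\langle M+N\rangle$ and $\langle M-N\rangle$ — which are the quadratic variations of the local martingales $M+N$ and $M-N$, again obtainable as limits of $\langle\cdot\rangle^\Delta$ by the local-martingale version of the theorem — identifies $\tfrac14(\langle M+N\rangle-\langle M-N\rangle)$ with $\langle M,N\rangle$, the joint quadratic variation of the local martingale parts. This proves the last sentence of the statement.

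\textbf{Where the work sits.} Most of this is bookkeeping; there is no genuine analytic obstacle, because the hard convergence has already been done in the single-process theorem. The one point that deserves care — and which I would treat as the ``main obstacle'' in the sense of being the only non-formal step — is making sure the termwise polarization identity really is exact including the final partial increments $(X(t)-X(t_k))(Y(t)-Y(t_k))$ and not merely on the ``full'' grid points, and that the uniform (in $t$) nature of the convergence survives the linear combination; both are straightforward once one writes out $\langle X\pm Y\rangle^\Delta(t)$ explicitly and uses $\bigl(\langle X+Y\rangle^\Delta-\langle X-Y\rangle^\Delta\bigr)(t)=4\langle X,Y\rangle^\Delta(t)$ as an identity of processes indexed by $t$. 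A secondary tidy-up is to confirm that $\langle M,N\rangle$ as defined via $\langle M+N\rangle$ and $\langle M-N\rangle$ agrees with whatever ad hoc notation $\langle M,N\rangle$ was given earlier (e.g. in the Fisk–Stratonovich correction term and in Lemma on $\langle f,X\rangle$), i.e. that the polarized object is the canonical one; this is immediate from uniqueness of the limit in probability.
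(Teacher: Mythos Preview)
Your polarization argument is correct and is the standard route to this result. The paper does not actually supply a proof of this theorem --- it is stated and then the text moves directly on --- but your argument is precisely the one the surrounding material is set up to support: the single-process theorem immediately preceding it (that $\langle X\rangle^\Delta$ converges uniformly in probability to $\langle M\rangle$ for a continuous semimartingale $X$ with local martingale part $M$) is exactly the input you invoke, and the polarization identity $\langle X,Y\rangle^\Delta=\tfrac14(\langle X+Y\rangle^\Delta-\langle X-Y\rangle^\Delta)$ reduces the joint case to two instances of that result. Your observations that $X\pm Y$ are again continuous semimartingales with local martingale parts $M\pm N$, that the limit $\tfrac14(\langle M+N\rangle-\langle M-N\rangle)$ is a difference of increasing processes hence of bounded variation, and that this coincides with $\langle M,N\rangle$, are all correct and complete the argument.
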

\vspace{2mm}
\begin{remark}
Let $w (t)=( w^1 (t), \ldots , w^{m} (t))$ be an m-dimensional
$(F_{(t)})$-adapted continuous stochastic process. It is an $(F_{(t)})$-Wiener
process iff each $w^i (t)$ is a scalar Wiener process with the joint quadratic
variation fulfilling $\langle w_i, w_j \rangle (t)= \delta_{ij}t$ where
$\delta{ii}=1$ and $\delta_y=0$\, $i\neq j$
\end{remark}

\section[Approximations of the diffusion equations]{Approximations of the diffusion equations by smooth ordinary
differential equations\index{Differential !equation}\index{Ordinary differential equations}}
The $m$-dimensional Wiener process is approximated by a smooth process and it
allows one to use non anticipative smooth solutions of ordinary differential\index{Differential !equation}
equations as approximations for solutions of a diffusion equation.
It comes from Langevin's classical procedure of
defining a stochastic differential equation\index{Stochastic !differential equation}.
 By a standard $m$-dimensional Wiener process we
 mean a measurable function $ w (t, \omega) \in R^m$,
 $(t;\omega) \in [0, \infty) \times \Omega$ with continuous trajectories
 $w (t, \omega )$ for each $\omega \in \Omega$ such that $w (0, \omega)=0$,
 and
$i_1$ ) $E (w(_2))- w (t_1) / F_{t_1})=0$,
$i_2$) $E ([w(t_2)-w(t_1)] [w(t_2)-w(t_1)]^T / \mathcal{F}_{t_1})=I_m (t_2-t_1)$
for any $0 \leq t_1 < t_2$,
\noindent where $(\Omega, \mathcal{F}, P)$ is a given complete probability
space and $F_t \subseteq F$ is the $\sigma$-algebra generated by $(w (s), s
\leq t)$.
The quoted Langevin procedure replaces a standard $m$-dimensional Wiener
process by a $C^1$ non-anticipative process $v_{\varepsilon} (t)$ (see
$v_{\varepsilon} (t)$ is $F_t$ measurable) as follows
\begin{equation}\label{4.1}
v_\varepsilon (t) = w (t) - \int_0^t (\exp - \beta (t-s))dw (s), \,
\beta=\frac{1}{\varepsilon}, \varepsilon \downarrow 0
\end{equation}
where the integral in the right hand side is computed as
\begin{equation}
w(t)- \beta \int_0^t w(s) (\exp - \beta (t-s))ds \,\,\,\mbox{(the integration
by parts formula)}
\end{equation}
Actually, $v_{\varepsilon} (t), \, t \in [0,T]$, is the solution of the
following equations
\begin{equation}\label{4.2}
\frac{d v_{\varepsilon} (t)}{dt}=- \beta v_{\varepsilon} (t) +\beta w
(t)=\beta (w(t)-v_{\varepsilon}(t), \, v_{\varepsilon} (0)=0
\end{equation}
and by a direct computation we obtain
\begin{equation}\label{4.3}
E||v_\varepsilon (t) - w (t)||^2 \leq \varepsilon, \,\, t \in [0,T]
\end{equation}
Rewrite $v_\varepsilon (t)$ in \eqref{4.1} as
\[
v_\varepsilon (t)=w(t)-\eta_\varepsilon (t),
\]
where
\begin{equation}\label{4.4}
\eta_\varepsilon (t)=\int_0^t \exp - \beta (t-s) dw (s)
\end{equation}
fulfills $d \eta^i = -\beta \eta^i dt + dw^i (t), \,\, i=1, \ldots, m,$ and
\begin{equation}\label{4.5}
\frac{d v_\varepsilon}{dt} (t)=\beta \eta_\varepsilon (t), \,\,
\beta=\frac{1}{\varepsilon}, \, \varepsilon \downarrow 0.
\end{equation}
Now, we are given continuous functions
\[
f(t,x), g_j (t,x):[0,T] \times R^n \to R^n, \,\,j=1, \ldots, m,
\]
such that
\[
(\alpha)
\begin{cases}
i)\, f, g_j \,\, \mbox{are bounded,}\,\,\,\,\, j=1,\ldots ,m\\
ii)\,||h(t,x'')-h(t,x'||\leq L||x''-x'||\,\,(\forall)x',x'' \in R^n, t\in[0,T]
\end{cases}
\]
where $L>0$ is a constant,$ h\triangleq f, g_j$.
In addition, we assume that $(g_j \in C^{1,2}_b ([0,T]\times R^n)$\\
$\beta )\,\, \frac{d\partial g_j}{\partial t},\, \frac{d\partial g_j}{\partial
x}, \,\frac{d\partial^2 g_j}{\partial t \partial x},\, \frac{d\partial^2
g_j}{\partial x^2}, j=1, \ldots , m$,
are continuous and bounded functions. Let $x_0 (t)$ and $x_\varepsilon (t), t
\in [0,T]$, be the solution in
\begin{equation}\label{4.6}
dx=[f(t,x)+\frac{1}{2} \sum_{i=1}^m \frac{\partial g_i}{\partial x} (t,x) g_i
(t,x)] dt
\end{equation}
$$+\sum_{i=1}^m g_i (t,x)dw^i_{(t)},\,\, x(0)=x_0$$
and
\begin{equation}\label{4.7}
\frac{dx}{dt}=f(t,x)+\sum_{i=1}^m g_i (t,x) \frac{dv_\varepsilon^i (t)}{dt},
\,x(0)=x_0
\end{equation}
correspondingly, where $v_\varepsilon (t)$ is associated with $w (t)$ in \eqref{4.1}
and fulfills \eqref{4.2}-\eqref{4.5}. It is the Fisk-Stratonovich integral (see ''$\circ$'' below) which allow one
to rewrite the system \eqref{4.6} as
\begin{equation}
dx= f(t,x)dt+\sum_{i=1}^m g_i (t,x)\circ dw^i (t), x(0)=x_0,
\end{equation}
where
$$g_i (t,x)\circ dw^i(t) \stackrel{def}{=} g_i (t, x)dw^i (t) +
\frac{1}{2} \frac{\partial g_i}{\partial x} (t, x) g_i (t x)dt$$
\vspace{2mm}
\begin{theorem}\label{tt:4.2.1}
Assume that continuous functions $f (t,x), g_i (t,x), t \in [0,T], x \in
R^n$, are given such that $(\alpha)$ and $(\beta)$ are fulfilled. Then
$\lim_{\varepsilon \to 0} E||x_\varepsilon (t)-x_0 (t)||^2=0,\, t \in [0,T]$,
where $x_0(t)$  and $x_\varepsilon (t)$ are the solutions defined in \eqref{4.6}
and, respectively,\eqref{4.7}
\end{theorem}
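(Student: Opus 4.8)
The plan is to carry out the classical Wong--Zakai argument: put the random ODE \eqref{4.7} into integral form, use the It\^{o} product rule to trade the oscillatory integral $\int_0^t g_i(s,x_\varepsilon(s))\beta\eta_\varepsilon^i(s)\,ds$ for an It\^{o} integral plus a correction that produces exactly the $\tfrac12\sum_i(\partial_xg_i)g_i$ term already present in \eqref{4.6}, and then close an error estimate for $x_\varepsilon-x_0$ via the Gronwall Lemma~\ref{le:1.1}. First I would record the elementary facts about the Ornstein--Uhlenbeck process $\eta_\varepsilon$ of \eqref{4.4}: writing $\eta_\varepsilon^i(t)=e^{-\beta t}\int_0^t e^{\beta s}\,dw^i(s)$ gives $E|\eta_\varepsilon^i(t)|^2=\tfrac{1-e^{-2\beta t}}{2\beta}\le\varepsilon/2$, and splitting $[0,T]$ into subintervals of length of order $\varepsilon$ and applying Doob's inequality on each piece yields $E\sup_{t\le T}|\eta_\varepsilon^i(t)|^{2}\to 0$ (similarly for fourth moments). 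Together with $E\|v_\varepsilon(t)-w(t)\|^{2}\le\varepsilon$ from \eqref{4.3}, the boundedness part of the hypotheses, and Lemma~\ref{le:1.1}, I would then establish the a priori bound $\sup_{\varepsilon>0}E\sup_{t\le T}\|x_\varepsilon(t)\|^{2}<\infty$; note that the integration by parts below is already needed for this, since $\beta\eta_\varepsilon^i$ is itself not bounded in $L^2$.

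The core step exploits that $x_\varepsilon(\cdot)$ is absolutely continuous in $t$, hence of bounded variation with vanishing joint quadratic variation against $\eta_\varepsilon^i$, while $d\eta_\varepsilon^i=-\beta\eta_\varepsilon^i\,ds+dw^i$. The product rule applied to $g_i(s,x_\varepsilon(s))\eta_\varepsilon^i(s)$ (with $\eta_\varepsilon^i(0)=0$) gives
\[
\int_0^t \beta\eta_\varepsilon^i\,g_i(s,x_\varepsilon)\,ds=\int_0^t g_i(s,x_\varepsilon)\,dw^i-g_i(t,x_\varepsilon(t))\eta_\varepsilon^i(t)+\int_0^t \eta_\varepsilon^i\big[\partial_s g_i+\partial_x g_i\,\dot x_\varepsilon\big]\,ds .
\]
Substituting $\dot x_\varepsilon=f(s,x_\varepsilon)+\sum_j g_j(s,x_\varepsilon)\beta\eta_\varepsilon^j$ (this is not circular: the new $\beta\eta_\varepsilon^j$ factor is multiplied by $\eta_\varepsilon^i$, changing the structure), and using $E\sup|\eta_\varepsilon^i|^2\to 0$, one sees that the boundary term and all terms carrying an explicit lone $\eta_\varepsilon^i$ factor vanish in $L^2$; the off-diagonal terms $\int_0^t \beta\eta_\varepsilon^i\eta_\varepsilon^j(\partial_xg_i)g_j(s,x_\varepsilon)\,ds$ with $j\ne i$ vanish because $\eta_\varepsilon^i,\eta_\varepsilon^j$ are independent with variance $O(\varepsilon)$; and there remain the diagonal terms $\int_0^t \beta(\eta_\varepsilon^i)^2(\partial_xg_i)g_i(s,x_\varepsilon)\,ds$.

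The crux is to show these diagonal terms converge in $L^2$ to $\tfrac12\int_0^t(\partial_xg_i)g_i(s,x_0)\,ds$. The idea is to replace $\beta(\eta_\varepsilon^i(s))^2$ by its expectation $\tfrac12(1-e^{-2\beta s})\to\tfrac12$ and to bound the error by a decorrelation estimate: $\mathrm{Cov}\big(\beta(\eta_\varepsilon^i(s))^2,\beta(\eta_\varepsilon^i(s'))^2\big)$ decays exponentially once $|s-s'|\gg\varepsilon$ (a Gaussian mixing bound for the stationary-like O--U process), which forces $\int_0^t[\beta(\eta_\varepsilon^i(s))^2-\tfrac12]\varphi(s)\,ds\to0$ in $L^2$ for a test process $\varphi$ that is merely $L^2$-bounded; the $\omega$-dependence of the integrand $(\partial_xg_i)g_i(s,x_\varepsilon(s))$ is absorbed using the $\mathcal{C}^{1,2}_b$ regularity of $g_i$, the Lipschitz property, and the a priori bound. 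This step, and the bookkeeping of the many remainder terms it generates, is the main obstacle.

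Finally I would assemble everything: the identity above turns \eqref{4.7} into
\[
x_\varepsilon(t)=x_0+\int_0^t\Big[f+\tfrac12\sum_i(\partial_xg_i)g_i\Big](s,x_\varepsilon)\,ds+\sum_i\int_0^t g_i(s,x_\varepsilon)\,dw^i(s)+R_\varepsilon(t),
\]
where $E\sup_{t\le T}\|R_\varepsilon(t)\|^2\to 0$ by the preceding estimates. Subtracting the It\^{o} form \eqref{4.6} of $x_0$, taking $E\sup_{s\le t}$, and using the Lipschitz hypothesis, Doob's inequality for the martingale differences $\int_0^s[g_i(r,x_\varepsilon)-g_i(r,x_0)]\,dw^i$, and the integral form of Gronwall's inequality in Lemma~\ref{le:1.1}, one obtains
\[
E\sup_{t\le T}\|x_\varepsilon(t)-x_0(t)\|^2\le C\,\Big(E\sup_{t\le T}\|R_\varepsilon(t)\|^2+o(1)\Big)\longrightarrow 0,
\]
which is the assertion of Theorem~\ref{tt:4.2.1}.
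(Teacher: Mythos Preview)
Your outline tracks the paper's proof closely through the integration-by-parts step: both arrive at the cross terms $T_{ij}(t)=\int_0^t(\partial_xg_i)g_j(s,x_\varepsilon)\,\beta\eta_\varepsilon^i\eta_\varepsilon^j\,ds$ and must show $T_{ii}\to\tfrac12\int_0^t(\partial_xg_i)g_i(s,x_\varepsilon)\,ds$ while $T_{ij}\to 0$ for $i\ne j$. The divergence is in how these are handled.

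You propose an averaging/decorrelation argument: replace $\beta(\eta_\varepsilon^i)^2$ by its mean $\tfrac12$ and control the fluctuation via exponential mixing of the Ornstein--Uhlenbeck process. This is the hard road, and the obstacle you flag is real. The integrand $(\partial_xg_i)g_i(s,x_\varepsilon(s))$ depends on $\omega$ through $x_\varepsilon$, which itself is driven by all of $\eta_\varepsilon$, so the covariance $E\big\{[\beta(\eta^i_\varepsilon(s))^2-\tfrac12][\beta(\eta^i_\varepsilon(s'))^2-\tfrac12]\varphi(s)\varphi(s')\big\}$ does not factor; bounding $|\varphi|\le C$ and passing to absolute values destroys the cancellation. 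One can rescue it via adaptedness and a conditional-expectation computation, but it is laborious and you have not carried it out.

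The paper avoids this entirely. It makes one more substitution $\beta\eta_\varepsilon^j\,ds=dw^j-d\eta_\varepsilon^j$ to obtain $T_{ij}=-\int_0^t g_{ij}\,\eta_\varepsilon^i\,d\eta_\varepsilon^j+O_{L^2}(\sqrt\varepsilon)$, and then for $i=j$ invokes the It\^o identity $\eta_\varepsilon^i\,d\eta_\varepsilon^i=\tfrac12\,d(\eta_\varepsilon^i)^2-\tfrac12\,dt$ (from $\langle\eta_\varepsilon^i\rangle_t=t$). The $-\tfrac12\,dt$ produces the Stratonovich correction \emph{exactly}; the leftover $\int_0^t g_{ii}\,d(\eta_\varepsilon^i)^2$ is treated by a second, deterministic integration by parts together with the moment bounds $E(\eta_\varepsilon^i(t))^{2k}=O(\varepsilon^k)$. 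For $i\ne j$ the analogous identity $d(\eta_\varepsilon^i\eta_\varepsilon^j)=\eta_\varepsilon^i\,d\eta_\varepsilon^j+\eta_\varepsilon^j\,d\eta_\varepsilon^i$ does the job. This route yields quantitative $O(\varepsilon)$ remainders with no ergodic averaging whatsoever, and only ever uses the fixed-$t$ bound $E|\eta_\varepsilon^i(t)|^2\le\varepsilon$. Your $E\sup_t|\eta_\varepsilon^i|^2\to 0$ is true but unnecessary, and your sketch for it (Doob on blocks of length $\varepsilon$) is not quite right since $\eta_\varepsilon$ is not a martingale.
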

\begin{proof}
Using \eqref{4.6} we rewrite the solution $x_\varepsilon (t)$ as
\begin{equation}\label{4.8}
x_\varepsilon (t)=x_0 + \int_0^t f(s), x_\varepsilon (s))ds
+\sum_{i=1}^m \beta \int_0^t \eta_\varepsilon^i (s) g_i (s, x_\varepsilon
(s))ds,\,\, t\in [0,T]
\end{equation}
and from $F_t$-measurability of $\eta_\varepsilon (t)$ we obtain that
$x_\varepsilon (t)$ is $\mathcal{F}_t$-measurable and non-anticipative with
respect to $\{\mathcal{F}_t\}$, \, $t\in [0,T]$. Therefore, the stochastic
integrals \index{Stochastic! integral}$\displaystyle{\int_0^t g_i (s, x_\varepsilon (s)) dw^i (s)}$ and
$\displaystyle{\int_0^t g_i (s, x_\varepsilon (s))d \eta_\varepsilon^i (s)}$
are well defined, and using \eqref{4.4} we obtain
\begin{equation}\label{4.9}
\int_0^t g_i (s, x_\varepsilon (s)) d \eta_\varepsilon^i (s)= -\beta \int_0^t
g_i
(s, x_\varepsilon (s))\eta_\varepsilon^i (s) ds\\
 + \int_0^t g_i (s, x_\varepsilon (s, x_\varepsilon (s))dw^i (s).
\end{equation}
{\underline{ \it Step 1}}

\vspace{2mm}
\noindent Using \eqref{4.8}in\eqref{4.8} there follows
\begin{eqnarray}\label{4.10}
x_\varepsilon (t)&=&x_0 + \int_0^t f(s, x_\varepsilon (s)) ds
 +\sum_{i=1}^m \int_0^t g_i (s, x_\varepsilon (s))dw^i (s)\nonumber\\
 &-&\sum_{i=1}^m \int_0^t g_i (s_s, x_\varepsilon (s)) d \eta_\varepsilon^i
(s), \, t \in [0,T]
\end{eqnarray}
In what follows it will be proved that (see step 2 and step 3)
\begin{equation}\label{4.11}
- \int_0^t g_i (s, x_\varepsilon (s)) d \eta_\varepsilon^i (s)=\frac{1}{2}
\int_0^t
\frac{\partial g_i}{\partial x} (s, x_\varepsilon (s)) g_i (s, x_\varepsilon
(s))ds +O_t (\varepsilon)
\end{equation}
where $E||O_t (\varepsilon)||^2 \leq c_1 \varepsilon, \,\, (\forall) \, t\in
[0,T]$, for some constant $c_1 >0$. Using \eqref{4.11} in \eqref{4.10} we rewrite \eqref{4.10} as
\begin{eqnarray}\label{4.12}
x_\varepsilon (t) &=& x_0 + \int_0^t \Big [ f(s, x_\varepsilon (s))\nonumber\\&+&
\frac{1}{2}\sum_{i=1}^{m}\frac{\partial g_i}{\partial x}
(s, x_\varepsilon (s)) g_i (s, x_\varepsilon (s)) \Big ] ds
 \sum_{i=1}^m \int_0^t g_i (s, x_\varepsilon (s)) dw^i (s)+O_t (\varepsilon),
\end{eqnarray}
where $E|O_t (\varepsilon)||^2 \leq c_1\varepsilon$.

The hypotheses $(\alpha)$ and $(\beta)$ allow one to check that
\[
\widetilde{f} (t,x)\triangleq f(t,x)+\frac{1}{2} \sum_{i=1}^m \frac{\partial
g_i}{\partial x} (t,x)g_i (t,x)
\]
and $g_i (t,x)$ fulfil $(\alpha)$ also but with a new Lipschitz constant
$\widetilde{L}$.

The proof will be complete noticing that
\begin{equation}\label{4.13}
E||x_\varepsilon (t) - x_0 (t)||^2 \leq c_2 \int_0^t E||x_\varepsilon
(s)-x_0(s)||^2 ds
+c_3 \varepsilon, \,\, t\in [0,T]
\end{equation}
for some constants $c_2,c_3>0$ and Gronwall's lemma applied to \eqref{4.13} implies
\begin{equation}\label{4.14}
E||x_\varepsilon (t)-x_0(t)||^2 \leq \varepsilon c_3 (\exp
T_{c_2})=\varepsilon c_4
\end{equation}
which proves the conclusion.

\vspace{2mm}

{\underline{ \it Step 2}}

\vspace{2mm}
\noindent To obtain \eqref{4.11} fulfilled we use an ordinary calculus as integration
by parts formula (see $x_\varepsilon (t)$ is a $C^1$ function in $t \in
[0,T]$). More precisely
\begin{eqnarray}\label{4.15}
- \int_0^t g_i (s, x_\varepsilon (s))d \eta_\varepsilon^i (s) &=&
-g_i (t, x_\varepsilon (t) \eta_\varepsilon^i (t) + \int_0^t
\eta_\varepsilon^i (s) \Big ( \frac{d}{ds} g_i (s, x_\varepsilon (s)) \Big )
ds\nonumber\\
 &=&\sum_{j=1}^m \int_0^t \frac{\partial g_i}{\partial x} (s, x_\varepsilon
(s)) g_j (s, x_\varepsilon (s)) \frac{dv_\varepsilon^j (s)}{ds}
\eta_\varepsilon^i (s) ds \nonumber\\
&+& O_t^i (\varepsilon)
\triangleq \sum_{j=1}^m T_{ij} (t) + O_t^i (\varepsilon),
\end{eqnarray}
Here
\[
T_{ij} (t) \triangleq \int_0^t \frac{\partial g_i}{\partial x} (s,
x_\varepsilon (s))g_j (s, x_\varepsilon (s)) \frac{dv_\varepsilon^j (s)}{ds}
\eta_\varepsilon^i (s) ds \,\,\,\mbox{and}
\]
\[
\begin{aligned}
O_t^i(\varepsilon)\triangleq \int_0^t \eta_\varepsilon^i (s) \Big [
\frac{\partial g_i}{\partial s} (s, x_\varepsilon (s)) + \frac{\partial
g_i}{\partial x} (s, x_\varepsilon (s))f (s, x_\varepsilon (s)) \Big ]ds\\
-g_i (t, x_\varepsilon (t)) \eta_\varepsilon^i (t)
\end{aligned}
\]
satisfies
\begin{equation}\label{4.16}
E|| O^1_t (\varepsilon) ||^2 \leq k \varepsilon, \, t \in [0,T],
\end{equation}
(see $E||\eta_\varepsilon^i (t)||^2 \leq \varepsilon$ in \eqref{4.4} and $f, g_i ,
\dfrac{\partial g_i}{\partial x}; \dfrac{\partial g_i}{\partial t}$ are
bounded). On the other hand, $\dfrac{dv_\varepsilon^j}{dt}= \beta
\eta_\varepsilon^j (t)$ (see \eqref{4.5}) and using $\beta \eta_\varepsilon^j
(t)dt=dw^j (t)-d\eta_\varepsilon^j (t)$ (see \eqref{4.4}) we obtain that $T_{ij}$ in
\eqref{4.15} fulfils
\begin{equation}\label{4.17}
T_{ij} (t)= - \int_0^t g_{ij} (s, x_\varepsilon (s)) \eta_\varepsilon^i (s)d
\eta_\varepsilon^j (s) + \theta_t^2 (\varepsilon),
\end{equation}

where $g_{ij} (t,x)\triangleq \dfrac{\partial g_i}{\partial x} (t,x) g_j
(t,x)$ and
\[
\theta_t^2 (\varepsilon) \triangleq \int_0^t g_{ij} (s, x_\varepsilon (s))
\eta_\varepsilon^i (s) dw^j (s)
\]
satisfies
\begin{equation}\label{4.18}
E||\theta^2_t (\varepsilon)||^2 \leq k_2 \varepsilon, \, t\in [0,T]
\end{equation}
(see $g_{ij}$ bounded and $E||\eta_\varepsilon^i (t) ||^2 \leq \varepsilon$).
\vspace{2mm}

{\underline{ \it Step 3}}
\vspace{2mm}

\noindent
(a) For $i=j$ we use formulas
\[
\eta_\varepsilon^i (t)= (\exp - \beta t) \mu_\varepsilon^i (t),
\mu_\varepsilon^i (t)= \int_0^t (\exp \beta s) dw^i (s)
\]
\[
\begin{aligned}
(\eta_\varepsilon^i (t))^2&=& (\exp- 2 \beta t)(\mu_\varepsilon^i (t))^2 =
(\exp - 2 \beta t)\Big [\int_0^t 2 (\exp 2 \beta s) \eta_\varepsilon^i (s)dw^i
(s)+ \int_0^t (\exp 2 \beta s)ds\Big ]\nonumber\\
& =&2 \int_0^t \eta_\varepsilon^i (s) d \eta_\varepsilon^i (s)+\int_0^t ds
\end{aligned}
\]
\[
T_{ii} (t)= - \int_0^t g_{ii} (s, x_\varepsilon (s)) \eta_\varepsilon^i (s) d
\eta_\varepsilon^i (s) + \theta_t^2 (\varepsilon)\,\, \mbox{(see \eqref{4.17})}
\]
We get
\begin{eqnarray}\label{4.19}
T_{ii}(t)&=&\frac{1}{2} \int_0^t g_{ii} (s, x_\varepsilon (s))ds
\hspace{2mm}-\frac{1}{2} \int_0^t g_{ii} (s, x_\varepsilon )d
(\eta_\varepsilon^i (t))^2 +\theta_t^2 (\varepsilon)\nonumber\\
&=& \frac{1}{2} \int_0^t g_{ii} (s, x_\varepsilon (s)) ds
\hspace{2mm}-\frac{1}{2} g_{ii} (t, x_\varepsilon (t)) (\eta_\varepsilon^i
(t))^2
\hspace{2mm}\nonumber\\&+&\frac{1}{2} \int_0^t \Big [ \frac{d}{ds} g_{ii} (s,
x_\varepsilon (s))\Big ] (\eta_\varepsilon^i (s))^2 ds +\theta^2_t
(\varepsilon)\nonumber\\
&=&\frac{1}{2} \int_0^t g_{ii} (s, x_\varepsilon (s))ds + O_t^2 (\varepsilon).
\end{eqnarray}
Here
\[
\begin{aligned}
O^2_t (\varepsilon) \triangleq  \,\theta_t^2 (\varepsilon)-\frac{1}{2} g_{ii}
(t, x_\varepsilon (t))(\eta_\varepsilon^i (t))^2\\
+\frac{1}{2} \int_0^t \Big [ \frac{\partial}{\partial s} g_{ii} (s,
x_\varepsilon (s)) +\frac{\partial g_{ii}}{\partial x} (s, x_\varepsilon (s))
f (s, x_\varepsilon (s))\\
+\beta \sum_{j=1}^m \frac{\partial g_{ii}}{\partial x} (s, x_\varepsilon (s))
g_j (s, x_\varepsilon (s))\eta_\varepsilon^j (s)\Big ] (\eta_\varepsilon^i
(s))^2 ds
\end{aligned}
\]

fulfils $E||O_t^2 (\varepsilon)||^2 \leq \widetilde{K}_2 \varepsilon$ taking
into account that
\begin{equation}\label{4.20}
E|| \theta_t^2 (\varepsilon)||^2 \leq k_2 \varepsilon \, \mbox{(see \eqref{4.17})},\,\,
E(\eta_\varepsilon^i (t))^2 = (\exp - 2 \beta t) E (\mu_\varepsilon^i (t))^2
\leq \varepsilon
\end{equation}
In addition, write $(\eta_\varepsilon^i (t))^2= (\exp - 2 \beta t)(
\mu_\varepsilon^i (t))^2$, where $(\mu_\varepsilon^i (t))^2= a_i (t)+ b(t)$ is
the corresponding decomposition using martingale $ a_i (t) \triangleq \int_0^t
2 \mu^i (s) (\exp \beta s) dw^i (s)$ and $b (t)\triangleq \int_0^t (\exp 2
\beta s)ds$.
Compute $(\eta_\varepsilon^i (t))^4= (\exp- 4 \beta t)[a_i (t))^2+ (b(t))^2+2
a_i (t)b(t)]$ and $(\eta_\varepsilon^i (t))^6= (\exp - 6 \beta t)[(a_i(t))^3+
(b(t))^3+3 (a_i (t))^2 b(t)+3a_i (t)(b(t))^2]$.
Noticing that $a_i (t), \, t \in [0,T]$, is a martingale with $a_i(0)=0$ we
can prove (exercise!) that any $(a_i (t))^m (m=2,3)$ satisfies
\[
E(a_i (t))^m=\dfrac{1}{2} \int_0^t m \,(m-1)\, E [a_i(s))^{m-2}\, (f_i (s))^2]ds
\]
where
$f_i (t)\triangleq 2 (\exp \beta t) \mu^i (t)$ and $a_i (t)\triangleq \int_0^t
f_i (s)dw^i (s)$.
We get
\begin{equation}\label{4.21}
E (\eta_\varepsilon^i (t))^4 \leq \,\mbox{(const)}\, \varepsilon^2, \,
E(\eta_\varepsilon^i (t))^6 \leq \mbox{(const)}\, \varepsilon^3\, t \in [0,T]
\end{equation}
and the conclusion $E||\theta_t^2 (\varepsilon)||^2 \leq \, \mbox{(const)}
\varepsilon $ used in \eqref{4.19}
follows directly from \eqref{4.20} and \eqref{4.21}.

(b)\, For any $i \neq j$, there holds
\[
T_{ij} (t)=- \int_0^t g_{ij} (s, x_\varepsilon (s)) \eta_\varepsilon^i (s) d
\eta_\varepsilon^j (s)+ \theta_t^2 (\varepsilon) \,\mbox{(see (17))}
\]
where
\begin{equation}\label{4.22}
d \eta_\varepsilon^j (s)= - \beta \eta_\varepsilon^j (s)ds+dw^j (s) \,
\mbox{(see \eqref{4.4})}
\end{equation}
Using \eqref{4.22} in \eqref{4.17} for $i \neq j$ we obtain that $ \eta_\varepsilon^i (t),
\eta_\varepsilon^j (t)$ are independent random variables\index{Independent!random variables} and
\begin{equation}\label{4.23}
T_{ij}(t)= - \frac{1}{2} \int_0^t g_{ij} (s, x_\varepsilon (s)) d
[\eta_\varepsilon^i (s) \eta_\varepsilon^j (s)]+\theta_t
(\varepsilon)+\widetilde{O}_t (\varepsilon),
\end{equation}
where
\[
\widetilde{O}_t (\varepsilon)= - \int_0^t g_{ij}(s, x_\varepsilon
(s))\eta_\varepsilon^i (s)dw^j (s)
\]
satisfies
\begin{equation}\label{4.24}
E||\widetilde{O}_t (\varepsilon)||^2 \leq \mbox{(const)} \varepsilon\,
( \mbox{see}\, g_{ij} \, \mbox{is bounded and}\, E(\eta_\varepsilon^i (s))^2
\leq \varepsilon)
\end{equation}
Integrating by parts the first term in \eqref{4.23} and using
\begin{equation}\label{4.25}
E(\eta_\varepsilon^i (t))^2 (\eta_\varepsilon^j (t))^2 \leq \mbox{(const)}
\varepsilon^2, \,\, E (\eta_\varepsilon^i (t))^2 (\eta_\varepsilon^j (t))^2
(\eta_\varepsilon^k (t))^2 \leq \mbox{(const)} \varepsilon^3
\end{equation}
we finally obtain
\begin{equation}\label{4.26}
T_{ij}(t)= \widetilde{O}_t^2 (\varepsilon) \mbox{with}\, E||\widetilde{O}_t^2
(\varepsilon)||^2 \leq \widetilde{C} \varepsilon, \, t \in [0,T]
\end{equation}
Using \eqref{4.19}, \eqref{4.26} in \eqref{4.15} we find \eqref{4.11} fulfilled.
The proof is complete
\end{proof}
\vspace{2mm}
\begin{remark}\label{rk:4:2.1}
\vspace{2mm}
{\it Under the conditions in theorem \eqref{tt:4.2.1} it might be useful to notice that the
computations remain unchanged if a stopping time $\tau : \Omega \to [0, T]$,\,
$\{\omega : \tau \geq t \} \in \mathcal{F}_t , \, (\forall ) t \in [0,T]$, is
used. Namely}
\[
\lim_{\varepsilon \downarrow 0} E||x_\varepsilon ( t \wedge \tau) - x_0 ( t
\wedge \tau)||^2=0 \,(\forall)\, t \in [0,T],
\]
{\it if the random varieble $\tau : \Omega \to [0, T]$ is adapted to $\{
\mathcal{F}_t \}$ i.e. $\{ \omega : \tau \geq t \} \in \mathcal{F}_t$ for $ t
\in [0,T]$.}
\vspace{3mm}

\begin{proof}
\vspace{2mm}

By definition
\[
x_\varepsilon (t \wedge \tau) = x_0 + \int_0^{ t \wedge \tau} f(s,
x_\varepsilon (s))ds +\sum_{j=1}^m \int_0^{t \wedge \tau} g_j (s,
x_\varepsilon (s) \frac{dv_\varepsilon^j}{ds} (s),
\]
\[
x_0 (t \wedge \tau)=x_0 + \int_0^{ t \wedge \tau} \widetilde{f} (t, x_0 (s))ds
+\sum_{j=1}^m \int_0^{t \wedge \tau} g_j (s, x_0 (s)dw^j (s), \, t \in [0,T],
\]
where
\[
\widetilde{f} (t,x)\triangleq f (t,x) +\frac{1}{2} \sum_{j=1}^m \frac{\partial
g_j}{\partial x} (t,x) g_j (t,x).
\]
Using the characteristic function
\[
\chi (t, \omega)=
\begin{cases}
1 \,\, \mbox{if} \,\, \tau (\omega) \geq t\\
0 \,\, \mbox{if} \,\, \tau (\omega) < t
\end{cases}
\]
which is a non-anticipative function, we rewrite $y_\varepsilon (t) \triangleq
x_\varepsilon (t \wedge \tau)$ and $y_0 (t) \triangleq \chi_0 (t \wedge \tau
)$ as
\[
y_\varepsilon (t) = x_0 + \int_0^t \chi (s) f (s, y_\varepsilon (s) ds +
\sum_{j=1}^m \int_0^t \chi (s) g_j (s, y_\varepsilon (s)
\frac{dv_\varepsilon^j (s)}{ds},
\leqno(*)
\]
\[
y_0 (t) = x_0 + \int_0^t \chi (s) \widetilde{f} (s, y_0 (s))ds + \sum_{j=1}^m
\int_0^t \chi (s) g_j (s, y_0 (s))dw^j (s).
\leqno(**)
\]
Now the computations in Theorem \ref{tt:4.2.1} repeated for (*) and (**) allow one to
obtain the conclusion.
\end{proof}
\end{remark}
\begin{remark}\label{rk:4:2.2}
\vspace{2mm}
{\it Using Remark 1 we may remove the boundedness assumption of $f, g_j$ in
the hypothesis $(\alpha)$ of Theorem \ref{tt:4.2.1}. That is to say, the solutions
$x_\varepsilon (t), x_0 (t),\, t \in [0,T]$, exist assuming only the
hypothesis $(\alpha, (ii))$ and $(\beta)$, and to obtain the conclusion we
multiply $f, g_j$ by a $C^\infty$ scalar function $0 \leq \alpha_N (x) \leq 1$
such that $\alpha_N (x)=1$ if $ x \in S_N (x_0)\,, \alpha_N (x)=0$ if $x \in
R^n \ S_{2N} (x_0)$ where $S_\rho (x_0) \subseteq R^n$ is the ball of radius
$\rho$ and centered at $x_0$. We obtain new bounded functions}
\[
f^N (t,x)= f(t,x) \alpha^N (x), \, g_j^N (t,x)=g_j (t,x) \alpha^N (x)
\]
{\it fulfilling $(\alpha)$ and $(\beta)$ of Theorem \ref{tt:4.2.1}, and therefore
$\lim_{\varepsilon \downarrow 0} E || x_\varepsilon^N (t) - x_0^N (t) ||^2 =
0$\,
$(\forall)\, t \in [0,T]$, where $ x_\varepsilon^N (t)$ and $x_0^N (t)$ are
the corresponding solutions. On the other hand, using a stopping time (exit
ball time)}
\[
\tau_N (\omega)= \inf \{ t \geq 0\, : \,x_0 (t, \omega) \not \in S_N (x_0)\}
\]
{\it we obtain $x_0 (t \wedge \tau_N)=x_0^N (t \wedge \tau_N),\, t \in [0,T]$
(see Friedmann) where $x_0 (t), t\in [0,T]$ is the solution of the equation
 \eqref{4.6} with $f, g_i$ fulfilling $(\alpha, ii)$ and $(\beta)$.

Finally we obtain:
\[
\lim_{\varepsilon \to 0} E||x_\varepsilon^N (t \wedge \tau_N) -x_0 (t \wedge
\tau_N) ||^2 =0
\leqno c)
\]
 for any $ t \in [0,T]$, and for arbitrarily fixed $N>0$. The conclusion (c)
re\-pre\-sents the approximation of the solution in \eqref{4.6} under the hypotheses
$(\alpha, ii)$ and $(\beta)$.}
\end{remark}
\begin{remark}
{\it The nonanticipative process $v_\varepsilon (t), \, t \in [0,T]$, used in
Theorem \ref{tt:4.2.1} is only of the class $C^1$ with respect to $t \in [0,T]$, but a
minor change in the appro\-xi\-ma\-ting equations as follows $(\beta =
\dfrac{1}{\varepsilon}, \, \varepsilon \downarrow 0)$}
\[
\frac{dv_\varepsilon}{dt} (t)=y_1, \, \varepsilon \frac{dy_1}{dt}=-y_1+y_2,
\ldots, \, \varepsilon \frac{dy_{k-1}}{dt}=-y_{k-1}+y_k
\]
\[
\varepsilon dy_k=-y_k dt +dw (t), \,t \in [0,T],\, y_j (0)=0, \, j=1, \ldots
k, v_\varepsilon (0)=0
\]
{\it will allow one to obtain a non-anticipative $v_\varepsilon (t), t \in
[0,T]$ of the class $C^k$, for an arbitrarily fixed $k$.}
\end{remark}
\section{ Stochastic rule of derivation}\index{Stochastic!Rule of Derivations}

\medskip

The results contained in Theorem \ref{tt:4.2.1} and in remarks following the
theorem give the possibility to obtain, in a straight manner, the standard
rule of stochastic derivation\index{Stochastic!derivation} associated with stochastic differential\index{Differential !equation}
equations ( SDE.)  $( 6_s )$  when the drift vector field  $f (  t, x  ) \in
R^n$ and diffusion vector fields $g_j ( t,x ) \in R^n $, $j \in \{ 1, \ldots ,
m \} $ are not bounded with respect to $ ( t,x ) \in  [ 0,T ] \times R^n$.
More precisely, we are given continuous functions

\[
f ( t,x ), g_j ( t,x) : [0,T] \times R^n \to R^n, j=1, \ldots ,m
\]

\noindent such that
\begin{itemize}
\item[(1)] $|| h (t, x'')-h (t, x') || \leq L || x'' - x' ||$, for any $x',
x'' \in R^n, t \in [0,T]$ where $L> 0$ is a constant and $h$ stands for $f$ or
$g_j$, $j \in \{1, \ldots , m\}$
\item[(2)] $g_j \in C_b^{1,2} ([0,T] \times R^n)$ i.e. $\partial_t g_j,
\partial_x g_j, \partial_{tx}^2 g_j$ and $\partial_{x^2}^2 g_j$, $j \in \{1,
\ldots , m\}$ are continuous and bounded functions

\noindent Consider the following system of SDE

\item[(3)]$ \displaystyle{dx= f(t,x) dt + \sum_{j=1}^m g_j (t,x) \circ  dw^j
(t), x (0)=x_0, \, t \in [0,T]}$ where Fisk-Stratonovich integral ''0'' is
related to It \^{o} stochatisc integral "." by
\[
g_j (t,x) \circ dw^j (t) \stackrel{def}{=} g_j (t,x) \bullet  dw^j (t) +
\frac{1}{2} [d_x g_j (t,x)] g_j (t,x)dt
\]
Assuming that $\{ f, g_j, j=1, \ldots , m \}$, fulfil the hypotheses (1), (2)
then
there is a unique solution $x(t):[0,T] \to R^n$ which is a continuous and
$F_t$-adapted process satisfying the corresponding integral equation

\item[(4)] $\displaystyle{x(t)=x_0 + \int_0^t f(s,x(s))ds + \sum_{j=1}^m
\int_0^t g_j (s, x(s)) \circ dw^j (s), t \in [0,T]}$\\
\mbox{(see A. Friedman)}.
Here $w(t)= (w^1 (t), \ldots , w^m (t) : [0,T] \to R^m$ is the standard
Wiener process over the filtered probability space
\mbox{$\{ \Omega, \{ \mathcal{F}_t\} \uparrow \subseteq \mathcal{F}, P \}$}
and $\{\mathcal{F}_t, t \in [0,T]\} \subseteq \mathcal{F}$ is the corresponding
increasing family of $\sigma$-algebras. Consider the exist ball time

\item[(5)] $\tau \stackrel{def}{=} \inf \{ t \in [0,T] : x(t) \not \in B (x_0,
\rho )\}$ associated with the unique solution of (4) and the ball centered at
$x_0 \in R^n$ with radius $\rho >0$.

\noindent By definition, each set $\{ \omega : \tau \geq t \}$ belongs to
$\mathcal{F}_t, t\in [0,T]$ and the characteristic function $\chi_\tau (t): [0,T]
\to \{0,1\}, \chi_\tau (t)=1$ for $\tau \geq t$, $\chi_\tau (t)=0$ for $\tau
<t$, is
an $\mathcal{F}_t$-adapted process
\end{itemize}

\begin{theorem}\label{th:4.t2} (stochastic rule of derivation)\index{Stochastic!Rule of Derivations}

Assume that the hypotheses (1) and (2) are
fulfilled and let $\{x(t):t \in [0,T]\}$ be the unique solution of (4).

Define a stopping time $\tau : \Omega \to [0,T]$ as in (5) and consider
a smooth scalar function $\varphi \in C^{1,3} ([0,T] \times R^n)$.
Then the following integral equation is valid

\begin{itemize}
\item[(6)] $\varphi (t \wedge \tau,
\, x (t \wedge \tau))= \varphi (0, x_0) +
\int_0^{t \wedge \tau} [ \partial_s \varphi (s, x(s))+ < \partial_x
\varphi (s, x (s)),\\
f(s, x (s))>]ds + \sum_{j=1}^m \int_0^{t \wedge \tau}
< \partial_x \varphi (s, x (s)) g_j (s, x(s))>
\circ \,dw^j (s), \, \\
t \in [0,T]$

\noindent where the Fisk-Stratonovich integral "0" is defined by

$h_j (s, x(s)) \circ dw^j
(s)= h_j (s, x (s)). dw^j (s)+ \frac{1}{2} [ \partial_x h_j
(s,x (s))] g_j (s, x(s))]ds$

using It\^{o} stochastic integral\index{Stochastic!integral} ".".
\end{itemize}
\end{theorem}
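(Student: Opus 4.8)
The plan is to derive the stochastic rule of derivation (the It\^o / Fisk-Stratonovich chain rule) as a consequence of the ODE approximation result already established in Theorem \ref{tt:4.2.1} and the refinements in Remarks \ref{rk:4:2.1} and \ref{rk:4:2.2}. The key observation is that along the smooth approximating trajectory $x_\varepsilon(t)$ of equation \eqref{4.7}, ordinary calculus applies, so that $\varphi(t\wedge\tau, x_\varepsilon(t\wedge\tau))$ satisfies the classical chain rule with no second-order correction term. One then passes to the limit $\varepsilon\downarrow 0$ and identifies the limit of each term.

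First I would set up the approximating system. Since $f$ and $g_j$ are only locally Lipschitz (hypothesis (1)) and $g_j\in C^{1,2}_b$ (hypothesis (2)), I would, following Remark \ref{rk:4:2.2}, multiply $f$ and $g_j$ by a cutoff $\alpha_N$ supported near $B(x_0,\rho)$ so that the boundedness hypotheses $(\alpha)$ and $(\beta)$ of Theorem \ref{tt:4.2.1} hold; this does not change the solution up to the exit-ball time $\tau$. Let $v_\varepsilon(t)$ be the Langevin smoothing of $w(t)$ defined in \eqref{4.1}, and let $x_\varepsilon(t)$ solve \eqref{4.7}. By Theorem \ref{tt:4.2.1} together with Remark \ref{rk:4:2.1} (the stopping-time version), $\mathbb{E}\|x_\varepsilon(t\wedge\tau)-x(t\wedge\tau)\|^2\to 0$ for every $t\in[0,T]$, where $x(t)$ is the unique solution of \eqref{4}.

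Next I would apply the classical (deterministic, pathwise) chain rule to $\varphi\in C^{1,3}$ along the $C^1$ path $t\mapsto x_\varepsilon(t)$: using the nonanticipative characteristic function $\chi_\tau(t)$ as in the proof of Remark \ref{rk:4:2.1},
\begin{align*}
\varphi(t\wedge\tau, x_\varepsilon(t\wedge\tau)) &= \varphi(0,x_0) + \int_0^t \chi_\tau(s)\big[\partial_s\varphi(s,x_\varepsilon(s)) + \langle\partial_x\varphi(s,x_\varepsilon(s)), f(s,x_\varepsilon(s))\rangle\big]ds \\
&\quad + \sum_{j=1}^m \int_0^t \chi_\tau(s)\langle\partial_x\varphi(s,x_\varepsilon(s)), g_j(s,x_\varepsilon(s))\rangle \frac{dv_\varepsilon^j(s)}{ds}\,ds.
\end{align*}
The crucial point is that the composite vector field $\Phi_j(s,y):=\langle\partial_x\varphi(s,y),g_j(s,y)\rangle$ inherits from $\varphi\in C^{1,3}$ and $g_j\in C^{1,2}_b$ exactly the regularity $(\alpha)$, $(\beta)$ required by Theorem \ref{tt:4.2.1} of a ``diffusion coefficient,'' and $\langle\partial_x\varphi,f\rangle$ plays the role of a ``drift.'' Hence the same $\varepsilon\downarrow 0$ analysis that produced \eqref{4.11}--\eqref{4.12} in the proof of Theorem \ref{tt:4.2.1} converts the smooth integral $\int_0^t\chi_\tau\,\Phi_j(s,x_\varepsilon)\,\frac{dv_\varepsilon^j}{ds}ds$ into the It\^o integral $\int_0^t\chi_\tau\,\Phi_j(s,x(s))\,dw^j(s)$ plus the Stratonovich correction $\frac12\int_0^t\chi_\tau\,[\partial_x\Phi_j(s,x(s))]g_j(s,x(s))\,ds$, i.e. precisely the Fisk--Stratonovich integral $\int_0^{t\wedge\tau}\Phi_j(s,x(s))\circ dw^j(s)$. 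The drift term converges by the $L^2$ convergence of $x_\varepsilon(\cdot\wedge\tau)$ and continuity of $\partial_s\varphi,\partial_x\varphi,f$. Taking limits in probability (extracting an a.s.-convergent subsequence for the left side using $\varphi\in C^1$ and the $L^2$ convergence) yields identity (6).

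\textbf{Main obstacle.} The genuine work is not the bookkeeping but re-running the quadratic-variation estimates of Theorem \ref{tt:4.2.1}: one must check that the remainder terms $O_t(\varepsilon)$ arising when $\int \Phi_j\,dv_\varepsilon^j$ is integrated by parts (the analogues of Steps 2--3 in that proof, using $\mathbb{E}\|\eta_\varepsilon(t)\|^2\le\varepsilon$, $\mathbb{E}(\eta^i_\varepsilon)^4\le C\varepsilon^2$, etc.) still vanish in $L^2$ — which requires that $\partial_x\Phi_j$, $\partial_{x^2}^2\Phi_j$, $\partial_{tx}^2\Phi_j$ be bounded on the region cut off by $\alpha_N$. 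This is where the hypothesis $\varphi\in C^{1,3}$ (one more $x$-derivative than for a plain coefficient) is consumed: $\partial_x\Phi_j$ involves $\partial_x^2\varphi$ and $\partial_x^2\Phi_j$ involves $\partial_x^3\varphi$. Once that regularity is in place, every estimate is verbatim the one already carried out for Theorem \ref{tt:4.2.1}, so I would state the needed bounds and invoke the earlier computation rather than redo it. A secondary, routine point is justifying that the stopping time $\tau$ is adapted (immediate from \eqref{5}) and that $\chi_\tau$ may be carried through the limit, which is handled exactly as in Remark \ref{rk:4:2.1}.
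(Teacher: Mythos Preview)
Your approach is correct and rests on the same idea as the paper: apply the ordinary chain rule along the $C^1$ approximation $x_\varepsilon$, then pass to the limit $\varepsilon\downarrow 0$ using Theorem~\ref{tt:4.2.1} and Remarks~\ref{rk:4:2.1}--\ref{rk:4:2.2}. The difference is in packaging. You treat $\Phi_j(s,y)=\langle\partial_x\varphi(s,y),g_j(s,y)\rangle$ as a new scalar diffusion coefficient and propose to \emph{re-run} Steps~2--3 of Theorem~\ref{tt:4.2.1} on the integral $\int_0^t\chi_\tau\Phi_j(s,x_\varepsilon(s))\,dv_\varepsilon^j/ds\,ds$. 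The paper instead \emph{augments the state}: it sets $z(t)=(\alpha(t),x(t))\in\mathbb{R}^{n+1}$ with $\alpha(t)=\varphi(t,x(t))$, observes that $z$ formally solves an SDE with drift $\widehat{f}=(h_0,f)$ and diffusions $\widehat{g}_j=(h_j,g_j)$ where $h_0=\partial_t\varphi+\langle\partial_x\varphi,f\rangle$ and $h_j=\langle\partial_x\varphi,g_j\rangle$, cuts off by $\gamma$, and then applies Theorem~\ref{tt:4.2.1}/Remark~\ref{rk:4:2.2} \emph{as a black box} to this $(n{+}1)$-dimensional system. The identity $\alpha_\varepsilon^\rho(t)=\varphi(t\wedge\tau,x_\varepsilon^\rho(t))$ is then just the statement that the first component of the approximating ODE is exactly the classical chain rule along $x_\varepsilon^\rho$, and the limit $\alpha_\varepsilon^\rho\to\widehat{\alpha}^\rho$ delivers (6) directly from the first line of (14). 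What the augmentation buys is that you never re-open the $\eta_\varepsilon$--estimates: the $C^{1,3}$ hypothesis on $\varphi$ is consumed exactly where you said (so that $\widehat{g}_j\in C^{1,2}_b$ after cutoff), but the analytic work is entirely absorbed by the single invocation of the already-proved approximation theorem. Your route gets to the same place with the same ingredients, just with a bit more bookkeeping in the limit passage.
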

\begin{proof}
Denote $\alpha (t)= \varphi (t, x(t)) \in R, \, z(t)= \mbox{col} (\alpha (t),
x(t) \in
R^n$, $ t \in [0,T]$ where $\{ x(t), t \in [0,T]\}$ is the unique solution of
(4) and $ \varphi \in C^{1,3} ( [0,T] \times R^n)$ is fixed.
Define new vector fields $\widehat{f} (t,x) = \mbox{col} (h_0 (t,x), f(t,x)
\in R^{n+1}$,
\begin{itemize}
\item[(7)] $\widehat{g}_j (t,x) = \mbox{col} (h_j (t,x), g_j (t,x)) \in
R^{n+1}, \,
j \in \{ 1, \ldots, m \}$,

where

$h_0 (t,x) \stackrel{def}{=} \partial_t \varphi (t,x) + < \partial_x \varphi
(t,x), f (t,x)>$

and

$h_j (t,x) = < \partial_x \varphi (t,x), g_j (t,x)>, (t,x) \in [0,T] \times R^n,
\, j\in \{1 \ldots , m\}$,
$z= \mbox{col} (\alpha, x) \in R^{n+1}$.

Let $\gamma (x): R^n \to [0,1]$ be a smooth function $(\gamma \in C^\infty
(R^n))$ such that $\gamma(x)=1$ if $ x\in B (x_0, \rho)$, $ \gamma (x)=0$ if $x
\in R^n \setminus B (x_0, 2 \rho)$ and $0 \leq \gamma (x) \leq 1$ for any $x \in
B(x_0, 2 \rho) \setminus B(x_0, \rho)$, where $\rho >0$ is fixed arbitrarily.
Multiplying $\widehat{f}$ and $\widehat{g_j}$ by $\gamma \in C^\infty (R^n)$ we
get

\item[(8)]
$f^\rho (t,x) \stackrel{def}{=} \gamma (x)\, \widehat{f} (t,x), \, g_j^\rho
(t,x)= \gamma (x) \widehat{g_j} (t,x)$, $j=1, \ldots , m,$ as smooth functions
satisfying the hypothesis of Theorem \ref{tt:4.2.1} and denote $\{z^\rho (t), t \in
[0,T]\}$ the unique solution fulfilling the foloowing system of SDE. $(z^\rho
(t)= (\alpha^\rho (t), x^\rho (t))$

\item[(9)]
$\displaystyle{z^\rho (t)= z_0 + \int_0^t f^\rho (s, x^\rho (s))ds +
\sum_{j=1}^m \int_0^t
g_j^\rho (s,x^\rho (s)) \circ dw^j (s)}$, $t \in [0,T]$,
where
$z_0= \mbox{col} (\varphi (0,x_0), x_0) \in R^{n+1}$ and\\
\mbox{$g_j^\rho (s, x^\rho (s))\circ dw^j )(s)= g_j^\rho (s, x^\rho (s))
\bullet dw^j
(s) + \frac{1}{2} [ \partial_x g_j^\rho (s,x^\rho (s))]g_j^\rho (s, x^\rho (s))
ds$}.

In particular, for $ t \in [0, \tau]$ and using the characteristic function
$\chi_\tau (t), \, \\t \in [0,T]$, (see (5)) we rewrite (9) as

\item[(10)]
$z^\rho (t \wedge \tau) = z_0 + \displaystyle{\int_0^t} \chi_\tau (s)
\widehat{f} (s, x (s))ds +
\sum_{j=1}^m \int_0^t \chi_\partial (s) \widehat{g_j} (s,x (s)) \circ dw (s)$
for any $ t \in [0,T]$, where $z^\rho ( t \wedge \tau) = (\alpha^\rho (t
\wedge \tau), x (t \wedge \tau))$ and
$ \widehat{g_j} (\widehat{f})$ are defined in (7).

Using Remark \ref{rk:4:2.2}, we get

\item[(11)]
\[
\lim_{\varepsilon \to 0 } E|| z_\varepsilon^\rho (t) - z^\rho (t \wedge \tau)
||=0,\,
\mbox {for each}\,\, t \in [0,T]
\]
\end{itemize}
Here $z_\varepsilon^\rho \stackrel{def}{=} z_\varepsilon^\rho (t \wedge \tau), t
\in [0,T]$, verifies the following system of ODE.
\[
\begin{cases}
\dfrac{dz}{dt} = \chi_\tau (t) f^\rho (t,x) + \displaystyle{\sum_{j=1}^m}
\chi_\tau (t) g_j (t,x)
\dfrac{d v^j_\varepsilon (t)}{dt}, t \in [0,T]\\
z(0)=z_0= (\varphi (0, x_0), x_0)
\end{cases}
\leqno (12)
\]
where the vector fields $f^\rho, g_j^\rho, f \in \{1, \ldots m\}$ are defined in
(8) and fulfil the hypothesis of Theorem \ref{tt:4.2.1}.
By definition, $z_\varepsilon^\rho (t)= (\alpha_\varepsilon^\rho (t),
x_\varepsilon^\rho (t), t \in [0,T]$, and (12) can be rewritten as follows
\[
\begin{cases}
\dfrac{d \alpha_\varepsilon^\rho (t)}{dt} = \chi_\tau (t) \gamma
(x_\varepsilon^\rho (t)) \Big [ h_0 (t, x_\varepsilon^\rho (t)) +
\displaystyle{\sum_{j=1}^m}
h_j (t, x_\varepsilon^\rho (t)) \dfrac{d v_\varepsilon^j (t)}{dt} \Big ]\\
\dfrac{d x_\varepsilon^\rho (t)}{dt} = \chi_\tau (t) \gamma (x_\varepsilon^\rho
(t)) \Big [ f (t, x_\varepsilon^\rho (t)) + \displaystyle{\sum_{j=1}^m} g_j (t,
x_\varepsilon^\rho (t)) \dfrac{dv_\varepsilon^j (t)}{dt} \Big ]
\end{cases}
\leqno(13)
\]

\hspace{1,5 cm}$\alpha_\varepsilon^\rho (0)= \varphi (0, x_0), \,
x_\varepsilon^\rho (0)=x_0, \,
t \in [0,T],$
\vspace{3mm}

where the scalar functions $h_i, i \in \{0,1, \ldots , m \}$, are given in (7).

In a similar way, write $z^\rho (t)= (\alpha^\rho (t), x^\rho (t)), t\in [0,T]$
and using (10) we get that
$\alpha^\rho (t \wedge \tau)= \widehat{\alpha}^\rho (t)$
and $ x^\rho (t \wedge \tau)= \widehat{x}^\rho (t)$, $t \in [0,T]$
fulfil the following system of SDE.
\[
\begin{cases}
\widehat{\alpha}^\rho (t) = \varphi (0, x_0) + \displaystyle{\int_0^t}
\chi_\tau (s) h_0
(s, x (s))ds + \displaystyle{\sum_{j=1}^m, \int_0^t} \chi_\tau (s) h_j (s, x(s))
\circ dw^j (s)\\
x(t \wedge \tau) = \widehat{x}^\rho (t)= x_0+ \displaystyle{\int_0^t}
\chi_\tau (s)
f (s, x(s))ds + \displaystyle{\sum_{j=1}^m \int_0^t} \chi_\tau (s)g_j (s,x(s))
\circ dw^j (s)
\end{cases}
\leqno(14)
\]
Notice that $\alpha_\varepsilon^\rho (t)=\varphi (t \wedge \tau,
x_\varepsilon^\rho (t))$ and $ x_\varepsilon^\rho (t), t \in [0,T]$,
$\varepsilon > 0$, are bounded and convergent to $ \widehat{\alpha}^\rho (t), x
( t \wedge \tau)= \widehat{x}^\rho (t)$, correspondigly, for each $ t \in
[0,T]$\, (see (11)), when $\varepsilon \to 0$.
\vspace{2mm}

\noindent As a consequence
\[
\begin{aligned}
\varphi (t &\wedge \tau, x (t \wedge \tau))= \lim_{\varepsilon \to 0} \varphi ( t
\wedge \tau, x_\varepsilon^\rho (t))= \lim_{\varepsilon \to 0}
\alpha^\rho_\varepsilon (t)= \widehat{\alpha}^\rho (t)=\\
& = \varphi (0, x_0) + \int_0^{t \wedge \tau} h_0 (s,x (s))ds + \sum_{j=1}^m
\int_0^{t \wedge \tau} h_j (s,x (s)) \circ dw^j (s),\, t \in [0,T]
\end{aligned}
\leqno(15)
\]
and the proof of the conclusion (6) is complete.
\end{proof}
\noindent\textbf{Comment on stochastic rule of derivation}\\\index{Stochastic!Rule of Derivations}
The stochastic rule of derivation is based on the hypothesis (2) which involves
higher differentiability properties of the diffusion vector fields
$$
g_j \in C_b^{1,2} ([0,T] \times R^n; R^n), j \in \{ 1, \ldots , m\}
$$
On the other hand, using a stopping time $\tau : \Omega \to [0,T]$, the right
hand side of the equation (6) is a semimartingale\index{Semimartingale} without imposing any growth
condition on the test function $ \varphi \in C^{1,3} ([0,T] \times R^n)$.
The standard stochastic rule of derivation does not contain a stopping time
and it
can be accomplished assuming the following growth condition
\begin{itemize}
\item[(16)]
$ | \partial_t \varphi (t,x)|, | \partial_{x_i} \varphi (t,x)|,
|\partial^2_{x_i x_j} \varphi (t,x)| \leq k (1+||x||^p)$,
$i,j \in \{ 1, \ldots , n\}, \\ x \in R^n$, where $p \geq 1$ (natural) and $k >0$
are fixed.
Adding condition (16) to the hypotheses (1) and (2) of Theorem \ref{th:4.t2}, and using a
sequence of stopping times $ \tau_\rho : \Omega \to [0,T]$, $
\displaystyle{\lim_{\rho \to
\infty} \tau_\rho = T}$, we get the following stochastic rule of derivation\index{Stochastic!Rule of Derivations}
\end{itemize}
\[
\begin{aligned}
\varphi (t,x(t))&= \varphi (0, x_0)+ \int_0^t [\partial_s \varphi (s, x (s)) +
<\partial_x \varphi (s,x(s)), f(s, x(s))>]ds +\\
&+ \sum_{j=1}^m \int_0^t <\partial_x \varphi (s,x (s)), g_j (s,x(s))> \circ dw^j
(s), \, t \in [0,T]
\end{aligned}
\leqno(17)
\].

\section[Appendix]{Appendix}
\begin{center}
{\Large I.~Two problems for stochastic flows asociated with nonlinear
	parabolic equations}
\end{center}
\label{part:par}

(I. Molnar , C. Varsan, Functionals associated with gradient stochastic flows and nonlinear
	parabolic equations, preprint IMAR 12/2009)


\subsection{Introduction}
\label{sec:a}

Consider that
$\{\hat{x}_{\varphi}(t;\lambda):t\in[0,T]\}$
is the unique solution of SDE driven by complete vector fields
$f\in(\mathcal{C}_b\cap\mathcal{C}^1_b\cap\mathcal{C}^2)(\mathbb{R}^n;\mathbb{R}^n)$ and
$g\in(\mathcal{C}^1_b\cap\mathcal{C}^2)(\mathbb{R}^n;\mathbb{R}^n)$,
\begin{equation}
\label{a:1}
\left\{
\begin{aligned}
&d_t\hat{x}=\varphi(\lambda)f(\hat{x})dt+g(\hat{x})\circ dw(t),\,t\in[0,T],\,x\in\mathbb{R}^n,\\
&\hat{x}(0)=\lambda\in\mathbb{R}^n,
\end{aligned}
\right.
\end{equation}
where
$\varphi\in(\mathcal{C}^1_b\cap\mathcal{C}^2)(\mathbb{R}^n)$ and
$w(t)\in\mathbb{R}$ is a scalar Wiener process over a complete filtered
probability space
$\{\Omega,\mathcal{F}\supset\{\mathcal{F}_t\},P\}$.
We recall that Fisk-Stratonovich integral
``$\circ$'' in \eqref{a:1} is computed by
\[
\label{a:2}
g(x)\circ dw(t)=g(x)\cdot dw(t)+\frac{1}{2}\partial_x g(x)\cdot g(x)dt,
\]
using Ito stochastic integral
``$\cdot$''.

We are going to introduce some nonlinear SPDE or PDE of parabolic type
which describe the evolution of stochastic functionals
$u(t,x):=h(\psi(t,x))$, or $S(t,x):=Eh(\hat{x}_{\psi}(T;t,x))$, $t\in[0,T]$,
$x\in\mathbb{R}^n$, for a fixed
$h\in(\mathcal{C}^1_b\cap\mathcal{C}^2)(\mathbb{R}^n)$.
Here $\{\lambda=\psi(t,x):t\in[0,T],x\in\mathbb{R}^n\}$
is the unique solution satisfying integral equations
\begin{equation}
\label{a:3}
\hat{x}_\varphi(t;\lambda)=x\in\mathbb{R}^n,\,t\in[0,T].
\end{equation}
The evolution of
$\{S(t,x):t\in[0,T],x\in R^n\}$
will be defined by some  nonlinear backward parabolic equation considering that
$\{\hat{x}_{\psi}(s;t,x):s\in[t,T],x\in R^n\}$ is the unique solution of SDE
\[
\left\{
\begin{aligned}
&d_s\hat{x}=\varphi(\psi(t,x))f(\hat{x})ds+g(\hat{x})\circ dw(s),\,s\in[t,T],\\
&\hat{x}(t)=x\in\mathbb{R}^n.
\end{aligned}
\right.
\]

\subsection{Some problems and their solutions}
\label{sec:b}

\noindent
\textbf{Problem} (P1). Assume that
$g$ and $f$ commute using Lie bracket, i.e.
\begin{equation}
\label{b:1}
[g,f](x)=0,\,x\in\mathbb{R}^n,
\end{equation}
where $[g,f](x):=[\partial_xg(x)]f(x)-[\partial_xf(x)]g(x)$,
\begin{equation}
\label{b:2}
TVK=\rho\in[0,1),
\end{equation}
where $V=\sup\{|\partial_x\varphi(x)|:x\in\mathbb{R}\}^n$ and
$K=\sup\{|f(x)|;x\in\mathbb{R}^n\}$.

Under the hypotheses \eqref{b:1} and \eqref{b:2}, find the nonlinear
SPDE of parabolic type satisfied by
$\{u(t,x)=h(\psi(t,x)):\in[0,T],x\in\mathbb{R}^n\}$,
$h\in(\mathcal{C}^1_b\cap\mathcal{C}^2)(\mathbb{R}^n)$, where
$\{\lambda=\psi(t,x)\in\mathbb{R}^n:t\in[0,T],x\in\mathbb{R}^n\}$	
is the unique continuous and $\mathcal{F}_t$-adapted solution of the integral
equation \eqref{a:3}.

\noindent
\textbf{Problem} (P2). Using $\{\lambda=\psi(t,x)\}$ found in (P1), describe
the evolution of a functional $S(t,x):=Eh(\hat{x}_{\psi}(T;t,x))$ using
backward parabolic equations, where $\{\hat{x}_{\psi}(s;t,x):s\in[t,T]\}$ is
the unique solution of SDE
\begin{equation}
\label{b:3}
\left\{
\begin{aligned}
&d_s\hat{x}=\varphi(\psi(t,x))f(\hat{x})ds+g(\hat{x})\circ dw(s),\,s\in[t,T]\\
&\hat{x}(t)=x\in\mathbb{R}^n.
\end{aligned}
\right.
\end{equation}

\subsection{Solution for the Problem (P1)}
\label{ssec:b.1}

\begin{remark}
\label{rem-b:1}
Under the hypotheses \eqref{b:1} and \eqref{b:2} of (P1), the unique
solution of integral equations \eqref{a:3} will be found as a composition
\begin{equation}
\label{b:4}
\psi(t,x)=\hat{\psi}(t,\hat{z}(t,x)),
\end{equation}
where $\hat{z}(t,x):=G(-w(t))[x]$ and $\lambda=\hat{\psi}(t,z)$, $t\in[0,T]$,
$z\in\mathbb{R}^n$, is the unique deterministic solution satisfying integral
equations
\begin{equation}
\label{b:5}
\lambda=F(-\theta(t;\lambda))[z]=:\hat{V}(t,z;\lambda),\,
t\in[0,T],\,z\in\mathbb{R}^n.
\end{equation}
Here $F(\sigma)[z]$ and $G(\tau)[z]$, $\sigma,\tau\in\mathbb{R}$, are the
global flows generated by complete vector fields $f$ and $g$ correspondingly,
and $\theta(t;\lambda)=t\varphi(\lambda)$.
The unique solution of \eqref{b:5} is constructed in the following
\end{remark}

\begin{lemma}
\label{lem-b:1}
Assume that \eqref{b:2} is fulfilled.  Then there exists a unique smooth
deterministic mapping $\{\lambda=\hat{\psi}(t,z):t\in[0,T],x\in\mathbb{R}^n\}$
solving integral equations \eqref{b:5} such that
\begin{equation}
\label{b:6}
\left\{
\begin{aligned}
&F(\theta(t;\hat{\psi}(t,z)))[\hat{\psi}(t,z)]=z\in\mathbb{R}^n,\,t\in[0,T],\\
&|\hat{\psi}(t,z)-z|\leq R(T,z):=\frac{r(T,z)}{1-\rho},\,
t\in[0,T],\text{ where }r(T,z)=TK|\varphi(z)|,
\end{aligned}
\right.
\end{equation}
\begin{equation}
\label{b:7}
\left\{
\begin{aligned}
&\partial_t\hat{\psi}(t,z)+
\partial_z\hat{\psi}(t,z)f(z)\varphi(\hat{\psi}(t,z))=0,\,
t\in[0,T],\,x\in\mathbb{R}^n,\\
&\hat{\psi}(0,z)=z\in\mathbb{R}^n.
\end{aligned}
\right.
\end{equation}
\end{lemma}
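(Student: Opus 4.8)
The statement to prove is Lemma~\ref{lem-b:1}: existence, uniqueness, the estimate \eqref{b:6}, and the transport equation \eqref{b:7} for the deterministic mapping $\hat{\psi}(t,z)$ solving the fixed-point relation \eqref{b:5}, namely $\lambda=F(-\theta(t;\lambda))[z]=\hat{V}(t,z;\lambda)$ with $\theta(t;\lambda)=t\varphi(\lambda)$. The plan is to treat this as a parametrized contraction mapping problem in $\lambda$, with $(t,z)$ as parameters, and then to extract smoothness and the PDE from the implicit function theorem applied to $H(t,z,\lambda):=\lambda-F(-t\varphi(\lambda))[z]$.

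First I would set up the contraction. For fixed $(t,z)\in[0,T]\times\mathbb{R}^n$ define the map $\Phi_{t,z}(\lambda):=F(-t\varphi(\lambda))[z]$. Using that $F(\sigma)[z]$ is the global flow of the bounded vector field $f$ (so $|F(\sigma)[z]-z|\le |\sigma|K$ by integrating $\tfrac{d}{d\sigma}F(\sigma)[z]=f(F(\sigma)[z])$) one gets $|\Phi_{t,z}(\lambda)-z|\le t|\varphi(\lambda)|K$. To control the Lipschitz constant in $\lambda$ I would differentiate: $\partial_\lambda\Phi_{t,z}(\lambda)=-[\partial_\sigma F(-t\varphi(\lambda))[z]]\,t\,\partial_\lambda\varphi(\lambda)$, and $\partial_\sigma F(\sigma)[z]=f(F(\sigma)[z])$ is bounded by $K$, so $|\partial_\lambda\Phi_{t,z}(\lambda)|\le tK|\partial_\lambda\varphi(\lambda)|\le TKV=\rho<1$ by \eqref{b:2}. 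Hence $\Phi_{t,z}$ is a $\rho$-contraction on $\mathbb{R}^n$ (a complete metric space), so Banach's fixed point theorem — precisely the contractive-mapping fixed point theorem recalled in the Comment after Theorem~\ref{th:1.5} — gives a unique $\lambda=\hat{\psi}(t,z)$. The estimate in \eqref{b:6} then follows from the standard a priori bound for contractions: $|\hat{\psi}(t,z)-z|\le \frac{1}{1-\rho}|\Phi_{t,z}(z)-z|\le\frac{TK|\varphi(z)|}{1-\rho}=R(T,z)$. The first line of \eqref{b:6}, $F(\theta(t;\hat{\psi}(t,z)))[\hat{\psi}(t,z)]=z$, is just the fixed point relation rewritten by applying $F(t\varphi(\hat{\psi}))$ to both sides and using the group property $F(\sigma)\circ F(-\sigma)=\mathrm{id}$.

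Next I would obtain smoothness and the PDE. Since $f\in\mathcal{C}^1_b\cap\mathcal{C}^2$ the flow $F$ is $\mathcal{C}^1$ in $(\sigma,z)$ (Theorem~\ref{th:1.6} on differentiability of the local flow, plus the exercise on differentiability with respect to the parameter, here the ``time'' $\sigma$), and $\varphi\in\mathcal{C}^1_b\cap\mathcal{C}^2$; so $H(t,z,\lambda)=\lambda-F(-t\varphi(\lambda))[z]$ is $\mathcal{C}^1$. Its Jacobian in $\lambda$ is $\partial_\lambda H=I_n-\partial_\lambda\Phi_{t,z}(\lambda)$, which is invertible because $\|\partial_\lambda\Phi_{t,z}\|\le\rho<1$ (Neumann series). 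The implicit function theorem therefore yields $\hat{\psi}\in\mathcal{C}^1([0,T]\times\mathbb{R}^n;\mathbb{R}^n)$, and more regularity propagates from that of $f$ and $\varphi$. To get \eqref{b:7} I would differentiate the identity $z=F(t\varphi(\hat{\psi}(t,z)))[\hat{\psi}(t,z)]$ with respect to $t$: writing $G_1(\sigma,\lambda):=F(\sigma)[\lambda]$ one has $0=\partial_\sigma G_1\cdot\big(\varphi(\hat\psi)+t\,\partial_\lambda\varphi(\hat\psi)\,\partial_t\hat\psi\big)+\partial_\lambda G_1\cdot\partial_t\hat\psi$, and one also needs the analogous $z$-derivative identity $0=\partial_\sigma G_1\cdot t\,\partial_\lambda\varphi(\hat\psi)\,\partial_z\hat\psi+\partial_\lambda G_1\cdot\partial_z\hat\psi$; combining them, using $\partial_\sigma G_1(\sigma,\lambda)=f(F(\sigma)[\lambda])$ evaluated at $(\sigma,\lambda)=(t\varphi(\hat\psi),\hat\psi)$ which equals $f(z)$, and eliminating the common factor $\partial_\lambda G_1$ (invertible, being the flow Jacobian, cf. Liouville's theorem / Remark~\ref{re:1.2}), gives $\partial_t\hat\psi(t,z)+\partial_z\hat\psi(t,z)f(z)\varphi(\hat\psi(t,z))=0$. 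The initial condition $\hat{\psi}(0,z)=z$ is immediate from \eqref{b:5} at $t=0$.

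\textbf{Main obstacle.} The genuinely delicate point is the bookkeeping in the last step: carefully relating $\partial_\sigma F(\sigma)[z]$, $\partial_z F(\sigma)[z]$ and $\partial_\lambda$ of the composite $F(t\varphi(\lambda))[\lambda]$, and correctly eliminating the Jacobian factor to land exactly on the quasilinear form \eqref{b:7} rather than a superficially different but equivalent expression. Everything else — the contraction estimates, the a priori bound, the application of the implicit function theorem — is routine once the constant $\rho=TVK<1$ from \eqref{b:2} is in hand. I would also double-check that the hypothesis \eqref{b:1} ($[g,f]=0$) is genuinely not needed for this lemma (it enters only in Remark~\ref{rem-b:1} to justify the factorization $\psi(t,x)=\hat\psi(t,G(-w(t))[x])$ of the stochastic solution), so the lemma should be proved under \eqref{b:2} alone, exactly as stated.
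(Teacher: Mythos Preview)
Your approach is essentially the paper's: the contraction argument, the iteration $\lambda_0=z$, $\lambda_{k+1}=\hat V(t,z;\lambda_k)$, the bound $|\partial_\lambda\hat V|\le TKV=\rho$, and the estimate $|\hat\psi-z|\le r(T,z)/(1-\rho)$ are exactly what the paper does.

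For the PDE \eqref{b:7} the paper takes a slightly cleaner variant of your computation, which sidesteps the bookkeeping you flagged as the main obstacle. Rather than differentiate the \emph{inverse} relation $z=F(t\varphi(\hat\psi))[\hat\psi]$, the paper observes that for each \emph{fixed} parameter $\lambda$, the map $z\mapsto\hat V(t,z;\lambda)=F(-t\varphi(\lambda))[z]$ is a first integral of the vector field $\varphi(\lambda)f(z)$ (since $\hat V(t,\hat y(t,\lambda);\lambda)\equiv\lambda$ along the flow $\hat y(t,\lambda)=F(t\varphi(\lambda))[\lambda]$), hence satisfies the \emph{linear} transport equation
\[
\partial_t\hat V(t,z;\lambda)+[\partial_z\hat V(t,z;\lambda)]\,f(z)\,\varphi(\lambda)=0.
\]
Substituting $\lambda=\hat\psi(t,z)$ makes the characteristic sit at $z$; then differentiating the fixed-point identity $\hat\psi=\hat V(t,z;\hat\psi)$ gives $(I-\partial_\lambda\hat V)\partial_t\hat\psi=\partial_t\hat V$ and $(I-\partial_\lambda\hat V)\partial_z\hat\psi=\partial_z\hat V$, and the common invertible factor $(I-\partial_\lambda\hat V)$ cancels, yielding \eqref{b:7}. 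This avoids juggling the full Jacobian of $\lambda\mapsto F(t\varphi(\lambda))[\lambda]$.

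One concrete slip in your sketch: the ``$z$-derivative identity'' you wrote as $0=\partial_\sigma G_1\cdot t\,\partial_\lambda\varphi(\hat\psi)\,\partial_z\hat\psi+\partial_\lambda G_1\cdot\partial_z\hat\psi$ should have $I_n$ on the left, not $0$ (you are differentiating $z$ with respect to $z$). With that correction your elimination goes through: setting $A:=\partial_\sigma G_1\cdot t\,\partial_\lambda\varphi(\hat\psi)+\partial_\lambda G_1$ one gets $\partial_z\hat\psi=A^{-1}$ and $\partial_t\hat\psi=-A^{-1}f(z)\varphi(\hat\psi)$, hence \eqref{b:7}. Your observation that \eqref{b:1} is not used here is correct.
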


\begin{proof}
The mapping $\hat{V}(t,z;\lambda)$ (see \eqref{b:5}) is a contractive
application with respect to $\lambda\in\mathbb{R}^n$, uniformly of
$(t,z)\in[0,T]\times\mathbb{R}^n$ which allows us to get the unique solution
of \eqref{b:5} using a standard procedure (Banach theorem).
By a direct computation, we get
\begin{equation}
\label{b:8}
|\partial_\lambda\hat{V}(t,z;\lambda)|=
|f(\hat{V}(t,z;\lambda))\partial_\lambda\theta(t;\lambda)|\leq
TVK=\rho\in[0,1),
\end{equation}
for any $t\in[0,T]$, $z\in\mathbb{R}^n$, $\lambda\in\mathbb{R}^n$, where
$\partial_\lambda\theta(t;\lambda)$ is a row vector.  The corresponding
convergent sequence $\{\lambda_k(t,z):t\in[0,T],z\in\mathbb{R}^n\}_{k\geq 0}$
is constructed fulfilling
\begin{equation}
\label{b:9}
\lambda_0(t,z)=z,\,\lambda_{k+1}(t,z)=\hat{V}(t,z;\lambda_k(t,z)),\,t\geq 0,
\end{equation}
\begin{equation}
\label{b:10}
\left\{
\begin{aligned}
&|\lambda_{k+1}(t,z)-\lambda_k(t,z)|\leq\rho^k|\lambda_1(t,z)-\lambda_0(t,z)|,\,
k\geq 0,\\
&|\lambda_1(t,z)-\lambda_0(t,z)|\leq|\hat{V}(t,z;z)-z|\leq TK|\varphi(z)|=:r(T,z).
\end{aligned}
\right.
\end{equation}
Using \eqref{b:10} we obtain that $\{\lambda_k(t,z)\}_{k\geq 0}$ is
convergent and
\begin{equation}
\label{b:11}
\hat{\psi}(t,z)=\lim_{k\to\infty}\lambda_k(t,z),\,
|\hat{\psi}(t,z)-z|\leq\frac{r(T,z)}{1-\rho}=:R(T,z),\,t\in[0,T].
\end{equation}
Passing $k\to\infty$ into \eqref{b:9} and using \eqref{b:11} we get the
first conclusion \eqref{b:6}. On the other hand, notice that
$\{\hat{V}(t,z;\lambda):t\in[0,T],z\in\mathbb{R}^n\}$ of \eqref{b:5} fulfils
\begin{equation}
\label{b:12}
\hat{V}(t,\hat{y}(t,\lambda);\lambda)=\lambda,\,t\in[0,T],
\text{ where }\hat{y}(t,\lambda)=F(\theta(t;\lambda))[\lambda].
\end{equation}
This shows that all the components of $\hat{V}(t,z;\lambda)\in\mathbb{R}^n$
are first integrals associated with the vector field
$f_{\lambda}(z)=\varphi(\lambda)f(z)$, $z\in\mathbb{R}^n$, for each
$\lambda\in\mathbb{R}^n$, i.e.
\begin{equation}
\label{b:13}
\partial_t\hat{V}(t,\hat{y}(t,\lambda);\lambda)+
[\partial_z\hat{V}(t,\hat{y}(t,\lambda);\lambda)]
f(\hat{y}(t,\lambda))\varphi(\lambda)=0,\,t\in[0,T]
\end{equation}
is valid for each $\lambda\in\mathbb{R}^n$.  In particular, for
$\lambda=\hat{\psi(t,z)}$ we get $\hat{y}(t,\hat{\psi}(t,z))=z$ and
\eqref{b:13} becomes (H-J)-equation
\begin{equation}
\label{b:14}
\partial_t\hat{V}(t,z;\hat{\psi}(t,z))+[\partial_z\hat{V}(t,z;\hat{\psi}(t,z)]
f(z)\varphi(\hat{\psi}(t,z))=0,\,t\in[0,T],\,z\in\mathbb{R}^n.
\end{equation}
Combining \eqref{b:5} and \eqref{b:14}, by direct computation, we convince
ourselves that $\lambda=\hat{\psi}(t,z)$ fulfils the following nonlinear
(H-J)-equation (see \eqref{b:7})
\begin{equation}
\label{b:15}
\left\{
\begin{aligned}
&\partial_t\hat{\psi}(t,z)+[\partial_z\hat{\psi}(t,z)]
f(z)\varphi(\hat{\psi}(t,z))=0,\,t\in[0,T],\,z\in\mathbb{R}^n,\\
&\hat{\psi}(0,z)=z\in\mathbb{R}^n,
\end{aligned}
\right.
\end{equation}
and the proof is complete.
\end{proof}

\begin{remark}
\label{rem-b:2}
Under the hypothesis \eqref{b:1}, the \index{stochastic flow}
$\{\hat{x}_{\varphi}(t;\lambda):t\in[0,T],\lambda\in\mathbb{R}^n\}$ generated
by SDE \eqref{a:1} can be represented as follows
\begin{equation}
\label{b:16}
\hat{x}_{\varphi}(t;\lambda)=G(w(t))\circ F(\theta(t;\lambda))[\lambda]=
H(t,w(t);\lambda),\,t\in[0,T],\,\lambda\in\mathbb{R}^n
\end{equation}
where $\theta(t;\lambda)=t\varphi(\lambda)$.
\end{remark}

\begin{lemma}
\label{lem-b:2}
Assume that \eqref{b:1} and \eqref{b:2} are satisfied and consider
$\{\lambda=\hat{\psi}(t,z):t\in[0,T],z\in\mathbb{R}^n\}$ found in
Lemma~\ref{lem-b:1}. Then the stochastic flow generated by SDE~\eqref{a:1}
fulfils
\begin{equation}
\label{b:17}
\{\hat{x}_{\varphi}(t;\lambda):t\in[0,T],\lambda\in\mathbb{R}^n\}
\text{ can be represented as in } \eqref{b:16},
\end{equation}
\begin{equation}
\label{b:18}
\begin{aligned}
&\psi(t,x)=\hat{\psi}(t,\hat{z}(t,x))
\text{ is the unique solution of integral equations } \eqref{a:3},\\
&\text{ where } \hat{z}(t,x)=G(-w(t))[x].
\end{aligned}
\end{equation}
\end{lemma}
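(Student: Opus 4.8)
\textbf{Proof plan for Lemma~\ref{lem-b:2}.} The statement has two parts, and the plan is to reduce both of them to results already established, namely Remark~\ref{rem-b:2} (the flow representation under the commuting hypothesis \eqref{b:1}) and Lemma~\ref{lem-b:1} (existence, uniqueness and the characterizing relation \eqref{b:6} for the deterministic mapping $\hat{\psi}$). Part \eqref{b:17} is nothing more than a restatement of \eqref{b:16}: since $f$ and $g$ commute, the Fisk--Stratonovich SDE \eqref{a:1} is solved by composing the two global flows, so $\hat{x}_{\varphi}(t;\lambda)=G(w(t))\circ F(\theta(t;\lambda))[\lambda]=H(t,w(t);\lambda)$ with $\theta(t;\lambda)=t\varphi(\lambda)$. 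The only thing I would spell out here is why the commuting relation is exactly what makes the two flows ``separate'' in the Stratonovich calculus: one applies the stochastic rule of derivation (Theorem~\ref{th:4.t2}) to the candidate process $H(t,w(t);\lambda)$ and checks, using $[g,f]=0$, that the drift and diffusion terms reproduce $\varphi(\lambda)f$ and $g$ respectively; uniqueness of the solution of \eqref{a:1} then forces the representation.

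For part \eqref{b:18}, the idea is to verify directly that $\psi(t,x):=\hat{\psi}(t,\hat{z}(t,x))$ with $\hat{z}(t,x)=G(-w(t))[x]$ satisfies $\hat{x}_{\varphi}(t;\psi(t,x))=x$, and then invoke uniqueness of the solution of \eqref{a:3}. Starting from \eqref{b:16} with $\lambda=\psi(t,x)$, we get
\begin{equation}
\label{b-plan:1}
\hat{x}_{\varphi}(t;\psi(t,x))=G(w(t))\circ F(\theta(t;\psi(t,x)))[\psi(t,x)].
\end{equation}
By construction $\psi(t,x)=\hat{\psi}(t,z)$ evaluated at $z=\hat{z}(t,x)$, so the first conclusion of \eqref{b:6} gives $F(\theta(t;\hat{\psi}(t,z)))[\hat{\psi}(t,z)]=z$, i.e. $F(\theta(t;\psi(t,x)))[\psi(t,x)]=\hat{z}(t,x)=G(-w(t))[x]$. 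Substituting this into \eqref{b-plan:1} and using the group property $G(w(t))\circ G(-w(t))=\mathrm{id}$ of the flow generated by $g$ yields $\hat{x}_{\varphi}(t;\psi(t,x))=x$, which is precisely \eqref{a:3}. Since $\psi(t,x)$ is continuous in $(t,x)$ (composition of the smooth deterministic map $\hat{\psi}$ from Lemma~\ref{lem-b:1} with the continuous process $\hat{z}(t,x)$) and $\mathcal{F}_t$-adapted (because $w(t)$, hence $\hat{z}(t,x)$, is $\mathcal{F}_t$-adapted), it is \emph{the} unique such solution, and the proof is complete.

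I would also record, as a small remark within the proof, that the hypothesis \eqref{b:2} enters only through Lemma~\ref{lem-b:1} — it is what guarantees the deterministic map $\hat{\psi}(t,\cdot)$ exists on the whole time interval $[0,T]$ and is globally defined and smooth, via the contraction estimate \eqref{b:8} — whereas \eqref{b:1} enters only through the flow representation \eqref{b:16}. Thus the two hypotheses play logically disjoint roles, and the lemma is essentially an assembly of the two preceding results.

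The main obstacle I anticipate is not in the algebra of flows, which is routine once \eqref{b:16} and \eqref{b:6} are in hand, but in making sure the identifications are performed in the correct order of arguments: $\hat{\psi}$ is a \emph{deterministic} function of a random argument $\hat{z}(t,x)$, and one must be careful that the relation \eqref{b:6}, which holds pointwise in the deterministic variable $z$, may be legitimately specialized at $z=\hat{z}(t,x;\omega)$ for a.e.\ $\omega$. This is harmless because \eqref{b:6} holds for \emph{every} $z\in\mathbb{R}^n$, but it should be stated explicitly so that the adaptedness claim — that $\psi(t,x)$ inherits $\mathcal{F}_t$-measurability from $w(t)$ through $\hat{z}(t,x)=G(-w(t))[x]$ and the continuity of $\hat{\psi}$ — is transparent. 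A secondary point worth a sentence is the verification that $G(\pm w(t))$ is well defined for all $t\in[0,T]$, which follows from $g\in(\mathcal{C}^1_b\cap\mathcal{C}^2)(\mathbb{R}^n;\mathbb{R}^n)$ being a complete vector field, as assumed throughout Section~\ref{sec:a}.
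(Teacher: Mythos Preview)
Your proposal is correct and follows essentially the same route as the paper. For \eqref{b:17} the paper makes explicit the intermediate step you sketch: it writes $y(\theta,\sigma)[\lambda]:=G(\sigma)\circ F(\theta)[\lambda]$, observes that the commuting hypothesis \eqref{b:1} makes this the solution of the gradient system $\partial_\theta y=f(y)$, $\partial_\sigma y=g(y)$, and then applies the stochastic rule of derivation to $y(\theta(t;\lambda),w(t))$ exactly as you propose; for \eqref{b:18} the paper simply says ``direct consequence of \eqref{b:17} combined with Lemma~\ref{lem-b:1}'', and your explicit verification via \eqref{b:6} and the group property of $G$ is precisely the unpacking of that phrase.
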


\begin{proof}
Using the hypothesis \eqref{b:1}, we see easily that
\begin{equation}
\label{b:19}
y(\theta,\sigma)[\lambda]:=G(\sigma)\circ F(\theta)[\lambda],\,
\theta,\sigma\in\mathbb{R},\,\lambda\in\mathbb{R}^n
\end{equation}
is the unique solution of the gradient system
\begin{equation}
\label{b:20}
\left\{
\begin{aligned}
&\partial_{\theta}y(\theta,\sigma)[\lambda]=f(y(\theta,\sigma)[\lambda]),\,
\partial_{\sigma}y(\theta,\sigma)[\lambda]=g(y(\theta,\sigma)[\lambda]),\\
&y(0,0)[\lambda]=\lambda
\end{aligned}
\right.
\end{equation}
Applying the standard rule of stochastic derivation associated with the smooth
mapping $\varphi(\theta,\sigma):=y(\theta,\sigma)[\lambda]$ and the continuous
process $\theta=\theta(t;\lambda)=t\varphi(\lambda)$, $\sigma=w(t)$, we get
that $\hat{y}_{\varphi}(t;\lambda)=y(\theta(t;\lambda),w(t))$, $t\in[0,T]$,
fulfils SDE~\eqref{a:1}, i.e.
\begin{equation}
\label{b:21}
\left\{
\begin{aligned}
&d_t\hat{y}_{\varphi}(t;\lambda)=\varphi(\lambda)f(\hat{y}_{\varphi}(t;x))dt
+g(\hat{y}_{\varphi}(t;\lambda))\circ dw(t),\,t\in[0,T],\\
&\hat{y}_{\varphi}(0;\lambda)=\lambda.
\end{aligned}
\right.
\end{equation}
On the other hand, the unicity of the solution satisfying \eqref{a:1} lead us
to the conclusion that
$\hat{x}_{\varphi}(t;\lambda)=\hat{y}_{\varphi}(t;\lambda)$, $t\in[0,T]$, and
\eqref{b:17} is proved.  The conclusion \eqref{b:18} is a direct consequence
of \eqref{b:17} combined with
$\{\lambda=\hat{\psi}(t,z):t\in[0,T],z\in\mathbb{R}^n\}$ is the solution
defined in Lemma \ref{lem-b:1}. The proof is complete.
\end{proof}

\begin{lemma}
\label{lem-b:3}
Under the hypotheses in Lemma \ref{lem-b:2}, consider the continuous
and $\mathcal{F}_t$-adapted process $\hat{z}(t,x)=G(-w(t))[x]$, $t\in[0,T]$,
$x\in\mathbb{R}^n$.  Then the following SPDE of parabolic type is valid
\begin{equation}
\label{b:22}
\left\{
\begin{aligned}
&d_t\hat{z}(t,x)+\partial_x\hat{z}(t,x)g(x)\hat{\circ}dw(t)=0,\,
t\in[0,T],x\,x\in\mathbb{R}^n,\\
&\hat{z}(0,x)=x
\end{aligned}
\right.
\end{equation}
where the \index{Fisk-Stratonovich integral} ``$\hat{\circ}$'' is computed by
\[
h(t,x)\hat{\circ}dw(t)=h(t,x)\cdot dw(t)-\frac{1}{2}\partial_x h(t,x)g(x)dt,
\]
using Ito \index{stochastic integral} ``$\cdot$''.
\end{lemma}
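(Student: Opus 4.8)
\textbf{Plan of proof for Lemma~\ref{lem-b:3}.} The goal is to show that the $\mathcal{F}_t$-adapted process $\hat{z}(t,x)=G(-w(t))[x]$ satisfies the backward-type SPDE \eqref{b:22}. The natural approach is to apply the stochastic rule of derivation (Theorem~\ref{th:4.t2}, in its Fisk-Stratonovich form) to the smooth deterministic mapping $(\sigma,x)\mapsto G(-\sigma)[x]$ composed with the continuous semimartingale $\sigma=w(t)$, and then reconcile the resulting Stratonovich expression with the modified integral ``$\hat{\circ}$'' defined in the statement. First I would record the elementary properties of the flow $G$: since $g\in(\mathcal{C}^1_b\cap\mathcal{C}^2)(\mathbb{R}^n;\mathbb{R}^n)$ is complete, $G(\tau)[x]$ is globally defined, $\mathcal{C}^2$ in $x$ and $\mathcal{C}^1$ in $\tau$, and $\Phi(\sigma,x):=G(-\sigma)[x]$ satisfies $\partial_\sigma\Phi(\sigma,x)=-g(\Phi(\sigma,x))$ with $\Phi(0,x)=x$. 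This is the ``test function'' to which the stochastic calculus will be applied, with $x$ playing the role of a parameter that will afterwards be allowed to be the spatial variable.

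Next I would invoke Theorem~\ref{th:4.t2} with trivial drift ($f\equiv 0$, $\varphi\equiv 1$) and diffusion vector field equal to the constant-in-the-SDE-variable field, i.e. apply it to $w(t)$ itself viewed as the solution of $d_tw=dw(t)$, $w(0)=0$. Writing out the Fisk-Stratonovich form of \eqref{th:4.t2}(6) for $\varphi(\sigma)=\Phi(\sigma,x)$ gives
\[
\hat{z}(t,x)=\Phi(w(t),x)=x+\int_0^t\partial_\sigma\Phi(w(s),x)\circ dw(s)
=x-\int_0^t g(\hat{z}(s,x))\circ dw(s),
\]
because the Stratonovich integral obeys the ordinary chain rule and no drift term appears. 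Then I would differentiate this identity in $x$; since $\partial_x\Phi$ is $\mathcal{C}^1$ and the flow estimates give the needed integrability and continuity, one may interchange $\partial_x$ with the Stratonovich integral to obtain $\partial_x\hat{z}(t,x)$ and, more importantly, re-express the integrand $g(\hat{z}(s,x))$ back in terms of $g(x)$ and $\partial_x\hat{z}(s,x)$. The key algebraic observation is the flow identity $g(G(-\sigma)[x])=[\partial_xG(-\sigma)[x]]\,g(x)$, which holds because $G(-\sigma)[\cdot]$ is generated (in reverse time) by $g$; equivalently $\partial_x\hat{z}(t,x)\,g(x)=g(\hat{z}(t,x))$. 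Substituting this into the displayed equation turns it into $d_t\hat{z}(t,x)+\partial_x\hat{z}(t,x)g(x)\circ dw(t)=0$ with the \emph{Stratonovich} integral.

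The last step is to convert the Stratonovich differential ``$\circ dw$'' into the modified ``$\hat{\circ}dw$'' announced in the statement. Here I would compute the It\^o correction: for a Stratonovich integral of a semimartingale integrand $h(t,x)=\partial_x\hat{z}(t,x)g(x)$ one has $h(t,x)\circ dw(t)=h(t,x)\cdot dw(t)+\tfrac12 d\langle h(\cdot,x),w\rangle(t)$, and the joint quadratic variation is evaluated from $d_t h(t,x)$, whose martingale part is driven by $dw$. Using the SDE for $\hat{z}$ just obtained, the martingale part of $d_t h(t,x)$ is $-\partial_x[\partial_x\hat{z}(t,x)g(x)]\,g(x)\,dw(t)$ up to the flow identities, so the correction contributes $-\tfrac12\partial_x h(t,x)g(x)\,dt$, which is exactly the definition of $h(t,x)\,\hat{\circ}\,dw(t)$. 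Reassembling gives \eqref{b:22}, and checking $\hat{z}(0,x)=G(0)[x]=x$ completes the argument. The main obstacle I anticipate is the bookkeeping in this final conversion: one must carefully identify the martingale part of $d_t[\partial_x\hat{z}(t,x)g(x)]$ and use the flow commutation identities consistently (and justify differentiating the stochastic integral in the parameter $x$, which requires the boundedness of $\partial_xg,\partial^2_xg$ from the hypothesis $g\in\mathcal{C}^1_b\cap\mathcal{C}^2$); everything else is a direct application of results already established in Sections~4.2 and~4.3.
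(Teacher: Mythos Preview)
Your proof is correct and rests on the same flow identity the paper uses, namely $g(G(-\sigma)[x])=\partial_x G(-\sigma)[x]\,g(x)$, or equivalently $\partial_\sigma H(\sigma)[x]=-\partial_x H(\sigma)[x]\,g(x)$ for $H(\sigma)[x]:=G(-\sigma)[x]$. The paper, however, takes a shorter route that avoids the Stratonovich detour: it differentiates the inverse-flow relation $H(\sigma)\circ G(\sigma)[\lambda]=\lambda$ in $\sigma$ to obtain $\partial_\sigma H=-\partial_x H\cdot g(x)$ directly, iterates once to get $\partial^2_\sigma H=\partial_x[\partial_x H\cdot g(x)]\,g(x)$, and then applies the plain It\^o formula to $\hat z(t,x)=H(w(t))[x]$. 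The result is already of the form $d_t\hat z=-\partial_x\hat z\,g(x)\cdot dw+\tfrac12\partial_x[\partial_x\hat z\,g(x)]g(x)\,dt$, which is precisely $-\partial_x\hat z\,g(x)\,\hat{\circ}\,dw$ by definition. Your Stratonovich-then-convert step works (and the computation of $d\langle h,w\rangle$ you sketch is correct), but it amounts to undoing one It\^o correction only to redo it; the paper's ordering makes the $\hat{\circ}$ form drop out immediately. Also, the step where you propose to differentiate the stochastic integral in $x$ is unnecessary: the identity $g(\hat z(t,x))=\partial_x\hat z(t,x)\,g(x)$ is a purely deterministic statement about the flow (the invariance of $g$ under its own flow), valid for every $\sigma$ and hence in particular for $\sigma=w(t)$; no interchange of $\partial_x$ with a stochastic integral is required.
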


\begin{proof}
The conclusion \eqref{b:22} is a direct consequence of applying standard
rule of stochastic derivation associated with $\sigma=w(t)$ and smooth
deterministic mapping $H(\sigma)[x]:=G(-\sigma)[x]$.  In this respect, using
$H(\sigma)\circ G(\sigma)[\lambda]=\lambda\in\mathbb{R}^n$ for any
$x=G(\sigma)[\lambda]$, we get
\begin{equation}
\label{b:23}
\left\{
\begin{aligned}
&\partial_{\sigma}\{H(\sigma)[x]\}=-\partial_x\{H(\sigma)[x]\}\cdot g(x),\,
\sigma\in\mathbb{R},\,x\in\mathbb{R}^n,\\
&\begin{split}
\partial^2_{\sigma}\{H(\sigma)[x]\}
&=\partial_{\sigma}\{\partial_{\sigma}\{H(\sigma)[x]\}\}=
\partial_{\sigma}\{-\partial_x\{H(\sigma)[x]\}\cdot g(x)\}\\
&=\partial_x\{\partial_x\{H(\sigma)[x]\}\cdot g(x)\}\cdot g(x),\,
\sigma\in\mathbb{R},\,x\in\mathbb{R}^n.
\end{split}
\end{aligned}
\right.
\end{equation}
The standard rule of stochastic derivation lead us to SDE
\begin{equation}
\label{b:24}
d_t\hat{z}(t,x)=\partial_{\sigma}\{H(\sigma)[x]\}_{\sigma=w(t)}\cdot dw(t)
+\frac{1}{2}\partial^2_{\sigma}\{H(\sigma)[x]\}_{\sigma=w(t)}dt,\,t\in[0,T],
\end{equation}
and rewritting the right hand side of \eqref{b:24} (see \eqref{b:23}) we
get SPDE of parabolic type given in \eqref{b:22}. The proof is complete.
\end{proof}

\begin{lemma}
\label{lem-b:4}
Assume the hypotheses \eqref{b:1} and \eqref{b:2} are fulfilled and
consider $\{\lambda=\psi(t,x):t\in[0,T],x\in\mathbb{R}^n\}$ defined in Lemma
\eqref{lem-b:2}. Then $u(t,x):=h(\psi(t,x))$, $t\in[0,T]$, $x\in\mathbb{R}^n$,
$h\in(\mathcal{C}^1_b\cap\mathcal{C}^2)(\mathbb{R}^n)$, satisfies the
following nonlinear SPDE of parabolic type
\begin{equation}
\label{b:25}
\left\{
\begin{aligned}
&d_tu(t,x)+\langle\partial_xu(t,x),f(x)\rangle\varphi(\psi(t,x))dt
+\langle\partial_xu(t,x),g(x)\rangle\hat{\circ}dw(t)=0\\
&u(0,x)=h(x),\,t\in[0,T],\,x\in\mathbb{R}^n,
\end{aligned}
\right.
\end{equation}
where the \index{Fisk-Stratonovich integral} ``$\hat{\circ}$''
is computed by
\[
h(t,x)\hat{\circ}dw(t)=h(t,x)\cdot dw(t)-\frac{1}{2}\partial_xh(t,x)g(x)dt.
\]
\end{lemma}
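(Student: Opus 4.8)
The strategy is to combine the representation $\psi(t,x)=\hat{\psi}(t,\hat{z}(t,x))$ obtained in Lemma~\ref{lem-b:2} with the two parabolic-type equations already established: the nonlinear deterministic (H-J)-equation \eqref{b:7} satisfied by $\hat{\psi}$ and the linear SPDE of parabolic type \eqref{b:22} satisfied by the $\mathcal{F}_t$-adapted process $\hat{z}(t,x)=G(-w(t))[x]$. Since $u(t,x)=h(\psi(t,x))=h(\hat{\psi}(t,\hat{z}(t,x)))$ is a composition of the smooth deterministic map $(t,z)\mapsto h(\hat{\psi}(t,z))$ with the semimartingale $t\mapsto\hat{z}(t,x)$ (for each fixed $x$), the natural tool is the stochastic rule of derivation proved in Theorem~\ref{th:4.t2}, applied in its Fisk-Stratonovich form.

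First I would fix $x\in\mathbb{R}^n$ and set $\varphi_0(t,z):=h(\hat{\psi}(t,z))$, which lies in the required differentiability class because $h\in(\mathcal{C}^1_b\cap\mathcal{C}^2)(\mathbb{R}^n)$ and $\hat{\psi}$ is smooth (Lemma~\ref{lem-b:1}). Next I would record that $\hat{z}(t,x)$ solves \eqref{b:22}, i.e.\ in Stratonovich notation $d_t\hat{z}(t,x)=-\partial_x\hat{z}(t,x)g(x)\circ dw(t)$ — the drift term is absorbed into the Stratonovich correction, so the underlying SDE for $\hat{z}$ has zero drift and diffusion vector field $-\,[\text{col.\ }j]\,\partial_x\hat{z}\,g$. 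Applying the chain rule of Theorem~\ref{th:4.t2} to $\varphi_0$ along $\hat{z}(t,x)$ gives, in Stratonovich form,
\begin{equation}
\label{plan:1}
d_tu(t,x)=\partial_t\hat{\psi}(t,\hat{z}(t,x))^{*}\partial_z h(\hat{\psi})\,dt
-\langle\partial_z\{h(\hat{\psi}(t,\hat{z}(t,x)))\},\partial_x\hat{z}(t,x)g(x)\rangle\circ dw(t),
\end{equation}
where I am writing $\partial_z\{h(\hat{\psi})\}=[\partial_z\hat{\psi}]^{*}\partial_\lambda h(\hat{\psi})$. Then I would substitute the (H-J)-equation \eqref{b:7}, namely $\partial_t\hat{\psi}(t,z)=-[\partial_z\hat{\psi}(t,z)]f(z)\varphi(\hat{\psi}(t,z))$ evaluated at $z=\hat{z}(t,x)$, into the drift term of \eqref{plan:1}.

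The remaining algebra is to re-express everything in terms of $u$ and its spatial gradient $\partial_xu(t,x)$. From $u(t,x)=h(\hat{\psi}(t,\hat{z}(t,x)))$ one computes $\partial_xu(t,x)=[\partial_x\hat{z}(t,x)]^{*}[\partial_z\hat{\psi}(t,\hat{z}(t,x))]^{*}\partial_\lambda h(\hat{\psi})$; this is precisely the factor appearing in the diffusion term of \eqref{plan:1}, so that term becomes $-\langle\partial_xu(t,x),g(x)\rangle\circ dw(t)$ once I note $[\partial_x\hat{z}]$ is invertible (it is the flow derivative, nonsingular by Liouville's theorem). For the drift term, after inserting \eqref{b:7} I get $-\langle[\partial_x\hat{z}]^{*}[\partial_z\hat{\psi}]^{*}\partial_\lambda h,\,f(\hat{z})\,\varphi(\hat{\psi})\rangle\,dt$; here I must identify $[\partial_x\hat{z}(t,x)]f(\hat{z}(t,x))$ with $f(x)$ — this uses the hypothesis \eqref{b:1} that $g$ and $f$ commute, exactly as in Exercise~2 of Section~1.6 (invariance of $f$ along the flow of $g$), together with $x=G(w(t))[\hat{z}(t,x)]$. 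Once that identification is made the drift becomes $-\langle\partial_xu(t,x),f(x)\rangle\varphi(\psi(t,x))\,dt$, since $\psi(t,x)=\hat{\psi}(t,\hat{z}(t,x))$. Finally I convert the Stratonovich stochastic integral back to the asymmetric form ``$\hat{\circ}$'' defined in the statement (Stratonovich minus the correction $\tfrac12\partial_x h\, g$), which is straightforward, and append the initial condition $u(0,x)=h(\hat{\psi}(0,x))=h(x)$ from \eqref{b:7}.

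\textbf{Main obstacle.} The delicate point is the identification $[\partial_x\hat{z}(t,x)]\,f(\hat{z}(t,x))=f(x)$, i.e.\ checking that the commutation hypothesis \eqref{b:1} really does transport the drift vector field $f$ correctly through the stochastic flow $G(-w(t))$; this is where one must invoke the conjugation identity $H(-t;\cdot)f(G(-t;\cdot))=f(\cdot)$ valid when $f$ is $g$-invariant (Exercise~2, Section~1.6), now applied pathwise with the random time $\sigma=w(t)$ — which is legitimate because that identity is a purely geometric fact about the deterministic flows. A secondary technical care is the bookkeeping between the three flavours of stochastic integral (Ito ``$\cdot$'', symmetric Stratonovich ``$\circ$'', and the asymmetric ``$\hat{\circ}$'' of the statement), making sure the $\tfrac12$-corrections are accounted for consistently when passing between \eqref{b:22}, \eqref{plan:1}, and the final equation \eqref{b:25}; none of this is conceptually hard but it is the place where sign errors would creep in. Everything else is routine chain-rule computation once Theorem~\ref{th:4.t2} is invoked.
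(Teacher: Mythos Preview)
Your approach is essentially the same as the paper's: write $u(t,x)=h(\hat{\psi}(t,\hat{z}(t,x)))$, apply the stochastic chain rule using the SPDE \eqref{b:22} for $\hat{z}$ and the (H-J) equation \eqref{b:7} for $\hat{\psi}$, and then invoke the commutation hypothesis \eqref{b:1} to turn $f(\hat{z})$ into $f(x)$. One small slip: the identity you need is $[\partial_x\hat{z}(t,x)]^{-1}f(\hat{z}(t,x))=f(x)$ (equivalently $[\partial_x\hat{z}]f(x)=f(\hat{z})$), not $[\partial_x\hat{z}]f(\hat{z})=f(x)$ as you wrote; with that correction the drift term comes out exactly as in \eqref{b:25}, and no separate conversion between ``$\circ$'' and ``$\hat{\circ}$'' is needed since Lemma~\ref{lem-b:3} already delivers $\hat{z}$ in the ``$\hat{\circ}$'' form.
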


\begin{proof}
By definition (see Lemma \eqref{lem-b:2}),
$\psi(t,x)=\hat{\psi}(t,\hat{z}(t,x))$, $t\in[0,T]$, where
$\hat{z}(,x)=G(-w(t))[x]$ and
$\{\hat{\psi}(t,z)\in\mathbb{R}^n:t\in[0,T],z\in\mathbb{R}^n\}$ satisfies
nonlinear (H-J)-equations \eqref{b:7} of Lemma \ref{lem-b:1}. In addition
$\{\hat{z}(t,x)\in\mathbb{R}^n:t\in[0,T],x\in\mathbb{R}^n\}$ fulfils SPDE
\eqref{b:22} in Lemma~\ref{lem-b:3}, i.e.
\begin{equation}
\label{b:26}
d_t\hat{z}(t,x)+\partial_x\hat{z}(t,x)\hat{\circ}dw(t)=0,\,
t\in[0,T],\,x\in\mathbb{R}^n.
\end{equation}
Applying the standard rule of stochastic derivation associated with the smooth
mapping $\{\lambda=\hat{\psi}(t,z):t\in[0,T],z\in\mathbb{R}^n\}$ and
stochastic process $\hat{z}(t,x):=G(-w(t))[x]=:H(w(t))[x]$, $t\in[0,T]$, we
get the following nonlinear SPDE
\begin{equation}
\label{b:27}
\left\{
\begin{aligned}
&d_t\psi(t,x)+\partial_x\psi(t,x)f(x)\varphi(\psi(t,x))dt
+\partial_x\psi(t,x)g(x)\hat{\circ}dw(t)=0,\\
&\psi(0,x)=x,\,t\in[0,T].
\end{aligned}
\right.
\end{equation}
In addition, the functional $u(t,x)=h(\psi(t,x))$ can be rewritten
$u(t,x)=\hat{u}(t,\hat{z}(t,x))$, where $\hat{u}(t,z):=h(\hat{\psi}(t,z))$
is a smooth \index{deterministic functional} satisfying nonlinear (H-J)-equations
(see \eqref{b:7} of Lemma~\ref{lem-b:1})
\begin{equation}
\label{b:28}
\left\{
\begin{aligned}
&\partial_t\hat{u}(t,z)+\langle\partial_z\hat{u}(t,z),f(z)\rangle
\varphi(\hat{\psi}(t,z))=0,\,t\in[0,T],\,z\in\mathbb{R}^n,\\
&\hat{u}(0,z)=h(z).
\end{aligned}
\right.
\end{equation}
Using \eqref{b:26}and \eqref{b:28} we obtain SDPE fulfilled by $\{u(t,x)\}$,
\begin{equation}
\label{b:29}
\left\{
\begin{aligned}
&d_tu(t,x)+\langle\partial_z\hat{u}(t,\hat{z}(t,x)),f(\hat{z}(t,x))\rangle
\varphi(\psi(t,x))dt+\langle\partial_xu(t,x),g(x)\rangle\hat{\circ}dw(t)=0,\\
&u(0,x)=h(x),\,t\in[0,T],\,x\in\mathbb{R}^n.
\end{aligned}
\right.
\end{equation}
The hypothesis \eqref{b:1} allows us to write
\begin{equation}
\label{b:30}
\begin{split}
\langle\partial_z\hat{u}(t,\hat{z}(t,x)),f(\hat{z}(t,x))\rangle
&=\partial_z\hat{u}(t,\hat{z}(t,x))[\partial_x\hat{z}(t,x)]
[\partial_x\hat{z}(t,x)]^{-1}f(\hat{z}(t,x))\\
&=\langle\partial_xu(t,x),f(x)\rangle,\,t\in[0,T],\,x\in\mathbb{R}^n,
\end{split}
\end{equation}
and using \eqref{b:30} into \eqref{b:29} we get the conclusion \eqref{b:25},
\begin{equation}
\label{b:31}
\left\{
\begin{aligned}
&\partial_tu(t,x)+\langle\partial_xu(t,x),f(x)\rangle\varphi(\psi(t,x))dt
+\langle\partial_xu(t,x),g(x)\rangle\hat{\circ}dw(t)=0,\\
&u(0,x)=h(x),\,t\in[0,T],x\in\mathbb{R}^n,
\end{aligned}
\right.
\end{equation}
where the \index{Fisk-Stratonovich integral} ``$\hat{\circ}$'' is computed
by
\begin{equation}
\label{b:32}
h(t,x)\hat{\circ}dw(t)=-\frac{1}{2}\partial_xh(t,x)g(x)dt+h(t,x)\cdot dw(t),
\end{equation}
using Ito integral ``$\cdot$''. The proof is complete.
\end{proof}

\begin{remark}
\label{rem-b:3}
The complete solution of Problem (P1) is contained in Lemmas
\ref{lem-b:1}--\ref{lem-b:4}. We shall rewrite them as a theorem.
\end{remark}

\begin{theorem}
\label{thm-b:1}
Assume that the vector fields
$f\in(\mathcal{C}_b\cap\mathcal{C}^1_b\cap\mathcal{C}^2)
(\mathbb{R}^n;\mathbb{R}^n)$,
$g\in(\mathcal{C}^1_b\cap\mathcal{C}^2)(\mathbb{R}^n;\mathbb{R}^n)$, and
scalar function $\varphi\in(\mathcal{C}^1_b\cap\mathcal{C}^2)(\mathbb{R}^n)$
fulfil the hypotheses \eqref{b:1} and \eqref{b:2}. Consider the
continuous and $\mathcal{F}_t$-\index{adapted process}
$\{\lambda=\psi(t,x\in\mathbb{R}^n):t\in[0,T],x\in\mathbb{R}^n\}$ satisfying
integral equations \eqref{a:3}. Then $u(t,x):=h(\psi(t,x))$, $t\in[0,T]$,
$x\in\mathbb{R}^n$, fulfils nonlinear SPDE of parabolic type \eqref{b:25}
(see Lemma \ref{lem-b:4}), for each
$h\in(\mathcal{C}^1_b\cap\mathcal{C}^2)(\mathbb{R}^n)$.
\end{theorem}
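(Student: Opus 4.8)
\textbf{Proof proposal for Theorem \ref{thm-b:1}.}

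The plan is to assemble the four lemmas of Section~\ref{ssec:b.1} into a single chain, treating Theorem~\ref{thm-b:1} essentially as a corollary of Lemma~\ref{lem-b:4} once the representation \eqref{b:18} of $\psi(t,x)$ has been established. First I would invoke Lemma~\ref{lem-b:1}: under hypothesis \eqref{b:2} the map $\hat V(t,z;\lambda)=F(-\theta(t;\lambda))[z]$ is a uniform contraction in $\lambda$ with Lipschitz constant $\rho=TVK<1$ (estimate \eqref{b:8}), so Banach's fixed point theorem yields a unique smooth deterministic solution $\lambda=\hat\psi(t,z)$ of \eqref{b:5} satisfying the flow identity and the (H-J) equation \eqref{b:7}. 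Next, using hypothesis \eqref{b:1} (the commutation $[g,f]=0$), Lemma~\ref{lem-b:2} gives the product representation $\hat x_\varphi(t;\lambda)=G(w(t))\circ F(\theta(t;\lambda))[\lambda]$ of the stochastic flow; combining this with the deterministic inversion of $F(\theta(t;\cdot))$ from Lemma~\ref{lem-b:1}, one gets that $\psi(t,x)=\hat\psi(t,\hat z(t,x))$ with $\hat z(t,x)=G(-w(t))[x]$ is the unique continuous, $\mathcal F_t$-adapted solution of the integral equations \eqref{a:3}.

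The second block of steps is the stochastic-calculus part. I would apply Lemma~\ref{lem-b:3} to obtain the linear SPDE \eqref{b:22} satisfied by $\hat z(t,x)=G(-w(t))[x]$, which follows from the stochastic rule of derivation (Theorem~\ref{th:4.t2} and the standard version \eqref{eq:4.2} of Section~4.3) applied to the deterministic map $H(\sigma)[x]=G(-\sigma)[x]$ with the continuous semimartingale $\sigma=w(t)$, the key being the computation \eqref{b:23} of $\partial_\sigma H$ and $\partial^2_\sigma H$ in terms of $g$. Then Lemma~\ref{lem-b:4} composes the smooth deterministic $\hat\psi(t,z)$ (solving \eqref{b:7}) with the stochastic $\hat z(t,x)$ (solving \eqref{b:22}) via the stochastic rule of derivation once more, producing the nonlinear SPDE \eqref{b:27} for $\psi(t,x)$, and finally the functional $u(t,x)=h(\psi(t,x))=\hat u(t,\hat z(t,x))$ with $\hat u(t,z)=h(\hat\psi(t,z))$ solving \eqref{b:28} yields, after the same composition argument, the equation \eqref{b:29}. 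The commutation hypothesis \eqref{b:1} enters one last time through the change-of-variables identity \eqref{b:30}, $\langle\partial_z\hat u(t,\hat z(t,x)),f(\hat z(t,x))\rangle=\langle\partial_x u(t,x),f(x)\rangle$, which converts \eqref{b:29} into the final form \eqref{b:25}.

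To write the theorem's proof itself I would simply state: under \eqref{b:1} and \eqref{b:2}, Lemma~\ref{lem-b:1} provides $\hat\psi$, Lemma~\ref{lem-b:2} identifies $\psi(t,x)=\hat\psi(t,G(-w(t))[x])$ as the unique adapted solution of \eqref{a:3}, Lemma~\ref{lem-b:3} gives the SPDE for $\hat z(t,x)$, and Lemma~\ref{lem-b:4} then gives \eqref{b:25} for $u(t,x)=h(\psi(t,x))$ for every $h\in(\mathcal C^1_b\cap\mathcal C^2)(\mathbb R^n)$; this is exactly the assertion of Theorem~\ref{thm-b:1}, so the proof is complete. The main obstacle I anticipate is not in the assembly but in justifying the repeated use of the stochastic rule of derivation for compositions such as $\hat\psi(t,G(-w(t))[x])$: one must check that the regularity actually available ($f\in\mathcal C^2$, $g\in\mathcal C^1_b\cap\mathcal C^2$, $\varphi\in\mathcal C^1_b\cap\mathcal C^2$, $h\in\mathcal C^1_b\cap\mathcal C^2$) propagates through the fixed-point construction so that $\hat\psi$ and $\hat u$ are genuinely $\mathcal C^{1,2}$ in $(t,z)$ — this is needed to apply Theorem~\ref{th:4.t2}, which requires a $\mathcal C^{1,3}$ (or at least $\mathcal C^{1,2}$ after the reductions) test function — and that the relevant flows are complete so no stopping-time localization is lost. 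Tracking these differentiability counts, together with the boundedness needed to drop the stopping time in passing from \eqref{b:15} to the global SPDE, is the delicate bookkeeping; everything else is the mechanical chaining of the four lemmas.
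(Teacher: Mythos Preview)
Your proposal is correct and matches the paper's approach exactly: the paper states in Remark~\ref{rem-b:3} that ``the complete solution of Problem (P1) is contained in Lemmas \ref{lem-b:1}--\ref{lem-b:4}'' and then records Theorem~\ref{thm-b:1} as a summary, with no separate proof beyond the chaining of those four lemmas. Your regularity bookkeeping concerns are legitimate but go beyond what the paper itself addresses.
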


\subsection{Solution for the Problem (P2)}
\label{ssec:b.2}

Using the same notations as in subsection \ref{ssec:b.1}, we consider the
unique solution $\{\hat{x}_{\psi}(s;t,x):s\in[t,T]\}$ satisfying
SDE~\eqref{b:3} for each $0\leq t<T$ and $x\in\mathbb{R}^n$.   As far as
SDE~\eqref{b:3} is a \index{non-markovian system}, the evolution of a functional
$S(t,x):=Eh(\hat{x}_{\psi}(T;t,x))$, $t\in[0,T]$, $x\in\mathbb{R}^n$,
$h\in(\mathcal{C}^1_b\cap\mathcal{C}^2)(\mathbb{R}^n)$, will be described
using the pathwise representation of the conditional mean values functional
\begin{equation}
\label{b:33}
v(t,x)=E\{h(\hat{x}_{\psi}(T;t,x))\mid\psi(t,x)\},\,
0\leq t<T,\,x\in\mathbb{R}^n.
\end{equation}
Assuming the hypotheses \eqref{b:1} and \eqref{b:2} we may and do
write the following integral representation
\begin{equation}
\label{b:34}
\hat{x}_{\psi}(T;t,x)=G(w(T)-w(t))\circ F[(T-t)\varphi(\psi(t,x))][x],\,
0\leq t<T,\,x\in\mathbb{R}^n,
\end{equation}
for a solution of SDE \eqref{b:3}, where $G(\sigma)[z]$ and $F(\tau)[z]$,
$\sigma,\tau\in\mathbb{R}$, $z\in\mathbb{R}^n$, are the global flows generated
by $g,f\in(\mathcal{C}^1_b\cap\mathcal{C}^2)(\mathbb{R}^n;\mathbb{R}^n)$.  The
right side hand of \eqref{b:34} is a continuous mapping of the two
independent random variables, $z_1=[w(T)-w(t)]\in\mathbb{R}$ and
$z_2=\psi(t,x)\in\mathbb{R}^n$ ($\mathcal{F}_t$-measurable) for each
$0\leq t<T$, $x\in\mathbb{R}^n$.   A direct consequence of this remark is to
use a \index{parameterized random variable}
\begin{equation}
\label{b:35}
y(t,x;\lambda)=G(w(T)-w(t))\circ F[(T-t)\varphi(\lambda)][x],\,0\leq t<T,
\end{equation}
and to compute the conditional mean values \eqref{b:33} by
\begin{equation}
\label{b:36}
v(t,x)=[Eh(y(t,x;\lambda))](\lambda=\psi(t,x)).
\end{equation}
Here the functional
\begin{equation}
\label{b:37}
u(t,x;\lambda):=Eh(y(t,x;\lambda)),\,t\in[0,T],\,x\in\mathbb{R}^n,
\end{equation}
satisfies a backward parabolic equation (Kolmogorov's equation) for each
$\lambda\in\mathbb{R}^n$ and rewrite \eqref{b:36} as follows,
\begin{equation}
\label{b:38}
v(t,x)=u(t,x;\psi(t,x)),\,0\leq t<T,\,x\in\mathbb{R}^n.
\end{equation}
In conclusion, the functional $\{S(t,x)\}$ can be written as
\begin{equation}
\label{b:39}
S(t,x)=E[E\{h(\hat{x}_{\psi}(T;t,x))\mid\psi(t,x)\}=Eu(t,x;\psi(t,x)),\,
0\leq t<T,\,x\in\mathbb{R}^n,
\end{equation}
where $\{u(t,x;\lambda):t\in[0,T],x\in\mathbb{R}^n\}$ satisfies the
corresponding backward parabolic equations with parameter
$\lambda\in\mathbb{R}^n$,
\begin{equation}
\label{b:40}
\left\{
\begin{aligned}
&\partial_tu(t,x;\lambda)+\langle\partial_xu(t,x;\lambda),f(x,\lambda)\rangle
+\frac{1}{2}\langle\partial^2_xu(t,x;\lambda)g(x),g(x)\rangle=0,\\
&u(T,x;\lambda)=h(x),\,f(x,\lambda):=\varphi(\lambda)f(x)+\frac{1}{2}[\partial_xg(x)]g(x).
\end{aligned}
\right.
\end{equation}
We conclude these remarks by a theorem.

\begin{theorem}
\label{thm-b:2}
Assume that the vector fields $f,g$ and the scalar function $\varphi$ of SDE
\eqref{b:3} fulfil the hypotheses \eqref{b:1} \eqref{b:2}, where the
continuous and $\mathcal{F}_t$-\index{adapted process}
$\{\psi(t,x)\in\mathbb{R}^n:t\in[0,T]\}$ is defined in Theorem \ref{thm-b:1}.
Then the evolution of the functional
\begin{equation}
\label{b:41}
S(t,x):=Eh(\hat{x}_{\psi}(T;t,x)),\,t\in[0,T],\,x\in\mathbb{R}^n,\,
h\in(\mathcal{C}^1_b\cap\mathcal{C}^2)(\mathbb{R}^n)
\end{equation}
can be described as in \eqref{b:39}, where
$\{u(t,x):t\in[0,T],x\in\mathbb{R}^n\}$ satisfies linear backward parabolic
equations \eqref{b:40} for each $\lambda\in\mathbb{R}^n$.
\end{theorem}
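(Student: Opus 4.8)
The argument has three stages: (i) a pathwise representation of the solution of the non-Markovian equation \eqref{b:3}; (ii) a conditioning (``freezing'') step isolating a Markovian sub-family indexed by the parameter $\lambda$; and (iii) the classical Kolmogorov backward equation for that sub-family, followed by the tower property to recover $S(t,x)$. Throughout, $f$ and $g$ are complete, so the global flows $F(\tau)$, $G(\sigma)$ are defined, and $\psi$ is the continuous $\mathcal{F}_t$-adapted solution of \eqref{a:3} furnished by Theorem \ref{thm-b:1} under the standing hypotheses \eqref{b:1}--\eqref{b:2}.

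First I would establish the representation \eqref{b:34}. Fix $0\le t<T$ and $x\in\mathbb{R}^n$ and observe that $\varphi(\psi(t,x))$ is $\mathcal{F}_t$-measurable, hence constant in the running variable $s\ge t$; thus $s\mapsto(s-t)\varphi(\psi(t,x))$ is a continuous process of bounded variation adapted to $\{\mathcal{F}_s\}_{s\ge t}$. Since $[g,f]=0$ by \eqref{b:1}, the smooth deterministic map $y(\theta,\sigma)[x]:=G(\sigma)\circ F(\theta)[x]$ solves the gradient system \eqref{b:20}. Applying the stochastic rule of derivation (Theorem \ref{th:4.t2}) to $y(\theta,\sigma)[x]$ along the continuous semimartingales $\theta(s)=(s-t)\varphi(\psi(t,x))$ and $\sigma(s)=w(s)-w(t)$ shows that $\hat{y}(s):=y(\theta(s),\sigma(s))[x]$ satisfies \eqref{b:3} on $[t,T]$ with $\hat{y}(t)=x$; pathwise uniqueness for \eqref{b:3} then gives $\hat{y}(s)=\hat{x}_\psi(s;t,x)$, and $s=T$ yields \eqref{b:34}. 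This is the time-$t$ analogue of the representation in Remark \ref{rem-b:2} and Lemma \ref{lem-b:2}.

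Next I would isolate the randomness. Writing $\Phi(z_1,z_2):=h\bigl(G(z_1)\circ F[(T-t)\varphi(z_2)][x]\bigr)$, relation \eqref{b:34} reads $h(\hat{x}_\psi(T;t,x))=\Phi\bigl(w(T)-w(t),\,\psi(t,x)\bigr)$, where $w(T)-w(t)$ is independent of $\mathcal{F}_t$ and $\psi(t,x)$ is $\mathcal{F}_t$-measurable. Since the $\sigma$-field generated by $\psi(t,x)$ is contained in $\mathcal{F}_t$, the freezing lemma for conditional expectations with respect to an independent $\sigma$-field gives $E\{h(\hat{x}_\psi(T;t,x))\mid\psi(t,x)\}=E\{h(\hat{x}_\psi(T;t,x))\mid\mathcal{F}_t\}=\bigl[Eh(y(t,x;\lambda))\bigr]_{\lambda=\psi(t,x)}$ with $y(t,x;\lambda)$ as in \eqref{b:35}. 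Setting $u(t,x;\lambda):=Eh(y(t,x;\lambda))$ this is \eqref{b:38}, and the tower property yields $S(t,x)=E\{E[h(\hat{x}_\psi(T;t,x))\mid\psi(t,x)]\}=Eu(t,x;\psi(t,x))$, that is \eqref{b:39}. Finally, for each fixed $\lambda$ the variable $y(t,x;\lambda)$ is the time-$T$ value of the genuinely Markovian diffusion $d_s\xi=\varphi(\lambda)f(\xi)ds+g(\xi)\circ dw(s)$, $\xi(t)=x$, whose Stratonovich form converts to the It\^o drift $f(x,\lambda)=\varphi(\lambda)f(x)+\frac{1}{2}[\partial_xg(x)]g(x)$. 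Under the standing regularity $f\in(\mathcal{C}_b\cap\mathcal{C}^1_b\cap\mathcal{C}^2)$, $g\in(\mathcal{C}^1_b\cap\mathcal{C}^2)$, $h\in(\mathcal{C}^1_b\cap\mathcal{C}^2)$ one may differentiate $u(t,x;\lambda)=Eh\bigl(G(w(T)-w(t))\circ F[(T-t)\varphi(\lambda)][x]\bigr)$ under the expectation (the flows $G,F$ are smooth in all arguments, $h$ is $\mathcal{C}^2$, and $w(T)-w(t)$ has a smooth Gaussian density, giving the requisite $\mathcal{C}^{1,2}$ regularity in $(t,x)$), and the standard It\^o--Kolmogorov computation produces the linear backward parabolic equation \eqref{b:40} with terminal datum $u(T,x;\lambda)=h(x)$. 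Combined with \eqref{b:39}, this is the assertion of the theorem.

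\textbf{Main obstacle.} The delicate point is stage (i): checking that Theorem \ref{th:4.t2} genuinely applies when the drift coefficient is the $\mathcal{F}_t$-measurable (but $s$-independent) random factor $\varphi(\psi(t,x))$ and the time interval is $[t,T]$ rather than $[0,T]$, and then invoking pathwise uniqueness for \eqref{b:3} to identify $\hat{y}$ with $\hat{x}_\psi$. Once \eqref{b:34} is secured, the freezing/tower manipulation and the Kolmogorov backward equation are routine consequences of the stated regularity hypotheses.
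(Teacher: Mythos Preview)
Your proposal is correct and follows exactly the paper's approach: the paper's ``proof'' consists of the remarks \eqref{b:33}--\eqref{b:40} preceding the theorem statement (it reads ``We conclude these remarks by a theorem''), and your three stages---the flow representation \eqref{b:34}, the freezing of $\lambda=\psi(t,x)$ via independence of $w(T)-w(t)$ from $\mathcal{F}_t$, and the Kolmogorov backward equation for the parameterized Markovian family---reproduce that discussion step for step. If anything you are more careful than the paper, which simply asserts \eqref{b:34} and \eqref{b:36} without spelling out the stochastic rule of derivation or the freezing lemma; your identification of stage (i) as the only nontrivial point is accurate.
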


\begin{remark}
\label{rem-b:4}
Consider the case of several vector fields defining both the drift and
diffusion of SDE \eqref{a:1}, i.e.
\begin{equation}
\label{b:42}
\left\{
\begin{aligned}
&d_t\hat{x}=[\sum_{i=1}^{m}\varphi_i(\lambda)f_i(\hat{x})]dt
+\sum_{i=1}^{m}g_i(\hat{x})\circ dw_i(t),\,t\in[0,T],\\
&\hat{x}(0)=\lambda\in\mathbb{R}^n.
\end{aligned}
\right.
\end{equation}
We notice that the analysis presented in Theorems \ref{thm-b:1} and
\ref{thm-b:2} can be extended to this multiple vector fields case
(see next section).
\end{remark}

\subsection{Multiple vector fields case}
\label{sec:c}

We are given two finite sets of vector fields
$\{f_1,\dots,f_m\}\subset(\mathcal{C}_b\cap\mathcal{C}^1_b\cap\mathcal{C}^2)
(\mathbb{R}^n;\mathbb{R}^n)$ and
$\{g_1,\dots,g_m\}\subset(\mathcal{C}^1_b\cap\mathcal{C}^2)
(\mathbb{R}^n;\mathbb{R}^n)$ and consider the unique solution
$\{\hat{x}_{\varphi}(t,\lambda):t\in[0,T],\lambda\in\mathbb{R}^n\}$ of SDE
\begin{equation}
\label{c:1}
\left\{
\begin{aligned}
&d_t\hat{x}=[\sum_{i=1}^{m}\varphi_i(\lambda)f_i(\hat{x})]dt
+\sum_{i=1}^{m}g_i(\hat{x})\circ dw_i(t),\,t\in[0,T],\,\hat{x}\in\mathbb{R}^n,\\
&\hat{x}(0)=\lambda\in\mathbb{R}^n
\end{aligned}
\right.
\end{equation}
where
$\varphi=(\varphi_1,\dots,\varphi_m)\subset(\mathcal{C}^1_b\cap\mathcal{C}^2)$
are fixed and $w=(w_1(t),\dots,w_m(t))\in\mathbb{R}^m$ is a standard Wiener
process over a complete filtered probability space
$\{\Omega,\mathcal{F}\supset\{\mathcal{F}_t\},P\}$.
Each \index{Fisk-Stratonovich integral} ``$\circ$'' in \eqref{c:1} is computed by
\begin{equation}
\label{c:2}
g_i(x)\circ dw_i(t)=g_i(x)\cdot dw_i(t)+\frac{1}{2}[\partial_xg_i(x)]g_i(x)dt,
\end{equation}
using Ito integral ``$\cdot$''.

Assume that $\{\lambda=\psi(t,x)\in\mathbb{R}^n:t\in[0,T],x\in\mathbb{R}^n\}$
is the unique continuous and $\mathcal{F}_t$-adapted solution satisfying
integral equations
\begin{equation}\label{c:3}
\hat{x}_{\varphi}(t;\lambda)=x\in\mathbb{R}^n,\,t\in[0,T].
\end{equation}
For each $h\in(\mathcal{C}^1_b\cap\mathcal{C}^2)(\mathbb{R}^n)$, associate
stochastic functionals $\{u(t,x)=h(\psi(t,x):t\in[0,T],x\in\mathbb{R}^n)\}$
and \index{deterministic mappings}
$\{S(t,x)=Eh(\hat{x}_{\psi}(T;t,x)):t\in[0,T],x\in\mathbb{R}^n\}$, where
$\{\hat{x}_{\psi}(s;t,x):s\in[t,T],x\in\mathbb{R}^n\}$ satisfies the following SDE
\[
\left\{
\begin{aligned}
&d_s\hat{x}=[\sum_{i=1}^{m}\varphi_i(\psi(t,x))f_i(\hat{x})]ds
+\sum_{i=1}^{m}g_i(\hat{x})\circ dw_i(t),\,s\in[t,T],\\
&\hat{x}(t)=x.
\end{aligned}
\right.
\]

\noindent
\textbf{Problem} (P1). Assume that
\begin{equation}
\label{c:4a}
\left\{
\begin{aligned}
&M=\{f_1,\dots,f_m,g_1,\dots,g_m\} \text{ are muttualy commuting using Lie
bracket i.e.}\\
&[X_1,X_2](x)=0 \text{ for any pair } X_1,X_2\in M
\end{aligned}
\right.
\end{equation}
\begin{equation}
\label{c:4b}
TV_iK_i=\rho_i\in[0,\frac{1}{m}),
\end{equation}
where $V_i:=\sup\{|\partial_x\varphi_i(x)|:x\in\mathbb{R}^n\}$ and
$K_i=\{|f_i(x)|:x\in\mathbb{R}^n\}$, $i=1,\dots,m$.

Under the hypotheses \eqref{c:4a} and \eqref{c:4b}, find the nonlinear
SPDE of parabolic type satisfied by
$\{u(t,x)=h(\psi(t,x)),t\in[0,T],x\in\mathbb{R}^n\}$,
$h\in(\mathcal{C}^1_b\cap\mathcal{C}^2)(\mathbb{R}^n)$, where
$\{\lambda=\psi(t,x)\in\mathbb{R}^n:t\in[0,T],x\in\mathbb{R}^n\}$ is the
unique continuous and $\mathcal{F}_t$-adapted solution of integral equations
\eqref{c:3}.

\noindent
\textbf{Problem} (P2).
Using $\{\lambda=\psi(t,x)\in\mathbb{R}^n:t\in[0,T],x\in\mathbb{R}^n\}$ found
in (P1), describe the evolution of a functional
$S(t,x)=Eh(\hat{x}_{\psi}(T;t,x))$ using backward parabolic equations, where
$\{\hat{x}_{\psi}(s;t,x):s\in[t,T]\}$ is the unique solution of SDE
\begin{equation}
\label{c:5}
\left\{
\begin{aligned}
&d_s\hat{x}=[\sum_{i=1}^{m}\varphi_i(\psi(t,x))f_i(\hat{x})]ds
+\sum_{i=1}^{m}g_i(\hat{x})\circ dw_i(s),\,s\in[t,T],\\
&\hat{x}(t)=\hat{x}\in\mathbb{R}^n.
\end{aligned}
\right.
\end{equation}

\subsection{Solution for (P1)}
\label{ssec:c.1}

Under the hypotheses \eqref{c:4a} and \eqref{c:4b}, the unique solution
of SPDE \eqref{c:1} can be represented by
\begin{equation}
\label{c:6}
\hat{x}_{\varphi}(t;\lambda)=G(w(t))\circ F(\theta(t;\lambda))[\lambda]
=:H(t,w(t);\lambda)
\end{equation}
where
\[
\begin{aligned}
&G(\sigma)[z]=G_1(\sigma_1)\circ\dots\circ G_m(\sigma_m)[z],\,
\sigma=(\sigma_1,\dots,\sigma_m)\in\mathbb{R}^m,\\
&F(\sigma)[z]=F_1(\sigma_1)\circ\dots\circ F_m(\sigma_m)[z],\,
\theta(t;\lambda)=(t\varphi_1(\lambda),\dots,t\varphi_m(\lambda))\in\mathbb{R}^m
\text{ and }\\
&\{(F_i(\sigma_i)[z],G_i(\sigma_i)[z]):\sigma_i\in\mathbb{R},z\in\mathbb{R}^n\}
\end{aligned}
\]
are the global flows generated by $(f_i,g_i)$, $i\in\{1,\dots,m\}$.

The arguments for solving (P1) in the case of one pair $(f,g)$ of vector
fields (see subsection \eqref{ssec:b.1}) can be used also here and we get the
following similar results.  Under the representation \eqref{c:6}, the unique
continuous and $\mathcal{F}_t$-adapted solution
$\{\lambda=\psi(t,x):t\in[0,T],x\in\mathbb{R}^n\}$ solving equations
\begin{equation}
\label{c:7}
\hat{x}_{\varphi}(t;\lambda)=x\in\mathbb{R}^n,\,t\in[0,T]
\end{equation}
will be found as a composition
\begin{equation}
\label{c:8}
\psi(t,x)=\hat{\psi}(t,\hat{z}(t,x)),\,\hat{z}(t,x):=G(-w(t))[x].
\end{equation}
Here $\lambda=\hat{\psi}(t,z)$, $t\in[0,T]$, $z\in\mathbb{R}^n$ is the unique
solution satisfying \index{deterministic integral
eqations}
\begin{equation}
\label{c:9}
\lambda=F(-\theta(t;\lambda))[z]=:\hat{V}(t,z;\lambda),\,t\in[0,T],\,z\in\mathbb{R}^n.
\end{equation}

\begin{lemma}
\label{lem-c:1}
Asume that \eqref{c:4a} and \eqref{c:4b} is fulfilled. Then there exists a
unique smooth mapping $\{\lambda=\hat{\psi}(t,z):t\in[0,T],z\in\mathbb{R}^n\}$
solving \index{deterministic integral equations} \eqref{c:9} such that
\begin{equation}
\label{c:10}
\left\{
\begin{aligned}
&F(\theta(t;\hat{\psi}(t,z)))[\hat{\psi}(t,z)]=z\in\mathbb{R}^n,\,t\in[0,T],\\
&|\hat{\psi}(t,z)-z|\leq R(T,z):=\frac{r(T,z)}{1-\rho},\,t\in[0,T],\,z\in\mathbb{R}^n,
\end{aligned}
\right.
\end{equation}
where $\rho=\rho_1+\dots+\rho_m\in[0,1)$ and
$r(T,z)=T\sum_{i=1}^{m}K_i|\varphi_i(z)|$.

In addition, the following nonlinear (H-J)-equation is valid
\begin{equation}
\label{c:11}
\left\{
\begin{aligned}
&\partial_t\hat{\psi}(t,z)+\partial_z\hat{\psi}(t,z)
[\sum_{i=1}^{m}\varphi_i(\hat{\psi}(t,z))f_i(z)]=0,\,t\in[0,T],\,z\in\mathbb{R}^n,\\
&\hat{\psi}(0,z)=z.
\end{aligned}
\right.
\end{equation}
\end{lemma}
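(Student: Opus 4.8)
The plan is to replicate, in the multi-field setting, the argument used for Lemma \ref{lem-b:1}, the only genuinely new ingredient being the role played by the mutual commutativity hypothesis \eqref{c:4a} in handling the composition $F(\sigma)[z]=F_1(\sigma_1)\circ\cdots\circ F_m(\sigma_m)[z]$ of $m$ global flows. Since the $f_i\in\mathcal{C}_b$ are complete, each $F_i(\sigma_i)[z]$ is a well-defined smooth flow and $F(\sigma)[z]$ makes sense; because the $f_i$ commute pairwise, Frobenius' theorem (Theorem \ref{th:1.9}, specialized to a commuting family) shows the $F_i$ commute, so that the factors in $F(\sigma)[z]$ may be reordered at will. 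In particular, bringing $F_i$ to the front gives $\partial_{\sigma_i}F(\sigma)[z]=f_i(F(\sigma)[z])$, a vector bounded by $K_i$.

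First I would show that $\hat V(t,z;\lambda)=F(-\theta(t;\lambda))[z]$, with $\theta(t;\lambda)=(t\varphi_1(\lambda),\dots,t\varphi_m(\lambda))$, is a uniform contraction in $\lambda$. Differentiating in $\lambda$ through $\theta$ and using the identity just stated, $|\partial_\lambda\hat V(t,z;\lambda)|\le t\sum_{i=1}^m K_iV_i\le T\sum_{i=1}^m K_iV_i=\sum_{i=1}^m\rho_i=:\rho$, and \eqref{c:4b} forces $\rho\in[0,1)$. Banach's fixed point theorem (the contraction mapping principle recalled in Chapter~1, applied with parameters $(t,z)$) then produces the unique smooth solution $\lambda=\hat\psi(t,z)$ of \eqref{c:9}, as the limit of the Picard sequence $\lambda_0(t,z)=z$, $\lambda_{k+1}(t,z)=\hat V(t,z;\lambda_k(t,z))$, for which $|\lambda_{k+1}-\lambda_k|\le\rho^k|\lambda_1-\lambda_0|$ and $|\lambda_1-\lambda_0|\le|F(-\theta(t;z))[z]-z|\le T\sum_{i=1}^m K_i|\varphi_i(z)|=r(T,z)$. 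Summing the geometric series gives $|\hat\psi(t,z)-z|\le r(T,z)/(1-\rho)=R(T,z)$, and passing $k\to\infty$ in the recursion yields $\lambda=F(-\theta(t;\hat\psi(t,z)))[\hat\psi(t,z)]$, which is the first line of \eqref{c:10}.

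Next I would derive the Hamilton--Jacobi equation \eqref{c:11}. Set $\hat y(t,\lambda)=F(\theta(t;\lambda))[\lambda]$; by commutativity of the flows, $\frac{d}{dt}\hat y(t,\lambda)=\sum_{i=1}^m\varphi_i(\lambda)f_i(\hat y(t,\lambda))=:f_\lambda(\hat y(t,\lambda))$, so $\hat y(t,\cdot)$ is the flow of the vector field $f_\lambda$. Since $\hat V(t,\hat y(t,\lambda);\lambda)=\lambda$ is constant in $t$, every component of $\hat V(t,\cdot;\lambda)$ is a first integral of $f_\lambda$, i.e. $\partial_t\hat V(t,\hat y(t,\lambda);\lambda)+[\partial_z\hat V(t,\hat y(t,\lambda);\lambda)]f_\lambda(\hat y(t,\lambda))=0$ for each $\lambda$. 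Specializing $\lambda=\hat\psi(t,z)$ and using $\hat y(t,\hat\psi(t,z))=z$ (the first line of \eqref{c:10}) gives an (H-J)-equation for $z\mapsto\hat V(t,z;\hat\psi(t,z))$; differentiating the identity $\hat V(t,z;\hat\psi(t,z))=\hat\psi(t,z)$ in $t$ and $z$ and combining with it, exactly as in the passage from \eqref{b:5} and \eqref{b:14} to \eqref{b:7}, yields $\partial_t\hat\psi(t,z)+[\partial_z\hat\psi(t,z)]\sum_{i=1}^m\varphi_i(\hat\psi(t,z))f_i(z)=0$ with $\hat\psi(0,z)=z$.

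I expect the main obstacle to be the careful justification of the two reordering identities, $\partial_{\sigma_i}F(\sigma)[z]=f_i(F(\sigma)[z])$ and the first-integral property of $\hat V$, both of which rest on the commutativity of the flows of mutually commuting complete vector fields; once these are in place the contraction estimate and the derivation of \eqref{c:11} are routine extensions of the one-field computations in Lemma \ref{lem-b:1}, with the scalars $\rho$, $r(T,z)$, $R(T,z)$ replaced by the sums indicated in the statement.
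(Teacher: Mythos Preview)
Your proposal is correct and follows essentially the same approach as the paper, which simply states that the proof is based on the arguments of Lemma~\ref{lem-b:1}. You have appropriately identified and justified the one genuinely new point in passing from $m=1$ to general $m$: the commutativity hypothesis \eqref{c:4a} (restricted to the $f_i$) is what guarantees $\partial_{\sigma_i}F(\sigma)[z]=f_i(F(\sigma)[z])$, $F(-\sigma)\circ F(\sigma)=\mathrm{id}$, and $\frac{d}{dt}\hat y(t,\lambda)=\sum_i\varphi_i(\lambda)f_i(\hat y(t,\lambda))$, without which neither the contraction estimate nor the first-integral argument would go through.
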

The proof is based on the arguments of Lemma \ref{lem-b:1} in subsection \ref{ssec:b.1}.

\begin{lemma}
\label{lem-c:2}
Assume that \eqref{c:4a} and \eqref{c:4b} are satisfied and consider
$\{\lambda=\hat{\psi}(t,z)\in\mathbb{R}^n:t\in[0,T],z\in\mathbb{R}^n\}$ found
in Lemma \eqref{lem-c:1}. Then the \index{stochastic flow} generated by SDE
\eqref{c:1} fulfils
\begin{equation}
\label{c:12}
\{\hat{x}_{\varphi}(t;\lambda):t\in[0,T],\lambda\in\mathbb{R}^n\}
\text{ can be represented as in \eqref{c:6}},
\end{equation}
\begin{equation}
\label{c:13}
\begin{aligned}
\psi(t,x)=\hat{\psi}(t,\hat{z}(t,x)), &\text{ is the unique solution of
\eqref{c:7}},\\
&\text{ where } \hat{z}(t,x)=G(-w(t))[x].
\end{aligned}
\end{equation}
\end{lemma}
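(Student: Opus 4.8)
The plan is to mirror the one-pair argument of Lemma~\ref{lem-b:2}, the genuinely new ingredient being Frobenius' theorem (Theorem~\ref{th:1.9}) applied to the enlarged family $M$, which is in involution by hypothesis \eqref{c:4a}. First I would introduce the $2m$-parameter deterministic mapping
\[
y(\theta,\sigma)[\lambda]:=G_1(\sigma_1)\circ\cdots\circ G_m(\sigma_m)\circ F_1(\theta_1)\circ\cdots\circ F_m(\theta_m)[\lambda],
\]
for $\theta,\sigma\in\mathbb{R}^m$, $\lambda\in\mathbb{R}^n$, where $F_i(\cdot)$, $G_i(\cdot)$ are the global flows generated by $f_i\in(\mathcal{C}_b\cap\mathcal{C}^1_b\cap\mathcal{C}^2)$ and $g_i\in(\mathcal{C}^1_b\cap\mathcal{C}^2)$. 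Since every pair of vector fields in $M=\{f_1,\dots,f_m,g_1,\dots,g_m\}$ commutes, Theorem~\ref{th:1.9} guarantees that this composition is independent of the order in which the flows are applied and that $y(\theta,\sigma)[\lambda]$ is the unique $\mathcal{C}^2$ solution of the completely integrable gradient system
\[
\partial_{\theta_i}y=f_i(y),\quad\partial_{\sigma_i}y=g_i(y),\quad i=1,\dots,m,\quad y(0,0)[\lambda]=\lambda .
\]

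Next I would specialize the parameters to the continuous semimartingales $\theta=\theta(t;\lambda)=(t\varphi_1(\lambda),\dots,t\varphi_m(\lambda))$ (a process of bounded variation, for $\lambda$ fixed) and $\sigma=w(t)=(w_1(t),\dots,w_m(t))$ (the driving Wiener process, whose local-martingale parts have joint quadratic variation $\langle w_i,w_j\rangle(t)=\delta_{ij}t$). Applying the stochastic rule of derivation (Theorem~\ref{th:4.t2}, in its Fisk--Stratonovich form) to the smooth deterministic mapping $y(\theta,\sigma)[\lambda]$ and these processes, and using the gradient equations above together with $\partial_t\theta_i(t;\lambda)=\varphi_i(\lambda)$, one obtains that $\hat{y}_\varphi(t;\lambda):=y(\theta(t;\lambda),w(t))[\lambda]$ satisfies exactly SDE~\eqref{c:1}. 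By the uniqueness of the solution of \eqref{c:1} under the Lipschitz/boundedness hypotheses on $f_i,g_i$, we conclude $\hat{x}_\varphi(t;\lambda)=\hat{y}_\varphi(t;\lambda)$ for all $t\in[0,T]$ a.s., which is the representation~\eqref{c:6}, i.e.\ conclusion~\eqref{c:12}.

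For \eqref{c:13} I would combine \eqref{c:12} with Lemma~\ref{lem-c:1}. With $\hat{z}(t,x):=G(-w(t))[x]$ one has $G(w(t))[\hat{z}(t,x)]=x$, while by the first identity in \eqref{c:10} the deterministic map $\hat{\psi}$ satisfies $F(\theta(t;\hat{\psi}(t,z)))[\hat{\psi}(t,z)]=z$. Hence, setting $\psi(t,x):=\hat{\psi}(t,\hat{z}(t,x))$,
\[
\hat{x}_\varphi(t;\psi(t,x))=G(w(t))\circ F(\theta(t;\psi(t,x)))[\psi(t,x)]=G(w(t))[\hat{z}(t,x)]=x,
\]
so $\psi$ solves the integral equations \eqref{c:7}; continuity and $\mathcal{F}_t$-adaptedness are inherited from $w(t)$ and the smoothness of $\hat{\psi}$. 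Uniqueness of $\psi$ follows because, for each fixed path, $\lambda\mapsto\hat{x}_\varphi(t;\lambda)$ is a diffeomorphism of $\mathbb{R}^n$ (a composition of global flows), so $\hat{x}_\varphi(t;\lambda)=x$ has at most one solution.

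I expect the main obstacle to be the careful justification that the multidimensional stochastic rule of derivation applies with the parameter $\theta$ itself depending (smoothly and boundedly) on $\lambda$: one must check the growth/boundedness conditions required by Theorem~\ref{th:4.t2} for the composite vector fields appearing after substitution and, if necessary, localize via exit-ball stopping times as in Remark~\ref{rk:4:2.2} and then remove the localization. A secondary point is to confirm that the order-independence and the full gradient-system structure genuinely use all the cross-brackets $[f_i,f_j]=[g_i,g_j]=[f_i,g_j]=0$ encoded in \eqref{c:4a}, which is exactly the hypothesis of Frobenius' theorem; the contraction estimate \eqref{c:4b} enters only through Lemma~\ref{lem-c:1} and plays no further role here.
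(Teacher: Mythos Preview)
Your proposal is correct and follows essentially the same route as the paper, which simply refers back to the one-pair argument of Lemma~\ref{lem-b:2}: define the $2m$-parameter flow via the commuting family (Frobenius), apply the Stratonovich chain rule to identify it with the solution of \eqref{c:1}, and then compose with $\hat{\psi}$ and $\hat{z}$ to solve \eqref{c:7}. The extra care you take over uniqueness (via the diffeomorphism property of the composed flows) and the potential need to localize for Theorem~\ref{th:4.t2} is more detail than the paper supplies, but entirely in the same spirit.
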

The proof follows the arguments used in Lemma \ref{lem-b:2} of section \ref{ssec:b.1}.

\begin{lemma}
\label{lem-c:3}
Under the hypothesis \eqref{c:4a}, consider the continuous and
$\mathcal{F}_t$-\index{adapted process} $\hat{z}(t,x)=G(-w(t))[x]$, $t\in[0,T]$,
$x\in\mathbb{R}^n$.  Then the following SPDE of \index{parabolic type} is valid
\begin{equation}
\label{c:14}
\left\{
\begin{aligned}
&d_t\hat{z}(t,x)+\sum_{i=1}^{m}\partial_x\hat{z}(t,x)g_i(x)\hat{\circ}dw_i(t)=0,\,
t\in[0,T],\,x\in\mathbb{R}^n\\
&\hat{z}(0,x)=x,
\end{aligned}
\right.
\end{equation}
where the \index{Fisk-Stratonovich integral} ``$\hat{\circ}$'' is computed by
\[
h_i(t,x)\hat{\circ}dw_i(t)=h_i(t,x)\cdot dw_i(t)-\frac{1}{2}\partial_xh_i(t,x)g_i(x)dt
\]
using Ito \index{stochastic integral} ``$\cdot$''.
\end{lemma}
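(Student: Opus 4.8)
The plan is to reproduce the single--vector--field argument of Lemma~\ref{lem-b:3}, now with the scalar Wiener process replaced by the $m$--dimensional one $w=(w_1,\dots,w_m)$ and with the full commutativity hypothesis \eqref{c:4a} in force. First I would fix the smooth deterministic mapping
\begin{equation*}
H(\sigma)[x]:=G(-\sigma)[x]=G_1(-\sigma_1)\circ\cdots\circ G_m(-\sigma_m)[x],\quad\sigma=(\sigma_1,\dots,\sigma_m)\in\mathbb{R}^m,\ x\in\mathbb{R}^n,
\end{equation*}
and observe that $\hat{z}(t,x)=H(w(t))[x]$. Because \eqref{c:4a} forces the flows $G_1,\dots,G_m$ to commute pairwise, the map $G(\sigma)[z]=G_1(\sigma_1)\circ\cdots\circ G_m(\sigma_m)[z]$ is, by the gradient--system / Frobenius results of Chapters~1--2 (see Theorem~\ref{th:1.8} and Theorem~\ref{th:1.9}), the unique solution of the gradient system $\partial_{\sigma_i}G(\sigma)[z]=g_i(G(\sigma)[z])$, $i=1,\dots,m$, with $G(0)[z]=z$. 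This is the only place where \eqref{c:4a} is genuinely used, and it is the key structural input of the proof.

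Next I would differentiate the identity $H(\sigma)\circ G(\sigma)[\lambda]=\lambda$ (valid with $x=G(\sigma)[\lambda]$) to obtain, for each $i$,
\begin{equation*}
\partial_{\sigma_i}\{H(\sigma)[x]\}=-\partial_x\{H(\sigma)[x]\}\,g_i(x),
\end{equation*}
and then differentiate once more in $\sigma_i$ to get
\begin{equation*}
\partial^2_{\sigma_i}\{H(\sigma)[x]\}=\partial_x\bigl\{\partial_x\{H(\sigma)[x]\}\,g_i(x)\bigr\}\,g_i(x),
\end{equation*}
exactly as in \eqref{b:23}. The mixed derivatives $\partial^2_{\sigma_i\sigma_j}$ with $i\neq j$ will not enter the final formula; I would record that this is because $w$ is a standard $m$--dimensional Wiener process, so $\langle w_i,w_j\rangle(t)=\delta_{ij}\,t$ and the It\^o correction in the multidimensional stochastic rule of derivation only picks up the diagonal second derivatives.

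Then I would apply the stochastic rule of derivation (Theorem~\ref{th:4.t2}, in its $m$--dimensional form, applied componentwise to the smooth mapping $\varphi(\sigma):=H(\sigma)[x]$ along the continuous semimartingale $\sigma=w(t)$) to write
\begin{equation*}
d_t\hat{z}(t,x)=\sum_{i=1}^m\partial_{\sigma_i}\{H(\sigma)[x]\}_{\sigma=w(t)}\cdot dw_i(t)+\frac{1}{2}\sum_{i=1}^m\partial^2_{\sigma_i}\{H(\sigma)[x]\}_{\sigma=w(t)}\,dt.
\end{equation*}
Substituting the two derivative identities and using $\partial_x\hat{z}(t,x)=\partial_x\{H(\sigma)[x]\}_{\sigma=w(t)}$ turns the right-hand side into
$-\sum_{i=1}^m\partial_x\hat{z}(t,x)g_i(x)\cdot dw_i(t)+\frac{1}{2}\sum_{i=1}^m\partial_x\{\partial_x\hat{z}(t,x)g_i(x)\}g_i(x)\,dt$,
which is precisely $-\sum_{i=1}^m\partial_x\hat{z}(t,x)g_i(x)\,\hat{\circ}\,dw_i(t)$ for the convention $h_i\,\hat{\circ}\,dw_i=h_i\cdot dw_i-\frac{1}{2}(\partial_x h_i)g_i\,dt$ stated in the lemma; together with $\hat{z}(0,x)=G(0)[x]=x$ this is \eqref{c:14}. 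The main obstacle is the justification of the gradient--system representation of $G(\sigma)$ from \eqref{c:4a} (and the attendant bookkeeping of the $-\frac{1}{2}$ It\^o term); once that is settled, the rest is a routine transcription of the scalar computation in Lemma~\ref{lem-b:3}.
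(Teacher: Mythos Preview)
Your proposal is correct and follows essentially the same approach as the paper: define $H(\sigma)[x]=G(-\sigma)[x]$, differentiate the identity $H(\sigma)\circ G(\sigma)[\lambda]=\lambda$ to obtain the first- and second-order $\sigma_i$-derivatives of $H$, apply the multidimensional It\^o rule to $\hat z(t,x)=H(w(t))[x]$, and then repackage the result as the nonstandard Fisk--Stratonovich integral ``$\hat\circ$''. Your write-up is in fact more explicit than the paper's on two points the paper leaves implicit: that hypothesis~\eqref{c:4a} (specifically the pairwise commutation of $g_1,\dots,g_m$) is what guarantees $\partial_{\sigma_i}G(\sigma)[z]=g_i(G(\sigma)[z])$ for every $i$ via Frobenius, and that the off-diagonal second derivatives drop out because $\langle w_i,w_j\rangle(t)=\delta_{ij}t$.
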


\begin{proof}
The conclusion \eqref{c:14} is a direct consequence of applying standard
rule of \index{stochastic derivation} associated with $\sigma=w(t)\in\mathbb{R}^m$ and
smooth deterministic mapping $H(\sigma)[x]=G(-\sigma)[x]$. In this respect,
using $H(\sigma)\circ G(\sigma)[\lambda]=\lambda\in\mathbb{R}^n$ for any
$x=G(\sigma)[\lambda]$, we get
\begin{equation}
\label{c:15}
\left\{
\begin{aligned}
&\partial_{\sigma_i}H(\sigma)[x]=-\partial_x\{H(\sigma)[x]\}g_i(x),\,
\sigma=(\sigma_1,\dots,\sigma_m)\in\mathbb{R}^m,\,x\in\mathbb{R}^n,\\
&\begin{split}
\partial^2_{\sigma_i}\{H(\sigma)[x]\}
&=\partial_{\sigma_i}\{\partial_{\sigma_i}\{H(\sigma)[x]\}\}
=\partial_{\sigma_i}\{-\partial_x\{H(\sigma)[x]\}g_i(x)\}\\
&=\partial_x\{\partial_x\{H(\sigma)[x]\}g_i(x)\}g_i(x),\,
\sigma\in\mathbb{R}^m,\,x\in\mathbb{R}^n
\end{split}
\end{aligned}
\right.
\end{equation}
for each $i\in\{1,\dots,m\}$.  Recall that the standard rule of \index{stachastic
derivation} lead us to
SDE
\begin{equation}
\label{c:16}
d_t\hat{z}(t,x)
=\sum_{i=1}^m\partial_{\sigma_i}\{H(\sigma)[x]\}_{(\sigma=w(t))}\cdot dw_i(t)
+\frac{1}{2}\sum_{i=1}^m\partial^2_{\sigma_i}\{H(\sigma)[x]\}_{(\sigma=w(t))}dt,
\end{equation}
for any $t\in[0,T]$, $x\in\mathbb{R}^n$.  Rewritting the right hand side of
\eqref{c:16} (see \eqref{c:15}) we get SPDE of parabolic type given in
\eqref{c:14}.
\end{proof}

\begin{lemma}
\label{lem-c:4}
Assume the hypotheses \eqref{c:4a} and \eqref{c:4b} are fulfilled and consider
$\{\lambda=\psi(t,x):t\in[0,T],x\in\mathbb{R}^n\}$ defined in
Lemma~\ref{lem-c:2}. Then $u(t,x):=h(\psi(t,x))$, $t\in[0,T]$,
$x\in\mathbb{R}^n$, $h\in(\mathcal{C}^1_b\cap\mathcal{C}^2)(\mathbb{R}^n)$,
satisfies the following nonlinear SPDE
\begin{equation}
\label{c:17}
\left\{
\begin{aligned}
&\begin{split}
d_tu(t,x)
+&\langle\partial_xu(t,x),\sum_{i=1}^{m}\varphi_i(\psi(t,x)f_i(x))\rangle dt\\
&+\sum_{i=1}^m\langle\partial_xu(t,x),g_i(x)\rangle\hat{\circ}dw_i(t)=0,\,
t\in[0,T]
\end{split}\\
&u(0,x)=h(x)
\end{aligned}
\right.
\end{equation}
where the nonstandard \index{Fisk-Stratonovich integral} ``$\hat{\circ}$'' is computed
by
\[
h_i(t,x)\hat{\circ}dw_i(t)
=h_i(t,x)\cdot dw_i(t)-\frac{1}{2}\partial_xh_i(t,x)g_i(x)dt.
\]
\end{lemma}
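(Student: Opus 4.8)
The plan is to mirror, in the multiple vector fields setting, the argument used for Lemma~\ref{lem-b:4}: combine the deterministic Hamilton--Jacobi equation for $\hat\psi$ from Lemma~\ref{lem-c:1} with the linear parabolic SPDE for $\hat z(t,x)=G(-w(t))[x]$ from Lemma~\ref{lem-c:3}, and then apply the stochastic rule of derivation (Theorem~\ref{th:4.t2}) to the composition. First I would record that, by Lemma~\ref{lem-c:2}, $\psi(t,x)=\hat\psi(t,\hat z(t,x))$, so that the functional splits as $u(t,x)=h(\psi(t,x))=\hat u(t,\hat z(t,x))$ with $\hat u(t,z):=h(\hat\psi(t,z))$. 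Differentiating the Hamilton--Jacobi equation \eqref{c:11} along $h$ (chain rule) shows that $\hat u$ is a smooth deterministic solution of the linear first order equation
\begin{equation*}
\partial_t\hat u(t,z)+\big\langle\partial_z\hat u(t,z),\textstyle\sum_{i=1}^{m}\varphi_i(\hat\psi(t,z))f_i(z)\big\rangle=0,\qquad \hat u(0,z)=h(z).
\end{equation*}

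Next I would apply the stochastic rule of derivation to the smooth deterministic map $\hat u(t,\cdot)$ evaluated along the continuous $\mathcal{F}_t$-adapted process $\hat z(t,x)=H(w(t))[x]$ with $H(\sigma)=G(-\sigma)$. Since $\hat z$ solves the parabolic SPDE \eqref{c:14}, the rule produces a drift term equal to $\partial_t\hat u(t,\hat z)$ plus the It\^o second order corrections coming from the quadratic variations $\langle w_i\rangle(t)=t$, while the cross terms vanish because $\langle w_i,w_j\rangle=0$ for $i\ne j$. The point of the computation is that these It\^o corrections are precisely absorbed into the nonstandard Fisk--Stratonovich brackets $\hat\circ\,dw_i(t)$ (defined with the $-\tfrac12\partial_xh_i(t,x)g_i(x)\,dt$ term), so that after collecting terms $u(t,x)$ satisfies
\begin{equation*}
d_tu(t,x)+\big\langle\partial_z\hat u(t,\hat z(t,x)),\textstyle\sum_{i=1}^{m}\varphi_i(\psi(t,x))f_i(\hat z(t,x))\big\rangle dt+\sum_{i=1}^{m}\big\langle\partial_xu(t,x),g_i(x)\big\rangle\hat\circ\,dw_i(t)=0.
\end{equation*}

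The remaining step, and the one where hypothesis \eqref{c:4a} is genuinely used, is to replace $\langle\partial_z\hat u(t,\hat z(t,x)),f_i(\hat z(t,x))\rangle$ by $\langle\partial_xu(t,x),f_i(x)\rangle$ for each $i$. Writing $\partial_z\hat u(t,\hat z(t,x))=[\partial_xu(t,x)][\partial_x\hat z(t,x)]^{-1}$ and using that the commuting relations $[g_j,f_i]=0$ force $[\partial_x\hat z(t,x)]^{-1}f_i(\hat z(t,x))=f_i(x)$ — the same identity as \eqref{b:30}, obtained because the flow $G(-w(t))$ transports each $f_i$ into itself when $g_j$ and $f_i$ commute — one arrives at the asserted form \eqref{c:17}. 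The main obstacle I anticipate is the bookkeeping in the stochastic rule of derivation for several independent Wiener processes: one must check that the only surviving second order terms are the diagonal ones, that the $\mathcal{C}^{1,2}_b$ regularity of the $g_i$ together with the $\mathcal{C}^2$ regularity of $h,\varphi_i,f_i$ suffice to justify the required smoothness of $\hat\psi$, $\hat z$ and $\hat u$ so that Theorem~\ref{th:4.t2} applies (possibly through the localizing stopping time argument used in its proof), and that the commutation identity transporting $f_i$ holds simultaneously for all $i$ — which is exactly what \eqref{c:4a} guarantees, since it requires all pairs in $M=\{f_1,\dots,f_m,g_1,\dots,g_m\}$ to commute.
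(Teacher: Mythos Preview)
Your proposal is correct and follows exactly the route the paper takes: the paper's proof of this lemma is simply the statement that it ``uses the same arguments as in Lemma~\ref{lem-b:4},'' and you have faithfully reproduced those arguments in the multiple vector fields setting, including the decomposition $u(t,x)=\hat u(t,\hat z(t,x))$, the (H--J) equation for $\hat u$, the SPDE for $\hat z$ from Lemma~\ref{lem-c:3}, and the key commutation identity~\eqref{b:30} used to pass from $f_i(\hat z(t,x))$ to $f_i(x)$.
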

The proof uses the same arguments as in Lemma \ref{lem-b:4} of
section~\ref{ssec:b.1}.

\begin{theorem}
\label{thm-c:1}
Assume that the vector fields
$\{f_1,\dots,f_m\}\subset(\mathcal{C}_{b}\cap\mathcal{C}^{1}_{b}\cap\mathcal{C}^{2})
(\mathbb{R}^n;\mathbb{R}^n)$,
$\{g_1,\dots,g_m\}\subset(\mathcal{C}^{1}_{b}\cap\mathcal{C}^{2})
(\mathbb{R}^n;\mathbb{R}^n)$ and scalar functions
$\{\varphi_1,\dots,\varphi_m\}\subset(\mathcal{C}^{1}_{b}\cap\mathcal{C}^{2})
(\mathbb{R}^n)$ fulfil the hypotheses~\ref{c:4a} and \ref{c:4b}

Consider the continuous and $\mathcal{F}_t$-\index{adapted process}
$\{\lambda=\psi(t,x)\in\mathbb{R}^n:t\in[0,T],x\in\mathbb{R}^n\}$ satisfying
integral equations~\eqref{c:7} (see Lemma~\ref{lem-c:2}). Then
$\{u(t,x):=h(\psi(t,x)):t\in[0,T],x\in\mathbb{R}^n\}$ fulfils nonlinear SPDE
of parabolic type~\eqref{c:17} (see Lemma~\ref{lem-c:4}) for each
$h\in(\mathcal{C}^{1}_{b}\cap\mathcal{C}^{2})(\mathbb{R}^n)$.
\end{theorem}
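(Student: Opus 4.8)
The plan is to assemble Theorem~\ref{thm-c:1} from the four preparatory Lemmas~\ref{lem-c:1}--\ref{lem-c:4}, which are the multiple--vector--field analogues of Lemmas~\ref{lem-b:1}--\ref{lem-b:4}; each of those lemmas is proved by exactly the argument used in section~\ref{ssec:b.1} for a single pair $(f,g)$, so the work here is to state the chain of reductions precisely and to flag where the two hypotheses \eqref{c:4a} and \eqref{c:4b} are used. First I would record the representation \eqref{c:6} of the stochastic flow. Since by \eqref{c:4a} the $2m$ fields $\{f_1,\dots,f_m,g_1,\dots,g_m\}$ pairwise commute in the Lie bracket, the flows $F_i(\sigma_i)$ and $G_i(\sigma_i)$ all commute with one another, so $y(\theta,\sigma)[\lambda]=G_1(\sigma_1)\circ\cdots\circ G_m(\sigma_m)\circ F_1(\theta_1)\circ\cdots\circ F_m(\theta_m)[\lambda]$ is the unique solution of the completely integrable gradient system with right--hand sides $f_i$ in the $\theta_i$--directions and $g_i$ in the $\sigma_i$--directions; this is Frobenius' theorem~\ref{th:1.9} together with the commuting--flows results of Chapter~1. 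Applying the stochastic rule of derivation (Theorem~\ref{th:4.t2}) to the smooth deterministic map $y(\theta,\sigma)[\lambda]$ along the continuous semimartingales $\theta=\theta(t;\lambda)=(t\varphi_1(\lambda),\dots,t\varphi_m(\lambda))$ and $\sigma=w(t)$ produces a process satisfying SDE~\eqref{c:1}; by uniqueness it coincides with $\hat x_\varphi(t;\lambda)$, which is \eqref{c:6}, hence Lemma~\ref{lem-c:2}.

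Next I would invert \eqref{c:6} by the splitting $\psi(t,x)=\hat\psi(t,\hat z(t,x))$ with $\hat z(t,x)=G(-w(t))[x]$. The deterministic part $\lambda=\hat\psi(t,z)$ is obtained as the fixed point of the map $\lambda\mapsto\hat V(t,z;\lambda)=F(-\theta(t;\lambda))[z]$; the contraction estimate $|\partial_\lambda\hat V(t,z;\lambda)|\leq T\sum_i V_iK_i=\rho<1$, which uses \eqref{c:4b} with $\rho=\rho_1+\cdots+\rho_m\in[0,1)$, gives existence, uniqueness and the bound in \eqref{c:10} by Banach's theorem, while differentiating the identity $F(\theta(t;\hat\psi(t,z)))[\hat\psi(t,z)]=z$ yields the nonlinear Hamilton--Jacobi equation \eqref{c:11} for $\hat\psi$; this is Lemma~\ref{lem-c:1}. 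In parallel, applying Theorem~\ref{th:4.t2} to the smooth map $H(\sigma)[x]=G(-\sigma)[x]$ along $\sigma=w(t)$, and simplifying the It\^o correction via $\partial^2_{\sigma_i}H=\partial_x(\partial_xH\cdot g_i)\cdot g_i$, produces the parabolic SPDE \eqref{c:14} for $\hat z(t,x)$, i.e.\ Lemma~\ref{lem-c:3}; the nonstandard integral ``$\hat\circ$'' appears precisely because the drift correction carries the opposite sign of the usual Stratonovich one.

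Then I would finish as in Lemma~\ref{lem-c:4}: write $u(t,x)=\hat u(t,\hat z(t,x))$ with $\hat u(t,z)=h(\hat\psi(t,z))$, which inherits from \eqref{c:11} the Hamilton--Jacobi equation $\partial_t\hat u(t,z)+\langle\partial_z\hat u(t,z),\sum_i\varphi_i(\hat\psi(t,z))f_i(z)\rangle=0$, $\hat u(0,z)=h(z)$. Applying the stochastic rule of derivation once more to $\hat u$ composed with the semimartingale $\hat z(t,x)$ governed by \eqref{c:14}, the Hamilton--Jacobi term cancels the drift generated by the quadratic variation, leaving $d_tu(t,x)+\langle\partial_z\hat u(t,\hat z(t,x)),\sum_i\varphi_i(\psi(t,x))f_i(\hat z(t,x))\rangle dt+\sum_i\langle\partial_xu(t,x),g_i(x)\rangle\hat\circ dw_i(t)=0$. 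The last reduction turns $\langle\partial_z\hat u(t,\hat z(t,x)),f_i(\hat z(t,x))\rangle$ into $\langle\partial_xu(t,x),f_i(x)\rangle$: since $G(\sigma)$ commutes with each $f_i$ (again by \eqref{c:4a}), one has $[\partial_x\hat z(t,x)]^{-1}f_i(\hat z(t,x))=f_i(x)$, and inserting $[\partial_x\hat z(t,x)][\partial_x\hat z(t,x)]^{-1}$ gives the identity. This yields \eqref{c:17} and completes the proof.

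The main obstacle I expect is the careful stochastic--calculus bookkeeping in the two applications of Theorem~\ref{th:4.t2}: one must track which vector fields contribute the second--order $\tfrac12\partial^2_\sigma$ corrections, verify that the Hamilton--Jacobi drift of $\hat\psi$ exactly annihilates the extra drift created when $w(t)$ is fed into a composition of smooth maps, and check that the sign conventions assemble into the nonstandard ``$\hat\circ$'' integral rather than the ordinary Stratonovich one. The commutativity hypothesis \eqref{c:4a} is used three times --- to integrate the gradient system, to obtain \eqref{c:14}, and to perform the final change of frame for the $f_i$--term --- so the places where it enters should be made explicit; everything else is a routine transcription of the single--field computations of section~\ref{ssec:b.1} with sums over $i\in\{1,\dots,m\}$ inserted.
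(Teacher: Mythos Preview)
Your proposal is correct and follows essentially the same route as the paper: the theorem is simply the compilation of Lemmas~\ref{lem-c:1}--\ref{lem-c:4}, each of which is proved by transcribing the corresponding single--field argument from section~\ref{ssec:b.1} with sums over $i\in\{1,\dots,m\}$; the paper itself gives no separate proof of Theorem~\ref{thm-c:1} beyond pointing to those lemmas. Your identification of where hypotheses~\eqref{c:4a} and~\eqref{c:4b} enter (commutativity for the flow representation, for~\eqref{c:14}, and for the final frame change; the contraction bound $\rho=\sum_i\rho_i<1$ for the fixed point) matches the paper's usage exactly.
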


\subsection{Solution for (P2)}
\label{ssec-c:2}

As far as SDE~\eqref{c:5} is a non-markovian system, the evolution of a
functional $S(t,x):=Eh(\hat{x}_{\psi}(T;t,x))$, $t\in[0,T]$, $x\in\mathbb{R}^n$,
for each $h\in(\mathcal{C}^{1}_{b}\cap\mathcal{C}^{2})(\mathbb{R}^n)$ will be
described using the pathwise representation of the conditioned mean values
functional
\begin{equation}
\label{c:18}
v(t,x):=E\{h(\hat{x}_{\psi}(T;t,x))\mid\psi(t,x)\},\,
0\leq t<T,\,x\in\mathbb{R}^n.
\end{equation}

Here $\hat{x}_{\psi}(T;t,x)$ can be expressed using the following integral
representation
\begin{equation}
\label{c:19}
\hat{x}_{\psi}(T;t,x)=G(w(T)-w(t))\circ F[(T-t)\varphi(\psi(t,x))](x),\,
0\leq t<T,
\end{equation}
where $G(\sigma)[z]$ and $F(\sigma)[z]$,
$\sigma=(\sigma_1,\dots,\sigma_m)\in\mathbb{R}^m$, $x\in\mathbb{R}^n$, are
defined in (P1) (see~\eqref{c:6}) for $\varphi:=(\varphi_1,\dots,\varphi_m)$.
The right hand side of~\eqref{c:19} is a continuous mapping of the two
independent random variables, $z_1=w(T)-w(t)\in\mathbb{R}^m$ and
$z_2=\psi(t,x)\in\mathbb{R}^n$ ($\mathcal{F}_t$-measurable) for each
$0\leq t<T$, $x\in\mathbb{R}^n$.

Using the parameterized random variable
\begin{equation}
\label{c:20}
y(t,x;\lambda)=G(w(T)-w(t))\circ F[(T-t)\varphi(\lambda)](x),\,
0\leq t<T
\end{equation}
we may and do compute the functional $v(t,x)$ in~\eqref{c:18} by
\begin{equation}
\label{c:21}
v(t,x)=[Eh(y(t,x;\lambda))](\lambda=\psi(t,x)),\,
0\leq t<T,\,x\in\mathbb{R}^n.
\end{equation}
Here, the functional
\begin{equation}
\label{c:22}
u(t,x;\lambda)=Eh(y(t,x;\lambda)),\,
t\in[0,T],\,x\in\mathbb{R}^n,
\end{equation}
satisfies a backward parabolic equation (Kolmogorov`s equation) for each
$\lambda\in\mathbb{R}^n$ and rewrite \eqref{c:21} as follows,
\begin{equation}
\label{c:23}
v(t,x)=u(t,x;\psi(t,x)),\,0\leq t<T,\,x\in\mathbb{R}^n.
\end{equation}
In conclusion, the functional $S(t,x)=Eh(\hat{x}_{\psi}(T;t,x))$ can be
reprezented by
\begin{equation}
\label{c:24}
S(t,x)=E[E\{h(\hat{x}_{\psi}(T;t,x))\mid\psi(t,x)\}]=Eu(t,x;\psi(t,x))
\end{equation}
for any $0\leq t<T$, $x\in\mathbb{R}^n$, where
$\{u(t,x;\lambda):t\in[0,T],x\in\mathbb{R}^n\}$ satisfies the corresponding
backward parabolic equations with parameter $\lambda\in\mathbb{R}^n$,
\begin{equation}
\label{c:25}
\left\{
\begin{aligned}
&\partial_tu(t,x;\lambda)+\langle\partial_xu(t,x;\lambda),f(x,\lambda)\rangle
+\frac{1}{2}\sum_{i=1}^{m}\langle\partial^2_xu(t,x;\lambda)g_i(x),g_i(x)\rangle=0,\\
&u(T,x;\lambda)=h(x),\,
f(x,\lambda)=\sum_{i=1}^{m}\varphi_i(\lambda)f_i(x)
+\frac{1}{2}\sum_{i=1}^{m}[\partial_xg_i(x)]g_i(x).
\end{aligned}
\right.
\end{equation}
We conclude these remarks by a theorem.

\begin{theorem}
\label{thm-c:2}
Assume that the vector fields
$\{f_1,\dots,f_m\}\subset(\mathcal{C}_b\cap\mathcal{C}^1_b\cap\mathcal{C}^2)
(\mathbb{R}^n;\mathbb{R}^n)$,
$\{g_1,\dots,g_m\}\subset(\mathcal{C}^1_b\cap\mathcal{C}^2)
(\mathbb{R}^n;\mathbb{R}^n)$, and scalar functions
$\varphi=(\varphi_1,\dots,\varphi_m)\subset(\mathcal{C}^1_b\cap\mathcal{C}^2)
(\mathbb{R}^n)$ of SDE \eqref{c:5} fulfil the hypotheses~\eqref{c:4a}
and~\eqref{c:4b}.
Then the evolution of the functional
\begin{equation}
\label{c:26}
S(t,x):=Eh(\hat{x}_{\psi}(T;t,x)),\,t\in[0,T],\,x\in\mathbb{R}^n,\,
h\in(\mathcal{C}^1_b\cap\mathcal{C}^2)(\mathbb{R}^n)
\end{equation}
can be described as in \eqref{c:24}, where
$\{u(t,x;\lambda):t\in[0,T],x\in\mathbb{R}^n\}$ satisfies linear backward
parabolic equations \eqref{c:25}, for each $\lambda\in\mathbb{R}^n$.
\end{theorem}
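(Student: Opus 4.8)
The plan is to reduce Theorem~\ref{thm-c:2} to the already-established Theorem~\ref{thm-c:1} together with the standard Feynman-Kac/Kolmogorov correspondence for (genuinely markovian) diffusions. First I would recall that, by Theorem~\ref{thm-c:1} (or rather by Lemmas~\ref{lem-c:1}--\ref{lem-c:2}), the hypotheses \eqref{c:4a} and \eqref{c:4b} guarantee that the integral equations \eqref{c:7} have a unique continuous and $\mathcal{F}_t$-adapted solution $\{\lambda=\psi(t,x)\}$, which moreover factors as $\psi(t,x)=\hat{\psi}(t,G(-w(t))[x])$. The point is that, once the random index $\lambda=\psi(t,x)$ is \emph{frozen}, the equation \eqref{c:5} becomes an autonomous SDE with deterministic coefficients $f(\cdot,\lambda)=\sum_i\varphi_i(\lambda)f_i(\cdot)$ and $g_i(\cdot)$, whose Fisk-Stratonovich form converts to the Ito drift $f(x,\lambda)=\sum_i\varphi_i(\lambda)f_i(x)+\frac12\sum_i[\partial_xg_i(x)]g_i(x)$ appearing in \eqref{c:25}. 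This is exactly the parameterized random variable $y(t,x;\lambda)$ of \eqref{c:20}, and the commutation hypothesis \eqref{c:4a} yields the closed-form representation $y(t,x;\lambda)=G(w(T)-w(t))\circ F[(T-t)\varphi(\lambda)](x)$ via the gradient-system argument already used for Remark~\ref{rem-b:2}/Lemma~\ref{lem-c:2}.

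Next I would establish the pathwise (conditional) representation. The key structural fact is that the right-hand side of \eqref{c:19} is a continuous function of the two \emph{independent} random variables $z_1=w(T)-w(t)\in\mathbb{R}^m$ and $z_2=\psi(t,x)\in\mathbb{R}^n$: indeed $z_2$ is $\mathcal{F}_t$-measurable while $z_1$ is independent of $\mathcal{F}_t$ by the Wiener increment property (see property $(i_2)$ of Section~4.2). A standard conditioning lemma (``freezing'' an independent random parameter) then gives
\begin{equation}
E\{h(\hat{x}_{\psi}(T;t,x))\mid\psi(t,x)\}=[Eh(y(t,x;\lambda))](\lambda=\psi(t,x))=u(t,x;\psi(t,x)),
\end{equation}
which is \eqref{c:21}--\eqref{c:23}, and then the tower property yields $S(t,x)=Eu(t,x;\psi(t,x))$, i.e.\ \eqref{c:24}. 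It remains to identify $u(t,x;\lambda)=Eh(y(t,x;\lambda))$ as the solution of the backward parabolic Cauchy problem \eqref{c:25} for each fixed $\lambda$. Since $\{y(s,x;\lambda):s\in[t,T]\}$ is, for frozen $\lambda$, the diffusion generated by the time-homogeneous operator $L_\lambda=\langle f(\cdot,\lambda),\partial_x\rangle+\frac12\sum_i\langle(\partial^2_x\cdot)g_i,g_i\rangle$ with terminal data $h$ at time $T$, the classical Kolmogorov backward equation applies; one checks the needed regularity of $u(\cdot,\cdot;\lambda)$ using the smoothness and boundedness assumptions on $f_i,g_i,\varphi_i$ ($\mathcal{C}^1_b\cap\mathcal{C}^2$) and $h\in\mathcal{C}^1_b\cap\mathcal{C}^2$, together with the stochastic rule of derivation of Theorem~\ref{th:4.t2} applied to $\varphi(s,y)=u(s,y;\lambda)$ along $y(s,x;\lambda)$.

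I expect the main obstacle to be the rigorous justification of the freezing/conditioning step and the attendant measurability: one must argue carefully that the joint law of $(w(T)-w(t),\psi(t,x))$ decouples, that $\lambda\mapsto Eh(y(t,x;\lambda))$ is measurable (in fact continuous, which follows from the flow representation \eqref{c:20} and dominated convergence), and that substituting the $\mathcal{F}_t$-measurable $\lambda=\psi(t,x)$ back into this deterministic function genuinely computes the conditional expectation. A secondary technical point is the smoothness of $u(t,x;\lambda)$ in $(t,x)$ uniformly enough in $\lambda$ to make \eqref{c:25} a pointwise identity rather than merely a distributional one; here the explicit representation $y(t,x;\lambda)=G(w(T)-w(t))\circ F[(T-t)\varphi(\lambda)](x)$ as a smooth deterministic composition of global flows evaluated at a Gaussian increment is what saves the day, because differentiation in $(t,x)$ can be passed under the expectation using the bounds on the flows and their derivatives. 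Everything else — the Stratonovich-to-Ito drift correction, the tower property, uniqueness of the SDE solution — is routine given the earlier results in the chapter.
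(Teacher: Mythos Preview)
Your proposal is correct and follows essentially the same approach as the paper: the paper's proof of Theorem~\ref{thm-c:2} is precisely the discussion in \eqref{c:18}--\eqref{c:25} that precedes the statement, namely the flow representation \eqref{c:19}, the independence of $w(T)-w(t)$ and $\psi(t,x)$, the freezing step \eqref{c:21}, the tower property \eqref{c:24}, and the identification of $u(t,x;\lambda)$ with the Kolmogorov backward equation \eqref{c:25}. You are, if anything, more careful than the paper about the measurability and regularity justifications for the conditioning step.
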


\textbf{Final remark}. One may wonder about the meaning of the martingale
representation associated with the non-markovian functionals
$h(\hat{x}_{\psi}(T;t,x))$,
$h\in(\mathcal{C}^1_b\cap\mathcal{C}^2)(\mathbb{R}^n)$.
In this respect, we may use the parameterized functional
$\{u(t,x;\lambda):t\in[0,T],x\in\mathbb{R}^n\}$ fulfilling backward parabolic
equations \eqref{c:25}. Write
\begin{equation}
\label{c:27}
h(\hat{x}_{\psi}(T;t,x))=u(T,\hat{x}_{\psi}(T;t,x);\hat{\lambda}=\psi(t,x))
\end{equation}
and apply the standard rule of stochastic derivation associated with smooth
mapping $\{u(s,x;\hat{\lambda}):s\in[0,T],x\in\mathbb{R}^n\}$ and stochastic
process $\{\hat{x}_{\psi}(s;t,x):s\in[t,T]\}$.  We get
\begin{equation}
\label{c:28}
\begin{split}
h(\hat{x}_{\psi}(T;t,x))=&u(t,x;\hat{\lambda})
+\int_t^T(\partial_s+L_{\hat{\lambda}})(u)(s,\hat{x}_{\psi}(s;t,x);\hat{\lambda})ds\\
&+\sum_{i=1}^{m}\int_t^T\langle\partial_xu(s,\hat{x}_{\psi}(s;t,x);\hat{\lambda}),
g_i(x)\rangle dw_i(s),
\end{split}
\end{equation}
where $L_{\hat{\lambda}}(u)(s,x;\hat{\lambda})
:=\langle\partial_xu(s,x;\hat{\lambda}),f(x,\hat{\lambda})\rangle
+\frac{1}{2}\sum_{i=1}^{m}\langle\partial^2_xu(s,x;\hat{\lambda})g_i(x),g_i(x)\rangle$
coincides with parabolic operator in PDE \eqref{c:25}.
Using \eqref{c:25} for $\hat{\lambda}=\psi(t,x)$, we obtain the following
martingale representation
\begin{equation}
\label{c:29}
h(\hat{x}_{\psi}(T;t,x))=u(t,x;\psi(t,x))
+\sum_{i=1}^{m}\int_t^T\langle\partial_xu(s,\hat{x}_{\psi}(s;t,x);\hat{\lambda}),
g_i(x)\rangle\cdot dw_i(s),
\end{equation}
which shows that the standard constant in the markovian case is replaced by a
$\mathcal{F}_t$-measurable random variable $u(t,x;\psi(t,x))$.  In addition,
the backward evolution of stochastic functional
$\{Q(t,x):=h(\hat{x}_{\psi}(T;t,x)):t\in[0,T],x\in\mathbb{R}^n\}$ given in
\eqref{c:29} depends essentially on the forward evolution process
$\{\psi(t,x)\}$ for each $t\in[0,T]$ and $x\in\mathbb{R}^n$.

\section*{Bibliographical Comments}
The writing of this part has much in common with the references \cite{1} and \cite{11}.


\newpage
\addcontentsline{toc}{chapter}{Index}
\pagestyle{plain}
\printindex


\end{document}